\documentclass[11pt, twoside, openleft]{book}
\usepackage[a4paper, left=1.5in, right=1.25in]{geometry}
\usepackage{amssymb}
\usepackage{amsmath}
\usepackage{microtype}  % Add to preamble, best fix [web:148][web:144]
\usepackage{amsthm}
\usepackage{commath}
\usepackage{textcomp}
\usepackage{mathrsfs}
\usepackage{enumerate}
\usepackage{amsfonts}
\usepackage{palatino}
\usepackage[dvipsnames]{xcolor}
\usepackage[colorlinks= true, linkcolor = Red, citecolor = Green, urlcolor = Blue, pdfstartview=]{hyperref}
\usepackage{mathtools}
\usepackage{graphicx}
\usepackage{wrapfig}
\usepackage{subcaption}
\usepackage{etoolbox}
\usepackage{comment}
\usepackage{blkarray}
\usepackage{sectsty}
\usepackage{stmaryrd}
\usepackage{esvect}
\usepackage{tikz-cd}
\usepackage{graphicx}
\usepackage[capitalise]{cleveref}
\usepackage{stackrel}
\usepackage{fancyhdr}
\usepackage{quiver}
\usepackage{emptypage}
\usetikzlibrary{nfold}
\usepackage{quotchap}
\usepackage{csquotes}
\usepackage[T1]{fontenc}

\usepackage{afterpage}
\newcommand\blankpage{%
    \null
    \thispagestyle{empty}%
    \addtocounter{page}{-1}%
    \newpage}

\makeatletter
\let\thetitle\@title
\makeatother
\frontmatter

\fancypagestyle{mainmatterstyle}{
  \fancyfoot{}
  \lhead{}\chead{}\rhead{}
  \lfoot{}\cfoot{\thepage}\rfoot{}
  \setlength{\footskip}{30pt}
  \setlength{\headheight}{12.77002pt}
  \fancyhead{}
  \fancyhead[ER]{\makebox[1em][r]{\thepage}}
  \fancyhead[ER]{\makebox[1em][r]{} \small \rightmark}
  \fancyhead[OL]{\makebox[1em][l]{}\hspace{5mm} \small\leftmark}
}
\usepackage[backend=bibtex,style=alphabetic,sorting=nty,backref=true,maxbibnames=99,maxalphanames=999]{biblatex}
\DefineBibliographyStrings{english}{%
  backrefpage = {\textbf{P.}},% originally "cited on page"
  backrefpages = {\textbf{P.}},% originally "cited on pages"
}

\theoremstyle{definition}
\theoremstyle{plain}
\newtheorem{definition}{Definition}[section]
\newtheorem{defn}{Definition}

\newtheorem{example}[definition]{Example}
\newtheorem{remark}[definition]{Remark} 
\newtheorem{lemma}[definition]{Lemma}
\newtheorem{prop}[definition]{Proposition}
\newtheorem{question}[definition]{Question}
\newtheorem{corollary}[definition]{Corollary}
\newtheorem{theorem}[definition]{Theorem}
\newtheorem{conj}{Conjecture}

\newtheorem*{theorem*}{Theorem}
\newtheorem*{conj*}{Conjecture}
\newtheorem*{corollary*}{Corollary}
\newtheorem*{lemma*}{Lemma}
\newtheorem*{prop*}{Proposition}
\newtheorem*{qstn*}{Question}
\newtheorem*{defn*}{Definition}

\renewcommand{\AA}{\mathbb{A}}
\renewcommand{\SS}{\mathbb{S}}
\renewcommand{\L}{\mathbb{L}}
\renewcommand{\frak}{\mathfrak{}}
\renewcommand{\rm}{\mathrm}

\newcommand{\tcolor}{\textcolor}

\newcommand{\CC}{\mathbb{C}}

\newcommand{\GG}{\mathbb{G}}

\newcommand{\HH}{\mathcal{H}}
\newcommand{\SH}{\mathrm{SH}}
\newcommand{\ZZ}{\mathbb{Z}}
\newcommand{\KK}{\mathcal{K}}
\newcommand{\Sch}{\mathrm{Sch}}
\newcommand{\Sm}{\mathrm{Sm}}

\newcommand{\pr}{\mathrm{pr}}

\newcommand{\XX}{\mathcal{X}}
\newcommand{\QQ}{\mathbb{Q}} 
\newcommand{\PP}{\mathbb{P}}
\newcommand{\RR}{\mathbb{R}}
\newcommand{\FF}{\mathbb{F}}

\newcommand{\bs}{\backslash}
\newcommand{\ul}{\underline}
\newcommand{\mcal}[1]{\mathcal{#1}}
\newcommand{\Abs}{\mathbb{A}^2\backslash\{0\}}
\newcommand{\Absn}{\AA^n\backslash\{0\}}
\newcommand{\barg}{\bar{g}}
\newcommand{\Lin}{\langle}
\newcommand{\Rin}{\rangle}
\newcommand{\abek}{\mathcal{A}b(k)}

\newcommand{\Spec}{\operatorname{Spec}}

\newcommand{\ML}{\operatorname{ML}}
\newcommand{\Fun}{\operatorname{Fun}}
\newcommand{\Hom}{\operatorname{Hom}}
\newcommand{\Map}{\operatorname{Map}}
\newcommand{\Sing}{\operatorname{Sing}}
\newcommand{\Set}{\operatorname{Set}}
\newcommand{\sSet}{\operatorname{sSet}}
\newcommand{\Spc}{\operatorname{Spc}}
\newcommand{\Spt}{\operatorname{Spt}}
\newcommand{\Shv}{\operatorname{Shv}}
\newcommand{\Psh}{\operatorname{Psh}}
\newcommand{\sPsh}{\operatorname{sPsh}}
\newcommand{\sShv}{\operatorname{sShv}}
\newcommand{\MW}{\operatorname{MW}}
\newcommand{\M}{\operatorname{M}}
\newcommand{\GW}{\operatorname{GW}}
\newcommand{\Gr}{\operatorname{Gr}}
\newcommand{\W}{\operatorname{W}}
\newcommand{\CH}{\operatorname{CH}}
\newcommand{\Pic}{\operatorname{Pic}}
\newcommand{\Th}{\operatorname{Th}}
\newcommand{\Aut}{\operatorname{Aut}}
\newcommand{\Ker}{\operatorname{Ker}}
\newcommand{\LND}{\operatorname{LND}}
\newcommand{\Id}{\operatorname{Id}}
\newcommand{\DM}{\operatorname{DM}}
\newcommand{\SmCor}{\operatorname{SmCor}}
\newcommand{\Cor}{\operatorname{Cor}}
\newcommand{\TN}{\operatorname{TN}}

\title{Relative $\AA^1$-Contractibility of Smooth Schemes And Exotic Motivic Spheres}
\date{}
\usepackage{pdfpages}
\usepackage{url}
\usepackage{lastpage}
\begin{document}
% \maketitle
\includepdf[pages={1,2}]{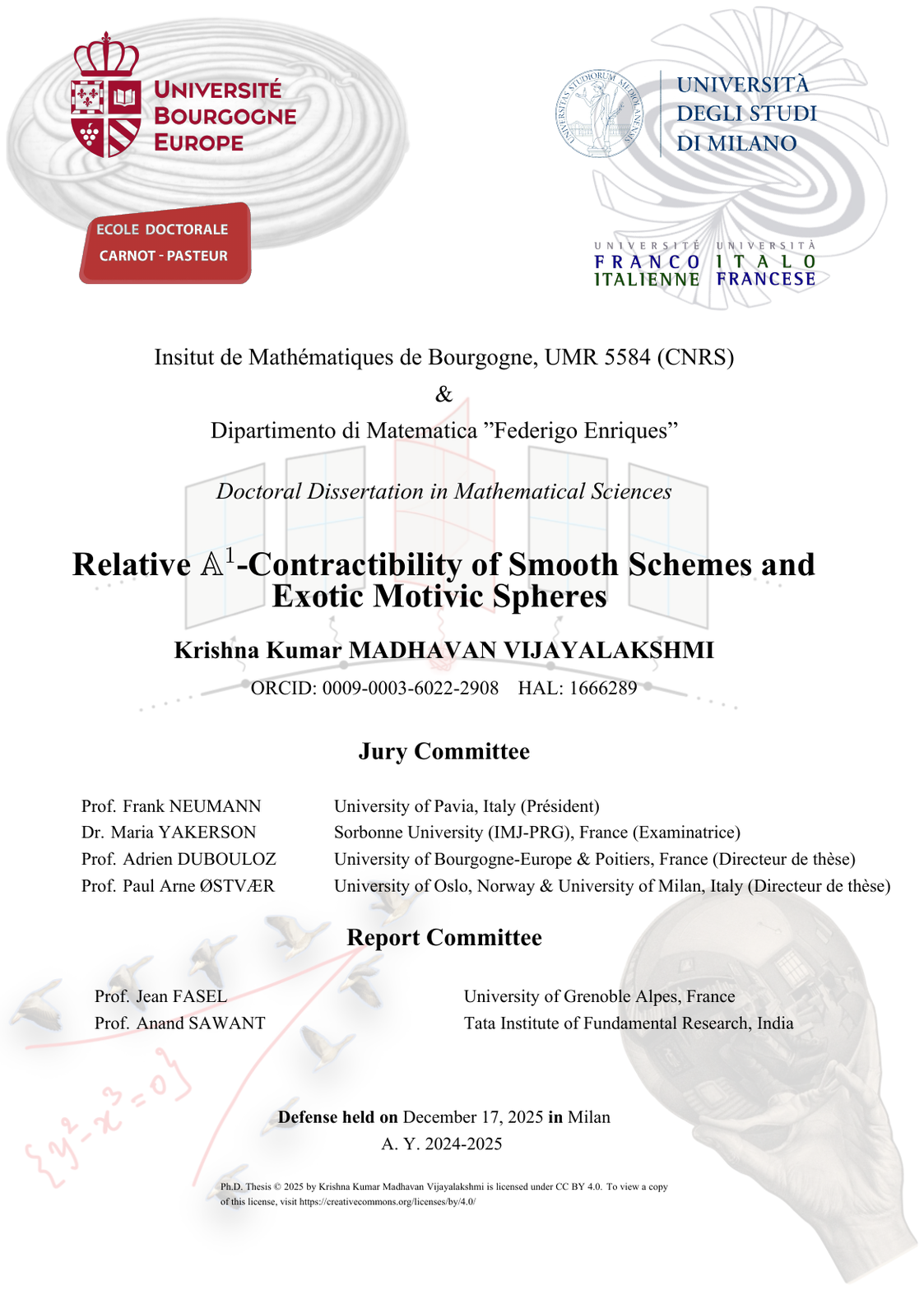}

\pagenumbering{roman}
\setcounter{page}{1}

\pagestyle{empty}
{
\normalsize
\hypersetup{linkcolor=black}
\tableofcontents
}
\newpage
\pagestyle{fancy}

\chapter{Abstract}
\markboth{Abstract}{}

One of the emerging problems in algebraic geometry is to characterize the affine $n$-space $\AA^n$ among smooth affine schemes up to $\AA^1$-contractibility: a property that makes motivic spaces be homotopy equivalent to a field $\Spec k$ in the motivic homotopy theory. Recent efforts show that this characterization holds in dimensions $n<3$ over certain fields. In our work, we extend this observation to "reasonable" arbitrary base schemes in relative dimensions $n< 3$, exploiting the Zariski local triviality and the triviality of the relative canonical sheaf. From dimensions $n\ge 3$, the existence of smooth “exotic” affine schemes, those that are $\AA^1$-contractible but not isomorphic to the affine $n$-space, has already been established. A well-studied family constitutes the Koras-Russell threefolds $\KK$ and their generalized prototypes $\XX_n$ in higher dimensions, whose $\AA^1$-contractibility has been so far proven over fields of characteristic zero. In this direction, we extend the relative $\AA^1$-contractibility of $\KK$ and $\XX_n$ over a Noetherian base scheme in arbitrary dimensions. 
\vspace{2mm}

Furthermore, using these prototypes, we study the existence of “exotic spheres” - $n$-dimensional smooth schemes that are $\AA^1$-homotopic, but not isomorphic to $\AA^n \bs \{0\}$, in motivic homotopy theory. This result can be seen as the "compact" analog of the study of exotic affine schemes. Our main result shows that in all dimensions $n\ge 4$, the quasi-affine varieties $\XX_n \bs \{\bullet\}$ give a model for the exotic motivic spheres over infinite perfect fields. The novelty here is that these constitute the first family of examples of smooth motivic spheres of dimension $n$, which are not isomorphic to $\AA^n \bs \{0\}$.
\vspace{8mm}

{\let\cleardoublepage\relax
\chapter*{Resum\'e}
\markboth{Abstract}{}

L'un des problèmes émergents en géométrie algébrique consiste à caractériser l'esp-ace affine $\AA^n$ parmi les schémas affines lisses à contractibilité $\AA^1$ près~: une propriété qui rend les espaces motiviques homotopiquement équivalents à un corps $\Spec k$ en homotopie motivique. Des travaux récents montrent que cette caractérisation est valable en dimension $n<3$, sur certains corps. Dans nos travaux, nous étendons cette observation à une large famille de schémas de base «~raisonnables~» en dimension relatives $n< 3$, en exploitant la trivialité locale de Zariski et la trivialité du faisceau canonique relatif. À partir de la dimensions $n\ge 3$, l'existence d'espaces affines «~exotiques~», c'est-à-dire des schémas affines lisses qui sont $\AA^1$-contractibles mais qui ne sont pas isomorphes à l'espace affine $\AA^n$, a déjà été établie. Une famille bien étudiée est celle des 3-variétés de Koras-Russell $\KK$ et leurs prototypes généralisés $\XX_n$ en dimensions supérieures, dont la $\AA^1$-contractibilité  a été prouvée jusqu'à présent sur des corps de caractéristique nulle. Dans cette direction, nous étendons ces résultats en établissant la $\AA^1$-contractibilité relative  de $\KK$ et $\XX_n$ sur un schéma de base noethérien en dimension arbitraire. 
\vspace{2mm}

De plus, en utilisant ces prototypes, nous étudions l'existence de «~sphères exotiques~» en homotopie motivique - des schémas lisses de dimension $n$ qui sont $\AA^1$-homotopes mais non isomorphes à $\AA^n \bs \{0\}$-. Ce résultat peut être considéré comme l'analogue «~compact~» de l'étude des espaces affines exotiques. Notre résultat principal montre qu'en dimension $n\ge 4$, les variétés quasi-affines $\XX_n \bs \{\bullet\}$ fournissent un modèle pour les sphères motiviques exotiques sur des corps parfaits infinis. La nouveauté ici est que celles-ci constituent la première famille d'exemples de sphères motiviques lisses de dimension $n$ qui ne sont pas isomorphes à $\AA^n \bs \{0\}$.

}

{\let\cleardoublepage\relax
\chapter*{Abstract}
\markboth{Abstract}{}

Uno dei problemi emergenti nella geometria algebrica è quello di caratterizzare lo spazio affine $n$-dimensionale $\AA^n$ tra schemi affini regolari a meno di $\AA^1$-contrazione: una proprietà che rende gli spazi motivici omotopicamente equivalenti a un campo $\Spec k$ nella teoria dell'omotopia motivica. Recenti studi dimostrano che questa caratterizzazione vale in dimensioni $n<3$ su determinati campi. Nel nostro lavoro, estendiamo questa osservazione a schemi “ragionevoli” di base arbitraria, in dimensioni relative $n< 3$, sfruttando la trivialità locale di Zariski e la trivialità del fascio canonico relativo. A partire dalle dimensioni $n\ge 3$, è già stata dimostrata l'esistenza di schemi affini regolari “esotici”, in altre parole, schemi che sono $\AA^1$-contraibili ma non isomorfi allo spazio affine $n$-dimensionale. Una famiglia ben studiata è costituita dalle tre-varietà $\KK$ di Koras-Russell e dai loro prototipi generalizzati $\XX_n$ in dimensioni superiori, la cui $\AA^1$-contraibilità  è stata finora dimostrata su campi di caratteristica zero. In questa direzione, estendiamo la $\AA^1$-contraibilità relativa di $\KK$ e $\XX_n$ su uno schema di base Noetheriano in dimensioni arbitrarie. 

\vspace{2mm}

Inoltre, utilizzando questi prototipi, studiamo l'esistenza di “sfere esotiche” - schemi lisci $n$-dimensionali che sono $\AA^1$-omotopici, ma non isomorfi ad $\AA^n \bs \{0\}$ nella teoria dell'omotopia motivica. Questo risultato può essere visto come l'analogo “compatto” dello studio degli schemi affini esotici. Il nostro risultato principale mostra che in tutte le dimensioni $n\ge 4$, le varietà quasi affini $\XX_n \bs \{\bullet\}$ forniscono un modello per le sfere motiviche esotiche su campi perfetti infiniti. Queste costituiscono la prima famiglia nota di esempi di sfere motiviche lisce di dimensione $n$, che non sono isomorfe a $\AA^n \bs \{0\}$.
}

\begin{savequote}
Mathematics is us reverse engineering the language of nature.
\qauthor{"The Almanack of Naval Ravikant" by Eric Jorgenson}
\end{savequote}
\chapter{Preface}\label{preface}
\markboth{Preface}{}

Our script resides in the realm of algebraic geometry: a unique blend of various fundamental fields of mathematics. In the following paragraphs, we will paint a picture that portrays the position of this script within the mathematical arena.
\vspace{2mm}

\textcolor{cyan}{Topology} is the study of geometry up to continuous deformations. It is unconcerned with the space being dense or sparse, curved or stretched (but it is sensitive to perforations). While geometry takes a quantitative view on spaces, topology takes a non-quantitative, continuous viewpoint: topology equips a space with a notion of \emph{nearness} without quantitatively spelling out how near they actually are. In this way, topology generalizes the notion of a metric space, whence generalizing the notion of distance between its elements. One of the most pivotal challenges in mathematical theory is to classify spaces up to a given \emph{type}. The classification of spaces up to diff(hom)-eomorphisms often pose limitations and topology offers a relaxed treatment through its notion of \textcolor{cyan}{homotopy}, which is an invariant that segregates spaces into families by their reachability via continuous maps: two continuous maps $f,g: X\to Y$ between topological spaces are \emph{homotopic} if there is a collection of continuous maps $H(x, i)$ (the homotopy function)
\begin{align*}
    &\hspace{15mm} H(x,i): X \times [0,1] \to Y \qquad \text{such that} \\
    & H(x,0) = f(x), \quad H(x,1) = g(x), \quad \forall\ x\in X
\end{align*}
\textcolor{cyan}{Algebra} is the art of discretizing these continuous data. \textcolor{cyan}{Algebraic topology}, thereby, develops frameworks to systematically transport topological challenges into the language of algebra - be it commutative, homological, homotopical, differential, or categorical.
{\small
\begingroup
\addtolength{\jot}{1em}
\begin{align*}
& \hspace{4mm} \text{Geometry} \xrightarrow{\makebox[2.5cm]{\text{deformation}}}  \text{Topology}\xrightarrow{\makebox[2.5cm]{\text{discretize}}}   \text{Algebra}  \\
& \frac{\text{Quantitative data}}{\text{bend, squeeze, stretch}} \xmapsto{\makebox[1cm]{}}\text{Continuous data} \xmapsto {\makebox[1cm]{}} \text{Discrete data}
\end{align*}
\endgroup}
Algebraic topology provides a home for tackling problems from geometry and topology facilitated by its numerous \emph{algebraic invariants}. This translation is enhanced by the notion of \emph{categories} and functors between categories. \textcolor{cyan}{Functors} are certain inherent maps that preserve the invariant nature between categories; an \emph{algebro-topological functor} associates a space to an algebraic invariant (groups, rings, modules,...). The \emph{building blocks} of algebraic topology can be broadly divided as follows:
\begin{align*}
    &\hspace{3mm} \text{Spheres}\quad S^n:= \{\vec{x} \in \RR^{n+1} : ||\vec{x}|| =1\} \hspace{16mm} \text{(Geometrical object)} \\
    & \text{Simplices}\quad \Delta^n:= \{\vec{x} \in \RR^{n+1} : \Sigma x_i = 1, x_i\ge 0\} \quad \text{(Combinatorial object).}
\end{align*}
Every "decent" topological space is fundamentally glued out of these blocks. Analyzing maps 'out of spheres' gives rise to the \emph{homotopy} theory and analyzing maps 'out of simplices' gives rise to \emph{(co-)homology} theory: two of the most fundamental algebro-topological invariants. The $n$-simplex (pl. simplices) is homeomorphic to another important object called the \emph{$n$-ball} given by $D^n:=\{\vec{x}\in \RR^n: ||\vec{x}|| \le 1\}$. In the view of homotopy theory, the following fact exemplifies the usefulness of such balls: every $n$-ball is homotopy equivalent to a point $\{\bullet\}$ (though the former is $n$-dimensional and the latter is 0-dimensional!). This crucial property means that almost all of the known algebraic invariants, when taken up to homotopy, become trivial on balls. This property makes it an ideal object for the choice of parameterizing homotopies in any category, as they themselves do not add any extra data! Spaces that are homotopy equivalent to a ball are called \textcolor{cyan}{contractible} spaces. In the view of algebro-topological theories, balls can be seen as \emph{contrasting objects} to spheres (e.g., the (co-)homology theory of the former is always trivial and the latter is never trivial). Therefore, within the framework of any homotopy theory, a solid comprehension of these objects is fundamental. Algebraic topology has a wealth of tools to study a given space:
\begin{displayquote}
{\small Cellular approximation, Siefert-Van Kampen \& Mayer-Vietoris type gadgets, Excision, Covering spaces, Hopf-Brouwer degree map, Obstruction theory \& Postnikov towers, Operads, Localization exact sequences, Universal coefficients,\\ K\"unneth formula, Poincar\'e duality, Spectral sequences, Characteristic classes, Hurewicz theorem, $\dots$}    
\end{displayquote}
Such a vast collection of machinery is transported to nearby areas of mathematics by invoking the categorical framework. Several of these tools behave well under 'decent' functorial transformations, and so this enables one to export tools from algebraic topology to the realm of \textcolor{cyan}{algebraic geometry} - a novel area where algebraic topology meets geometry in profound ways. It is where polynomial equations define the geometric objects. The building blocks of algebraic geometry are the \emph{affine varieties} upon which bigger blocks of \emph{quasi-projective varieties} and schemes are built. Every scheme is locally glued out of affine varieties (in the so-called \emph{Zariski topology}). At the heart of this conception is the \emph{Hilbert's Nullstellensatz} (German for "theorem of zeroes") that embellishes a dictionary between algebra and geometry
{\small
\begin{align*}
    & \hspace{37mm}\text{Algebra} \longleftrightarrow \text{Geometry} \\ 
    & \{ \text{Radical\ ideals\ of}\ \CC[x_1,\dots,x_n] \} \longleftrightarrow \{ \text{Zero \ sets\ in}\ \CC^n \} \\
    & \hspace{48.5mm} J \longmapsto \mcal{Z}(J), \\
    & \hspace{38.5mm} \rm{ \sqrt{\mcal{I}(W)} \longmapsfrom W } 
\end{align*}}
Roughly, it says that for an ideal $J \subseteq \CC[x_1,\dots,x_n]$, we have $\mcal{I}(\mcal{Z}(J)) = \sqrt{J}$, where $\sqrt{J}$ is the radical ideal of $J$. This result extends to all algebraically closed fields $k$ as well!

\subsubsection{Affine Algebraic Geometry}
\textcolor{cyan}{Commutative algebra} is a classical subdomain of algebra concerned with algebraic objects (like groups, rings, modules) that obey the law of commutativity $a*b = b*a$ (in a precise mathematical sense). It provides the local algebraic tools required to understand the global geometric objects. One of the most useful tools from commutative algebra is that of \emph{localization} of objects, which, in fact, can be generalized to maps in a given category. This allows one to "formally" invert a (class of) maps in any category to create a new category with extra maps. For example, homotopy theories are constructed in this way. The combined power of the Nullstellensatz and the localization allows one to transport almost all of the machinery from commutative algebra, locally, into the realm of algebraic geometry. Such systematic processes form the base of an important subfield of algebraic geometry called the \textcolor{cyan}{affine algebraic geometry}. Some of the open problems in affine algebraic geometry involve a deep understanding of the ring with the simplest possible description: the polynomial ring in $n$-variables $k[x_1,\dots,x_n]$.

\subsection*{What is Motivic Homotopy Theory?}
\addcontentsline{toc}{section}{What is Motivic Homotopy Theory?}
\textcolor{cyan}{Motivic homotopy theory} is a sophisticated algebro-geometric machinery that utilizes tools from all of the above-mentioned areas. The fundamental idea is to develop a \emph{homotopy theory for algebraic geometry}. In algebraic topology, one studies continuous maps between topological spaces and their homotopy classes. Motivic homotopy theory creates an analogous framework for algebraic varieties and schemes. Unlike algebraic topology, the framework of algebraic geometry is \emph{relative} - rather than studying a scheme by itself, one studies a \emph{scheme over a scheme}, that is, a morphism of schemes.

\subsubsection{The Flashback}
The plot begins with Grothendieck's revolutionary ideas about \textcolor{cyan}{motives} (from the French usage: \emph{motifs}) in the 1960s through the observation that various cohomology theories (de Rham, \'etale, crystalline, $\dots$) for algebraic varieties seemed to capture overlapping but distinct aspects of the same underlying geometric information. He envisioned a universal theory of motives - abstract objects that would encode the essential cohomological data of varieties, with each specific cohomology theory arising as a \emph{realization} of these motives. He further conjectured that there exists a suitable cohomology theory (now called the \tcolor{cyan}{motivic cohomology}) through which all other known theories should factor. So, philosophically speaking, motives are meant to capture the \emph{cohomological essence} of an algebraic variety. His original program was extraordinarily ambitious but proved technically challenging (\cite{grothendieck1969standard}). He envisioned that the category of motives would be Abelian and would provide a universal cohomology theory, but constructing this category rigorously remained elusive for decades. Motivic cohomology was first conjectured to exist by Beilinson and Lichtenbaum in the mid-1980s via certain "motivic complexes" (see \cite{beilinson1985higher, beilinson1987notes} and references therein). Up till this date, several mathematicians continue investing huge efforts in this direction, and a lot of progress has been made so far in assembling pieces of this motivic puzzle.
\medskip

\textbf{Higher Chow groups:} Bloch \cite{bloch1986algebraic} introduced the theory of \tcolor{cyan}{higher Chow groups}, a generalization of the classical Chow groups, which was the first integral (as opposed to rational) definition of Borel-Moore motivic homology for quasi-projective varieties over a field (see also \cite{levine1994bloch}). The higher Chow groups later turned out to be the motivic cohomology $H^{p,q}$ in disguise due to the work of Voevodsky (\cite{voevodsky2002motivic}): for a smooth scheme $X$ over any field and integers $p,q \in \ZZ$, there is a natural isomorphism of groups
        $$H^{p,q}(X,\ZZ) \xrightarrow{\simeq} \CH^q(X, 2q-p).$$

\textbf{The Homotopical Revolution:} A crucial parallel development was Quillen's introduction of \tcolor{cyan}{model categories} \cite{quillen1967homotopical}, which provided an axiomatic framework for doing homotopy theory in contexts far beyond topology. This would later prove crucial for the foundations of motivic homotopy theory. On the other hand, between 1970-80s, \tcolor{cyan}{algebraic $K$-theory} developed rapidly under Atiyah, Bass, Milnor, Serre, Quillen et al and it became clear that it had deep connections to the motivic phenomena, but the precise relationship remained mysterious (see also the beautiful explanation of how Serre's conjecture naturally lead to the birth of $K$-theory \cite{lam2006serre}).
\medskip

\textbf{The Birth of $\AA^1$-Homotopy Theory}: The breakthrough finally came when Morel and Voevodsky realized they could bypass some of the difficulties in motivic theory by working in a homotopical rather than homological context. Their key insight was to construct a homotopy theory where the affine line $\AA^1$ plays the role that the unit $[0,1]$ interval plays in classical homotopy theory. They defined the motivic or the \tcolor{cyan}{$\AA^1$-homotopy category} of schemes \cite{MV99} and established that it has the structure of a closed model category. They also showed how to construct motivic cohomology as a \emph{representable} functor in this setting. Their approach involved several sophisticated techniques involving the choice of a Grothendieck topology (Nisnevich), imbibing the theory of simplicial sheaves (thereby encoding homotopical data), and exploiting localization techniques (Bousfield) to systematically invert $\AA^1$-homotopy equivalences.
\medskip

\textbf{Motivic Spaces and Motivic Spectra:} 
Similar to the classical homotopy theory, the motivic homotopy theory has an unstable and stable version. For a field $k$, the unstable motivic theory $\mcal{H}(k)$ has several realization functors that allow one to transport information between the world of algebraic topology and motivic geometry. Passing onto the $S^1$-spectra and inverting the Tate circle $\PP^1$, one obtains the \tcolor{cyan}{motivic spectra} $\Spt(k)$ and the associated stable $\AA^1$-homotopy theory $\SH(k)$. The inversion of $\PP^1$ signifies that spaces become \emph{weakly equivalent} upon smashing with $\PP^1$s. Moreover, inverting $\PP^1$ is essential to have non-trivial transfers in $\SH(k)$ (see \cite[Part III, \S 2, Remark 2.8]{Nordfjordeid2007motivic}). The luxury of the $\AA^1$-homotopy category is that several of the algebraic theories become representable here:
{\small
\begin{enumerate}
\item The algebraic $K$-theory is representable (see \cite{gepner2009motivic, spitzweck2009bott}):
If $k$ is a Noetherian ring with finite Krull dimension, then there is a natural isomorphism in $\SH(k)$
    $$ \Sigma_{\PP^1}^{\infty} \PP^{\infty}_{+} [\beta^{-1}] \xrightarrow{\simeq} \textbf{KGL}, $$
where $\PP^{\infty}$ is the infinite projective space, $\beta$ is a certain Bott element and \textbf{KGL} is the algebraic $K$-theory of Voevodsky \cite{voevodsky1998A1}.

\item Chow groups are representable: assume $k$ is a perfect field. For every $n\ge 0$, and every smooth $k$-scheme $X$, there is a canonical bijection
    $$ \CH^n(X) \xrightarrow{\simeq} \Hom_{\SH(k)} (X_+, \textbf{K}(\ZZ(n), 2n). $$
Here, $\textbf{K}(\ZZ(n),2n)$ is the \emph{motivic Eilenberg–Mac Lane space} 

\item Motivic cohomology is representable (\cite[Chapter 14]{mazza2006lecture}): For every $X\in \Sm_k$ and integers $p,q \in \ZZ$, there is an isomorphism
    $$ H^{p,q}(X,\ZZ) \xrightarrow{\simeq} \Hom_{\SH(k)}(\Sigma_{\PP^1}^{\infty} X_+,  \Sigma^{p,q} \textbf{HZ}) $$
where $\textbf{HZ}$ is the \emph{motivic Eilenberg-Maclane spectrum}.

\item Vector bundles are representable on smooth affine schemes (\cite{morel2012A1topology, schlichting2017euler, asok2017affine}): If $k$ is smooth over a Dedekind ring with perfect residue fields, then for any smooth \emph{affine} $k$-scheme $X$, there is a pointed bijection
        $$ \mcal{V}_r(X) \xrightarrow{\simeq} [X, \Gr_r]_{\AA^1}.$$
Here, $\mcal{V}_r$ denotes the set of algebraic vector bundles of rank $r$ and $\Gr_r$ is the Grassmannian of rank $r$.
\end{enumerate}}

\subsubsection{Some Phenomenal Applications of Motivic Homotopy Theory}
\textbf{The Milnor Conjecture:} 
The Milnor Conjecture \cite[Question 4.3]{milnor1970algebraic} is a statement that asserts a relationship between the Milnor $K$-theory and the fundamental ideal of the Grothendieck-Witt ring of a field $k$ (see \cref{App;MilnorK-theory}). The full solution is due to Orlov-Vishik-Voevodsky \cite{orlov2007exact, Voevodsky00Milnorconj, Suslin97Voevodsky} using motivic homotopy theory:
    $$\mu: K^{\M}_*(F)/2 \xrightarrow{\simeq} \bigoplus_{n\ge 0} \mcal{I}^n(F)/ \mcal{I}^{n+1}(F)$$
\textbf{The Bloch-Kato Conjecture}: 
The Bloch-Kato conjectures were solved due to the cumulative efforts of several mathematicians between 1980-2000. On one side of the plot, we have a family of related conjectures stated independently by Beilinson and Lichtenbaum, which were later unified into what's now called the \emph{Beilinson-Lichtenbaum Conjecture}. It predicts that the motivic cohomology and \'etale cohomology agree in certain ranges for a smooth scheme $X$ over arithmetic fields:
\begin{align*}
    H^{p,q}(X, \mathbb{Z}/n) \xrightarrow{\simeq} H^{p}_{\acute{e}t}(X, \mu_n^{\otimes q}) \quad \text{for}\quad p\le q.
\end{align*}
Around the 1990s, Voevodsky, with several others, recognized that proving the Beilinson Lichtenbaum conjecture required developing motivic cohomology systematically by homotopical methods. On the other side, Bloch and Kato \cite{bloch1986p-adic} formulated their conjecture connecting Milnor $K$-theory to Galois cohomology via the Bloch-Kato map (a.k.a. the norm residue homomorphism). Again, these were a collection of statements rather than a single statement, proven incrementally, with different cases requiring different levels of the motivic machinery. These conjectures were then further generalized and unified into what's called the \emph{Bloch-Kato Conjecture}. It became reformulated in terms of motivic cohomology as it became representable by motivic Eilenberg-MacLane spaces. Building on the proof of the Milnor conjecture, the Bloch-Kato conjecture was eventually proven in full generality through the combined work of several mathematicians, with the final breakthrough coming from Voevodsky \cite{voevodsky2011motivic}: it describes the relationship between Milnor $K$-theory and Galois cohomology: for any field $k$ and integers $n$ and $l\ne$ char $k$, we have an isomorphism
\begin{align*}
    K^{\M}_r(F)/n \xrightarrow{\simeq} H_{\acute{e}t}^r(F, \mu_n^{\otimes r})\quad \text{for all} \quad n,r \ge 0 
\end{align*}
The case $n =2$ is the Milnor conjecture, and $r=2$ is the Merkurjev–Suslin theorem. The proof of the Bloch-Kato conjecture heavily relies on the motivic homotopy theory infrastructure developed between the 1990s-2000s, particularly: The motivic Adams spectral sequence, motivic Steenrod operations (\cite{voevodsky2003reduced}), and so on. A complete proof of the Bloch-Kato conjecture automatically implies the original Beilinson and Lichtenbaum conjecture \cite{voevodsky2003Z/2} and a related Quillen-Lichtenbaum conjecture. The Bloch-Kato isomorphism is also called the \emph{norm residue isomorphism}. In retrospect, rather than being applications of motivic homotopy theory, these conjectures were the driving factors for the emergence of motivic homotopy theory. These demonstrate that motivic homotopy theory is not just abstract machinery but could solve concrete, longstanding problems in arithmetic and algebraic geometry. The following are some more recent applications.
\begin{itemize}
\item Voevodsky’s slice tower construction in $\SH(k)$ (see \cite{voevodsky2002open} and \cite{voevodsky2002possible}) and the verifications of several related conjectures (see e.g., \cite{voevodsky2003zero, levine2008homotopy, spitzweck2012slices, levine2013convergence, bachmann2024etale} and references therein)
\item The construction of Grothendieck’s Six Functor Formalism in $\SH(k)$ \cite{Ayoub2007six},
\item Just like the complex cobordism spectrum \textbf{MU} is universal among complex-oriented cohomology theories, the existence of $\SH(k)$ gives rise to algebraic cobordism spectrum $\Omega^n$ represented by \textbf{MGL}, which is the universal oriented cohomology theory for a smooth scheme $X$ of codimension $n$ (\cite{levine2007algebraic}):
\begin{align*}
    \Omega^n(X) \xrightarrow{\simeq} [\Sigma^{\infty}_{\PP^1}X_+, \Sigma^{2n,n} \textbf{MGL}]_{\AA^1}
\end{align*}
\item Morel’s computation of motivic stable $\pi_0^{\AA^1}$ of the sphere spectrum \cite{morel2012A1topology}, followed by first computations of motivic unstable $\pi_1^{\AA^1}$ by Asok-Morel (\cite{asokmorel2011}) and higher motivic homotopy groups, with applications to vector bundles of Asok-Fasel (\cite{asok2014cohomological, asok2014algebraicspheres}).
\end{itemize}

\textbf{Vista:} This script emerged as a cross-fertilization of ideas collectively drawn from the traditional (commutative algebra, algebraic topology) to the modern (affine algebraic geometry, motivic homotopy theory) vogue machinery. Several mathematical inceptions are borne out of the culmination of these realms, rendering the reciprocal movement fruitful in the last century. The author's ambitious goal is to convince the reader of one such beautiful truth in the next ten-squared pages or so.

% ----------------------------------------------------------------
{\fontfamily{antt}\selectfont
\section*{Blueprint of the script}
\addcontentsline{toc}{section}{Blueprint of the Script}

This script is primarily structured into five chapters that we shall now outline. 
\medskip

In \tcolor{Blue}{\cref{chp1}}, we begin with a gentle introduction to the topology at infinity, a machinery that is exclusively designed to study open manifolds (non-compact, without boundary) and review the general knowledge and motivation for such a machinery. The ideal overriding goal would be to develop such machinery for smooth affine schemes (cf. \tcolor{Blue}{\cref{app:Mot-top-infty}}). We will touch upon the theory of exotic affine varieties in low dimensions in \tcolor{Blue}{\cref{intro:A1-cont-survey}}. Following this, \tcolor{Blue}{\cref{chp2}} introduces us to the beautiful world of motivic homotopy theory. This chapter provides the foundational scaffolding upon which the remainder of the script is assembled. In particular, we set up the basic language for the $\AA^1$-homotopy theory via simplicial Nisnevich sheaves and construct the unstable $\AA^1$-homotopy category in \tcolor{Blue}{\cref{subsec:unstable-construction}}, where algebro-topological analogs such as (path) connectedness and contractibility are furnished. Among important formulations are that of functoriality (\tcolor{Blue}{\cref{sect:functoriality}}) and the notion of relative $\AA^1$-contractibility (\tcolor{Blue}{\cref{intro:rel-A1-contr}}). Affine algebro-geometric counterpart is discussed in \tcolor{Blue}{\cref{chp3}}, where we study in detail the characterization of the affine $n$-space from various sources of related formulations, giving us a multi-faceted vantage point. The foundations developed in these chapters will allow us to develop the theory of relative $\AA^1$-contractibility of smooth schemes in low dimensions in \tcolor{Blue}{\cref{chp4}}. We proceed dimension-by-dimension. The theory of $\AA^1$-contractibility for \'etale (i.e., zero-dimensional) schemes over fields and its extension over arbitrary base schemes are discussed in \tcolor{Blue}{\cref{sect:Rel-etale schemes}}. Here we obtain the following (\tcolor{Blue}{\cref{0-dim:etale-schemes-iso}}):
\begin{theorem*}
Let $X$ be an \'etale scheme of finite presentation over an arbitrary base scheme $S$. Then the canonical morphism $f: X\to S$ is a relative $\mathbb{A}^1$-weak equivalence in $\Spc_S$ if and only if $f$ is an isomorphism in $\Sm_S$. 
\end{theorem*}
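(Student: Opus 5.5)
One direction is immediate: if $f$ is an isomorphism in $\Sm_S$, its image under the Yoneda embedding into $\Spc_S$ is an isomorphism, hence an $\AA^1$-weak equivalence. So the work is in the converse. The plan is to show that an $\AA^1$-weak equivalence $f\colon X\to S$ with $X$ étale forces $X$ to be a Nisnevich sheaf that is already $\AA^1$-invariant and discrete. Then the weak equivalence is detected on $\pi_0$ as a map of sheaves, and Yoneda turns that back into an isomorphism of schemes.

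The key steps are as follows.

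\textbf{Step 1: $X$ is $\AA^1$-local and discrete.} The representable presheaf $h_X=\Hom_S(-,X)$ on $\Sm_S$ is a Nisnevich (indeed fpqc) sheaf of sets, so it is simplicially constant and Nisnevich fibrant. It is $\AA^1$-invariant: for $U\in \Sm_S$, any $S$-morphism $U\times\AA^1\to X$ factors through $U$. The reason is that $X\to S$ is étale, hence unramified and separated over each connected component after shrinking. Concretely, for $u\in U$ the fibre $\AA^1_{\kappa(u)}$ is geometrically connected and maps to the étale, hence discrete, fibre $X_{s}$. So the map is constant on fibres, and by rigidity of étale maps (the section $U\times\AA^1\to U\times_S X\times \AA^1$ is an open-and-closed immersion, and two sections agreeing on the zero section agree on connected components) it equals the pullback of its restriction along the zero section. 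Hence $h_X(U)\to h_X(U\times\AA^1)$ is a bijection, and $X$ is $\AA^1$-local. The same holds trivially for $S$, the final object.

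\textbf{Step 2: from weak equivalence to isomorphism of sheaves.} A weak equivalence between $\AA^1$-local (fibrant) objects is a simplicial (Nisnevich-local) weak equivalence. For discrete sheaves this means $h_X\to h_S=\ast$ is an isomorphism of Nisnevich sheaves of sets. Evaluating on $S$ itself, and on each $U\in\Sm_S$, shows that $h_X\to h_S$ is a natural bijection. By Yoneda, $f$ is an isomorphism in $\Sm_S$; in particular $f$ has a section that is also inverse.

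The main obstacle I expect is Step 1's rigidity claim over a completely arbitrary base $S$, which is possibly non-Noetherian and with disconnected fibres. Two points need care there. First, the argument needs the diagonal of $X\to S$ to be an open immersion, so that equality of two sections $U\times\AA^1\rightrightarrows X$ is an open-and-closed condition. Second, $U\times\AA^1\to U$ needs connected fibres, so that the locus of equality, which contains the zero section, is everything. I would handle this using finite presentation to reduce to $S$ affine of finite type over $\ZZ$ by a limit argument if needed. I would also use the fact that étale implies the diagonal is open (unramified), so no separatedness is required. I would also double-check that the paper's model of $\Spc_S$ (simplicial Nisnevich sheaves on $\Sm_S$) makes representables fibrant, which holds since the Nisnevich topology is subcanonical.
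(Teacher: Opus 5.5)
Your proof is correct, but it takes a different route from the paper. The paper argues fibrewise. By base change (\cref{cor:A1-cont-fibers}) each $f_s\colon X_s\to \Spec\kappa(s)$ is $\mathbb{A}^1$-contractible. Over a field an \'etale scheme is a disjoint union of spectra of finite separable extensions, so $\mathbb{A}^1$-connectedness makes $X_s$ a single $\Spec k'$, and the existence of a rational point forces $k'=\kappa(s)$. Hence $f$ is bijective with trivial residue extensions, so universally injective. Being \'etale, it is then an open immersion, and being surjective it is an isomorphism.

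You never pass to fibres of $f$. Instead you show that $h_X$ is already $\mathbb{A}^1$-local, so $[U,X]_{\mathbb{A}^1}=\Hom_S(U,X)$ for all $U\in\Sm_S$. An $\mathbb{A}^1$-weak equivalence to the terminal object is then a sectionwise bijection, and Yoneda finishes; this is the relative analogue of \cref{A1-rigid:mainresult}(3).

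What each route buys:
\begin{itemize}
\item Your argument is more formal. It avoids the field case (rational points on $\mathbb{A}^1$-connected spaces) and the criterion ``\'etale $+$ universally injective $\Rightarrow$ open immersion''. As a by-product it proves $\mathbb{A}^1$-rigidity of \'etale $S$-schemes without the integrality hypothesis of \cref{0dim:rigid}.
\item The paper's argument needs only the field case plus standard scheme theory, and never has to check locality over a general base.
\end{itemize}

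One step in your Step~1 should be stated more carefully. The argument ``the section is open-and-closed, so two sections agreeing on the zero section agree on connected components'' uses that $X\to S$ is separated, which does hold for objects of $\Sm_S$. Without separatedness the diagonal is only an open immersion. Then the equalizer of $\varphi$ and $\varphi\circ i_0\circ\mathrm{pr}$ is merely open, and an open set containing the zero section need not be everything; connectedness of the fibres does not rescue this.

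What closes the argument is your fibrewise constancy. Every morphism $\mathbb{A}^1_{\kappa(u)}\to X_{\kappa(s)}$ over $\kappa(s)$ factors through a $\kappa(u)$-point, because $\kappa(u)$ is algebraically closed in $\kappa(u)[t]$. So the two maps agree at every point of $U\times\mathbb{A}^1$, and the open equalizer is therefore everything.
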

We also show that any separated \'etale scheme is necessarily $\AA^1$-rigid (\tcolor{Blue}{\cref{0dim:rigid}}). In \tcolor{Blue}{\cref{sect:dim=1}}, we move on to dimension 1 where the uniqueness of affine line $\AA^1$ is established over arbitrary fields (\tcolor{Blue}{\cref{1-dim theorem}}) which gives us the leverage to extend this characterization to the relative setting over (normal) base schemes in \tcolor{Blue}{\cref{sect:reldim=1}}. The main theorem here is the following (\tcolor{Blue}{\cref{1-dim DD}})

\begin{theorem*}
For a smooth separated morphism of finite presentation $f:X\to S$ and relative dimension $1$ over a Noetherian normal scheme $S$, the following are equivalent:
\begin{enumerate}
\item $f:X\to S$ is a relative $\AA^1$-weak equivalence in $\mathrm{Spc}_S$,
\item $f:X\to S$ is an $\AA^1$-fiber space,
\item $f:X\to S$ is a Zariski locally trivial $\AA^1$-bundle. 
\end{enumerate} 
\end{theorem*}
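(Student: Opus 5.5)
The natural route is to prove the cycle $(3)\Rightarrow(1)\Rightarrow(2)\Rightarrow(3)$.

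\textbf{$(3)\Rightarrow(1)$.} Suppose $f$ is a Zariski locally trivial $\AA^1$-bundle. Choose a Zariski cover $\{U_i\}$ of $S$ with $X|_{U_i}\cong U_i\times\AA^1$. On each $U_i$ the projection $U_i\times\AA^1\to U_i$ is an $\AA^1$-weak equivalence by definition of the localization. Being a relative $\AA^1$-weak equivalence in $\Spc_S$ is Nisnevich (hence Zariski) local on $S$, and this is stated in the functoriality section (\cref{sect:functoriality}). So $f$ is an $\AA^1$-weak equivalence. If locality on the base is not available directly, I would argue via \v{C}ech nerves instead: the finite intersections $U_{i_0\cdots i_n}$ are again trivializing opens, and $\AA^1$-equivalences are stable under homotopy colimits.

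\textbf{$(1)\Rightarrow(2)$.} An $\AA^1$-fiber space should mean that every fiber $X_s$ over a point $s\in S$ is isomorphic to $\AA^1_{\kappa(s)}$. I would base change along $\Spec\kappa(s)\to S$. The pullback functor $\Spc_S\to\Spc_{\kappa(s)}$ preserves $\AA^1$-weak equivalences between smooth schemes, again by the functoriality results. Hence $X_s\to\Spec\kappa(s)$ is an $\AA^1$-weak equivalence, i.e.\ $X_s$ is a smooth, separated, $\AA^1$-contractible curve over a field. By the characterization of $\AA^1$ in dimension one over arbitrary fields (\cref{1-dim theorem}), $X_s\cong\AA^1_{\kappa(s)}$.

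\textbf{$(2)\Rightarrow(3)$.} This is the main obstacle: showing that a smooth morphism whose fibers are all affine lines is a Zariski locally trivial $\AA^1$-bundle over a Noetherian normal base. This is the classical theorem of Kambayashi–Miyanishi (and Kambayashi–Wright) that an $\AA^1$-fibration over a normal base is locally trivial. I would invoke it in the form presumably recorded in \cref{chp3}, where characterizations of affine spaces are collected, so I would look there first. If it is not available, the proof goes in two stages.

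\begin{enumerate}
\item At the generic point $\eta$ of each component, $X_\eta\cong\AA^1_{K}$ with $K=\kappa(\eta)$, so $X$ is generically trivial: $f^{-1}(V)\cong V\times\AA^1$ for some dense open $V\subseteq S$. To get this, spread out the isomorphism using finite presentation.
\item For the codimension-one points, the local rings of $S$ are DVRs by normality. Over such a DVR, a smooth affine scheme whose two fibers are both $\AA^1$ is isomorphic to $\AA^1$ over the DVR. The argument uses a coordinate $x$ on the generic fiber, rescaled by a power of the uniformizer so that it restricts to a nonconstant function on the closed fiber, followed by a degree argument.
\end{enumerate}

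Separately, $f$ must be shown to be affine. This should follow because $f$ is a smooth separated morphism with affine fibers of relative dimension one. Then $X=\Spec A$ with $A$ an $\mathcal{O}_S$-algebra whose fibers are polynomial rings.

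Finally, I would extend from codimension one to all of $S$ via the Hartogs property of normal Noetherian schemes (the $S_2$ condition), extending a coordinate across codimension $\ge 2$. This yields $A_{\mathfrak p}\cong\mathcal{O}_{S,\mathfrak p}[t]$ at every local ring, and spreading out these local isomorphisms gives Zariski local triviality.

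This last stage is where normality of $S$ is essential. I expect the affineness of $f$ and the extension of the coordinate in codimension $\ge2$ to be the technically delicate points.
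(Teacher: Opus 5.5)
Your cycle $(3)\Rightarrow(1)\Rightarrow(2)\Rightarrow(3)$ is essentially the paper's proof, provided $(2)\Rightarrow(3)$ is obtained by citing Kambayashi--Wright \cite{KW85} directly, as the paper does. The theorem is not recorded in usable form in \cref{chp3}. Two small differences are harmless. The paper gets $(3)\Rightarrow(1)$ from \cref{A1-contr:egs}, and it also proves $(2)\Rightarrow(1)$ via \cref{fiberwise=relative}, which your cycle makes unnecessary. The base-locality you quote is stated in the proof of \cref{fiberwise=relative}, not in \cref{sect:functoriality}, but your \v{C}ech-nerve argument is fine on its own.

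Your self-contained fallback for $(2)\Rightarrow(3)$, however, has a genuine gap at the last stage. The hard step is not Hartogs extension. It is showing that the bundle over the punctured spectrum $S'$ of a normal local ring of dimension $\ge 2$ is \emph{trivial}.

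Local triviality at codimension-one points, or by induction on $S'$, only gives an affine-line bundle over $S'$. That is a torsor under some line bundle $L$ on $S'$, and there is no global coordinate to extend unless both $L$ and the torsor class are trivial. Neither is automatic:
\begin{itemize}
\item $\Pic(S')$ can be nonzero; for the local ring of the cone $xy=z^2$ at its vertex it is $\ZZ/2$.
\item Even for $S$ regular of dimension $2$, one has $H^1(S',\mathcal{O}_{S'})\cong H^2_{\mathfrak m}(\mathcal{O}_{S,s})\neq 0$. For example, $\mathrm{SL}_2\to\AA^2\setminus\{0\}$ localized at the origin is a nontrivial Zariski locally trivial $\AA^1$-bundle.
\end{itemize}
Proving triviality over $S'$ is exactly the content of \cite[\S 2.2--2.3]{KW85}, and it must use that the closed fiber is $\AA^1$. \cref{eg:notZLT} shows that in relative dimension $2$ the analogous statement fails over a regular base, so no formal codimension-two argument can replace it. Only after this does the paper's extension step run: normality of $X$, $\operatorname{codim} X_s\ge 2$, \'etaleness via $\Omega_{X/S}$, then bijectivity.

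Two smaller problems:
\begin{itemize}
\item Your justification of affineness is false in general. $\AA^1_R$ minus a closed point of the special fiber over a DVR $R$ is smooth, separated, of relative dimension one, with affine fibers $\AA^1_K$ and $\GG_{m,\kappa}$, yet it is not affine. Affineness is also not needed: the rescaled coordinate is regular on $X$ because $X$ is normal and $X_s$ is an irreducible principal divisor.
\item Rescaling by $\pi^n$ only makes the coordinate nonzero on $X_s$; it may still be a nonzero constant there. This is why the paper's DVR sketch needs the extra affine change $\tau$ taken from \cite[Proposition 1.4]{KW85}.
\end{itemize}
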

This has the following major applications: 
\medskip

a) The characterization of the affine line over (normal) base schemes (\tcolor{Blue}{\cref{A1unique:DD}}):
\begin{corollary*}
A smooth separated scheme $f:X\to S$ of finite presentation and relative dimension $1$ over a Noetherian normal scheme $S$ is isomorphic to $\mathbb{A}^1_S$ if and only if all of the following holds:
\begin{itemize}
\item[1.] $X$ is a smooth $\mathbb{A}^1$-contractible curve in $\mathcal{H}(S)$,
\item[2.] the relative canonical sheaf of differentials $\Omega_{X/S}$ of $f$ is trivial, and
\item[3.] $f$ has a section.
\end{itemize}
\end{corollary*}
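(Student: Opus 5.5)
The plan is to deduce the corollary from the preceding theorem (\cref{1-dim DD}). First I need to read hypothesis 1 correctly: saying $X$ is $\AA^1$-contractible in $\mcal{H}(S)$ means the structure morphism $f:X\to S$ is an $\AA^1$-weak equivalence in $\Spc_S$. The forward direction is routine. If $X\cong \AA^1_S$, then $\AA^1_S\to S$ is an $\AA^1$-weak equivalence by the very construction of the localization. The sheaf $\Omega_{\AA^1_S/S}\cong \mathcal{O}_{\AA^1_S}\,dt$ is free of rank one. The zero section is a section of the projection.

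For the converse, I would argue in the following order.

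\textbf{Step 1.} By hypothesis 1 and the equivalence (1)$\Leftrightarrow$(3) of \cref{1-dim DD}, $f$ is a Zariski locally trivial $\AA^1$-bundle. So there is an open cover $S=\bigcup U_i$ with $X_{U_i}\cong \AA^1_{U_i}$.

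\textbf{Step 2.} Since $S$ is normal, the automorphisms of $\AA^1_{U}$ over $U$, for $U$ reduced and connected, are affine: $t\mapsto a t+b$ with $a\in\mathcal{O}(U)^\times$ and $b\in \mathcal{O}(U)$. On the overlaps, an automorphism $\sum c_k t^k$ with inverse polynomial forces the higher coefficients to be nilpotent, hence zero. Therefore $X$ is an $\AA^1$-bundle with structure group $\GG_a\rtimes\GG_m$. The linear parts $a_{ij}$ define a line bundle $L$ on $S$, and $X$ is an affine-space bundle, i.e.\ a torsor under $L$ (or its dual, depending on conventions).

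\textbf{Step 3.} Compute $\Omega_{X/S}$. Locally it is generated by $dt_i$, and $dt_i=a_{ij}\,dt_j$. Hence $\Omega_{X/S}\cong f^*L^{\vee}$, up to the convention for $L$. Triviality of $\Omega_{X/S}$ gives $f^*L^\vee\cong\mathcal{O}_X$. Using the section $s$ from hypothesis 3, $L^\vee\cong s^*f^*L^\vee\cong\mathcal{O}_S$, so $L$ is trivial. Alternatively, $f_*\mathcal{O}_X$ has $\mathcal{O}_S$ as degree-zero part, which also gives the descent. Thus $X$ is a $\GG_a$-torsor over $S$.

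\textbf{Step 4.} Use the section again. A torsor with a section is trivial, so $X\cong \GG_{a,S}=\AA^1_S$. Concretely, translating each local coordinate $t_i$ by $-t_i(s)$ makes the transition functions $b_{ij}$ vanish.

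The main obstacle is Step 2: justifying that the transition automorphisms are affine over a general Noetherian normal base, where the local pieces might be disconnected. I would handle this by working on connected affine opens. Normal Noetherian schemes are disjoint unions of integral schemes, so the polynomial-automorphism argument over an integral domain applies. A secondary subtlety is making the sign and convention bookkeeping consistent in Step 3 between $L$ and its dual. This is harmless, since only triviality is used.
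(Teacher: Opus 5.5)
Your proposal is correct and follows the paper's proof: Zariski local triviality from \cref{1-dim DD}, then reduction of the structure group to $\GG_a\rtimes\GG_m$ so that $X$ is a torsor under a line bundle $L$, then triviality of $\Omega_{X/S}\cong f^*L^{\vee}$ forcing $L$ to be trivial, and finally the section trivializing the resulting $\GG_{a,S}$-torsor. The one deviation is how you descend triviality from $f^*L^{\vee}$ to $L^{\vee}$: you use $s^*f^*=\mathrm{id}$ coming from the section, whereas the paper cites the isomorphism $f^*:\Pic(S)\to\Pic(X)$ (Magid); both work, and yours is more self-contained.
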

b) The relative variant of Zariski Cancellations (\tcolor{Blue}{\cref{Cor:rel-ZCP:dim1}}):
\begin{corollary*}
Let $f: X\to S$ be a smooth separated scheme of finite presentation and relative dimension $1$ over a normal scheme $S$ such that $X\times_S \AA^n_S\cong \AA^{n+1}_S$ for some $n\geq 1$. Then $X$ is $S$-isomorphic to $\AA^1_S$. 
\end{corollary*}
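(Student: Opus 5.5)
The plan is to reduce the corollary to the characterization of the affine line over a normal base stated just before it (the corollary following \cref{1-dim DD}). That is, we check conditions 1--3 for $X$ and then conclude $X\cong \AA^1_S$. If $S$ is not Noetherian, we first reduce to the Noetherian case by standard limit arguments. Since $X\to S$ is of finite presentation and the isomorphism $X\times_S\AA^n_S\cong\AA^{n+1}_S$ is defined by finitely many polynomial data, everything descends to a Noetherian normal base, provided we can write $S$ locally as a filtered limit of Noetherian normal schemes. This is where the normality hypothesis has to be handled with care; alternatively, one can work Zariski locally on $S$ and only use normality of local rings. I will first write the argument assuming $S$ is Noetherian.

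\textbf{Condition 1} ($\AA^1$-contractibility). The projection $X\times_S\AA^n_S\to X$ is an $\AA^1$-weak equivalence, since it is the projection of a trivial vector bundle. Hence $X\simeq \AA^{n+1}_S\simeq S$ in $\mathcal{H}(S)$. This equivalence is compatible with the structure maps, so $f$ is a relative $\AA^1$-weak equivalence.

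\textbf{Condition 3} (a section). Take the zero section $S\to\AA^{n+1}_S$, transport it through the isomorphism to $X\times_S\AA^n_S$, and compose with the projection to $X$. This gives an $S$-morphism $S\to X$ that is a section of $f$.

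\textbf{Condition 2} (triviality of $\Omega_{X/S}$). The projection $p\colon X\times_S\AA^n_S\to X$ gives
\[
\Omega_{X\times\AA^n/S}\cong p^*\Omega_{X/S}\oplus\mathcal{O}^n .
\]
The left side is free because $X\times_S\AA^n_S\cong\AA^{n+1}_S$. Restrict along the zero section $s_0\colon X\to X\times\AA^n_S$. Then $\Omega_{X/S}\oplus\mathcal{O}_X^n\cong\mathcal{O}_X^{n+1}$, so $\Omega_{X/S}$ is a stably trivial line bundle. Taking determinants gives $\Omega_{X/S}\cong\det(\mathcal{O}_X^{n+1})\cong\mathcal{O}_X$, since a line bundle equals its own determinant. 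So $\Omega_{X/S}$ is trivial.

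Finally, apply the corollary to conclude $X\cong\AA^1_S$ over $S$. An alternative route goes directly through \cref{1-dim DD}: condition 1 gives a Zariski locally trivial $\AA^1$-bundle. Such bundles over a normal base are affine-linear, with structure group $\mathbb{G}_a\rtimes\mathbb{G}_m$. The $\mathbb{G}_m$-part is governed by $\Omega_{X/S}$, which we showed is trivial, so $X$ is a $\mathbb{G}_a$-torsor. The section then trivializes it. This gives the same conclusion.

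The main obstacle is the non-Noetherian reduction. The corollary to be proved assumes only that $S$ is normal, while the results invoked require $S$ Noetherian normal. I would first try to avoid approximation altogether. The idea is to note that the proof of \cref{1-dim DD} is Zariski local and really only needs the fibers to be $\AA^1$ together with normality of the local rings. If that fails, the fallback is to approximate $S$ by Noetherian normal schemes, at least when $S$ is a limit of normal schemes of finite type over $\ZZ$, and to descend the isomorphism there.
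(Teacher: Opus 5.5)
Your argument is essentially the paper's: it also verifies the three hypotheses of the characterization of $\AA^1_S$ and concludes. These are relative $\AA^1$-contractibility (via the projection $X\times_S\AA^n_S\to X$ and two-out-of-three), a section built from the zero section of $\AA^{n+1}_S$, and triviality of the relative canonical sheaf. The only variation is in the last point. You get $\Omega_{X/S}\cong\mathcal{O}_X$ by pulling back along the zero section and taking determinants. The paper instead combines $\omega_{X\times_S\AA^n_S/S}\cong\pr_1^*\omega_f$ with $\Pic(X)\cong\Pic(X\times_S\AA^n_S)$, so your version needs only the injectivity of $\pr_1^*$ on Picard groups, which the section gives for free.

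Your non-Noetherian reduction is not part of the paper's proof. The paper simply applies the Noetherian characterization under its standing convention that base schemes are locally Noetherian of finite Krull dimension, which it needs already to set up $\mathcal{H}(S)$. So you can drop that part rather than leave it as an unresolved fallback.
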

Here, the normality of the base scheme is essential as we provide plenty of counter-examples in \tcolor{Blue}{\cref{non-examples:reldim1}}. Systematically, we proceed to dimension 2, where we discuss the uniqueness of an affine plane over a field of characteristic zero, as was recently proven by Choudhury-Roy (\tcolor{Blue}{\cref{A2isunique})} and extend it over perfect fields in \tcolor{Blue}{\cref{A2unique:perfect}}. This characterization being open for imperfect fields, we provide some speculations involving non-trivial forms of $\AA^2_k$ in \tcolor{Blue}{\cref{sect:non-trivial-forms-A2}}. All this will enable us to prove the following main result (\tcolor{Blue}{\cref{2-dim DD}}):
\begin{theorem*}
Let $X$ be a smooth scheme of relative dimension $2$ over a Dedekind scheme $D$ with characteristic zero residue fields. Furthermore, let the canonical morphism $f: X \to S$ be affine. Then the following are equivalent:
\begin{enumerate}
\item $f$ is a relative $\AA^1$-weak equivalence in $\Spc_S$,
\item $f$ is a Zariski locally trivial $\AA^2$-bundle.
\end{enumerate}
\end{theorem*}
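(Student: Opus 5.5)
The argument splits into the two implications. Throughout I would work Zariski-locally on $D$: cover $D$ by affine opens $\Spec R$ with $R$ a Dedekind domain. Both properties are Zariski local on the base, so this reduction is harmless. Since every residue field of $D$ has characteristic zero, $\QQ\subseteq R$. As $f$ is affine, $X|_{\Spec R}=\Spec A$ for a finitely generated, flat (being smooth) $R$-algebra $A$ of relative dimension $2$.

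For $(2)\Rightarrow(1)$, choose a Zariski cover $\{U_i\}$ of $S=D$ with $X_{U_i}\cong U_i\times\AA^2$. Each projection $U_i\times\AA^2\to U_i$ is an $\AA^1$-weak equivalence, being a composite of two $\AA^1$-projections. I would then invoke the Zariski-local nature of relative $\AA^1$-weak equivalences from the section on relative $\AA^1$-contractibility, exactly as in the proof of \cref{1-dim DD}. Concretely, $f$ is the homotopy colimit of its restrictions over the Čech nerve of the cover, and all intersections are again trivialized. Hence $f$ is a relative $\AA^1$-weak equivalence in $\Spc_S$.

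For $(1)\Rightarrow(2)$, I would proceed in three steps.

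\emph{Step 1: all fibres are $\AA^2$.} Let $s\in D$ be any point, closed or generic, and base change along $\Spec\kappa(s)\to D$. By the functoriality results of \cref{sect:functoriality}, pullback preserves relative $\AA^1$-weak equivalences between smooth schemes. Hence $X_s$ is a smooth affine $\AA^1$-contractible surface over the characteristic-zero field $\kappa(s)$. The characterization of the affine plane (\cref{A2isunique}, due to Choudhury--Roy) then gives $X_s\cong\AA^2_{\kappa(s)}$. Thus $A$ is an $\AA^2$-fibration over $R$: finitely generated, flat, with every fibre a polynomial ring in two variables.

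\emph{Step 2: local triviality at each prime.} For each prime $\mathfrak p\subset R$, the ring $R_{\mathfrak p}$ is either a field, which is the case just settled, or a DVR containing $\QQ$. In the DVR case, Sathaye's theorem says that an $\AA^2$-fibration over a DVR containing $\QQ$ is trivial, so $A_{\mathfrak p}\cong R_{\mathfrak p}^{[2]}$. More generally, the Asanuma--Bhatwadekar results on $\AA^2$-fibrations over Noetherian normal domains containing $\QQ$ give the same conclusion.

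\emph{Step 3: spreading out.} Since $A$ is finitely presented, an isomorphism $R_{\mathfrak p}[x,y]\xrightarrow{\simeq}A_{\mathfrak p}$ is given by finitely many elements and inverse formulas. It therefore descends to $R_g[x,y]\cong A_g$ for some $g\notin\mathfrak p$. Covering $\Spec R$, and then $D$, by such $D(g)$ shows that $f$ is a Zariski locally trivial $\AA^2$-bundle.

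The main obstacle is Step 2. This is the genuinely two-dimensional input, and it is where the characteristic-zero hypothesis on all residue fields and the Dedekind (one-dimensional, regular) hypothesis on the base are used. I would take Sathaye's DVR theorem as given rather than reprove it, and state carefully that it applies because $\QQ\subseteq R_{\mathfrak p}$. A secondary point requiring care is Step 1 at the generic point: there $\kappa(s)=\operatorname{Frac}(R)$ is not finitely generated over $R$, so one must check that the functoriality statement covers essentially smooth (pro-open) base change. I would handle this by passing to the colimit over the open neighbourhoods of the generic point, where each base change is by an open immersion.
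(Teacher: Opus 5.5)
Your proposal is correct and follows essentially the same route as the paper. You base change to the fibres, identify each fibre with $\AA^2_{\kappa(s)}$ via the characterization of the affine plane over characteristic-zero fields, and conclude at the local rings of $D$ by Sathaye's theorem (\cref{sathayethm}); your Step~3 only makes explicit the spreading-out that the paper leaves implicit. The differences are minor: you apply Choudhury--Roy (\cref{A2isunique}) directly, using that the fibres of the affine morphism $f$ are affine, where the paper invokes \cref{A2-fiberspace}; your worry about the generic point is unnecessary, since \cref{cor:A1-cont-fibers} already covers all points, closed or not; and your aside on Asanuma--Bhatwadekar should be restricted to one-dimensional base rings, though you only need the DVR case anyway.
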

This has the following major applications:
\medskip

a) The characterization of the affine plane in relative dimension 2 (\tcolor{Blue}{\cref{A2unique:DD}}):
\begin{corollary*}
A smooth affine scheme $f:X\to D$ over an \emph{affine} Dedekind scheme $D$ with characteristic zero residue fields is isomorphic to $\mathbb{A}^2_D$ if and only if both of the following holds:
\begin{enumerate}
\item $X$ is a smooth $\mathbb{A}^1$-contractible surface in $\mathcal{H}(D)$, and
\item the relative canonical sheaf of differentials $\omega_f$ is trivial.
\end{enumerate}
\end{corollary*}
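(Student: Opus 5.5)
The plan is to deduce the corollary from the relative dimension $2$ theorem just stated (\cref{2-dim DD}), together with the fact that over an affine Dedekind base with trivial canonical sheaf a Zariski locally trivial $\AA^2$-bundle is globally trivial.

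\emph{Forward direction.} If $X\cong \AA^2_D$, then $X\to D$ is a relative $\AA^1$-weak equivalence: the homotopy $\AA^2_D\times\AA^1\to\AA^2_D$, $(x,t)\mapsto tx$, contracts it onto the zero section. In particular $X$ is $\AA^1$-contractible in $\mathcal{H}(D)$. Moreover $\omega_f=\Lambda^2\Omega_{\AA^2_D/D}$ is free, generated by $dx\wedge dy$.

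\emph{Converse, first step.} Here I would use that $D$ is the terminal object of $\Sm_D$. Hence $\AA^1$-contractibility of $X$ in $\mathcal{H}(D)$ means exactly that the structure map $f\colon X\to D$ is an $\AA^1$-weak equivalence, i.e.\ a relative $\AA^1$-weak equivalence in $\Spc_D$. The map $f$ is affine because $X$ and $D$ are affine. So \cref{2-dim DD} applies and shows that $f$ is a Zariski locally trivial $\AA^2$-bundle: there is a Zariski cover $D=\bigcup U_i$ with $X|_{U_i}\cong \AA^2_{U_i}$.

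\emph{Converse, globalization.} The bundle is classified by a Čech $1$-cocycle with values in $\Aut_{U_{ij}}(\AA^2_{U_{ij}})$. Here $U_{ij}$ is an open subscheme of a Dedekind scheme, so each $\mathcal{O}(U_{ij})$ is a Dedekind domain, in particular of dimension $\le 1$. My plan is to show the cocycle is cohomologous to one with values in the affine group $\mathrm{Aff}_2=\mathrm{GL}_2\ltimes\mathbb{G}_a^2$, in three steps.
\begin{enumerate}
\item Over the generic point $K=\mathrm{Frac}(D)$, Jung--van der Kulk says $\Aut(\AA^2_K)$ is an amalgamated product of the affine and triangular subgroups.
\item I would then invoke the known theorem for $\AA^2$-bundles over a one-dimensional Noetherian normal base, due to Bass--Connell--Wright and Sathaye. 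Over Dedekind bases of characteristic zero it says a locally trivial $\AA^2$-bundle is isomorphic to the total space of a rank $2$ vector bundle $E$, since affine translation parts can be killed once $H^1(D,\mathcal{O})=0$. This is where I expect the main obstacle: making sure the tame/amalgam structure over $K$ actually descends to cocycle reduction over a Dedekind ring. The fallback is to cite the Bass--Connell--Wright result that $\AA^n$-bundles over affine bases are trivial when locally polynomial, in the form proven by Sathaye for $n=2$ over a DVR containing $\QQ$, and to patch those local results.
\item Translations are handled by the vanishing of $H^1(D,\mathcal{O}_D)^{\oplus 2}$, which holds because $D$ is affine.
\end{enumerate}
This gives $X\cong \mathbb{V}(E)$ for a rank $2$ vector bundle $E$ on $D$.

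\emph{Finishing.} For $X=\mathbb{V}(E)$ one has $\omega_f\cong f^*\det E^{\vee}$. Since $f$ has a section (the zero section $s$), triviality of $\omega_f$ gives $\det E\cong s^*f^*\det E$, which is trivial. Over a Dedekind domain $E\cong \mathcal{O}\oplus\det E$ by Steinitz's theorem, so $E\cong \mathcal{O}^2$ and $X\cong\AA^2_D$. This is the role of the canonical sheaf hypothesis: without it one gets the nontrivial bundles $\AA^1\times\mathbb{V}(L)$, with $L$ a nontrivial line bundle, which are still $\AA^1$-contractible.
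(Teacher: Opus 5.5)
This is essentially the paper's proof. \cref{2-dim DD} gives Zariski local triviality, \cite[Theorem 4.9]{BCW76} identifies $X$ with the total space of a rank~$2$ vector bundle $E$ over the affine base, and triviality of $\omega_f\cong f^*\det E^{\vee}$ together with the splitting $E\cong\mathcal{O}_D\oplus\det E$ (Serre/Steinitz) forces $E\cong\mathcal{O}_D^{2}$; your zero-section argument is a fine substitute for the paper's use of $\Pic(D)\cong\Pic(X)$.

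The globalization step needs nothing beyond the Bass--Connell--Wright citation, which holds over any commutative ring. You do not need Jung--van der Kulk, Sathaye, or any dimension or characteristic hypothesis there. Moreover, the amalgam structure over $K$ would not descend, since e.g.\ Nagata's automorphism of $k[z][x,y]$ is not tame over $k[z]$.

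Also discard your ``fallback'': Bass--Connell--Wright does not say that locally polynomial algebras are trivial. It only says they are symmetric algebras of projective modules, which is exactly why hypothesis~(2) is needed.
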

b) The relative variant of Zariski Cancellations (\tcolor{Blue}{\cref{Cor:rel-ZCP:dim2}}):
\begin{corollary*}
Let $D$ be an affine Dedekind scheme with characteristic zero residue fields. Then for a smooth affine scheme $f: X\to D$ of relative dimension 2 over a Dedekind scheme $D$, the isomorphism $X\times_D \AA^1_D \cong \AA^3_D$ implies that $X \cong \AA^2_D$.
\end{corollary*}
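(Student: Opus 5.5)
Since $X\times_D \AA^1_D\cong \AA^3_D$, I will deduce $X\cong \AA^2_D$ from the characterization of the affine plane over an affine Dedekind scheme stated just before. That corollary needs two inputs: that $X$ is $\AA^1$-contractible in $\mcal{H}(D)$, and that $\omega_f$ is trivial. Both should follow formally from the product hypothesis. The smoothness, affineness and relative dimension $2$ of $f$ are given.

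\emph{Step 1: $\AA^1$-contractibility.} The projection $\pr_1: X\times_D \AA^1_D \to X$ is an $\AA^1$-weak equivalence in $\Spc_D$, since it is the trivial line bundle and $\AA^1$-invariance holds. Composing with the isomorphism $X\times_D\AA^1_D\cong \AA^3_D$, and using that $\AA^3_D\to D$ is an $\AA^1$-weak equivalence, I get a zig-zag of weak equivalences $X \leftarrow X\times_D \AA^1_D\cong \AA^3_D\to D$. Two-out-of-three does not apply directly to $f$, because the isomorphism is only a $D$-isomorphism and the triangle need not commute on the nose. However, $X\times_D\AA^1_D\to D$ factors as $f\circ\pr_1$, and also as the structure map of $\AA^3_D$ through the $D$-isomorphism. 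The latter is a weak equivalence, so $f\circ \pr_1$ is one. Since $\pr_1$ is a weak equivalence, two-out-of-three shows that $f$ is a relative $\AA^1$-weak equivalence, i.e.\ $X$ is $\AA^1$-contractible in $\mcal{H}(D)$.

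\emph{Step 2: triviality of $\omega_f$.} I would compute relative differentials of the product. Writing $p=f\circ\pr_1$, we have $\Omega_{X\times\AA^1/D}\cong \pr_1^*\Omega_{X/D}\oplus \mcal{O}$. Taking determinants gives $\omega_{p}\cong \pr_1^*\omega_f$. Since $p$ is identified with $\AA^3_D\to D$, the sheaf $\omega_p$ is trivial. Hence $\pr_1^*\omega_f$ is trivial in $\Pic(X\times_D\AA^1_D)$. Pulling back along the zero section $s:X\to X\times_D \AA^1_D$, which satisfies $\pr_1\circ s=\mathrm{id}$, gives $\omega_f\cong s^*\pr_1^*\omega_f\cong \mcal{O}_X$. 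This avoids needing homotopy invariance of $\Pic$ for possibly non-normal $X$, although $X$ is in fact regular here.

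\emph{Step 3.} I then apply the characterization of the affine plane over an affine Dedekind scheme (\cref{A2unique:DD}) to conclude $X\cong\AA^2_D$. The only delicate point is the bookkeeping in Step 1 about the isomorphism being over $D$. The real content, the Zariski local triviality via the relative dimension-$2$ main theorem, has already been absorbed into \cref{A2unique:DD}.
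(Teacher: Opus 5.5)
Your proof is correct and follows the paper's route. You get triviality of $\omega_f$ from $\omega_{X\times_D\AA^1_D/D}\cong\pr_1^*\omega_f$, relative $\AA^1$-contractibility from the stable isomorphism $X\times_D\AA^1_D\cong\AA^3_D$, and then apply \cref{A2unique:DD}; the only differences are refinements, namely that you obtain injectivity of $\pr_1^*$ on $\Pic$ from the zero section instead of invoking $\Pic(X)\cong\Pic(X\times_D\AA^1_D)$, and you spell out the two-out-of-three argument over $D$ instead of citing \cref{stableiso:A1-weq} (your worry that the triangle might not commute is unnecessary, since a $D$-isomorphism commutes with the structure maps).
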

This gives us the complete picture in relative dimension 2 as there are well-established counter-examples over a field of positive characteristics (\tcolor{Blue}{\cref{eg:Asanuma3F}} and other examples therein). Following these characterizations, we jump on to higher dimensions ($\ge 4$) in \tcolor{Blue}{\cref{sect:higherdimensions}} where our main motivation is to expand on techniques to produce abundant relatively $\AA^1$-contractible smooth affine schemes. The relative $\AA^1$-contractibility of higher-dimensional smooth (affine) schemes has been studied extensively in \tcolor{Blue}{\cref{sect:ADF-family}} where our motivation is to exemplify that a certain family of smooth affine quadrics, which has been well-studied in the literature, gives ample instances of relatively $\AA^1$-contractible smooth schemes. This fact is spelled-out in \tcolor{Blue}{\cref{thm:An-fiber-space-contractible}}:
\begin{theorem*} 
Let $f:X\to S$ be a separated $\mathbb{A}^n$-fiber space over a regular scheme $S$, for all $n\ge 0$. Then $X$ is relatively $\mathbb{A}^1$-contractible over $S$.
\end{theorem*}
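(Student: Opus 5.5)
The plan is to reduce the statement to a Nisnevich-local computation on $S$, using that an $\AA^n$-fiber space is, by definition, a flat morphism of finite presentation whose fibers over all points $s\in S$ are isomorphic to $\AA^n_{\kappa(s)}$. The first step is to pass from this fiberwise description to a local one. Over a regular base, a separated $\AA^n$-fiber space should be an $\AA^n$-bundle locally in a suitable topology. For $n\le 2$ this follows from the Kambayashi--Miyanishi / Sathaye type theorems recalled in \cref{chp3}: these give Zariski local triviality over regular (or Dedekind/normal) bases, in line with \cref{1-dim DD} and \cref{2-dim DD}. For general $n$ I would not try to prove local triviality. Instead I would prove \emph{stalkwise} contractibility: by the Nisnevich-local nature of $\AA^1$-weak equivalences, it suffices to show that for every point $s\in S$, the base change of $f$ to the henselization $\Spec \mathcal{O}^h_{S,s}$ induces an $\AA^1$-weak equivalence $X\times_S \Spec\mathcal{O}^h_{S,s}\to \Spec\mathcal{O}^h_{S,s}$.

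Concretely, I would invoke the criterion from \cref{intro:rel-A1-contr} and the functoriality results of \cref{sect:functoriality}. Relative $\AA^1$-contractibility of $f$ can be checked after the base changes $\Spec \mathcal{O}^h_{S,s}\to S$, using continuity of $\Spc$ along cofiltered limits of essentially smooth schemes together with Nisnevich descent. This reduces the problem to $S$ a regular henselian local scheme, with closed point $s$.

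Over such an $S$, I would use the fiber structure as follows.
\begin{enumerate}
\item Since $X_s\cong \AA^n_{\kappa(s)}$ has a rational point and $f$ is smooth, Hensel's lemma gives a section $\sigma:S\to X$.
\item I would then show that $\sigma\circ f$ is $\AA^1$-homotopic to $\mathrm{id}_X$ in $\mathcal{H}(S)$.
\end{enumerate}
For the second step I would argue by induction on $\dim S$ via the localization (Morel--Voevodsky gluing) theorem. Let $i:s\hookrightarrow S$ be the closed point and $j:U\hookrightarrow S$ its open complement. Then $f$ restricted to $U$ is contractible by induction, since $U$ is covered by spectra of regular local rings of smaller dimension, and $X_s\to s$ is contractible because $X_s\cong\AA^n_s$. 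The gluing/cofiber sequence $j_\sharp j^*\to \mathrm{id}\to i_*i^*$ and conservativity of the pair $(i^*,j^*)$ should then show that the canonical map $X\to S$ becomes an equivalence. Regularity of $S$ is what ensures that every point and every localization stays in the class of bases where the fibers behave well. For $n=0$ the claim is just \cref{0-dim:etale-schemes-iso}, since an $\AA^0$-fiber space is étale with trivial fibers, hence an isomorphism.

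The main obstacle is the gluing step. The localization theorem is usually stated for pointed spaces and smooth schemes over $S$, and conservativity of $(i^*,j^*)$ holds in $\SH$ but is subtler unstably. What I would try first is to work in $\mathcal{H}(S)$ with the pair $(j^*,i^*)$ of functors, using the description of Nisnevich stalks. Every Nisnevich point of $S$ lies either in $U$ or at $s$, so a map of motivic spaces over $S$ that becomes an equivalence after $j^*$ and after $i^*$ has equivalent stalks at all henselian points, after $\AA^1$-localization. To get this I need $L_{\AA^1}$ to commute with $i^*$, which is exactly the content of the Morel--Voevodsky gluing theorem. If this unstable argument fails, the fallback is to use separatedness together with the fibration criterion for $\AA^n$-fiber spaces. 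That route means proving $\AA^1$-invariance of $\pi_0^{\AA^1}$ and the higher homotopy sheaves fiberwise, which is heavier but standard.
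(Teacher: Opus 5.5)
Your argument is correct in substance, but it is not the paper's proof. It is the alternative the paper only mentions in the remark after its proof. Once the henselian reduction and the induction on $\dim S$ are written out, you are reproving \cref{fiberwise=relative}, after which the theorem is immediate because every fiber $X_s\cong\AA^n_{\kappa(s)}$ is $\AA^1$-contractible.

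The paper argues geometrically instead:
\begin{enumerate}
\item it reduces to $S$ affine;
\item it uses the Jouanolou--Thomason lemma on the regular separated scheme $X$ to get a vector-bundle torsor $\rho\colon V\to X$ with $V$ affine;
\item it checks with Quillen--Suslin that $f\circ\rho$ is an affine $\AA^{n+r}$-fiber space;
\item it applies Asanuma's stable structure theorem to make $V\times_S\AA^m_S$ a vector bundle over $S$;
\item it concludes with \cref{cor:pushpull-A1-weak}~(2) and \cref{lem:3-from-2}.
\end{enumerate}
That route avoids the Morel--Voevodsky gluing theorem and exhibits an explicit chain of geometric $\AA^1$-equivalences, but it needs regularity essentially.

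Your route does not use regularity at all, contrary to what you claim. Fibers of a base change are base changes of fibers, so the $\AA^n$-fiber-space condition survives henselization and restriction to $U$ over any Noetherian base of finite Krull dimension. Your argument therefore proves the statement in that generality.

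There are two things to fix.

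\textbf{The gluing step.} The step you call the main obstacle needs neither joint conservativity of $(i^*,j^*)$ nor any stalk argument, because the target of $f$ is the terminal object.
\begin{itemize}
\item By induction $X_U\to U$ is an $\AA^1$-weak equivalence in $\Spc_U$, so $j_\sharp X_U\to j_\sharp U$ is one in $\Spc_S$ by \cref{cor:pushpull-A1-weak}~(2).
\item In the homotopy cocartesian square of \cref{MV:gluing theorem}, its cobase change $X\to i_*i^*X=i_*X_s$ is therefore an equivalence.
\item The map $i_*X_s\to i_*\Spec\kappa(s)=S$ is an equivalence, since $X_s\cong\AA^n_{\kappa(s)}$ and $i_*$ preserves $\AA^1$-weak equivalences.
\end{itemize}
Two-out-of-three finishes the argument; this is exactly the proof of \cref{fiberwise=relative}.

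Drop the stalk heuristic. Nisnevich points of $\Sm_S$ are henselizations of smooth $S$-schemes, not points of $S$. The content of the gluing theorem is the cocartesian square together with $i_*$ preserving equivalences, not a commutation of $L_{\AA^1}$ with $i^*$. Your fallback is not standard either: $\AA^1$-invariance of $\pi_0^{\AA^1}$ is Morel's conjecture, which fails in general. The Hensel section is harmless but never used.

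\textbf{The local-triviality claim.} Delete the opening assertion that separated $\AA^n$-fiber spaces over regular bases are locally trivial in some topology, in particular for $n=2$.
\begin{itemize}
\item \cref{Winkelmann:example} is a separated $\AA^2$-fiber space over the regular scheme $\AA^2_\ZZ$ that is not locally trivial in any topology.
\item \cref{eg:Asanuma3F} is an affine one over $\AA^1_k$ in characteristic $p$ that is not Zariski locally trivial.
\item \cref{2-dim DD} needs a Dedekind base with characteristic-zero residue fields and $f$ affine.
\end{itemize}
Only $n\le 1$ is covered, by \cref{1-dim DD}. Since you abandon this claim anyway, it does not damage the argument.
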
 
In \tcolor{Blue}{\cref{chp5}}, we study the relative $\AA^1$-contractibility of threefolds, which completes the picture of our quest for relative $\AA^1$-contractibility. We study the family of Koras-Russell threefolds $\KK$ over a perfect field in \tcolor{Blue}{\cref{sect:Koras-Rusell-perfect}} and establish its unstable $\AA^1$-contractibility over perfect fields (\tcolor{Blue}{\cref{A1-cont-over-perfect-fields}}):
\begin{prop*}
For any perfect field $k$, the canonical morphism $f:\KK\to \Spec k$ is an $\AA^1$-weak equivalence in $\Spc_k$.
\end{prop*}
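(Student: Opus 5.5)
Over a field of characteristic zero, $\AA^1$-contractibility of the Koras--Russell threefold $\KK=\{x+x^2y+z^2+t^3=0\}\subset\AA^4_k$ is due to Dubouloz--Fasel (and Hoyois--Krishna--{\O}stv{\ae}r stably). I would follow the Dubouloz--Fasel argument and check that every step uses only perfectness of $k$, never characteristic zero. The plan has four steps.

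\textbf{Step 1: the $\AA^1$-chain connectedness input.} First show that $\KK$ is $\AA^1$-chain connected over every finitely generated separable field extension $L/k$. Concretely, every $L$-point should be joined to the origin by a chain of naive $\AA^1$-homotopies. The natural tool is the explicit family of $\GG_a$-actions on $\KK$, namely the locally nilpotent derivations $x^2\partial_z-2z\partial_y$ and $x^2\partial_t-3t^2\partial_y$. These give the affine lines through each point. For points with $x=0$, one uses the surface $\{x=0\}\cong\AA^1\times\{z^2+t^3=0\}$ together with the cuspidal cubic.

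In characteristic $2$ or $3$ the factors $2z$ and $3t^2$ vanish, so the derivations must be replaced by exponential (additive group) actions written directly. For example, $(x,y,z,t)\mapsto (x,\,y-2zs x^{-2}\dots)$ needs care, and I would instead use the Makar-Limanov style description: $\KK$ is an affine modification of $\AA^3$ along $\{x=0\}$. Perfectness is used here to ensure that the residue fields are separable over $k$, so that the relevant sheaves are detected on them.

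\textbf{Step 2: contractibility via an $\AA^1$-fiber-space decomposition.} Following Dubouloz--Fasel, I would use the projection $\KK\to\AA^1_x$ and the open cover $\{x\neq 0\}\cup(\text{a neighbourhood of }x=0)$. Over $x\ne0$ we have $\KK_{x}\cong \AA^2\times\GG_m$, since $y$ is solved for. The fiber over $x=0$ is $\AA^1_y\times C$, with $C$ the cusp. The cleaner route is the one of Dubouloz--Fasel: $\KK$ is the total space of a suitable $\AA^1$-bundle modification, and one shows $\KK\times\AA^1$ or a related space is $\AA^1$-equivalent via an explicit homotopy, namely the $\GG_m$-action $\lambda\cdot(x,y,z,t)=(\lambda^6x,\lambda^{-6}y,\lambda^3z,\lambda^2t)$ combined with a retraction onto $\{x=0\}$. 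Weighted-homogeneity gives that $\KK$ is an $\AA^1$-retract of its "ghost" model. All maps involved are polynomial with integer coefficients, so they work over any $k$.

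\textbf{Step 3: passing to motivic spaces.} Since $k$ is perfect, Morel's theory applies: $\KK$ is $\AA^1$-connected by Step 1, and its $\AA^1$-homotopy sheaves are strictly $\AA^1$-invariant, hence detected on fields. The stable contractibility (the motive of $\KK$ is trivial, by a localization triangle computed over any field) together with $\AA^1$-simple connectivity yields unstable contractibility by the $\AA^1$-Hurewicz and Whitehead theorems. Perfectness is exactly the hypothesis under which these theorems are available; the infinite-field requirement can be removed by standard norm arguments.

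\textbf{Main obstacle.} I expect the hardest part to be Step 1 in characteristics $2$ and $3$, i.e.\ producing enough $\AA^1$-curves without the usual derivations. The first thing I would try is to write the additive group actions as exponentials directly over $\ZZ$, so that they specialize to every prime; failing that, I would use the affine-modification description to produce the needed curves.
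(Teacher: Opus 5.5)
Your proposal has a genuine gap: none of the three steps controls the higher $\AA^1$-homotopy of $\KK$.

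\textbf{Step 1.} This only gives $\pi_0^{\AA^1}$, and it is the easy part, not the main obstacle. The $\GG_a$-actions can be written over $\ZZ$ as exponentials, e.g.\ $(x,y,z,t)\mapsto(x,\,y-2sz-s^2x^2,\,z+sx^2,\,t)$ on your equation. So characteristics $2$ and $3$ cause no trouble there.

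\textbf{Step 2.} This contains no valid argument. Your $\GG_m$-action has weight $-6$ on $y$, i.e.\ it is hyperbolic. Its orbit maps therefore do not extend over $\lambda=0$, and they give no $\AA^1$-homotopy retracting $\KK$ onto $\{x=0\}$ or onto the fixed point. (A $\GG_m$-action contracting a smooth affine variety to an attractive fixed point would make it an affine space.) The ``ghost model'' is never defined.

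\textbf{Step 3.} This presupposes what has to be proved. Morel's Hurewicz--Whitehead argument needs two inputs, and you supply neither.
\begin{itemize}
\item $\AA^1$-simple connectivity of $\KK$: nothing in Steps 1--2 provides it.
\item Vanishing of $\AA^1$-homology in $D_{\AA^1}(\abek)$: this does not follow from triviality of the motive. Motives only see $K^{\M}_*$ and ranks, while $\AA^1$-homology sees $K^{\MW}_*$ and $\GW(k)$-valued degrees. For instance, over $\RR$ a self-map of $\Abs$ of degree $2-\langle -1\rangle$ is an isomorphism on motives but not an $\AA^1$-weak equivalence. Stable contractibility is also a different statement from triviality of the motive.
\end{itemize}

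\textbf{What is missing (the paper's route).} The paper never touches $\pi_1^{\AA^1}(\KK)$. Take $P=\{z=0\}\cong\AA^2$ and $L=\{x=y=t=0\}$, which meet transversally. The paper compares the cofiber sequences $\Abs\to\AA^2\to\AA^2/(\Abs)$ and $\KK\bs L\to\KK\to\KK/(\KK\bs L)$, and the very weak five lemma reduces everything to two maps.
\begin{itemize}
\item The quotient map $j$ is an equivalence by homotopy purity. The normal bundle of $L$ is trivialized by $y,t$, because $x^{m-1}z-1$ is a unit near $L$.
\item For $i:\Abs\to\KK\bs L$, one first produces an $\AA^1$-weak equivalence $g:\KK\bs L\to\Abs$. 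This is done by realizing $\KK(s)\bs L$ and $\KK(1)\bs L\cong\AA^3\bs L$ as \'etale-locally trivial $\AA^1$-bundles over a common algebraic space $\mathfrak S$. Their fiber product is then a Zariski-locally trivial $\AA^1$-bundle over both.
\item This is where characteristic $p$ genuinely enters. The finite \'etale cover over $\{x=0\}$ used to build $\mathfrak S$ must be taken in the variable whose exponent is prime to $p$, which $\gcd(r,s)=1$ allows.
\item Finally, the $\GW(k)$-degree of $g\circ i$ is shown to be a unit. This uses an explicit Milnor--Witt cocycle computation, based on $gr-hs=1$, proving that $\partial:H^1(\KK\bs L,K^{\MW}_2)\to H^2_L(\KK,K^{\MW}_2)$ is surjective.
\end{itemize}
Since $\Abs$ is $\AA^1$-simply connected, this degree is all that is needed. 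It is exactly the $\GW$-level information that your motivic Step 3 cannot detect.
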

The above proposition, when coupled with Morel-Voevodsky's gluing technique (\cref{MV:gluing theorem}), will allow us to derive the $\AA^1$-contractibility of Koras-Russell threefolds over relative base schemes as well:
\begin{theorem*} 
The following are \tcolor{Blue}{\cref{KR3FoverZ}} and \tcolor{Blue}{\cref{KR3Fmain:Noetherian}}:
\begin{enumerate}
\item The Koras-Russell three fold of first kind $f: \KK\to \Spec \ZZ$ is relatively $\AA^1$-contractible in $\Spc_{\ZZ}$. In particular, $\KK$ is an $\AA^1$-contractible scheme in $\Spc_{\ZZ}$ that is not isomorphic to $\AA^3_{\ZZ}$.
\item Let $S$ be any Noetherian scheme with perfect residue fields. The Koras-Russell threefold of first kind $f: \KK \to S$ is relatively $\AA^1$-contractible in $\Spc_S$.
\end{enumerate}
\end{theorem*}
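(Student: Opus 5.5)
The plan is to deduce both statements from the field case, \cref{A1-cont-over-perfect-fields}, using Morel--Voevodsky's gluing technique (\cref{MV:gluing theorem}). The gluing criterion says that for a Noetherian base $S$ of finite Krull dimension, a morphism in $\Spc_S$ is an $\AA^1$-weak equivalence as soon as its pullback to every point $s\in S$, i.e. to $\Spec \kappa(s)$, is one. This holds provided the construction commutes with base change along $i_s:\Spec\kappa(s)\to S$, which is the case for smooth schemes via $i_s^*$. So the first step is to write $\KK$ over $S$ as the base change of the $\ZZ$-scheme
\[
\KK_{\ZZ}=\{x+x^2y+z^2+t^3=0\}\subset \AA^4_{\ZZ}.
\]
I then check that $\KK_S=\KK_{\ZZ}\times_{\ZZ}S\to S$ is smooth; this uses that $\partial/\partial x=1+2xy$ and $x$ never vanish simultaneously, so smoothness holds in every characteristic. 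Its fibre over $s$ is the Koras--Russell threefold over $\kappa(s)$.

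For (2), I apply the gluing theorem to $f:\KK_S\to S$. Since $i_s^*$ is left adjoint and takes the representable $\KK_S$ to $\KK_{\kappa(s)}$ and $S$ to $\Spec\kappa(s)$, the pulled-back map is exactly the structure morphism $\KK_{\kappa(s)}\to\Spec\kappa(s)$. By hypothesis $\kappa(s)$ is perfect, so \cref{A1-cont-over-perfect-fields} makes this an $\AA^1$-weak equivalence. The gluing theorem then gives that $f$ is an $\AA^1$-weak equivalence, i.e. relative $\AA^1$-contractibility.

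Statement (1) is the special case $S=\Spec\ZZ$. It is Noetherian of dimension one, and its residue fields $\QQ$ and $\FF_p$ are perfect. For the non-isomorphism with $\AA^3_{\ZZ}$, I base change to $\QQ$ (or $\CC$). An isomorphism $\KK\cong\AA^3_{\ZZ}$ would induce $\KK_{\QQ}\cong\AA^3_{\QQ}$, and hence $\KK_{\CC}\cong \AA^3_{\CC}$. This contradicts the classical Makar-Limanov theorem: $\ML(\KK_{\CC})=\CC[x]\neq\CC$.

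The main obstacle is checking that the gluing theorem really applies in this form. It requires finite Krull dimension, which I either add implicitly or reduce to by noting that contractibility is Zariski local on $S$ and that Noetherian local schemes have finite dimension. It also requires that $i_s^*$ commute with $\AA^1$-localisation, so that pointwise equivalence can be detected on residue fields. I would cite the version for essentially smooth pullbacks to points, and, where needed, pass through the henselisations or local schemes $\Spec\mathcal{O}_{S,s}$ to apply the closed/open gluing inductively on dimension.
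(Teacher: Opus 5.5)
Your proposal is correct and follows essentially the paper's route: you check the fibres over all residue fields with \cref{A1-cont-over-perfect-fields} and glue them via \cref{fiberwise=relative}, which is the Morel--Voevodsky gluing argument. Note that this criterion is only needed, and only proved, for a smooth $S$-scheme mapping to the terminal object $S$, not for arbitrary morphisms in $\Spc_S$ as you phrase it. Specialising to the Russell cubic is harmless because your smoothness and fibre arguments work for all exponents $(m,r,s)$, and your base change to $\CC$ with Makar--Limanov supplies the non-isomorphism $\KK\not\cong\AA^3_{\ZZ}$ that the paper's proof leaves implicit.
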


By observing that all these techniques solely involve fibrewise analysis, we proceed to invoke all these techniques to their higher-dimensional analogs in \tcolor{Blue}{\cref{sect:generalized-KR}}. Here, we study the \emph{generalized Koras-Russell varieties} and enhance its unstable $\AA^1$-contractibility over perfect fields (\tcolor{Blue}{\cref{KR3Fprototypes:field-A1-cont:perfect}})
\begin{theorem*}
Let $k$ be any perfect field. Then the canonical map of the generalized Koras-Russell varieties $\XX_{m}(\ul{n},\psi) \to \Spec k$ is an $\AA^1$-weak equivalence in $\Spc_k$.
\end{theorem*}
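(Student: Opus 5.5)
The plan is to reproduce, for $\XX_m(\ul{n},\psi)$, the strategy already used for $\KK$ over perfect fields (\cref{A1-cont-over-perfect-fields}). Those earlier arguments only use fibrewise analysis, so they should carry over to the generalized varieties. Recall that $\XX_m(\ul{n},\psi)$ is cut out by an equation of the form $x_1^{n_1}\cdots x_m^{n_m} z = y^{q} + t^{r} + x\,\psi$ with suitable coprimality conditions on the exponents. The projection to the $x$-coordinates (or to $x=x_1\cdots x_m$) has a special fibre over the locus $x_1\cdots x_m=0$. Off that locus, $z$ is determined by the other coordinates.

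The first step is to construct an explicit $\AA^1$-chain homotopy (a "ghost homotopy") showing that $\XX_m$ is $\AA^1$-chain connected, and that $\Sing^{\AA^1}_*(\XX_m)$ evaluated on finitely generated field extensions $L/k$ is contractible. Concretely, I would use the $\GG_m$-action scaling the $x_i$'s. I would also use the closed subvariety $\{x_1=\dots=x_m=0\}\cong \AA^1\times\{y^q+t^r=0\}$, which is a product of $\AA^1$ with a cuspidal curve. The aim is to show that $\XX_m$ is $\AA^1$-weakly equivalent to a homotopy colimit built from $\AA^n$'s.

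The cleanest route is a cdh/Nisnevich-type excision. I would cover $\XX_m$ by the open piece $\{x\neq 0\}\cong \GG_m\text{-stuff}\times \AA^{2}$ and a neighbourhood of the exceptional fibre. Then I would compare with the corresponding decomposition of $\AA^{m+2}$ via the morphism given by the $\GG_m$-equivariant structure. As in the Koras-Russell case (Dubouloz-Fasel, Hoyois-Krishna-Østvær), the cleanest version is: $\XX_m$ is obtained from $\AA^{m+2}$ by an affine modification along the divisor $x=0$ with centre a cuspidal subscheme. One then shows that the induced map of Nisnevich squares is an $\AA^1$-equivalence termwise, using perfectness to handle the normalization $\AA^1\to\{y^q=t^r\}$, which is a universal homeomorphism. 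Over a perfect field, a universal homeomorphism between smooth schemes, or more generally a finite purely inseparable map, induces an $\AA^1$-equivalence after suitable localization. This is where the earlier restriction to characteristic zero should be replaced by perfectness.

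The main obstacle is the step just described. It requires showing that the singular centre of the modification does not spoil the $\AA^1$-equivalence, given that $\{y^q+t^r=0\}$ is not smooth. I would first try to avoid singular schemes entirely by using the explicit $\AA^1$-homotopy of the earlier Koras-Russell proof. That homotopy contracts $\XX_m$ onto the subvariety $\{x_1=\dots=x_m=0, y=t=0\}\cong\AA^1_z$, via $(x,y,z,t)\mapsto(\lambda x,\dots)$, with appropriate weights chosen so that the equation is preserved. The fact that the resulting map need not land in $\XX_m$ at $\lambda=0$ would be repaired by composing with the naive $\AA^1$-homotopies in the $x_i$-directions one variable at a time, inductively on $m$. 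The base case $m=1$ is precisely \cref{A1-cont-over-perfect-fields}. The inductive step treats $x_m$ as a parameter: restricting to $x_m=1$ gives a variety of type $\XX_{m-1}$ crossed with an affine space, which is $\AA^1$-contractible by induction, and the fibre over $x_m=0$ is handled as in the threefold case. Together with Morel's criterion that $\AA^1$-chain contractibility on fields plus $\AA^1$-connectedness yields $\AA^1$-contractibility of smooth schemes over perfect fields, this would conclude the proof.
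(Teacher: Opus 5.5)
\paragraph{The gap: no valid gluing step.}
The step your argument ultimately rests on is not a theorem. There is no criterion of Morel saying that $\AA^1$-chain connectedness, together with contractibility of $\Sing^{\AA^1}_*(\XX_m)(L)$ for all finitely generated $L/k$, yields $\AA^1$-contractibility. Morel's unramifiedness results do let you test the sheaves $\pi_i^{\AA^1}(\XX_m)$ on fields. But their values there are computed from $\L_{mot}(\XX_m)(L)$, not from $\Sing^{\AA^1}_*(\XX_m)(L)$, and there is no general comparison between the two. Indeed, $\Sing^{\AA^1}_*(\XX_m)$ need not satisfy Nisnevich descent, and already the map $\pi_0^{ch}\to\pi_0^{\AA^1}$ can fail to be injective.

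None of your auxiliary routes closes this gap.
\begin{itemize}
\item Since $\ul{x}^{\ul{n}}z$ and $x_0$ must have the same weight and $n_0\ge 2$, any diagonal $\GG_m$-action preserving the equation with all weights $\ge 0$ is trivial. So there is no limit at $\lambda=0$. Rescaling individual $x_i$ does not preserve the equation either. The threefold case itself was not proved by an explicit contraction; it needed the algebraic space $\mathfrak{S}$ and the Brouwer degree computation.
\item An affine modification is not a Nisnevich distinguished square, so there is no ``termwise'' comparison to make.
\item The claim that universal homeomorphisms of smooth schemes over a perfect field are $\AA^1$-weak equivalences is false in $\mathcal{H}(k)$. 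In characteristic $p>0$, the relative Frobenius $\GG_m\to\GG_m$, $u\mapsto u^p$, is a universal homeomorphism between $\AA^1$-rigid schemes that is not an isomorphism. By \cref{A1-rigid:mainresult} it is therefore not an $\AA^1$-weak equivalence.
\item In your induction, knowing the fibres over $x_m=1$ and $x_m=0$ says nothing about the total space without a gluing mechanism.
\item The reduction from general $\psi$ to $\psi=1$ is never addressed.
\end{itemize}

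\paragraph{How the paper supplies the missing pieces.}
It uses the stable isomorphism $\XX_m(\ul{n},\psi)\times\AA^1\cong\XX_m(\ul{n},1)\times\AA^1$ to assume $\psi=1$, and inducts on $m$ starting from the threefold case \cref{A1-cont-over-perfect-fields}. Set $W_m=\{x_m=0\}\cong\AA^{m+2}$, $H_m=\{z=0\}\cong\AA^{m+2}$ and $P_m=W_m\cap H_m\cong\AA^{m+1}$. The paper maps the cofibre sequence $H_m\bs P_m\to H_m\to H_m/(H_m\bs P_m)$ to $\XX_m\bs W_m\to\XX_m\to\XX_m/(\XX_m\bs W_m)$.
\begin{itemize}
\item The left vertical map is $j_{m-1}\times\mathrm{id}_{\GG_m}$, because $\XX_m\bs W_m\cong\XX_{m-1}\times\GG_m$ (take $x_m^{n_m}z$ as the new $z$-coordinate). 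It is an equivalence by induction.
\item By homotopy purity with trivial normal bundles, the right vertical map is the $\PP^1$-suspension of the inclusion $P_m\hookrightarrow W_m$ of affine spaces.
\item Then \cref{weak5lemma} shows that $H_m\to\XX_m$ is an $\AA^1$-weak equivalence.
\end{itemize}

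\paragraph{Salvaging your parameter idea.}
Treating $x_m$ as a parameter can also be made rigorous, but via \cref{fiberwise=relative} rather than a field-valued chain criterion.
\begin{itemize}
\item For $\psi=1$ the projection $\XX_m\to\AA^1_{x_m}$ is smooth: the partial derivatives in $x_0$ and $z$ never vanish simultaneously, because $n_0\ge2$.
\item Its fibre over $0$ is $\AA^{m+2}$.
\item Every other scheme-theoretic fibre, including the generic one over the possibly imperfect field $k(x_m)$, becomes $\XX_{m-1}\times_k\kappa$ after rescaling $z$. It is therefore contractible by induction and \cref{basechange:imperfect}.
\item \cref{cor:pushpull-A1-weak} and two-out-of-three then finish the argument.
\end{itemize}
Either way, a genuine gluing step is the heart of the proof, and that is what your proposal lacks.
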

This will then allow us to extend this to general base schemes (\tcolor{Blue}{\cref{KR3F:prototypes-A1-cont-base:Noetherian}}):
\begin{corollary*}
Let $S$ be any Noetherian scheme with perfect residue fields. Then the canonical morphism of the generalized Koras-Russell varieties $\varphi: \XX_{m}(\ul{n},\psi) \to S$ is an $\AA^1$-weak equivalence in $\Spc_S$.
\end{corollary*}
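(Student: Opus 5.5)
We prove the corollary by reducing to the field case, \cref{KR3Fprototypes:field-A1-cont:perfect}, through Morel--Voevodsky's gluing technique (\cref{MV:gluing theorem}), exactly as for \cref{KR3Fmain:Noetherian}. The first step is to record that $\XX_m(\ul{n},\psi)$ is defined by equations with coefficients in $\ZZ$ (or in $\ZZ[\psi\text{-coefficients}]$, which we fix beforehand). Hence $\varphi: \XX_m(\ul{n},\psi)\to S$ is the base change of a smooth finitely presented model $\XX_{m,\ZZ}\to\Spec\ZZ$. It follows that for every point $s\in S$ the fibre $\varphi^{-1}(s)$ is the generalized Koras--Russell variety $\XX_m(\ul{n},\psi)_{\kappa(s)}$ over the residue field $\kappa(s)$. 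This step needs a brief check: smoothness of the fibres must survive in every characteristic, or at least in the characteristics realized by the residue fields of $S$. If needed, we restrict to the open locus where the defining exponents behave, as was done for $\KK$.

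Second, we apply the fibrewise criterion. Over a Noetherian base $S$ of finite Krull dimension, a morphism in $\Spc_S$ is an $\AA^1$-weak equivalence as soon as its pullback along every point inclusion $i_s:\Spec\kappa(s)\to S$ is an $\AA^1$-weak equivalence in $\Spc_{\kappa(s)}$. This follows from the gluing/localization sequence $j_!j^*\to \Id \to i_*i^*$ together with Noetherian induction on closed subsets and a continuity argument for passing to generic points of strata.

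Because $\varphi$ is smooth, the pullback $i_s^*$ of the representable $\XX_m(\ul{n},\psi)$ is represented by the fibre $\XX_m(\ul{n},\psi)_{\kappa(s)}$. Each $\kappa(s)$ is perfect by hypothesis, so \cref{KR3Fprototypes:field-A1-cont:perfect} shows that $\XX_m(\ul{n},\psi)_{\kappa(s)}\to\Spec\kappa(s)$ is an $\AA^1$-weak equivalence. The criterion then gives that $\varphi$ is an $\AA^1$-weak equivalence in $\Spc_S$.

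The main obstacle is the fibrewise detection step. The gluing theorem directly controls only closed/open decompositions, while we are given equivalences only at points. To handle this, we first apply the continuity property of $\mathcal{H}$ for essentially smooth (pro-)limits. This spreads the equivalence at a generic point $\eta$ of a stratum to an open neighbourhood $U$ of $\eta$ in that stratum: the map $\XX_U\to U$ is a retract of its $\AA^1$-localization data defined at finite stages. Noetherian induction on $S\setminus U$ then finishes the argument, using the gluing theorem to assemble $U$ and its closed complement. If full spreading-out is delicate for the unstable statement, we fall back on the approach used for \cref{KR3Fmain:Noetherian}. There we base change the explicit $\AA^1$-contraction data, which are built from $\AA^1$-chain homotopies and the ghost-homotopy/$\GG_a$-action used over the fields, from $\Spec\ZZ$ or a localization of it. This avoids pointwise spreading altogether.
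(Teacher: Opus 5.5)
Your argument is the paper's proof: each fibre $\varphi^{-1}(s)$ is a generalized Koras--Russell variety over the perfect field $\kappa(s)$, hence $\AA^1$-contractible by \cref{KR3Fprototypes:field-A1-cont:perfect}, and the fibrewise criterion you invoke is already \cref{fiberwise=relative} (proved there by induction on $\dim S$ after a Nisnevich-local reduction to Henselian local bases), so your continuity/Noetherian-induction sketch and the fallback via ``explicit contraction data'' can simply be dropped (that fallback also misdescribes both the field-level argument and the proof of \cref{KR3Fmain:Noetherian}, which goes through \cref{fiberwise=relative} as well). Two small corrections: that criterion is established, and needed, only for the structure morphism of a smooth $S$-scheme, not for arbitrary morphisms of $\Spc_S$ as you phrase it; and smoothness never forces a restriction of the base, since the $x_0$-derivative of the defining equation equals $\psi(0)$ along $\{\ul{x}=0\}$ while the $z$-derivative $\ul{x}^{\ul{n}}$ is invertible off it, so the fibres are smooth in every characteristic once $\psi(0)$ is a unit.
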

All these results will fruitage to a major application: existence of exotic motivic spheres in higher dimensions. We proceed with a survey of exotic motivic spheres in dimensions $\le 3$ in \tcolor{Blue}{\cref{sect:overview-mot-sph}} and establish the existence of exotic spheres in dimensions $\ge 4$ in \tcolor{Blue}{\cref{sect:exist-mot-sph-higherdim}}. In particular, we show that the complement of Koras-Russell prototypes is sufficiently connected (\tcolor{Blue}{\cref{X-p-isA1-connected}}):
\begin{lemma*}
Let $k$ be any infinite perfect field. Then the strictly quasi-affine variety $\XX_m \bs \{\bullet\}$ is $\AA^1$-chain connected.
\end{lemma*}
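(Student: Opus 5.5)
Write $X:=\XX_m$ and $p$ for the removed point. The plan is to prove directly that for every finitely generated separable field extension $L/k$, any two $L$-points of $X\bs\{p\}$ are joined by a finite chain of naive $\AA^1$-homotopies $\AA^1_L\to X\bs\{p\}$, i.e.\ that $\Sing_*(X\bs\{p\})(L)$ has a single component. I would use the explicit equations of the generalized Koras--Russell variety $\XX_m(\ul{n},\psi)$ from \cref{sect:generalized-KR}. Recall that $X$ sits in $\AA^{m+3}$ with coordinates $x_1,\dots,x_m,y,z,t$, cut out by an equation of the form $x_1^{n_1}\cdots x_m^{n_m}\, z = y^{\alpha}+t^{\beta}+x\cdot(\text{stuff})$, or more precisely by its $\psi$-version. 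Since $X$ is a homogeneous enough hypersurface, I would first use an automorphism to move the point to a convenient position: by default $p$ is the origin.

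The first step is to analyse the projection $\pi=x:X\to\AA^m$, the same fibration used in the proof of \cref{KR3Fprototypes:field-A1-cont:perfect}. Over the open set $\{x_1\cdots x_m\neq 0\}$, the variety $X$ is isomorphic to $\AA^2\times\GG_m^m$, with $z$ solved for. Over the divisor $\{x_i=0\}$, the fibres are cylinders over the cuspidal curve $\{y^\alpha+t^\beta=0\}$ times affine factors. The key observation is that every $L$-point of $X$ lies on an affine line contained in $X$ that avoids $p$. One source is the lines in the $z$-direction: if some $x_i=0$, then $z$ is free, so $s\mapsto(x,y,z+s,t)$ lies in $X$. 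Moving $z$ then moves the point off $p$, unless the line passes through $p$, and in that case I use a translate instead. The other source is the lines in the $x_j$-directions inside the locus $\{y^\alpha+t^\beta=0,\ z=0\}$.

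The second step is to connect every $L$-point of $X\bs\{p\}$ to one fixed base point $q\neq p$. For $L$ infinite, I would proceed as follows:
\begin{enumerate}
\item From a point with all $x_i\neq 0$, use the $\AA^2$-directions, namely the $(y,t)$-lines with $z$ determined. These give affine lines avoiding $p$, since $p$ has $x=0$. They connect the point to a point with $y=t=0$.
\item Then move $x$ along a line in the $x$-coordinates to reach the divisor $x_1=0$.
\item Inside $\{x_1=0\}\cong\{y^\alpha+t^\beta=0\}\times\AA^{m}$ (with $z$ free), the cusp is the image of $\AA^1$ under $s\mapsto(s^\beta,-s^\alpha)$ up to sign conventions. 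Any point there joins $(0,0)$ via this parametrisation combined with a nonzero shift in $z$ or in $x_2,\dots,x_m$, so that the chain avoids $p$.
\end{enumerate}

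This is where the hypotheses enter. The condition $m\ge 2$ (dimension $\ge 4$) supplies an extra free coordinate, which is used to detour around $p$. Infinitude of the field guarantees that generic translates avoiding $p$ exist. Perfectness is only needed to invoke the earlier structural results over $k$.

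The main obstacle I expect is the case of points lying on the special fibre near $p$, where every obvious line through the point passes through $p$; the cuspidal rational curve through the origin is the typical example. My first attempt there is to compose with a line in an auxiliary coordinate ($x_2$ or $z$) to step off, use the cusp parametrisation in the shifted slice, and step back. Checking that each such slice genuinely lies in $X$ requires the exact form of $\psi$. If this naive chain connectivity fails in some configuration, the fallback is to show that $X\bs\{p\}$ is covered by open subschemes, each $\AA^1$-chain connected and with pairwise intersections containing common $L$-points, such as the complements of the coordinate hyperplanes $\{x_i=0\}$ with the special fibre removed, and then glue.
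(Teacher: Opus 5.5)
Your second step fails, and nothing else in the proposal replaces it. Write the variety correctly as $\XX_m=\{\ul{x}^n z=y^r+t^s+x_0\}\subset\AA^{m+4}$, with coordinates $x_0,\dots,x_m,y,z,t$ and $\ul{x}=\prod_{i=0}^m x_i$. On the open part $U=\{\ul{x}\neq 0\}\cong\GG_m^{m+1}\times\AA^2_{y,t}$ the $x_i$ are units. Since units on $\AA^1_L$ are constant, every $\AA^1_L$ contained in $U$ has constant $x$-coordinates. So points of $U$ in different $x$-slices can only be joined through curves meeting $D=\{\ul{x}=0\}$. Along such a curve, $z(w)=\bigl(y(w)^r+t(w)^s+x_0(w)\bigr)/\ul{x}(w)^n$ is a polynomial only if, at each crossing parameter $w_0$, the numerator vanishes to order at least $n\cdot\operatorname{ord}_{w_0}\ul{x}(w)$.

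Your recipe (put $y=t=0$, then move $x$ along a line to $\{x_1=0\}$) violates this. The numerator is then $x_0(w)$. Either it does not vanish at $w_0$ (when only some $x_i$ with $i\ge 1$ vanishes), or it vanishes to some order $a\ge 1$. In the latter case $\operatorname{ord}_{w_0}\ul{x}(w)\ge a$, so you would need $a\ge na$, which is impossible for $n\ge 2$. Hence $z$ acquires a pole, and the ``line'' is not a morphism to $\XX_m$. What you actually obtain is chain connectedness of the individual slices and boundary pieces only.

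The missing idea is the crossing curve, and the paper's proof is built on it. It projects to $x_0$ and uses that the fibre over $0$ is $\Gamma_{r,s}\times\AA^{m+1}$, which is chain connected via the cusp parametrization. It then joins this fibre to the fibre over $\alpha\in\GG_m$ by the curve $\theta$ with $x_0=\alpha w$, $x_1=\dots=x_m=1$, $(y,t)=(f(\alpha w),g(\alpha w))$, and $z$ the forced quotient. Here $f,g\in k[w]$ satisfy $w^n\mid f(w)^r+g(w)^s+w$. Such $f,g$ exist over any field: start from $(a,b)=(1,-1)$ or $(-1,1)$ with $a^r+b^s=0$ and lift $w$-adically. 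This works because $\gcd(r,s)=1$ makes $ra^{r-1}$ or $sb^{s-1}$ a unit. Adding multiples of $w^n$ to $f,g$ also lets you ensure $p\notin\operatorname{Im}\theta$.

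Several other points need repair.

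\emph{The removed point.} It is $p=(1,\dots,1,0,1,0)$, which lies in $U$, not the origin. Nothing justifies an automorphism moving it there. The origin lies on the singular locus of the special fibre $\{x_0=0\}$. For $m=0$ in characteristic zero, every automorphism preserves $\ML=k[x_0]$ and hence its unique singular fibre. The choice also matters downstream: \cref{exotic-motspheres-countereg} uses that $d\phi_p$ is an isomorphism (\cref{tgtspaces-isomorphic}). This fails at the origin, where $\partial_z$ of the equation is $\ul{x}^n=0$. In particular, your first step is wrong as stated. In the slice $x=(1,\dots,1)$, the point with $y=t=0$ that you aim for \emph{is} $p$, so there you must work in $\AA^2_L$ minus a point.

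\emph{The range of $m$.} The statement covers every $m\ge 0$, including the Koras--Russell threefold $m=0$. An argument needing $m\ge 2$ and an auxiliary coordinate to detour around $p$ would therefore not prove it. Note also that cuspidal cylinders occur only over $\{x_0=0\}$, whereas $\{x_i=0\}\cap\XX_m\cong\AA^{m+2}$ for $i\ge 1$.

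\emph{The fallback.} It cannot work. Each open $\{x_i\neq 0\}$ carries the non-constant unit $x_i$, so it is never $\AA^1$-chain connected; for example, $\{x_m\neq 0\}\cong\XX_{m-1}\times\GG_m$.
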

Primarily using a motivic excision style result, we then show that these varieties in fact provide examples of exotic motivic spheres (\cref{exotic-motspheres-countereg}):
\begin{theorem*}
Let $k$ be an infinite perfect field. Then for every $m\ge 0$, the strictly quasi-affine variety $\XX_m\bs \{\bullet\}$ of dimension $m+3$ is $\AA^1$-homotopic to $\AA^{m+3}_k \bs\{0\}$, but is not isomorphic to $\AA^{m+3}_k\bs \{0\}$ as a $k$-scheme.
\end{theorem*}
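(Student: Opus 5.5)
We split the statement into two parts: the $\AA^1$-homotopy equivalence, and the non-isomorphism.

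\textbf{Step 1: $\XX_m\bs\{\bullet\}$ and $\AA^{m+3}_k\bs\{0\}$ are $\AA^1$-equivalent after pointing.} Write $n=m+3$ and let $x\in \XX_m$ be the distinguished rational point. The point $x$ is a smooth closed point, hence of codimension $n$. Morel--Voevodsky purity then gives a homotopy cofiber sequence
$$\XX_m\bs\{x\}\to \XX_m \to \XX_m/(\XX_m\bs\{x\})\simeq \Th(N_x)\simeq \PP^{\wedge n}.$$
Here the normal bundle $N_x$ is trivial, since it lives over a point. By the theorem above, $\XX_m$ is $\AA^1$-contractible over perfect $k$. Hence $\PP^{\wedge n}\simeq \Sigma(\XX_m\bs\{x\})$ unstably, in pointed spaces; we base-point at any rational point of the complement, which exists because $k$ is infinite. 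Applying the same argument to $\AA^n$ and the origin gives $\Sigma(\AA^n\bs\{0\})\simeq \PP^{\wedge n}\simeq S^{n,n}$.

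\textbf{Step 2: desuspend.} To pass from the suspensions to the spaces themselves, I would do the following.
\begin{itemize}
\item Use the Lemma above: $\XX_m\bs\{\bullet\}$ is $\AA^1$-chain connected, hence $\AA^1$-connected.
\item Show that $\XX_m\bs\{\bullet\}$ is $\AA^1$-$(n-2)$-connected. The $\AA^1$-Hurewicz theorem together with Morel's stable connectivity shows that an $\AA^1$-connected space whose suspension is $(n-1)$-connected is itself $(n-2)$-connected. This step uses $n\ge 4$, so that $n-2\ge 2$ and Hurewicz applies beyond $\pi_1$.
\item Identify the first nonvanishing homotopy sheaf. By Hurewicz, $\pi_{n-1}^{\AA^1}\cong \HH_{n-1}^{\AA^1}\cong \KK^{MW}_n$, which follows from the suspension computation.
\item Produce a map. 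A generator of $\KK^{MW}_n$ gives a map $\AA^n\bs\{0\}\to \XX_m\bs\{\bullet\}$. I would build it via the excision/purity identification near $x$: take a Nisnevich/étale neighborhood of $x$, which is étale over $\AA^n$ near $0$, and use the homotopy equivalence $\AA^n\bs\{0\}\simeq U\bs\{x\}$ for a suitable neighborhood $U$ (motivic excision style). Then compose with the inclusion $U\bs\{x\}\hookrightarrow \XX_m\bs\{x\}$.
\item Check it is an equivalence. By construction this map induces an isomorphism on $\AA^1$-homology, via the cofiber sequences and the five lemma. Both spaces are $\AA^1$-simply connected, so the $\AA^1$-Whitehead theorem makes the map an $\AA^1$-weak equivalence.
\end{itemize}

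\textbf{Step 3: non-isomorphism.} If $\XX_m\bs\{x\}\cong \AA^n\bs\{0\}$, then taking global functions and affine hulls gives
$$\XX_m=\Spec\mathcal{O}(\XX_m\bs\{x\}).$$
This holds because $\XX_m$ is normal affine and $\operatorname{codim}\{x\}=n\ge 2$, so Hartogs applies. The same computation gives $\Spec\mathcal{O}(\AA^n\bs\{0\})=\AA^n$. So the isomorphism would force $\XX_m\cong \AA^n$, contradicting that $\XX_m$ is exotic. For that, I would use the known non-triviality of the Makar-Limanov invariant (or Derksen invariant) of the generalized Koras--Russell varieties, which is stable under base change to $\bar k$.

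\textbf{Main obstacle.} I expect the main difficulty to be Step 2: producing an actual map rather than only a suspension equivalence, and verifying simple connectivity without a stable-to-unstable shortcut. If the neighborhood construction in the fourth item fails, my fallback is the Asok--Doran--Fasel criterion. It says that an $\AA^1$-$(n-2)$-connected smooth scheme whose suspension is equivalent to $S^{n,n}$, with $n\ge 4$, is already equivalent to $\AA^n\bs\{0\}$; I would prove this via the Freudenthal theorem in the relevant range.
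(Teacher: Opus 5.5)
Your Steps 1 and 3 agree with the paper. In Step 1, purity plus $\AA^1$-contractibility of $\XX_m$ identifies the cofiber with $\Sigma(\XX_m\bs\{p\})$. In Step 3, Hartogs plus the Makar--Limanov invariant excludes an isomorphism. Step 2, however, has two genuine gaps.

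The first gap is connectivity. An $\AA^1$-connected space $X$ whose suspension is $\AA^1$-$(n-1)$-connected need not be $\AA^1$-$(n-2)$-connected. The suspension only yields $\widetilde{H}^{\AA^1}_i(X)=0$ for $i\le n-2$. In degree $1$, Morel's Hurewicz theorem only identifies $H^{\AA^1}_1(X)$ with the $\AA^1$-abelianization of $\pi^{\AA^1}_1(X)$. So a non-trivial fundamental sheaf with trivial abelianization is not ruled out; the classical model is a punctured Poincar\'e homology sphere, whose suspension is contractible. Morel's connectivity theorems run in the other direction. Your fallback criterion presupposes the same $(n-2)$-connectivity, so it does not help. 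The missing input is geometric. The paper applies Asok's $\AA^1$-excision theorem to:
\begin{itemize}
\item the $\AA^1$-contractible $\XX_m$;
\item the $\AA^1$-connected open $\XX_m\bs\{p\}$, which is connected by the chain-connectedness lemma (this is where $k$ infinite is used);
\item the point $p$, of codimension $d=m+3\ge 3$.
\end{itemize}
This gives $\pi^{\AA^1}_i(\XX_m\bs\{p\})\cong\pi^{\AA^1}_i(\XX_m)=0$ for $i\le d-2$. It also covers $m=0$, which your restriction to $n\ge 4$ drops.

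The second gap is the comparison map. An \'etale neighbourhood $(U,x)\to(\AA^n,0)$ identifies the quotients $U/(U\bs\{x\})\simeq\AA^n/(\AA^n\bs\{0\})$, not the punctured neighbourhoods. In general $U\bs\{x\}$ is not $\AA^1$-equivalent to $\AA^n\bs\{0\}$: it typically has non-constant units, hence non-constant maps to the $\AA^1$-rigid $\GG_m$. Moreover, the \'etale map goes from $U$ to $\AA^n$, not the other way. An abstract equivalence $\Sigma(\XX_m\bs\{p\})\simeq(\PP^1)^{\wedge n}\simeq\Sigma(\AA^n\bs\{0\})$ that is not induced by a map cannot feed a Whitehead argument.

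The paper's point is that a global map exists. The projection $\phi\colon\XX_m\to\AA^n$ forgetting $z$ satisfies $\phi^{-1}(q)=\{p\}$, since all $x_i=1$ forces $z=1$. It is \'etale at $p$, since the $z$-derivative of the equation is $\ul{x}^n$, which equals $1$ at $p$. Hence $\phi$ restricts to $\XX_m\bs\{p\}\to\AA^n\bs\{q\}$ and maps one purity cofiber sequence to the other. Because $\XX_m$ and $\AA^n$ are both contractible, the induced map of cofibers is $\Sigma\phi$. This map of Thom spaces is an equivalence by Voevodsky's compatibility lemma. So $\phi$ is an $\AA^1$-homology equivalence between $\AA^1$-simply connected spaces, and Shimizu's theorem makes it an $\AA^1$-weak equivalence. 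Once connectivity is known, a map $\AA^n\bs\{0\}\to\XX_m\bs\{p\}$ could also be produced by obstruction theory, but not by the neighbourhood construction you describe.
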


This concludes the bulk of the script. The author has taken substantial efforts to make this script as self-contained as possible. For this \emph{motive}, a good collection of background and supporting ideas is provided in \tcolor{Blue}{\cref{app:category}}, \tcolor{Blue}{\cref{app:A1-alg-top}}, and \tcolor{Blue}{\cref{app:Scheme-theory}}.
}
\newpage

% -----------------------------------------------------------
{\fontfamily{antt}\selectfont
\section*{The script in a concise non-expert viewpoint}
\addcontentsline{toc}{section}{For Non-Experts}

Spaces similar to "points" in algebraic geometry are called \tcolor{cyan}{$\AA^1$-contractibles}, and our script revolves around these objects. The affine spaces $\AA^n_k$ are primordial examples of $\AA^1$-contractible schemes, and so one ponders whether a smooth affine scheme $X$ being $\AA^1$-homotopic to $\AA^n_k$ in the motivic homotopy category automatically implies that they are isomorphic as schemes as well?! We will review this situation over base fields and relatively extend it to general base schemes. We will witness that there are instances where this does hold and instances otherwise; spaces of the latter nature will be called \tcolor{cyan}{exotic}. We shall cover this aspect in the first part of our script in \cref{chp4}. In contrast to this aspect, the material further studies the "compact" analog of exoticness. The smooth schemes $\Absn$ are examples of spheres in motivic geometry. This makes one ponder 
if these are the only spheres up to $\AA^1$-homotopy, i.e., whether a smooth scheme $X$ being $\AA^1$-homotopic to $\Absn$ in the motivic homotopy category automatically implies that they are isomorphic as schemes?! This forms the sole essence of \cref{chp5}, where we discover the anti-verity of this expectation.
}

% \afterpage{\blankpage}

\chapter{Acknowledgments}
\markboth{Acknowledgment}{}

{\fontfamily{antt}\selectfont
\textsc{To all the beautiful people who landed me safe on ground...}
\vspace{2mm}

I am in excess debt towards the prudent parenting of Vijayalakshmi B. and Dr. Madhavan N., whose unceasing love and meticulous parenting have helped me surmount snags of life, and Kishore Kumar M.V. for making us proud every moment with his relentless source of creativity and unwavering willpower.
\vspace{1mm}

This piece of research, and thereby, the manuscript would cease to exist if it were not for the ever energetic and expert guidance from my lovely advisors: Paul Arne {\O}stv{\ae}r and Adrien Dubouloz. I am deeply grateful to Paul Arne for introducing me to motivic geometry and for his numerous academic advices throughout this journey - especially for insights on drafting concise mathematical articles and for several sportive gym interactions! If I were to say this joint doctorate had worked out without much hustle, it is due to Adrien's warm hospitality and curation towards administrative conundrums - I thoroughly enjoyed our coffee room math discussions and relaxed conversations, which I'll cherish for years to come!
\vspace{1mm}

I extend my gratitude to Aravind Asok and Marc Hoyois for various insightful discussions at Math events.
\vspace{1mm}

I remain grateful to Frédéric Déglise and Jean Fasel for various academic interactions during conferences and my stay in France.
\vspace{1mm}

I express my gratitude to Matthias Wendt for many fruitful discussions during my visit to the Algebra and Topology Oberseminar at Wuppertal.
\vspace{1mm}

I remain grateful to Tariq Syed for his countless academic advices and cheerful interactions during our countless Math meetings.
\vspace{1mm}
 
I thank Parnashree Ghosh, Anand Sawant, and Biman Roy for their fruitful and friendly discussions at CIRM, Luminy.
\vspace{1mm}

I gratefully acknowledge the research funding provided by Università di Milan and Institut Mathematiques de Bourgogne, and the mobility grant provided by the Université Franco-Italienne / Università Italo-Francese through the \emph{C2-379: Programme Vinci 2024}. I thank the timely interventions and cumulative efforts of various secretaries and professionals, especially Stefania and Vanessa in Milan, and Anissa, Bibiane, Emeline, Francis, Magali, and Laura in Dijon, for pragmatically working along despite linguistic hurdles in making this cotutelle carry out smoothly - I owe you all a ton of reverence!
\vspace{1mm}

                                 % MILAN LAB - split up
Having spent close to 4 years now, Milano has become close to my heart. I am grateful to several versatile mathematicians from Milano, especially Fabrizio Andreatta, Federico Binda, Bert Geemen, Amnon Neemon, and Paolo Stellari, whose interactions have helped me in different ways throughout these years. I am immensely grateful to my cycle-mates: Andrea, Alessandra, Filippo, Kaixing, Mariano, and Silvia for being compassionate during these 3 lovely years. I'd like to thank Martina for her countless assistance and swift responses with the cotutelle formalities - it'd have been difficult without you! My warm thanks to my academic brother, Federico, for many useful discussions and for traveling alongside me for various mathematical events across Europe! My sincere thanks to Giulia for helping me out with a crucial bureaucratic process at a point in time! Among many cheerful people that I've met in Milan, I had the pleasure of meeting Dhrumith, Marco Manoj, and Lorenzo, whose affection and timely assistance carry paramount significance.
\vspace{1mm}

            % DIJON LAB  - split up
My stay at the IMB, Dijon, was such a delight, as it allowed me to meet several interesting mathematicians and dearly people. I remain grateful to Mattia Cavicchi, Daniele Faenzi, Lucy Moser-Jauslin, Jan Nagel, and Julia Schneider for organizing the reading seminars and for countless laid-back interactions. The community experience was exemplified by several kind people that I met, including Arnaud, Deniz, Dimitrios, Edwin, Federica, Felipe, Ivan, Keyao, Leila, Lyalya, Maya, Mehdi, Oscar, Ouneïs, Pietro, Thibaut, and Victor (FR+BR) - cheers to have made IMB a home every single day! I had the pleasure of sharing the apartment space with a caring flatmate and a fellow mathematician, Théo, who helped me stabilize the daily routine and several bureaucratic fickle, along with kind neighbors Helal-Sandhya. Among many cheerful people that I've met outside of Math, I'm happy to have met Jeswin and Sajana, who made life more sociable!
\vspace{1mm}

I'd like to extend my warm thanks to Ritheesh for being out there for me at times of need. This exciting journey would have been impossible if not for the anchoring, consistent support of several people back home during the last 5-year-long tide of my life, especially Asohamithran, Durairaj, and Vassavi. Thank you, guys, we've made it!
\vspace{2mm}

My deepest gratitude to all those who have spent their time underpinning me throughout this journey, who, I assume, are philanthropic enough to envision and acknowledge that their contribution has had a palpable impact on my life so far.
\vspace{1mm}

\textsc{Last but foremost, to nature and science for emphatically providing everyone with a source of rejuvenation and mental agility.}

\begin{flushright}    
\textsc{Krishna Kumar M.V.}
\vspace{1mm}

Dijon \& Milan (2022-2025)
\end{flushright}

}

% \afterpage{\blankpage
% \thispagestyle{empty}}

\chapter{Useful notations}
\markboth{Useful notations}{}

\begin{tabular}{llp{.7\textwidth}}
$R_X$ &--& Representable functor associated to a space $X$ \\
$\textbf{K}(X,n)$ &--& Eilenberg-Maclane space of $X$ in degree $n$\\ 
$\Sm_S$    &--& The category of smooth separated schemes over $S$\\
$\sPsh(\Sm_S)$     &--& The category of simplicial presheaves over $\Sm_S$\\
$\sShv(\Sm_S)$    &--& The category of simplicial sheaves over $\Sm_S$\\
$\Psh_{\AA^1}(\Sm_S)$    &--& The category of $\AA^1$-invariant presheaves over $\Sm_S$\\
$\Shv_{Nis}(\Sm_S)$    &--& The category of Nisnevich sheaves over $\Sm_S$\\
$\Spc_S$     &--& The category of motivic spaces over $S$\\
$\SH(S)$   &--& The stable $\AA^1$-homotopy category over $S$\\
$\mcal{H}(S)$    &--& The unstable $\AA^1$-homotopy category over $S$\\
$\RR^n$   &--& The $n$-dimensional Euclidean space\\ 
$\CC^n$    &--& The $n$-dimensional complex (analytic) space\\
$\SS^n$    &--& The $n$-dimensional sphere \\
$\AA^n_S$   &--& The $n$-dimensional affine space over a base scheme $S$ \\
$S^1$   &--& The simplicial circle\\
$\GG_m$       &--& The Tate circle\\
$\SS^{p,q}$   &--& Mixed motivic spheres in weights $p$ and $q$\\
$\PP^n_S$     &--& The $n$-dimensional projective space over $S$\\
$\ML(R)$      &--& The Makar-Limanov invariant of a commutative ring $R$\\
$\mcal{D}(R)$  &--& The Derksen invariant of $R$\\
$\kappa(s)$      &--& Residue field at a point $s \in S$\\
$\bar{\kappa}(X)$   &--& Logarithmic Kodaira dimension of an algebraic variety $X$\\
$\Pic(X)$        &--& Picard group of $X$\\
$\pi_n^{\AA^1}$  &--& $n$-th $\AA^1$-homotopy sheaf\\
$\Th(E)$        &--& Thom space associated to the vector bundle $\pi: E\to B$\\
$\Sigma X$        &--& Simplicial suspension of a space $X$\\
$\pi_1^{\infty}$   &--& The fundamental group at infinity\\
$\Omega_f$     &--& Sheaf of relative K\"ahler differentials associated to $f$\\
$\omega_f$   &--& Relative canonical sheaf of $f$\\
$\GW(k)$     &--& Grothendieck-Witt ring over a field $k$\\
$\W(k)$     &--& Witt ring over $k$ \\
$K^{\M}$ &--&  The Milnor $K$-theory\\
$K^{\MW}$  &--&  The Milnor-Witt $K$-theory
\end{tabular}

% \clearpage
% \thispagestyle{empty}
% \hfill
% \clearpage
% \vfill
% \begin{center}
% Welcome to the end of the beginning!
% \end{center}

\chapter*{}
\markboth{}{}

\thispagestyle{empty}
\begin{center}

\begin{flushright}
To the challenges that made me stronger...
\end{flushright}

\end{center}

\mainmatter

\begin{savequote}
One can always see the light at the end of the tunnel, if only one can make the ends meet. \qauthor{The author.}
\end{savequote}
\chapter{Overview}\label{chp1}
\markboth{I Overview}{}

\section{Introduction} 
The Euclidean spaces $\RR^n$ are one of the most basic objects of study in manifold theory. They are open (non-compact, without boundary) smooth $n$-manifolds that are contractible, i.e., homotopy equivalent to a point, and so they have trivial homology and homotopy theories. It is therefore natural to characterize this object with respect to this very property, namely, the following:
\begin{question}\label{qstn:OPC}
Is the Euclidean space $\RR^n$ the unique open contractible smooth manifold in dimension $n$?
\end{question}
Phrased differently, the question claims that "every open smooth $n$-manifold that is homotopic to $\RR^n$ is homeomorphic to $\RR^n$". This statement should ring a bell to its closely resembling contrast in topology: the celebrated \emph{Poincaré Conjecture}, whose generalized version asserts that "every closed smooth $n$-manifold homotopic to $\SS^n$ in a specified category $\mathcal{C}$ is also homeomorphic to $\SS^n$ in $\mathcal{C}$". The topological situation $\mathcal{C}=\rm{Top}$ is proven to be true in all dimensions: Smale ($n\ge 5$), Freedman ($ n=4$), and Perelman ($n=3$), and other cases were well-known. On the flip side, the smooth situation $\mathcal{C}=\rm{Diff}$ is not fully solved: well-known for $n\le 2$, the case $n=3$ is peculiar as the topological and the smooth version coincide, the case $n=4$ seems to be notoriously difficult and has been open for decades now (refer to \cite{scorpan2005wild}). The smooth version fails in certain higher dimensions due to the presence of Milnor's \emph{exotic spheres} (\cite{milnor1956spheres}) - a sphere that is homeomorphic to $\SS^n$ but not diffeomorphic to the $\SS^n$ with the standard differential structure. 
\medskip

In the light of this breathtaking program, \cref{qstn:OPC} is famously known as the \emph{Open Poincaré Conjecture}. This question is now completely understood using the machinery of \emph{topology at infinity}, as we shall review in the upcoming sections. Our goal in this chapter is to give the background and solution to this question in the topological setting and motivate its algebro-geometric parallel. In \cref{intro:Top-infty}, we briefly review the formulation in topology describing homology and homotopy at infinity. We then introduce the algebro-geometric analog of \cref{qstn:OPC} in \cref{intro:A1-cont-survey} (\cref{qstn:OPC-Alg-geo}). This program is also one of the key motivating factors for the author to delve deeply into algebraic geometry, and this script could be seen as one of its by-products.

% ------------------------------------------------------------------
\subsection{Topology at Infinity}\label{intro:Top-infty}
Topology at infinity is the study of how spaces behave "at their boundary" or in regions that are very far from any fixed point. It's a way of studying the global structure of non-compact spaces by examining what happens as one moves arbitrarily far out. This is systematically studied by developing the so-called "end" of a space. An \emph{end} of a (simplicial or chain) complex is a sub-complex with a particular type of infinite behavior capturing non-compactness in topology and infinite generation in algebra. The theory of ends dates back as far as the 1930s to Freudenthal \cite{freudenthal1930enden}, who initiated this in connection with topological groups.
\medskip

The theory of end spaces was initially applied to that of $\sigma$-compact spaces - a space $X$ with an increasing sequence $\{K_n\}$ of compact subspaces with each $K_n$ in the interior of $K_{n+1}$ such that
            $$X = \bigcup_{n=0}^\infty K_n$$
and the compact subsets $K_n$ of $X$ are partially ordered by inclusion. The idea was to study the homotopy theory of such spaces by considering its "ends" pointing outward of $X$. The primary example to think of is Euclidean spaces. For instance, $\RR$ has two ends pointing outside of it, the $+\infty$ and $-\infty$, whereas $\RR^2$ only has one end pointing outside, the $\infty$. More precisely, consider the system of spaces
  $$\mathcal{E}(X):= \{\mathrm{Closure}(X\bs K): K\subset X\ \rm{compact} \}.$$
This forms an inverse system (a.k.a. pro-object) in the category of spaces and thus, a profinite space. The \emph{set of ends} of $X$ is a connected component of this system of spaces, i.e.,
        $$e(X) := \lim_{K} \pi_0\ \bigg(\mathcal{E}(X)\bigg) $$
equipped with the inverse limit topology.

\begin{remark}
For those spaces without a boundary, its end can be interpreted as an "ideal boundary" which captures the different possible notions of infinity. Some common instances are one-point compactification in point-set topology, asymptotic behaviors in geometric topology, boundary at infinity in hyperbolic geometry, and so on.
\end{remark}

Here is a more refined definition of end spaces.
\begin{defn}
The end space $e(W)$ of a space W is the space of paths
$$\omega: \big([0,\infty],\{\infty\}\big) \to (W^{\infty}, \{\infty \})$$
with the compact-open topology such that $\omega^{-1}(\infty) = \{\infty\}$. Here, $W^{\infty}:= W\cup \{\infty\}$ is the one-point compactification of $W$.
\end{defn} 
For a topological space $W$, its end space $e(W)$ can be realized as a homotopy theoretic model for the behavior of $W$ at infinity.
\begin{example}
Here are some basic examples:
\begin{itemize}
\item If $K$ is compact, then $e(K) = \emptyset$.
\item The real line has two components at infinity corresponding to the two distinct connected open neighborhoods $(-\infty,-k)$ and $(k,\infty)$, for $k\in \RR$. Thus, $e(\RR) \cong \SS^0 = \{\pm \infty\}$. Therefore, an end space need not be necessarily connected. 
\item For all $n \ge 2$, $\RR^n$ has only one end since $\RR^n\bs K$ has only one unbounded component for any compact subset $K\subset \RR^n$, thus $e(\RR^n) \cong \SS^{n-1}$. 
\end{itemize}
\end{example}

% -------------------------------------------------------------------------
\subsubsection{Homology at infinity}
A suitable framework to study the theory of end spaces is the proper homotopy theory, in the sense that a map $f: V \to W$ is a \emph{proper homotopy equivalence} if $f$ is a proper\footnote{inverse image of compact is compact} map, and it induces homotopy equivalence in the proper category. The properness of the $f$ is equivalent to extending $f$ to infinity via $f^{\infty}: V^{\infty}\to W^{\infty}$. Just as ordinary homology theory is good enough to study compact spaces, \emph{locally finite homology} plays a key role for non-compact spaces. Recall that the \emph{singular homology} groups $H_*(W)$ are defined as the homology theory associated to the singular chain complex $S_*(W)$, i.e., $H_*(W):= H_*(S_*(W))$. By replacing the singular chain complexes with locally finite chain complexes $S_*^{lf}(W)$, one defines the \emph{locally finite homology} groups $H_*^{lf}(W):= H_*(S_*^{lf}(W))$. From the definition of these chain complexes, one immediately verifies that if $K$ is any compact space, then $H_*(K) =  H_*^{lf}(K)$ (\cite[Example 3.4]{hughes1996ends}). These complexes provide us with the following definition:

\begin{defn}\label{defn:sing-hom-infty}
The \emph{singular homology at $\infty$} of a space $W$ is defined as $H_*^{\infty}(W):= H_*(S_*^{\infty}(W)$, where 
 $$S^{\infty}(W):= \textrm{Cone}(i: S_*(W)\to S_*^{lf}(W))_{*+1}$$ 
of the inclusion $i:S_*(W)\to S_*^{lf}(W)$ obtained by viewing a $r$-simplex $\sigma: \Delta^r \to W$ as a locally finite singular chain (\cite[Definition 3.8]{hughes1996ends}).
\end{defn}

These so-defined homology groups are related by the following long exact sequence:
$$\dots \to H^{\infty}_r(W)\to H_r(W)\xrightarrow{i_*} H^{lf}_r(W)\to H_{r-1}^{\infty}(W)\to \dots$$
\begin{example}\label{eg:homol-infty-sphere}
The following provides some first examples:
\begin{itemize}
\item Let $K$ be any compact space. Then $S_*(K) = S_*^{lf}(K)$ and so the group $H^{\infty}_* (K)$ is trivial. 
\item Using Poincaré-Lefschetz type argument, one observes that the homology at infinity of any smooth open contractible $n$-manifold is that of a sphere of dimension $n-1$. For $n\ge 1$, let $W = \RR^n\times K$. Then we have $H_*(W) = H_*(K)$ and $H_*^{lf}(W) = H_{*-n}(K)$. And so,
  $$H_*^{\infty}(W) =  H_*(K\times S^{n-1}) = H_*(K)\oplus H_{*-n+1}(K).$$ 
\end{itemize}
\end{example}
Thus, the homology at infinity $H_*^{\infty}(W)$ of a space $W$ is the proper homotopy invariant given by the difference between homology $H_*(W)$ and locally finite homology $H^{lf}_*(W)$ and the compactness of $W$ is determined by the vanishing of $H_*^{\infty}(W)$.

\subsubsection{Homotopy at infinity}
In a similar vein as above, one has a parallel theory for homotopy groups. As in the case of end spaces, let us assume $X$ is a $\sigma$-compact topological space with the compact subspaces $K_n\subset X$ partially ordered by inclusion, and if $K\subset K'$, we simply consider the inclusion $X\bs K' \subset X\bs K$. A \emph{base point at infinity}, denoted by $\star$, is given by an open subset $U\subset X$ such that for any compact set $K$, there exists a compact set K' with $K\subset K'$ and $U\bs K'$ non-empty and simply connected. 
\begin{defn}\label{defn:fund-grp-at-infty}
Here is a definition of the fundamental group at infinity (\cite[P. 2]{diez2001fundamental}):
\begin{itemize}
\item Let $X$ be any topological space with a base point at infinity $\star$. Then we define the \emph{fundamental group at infinity of $X$ based at $\star$} as the inverse limit of the relative homotopy groups given as follows:
    $$\pi_1^{\infty}(X,\star) = \lim_{\leftarrow} \pi_1(X\bs  K, U\bs K)$$
where the inverse limit is over the cofinal family of compact subsets $K$ of $X$ such that $U\bs K$ is simply connected, partially ordered by inclusion, and using the natural induced maps on the fundamental groups. 
\item We say that $X$ is \emph{simply connected at infinity} if $\pi_1^{\infty}(X,\star)$ is trivial.
\end{itemize}
\end{defn}

Alternatively, the space $(X,\star)$ is said to be \emph{simply connected at infinity} if for every compact $K \subsetneq X$, there exists $K \subsetneq K' \subsetneq X$ such that the induced map on the fundamental groups 
    $$\pi_1(X\bs K') \xrightarrow{0} \pi_1(X\bs K) $$ 
is the trivial map.

\begin{example}\label{egs:homotopy-at-infty}
Here are some basic examples:
\begin{itemize}
\item If $K$ is compact, then the group $\pi_1^{\infty}(K)$ is trivial.
\item For all $n\ge 0$, we have $\pi_1^{\infty}(\RR^n) \cong \pi_1(\SS^{n-1})$. Hence, $\RR^n$ is simply connected at infinity if and only if $n\ge 3$. 
\end{itemize}
\end{example}

We are now in a position to spell out the solution of \cref{qstn:OPC}. The proof is due to the culmination of several of the preceding mathematical works (see \cref{intro:exotic-threefolds}; for historical contributions see \cite[\S 1.1]{asok2021A1}) with the final breakthrough coming from Siebenmann \cite{siebenmann1968detecting}. Combined with the astonishing work of Perelman and Freedman on the low-dimensional Poincaré conjecture (\cite{Freedman1997work, Perelman2014poincare}), Siebenmann's result gives us the following characterization of Euclidean spaces:
\begin{theorem}\label{thm:char-of-Rn}
The following version is stated in \cite[Theorem 2]{asok2021A1}.
\begin{enumerate}
\item For $n\le 2$, $\RR^n$ is the unique open contractible $n$-manifold,
\item For $n\ge 3$, $\RR^n$ is the unique open contractible $n$-manifold that is simply connected at infinity.
\end{enumerate}
\end{theorem}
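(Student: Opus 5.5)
The plan is to reduce both parts to the classification of closed (or compact bounded) manifolds via a compactification argument at infinity, treating low dimensions separately.

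\textbf{Case $n\le 2$.} For $n=1$, a connected open $1$-manifold is either $S^1$ or $\RR$. Since $S^1$ is compact, it is excluded. For $n=2$, an open contractible surface $W$ is orientable and has $H_1(W)=0$. I would invoke the classification of noncompact surfaces (Kerékjártó–Richards). That classification says $W$ is determined by its genus, its space of ends, and which ends are planar. Contractibility forces genus zero. By \cref{eg:homol-infty-sphere}, the homology at infinity is that of $\SS^1$; since $H_0^{\infty}$ counts ends, $W$ has exactly one end. Hence $W$ is a sphere minus one point, i.e. $W\cong\RR^2$. Alternatively, the Riemann mapping / uniformization theorem gives the same conclusion directly.

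\textbf{Case $n\ge 3$, general strategy.} For $n\ge 3$ the necessity of simple connectivity at infinity is immediate from \cref{egs:homotopy-at-infty}. For sufficiency, let $W$ be open, contractible, and simply connected at infinity. The plan is to show that $W$ is the interior of a compact contractible manifold $\overline W$ whose boundary is a homotopy $(n-1)$-sphere. Then we identify $\overline W$ with $D^n$ via the Poincaré conjecture in dimension $n-1$ (or the $h$-cobordism theorem), so that $W\cong \RR^n$.

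\textbf{Case $n\ge 6$.} Apply Siebenmann's theorem \cite{siebenmann1968detecting}. Its hypotheses are:
\begin{itemize}
\item $W$ has one end, which follows from $H_0^\infty=H_0(\SS^{n-1})$;
\item the end is tame and $\pi_1$-stable, since $\pi_1^\infty$ is trivial;
\item the Wall finiteness obstruction lies in $\tilde K_0(\ZZ[\pi_1^\infty])=\tilde K_0(\ZZ)=0$.
\end{itemize}
Tameness should follow because the end has the homology of $\SS^{n-1}\times\RR$ and trivial fundamental group, so a neighborhood of infinity is finitely dominated. Siebenmann's theorem then produces a boundary $\partial\overline W$. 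Lefschetz duality, together with the computation of homology at infinity, shows $\partial\overline W$ is a simply connected homology sphere, hence a homotopy sphere. The generalized Poincaré conjecture (Smale), or directly the $h$-cobordism theorem applied to $\overline W$ minus a small open disc, gives $\overline W\cong D^n$.

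\textbf{Case $n=5$.} Here I would reduce to the previous case using Stallings' engulfing approach, or by crossing with $\RR$ and using the product structure theorem. This step is the main obstacle: one must run the engulfing argument carefully in the presence of simple connectivity at infinity.

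\textbf{Case $n=4$.} The smooth case cannot be proved this way, because of exotic $\RR^4$'s. So the statement should be read in the topological category. Freedman's work \cite{Freedman1997work} supplies the $4$-dimensional analogue of Siebenmann's end theorem and shows that the end is collared by $\SS^3\times\RR$.

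\textbf{Case $n=3$.} Simple connectivity at infinity lets us exhaust $W$ by compact submanifolds $K_i$ with simply connected boundaries. By the loop theorem and Dehn's lemma, these boundaries can be taken to be $2$-spheres. Perelman's theorem \cite{Perelman2014poincare} shows each $K_i$ is a $3$-ball, and an increasing union of balls is $\RR^3$.

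I expect the low-dimensional inputs ($n=3,4$) and the tameness verification to carry the real weight. Since the statement is explicitly a quotation from \cite[Theorem 2]{asok2021A1}, I would cite these deep theorems as black boxes and only verify carefully that their hypotheses follow from contractibility plus $\pi_1^\infty=0$.
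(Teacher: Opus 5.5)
Your outline is correct and follows the same route the paper takes by citation: Siebenmann's end theorem plus the $h$-cobordism theorem in high dimensions, Freedman for $n=4$ (read up to homeomorphism, as you note), Perelman for $n=3$, and surface classification for $n\le 2$. In your $n=3$ sketch, which is C.~H.~Edwards' argument, the order is reversed: compressing $\partial K_i$ via the loop theorem and Dehn's lemma, using that loops on $\partial K_i$ die in the complement of a smaller compact set, is what \emph{produces} the $2$-sphere boundaries, since a closed simply connected surface already is $\SS^2$. Also, $n=5$ is not an obstacle: Stallings' engulfing theorem \cite{stallings1962piecewise} proves directly that a contractible PL $n$-manifold simply connected at infinity is PL homeomorphic to $\RR^n$ for all $n\ge 5$, whereas crossing with $\RR$ cannot work, because $W\times\RR$ is already Euclidean for contractible $W$ that are not simply connected at infinity (e.g.\ $\mcal{W}\times\RR\cong\RR^4$ for the Whitehead manifold).
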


As witnessed above, the assumption of simply connectedness at infinity is necessary for dimension 3 and above. This is due to the existence of certain \emph{exotic} manifolds, which by definition, are smooth contractible manifolds that are not homeomorphic to $\RR^n$. We will revisit this situation shortly in \ref{intro:exotic-threefolds}. 

\begin{remark}
In terms of end spaces, we say that a space $X$ is simply connected at infinity if $e(X)$ is both simply connected and connected as a space. In fact, rather than just studying the $\pi_1^{\infty}$, one can study the "homotopy type at infinity" in the context of proper homotopy theory, for which we redirect the interested reader to  \cite[Chapter 9]{hughes1996ends}.    
\end{remark}
It is time to transport this story into the realm of algebraic geometry!

%--------------------------------------------------------------------
\section{Smooth Affine Schemes up to $\AA^1$-Contractibility}\label{intro:A1-cont-survey}

In parallel to the topological situation, one observes that affine spaces $\AA^n_k$ are one of the most basic objects of study in algebraic geometry. They are smooth, affine, and $\AA^1$-contractible - equivalent to that of a base field $k$. The latter property implies that many of the known algebro-geometric theories become trivial for $\AA^n_k$. Hence, the following question is inevitable:
\begin{question}\label{qstn:OPC-Alg-geo}
Is $\AA^n$ the unique smooth affine $\AA^1$-contractible scheme of dimension $n$?
\end{question}
Alternatively, for a base field $k$, if a smooth affine scheme of dimension $n \ge 0$ is $\AA^1$-homotopic to $\AA^n_k$, then is it also isomorphic to $\AA^n_k$ as a $k$-scheme? We will begin with a historical outlook of \cref{qstn:OPC-Alg-geo}.

\subsection{Exotic Affine Varieties}
The study of real and complex manifolds up to topological contractibility is a far-reaching concept that has several connections among nearby fields. The sophistication of algebraic geometry is the ability to forget the algebraic structure, when necessary, allowing one to compare it with the underlying topological structure. These are achieved by certain \emph{realization functors}. As a consequence, one might obtain spaces that are topologically contractible but not contractible in the $\AA^1$-homotopy category. As a linkage to topological situation, a scheme that is $\AA^1$-contractible but not isomorphic to $\AA^n_k$ will be called \emph{exotic}. Experience from the topological situations from the preceding sections humbles one to walk through \cref{qstn:OPC-Alg-geo} dimension-by-dimension. Any 1-dimensional smooth complex contractible manifold is isomorphic to $\AA^1_{\CC}$ (use \cref{thm:char-of-Rn} with $\CC\cong \RR^2$). A difficulty presents itself already in the next step.

\subsubsection{Exotic complex surfaces}\label{intro:exoticsurfaces} 
The complex space $\CC^2$ is a contractible complex manifold. But apparently, this is not the only one. The first counter-example was constructed in 1971 in the landmark paper by C.P. Ramanujam \cite{Ram71} in response to a question of M.P. Murthy on the Zariski cancellation in dimension 2. He hoped that the hypotheses of a smooth affine rational complex variety $S$ being topologically contractible automatically imply $S$ is isomorphic to $\CC^2$. But to his (our) much surprise, he constructed a counter-example himself, now called the \emph{Ramanujam surface} in his eponym. The working strategy of his counter-example can be outlined as follows:
\begin{example}\label{eg:Ramanujamsurf}
Take the complex projective plane $\PP^2_{\CC}$. Consider a cubic with a cuspidal (or flex) point $q$ and another non-degenerate conic $Q$ such that their intersection multiplicity at a point $p\in \PP^2$ is 5 ($m_p(C\cap  Q) = 5 $) and another point $r\in \PP^2$ where they intersect transversally ($m_r(C\cap Q)=1$). Such a $Q$ exists either by avoiding the situation where $p=q$ or by deducing the group law on the smooth locus of $C$. Now, consider the blowup $X:= Bl_r(\PP^2)$ at the point $r$ 
\begin{align*}
 & \hspace{15mm} \pi: X\to \PP^2\quad  \textrm{such that} \\
 & \pi^{-1}(C) = \widehat{C}\quad  \text{and} \quad \pi^{-1}(Q) = \widehat{Q}
\end{align*}
where $\widehat{C}$ and $\widehat{Q}$ are the strict transforms of $C$ and $Q$ respectively. The Ramanujam surface is defined by setting
        $$\mathcal{R}:= X \bs \{\widehat{C}\cup \widehat{Q} \}$$
He then shows that $X$ is simply connected with trivial homology class $H_1(X) = (1)$ and that $\pi_1(X)$ is an Abelian group. Hence, by the Hurewicz theorem, one gets that $\pi_1(X)\cong H_1(X) = (1)$, whence $X$ is topologically contractible. However, he \cite[\S 3]{Ram71} showed that $\pi_1^{\infty}(X) \ne 0$, while the complex affine spaces $\CC^n$ are all simply connected at infinity. Since $\pi_1^{\infty}$ is a homeomorphic invariant, this, in particular, implies that $X \ncong \CC^2$.
\end{example}

But in the same paper, he proves that under a stronger assumption, $\CC^2$ can be uniquely identified:
\begin{theorem*}
Let $X$ be a non-singular complex algebraic surface which is contractible AND \emph{simply connected at infinity}. Then $X$ is isomorphic to the affine two-space as an algebraic variety.
\end{theorem*}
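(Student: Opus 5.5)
The plan is to follow Ramanujam's strategy: first reduce to a compactification with a boundary divisor of very special shape, and then identify that compactification with $\PP^2$. Let $X$ be the given non-singular contractible surface. From contractibility we get $H_i(X,\ZZ)=0$ for $i>0$. Then $X$ is affine, or at least has a smooth projective completion $\bar X$ whose boundary $D=\bar X\setminus X$ is a connected curve. We may take $D$ to be a simple normal crossing divisor by resolving singularities of the boundary. Lefschetz duality on $\bar X$ together with the vanishing of $H_*(X)$ gives $H^i(\bar X)\cong H^i(D)$ for $i<4$ in the reduced sense. This makes $D$ a tree of smooth rational curves, and the intersection form on the components of $D$ is unimodular. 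In particular $\bar X$ is a rational surface, since $H^1(\bar X)=H^2(\bar X,\mathcal O)=0$ and $X$ contains no complete curves. The vanishing of $H^2(X)$ gives the latter, and it also shows there is no nonconstant unit by $H^1(X,\ZZ)=0$.

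The second step brings in the hypothesis of simple connectivity at infinity. A small tubular neighbourhood $T$ of $D$ has boundary $\partial T$, a plumbed $3$-manifold, and $\pi_1^\infty(X)\cong\pi_1(\partial T)$. Since $D$ is a tree of rational curves with unimodular intersection matrix, $\partial T$ is a homology sphere. Requiring $\pi_1(\partial T)=1$ then forces it, by Mumford-type arguments on plumbing graphs, to be $S^3$. We next simplify the weighted dual graph of $D$ by contracting $(-1)$-curves of valence $\le 2$ and performing elementary transformations (blow up at a point of a $0$-curve, blow down). This should be done without changing $X$ or $\pi_1(\partial T)$. The aim is to show that the graph reduces to a single vertex of self-intersection $+1$. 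The key input is the classification of plumbing graphs giving a simply connected boundary: a linear chain whose continued fraction is consistent with $\pi_1=1$ reduces to a single smooth curve of positive self-intersection.

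Once $D$ is a single smooth rational curve with $D^2=1$ on the smooth rational surface $\bar X$, the final step is to identify $\bar X$ with $\PP^2$. Riemann--Roch gives $h^0(\mathcal O(D))=3$, and the linear system $|D|$ is base-point free. It defines a birational morphism $\bar X\to\PP^2$ sending $D$ to a line. Since $X$ contains no complete curves, no curve can be contracted outside $D$, so the morphism is an isomorphism. Hence $X\cong\PP^2\setminus\text{line}\cong\AA^2_{\CC}$.

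The main obstacle is the second step, the combinatorial reduction of the boundary graph using $\pi_1(\partial T)=1$. One must show that a branched tree cannot have trivial boundary fundamental group unless it is reducible. This requires the Mumford-style presentation of $\pi_1$ of a plumbed manifold, together with careful control of which graph moves are permitted without creating singularities or complete curves in $X$. I would first try to show that a vertex of valence $\ge 3$ yields a nontrivial quotient, a triangle group or Brieskorn sphere, and then handle linear chains by explicit continued-fraction reductions.
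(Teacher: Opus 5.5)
The thesis does not prove this statement; it is quoted from Ramanujam \cite{Ram71}, so your outline has to stand on its own. Your first step is fine: Lefschetz duality $H_i(\bar X,D)\cong H^{4-i}(X)$ gives an SNC tree of smooth rational curves with unimodular form, $h^1(\mathcal O_{\bar X})=0$, and no complete curves in $X$. Your last step is also fine: $|D|$ for a single $(+1)$-curve maps $\bar X$ isomorphically onto $\PP^2$. That step only needs $h^1(\mathcal O_{\bar X})=0$ and the absence of complete curves, which is fortunate, because $q=p_g=0$ does not by itself give rationality.

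The gap is the middle step, which carries the whole theorem. It is not carried out, and the lemma you propose for it is false in this setting. Mumford's argument that a vertex of valence $\ge 3$ forces a nontrivial triangle-group quotient of $\pi_1(\partial T)$ relies on negative definiteness. Here the components of $D$ span $H_2(\bar X)$, so by the Hodge index theorem the form has a positive eigenvalue. In that regime there are branched trees with $\partial T\cong S^3$ that satisfy every constraint you have extracted.

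For instance, let $c$ have weight $a$ and attach to it a leaf $\ell$ of weight $0$. Attach also two copies of the chain $u_i - v_i - w_i$ with weights $-2,-1,-3$, joined to $c$ at the middle vertex $v_i$.
\begin{itemize}
\item Since $\ell$ is a $0$-framed meridian of $c$, you can slide the $v_i$ off $c$ and cancel the Hopf pair $(c,\ell)$. This gives $\partial T\cong\partial T_1\#\partial T_2$, where $T_i$ is the plumbing on $[-2,-1,-3]$. That chain blows down completely, so $\partial T\cong S^3$.
\item Replacing $v_i$ by $v_i-\ell$ splits the lattice as $\left(\begin{smallmatrix} a&1\\ 1&0\end{smallmatrix}\right)\oplus B_1\oplus B_2$, with each $B_i$ negative definite of determinant $-1$. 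So the form is unimodular of signature $(1,7)$.
\item No $(-1)$-vertex has valence $\le 2$, and elementary transformations at $\ell$ only change $a$. Your reduction therefore never reaches a single $(+1)$-vertex.
\end{itemize}

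This graph is not blow-up equivalent to $[+1]$ at all, since otherwise it would be realised on a blow-up of $(\PP^2,L)$. The obstruction is geometric rather than topological. On a surface with $h^1(\mathcal O)=0$, $|\ell|$ would be a $\PP^1$-fibration with $c$ a section, and $u_1,v_1,w_1$ would lie in one fibre $F$. Because $v_1$ meets the section, it has multiplicity $1$ in $F$. Then $F\cdot v_1=0$ gives $1=m_{u_1}+m_{w_1}+\cdots\ge 2$, a contradiction.

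So the missing idea is to combine $\pi_1(\partial T)=1$ with global information about $\bar X$. No argument that looks only at the weighted graph, unimodularity and signature can finish your second step. A more realistic target for that step is not ``reduce to $[+1]$'' but this: after blowing up points of $D$, find a $0$-curve in $D$ meeting the rest of $D$ in exactly one point. Its linear system then gives a $\PP^1$-fibration that restricts to an $\AA^1$-fibration $X\to\AA^1$. The conditions $\chi(X)=1$, $\pi_1(X)=1$ and the absence of complete curves then force every fibre to be a reduced affine line, so $X\cong\AA^2$. Until an argument of this kind is supplied, the central claim of your outline is unsupported.
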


Following this attractive construction, several analogs of Ramanujam surfaces have been studied and since then established. In 1988, Gurjar-Miyanishi \cite{Gurjar1988affine} classified all the acyclic affine surfaces with log Kodaira dimension at least 1. Following this, Tom Dieck-Petrie studied a family of these smooth affine surfaces, which is now eponymously called the \emph{tom Dieck-Petrie surface} defined as follows:

\begin{example}\label{eg:tomDieck-petri}
For integers $m > l \ge 2$ with $m,l$ coprime, the \emph{tom Dieck-Petrie surface} $\nu_{m,l}$ is a complex surface defined by the following equation:
\begin{equation*}
    \nu_{m,l} := \biggl\{ \frac{(xz+1)^m - (yz+1)^l -z}{z} =0 \biggr\} \subset \AA^3_{\CC}
\end{equation*}
\end{example}

Let us now spell out some of the interesting properties of $\nu_{m,l}$. The smooth affine surface $\nu_{m,l}$ is topologically contractible \cite[Theorem A]{Tom1990contractible} but is not isomorphic to $\CC^2$. This is because the former has log Kodaira $\bar{\kappa}(\nu_{m,l})= 1$, but the latter has $-\infty$. As a consequence, let us also note that $\nu_{k,l}$ cannot be $\AA^1$-chain connected (\cref{defn:A1-chainconn}). Indeed, due to the classification theory of \cite{asokmorel2011}, any $\AA^1$-chain connected variety is necessarily $\AA^1$-uniruled\footnote{this is also called \emph{log-uniruled} in the literature} (cf. \cref{defn:A1-uniruled}) and hence has log Kodaira dimension $-\infty$.
\medskip

Another elegant tool to prove the topological contractibility is via the machinery of \emph{affine modifications}. In their classical paper, Kaliman-Zaidenberg establish the conditions under which the topology is preserved under these modifications. Due to \cite[Example 3.1]{kalimanZaid1999affine}, we have that $\nu_{m,l}$ is obtained as the affine modification of $\AA^2$ along the divisor $\mathcal{C} := \{x^l - y^m =0\}\subset \AA^2$ with center $(1,1) \subset D$; here $\mathcal{C}$ is the cuspidal curve which is topologically contractible (even $\AA^1$-contractible!). Thus, due to \cite[Corollary 3.1]{kalimanZaid1999affine}, one sees that $\nu_{m,l}$ is topologically contractible. Essentially, exploiting the affine modification techniques and higher Chow groups calculation, \cite[Theorem 3.2]{DPO2019} also shows that $\nu_{k,l}$ is in fact \emph{stably $\AA^1$-contractible} - a phenomenon by which an algebraic variety $(X,x)$ becomes $\AA^1$-contractible after finitely many suspensions 
    $$(X,x) \wedge (\PP^1, \infty)\wedge \dots \wedge(\PP^1, \infty)\sim_{\AA^1} *.$$ 
Since $\nu_{m,l}$ is not $\AA^1$-chain connected from the above discussion, one could also learn from this that the affine modifications do not preserve $\AA^1$-chain connectedness in general (cf. \cite[Example 2.32]{DPO2019}).
\medskip

The characterization of Ramanujam on the affine plane does not generalize to higher dimensions, in the sense that there are smooth complex affine varieties in all dimensions $n \ge 3$ which are topologically contractible, even simply connected at infinity, but not isomorphic to $\CC^n$. In fact, due to a theorem of Ramanujam-Dimca (\cite[Theorem 3.2]{zaidenberg2000exotic}) we have the following:
\begin{theorem*}
Let $X$ be a contractible smooth affine algebraic variety. If $dim_{\CC}X = n\ge 3$, then $X$ is diffeomorphic to $\RR^{2n}$.
\end{theorem*}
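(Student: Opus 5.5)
The approach is the classical Dimca–Ramanujam route through the h-cobordism theorem. Write $N=2n\ge 6$ for the real dimension of $X$. First, embed $X$ as a closed algebraic subvariety of some $\CC^M$. Then take $\rho(z)=\|z\|^2$, restricted to $X$. This is a proper, real-analytic, strictly plurisubharmonic exhaustion function. After a small generic perturbation, for instance replacing it by $\|z-a\|^2$ for a generic point $a$, it becomes a Morse function. Its critical points lie in a compact set, because the set of critical values of a real algebraic function is finite. So for $R\gg 0$ the sublevel set
$$W:=\{\rho\le R\}$$
is a compact smooth $N$-manifold with boundary, and $X$ is diffeomorphic to its interior $\mathrm{int}(W)\cong W\cup_{\partial W}(\partial W\times[0,\infty))$.

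Next, apply the Andreotti–Frankel/Milnor index bound. Every critical point of a strictly plurisubharmonic Morse function on a complex $n$-manifold has index at most $n$. Hence $W$ has a handle decomposition with handles of index $\le n$ only. Turning this decomposition upside down, $W$ is obtained from a collar on $\partial W$ by attaching handles of index $\ge N-n=n\ge 3$. Consequently $\pi_1(\partial W)\to\pi_1(W)$ is an isomorphism, since handles of index $\ge 3$ do not change $\pi_1$. Because $X$ is contractible, $W\simeq X$ is contractible, and therefore $\partial W$ is simply connected. This is exactly where $n\ge 3$ enters; for $n=2$ the dual handles have index $2$ and the argument breaks down, as the Ramanujam surface shows. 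By Lefschetz duality together with contractibility of $W$, we get $H_*(\partial W)\cong H_*(S^{N-1})$. Thus $\partial W$ is a simply connected homology sphere of dimension $N-1\ge 5$, hence a homotopy sphere.

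Finally, remove a small open ball $B$ from the interior of $W$. The region $V=W\setminus B$ is a cobordism between $S^{N-1}$ and $\partial W$. It is simply connected by van Kampen, since $N\ge 6$, and excision combined with the contractibility of $W$ shows that both boundary inclusions are homology equivalences. This makes $V$ an h-cobordism of dimension $N\ge 6$. By Smale's h-cobordism theorem, $V\cong S^{N-1}\times[0,1]$, so $W$ is diffeomorphic to the closed disk $D^N$. Therefore $X\cong\mathrm{int}(W)\cong\mathrm{int}(D^N)\cong\RR^{2n}$.

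The main obstacle is the middle step: rigorously deducing that $\partial W$ is simply connected. This needs the Morse index bound in the dual handle decomposition, and care that the perturbed exhaustion remains strictly plurisubharmonic and has no critical points at infinity. I would rely on Milnor's Morse-theory treatment for the index bound, and on the finiteness of critical values of real-algebraic functions for the compactness of the critical set. The remaining steps are standard algebraic topology.
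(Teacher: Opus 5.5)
Your proposal is correct and is essentially the argument the paper points to; the paper gives no proof of its own, only a citation of Zaidenberg's survey and the remark that the proof uses the Lefschetz hyperplane section theorem and the $h$-cobordism theorem, and your Andreotti--Frankel index bound is exactly the Morse-theoretic form of that Lefschetz theorem, giving $\pi_1(\partial W)=1$ for $n\ge 3$ before Smale's theorem identifies $W$ with $D^{2n}$. One small imprecision: that $\partial W\hookrightarrow V$ is a homology equivalence follows from Lefschetz duality, $H_*(V,\partial W)\cong H^{2n-*}(V,S^{2n-1})=0$, rather than from excision alone.
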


The proof involves the usage of $h$-cobordism and Lefschetz hyperplane section theorems. But vaguely put, in dimensions $d\ge 3$, one can obtain such an exotic affine variety by simply taking the product of one such topologically contractible surface with the desired number of affine spaces $\AA^n$.

\begin{remark}
We shall see that due to the $\AA^1$-homotopy theoretical characterization of the $\AA^2_k$ (\cref{A2isunique}), such exotic surfaces are detected by $\AA^1$-contractibility. In particular, this will show that there are topologically contractible surfaces that are not $\AA^1$-contractible, establishing that $\AA^1$-contractibility is a considerably stronger notion than the topological contractibility (at least over fields admitting an embedding into $\CC$).
\end{remark}

% ----------------------------------------------------
\subsubsection{Exotic real threefolds}\label{intro:exotic-threefolds}
Recall from \cref{thm:char-of-Rn} that simply connectedness is necessary from dimension 3. The following pioneering example is one of its kind. J.H.C. Whitehead in 1934 proposed the following argument \cite{whitehead1934certain} in an attempt to prove the Poincaré conjecture for closed 3-manifolds: start with a homotopy equivalence $f: M \to S^3$. Removing a point, we have an open contractible manifold $M\bs \{\bullet\}$ and a continuous map $\widetilde{f}: M \bs\{\bullet\} \to \RR^3$. Now, he essentially argued that "an open contractible 3-manifold that is homotopic to $\RR^3$ is homeomorphic to $\RR^3$" and so that this homeomorphism $\widetilde{f}$ would extend by continuity across infinity, inducing a homeomorphism $f: M \to S^3$. This proof collapsed soon after his own counter-example \cite{whitehead1935prototype}, where he produced an open contractible 3-manifold that is not homeomorphic to $\RR^3$. This is called the \emph{Whitehead manifold} $\mathcal{W}$ (\cref{pic:Whitehead-continuum}) in his eponym.
\begin{figure}[hbt!]
    \centering
    \includegraphics[width=0.8\textwidth,keepaspectratio]{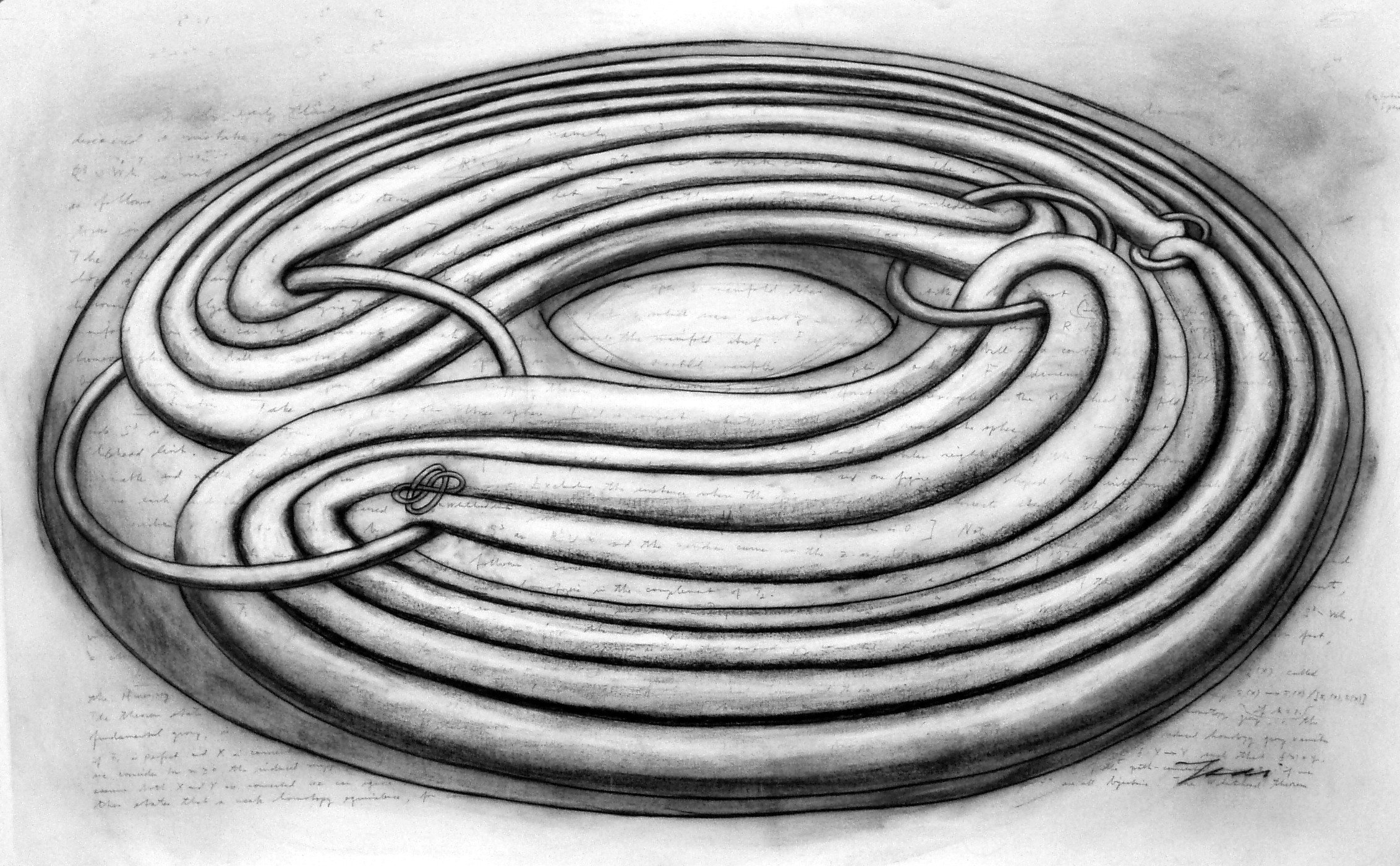}
    \caption{Whitehead Continuum, \cite{Tsai-Whitehead}}
    \label{pic:Whitehead-continuum}
\end{figure}
\vspace{1mm}

Following this exotic threefold, higher-dimensional analogues were established and their associated cancellation questions were studied deeply by several mathematicians exploiting tools available from the solution of the higher-dimensional Poincaré conjecture and the surgery theory (see the amazing cumulative works of \cite{glimm1960two,  mazur1961note, poenaru1960decompositions, mcmillan1962some, stallings1962piecewise, curtis1965infinite}). A crucial property was then observed: the Euclidean space $\RR^3$ stayed simply connected in the complement of any compact subset, but the Whitehead manifold did not, and thus began the utility of topology at infinity!
\medskip

Spaces such as the \emph{Whitehead manifolds} $\mathcal{W}$ constitute the basic motivation to study the obstruction theory related to the characterization of $\AA^n$ and further in constructing motivic invariants at infinity (\cref{app:Mot-top-infty}) to distinguish exotic affine $\AA^1$-contractible varieties. In \cref{chp5}, we will visit yet another \emph{aesthetically notorious} family of exotic affine threefolds $\KK$, which plays the analogous role of the Whitehead manifold in algebraic geometry. 

\subsubsection{\textsc{Moving forward...}}
The \cref{qstn:OPC-Alg-geo} can be considered, in part, as the genesis of this script. In alignment with topology, this desired characterization is known as the \emph{motivic open Poincaré conjecture}. The existence of exotic varieties as noted above restricts one from obtaining a complete solution to \cref{qstn:OPC-Alg-geo} as in \cref{thm:char-of-Rn}. One of the vivid dreams in this direction is to have a motivic homotopy at infinity (\cite[Question 5.4.4]{asok2021A1}, also see \cref{app:Mot-top-infty}). To sum up, our aim in this script can be concisely articulated as follows:

\begin{itemize}
\item Study the \cref{qstn:OPC-Alg-geo} in low dimensions over fields and extend the characterization to a more general base scheme in relative dimensions up to 2. In the sequel, we will also review the situation in dimensions $\ge 4$ and illustrate a key usage of a theorem from affine algebraic geometry in harnessing plenitude of relatively $\AA^1$-contractible schemes  (\cref{sect:higherdimensions}).

\item Study the theory of exotic affine varieties and extend the $\AA^1$-contractibility of Koras-Russell threefolds (\cref{sec:KR-prototypes}) and its generalized version (\cref{sect:generalized-KR}) over perfect fields and Noetherian base schemes. Following this, we study the compact analogs of this problem and prove the existence of exotic motivic spheres (\cref{sect:exotic-motivic-spheres}) in all dimensions $\ge 4$.
\end{itemize}

% \afterpage{\blankpage}

\begin{savequote}
Everything can be taken from a man but one thing: the last of the human freedoms — to choose one’s attitude in any given set of circumstances.
\qauthor{"Man's Search for Meaning" by Viktor Frankl}   
\end{savequote}
\chapter{Motivic Homotopy Theory}\label{chp2}
\markboth{II Motivic Homotopy Theory}{}

{\small
{\fontfamily{bch}\selectfont
\subsubsection{\textsc{Chapter Summary}}
In this chapter, we establish the foundations of (unstable) motivic homotopy theory. In \cref{subsec:unstable-construction}, we construct the unstable $\AA^1$-homotopy category $\mcal{H}(S)$ after providing the necessary motivation and the framework of simplicial Nisnevich sheaves to build such a category. We describe the crucial motivic localization functor $\L_{mot}$ in \cref{subsec:L-mot} and explain its ability to retain both the other localizations, namely $\L_{\AA^1}$ and $\L_{Nis}$. The $\AA^1$-model structure is dealt with in \cref{subsec:A1-model-structure}. In the sequel, we define the central jargon of our script - the $\AA^1$-contractibility in \cref{intro:A1-contr} and various important revolving notions such as $\AA^1$-connectedness (\cref{subsec:A1-connectedness}), $\AA^1$-rigidity (\cref{subsec:A1-rigid-schemes}), and $\AA^1$-chain connectedness (\cref{subsec:A1-chain-connectedness}). This brings us to the fundamental notion of functoriality (\cref{sect:functoriality}), which plays a key role in motivic homotopy theory. We then apply the functoriality in the case when the space is $\AA^1$-contractible and obtain preliminary results for future applications. \cref{intro:rel-A1-contr} and subsequent parts will play a pivotal role in \cref{chp4}, as they provide a framework for working relative to a base scheme. We finally conclude with the necessary background on Milnor-Witt $K$-theory in \cref{sect:Milnor-Witt}, which finds its application in \cref{chp5}.
}}

% --------------------------------------------------------------------
\section{The \texorpdfstring{$\mathbb{A}^{1}$}{A1}-Homotopy Category} 
\label{sect:unstable-A1-category}
In this section, we begin with the fundamentals of the construction of the unstable $\AA^1$-homotopy theory (a.k.a. Motivic Homotopy Theory) associated to the category of smooth, separated schemes of finite type over a base scheme $S$. To describe the homotopy theory of motivic spaces, one takes two approaches: the traditional approach via the language of model categories (cf. \cref{app:model-cats}) as established by Quillen \cite{quillen1967homotopical} or the relatively modern language of $(\infty,1)$-categories (e.g., \cite{lurie2009HTT}). As far as our script is concerned, we will freely use both of these approaches while predominantly using the former approach. The latter approach using the $\infty$-machinery will be exploited at instances where it makes the context more tractable. Nevertheless, the reader should worry less about this aspect as all our formulations can very well be illustrated via both languages using modern bridges that exist between these two worlds (one such bridge is here: \cite[\S 2]{robalo2015Motives}). For an exposition-type background, see \cite{brazelton2018inftycats, deglise2021intoductory}.
\medskip

\textbf{\textsc{Running Convention}} Throughout this chapter, unless mentioned otherwise: $S$ will be a locally Noetherian base scheme of finite Krull dimensions, $k$ will denote a field with its units $k^\times$, $\Sch_S$ = separated schemes of finite type  $S$, $\Sm_S$ = smooth separated schemes of finite type over $S$.

\subsubsection{Why build on simplicial presheaves and not on schemes themselves?}
The $\AA^1$-homotopy category on $\Sm_S$ is built on the category of simplicial presheaves $\sPsh(\Sm_S)$. It is natural to ponder why not define it directly on $\Sm_S$. The reason is that though the category $\Sm_S$ is complete (i.e., it has all limits) but it is very far from being co-complete (i.e., it lacks colimits), and it is common knowledge that colimits are indispensable to build a homotopy theory on any category. A convenient solution is to "formally add" the desired colimits to $\Sm_S$. In the categorical language, this is equivalent to taking the \emph{presheaf category on $\Sm_S$}. This is achieved by the Yoneda embedding defined via the following fully faithful functor,
\begin{align*}
    & Y: \Sm_S \hookrightarrow \Fun((\Sm_S)^{op}, \mathcal{S}et)=: \Psh(\Sm_S) \\
    & \hspace{11mm} X \mapsto \bigg(U \mapsto R_U  :=  \Hom_{\Sm_S}(U, X) \bigg)
\end{align*}
Here, $R_U$ is the representable presheaf functor\footnote{also called the \emph{contravariant $\Hom$} functor} on $U$. The advantage is that we can now exploit the model categorical framework already built for the category of presheaves (explore more in \cref{app:model-on-sSets}). Any presheaf can be viewed as a simplicial presheaf when equipped with the "discrete" simplicial structure. Summing up, we have a series of functors
        $$\Sm_S\xhookrightarrow{Y} \Psh(\Sm_S) \xrightarrow{D} \sPsh(\Sm_S)$$
where $D$ represents the 'discrete' simplicial structure on the category of presheaves. However, this approach has not yet another issue. Let $U, V\subset X$ be an open (Zariski) subscheme of a scheme $X$ such that $X= U\cup V$. Then consider the following squares
\[ \begin{tikzcd}
 U\cap V \arrow[r] \arrow[d] &  U \arrow[d] \\
 V \arrow[r] & X
 \end{tikzcd} 
\hspace{8mm}
\overset{Y}{\longrightarrow}
\hspace{10mm}
\begin{tikzcd}
 R_{U\cap V} \arrow[r] \arrow[d] &  R_U \arrow[d] \\
 R_V \arrow[r] & R_X
\end{tikzcd} 
 \]                    
The first square (on the left) is a pushout in $\Sm_S$, where $U\cap V$ is the fiber product of schemes. However, the corresponding square (on the right) obtained via taking the representable presheaves does not stay as a pushout in $\Psh(\Sm_S)$ unless $X=U$ or $X=V$. In other words, by a priori defining spaces as presheaves, the union as presheaves $U \cup_{\Psh} V$ is not the same as the categorical union $X$. Recall from classical topology that squares that satisfy this analogous property are called \emph{Mayer-Vietoris squares}. Stated in analogy, this issue can be seen as a \emph{failure of Mayer-Vietoris property} for schemes. A suitable way to resolve this is to modify the squares themselves as follows.
\begin{defn}\label{EDS}
An \emph{Elementary Distinguished Square} (EDS, for short) in $\Sm_S$ is a square of the form 
\[\begin{tikzcd}
 p^{-1}(U) \arrow[r] \arrow[d] &  V \arrow[d,"p"] \\
 U \arrow[r,"j"] & X
\end{tikzcd}\]
such that $p$ is an \'etale morphism, $j$ is an Zariski open immersion such that the pullback 
        $$p^{-1}(X\bs U) \xrightarrow{\cong} X\bs U$$ 
is an isomorphism (here $X \bs U$ is equipped with the reduced subscheme structure with support in the closed subset $X \bs U$).
\end{defn}

\begin{example}
An important class of EDS is provided by Zariski open coverings of the form $X= U\cup V$. Here, one takes $ p=j_V$ as an open embedding and the condition that $p^{-1}(X\bs U)\to X\to U$ is an isomorphism is equivalent to the condition that $U\cup V= X$. 
\end{example}

One can easily see that an EDS is a pushforward in $\Sm_S$. In other words, $X$ is recovered as the colimit of the diagram $U\leftarrow p^{-1}(U) \to V$. This guarantees us the Mayer-Vietoris-type property for these kinds of squares. Now, recall that an open covering for schemes generalizes the notion of open coverings via families of maps, which is formulated via the notion of Grothendieck topologies (see \cref{App;Groth-Top}). The strategy to coin a suitable category of spaces that also includes the analogous Mayer-Vietoris style property is to find a Grothendieck topology that is generated by these EDS as its "open coverings". To our fortune, there does exist such a topology and even more, it is super-convenient in motivic homotopy theory (cf. \cref{App:Nisnevich-useful})

\subsubsection{The Nisnevich topology}\label{sect:Nisnevich}
The \emph{Nisnevich topology}, eponym to Yeo A. Nisnevich, appeared in \cite{nisnevich1989top} as the \emph{completely decomposed topology} or the cd-topology, whose initial motivation was to construct certain descent for spectral sequences converging to various algebraic $K$-theory groups. For our purposes, we will only need the definitions and some properties of this topology. The following is one of the several equivalent definitions (see \cref{App:Nisnevich-top}) of Nisnevich topology due to \cite[\S 3.1]{MV99}.

\begin{defn}
A finite family of \'etale morphisms $\{p_i: U_i \to  X\}_{i\in I}$ is a \emph{Nisnevich cover} if and only if for each point $x\in X$ and some index $I$, there is an $i\in I$ and a point $y \in U_i$ with $p_i (y_i)=x_i$ such that the induced map on the corresponding residue fields $\kappa(x)\to \kappa(y)$ is an isomorphism. The \emph{Nisnevich topology} on $\Sm_S$ is the Grothendieck topology generated by these Nisnevich covers.
\end{defn}

Any Zariski cover is a Nisnevich cover. On the other hand, an \'etale cover is Nisnevich only if it is surjective on $k$-points for all fields $k$, as illustrated by the following example (\cite[Example 3.41]{antieau2017primer}).

\begin{example}\label{eg:Nisnevich-cover}
Let $k$ be a field with characteristic unequal to 2 and let $a\in k^{\times}$. Consider a covering of $\AA^1$ by $j: \AA^1\bs \{a\}\to \AA^1$ and an \'etale map $p: \AA^1\bs \{0\}\to \AA^1$ defined by $x\mapsto x^2$. Then this \'etale cover is Nisnevich if and only if $a=b^2$, for some $b\in k$.
\end{example}

An EDS as in \cref{EDS} is closely related to Nisnevich covers as follows (\cite[Example 3.45]{antieau2017primer}).
\begin{example}
The Nisnevich cover from the \cref{eg:Nisnevich-cover} is not an EDS. However, upon removing one of the square roots of $a$ from $\AA^1\bs \{0\}$
\[\begin{tikzcd}
  &  \AA^1\bs \{0,b\} \arrow[d,"p"] \\
 \AA^1\bs\{a\} \arrow[r,"j"] & \AA^1
\end{tikzcd}\]
The square becomes an EDS. For this reason, one sometimes calls an EDS the \emph{Nisnevich distinguished square}.
\end{example}

The following furnishes the definition of a presheaf to be a sheaf in the Nisnevich topology (\cite[Definition 2.2]{voevodsky1998A1}).
\begin{defn}\label{defn:Nis-sheaf}
A contravariant functor $\mcal{F}:\Sm_S^{op} \to \Set$ is called a \emph{Sheaf in Nisnevich topology} if the following two conditions hold:
\begin{enumerate}
\item $\mcal{F}(\emptyset) = *$ (takes the initial to the final object),
\item for any EDS as in \cref{EDS}, the square of $\Set$
      \[\begin{tikzcd}
         \mcal{F}(X) \arrow[r] \arrow[d] &  \mcal{F}(V) \arrow[d,"p"] \\
         \mcal{F}(U) \arrow[r,"j"] & \mcal{F}(p^{-1}(U))
        \end{tikzcd}\]
is cartesian, that is, $\mcal{F}(X) \simeq \mcal{F}(U)\times_{\mcal{F}(p^{-1}(U)} \mcal{F}(V)$.
\end{enumerate}
\end{defn}
Presheaves satisfying the conditions of \cref{defn:Nis-sheaf} are said to satisfy the \emph{Brown-Gersten property} or the \emph{BG property} (\cite[P. 99]{MV99}), which can be seen as the analogous excision property for the Nisnevich topology. The terminology is chosen so that the authors in \cite{brown2006algebraic} studied this in the Zariski site. The following affirms that Nisnevich topology is indeed the right choice if we expect to have a Mayer-Vietoris type result in the $\AA^1$-homotopy category.
\begin{lemma}
Any EDS is a co-cartesian square in the category $\Shv(\Sm_S)$. In particular, the canonical morphism  
        $$V/(U \times_X V) \to X/U$$ 
is an isomorphism of Nisnevich sheaves.
\end{lemma}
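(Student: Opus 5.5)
The plan is to use the Yoneda lemma in the category of Nisnevich sheaves. A square of sheaves
\[\begin{tikzcd}
 R_{p^{-1}(U)} \arrow[r] \arrow[d] & R_V \arrow[d] \\
 R_U \arrow[r] & R_X
\end{tikzcd}\]
is co-cartesian in $\Shv(\Sm_S)$ exactly when, for every Nisnevich sheaf $\mcal{F}$, applying $\Hom_{\Shv}(-,\mcal{F})$ yields a cartesian square of sets. By Yoneda, $\Hom(R_Y,\mcal{F}) = \mcal{F}(Y)$ for every smooth $Y$, and the Nisnevich topology is subcanonical, so representables are already sheaves and need no sheafification. The resulting square of sets is then precisely the square in \cref{defn:Nis-sheaf}(2), which is cartesian by that very definition. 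This settles the first claim.

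For the second claim, I would first fix the meaning of $X/U$: it is the sheaf-theoretic pushout $R_X \sqcup_{R_U} *$, and likewise $V/(U\times_X V) = R_V \sqcup_{R_{p^{-1}(U)}} *$. Here $U\times_X V = p^{-1}(U)$. I would then use the pasting lemma for pushouts. Paste the co-cartesian EDS square with the pushout square collapsing $R_U$ to a point. The composite rectangle is then co-cartesian, so
\[
 R_V \sqcup_{R_{p^{-1}(U)}} * \;\cong\; R_V\sqcup_{R_{p^{-1}(U)}} R_U \sqcup_{R_U} * \;\cong\; R_X\sqcup_{R_U} *,
\]
and one checks that this isomorphism is induced by $p$, i.e.\ it is the canonical morphism.

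The main subtlety is making sure the colimits are taken in sheaves, not presheaves. As the paper remarked earlier, the presheaf pushout fails even for Zariski covers, so one must sheafify. Colimits in $\Shv$ are the sheafifications of presheaf colimits, and sheafification is a left adjoint, so it commutes with pushouts; the pasting argument therefore goes through in $\Shv$.

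I would also mention the concrete alternative, which is to check the isomorphism on Nisnevich stalks, i.e.\ on henselian local schemes. Consider a henselization $\Spec \mcal{O}^h_{X,x}$. If $x\in U$, both quotients are a point. Otherwise the section through $X\setminus U$ lifts through $p$, because $p$ is an isomorphism over $X\setminus U$ and the base is henselian. This gives surjectivity. Injectivity uses the uniqueness of such lifts together with $p^{-1}(X\setminus U)\cong X\setminus U$. I expect this stalkwise check to be the only delicate point if one avoids the formal argument, so I would rely on the formal Yoneda/pasting argument.
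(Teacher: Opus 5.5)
Your argument is correct and is essentially the proof behind the paper's one-line citation of \cite[Lemma 1.6]{MV99}: cocartesianness follows from Yoneda together with the Brown--Gersten condition built into \cref{defn:Nis-sheaf}, and the quotient statement follows from pasting the EDS square with the pushout defining $X/U$. The only blemish is your stalkwise aside, which you do not need and which is misstated: stalks must be computed at arbitrary henselian local essentially smooth $T$, where the stalk of $X/U$ is $X(T)/U(T)$ (so it is not a point merely because some $x$ lies in $U$), and the henselian lifting argument has to be run on each $T$-point of $X$ whose closed point lands in $X\setminus U$.
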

\begin{proof}
See \cite[Lemma 1.6]{MV99}.    
\end{proof}

\subsection{The Unstable Construction}\label{subsec:unstable-construction}
\textsc{SLOGAN:} The $\AA^1$-homotopy theory is the homotopy theory of simplicial presheaves on $\Sm_S$ satisfying Nisnevich descent and $\AA^1$-invariance.
\medskip

In this section, we will introduce the fundamental tools that will aid us in constructing the category of motivic spaces $\Spc_S$ and eventually the unstable motivic homotopy category $\mcal{H}(S)$. For the sake of exposition, we will use the language of model categories. Among all the popular model structures on motivic spaces, the \emph{$\AA^1$-local projective model structure} is conveniently suited in terms of functoriality, and so we will make use of this model structure. For various interesting accounts on the available model structures on simplicial presheaves, the reader can refer to \cite{blander2001local, dugger2001universal, dundas2003motivicfunctors, dugger2004hypercovers, isaksen2005flasque}.

\subsubsection{Bousfield Localizations}
The advantage of using the local projective model structure is that every scheme is both a local projective fibrant and cofibrant as a presheaf and sheaf. However, we have that the category $\Sm_S$ embeds fully faithfully in the local projective homotopy category of simplicial presheaves; thus, to have a distinguished identification of smooth schemes in $\sPsh(\Sm_S)$, we would need to further enlarge the class of (local projective) weak equivalences. This is done via the process called \emph{Bousfield localizations}, which can be thought of as an analogous localization of rings from commutative algebra (see \cref{app:Bousfield} for a background). To build the category of spaces, we need to undergo two such localizations: the Nisnevich localization and the $\AA^1$-localization that we describe below.

\subsubsection{Nisnevich descent}
Consider the category of simplicial presheaves $\sPsh(\Sm_S)$ equipped with the Nisnevich topology. Let $U_{\bullet}$ be an object in $\sPsh(\Sm_S)$ such that $U_n$ is a presheaf of sets in $\sPsh(\Sm_S)$, for each $n$. Suppose that the map $U \to V$ is a Nisnevich cover, then one defines the \v{C}ech complex $\check{U}\to V$ associated to $U\to V$ by defining $\check{U}_n:= U\times_V\dots \times_V U$\footnote{the self-product of $U$ taken $(n+1)$-times over $V$}. Then the Bousfield localization of the $\sPsh(\Sm_S)$ with respect to this \v{C}ech covering exists and which is called the \emph{Nisnevich localization}.

\begin{theorem}(\cite[Theorem 3.33]{antieau2017primer})
The Bousfield localization of $\sPsh(\Sm_S)$ with respect to the class of \v{C}ech covers $\check{U}_{\bullet}\to V$ exists. We denote this localization functor by $\L_{Nis} \sPsh(\Sm_S)$.
\end{theorem}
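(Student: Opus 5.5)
The plan is to construct the localization by the general existence theorem for left Bousfield localizations of combinatorial, left proper, simplicial model categories (Hirschhorn, Smith). The set of maps to invert is taken to be essentially small.

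\textbf{Step 1: the underlying model category.} Equip $\sPsh(\Sm_S)$ with the global projective model structure. Weak equivalences and fibrations are defined objectwise, and the generating cofibrations are $\partial\Delta^n\times R_X\to\Delta^n\times R_X$ with $X\in\Sm_S$.

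This structure has the needed properties:
\begin{itemize}
\item it is cofibrantly generated and locally presentable, because $\Sm_S$ is essentially small (schemes of finite type over $S$ up to isomorphism form a set), so it is combinatorial;
\item it is left proper, since every projective cofibration is an objectwise cofibration, $\sSet$ is left proper, and pushouts are computed objectwise;
\item it is simplicial, via the objectwise simplicial enrichment.
\end{itemize}

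\textbf{Step 2: reduce the localizing class to a set.} We want to invert all \v{C}ech covers $\check{U}_\bullet\to V$ attached to Nisnevich covers $U=\coprod U_i\to V$. Every Nisnevich cover of $V\in\Sm_S$ consists of finitely many \'etale maps of finite type. The isomorphism classes of such data therefore form a set $C$, and isomorphic covers give isomorphic \v{C}ech nerves. So inverting the set $C$ inverts the whole class.

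If one prefers, the earlier lemma identifying the Nisnevich topology as generated by elementary distinguished squares allows a further reduction. One can localize instead at the set of maps from the homotopy pushout $R_U\cup^h_{R_{p^{-1}(U)}}R_V$ to $R_X$, one for each EDS. This is equivalent to the \v{C}ech localization by the Brown–Gersten/Morel–Voevodsky descent criterion, though for mere existence this reduction is not needed.

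\textbf{Step 3: apply the existence theorem.} Invoke Hirschhorn's Theorem 4.1.1 (or Barwick/Smith for combinatorial left proper model categories). The left Bousfield localization $\L_C\sPsh(\Sm_S)$ exists. Its cofibrations are the projective ones, and its fibrant objects are the objectwise Kan presheaves $\mcal{F}$ for which $\mcal{F}(V)\to \operatorname{holim}\mcal{F}(\check U_\bullet)$ is an equivalence for every cover in $C$. That is exactly Nisnevich (\v{C}ech) descent. Define $\L_{Nis}\sPsh(\Sm_S)$ to be this localization, and take the localization functor to be a fibrant replacement there.

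The main obstacle is Step 2: the set-theoretic smallness of the localizing class together with cofibrancy of the \v{C}ech nerves. The homotopy-invariant way to handle cofibrancy is to localize at cofibrant replacements of the maps $\check U_\bullet\to V$. Then the localization depends only on their homotopy classes, and representables are already projectively cofibrant. Left properness is routine, but it must still be checked, since the theorem requires it.
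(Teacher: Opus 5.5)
Your proposal is correct and matches the argument behind the cited result; the paper gives no proof of its own. Antieau--Elmanto obtain Theorem 3.33 by applying Hirschhorn's existence theorem for left Bousfield localizations to the left proper, cellular (combinatorial), simplicial projective model structure on $\sPsh(\Sm_S)$, using that the \v{C}ech covers form an essentially small class because $\Sm_S$ is essentially small; your added remarks on cofibrant replacement of the \v{C}ech nerves and on the EDS reduction are harmless refinements not needed for existence.
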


We say that a simplicial presheaf $X\in \sPsh(\Sm_S)$ satisfies \emph{Nisnevich descent} if for every \v{C}ech covers $\check{U}_{\bullet}\to X$, we have that the                     $${\rm{hocolim}_n\ U_n \xrightarrow{\simeq} X} $$ 
is a weak equivalence of simplicial presheaves. More generally, one considers descent with respect to more general covers than the \v{C}ech covers in any Grothendieck topology $\tau$, which are called hypercovers. Recall that a covering $U_{\bullet} \to X$ is called a \emph{hypercover} if each $U_n$ is a coproduct of representables, the induced map $U_0\to X_0$ is a $\tau$-cover and each $U^{\Delta^n}\to U^{\partial\Delta^n}$ is a $\tau$-cover in degree 0.

% -----------------------------------------------
\subsubsection{$\AA^1$-Localization: inverts $\AA^1$}
At this point, it is not clear as how one should define contractible schemes in this category. To construct a homotopy theory on $\sPsh(\Sm_S)$ that captures the underlying structure of $\Sm_S$, we need to enlarge weak equivalences that are more intrinsic and that are parametrized by the objects of $\Sm_S$ itself. To remedy this, we enlarge the category $\sPsh(\Sm_S)$ with respect to another natural class of morphisms. Recall that in topology, homotopies are parametrized with respect to an interval $[0,1]$. As $[0,1]$ is not an algebraic variety, we replace it by the affine line $\AA^1$. Let $I$ denote the class of maps $\{\AA^1\times U\to U \mid U\in \Sm_S\}$ in $\sPsh(\Sm_S)$. Alternatively, one can also choose\footnote{this is possible since $\Sm_S$ is essentially small} a subset $J\subseteq I$ such that $X\in \Sm_S$ varies over a representative of each isomorphism class of $\Sm_S$ and we will need to invert these classes of morphism as well. This process is called the \emph{$\AA^1$-localization}. In other words, all the projection maps 
            $$\{\pr_2: \AA^1\times U\to U\mid U\in \Sm_S\}$$
would have now been inverted in the to-be-constructed $\AA^1$-homotopy category of schemes.

\begin{defn}\label{defn:A1-invar-sheaf}
Let $\mcal{F} \in \sPsh(\Sm_S)$. Then we say that $\mcal{F}$ is said to satisfy \emph{$\AA^1$-invariance} if the induced morphism 
    $$\pr^*: \mcal{F}(X) \to \mcal{F}(X\times\AA^1)$$ 
of the natural projection $X \times \AA^1\to X$ is a bijection. Such presheaves are said to be $\AA^1$-invariant and we denote this subcategory of $\AA^1$-invariant presheaves by $\Psh_{\AA^1}(\Sm_S) \subset \Psh(\Sm_S)$.
\end{defn}

\begin{example}
Interestingly, note that not all (even representables!) presheaves are $\AA^1$-invariant. The affine scheme $\GG_m$ represents the sheaf of units and is $\AA^1$-invariant (cf. \cref{Gm-A1-rigid}). On the other hand, the affine line $\AA^1$ represents the sheaf of global sections, but it is not $\AA^1$-invariant (cf. \cref{eg:nonA1rigid-A1&P1}).
\end{example}

\begin{remark}\label{rem:A1-invar:onto}
Note that since $\pr_2: \AA^1_k \times U \to U$ has a section, it is always split injective. Hence, $\mathcal{F}$ is $\AA^1$-invariant provided $\pr^*$ is surjective. Moreover, observe that $\mathcal{F}$ is $\AA^1$-invariant if and only if the induced sections $i_0^* = i_1^*: \mathcal{F}(\AA^1_k \times U) \to \mcal{F}(U)$ agree, for all $U\in \Sm_k$ (\cite[Lemma 2.16]{mazza2006lecture}). 
\end{remark}

\begin{prop}(\cite[Proposition 3.55]{antieau2017primer})
The Bousfield localization of $\sPsh(\Sm_S)$ with respect to the class of morphisms $J$ exists. We denote this $\AA^1$-localized category by $\L_{\AA^1} \sPsh(\Sm_S)$.
\end{prop}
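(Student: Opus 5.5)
The plan is to derive the existence of $\L_{\AA^1}\sPsh(\Sm_S)$ from Hirschhorn's existence theorem for left Bousfield localizations. That theorem says: if $\mathcal{M}$ is a left proper, cellular (or, in Smith's version, combinatorial) simplicial model category and $J$ is a \emph{set} of morphisms, then the left Bousfield localization $\L_J\mathcal{M}$ exists. Its cofibrations are those of $\mathcal{M}$, and its weak equivalences are the $J$-local equivalences. So there are three things to check: the model structure we localize is of the required type, $J$ is a set, and the localization is the one we want.

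\emph{Step 1: the ambient model structure.} We work with the projective model structure on $\sPsh(\Sm_S)$, or its Nisnevich-local version $\L_{Nis}\sPsh(\Sm_S)$ provided by the preceding theorem. The projective structure is combinatorial and simplicial, since it is a diagram category in $\sSet$ with generating (trivial) cofibrations $R_U\times\partial\Delta^n\to R_U\times\Delta^n$ (respectively $R_U\times\Lambda^n_k\to R_U\times\Delta^n$). It is also cellular, with representables $R_U\times\Delta^n$ compact. It is left proper because $\sSet$ is left proper and pushouts and weak equivalences are computed objectwise. Left Bousfield localization preserves being combinatorial, simplicial and left proper, so the Nisnevich-local structure inherits these properties. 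This is where the essential smallness of $\Sm_S$ is used: it makes the generating sets genuinely sets.

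\emph{Step 2: $J$ is a set.} By the footnote, we choose one representative $U$ from each isomorphism class of $\Sm_S$. Since $\Sm_S$ is essentially small, $J=\{\pr_2:\AA^1\times U\to U\}$ is then a set rather than a proper class. Inverting $J$ also inverts all of $I$: every member of $I$ is isomorphic in the arrow category to a member of $J$, and local equivalences are closed under isomorphism.

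\emph{Step 3: apply the existence theorem.} Hirschhorn's theorem (Thm.~4.1.1), or Barwick's combinatorial version, now gives $\L_{\AA^1}\sPsh(\Sm_S)$. Its fibrant objects are the projective (Nisnevich) fibrant $\mcal{F}$ for which $\Map(U,\mcal{F})\to\Map(\AA^1\times U,\mcal{F})$ is a weak equivalence for every $U$, i.e.\ $\mcal{F}(U)\to\mcal{F}(\AA^1\times U)$ is a weak equivalence. By Yoneda this is the homotopical form of the $\AA^1$-invariance in \cref{defn:A1-invar-sheaf}. Alternatively, one can present the localization as $\infty$-categorical: it is the accessible localization of the presentable $\infty$-category $\Psh(\Sm_S)$ at a small set of maps (Lurie, HTT 5.5.4.15).

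The main obstacle is set-theoretic, namely Step 2 together with left properness. If $J$ were allowed to be a proper class, the small object argument would fail and the localization need not exist. I would resolve this by the explicit choice of representatives described in Step 2 and by citing left properness of projective simplicial presheaves. The remaining verifications are routine citations.
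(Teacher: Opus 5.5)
Your proposal is correct and takes essentially the same route as the paper. The paper gives no argument of its own; it cites \cite[Proposition 3.55]{antieau2017primer}, which rests on the general existence theorem for left Bousfield localizations of a left proper combinatorial (or cellular) simplicial model category at a \emph{set} of maps. The paper's appendix records that theorem via Lurie's HTT A.3.7.3, and the paper makes $J$ a set by choosing representatives of isomorphism classes in the essentially small category $\Sm_S$. Your checks of left properness, combinatoriality and the set-theoretic point, together with your description of the fibrant objects as the $\AA^1$-invariant fibrant presheaves, are exactly the content that citation packages.
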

The $\AA^1$-localization functor $\L_{\AA^1}: \Psh(\Sm_S)\to \Psh_{\AA^1}(\Sm_S)$ is the left adjoint to the inclusion functor $\iota: \Psh_{\AA^1}(\Sm_S)\hookrightarrow \Psh(\Sm_S)$.
\medskip

With these two localizations set in place, we can now finally define the motivic homotopy category.

\begin{defn}\label{defn:MHT}
For a base scheme $S$, the category of \emph{motivic $S$-spaces}, denoted $\Spc_S$, is the category of simplicial presheaves over $\Sm_S$. The \emph{$\AA^1$-homotopy category}, denoted $\HH(S)$, is the homotopy category of $\Spc_S$ obtained by imposing the Nisnevich descent and the $\AA^1$-localization
\begin{align*}
    \HH(S) := \Shv_{Nis}(\Sm_S) \cap \Psh_{\AA^1}(\Sm_S)\subset \sPsh(\Sm_S)=: \Spc_S
\end{align*}
\end{defn}
There are also other variants that one could be interested in. For instance, by only imposing the Nisnevich descent, one forms the Nisnevich local homotopy category, denoted by $\HH(S)_{Nis}$, and similarly, by only imposing the $\AA^1$-invariance, one has the $\AA^1$-local homotopy theory $\HH(S)_{\AA^1}$. In our script, we will only be concerned about the homotopy category with both the localizations and hence only about $\HH(S)$.

\subsubsection{Pointed Motivic Spaces}\label{pointed-spaces}
One similarly has a parallel theory for the pointed $\AA^1$-homotopy category by considering the pointed versions of motivic spaces. For instance,
\begin{itemize}
\item The affine line $\AA^1$ is pointed by 1,
\item The multiplicative scheme $\GG_m$ is pointed by 1,
\item The projective line is pointed by $\infty$ (in fact, here the base point does not matter, as $(\PP^1,\infty) \simeq (\PP^1,x)$, for any $x \in \PP^1$),
\item Any general motivic space $X$ is pointed by the zero object $*$ and so, we have $X_+ := X \sqcup {*}$.
\end{itemize}
The corresponding pointed motivic spaces is denoted by $\Spc_{S,\bullet}$ and the $\AA^1$-homotopy category is denoted by $\mcal{H}(S)_{\bullet}$. There is an adjunction
    $$(-)_+: \mcal{H}(S) \leftrightharpoons \mcal{H}(S)_{\bullet}: \mcal{U}$$
where $\mcal{U}$ is the right adjoint that forgets the base point. One can invoke several operations in $\mcal{H}(S)_{\bullet}$ as one does in topology. For instance, if $Y\to X$ is a map of pointed motivic spaces, then the cofibre in $\mcal{H}(S)_{\bullet}$ can be computed as 
\[\begin{tikzcd}
	X & Y  \\
	* & Y/X
	\arrow[from=1-1, to=1-2]
	\arrow[from=1-1, to=2-1]
	\arrow[from=1-2, to=2-2]
	\arrow[from=2-1, to=2-2]
\end{tikzcd}\]
with the cofiber $Y/X$ pointed by the image of $Y$. Similarly, using the zero object $*$ in $\Spc_S$, we have suspensions in $\mcal{H}(S)_{\bullet}$
\[\begin{tikzcd}
	X & *  \\
	* & \Sigma X
	\arrow[from=1-1, to=1-2]
	\arrow[from=1-1, to=2-1]
	\arrow[from=1-2, to=2-2]
	\arrow[from=2-1, to=2-2]
\end{tikzcd}\]
for any motivic space $X$. Moreover, if $X \vee Y$ denotes the coproduct of motivic spaces, then we have the smash product in $\mcal{H}(S)_{\bullet}$ given by 
        $$X\wedge Y:= \frac{X\times Y}{X\vee Y}. $$
The stable $\AA^1$-homotopy theory ${\SH(S)}$ is then obtained by formally inverting the motivic sphere $\PP^1$ (see \cref{sec:motivic-spheres})
\begin{align*}
    \SH(S) := \mcal{H}(S)_{\bullet}[(\PP^1)^{-1}]\\
\end{align*}
which gives the following functors:
\begin{align*}
\mcal{H}(S)_{\bullet} \xrightleftharpoons[\Omega^{\infty}]{\Sigma^{\infty}} \SH(S)
\end{align*}
For more background on the stable theory, one can refer to \cite{adams1974stable, voevodsky1998A1, jardine2000motivicspectra, hovey2001spectra,dundas2003motivicfunctors, Nordfjordeid2007motivic}.

\subsubsection{$\L_{Nis} \leftrightharpoons \L_{\AA^1}$ destroy each other}
Although the definition of $\HH(S)$ is formally straightforward, there is a practical difficulty when one tries to do some computations from scratch. The localization functors $\L_{Nis}$ might destroy the $\AA^1$-invariance and $\L_{\AA^1}$ need not preserve the (Nisnevich) sheaf nature. Consider the following instance (\cite[\S 3, Example 2.7]{MV99}):

\begin{example}
Let $U_0 = \AA^1\bs \{0\}$ and $U_1= \AA^1\bs \{1\}$, and let $U_{01} = U_0\cap U_1$. Since both $U_0$ and $U_1$ are $\AA^1$-invariant (cf. \cref{Gm-A1-rigid}), it follows that $U_{01}$ is also $\AA^1$-invariant. Now, for a choice of closed embedding of $j: U_{01}\hookrightarrow \AA^n$ (for some $n$), define the coproduct
    $$F := (U_0\times \AA^n) \bigsqcup_{U_{01}}(U_1\times \AA^n)$$
which is a (non-smooth) scheme $F \in \Sch_S$. Here, the morphism $U_{01}\to U_i\times \AA^n$ is the product of $j$ with an open embedding $U_{01}\to U_i$, for $i=0,1$. Now, let $X$ be any connected scheme. Consider the sheaf
$$R_F(X):= \Hom(X,F) = \Hom_{Sch_S}(X,U_0\times \AA^n) \bigsqcup_{\Hom_{\Sm_S}(X, U_{01})} \Hom_{\Sm_S}(X, U_1\times \AA^n).$$
Since $\L_{\AA^1}$ preserves pushouts, we have that 
$$\L_{\AA^1} R_F \simeq  \L_{\AA^1}(U_0\times \AA^n) \sqcup_{\L_{\AA^1}(U_{01})}\L_{\AA^1}(U_1\times \AA^n).$$
The $\AA^1$-localization contracts $\AA^n$s and moreover due to the $\AA^1$-invariance of $U_0, U_1,$ and $U_{01}$, we get 
    $$\L_{\AA^1}(U_i) = U_i\ \ ;   \L_{\AA^1}(U_{01}) = U_{01}$$
for $i= 0,1$ and we finally obtain its (associated) $\AA^1$-invariant sheaf as
        $$\L_{\AA^1} R_F \simeq U_0 \sqcup_{U_{01}} U_1.$$
We claim that $\L_{\AA^1} R_F$ cannot be a sheaf. Suppose that it were a sheaf; then it should agree with its sheafification. But as sheafification is a left adjoint, it preserves pushouts, and hence we have 
    $$\L_{\AA^1}R_F \simeq  U_0\sqcup_{\AA^1} U_1 \simeq  \AA^1$$
which is a contradiction to the fact that $\L_{\AA^1} R_F$ is $\AA^1$-invariant, because $\AA^1$ is not (cf. \cref{eg:nonA1rigid-A1&P1})! Hence, we conclude that $\L_{\AA^1}R_F$ cannot be a sheaf.
\end{example}

\subsubsection{The $\AA^1$-Singular Functor}\label{A1-Sing-functor}
In terms of computation, a practical replacement for $\L_{\AA^1}$ functor is the motivic singular chain functor $\Sing_*^{\AA^1}$. In analogy with topology (cf. \cref{eg:n-simplex}), it is obtained from the following \emph{algebraic $n$-simplex}:
    $$\Delta^n_{\AA^1}:= \Spec \ZZ[t_0,\dots,t_n]/ \biggr(\sum_{i=0}^n t_i-1 \biggl).$$
The space $\Delta^n_{\AA^1}$ exists as a smooth affine $k$-scheme that is (non-canonically) isomorphic to $\AA^n_k$. One of the main utilities of this space is to construct the (Suslin-Voevodsky) $\AA^1$-singular chain functor
    $$\Sing_*^{\AA^1}(X) := \bigg( U \mapsto \Hom_{\Sm_k}(\Delta_{\AA^1}^{*} \times U, X)\bigg).$$
One shows that this gives a cosimplicial scheme $\Delta^{\bullet}\in \Fun(\Delta, \Sm_S)$. The following are some of the evident advantages of this functor:
\begin{lemma}
For any $F\in \Psh(\Sm_S)$, the presheaf $\Sing^{\AA^1}_*(F)$ is $\AA^1$-invariant.
\end{lemma}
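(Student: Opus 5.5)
The claim is that for every $F\in \Psh(\Sm_S)$ the simplicial presheaf $\Sing_*^{\AA^1}(F)$, given by $U\mapsto F(\Delta^\bullet_{\AA^1}\times U)$, is $\AA^1$-invariant. Here $\AA^1$-invariance is understood up to weak equivalence: for each $U$, the map $\pr^*:\Sing_*^{\AA^1}(F)(U)\to \Sing_*^{\AA^1}(F)(U\times\AA^1)$ should be a weak equivalence of simplicial sets. By \cref{rem:A1-invar:onto}, adapted to the simplicial setting, it suffices to show that the two restrictions
$$i_0^*,\ i_1^*:\Sing_*^{\AA^1}(F)(U\times\AA^1)\to \Sing_*^{\AA^1}(F)(U)$$
are simplicially homotopic. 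Indeed, $\pr\circ i_0=\Id_U$, so $i_0^*\circ \pr^*=\Id$. The other composite $\pr^*\circ i_0^*$ is induced by the map $m_0: U\times\AA^1\to U\times\AA^1$, $(u,t)\mapsto(u,0)$. This map is connected to the identity by the multiplication homotopy $\mu:U\times\AA^1\times\AA^1\to U\times\AA^1$, $(u,t,s)\mapsto (u,st)$. So it is enough to show that any two maps $f_0,f_1:V\to W$ that are $\AA^1$-homotopic through some $h:V\times\AA^1\to W$ induce simplicially homotopic maps $\Sing_*^{\AA^1}(F)(W)\to \Sing_*^{\AA^1}(F)(V)$.

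For the key step I would build the simplicial homotopy explicitly from the cosimplicial structure of $\Delta^\bullet_{\AA^1}$, copying the classical proof that singular chains turn homotopies into chain homotopies. For each $n$ and each order-preserving map $\alpha:[n]\to[1]$, viewed as an $n$-simplex of $\Delta^1$, let $\theta_\alpha:\Delta^n_{\AA^1}\to \AA^1$ be the function $\sum_{i:\alpha(i)=1} t_i$. This is the affine map sending the vertex $e_i$ to $\alpha(i)\in\{0,1\}$. These functions are compatible with the cofaces and codegeneracies, because they are affine and are determined by their values on vertices. Given $x\in F(\Delta^n_{\AA^1}\times W)$, define
$$H_\alpha(x):=(\Id_{\Delta^n}\times h)\circ(\Id_{\Delta^n}\times\Id_V\times\theta_\alpha)\;\text{pulled back to}\; F(\Delta^n_{\AA^1}\times V),$$
that is, pull $x$ back along $(\delta,v)\mapsto(\delta,h(v,\theta_\alpha(\delta)))$. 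These $H_\alpha$ should assemble into a map of simplicial sets $\Sing_*^{\AA^1}(F)(W)\times\Delta^1\to\Sing_*^{\AA^1}(F)(V)$. At the constant maps $\alpha\equiv0$ and $\alpha\equiv1$ it restricts to $f_0^*$ and $f_1^*$, since $\theta_\alpha$ is then identically $0$ or $1$.

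The main obstacle is verifying the simplicial identities for this family, namely $d_j H_\alpha=H_{\alpha\circ\delta^j}d_j$ and the analogous identity for degeneracies. This reduces to the identity $\theta_\alpha\circ\delta^j=\theta_{\alpha\circ\delta^j}$ (and similarly with $\sigma^j$), which is checked on vertices because all maps involved are affine. Once this is done, functoriality of $F$ gives the homotopy. No sheaf or $\AA^1$-invariance hypothesis on $F$ is used, so the result holds for an arbitrary presheaf $F$, including a simplicial presheaf by applying the construction levelwise and taking the diagonal.

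Finally, apply the key step with $h=\mu$, $f_0=m_0$ and $f_1=\Id$. This makes $\pr^*\circ i_0^*$ homotopic to the identity, so $\pr^*$ is a simplicial homotopy equivalence with inverse $i_0^*$. It follows that $i_0^*$ and $i_1^*$ are homotopic, both being homotopy inverse to $\pr^*$, and hence $\Sing_*^{\AA^1}(F)$ is $\AA^1$-invariant.
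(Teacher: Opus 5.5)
Your argument is correct and follows the standard Morel--Voevodsky proof that the paper cites (\cite[\S 3, Corollary 3.5]{MV99}): an elementary $\AA^1$-homotopy becomes a simplicial homotopy after applying $\Sing_*^{\AA^1}$, via the maps $\Delta^n_{\AA^1}\to\Delta^1_{\AA^1}\cong\AA^1$ induced by $\alpha\colon[n]\to[1]$ (exactly your $\theta_\alpha$), and the multiplication homotopy $(u,t,s)\mapsto(u,st)$ then makes $\pr^*$ a simplicial homotopy equivalence with inverse $i_0^*$. You are also right to read $\AA^1$-invariance here as ``$\pr^*$ is a weak equivalence'' rather than a bijection, since $\Sing_*^{\AA^1}(F)$ is not strictly $\AA^1$-invariant in general.
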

\begin{proof}
This is \cite[\S 3, Corollary 3.5]{MV99}.
\end{proof}

\begin{prop}
If $F\in \Psh_{\AA^1}(\Sm_S)$, then $F\to \Sing_*^{\AA^1}(F)$ is an equivalence of presheaves.
\end{prop}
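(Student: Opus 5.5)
We need to show that for an $\AA^1$-invariant presheaf $F$, the canonical map $F\to \Sing_*^{\AA^1}(F)$ is an equivalence, i.e. a weak equivalence of simplicial presheaves. The plan is to compute sectionwise. For each $U\in \Sm_S$, the simplicial set $\Sing_*^{\AA^1}(F)(U)$ has $n$-simplices $F(\Delta^n_{\AA^1}\times U)$, with faces and degeneracies induced by the cosimplicial structure of $\Delta^\bullet_{\AA^1}$. The canonical map $F(U)\to \Sing_*^{\AA^1}(F)(U)$ includes $F(U)$ as the constant simplicial set, via pullback along the projections $\Delta^n_{\AA^1}\times U\to U$. It therefore suffices to show that for every $U$ this map of simplicial sets is a weak equivalence; sectionwise weak equivalences are weak equivalences in every model structure under consideration.

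\textbf{Step 1: the simplicial set is constant.} Since $\Delta^n_{\AA^1}\cong \AA^n$ (non-canonically, but any isomorphism over $U$ will do), iterate $\AA^1$-invariance. By \cref{defn:A1-invar-sheaf}, $\pr^*:F(V)\to F(V\times \AA^1)$ is a bijection for every $V$. Applying this with $V=U\times\AA^{n-1}$ and inducting on $n$, the projection $p_n:\Delta^n_{\AA^1}\times U\to U$ induces a bijection $p_n^*:F(U)\xrightarrow{\cong}F(\Delta^n_{\AA^1}\times U)$ for every $n$.

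\textbf{Step 2: compatibility with the simplicial maps.} Every cosimplicial structure map $\theta:\Delta^m_{\AA^1}\to \Delta^n_{\AA^1}$ is a map over $\Spec \ZZ$, so $p_n\circ(\theta\times \Id_U)=p_m$. Hence $(\theta\times\Id_U)^*\circ p_n^*=p_m^*$. Combined with Step 1, this shows that every simplicial operator of $\Sing_*^{\AA^1}(F)(U)$ is a bijection compatible with the identifications $p_n^*$. So the map from the constant simplicial set $F(U)$ is a levelwise bijection of simplicial sets, hence an isomorphism and in particular a weak equivalence. The identification is natural in $U$, because the $p_n$ are natural, so we get an isomorphism of simplicial presheaves. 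If $F$ is itself a simplicial presheaf that is levelwise $\AA^1$-invariant, the same argument applies to each level; realizing the resulting bisimplicial map diagonally then gives a levelwise isomorphism.

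The only delicate point is Step 1's passage from $\AA^1$ to $\AA^n$ and to the non-canonical identification $\Delta^n_{\AA^1}\cong\AA^n$. This is handled by noting that $\AA^1$-invariance applies to all $V\in\Sm_S$, including $V=U\times\AA^{n-1}$. Bijectivity of $p^*$ does not depend on the chosen coordinates, since any isomorphism $\Delta^n_{\AA^1}\times U\cong \AA^n\times U$ over $U$ commutes with the projections to $U$. No further obstacle is expected.
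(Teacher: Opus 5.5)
Your proof is correct and is the standard argument behind the reference the paper gives, since the paper offers no argument of its own and simply cites \cite[P.~87]{MV99}: iterated $\AA^1$-invariance gives bijections $p_n^*\colon F(U)\to F(\Delta^n_{\AA^1}\times U)$ compatible with all cosimplicial structure maps, so $\Sing_*^{\AA^1}(F)$ is the constant simplicial presheaf on $F$ and the canonical map is even an isomorphism. Your extension to levelwise $\AA^1$-invariant simplicial presheaves via the diagonal is also fine under the paper's definition of $\AA^1$-invariance, which requires $\pr^*$ to be a bijection.
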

\begin{proof}
See \cite[P. 87]{MV99}.
\end{proof}

\begin{prop}\label{Sing=L_A1:equiv}
The canonical morphism $\L_{\AA^1} \to \Sing_*^{\AA^1}$ is an equivalence of functors.
\end{prop}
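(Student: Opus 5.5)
We will show that for every $F\in\Psh(\Sm_S)$ the canonical map $F\to\Sing_*^{\AA^1}(F)$ is an $\AA^1$-weak equivalence whose target is $\AA^1$-local. By uniqueness of localizations, this identifies $\Sing_*^{\AA^1}$ with $\L_{\AA^1}$. Concretely, both functors come with a natural transformation from the identity. A map $F\to G$ exhibits $G$ as the $\AA^1$-localization of $F$ exactly when $G$ is $\AA^1$-invariant and $F\to G$ is an $\AA^1$-equivalence. The comparison morphism $\L_{\AA^1}F\to\Sing_*^{\AA^1}F$ is therefore obtained from the universal property and is then an equivalence by two-out-of-three.

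The first half is already available. By the lemma just stated (\cite[\S 3, Corollary 3.5]{MV99}), $\Sing_*^{\AA^1}(F)$ is $\AA^1$-invariant for every $F$, so it is a local object in $\L_{\AA^1}\sPsh(\Sm_S)$. The preceding proposition also gives that $\L_{\AA^1}$-local objects are fixed by $\Sing_*^{\AA^1}$ up to equivalence. This will be used to check that the comparison map is natural and compatible on local objects.

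The second half, and the main step, is to show that $F\to\Sing_*^{\AA^1}(F)$ is an $\AA^1$-weak equivalence. I would write $\Sing_*^{\AA^1}(F)$ as the diagonal, equivalently the homotopy colimit over $\Delta^{op}$, of the simplicial object $n\mapsto \underline{\Hom}(\Delta^n_{\AA^1},F)$. Since $\Delta^n_{\AA^1}\cong\AA^n$, each degeneracy/projection $F\to\underline{\Hom}(\Delta^n_{\AA^1},F)$ is an $\AA^1$-equivalence. To see this, I would reduce to representables $F=R_X$ by writing $F$ as a homotopy colimit of representables, using that both sides commute with homotopy colimits. For representables the map $X\to\underline{\Hom}(\AA^n,X)$ is $\AA^1$-homotopy equivalent to the identity, via the contraction $\AA^n\times\AA^1\to\AA^n$, $(t,s)\mapsto st$, which furnishes an explicit $\AA^1$-homotopy. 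Levelwise $\AA^1$-equivalences between simplicial objects induce an $\AA^1$-equivalence on homotopy colimits, because $\L_{\AA^1}$-equivalences are closed under homotopy colimits in a left Bousfield localization. The map from the constant simplicial object $F$ then gives $F\simeq\operatorname{hocolim}F\to\Sing_*^{\AA^1}F$ as an $\AA^1$-equivalence.

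I expect the main obstacle to be the homotopy-colimit argument. One must check that the diagonal of the bisimplicial presheaf computes the homotopy colimit in the projective model structure, and that the $\AA^1$-homotopy for the inner hom is natural enough to assemble. If this becomes delicate, I would instead cite \cite[\S 2, Corollary 3.8]{MV99}, where the map to $\Sing_*$ is shown to be an $\AA^1$-equivalence via the interval-object formalism, and then conclude as above.
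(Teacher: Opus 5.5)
Your proposal is correct and is essentially the paper's route. The paper's proof is just the citation \cite[Corollary 3.8]{MV99}. Combined with the $\AA^1$-invariance lemma stated just before, that citation is exactly your pairing of ``$\Sing_*^{\AA^1}(F)$ is $\AA^1$-local'' with ``$F\to\Sing_*^{\AA^1}(F)$ is an $\AA^1$-weak equivalence''. The second fact is proved in \cite{MV99} along the lines you sketch: levelwise strict $\AA^1$-homotopy equivalences $F\to\underline{\Hom}(\Delta^n_{\AA^1},F)$, followed by passage to the diagonal.

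The reduction to representables is superfluous. Since $\underline{\Hom}(\AA^n,R_X)$ is not representable anyway, it buys nothing. The contraction $(f,s)\mapsto f(s\cdot -)$ already gives the required strict $\AA^1$-homotopy on $\underline{\Hom}(\AA^n,F)$ for every presheaf $F$.
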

\begin{proof}
This is \cite[Corollary 3.8]{MV99}.
\end{proof}

\begin{corollary}\label{L_A1-finite-pdts}
The functor $\L_{\AA^1}$ preserves finite products.    
\end{corollary}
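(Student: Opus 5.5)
The plan is to reduce the claim to the concrete model of $\L_{\AA^1}$ furnished by \cref{Sing=L_A1:equiv}: since the canonical morphism $\L_{\AA^1}\to \Sing_*^{\AA^1}$ is an equivalence of functors, it suffices to show that $\Sing_*^{\AA^1}$ preserves finite products up to (objectwise) weak equivalence. Finite products comprise the terminal object and binary products. For the terminal object, $\Sing_*^{\AA^1}(*)$ is the constant simplicial presheaf on a point, since $\Hom(\Delta^n_{\AA^1}\times U, *)=*$ for all $n$ and $U$. So the content lies in binary products.

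For binary products, I would compute levelwise. For $F,G\in \sPsh(\Sm_S)$ (extending the definition to presheaves by $\Sing_*^{\AA^1}(F)(U)=F(\Delta^{*}_{\AA^1}\times U)$, taking the diagonal when $F$ is itself simplicial), evaluation at $\Delta^n_{\AA^1}\times U$ commutes with products of presheaves, because products of presheaves are computed objectwise:
\begin{align*}
(F\times G)(\Delta^n_{\AA^1}\times U) = F(\Delta^n_{\AA^1}\times U)\times G(\Delta^n_{\AA^1}\times U).
\end{align*}
These identifications are compatible with the cosimplicial structure maps of $\Delta^{\bullet}_{\AA^1}$ by naturality. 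Hence $\Sing_*^{\AA^1}(F\times G)\cong \Sing_*^{\AA^1}(F)\times \Sing_*^{\AA^1}(G)$ as simplicial presheaves, on the nose and not merely up to equivalence. If the simplicial direction of $F$ is involved, the diagonal of a product of bisimplicial sets is the product of the diagonals, so the isomorphism persists.

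The remaining step, which I expect to be the only real subtlety, is transporting this back to $\L_{\AA^1}$. One must check that the comparison map $\L_{\AA^1}(F\times G)\to \L_{\AA^1}F\times \L_{\AA^1}G$ coincides, under the natural equivalence of \cref{Sing=L_A1:equiv}, with the isomorphism above. One must also check that products of weak equivalences remain weak equivalences, so that replacing $\L_{\AA^1}F$ by $\Sing_*^{\AA^1}F$ factorwise is legitimate. The latter holds for objectwise (or local projective) weak equivalences of simplicial presheaves, since products of weak equivalences of Kan complexes are weak equivalences and products are computed sectionwise. Naturality of the equivalence $\L_{\AA^1}\Rightarrow\Sing_*^{\AA^1}$ in the input then gives a commutative square. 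By two-out-of-three, the comparison map for $\L_{\AA^1}$ is a weak equivalence, which proves that $\L_{\AA^1}$ preserves finite products.
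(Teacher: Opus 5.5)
Your proof is correct and follows essentially the same route as the paper. Both reduce to $\Sing_*^{\AA^1}$ via \cref{Sing=L_A1:equiv} and use that the realization over $\Delta^{op}$ commutes with finite products: the paper phrases this as siftedness of $\Delta^{op}$, you implement it concretely as ``the diagonal of a product of bisimplicial sets is the product of the diagonals'', and your naturality square with two-out-of-three makes explicit the transfer back to $\L_{\AA^1}$ that the paper leaves implicit.
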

\begin{proof}
This follows from the identification of $\L_{\AA^1} \simeq \Sing_*^{\AA^1}$ in \cref{Sing=L_A1:equiv}. Note that since $\Delta^{op}$ is sifted (for e.g., see \cite[Remark 5.4.4.8]{lurie2009HTT}), colimits indexed over $\Delta^{op}$ commutes with products. As a result, $\Sing_*^{\AA^1}$ preserves finite products and so does $\L_{\AA^1}$.
\end{proof}

The following is an illustration of \cref{L_A1-finite-pdts} in action.
\begin{example}
For all $n$ and for all $X\in \Sm_S$, the canonical projection $X\times \AA^n\to X$ is an isomorphism in $\HH(S)$. This is clear for $n=1$. For $n\ge 2$, this is a straightforward application of \cref{L_A1-finite-pdts}.
\end{example}

\subsection{The Motivic Localization Functor}\label{subsec:L-mot}
Recall that $\L_{Nis}$ and $\L_{\AA^1}$ might cancel out each others' power. A wise way to retain both of its powers while preserving the categorical constraints is to iterate them infinitely many times. The resulting functor is called the \emph{Motivic Localization Functor,} denoted by $\L_{mot}$. In categorical terms, $\L_{mot}$ is obtained as the left adjoint of the inclusion functor $\iota: \HH(S)  \subset \Spc_S$. It is explicitly defined as follows:
$$\L_{mot}:= \textrm{colim} (\L_{Nis}\to \L_{\AA^1}\L_{Nis} \to \L_{\AA^1}\L_{Nis}\L_{\AA^1}\to \dots..) =  (\L_{\AA^1}\circ \L_{Nis})^{\circ \mathbb{N}}$$
and so described in this context, we have 
        $$\HH(S) := \L_{mot}(\Spc_S).$$

\subsubsection{How does $\L_{mot}$ preserve both localizations?}
For any simplicial presheaf $\mcal{F}$, the $\L_{mot}(\mcal{F})$ is a sheaf that satisfies both Nisnevich descent and $\AA^1$-invariance. Indeed, note that the following presentations of $\L_{mot}$ are equivalent as a consequence of the cofinality (cf. \cref{cofinal}):
\begin{align*}
    \textrm{colim} (\L_{Nis}\circ \L_{\AA^1}\to (\L_{Nis} \circ \L_{\AA^1})^2 \to \dots.. )\\
    \textrm{colim} (\L_{\AA^1}\circ \L_{Nis}\to (\L_{\AA^1} \circ \L_{Nis})^2 \to \dots.. )
\end{align*}
The former is a colimit computed in $\Shv_{Nis}(\Sm_S)$, which is closed under filtered colimits and hence the resulting object is again a (Nisnevich) sheaf. The latter is a colimit computed in $\Psh_{\AA^1}(\Sm_S)$, which is also closed under filtered colimits, hence it preserves $\AA^1$-invariance. The property that both of these categories $\Shv_{Nis}(\Sm_S)$ and $\Psh_{\AA^1}(\Sm_S)$ are closed under filtered colimits is because they are \emph{accessible} subcategories of $\Spc_S$ (\cite[Example 5.2.4.7]{lurie2009HTT}).

\begin{prop}\label{Prop:L_mot-preserve}
The following are some of the essential preliminary properties of $\L_{mot}$:
\begin{enumerate}
\item The functor $\L_{mot}$ preserves colimits.
\item The functor $\L_{mot}$ preserves finite products.
\item The functor $\L_{mot}$ is locally cartesian.
\end{enumerate}
\end{prop}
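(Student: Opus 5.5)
We treat $\L_{mot}$ as the left adjoint of the fully faithful inclusion $\iota:\HH(S)\hookrightarrow \Spc_S$, and also use its explicit description as the sequential colimit $(\L_{\AA^1}\circ\L_{Nis})^{\circ\mathbb{N}}$. Each of the three claims will be reduced to the corresponding property of the two factors $\L_{Nis}$ and $\L_{\AA^1}\simeq \Sing_*^{\AA^1}$ (\cref{Sing=L_A1:equiv}). We then pass to the filtered colimit, using that filtered colimits in $\Spc_S$ commute with finite limits.

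\textbf{Colimits.} Statement (1) is formal. $\L_{mot}$ is a left adjoint, so as a functor $\Spc_S\to\HH(S)$ it preserves all colimits. Here colimits in $\HH(S)$ are computed by applying $\L_{mot}$ to the colimit computed in $\Spc_S$. No further input is needed beyond the existence of the adjunction, which is the Bousfield localization statement recalled above.

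\textbf{Finite products.} For (2), we check that each factor preserves finite products.

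\begin{itemize}
\item For $\L_{\AA^1}$ this is \cref{L_A1-finite-pdts}.
\item For $\L_{Nis}$, we use that Nisnevich sheafification, or its homotopical version, is a left exact localization. Concretely, it is built from \v{C}ech or hypercover colimits and stalks. Since Nisnevich points are henselian local schemes and the stalk functors commute with finite limits, $\L_{Nis}$ preserves finite products.
\end{itemize}

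A finite composite of product-preserving functors preserves products. The sequential colimit is filtered, so for $F,G\in\Spc_S$ we get
\[
\operatorname{colim}_n (\L_{\AA^1}\L_{Nis})^n(F\times G)\simeq \operatorname{colim}_n (\L_{\AA^1}\L_{Nis})^n F\times \operatorname{colim}_n(\L_{\AA^1}\L_{Nis})^n G .
\]
Preservation of the terminal object holds because $S$ is representable and both $\AA^1$-invariant and a sheaf.

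\textbf{Locally cartesian.} For (3), we must show the following. Given a map $E\to B$ and a map $B'\to B$ in $\Spc_S$, the canonical map
\[
\L_{mot}(E\times_B B')\to \L_{mot}E\times_{\L_{mot}B}\L_{mot}B'
\]
is an equivalence whenever the target base is motivically local, i.e. when $B$ lies in $\HH(S)$. Again we reduce to the factors.

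\begin{itemize}
\item $\L_{Nis}$ is left exact, so it preserves all pullbacks.
\item For $\L_{\AA^1}\simeq\Sing_*^{\AA^1}$, we apply the functor levelwise. We have $\Sing_n(E\times_BB')=E(\Delta^n\times -)\times_{B(\Delta^n\times-)}B'(\Delta^n\times-)$, and we then take geometric realization over $\Delta^{op}$.
\end{itemize}

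Realization commutes with pullbacks along $B$ precisely when the simplicial object $B(\Delta^\bullet\times -)$ is constant, or more generally when the relevant maps are fibrations. This holds when $B$ is $\AA^1$-invariant, by a Puppe / Bousfield–Friedlander type realization lemma. This gives local cartesianness of $\L_{\AA^1}$ over $\AA^1$-invariant bases. We then iterate, noting that $\L_{Nis}$ keeps $B$ local. Finally we pass to the sequential colimit, which is filtered and hence commutes with the pullbacks.

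\textbf{Main obstacle.} The delicate step is the realization lemma for $\Sing_*^{\AA^1}$. Geometric realization does not commute with pullbacks in general, and one must verify that the base $B$ being $\AA^1$-local (indeed, fibrant in the $\AA^1$-local projective structure) makes $\pi_0$ of the base constant in the simplicial direction. Only under that condition does the Bousfield–Friedlander theorem apply. If this becomes too technical, the fallback is to cite the $\infty$-categorical statement (Hoyois / Asok–Hoyois–Wendt) that $\L_{mot}$ is locally cartesian, proved by the same iteration argument.
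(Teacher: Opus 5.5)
Your proposal is correct and follows essentially the same route as the paper. Part (1) comes from $\L_{mot}$ being a left adjoint. Part (2) comes from $\L_{Nis}$ and $\Sing_*^{\AA^1}$ preserving finite products, together with the filtered colimit $(\L_{\AA^1}\circ\L_{Nis})^{\circ\mathbb{N}}$ commuting with them. Part (3) comes from combining left exactness of $\L_{Nis}$ with local cartesianness of $\L_{\AA^1}$ and then passing to the colimit.

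The only difference is that the paper cites Hoyois for local cartesianness, whereas you sketch it through $\Sing_*^{\AA^1}$. Your ``main obstacle'' is milder than you fear. For $\AA^1$-invariant $B$, the simplicial object $B(\Delta^\bullet\times U)$ is homotopically constant. Realization of fiber products over a constant base then agrees with the fiber product of realizations, because $\Delta^{op}$ is sifted and colimits in the slice over $B(U)$ are universal. So no Bousfield--Friedlander $\pi_*$-Kan hypotheses are needed.
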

\begin{proof}
(1) follows from the fact that $\L_{mot}$ is a left adjoint. (2) follows from the observation that both $\L_{Nis}$ and $\Sing^{\AA^1}$ preserve finite products. The $\Sing^{\AA^1}$ is given as the sifted colimit of right adjoint functors. Now, from the presentation of $\L_{mot}$, it is constructed as the filtered colimits, which are known to preserve finite products. (3) follows from the facts that both $\L_{\AA^1}$ and $\L_{Nis}$ is locally cartesian and $\L_{Nis}$ is left exact (\cite[Corollaries 3.5 \& 3.9]{hoyois2017six}).
\end{proof}

\begin{remark}
It in important to note that in some literature such as \cite{asok2021A1, bachmann2021norms, rondigs2025grothendieck}, it is common to call the category of simplicial presheaves as motivic spaces and the localized category $\L_{mot}(\sPsh(\Sm_S)) =: \mcal{H}(S)$ as the $\AA^1$-homotopy category. On the other hand, literature such as \cite{antieau2017primer} terms the $\AA^1$-localized category as the category of spaces, which, in comparison, is the same as the $\AA^1$-homotopy category $\HH(S)$. In this script, we have adopted the former approach. 
\end{remark}

\subsubsection{The bird-eye view}\label{the-bird-eye}
All this information can be summarized as follows:

\[\hspace{-7mm}
\begin{tikzcd}
	&&& {\HH(S)_{Nis}} \\
	{} & {\mathrm{\Psh(\Sm_S)}} & {\mathrm{\Spc_S}} &&& {\mathrm{\HH(S)}} & {\mathrm{\HH(S)}_{\bullet}} \\
	& {\mathrm{\Sm_S}} && {\HH(S)_{\AA^1}} &&& {\mathrm{SH(S)}} \\
	{} && {}
	\arrow["{\L_{mot}}"{description}, curve={height=-12pt}, squiggly, from=1-4, to=2-6]
	\arrow["{{\mathrm{D}}}", from=2-2, to=2-3]
	\arrow["{\L_{Nis}}", curve={height=-12pt}, from=2-3, to=1-4]
	\arrow["{{\mathrm{\L_{mot}}}}", squiggly, from=2-3, to=2-6]
	\arrow["{\L_{\AA^1}}"', curve={height=12pt}, from=2-3, to=3-4]
	\arrow["{{\mathrm{\sqcup \{*\}}}}", from=2-6, to=2-7]
	\arrow["{\Omega^{\infty}_{\PP^1}}", shift left=4, tail reversed, no head, from=2-7, to=3-7]
	\arrow["{\sum_{\PP^1}^{\infty}}"', shift right=4, from=2-7, to=3-7]
	\arrow["{{\mathrm{Y}}}"', hook', from=3-2, to=2-2]
	\arrow["{\L_{mot}}"{description}, curve={height=12pt}, squiggly, from=3-4, to=2-6]
\end{tikzcd}\]

\subsection{The \texorpdfstring{$\mathbb{A}^{1}$}{A1}-Model Structure} \label{subsec:A1-model-structure}
We will now brief on the choice of a model structure for $\HH(S)$. For a detailed account of the background on the model categories, one can refer to \cref{app:model-cats}. For the scope of this thesis, we will use the $\AA^1$-local projective model structure (for e.g., see \cite{bousfield1972homotopy, blander2001local, antieau2017primer}). We will use the model structures developed for simplicial presheaves to define a model structure on $\HH(S)$ assuming the material in \cref{App:model:simp-pshv}. 

\begin{defn}
The following definitions forms the fundamental toolkit for identifying an $\AA^1$-local model structure on $\Sm_S$:
\begin{enumerate}
\item A motivic space $Z \in \Spc_S$ is called \emph{$\AA^1$-local} if it is local projective fibrant and the induced map
            $$\pr^* :\Hom(U, Z)\to \Hom (U\times \AA^1, Z) $$
is a weak equivalence of simplicial sets with respect to the canonical projection $\pr: U\times \AA^1\to U$. Hence, a Nisnevich sheaf is $\AA^1$-local if and only if it is $\AA^1$-invariant.

\item A morphism of motivic spaces $f: X\to Y$ is called an \emph{$\AA^1$-local weak equivalence} if 
    $$(Qf)^* : \Hom(QY,Z)\to \Hom(QX,Z)$$
is a weak equivalence of simplicial sets, for any $\AA^1$-local space $Z$. Here $Q$ is the local projective cofibrant replacement functor (cf. \cref{cofib-replacement})
\end{enumerate}
\end{defn}

The following result precisely gives us the $\AA^1$-model structure on $\mcal{H}(S)$. The version stated here is due to Blander (\cite{blander2001local}) as expressed in (\cite[Theorem 6.2.7]{severitt2006motivic}):

\begin{theorem}
The motivic homotopy category $\HH(S)$ is equipped with respect to the following classes of maps
\begin{itemize}
\item $\mcal{W}$eq as $\AA^1$-local weak equivalences,
\item $\mcal{F}$ib as objectwise fibrations
\item $\mcal{C}$of as $\AA^1$-local projective cofibrations
\end{itemize}
forms a simplicial proper cellular model category. This is called the \emph{$\AA^1$-local projective model structure} on $\HH(S)$.
\end{theorem}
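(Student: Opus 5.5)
The plan is to obtain the $\AA^1$-local projective model structure as a left Bousfield localization of the (Nisnevich) local projective model structure on $\sPsh(\Sm_S)$, and then to check that properness, cellularity and the simplicial structure all survive the localization. The underlying input is the global projective model structure on simplicial presheaves. This structure has objectwise weak equivalences and objectwise fibrations, and its cofibrations are generated by the maps $\partial\Delta^n\times R_U\to\Delta^n\times R_U$ for $U\in\Sm_S$. It is standard (cf. \cref{App:model:simp-pshv}) that this structure is simplicial, proper, and cellular, since it is transferred from the Kan–Quillen structure on $\sSet$, which is cellular and proper.

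\textbf{Step 1.} First localize with respect to the \v{C}ech covers $\check{U}_\bullet\to V$ coming from Nisnevich covers; by the Theorem quoted from \cite[Theorem 3.33]{antieau2017primer}, this localization exists. Since $\Sm_S$ is essentially small, one may replace these covers by a \emph{set} of maps: for instance the squares built from elementary distinguished squares (\cref{EDS}), which generate the Nisnevich topology, together with $\emptyset\to R_\emptyset$. Hirschhorn's existence theorem for left Bousfield localizations of left proper cellular simplicial model categories then yields a model structure that is again left proper, cellular and simplicial, with the same cofibrations. Right properness of the Nisnevich-local structure I would get from the Brown–Gersten characterization of fibrant objects: fibrant objects are exactly those sending EDS to homotopy pullbacks, and homotopy pullbacks are stable under base change along fibrations.

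\textbf{Step 2.} Next localize further at the set $J$ of projections $\AA^1\times U\to U$, with $U$ ranging over representatives of isomorphism classes; this is \cite[Proposition 3.55]{antieau2017primer}. Again Hirschhorn's theorem gives a left proper, cellular, simplicial model category. Unwinding the definitions shows that its fibrant objects are exactly the $\AA^1$-local spaces of the preceding definition. Its weak equivalences are exactly the maps $f$ for which $(Qf)^*$ is a weak equivalence against every $\AA^1$-local $Z$, i.e. the $\AA^1$-local weak equivalences. Its cofibrations are the projective cofibrations, which matches the stated classes. Its fibrations are the maps with the right lifting property. (They are objectwise fibrations between fibrant objects, which is how I would read ``objectwise fibrations'' in the statement.) The homotopy category is then $\L_{mot}(\Spc_S)=\HH(S)$, by the universal property of $\L_{mot}$ as the left adjoint of the inclusion.

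\textbf{Main obstacle.} The hard step is \emph{right} properness, since left Bousfield localization generally destroys it. Here I would follow Blander's argument. The key is that the $\AA^1$-localization can be computed by $\Sing_*^{\AA^1}$ (\cref{Sing=L_A1:equiv}) interleaved with Nisnevich fibrant replacement, through the functor $\L_{mot}$. Using that $\L_{mot}$ is locally cartesian (\cref{Prop:L_mot-preserve}(3)), the pullback of an $\AA^1$-local weak equivalence along a fibration is then again an $\AA^1$-local weak equivalence. Concretely, factor through $\L_{mot}$, and use that pullbacks along fibrations are homotopy pullbacks, which $\L_{mot}$ preserves. Then compare the resulting squares objectwise.
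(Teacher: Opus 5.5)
The overall plan is sound. Hirschhorn localization of the projective structure, first at the Nisnevich data and then at the projections $\AA^1\times U\to U$, gives a left proper, cellular, simplicial structure whose cofibrations are the projective ones and whose weak equivalences are the $\AA^1$-local ones. This is the standard route to the structure the paper quotes from Blander/Severitt; the paper gives no argument beyond the citation. Your reading of ``objectwise fibrations'' is also correct. Taken literally, the classes do not form a model structure: $R_{\AA^1_S}\to R_S$ is an objectwise fibration and an $\AA^1$-weak equivalence, yet the lifting square with top map $(0,1)\colon\partial\Delta^1\times R_S\to R_{\AA^1_S}$ has no lift through $\Delta^1\times R_S$.

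The genuine gap is right properness. Your claim that pullbacks along fibrations are homotopy pullbacks ``which $\L_{mot}$ preserves'' is false, because $\L_{mot}$ is not left exact. The square formed by the sections $0,1\colon S\to\AA^1_S$ is a pullback along an objectwise fibration (all terms are discrete), and its corner is $\emptyset$. After $\L_{mot}$ both maps become equivalences $*\to *$, whose pullback is $*\not\simeq\emptyset$. Local cartesianness only gives that $\L_{mot}$ commutes with pullbacks over a base that is already $\AA^1$-local. Right properness, however, concerns pullback along an $\AA^1$-fibration $E\to B$ with arbitrary $B$. To reduce to a local base you would need $E\to B$ to be homotopy cartesian over $\L_{mot}E\to\L_{mot}B$. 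That does not follow from defining fibrations by a lifting property; it is essentially what has to be proved. ``Compare the resulting squares objectwise'' does not supply it.

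The missing ingredient is Bousfield's localization theorem for a coaugmented functor $Q$ on a \emph{proper} model category (Bousfield, \emph{On the telescopic homotopy theory of spaces}, Thm.~9.3). Apply it with $Q$ a functorial fibrant replacement in the $\AA^1$-localized structure.
\begin{itemize}
\item Axioms (A1) and (A2) are formal.
\item Axiom (A3) asks that the pullback of a $Q$-equivalence along a fibration between $Q$-local objects be a $Q$-equivalence. This is exactly what commutation of $\L_{mot}$ with pullbacks over local bases gives.
\item The theorem then yields right properness and describes the fibrations as those whose $Q$-naturality square is homotopy cartesian.
\item The resulting structure agrees with Hirschhorn's, since cofibrations and weak equivalences coincide.
\end{itemize}
Alternatively, you can adapt Morel--Voevodsky's properness argument for sites with interval.

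Because Bousfield's theorem needs the Nisnevich-local structure to be right proper, Step~1 must be repaired too. A characterization of fibrant objects says nothing about pulling back arbitrary local weak equivalences along fibrations. The correct reason is that $\L_{Nis}$ is left exact. Equivalently, for Noetherian $S$ of finite Krull dimension, descent for elementary distinguished squares agrees with hyperdescent, so local weak equivalences are detected on Henselian stalks. There objectwise fibrations become Kan fibrations, and $\sSet$ is right proper.
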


The following fact is useful in identifying the $\AA^1$-local spaces (\cite[Remark 3.58]{antieau2017primer}): a motivic space $Z \in \Spc_S$ is  \emph{$\AA^1$-local} if it satisfies the following:
\begin{itemize}
\item takes values in Kan complexes (hence, fibrant in $\Spc_S$)
\item satisfies Nisnevich hyperdescent (hence, fibrant in $\HH(S)_{Nis}$)
\item if $Z(U)\to Z(U\times \AA^1)$ is a weak equivalence of simplicial sets for al $U\in \Sm_S$ (hence, fibrant in $\HH(S)_{\AA^1}$).
\end{itemize}
Thus, it is important to note that the $\AA^1$-local spaces are precisely the fibrant objects in the $\AA^1$-homotopy category $\mcal{H}(S)$.

\subsection{\texorpdfstring{$\mathbb{A}^{1}$}{A1}-Contractibility}\label{intro:A1-contr}
One of the essential notions in the realm of $\AA^1$-homotopy theory is that of schemes that are contractible. This notion is central to formulating our main results in \cref{chp4} and \cref{chp5}. We shall now see the definition and some basic examples. Later in \cref{intro:rel-A1-contr}, we shall be a bit more expansive and connect this notion with the contents of our script.
 
\begin{defn}\label{A1-contractible:defn}
Let $\XX$ be a motivic space over an arbitrary scheme $S$. We say that $\XX$ is \emph{$\AA^1$-contractible} if the structure morphism $f: \XX\to S$ is an $\AA^1$-weak equivalence in the category of motivic spaces $\Spc_S$ (or) equivalently, $f$ is an isomorphism in the $\AA^1$-homotopy category $\HH(S)$, that is, $\L_{mot}(f)$ is an isomorphism in $\mcal{H}(S)$.
\end{defn}

\begin{example}\label{A1-contr:egs}
There are plenitude examples of $\AA^1$-contractible spaces in nature:
\begin{enumerate}
\item The affine line $\AA^1$ is $\AA^1$-contractible, by the very construction. In fact, any affine space $\AA^n$ is $\AA^1$-contractible given by the na\"ive $\AA^1$-homotopy $$\AA^1\times \AA^n \to \AA^n \quad (t,x_1,\dots,x_n)\mapsto (tx_1,\dots,tx_n)$$
\item The singular cuspidal curves $C_{r,s} := \{x^r-y^s=0\}$, for coprime integers $r,s$, are $\AA^1$-contractible \cite[Example 2.1]{asok2007unipotent}. In fact, the normalization map
        $$\AA^1 \to C_{r,s} \quad \text{defined as }\quad w \mapsto (w^s, w^r) $$
is an $\AA^1$-weak equivalence of motivic spaces,
\item Any vector bundle is, in fact, an $\AA^1$-homotopy equivalence \cite[Example 2.2]{MV99}. More generally, any Zariski and Nisnevich locally trivial bundles with $\AA^1$-contractible fibers are seen to be $\AA^1$-contractible. In fact, in \cref{chp4} we will see that the $\AA^1$-contractibility captures all the Zariski locally trivial $\AA^n$-bundles in low relative dimensions up to 2.
\end{enumerate}
We shall encounter several other examples as we progress!
\end{example}
\medskip

We will now recall the analogous notions of connectedness and path-connectedness in algebraic geometry.

\subsection{\texorpdfstring{$\mathbb{A}^{1}$}{A1}-Connectedness}\label{subsec:A1-connectedness}
Recall that in topology, a space is \emph{connected} if it cannot be written as a union of two proper non-empty open subsets. This is captured by the set of path components $\pi_0$ that counts the number of distinct components of a space. Similarly, there is a notion for schemes, which is invariant under connected components. 

\begin{defn}\label{defn:A1-conn}
Let $X$ be a $k$-scheme. The \emph{$\AA^1$-connected component sheaf} of a scheme $X$ is the Nisnevich sheaf associated to the presheaf 
        $$\Sm_k \ni U \mapsto \Hom_{\mcal{H}(k)}(U,X)=: [U,X]_{\AA^1} $$
for all $U\in \Sm_k$ and is denoted by $\pi_0^{\AA^1}(X)$. We say that a scheme $X$ is \emph{$\AA^1$-connected} if $\pi_0^{\AA^1}(X) \to \Spec k$ is isomorphism of sheaves. Similarly, $X$ is \emph{$\AA^1$-disconnected} if it is not $\AA^1$-connected.
\end{defn}

In other words, $\pi_0^{\AA^1}(X)$ can be seen as the sheafification of the presheaf of sections $X(U)$ in the $\AA^1$-homotopy category. Note that for any space $X$, $\pi_0^{\AA^1}(X)$ is only a (Nisnevich) sheaf of sets as in the case of classical topology.

\begin{remark}
For a simplicial presheaf $X$, one can also define the sheaf of \emph{simplicial connected components}, denoted $\pi_0^s(X)$, given by the sheafification of 
        $$U\mapsto [U,X]_{\mcal{H}o_s}$$
where $\mcal{H}o_s$ is the simplicial homotopy category (\cite[Definition 2.1.1]{asokmorel2011}). The two so-mentioned sheaves are related by the motivic localization functor $\L_{mot}$ as follows:
        $$\pi_0^{\AA^1}(X) \simeq \pi_0^s(\L_{\AA^1}(X))$$
This is sometimes used as an alternative definition of the sheaf of $\AA^1$-connected components.
\end{remark}

The following shows the invariant nature of $\pi_0^{\AA^1}$ that will be later useful when dealing with $\AA^1$-contractibility.
\begin{example}\label{S-point}
Every $\AA^1$-connected smooth scheme $X$  over a locally Henselian scheme $S$ (for instance, the spectrum of a field) admits an $S$-point (\cite{MV99}, Remark 2.5). In particular, having an $S$-point is an inherent property of a simplicial sheaf which is $\AA^1$-invariant under $\AA^1$-weak equivalences.
\end{example}

The following result, called the \emph{Unstable 0-connectivity theorem}, is fundamental for these sheaves:
\begin{theorem}\label{unstable-0-connect:thm}
If $\mathcal{F}$ is a space, then the canonical map $\mathcal{F} \to \pi_0^{\AA^1}(\mathcal{F})$ is an epimorphism after Nisnevich sheafification. In particular, if $\mathcal{F}\to \mathcal{G}$ is a $\AA^1$-weak equivalence of spaces and $\mathcal{F}$ is $\AA^1$-connected, then  $\mathcal{G}$ is also $\AA^1$-connected.
\end{theorem}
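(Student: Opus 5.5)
The plan is to realize $\pi_0^{\AA^1}(\mathcal{F})$ as the simplicial connected components of a fibrant model, using the identification $\pi_0^{\AA^1}(\mathcal{F}) \simeq \pi_0^s(\L_{mot}(\mathcal{F}))$ from the preceding remark. The canonical map $\mathcal{F} \to \pi_0^{\AA^1}(\mathcal{F})$ then factors as
\[
\mathcal{F} \longrightarrow \pi_0^s(\mathcal{F}) \longrightarrow \pi_0^s(\L_{mot}\mathcal{F}).
\]
The first arrow is objectwise surjective on vertices, so it is surjective after sheafification. It therefore suffices to show that $\pi_0^s(\mathcal{F}) \to \pi_0^s(\L_{mot}\mathcal{F})$ is an epimorphism of Nisnevich sheaves. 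Epimorphisms of Nisnevich sheaves are detected on stalks, i.e. on Henselian local schemes $\Spec \mathcal{O}^h_{X,x}$. So I would check that every vertex of $\L_{mot}\mathcal{F}(\Spec \mathcal{O}^h_{X,x})$ is connected by a path to the image of a vertex of $\mathcal{F}(\Spec \mathcal{O}^h_{X,x})$.

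Next I would use the explicit description of $\L_{mot}$ as the colimit
\[
\textrm{colim}\big(\L_{Nis}\to \L_{\AA^1}\L_{Nis} \to \cdots\big),
\]
with $\L_{\AA^1}$ modelled by $\Sing_*^{\AA^1}$ (\cref{Sing=L_A1:equiv}). Taking $\pi_0$ of stalks commutes with this filtered colimit, so it suffices to prove that each of the two steps induces a surjection on stalkwise $\pi_0$. For $\Sing_*^{\AA^1}$ this is immediate: its $0$-simplices are exactly the sections of the input, because $\Delta^0_{\AA^1}=S$. Hence $\pi_0$ of the input surjects onto $\pi_0$ of the output, objectwise and in particular on stalks.

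For the Nisnevich fibrant replacement $\L_{Nis}$, I would use that it is a local weak equivalence. This means it induces isomorphisms of stalkwise homotopy sheaves, in particular on $\pi_0$ of stalks, so surjectivity holds there too. Composing the surjections along the tower and passing to the colimit gives the first claim.

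For the second claim, suppose $\mathcal{F}\to\mathcal{G}$ is an $\AA^1$-weak equivalence. Then $\L_{mot}\mathcal{F}\simeq\L_{mot}\mathcal{G}$, so $\pi_0^{\AA^1}(\mathcal{G})\cong\pi_0^{\AA^1}(\mathcal{F})\cong \Spec k$, and $\mathcal{G}$ is $\AA^1$-connected.

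The main obstacle is the bookkeeping about stalks. One needs that $\pi_0$ of the sheafified sections at Henselian points commutes with the infinite iteration defining $\L_{mot}$. One also needs that Nisnevich fibrant replacement, which is not objectwise surjective on $\pi_0$, is nevertheless stalkwise an equivalence. I would handle this by computing everything on the essentially small family of Henselian local points, where the Nisnevich topology is trivial, so that $\L_{Nis}$ acts as the identity up to weak equivalence. On those points the argument reduces to the evident surjectivity for $\Sing_*^{\AA^1}$. This is essentially Morel–Voevodsky's argument, \cite[\S 2, Corollary 3.22]{MV99}.
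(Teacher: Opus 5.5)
Your argument is correct and is essentially the Morel--Voevodsky proof that the paper invokes by citation (\cite[\S 2, Corollary 3.22]{MV99}). There, $\Sing_*^{\AA^1}$ is the identity on vertices and hence surjective on objectwise $\pi_0$, Nisnevich localization is an equivalence on stalks at Henselian local schemes, and surjectivity on stalkwise $\pi_0$ survives the sequential colimit defining $\L_{mot}$; as you note, the second assertion follows directly from $\L_{mot}\mathcal{F}\simeq \L_{mot}\mathcal{G}$.
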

\begin{proof}
This is \cite[\S 2, Corollary 3.22]{MV99}.
\end{proof}

\subsubsection{$\AA^1$-invariance of $\pi_0^{\AA^1}$}
Recall from \cref{defn:A1-invar-sheaf} that a presheaf is $\AA^1$-invariant if the induced map of the canonical projection is a bijection. By design, we have that the \emph{presheaf} of $\AA^1$-connected components $U \mapsto [U,X]_{\AA^1}$ is $\AA^1$-invariant. This is also true classically. Indeed, we have the set of path components parametrized by the interval $I=[0,1]$ (which can be seen as a presheaf via $\pi_0:=  Y \mapsto [Y, X]$, for locally contractible spaces $X, Y$) can be shown to be $I$-invariant. As the presheaf $\pi_0$ is discrete, it implies that its sheafification stays $I$-invariant (see \cite[P. 87, Example 2]{MV99}). The analogous question in the motivic setting is the famous $\AA^1$-invariance conjecture of Morel:
\begin{conj*}
The sheaf of $\AA^1$-connected components $\pi_0^{\AA^1}$ is $\AA^1$-invariant.
\end{conj*}
The conjecture does hold true in some cases (cf. \cite[Remark 2.5]{choudhury2024}), but it has been proven to fail in general (\cite{ayoub2023counterexamples}, \cite[\S 4]{balwe2015a1}). However, we have the following universal property of $\pi_0^{\AA^1}$.
\begin{lemma}
Let $\XX$ be any space and $\mcal{F}$ be any $\AA^1$-invariant sheaf. Then any morphism $\XX\to \mcal{F}$ uniquely factors via $\XX\to \pi_0^{\AA^1}(\mcal{F})$.
\end{lemma}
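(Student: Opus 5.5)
The statement should be read as: any morphism $\XX\to\mcal{F}$ factors uniquely through the canonical map $\XX\to\pi_0^{\AA^1}(\XX)$. The plan is to realize $\mcal{F}$ as a fibrant object of $\mcal{H}(S)$ and to use that it is simplicially discrete. Concretely, I would establish a chain of natural bijections
\begin{align*}
\Hom_{\Spc_S}(\XX,\mcal{F}) \cong \Hom_{\mcal{H}(S)}(\XX,\mcal{F}) \cong \Hom_{\Shv_{Nis}}\big(\pi_0^s(\L_{mot}\XX),\mcal{F}\big) \cong \Hom_{\Shv_{Nis}}\big(\pi_0^{\AA^1}(\XX),\mcal{F}\big),
\end{align*}
compatible with precomposition by $\XX\to\pi_0^{\AA^1}(\XX)$. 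Uniqueness of the factorization will then follow from the unstable $0$-connectivity theorem (\cref{unstable-0-connect:thm}).

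\textbf{Step 1: $\mcal{F}$ is $\AA^1$-local.} View $\mcal{F}$ as a simplicially constant simplicial presheaf. Its sections are discrete simplicial sets, hence Kan complexes. A Nisnevich sheaf of sets satisfies Nisnevich hyperdescent, because a constant simplicial object has no higher homotopy, so descent reduces to the sheaf condition of \cref{defn:Nis-sheaf}. $\AA^1$-invariance of $\mcal{F}$ says exactly that $\mcal{F}(U)\to\mcal{F}(U\times\AA^1)$ is a bijection, hence a weak equivalence of discrete simplicial sets. By the criterion recalled after the $\AA^1$-local projective model structure, $\mcal{F}$ is therefore $\AA^1$-local, i.e.\ fibrant in $\mcal{H}(S)$.

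\textbf{Step 2: from $\Spc_S$ to $\mcal{H}(S)$.} Since $\mcal{F}$ is fibrant and discrete, the derived mapping space $\Map(\XX,\mcal{F})$ is homotopy discrete. Using that $\L_{mot}$ is left adjoint to the inclusion $\mcal{H}(S)\subset\Spc_S$, I would identify $\Hom_{\mcal{H}(S)}(\XX,\mcal{F})$ with $\Hom_{\mcal{H}(S)}(\L_{mot}\XX,\mcal{F})$. I would also identify it with the set of honest maps of simplicial presheaves $\XX\to\mcal{F}$, modulo homotopy. Because $\mcal{F}$ is discrete, homotopies into it are constant, so this quotient is just the set of maps. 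A cofibrant replacement $Q\XX\to\XX$ is needed to make this rigorous, but maps into a discrete sheaf only see $\pi_0$, so the replacement does not change the answer.

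\textbf{Step 3: reduction to $\pi_0$.} Any map from a simplicial sheaf $\mcal{Y}$ to a discrete sheaf factors uniquely through the sheafified $\pi_0^s(\mcal{Y})$, the coequalizer of $\mcal{Y}_1\rightrightarrows\mcal{Y}_0$. Applying this with $\mcal{Y}=\L_{mot}\XX$ and using $\pi_0^{\AA^1}(\XX)\simeq\pi_0^s(\L_{mot}\XX)$ gives the last bijection, and with it existence of the factorization.

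\textbf{Step 4: uniqueness.} The canonical map $\XX\to\pi_0^{\AA^1}(\XX)$ is an epimorphism of Nisnevich sheaves after sheafification, by \cref{unstable-0-connect:thm}. Since $\mcal{F}$ is a sheaf, two maps $\pi_0^{\AA^1}(\XX)\to\mcal{F}$ that agree after precomposition with this map must coincide.

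The main obstacle is Step 2: making precise that maps into the discrete fibrant object $\mcal{F}$ in $\Spc_S$ agree with maps in $\mcal{H}(S)$, with no quotient by homotopy. The key input is that $\mcal{F}$ is fibrant and that its derived mapping spaces are discrete, and I would handle the cofibrancy of $\XX$ through the replacement $Q$ as indicated there.
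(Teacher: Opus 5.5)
Your reading is the intended one: as printed, $\pi_0^{\AA^1}(\mcal{F})=\mcal{F}$ for an $\AA^1$-invariant sheaf, so the literal statement would be trivial. Your argument is correct and is the standard mechanism behind the paper's one-line citation of \cite[Lemma 2.8]{balwe2015a1}: an $\AA^1$-invariant sheaf is a discrete $\AA^1$-local object, so maps into it pass through $\L_{mot}\XX$ and then through $\pi_0^s(\L_{mot}\XX)=\pi_0^{\AA^1}(\XX)$. Step 4 is redundant, since the natural bijection you build in Steps 2--3, given by precomposition with $\XX\to\pi_0^{\AA^1}(\XX)$, already yields uniqueness.
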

\begin{proof}
This is due to \cite[Lemma 2.8]{balwe2015a1}.
\end{proof}

\subsubsection{Higher \texorpdfstring{$\mathbb{A}^{1}$}{A1}-homotopy sheaves}
To conclude, note that subsequent to $\pi_0^{\AA^1}$, one can also define the higher $\AA^1$-homotopy sheaves by taking maps from the simplicial motivic spheres as follows.
\begin{defn}
Let $(\mathfrak{X},x)$ be any pointed space. Then the $i$-th $\AA^1$-homotopy sheaf is the Nisnevich sheaf on $\Sm_k$ associated with the presheaf 
    $$\Sm_k\ni U \mapsto \Hom_{\mcal{H}(k)}(S^i \wedge U_+ , (\mathfrak{X},x))$$
for $U\in \Sm_k$. It is denoted by $\pi^{\AA^1}_i(\mathfrak{X},x)$.
\end{defn}

Like in topology, one shows that $\pi_1^{\AA^1}$ is Nisnevich sheaf of groups and $\pi_i^{\AA^1}$ are all Nisnevich sheaf of Abelian groups for all $i\ge 2$ (\cite{morel2012A1topology}). Recall that the Whitehead theorem states that any weak equivalence between CW-complexes is also a homotopy equivalence. The following is the $\AA^1$-analog of it due to \cite[\S 3, Proposition 2.14]{MV99}:
\begin{prop}\label{Whitehead-thm}
Assume that $\mathfrak{X}$ and $\mathfrak{Y}$ are $\AA^1$-connected pointed spaces. Then a morphism $f:\mathfrak{X}\to \mathfrak{Y}$ is an $\AA^1$-weak equivalence if and only for any choice of base point $x\in \mathfrak{X}$ with $y:= f(x)$ the induced morphism
    $$\pi_i^{\AA^1}(\mathfrak{X},x) \to \pi_i^{\AA^1}(\mathfrak{Y},y) $$
is an isomorphism of Nisnevich sheaves.
\end{prop}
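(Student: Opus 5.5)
The plan is to reduce to the classical Whitehead theorem objectwise, after passing to fibrant replacements in the $\AA^1$-local model structure. The forward implication is formal: an $\AA^1$-weak equivalence becomes an isomorphism in $\mcal{H}(k)_\bullet$. Hence for every $U\in\Sm_k$ it induces bijections
\[
\Hom_{\mcal{H}(k)}(S^i\wedge U_+,\mathfrak{X})\to \Hom_{\mcal{H}(k)}(S^i\wedge U_+,\mathfrak{Y}),
\]
and the induced maps on associated Nisnevich sheaves are isomorphisms.

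For the converse, replace $f$ by $\L_{mot}(f):\L_{mot}\mathfrak{X}\to \L_{mot}\mathfrak{Y}$, a map between $\AA^1$-local (fibrant) spaces. An $\AA^1$-weak equivalence between $\AA^1$-local objects is exactly a Nisnevich local weak equivalence, which is the local model structure fact from \cref{subsec:A1-model-structure}. It therefore suffices to show that $\L_{mot}(f)$ induces isomorphisms on all Nisnevich sheaves of simplicial homotopy groups $\pi_i^s$ at every base point, together with an isomorphism on $\pi_0^s$. This is Jardine's characterization of local weak equivalences via stalks. For $\AA^1$-local $\mathfrak{Z}$, the simplicial homotopy sheaves agree with the $\AA^1$-ones: $\pi_i^s(\mathfrak{Z})\cong\pi_i^{\AA^1}(\mathfrak{Z})$, because maps out of $S^i\wedge U_+$ in $\mcal{H}(k)$ are computed by the fibrant target, using $\pi_0^{\AA^1}(X)\simeq\pi_0^s(\L_{\AA^1}X)$ and its pointed analogues. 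The hypothesis thus gives isomorphisms on $\pi_i^s$ at the chosen base points.

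Next I handle $\pi_0$ and arbitrary base points. By $\AA^1$-connectedness and \cref{unstable-0-connect:thm}, $\pi_0^s$ of both fibrant replacements is the point sheaf, so the map on $\pi_0^s$ is an isomorphism. The stalks at Henselian local points are then connected Kan complexes. The isomorphism of homotopy sheaves at the given base point, equivalently at the points where $x$ lands, therefore gives isomorphisms of homotopy groups at all base points in each stalk, since in a connected Kan complex these are conjugate via paths. The classical Whitehead theorem on stalks then shows that $\L_{mot}(f)$ is a stalkwise weak equivalence. Since the Nisnevich site has enough points, this means it is a Nisnevich local weak equivalence, and hence $f$ is an $\AA^1$-weak equivalence.

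The main obstacle I expect is the base point issue. The statement quantifies over base points $x$ of $\mathfrak{X}$, but stalks may have points that do not come from a global $k$-point. I would handle this using that $\AA^1$-connectedness makes each stalk connected, so that a single base point per stalk suffices. Alternatively, I would use local sections near any stalk point and argue Nisnevich locally, since the sheaf isomorphism is a local statement. The comparison $\pi_i^s\cong\pi_i^{\AA^1}$ for fibrant objects is the other step requiring care, but it is standard from \cite{MV99}.
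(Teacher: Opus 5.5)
Your proof is correct and is essentially the standard argument behind the paper's citation of \cite[\S 3, Proposition 2.14]{MV99}: you pass to the fibrant replacements $\L_{mot}\mathfrak{X}\to\L_{mot}\mathfrak{Y}$, identify $\pi_i^{\AA^1}$ with the simplicial homotopy sheaves $\pi_i^s$ of these $\AA^1$-local objects, and use $\AA^1$-connectedness to make the Nisnevich stalks connected Kan complexes, so that a single base point suffices for the classical Whitehead theorem stalkwise. The appeal to \cref{unstable-0-connect:thm} is unnecessary, since $\pi_0^s(\L_{mot}\mathfrak{X})\cong\pi_0^{\AA^1}(\mathfrak{X})$ is already the point sheaf by the definition of $\AA^1$-connectedness.
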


\begin{example}
The higher $\AA^1$-homotopy sheaves $\pi_i^{\AA^1}$ (for $i\ge1$) form primordial examples of $\AA^1$-invariant sheaves (See \cref{thm:A1-htpysheaves-invariant}).
\end{example}

\subsection{\texorpdfstring{$\mathbb{A}^{1}$}{A1}-Rigid Schemes}\label{subsec:A1-rigid-schemes}
In this section, we study certain special kinds of motivic spaces that can be understood as 'discrete' schemes and have trivial higher $\AA^1$-homotopies.

\begin{defn}\label{defn:A1-rigid}
A scheme $f: X\to \Spec k$ is called \emph{$\AA^1$-rigid} if for any smooth scheme $U \in \Sm_k$, the induced morphism 
        $$ \Hom_{\Sm_k}(U, X) \to \Hom_{\Sm_k} (U \times_k \AA^1_k, X)$$
of the canonical projection $U \times \AA^1\to U$ is a bijection.
\end{defn}
Therefore, a scheme is $\AA^1$-rigid if it is $\AA^1$-local when viewed as a space and $\AA^1$-invariant when viewed as a presheaf. The $\AA^1$-rigid schemes stay $\AA^1$-invariant under (reasonable) field extensions (\cite[Lemma 2.1.11]{asokmorel2011}):

\begin{lemma}\label{A1-chain-conne-sep-ext:lemma}
A smooth $k$-scheme $X$ is \emph{$\AA^1$-rigid} if and only if for every finitely generated separable extension $L/k$ the map
        $$\Hom(\Spec L, X)\to \Hom(\Spec L \times_k \AA^1_k, X)$$
induced by the canonical projection $\AA^1_k \times \Spec L \to \Spec L$ is a bijection.
\end{lemma}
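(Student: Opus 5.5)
The forward implication is immediate: take $U = \Spec L$ in \cref{defn:A1-rigid}, since $\Spec L$ lies in $\Sm_k$ when $L/k$ is finitely generated and separable (here "$\Spec L$" means an essentially smooth scheme, a limit of smooth $k$-schemes). To make this honest, I would first extend $\Hom(-,X)$ to essentially smooth schemes by taking colimits over smooth models. Because $X$ is of finite type over $k$, $\Hom(\Spec L, X) = \mathrm{colim}_V \Hom(V,X)$, where $V$ runs over the smooth models of $L$. Bijectivity for each $V$ then passes to the filtered colimit, and the same holds for $\Spec L\times_k\AA^1_k$.

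For the converse, fix $U\in\Sm_k$, which I may assume connected. By \cref{rem:A1-invar:onto} the map $\Hom(U,X)\to\Hom(U\times\AA^1,X)$ is always injective, so only surjectivity needs proof. Let $h: U\times\AA^1\to X$. I must show that $h = h_0\circ\pr$, where $h_0 = h\circ i_0$. Set $\eta = \Spec k(U)$ for the generic point of $U$; then $k(U)$ is finitely generated and separable over $k$, because $U$ is smooth. Restricting $h$ to $\eta\times\AA^1 = \AA^1_{k(U)}$ and applying the hypothesis, the restriction factors through the projection. So $h$ and $h_0\circ\pr$ agree on $\AA^1_{k(U)}$.

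Now $\AA^1_{k(U)}$ contains the generic point of the integral scheme $U\times\AA^1$. Two morphisms into a separated $X$ that agree on a dense open subscheme of a reduced scheme agree everywhere. I would spread out the agreement over $\eta\times\AA^1$ to agreement on $W\times\AA^1$ for some dense open $W\subset U$, using that $X$ is of finite type. The equalizer of $h$ and $h_0\circ\pr$ is a closed subscheme, since $X$ is separated, and it contains a dense open subset of the reduced scheme $U\times\AA^1$. Hence it is everything, and $h = h_0\circ\pr$.

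The main obstacle is making the step "agreement on the generic fiber implies agreement on an open" rigorous, and this is where separatedness of $X$ enters. If $X$ is only assumed smooth and not separated, I would instead argue pointwise: for each point $u\in U$, apply the hypothesis to the residue field $\kappa(u)$. When $k$ is perfect, $\kappa(u)$ is finitely generated separable; otherwise I would use points of the Henselizations to handle it. This shows that $h$ and $h_0\circ\pr$ agree on all fibers, and reducedness then upgrades this to an equality of morphisms. Since $\Sm_k$ consists of separated schemes by the running convention, I expect the first argument, using only the generic point, to suffice.
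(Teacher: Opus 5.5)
Your proof is correct. The paper does not prove this lemma itself but only cites \cite[Lemma 2.1.11]{asokmorel2011}, and your argument is the standard one: for the forward direction, a colimit over smooth models of $L$ (so drop the slip that $\Spec L$ itself lies in $\Sm_k$); for the converse, restricting $h$ to the generic fibre $\AA^1_{k(U)}$ and then using that $X$ is separated and $U\times\AA^1$ is reduced. Your spreading-out step is superfluous, since the closed equalizer of $h$ and $h_0\circ\pr$ already receives $\AA^1_{k(U)}$ and so contains the generic point of the integral scheme $U\times\AA^1$; your non-separated fallback is also unnecessary, because objects of $\Sm_k$ are separated by the paper's convention.
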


\subsubsection{Examples of $\AA^1$-rigid schemes}
Any 0-dimensional scheme over a field $k$ is $\AA^1$-rigid (for a generalization of this result, see \cref{0dim:rigid}). Abelian varieties, Smooth projective curves of positive genus, and smooth subvarieties and products of an $\AA^1$-rigid variety all constitute examples of $\AA^1$-rigid schemes (see also \cite{asokmorel2011, asok2021A1, choudhury2024}). The following portray some common occurrences.
\begin{example}\label{Gm-A1-rigid}
The affine scheme $\GG_m$ is $\AA^1$-rigid over any reduced commutative ring $R$.
\begin{proof}
For simplicity, let us prove this over fields $R = k$, noting that the proof is essentially the same over any such $R$. Note that as a presheaf  $\GG_m = k[Y,Y^{-1}]$ induces a functor $\GG_m(U) = \mathcal{O}_U^{\times}$, the sheaf of units. Being a Zariski presheaf, it suffices to check the property (\cref{defn:A1-rigid}) locally on affine schemes. Indeed, let $X = \Spec A$ be any affine scheme where $A$ is any finitely generated $k$-algebra, then we have
    $$ \Hom_{\Sm_k}(X, \GG_m) \to \Hom_{\Sm_k} (X \times_k \AA^1_k, \GG_m).$$
Since $A$ is reduced, we have that $\GG_m(X) = \mathcal{O}_X^{\times} \cong A^{\times}$. On the other hand, since $X \times_k\AA^1_k \cong  \Spec A\times_k \Spec(k[x]) \cong \Spec (A\otimes_k k[x]) \cong \Spec A[x]$, we have that $\GG_m(A[x])\cong A^{\times}$, which completes the proof.
\end{proof}
\end{example}
\begin{example}\label{eg:nonA1rigid-A1&P1}
The projective line $\PP^1$ is not $\AA^1$-rigid. This can be tested even on the simplest space $X = \Spec k$. This is because every embedding $\Spec k \hookrightarrow \PP^1_k$ is isomorphic to a constant embedding, but there are clearly non-constant embeddings $\AA^1 \hookrightarrow \PP^1$, and as a result
    $$ \Hom_{\Sm_k}(\Spec k ,\PP^1_k) \to \Hom_{\Sm_k}(\Spec k \times_k \AA^1_k, \PP^1_k)$$
is not surjective (\cref{rem:A1-invar:onto}). For the same reason, $\AA^1$ also cannot be $\AA^1$-rigid.
\end{example}

Consider the functor $\rm{h}: \Sm_k \to \HH(k)$ that is obtained by composing the corresponding functors in the diagram \ref{the-bird-eye}. Then we have that $\rm{h}$ embeds the full subcategory of $\AA^1$-rigid schemes fully faithfully into the $\AA^1$-homotopy category (\cite[Lemma 2.1.9]{asokmorel2011}). This has the following consequences:

\begin{lemma}\label{A1-rigid:mainresult}
Let $X,Y \in \Sm_k$ be two $\AA^1$-rigid schemes, then we have the following:
\begin{enumerate}
\item for any $U\in \Sm_k$, the canonical map 
\begin{align*}
    & \Hom_{\Sm_k}(U,X) \to \Hom_{\mathcal{H}(k)}(U,X):= [U,X]_{\AA^1} \\
    & \hspace{24mm} f \mapsto [f]
\end{align*}
is a bijection,
\item the canonical map $X \to \pi_0^{\AA^1}(X)$ is an isomorphism of (Nisnevich) sheaves. As a result, $X$ is $\AA^1$-connected if and only if $X \cong \Spec k$,
\item $\rm{h}(X) \simeq_{\AA^1} \rm{h}(Y) \iff X\cong Y$, that is, two $\AA^1$-rigid schemes are $\AA^1$-weakly equivalent if and only if they are isomorphic as $k$-schemes.
\end{enumerate}
\end{lemma}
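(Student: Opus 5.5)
The plan rests on one structural fact, stated just before \cref{A1-rigid:mainresult}: the functor $\rm{h}: \Sm_k \to \HH(k)$ is fully faithful on the full subcategory of $\AA^1$-rigid schemes. More precisely, an $\AA^1$-rigid $X$, viewed as a representable simplicial presheaf, is already $\AA^1$-local, i.e.\ fibrant in $\HH(k)$. Indeed, representables in $\Sm_k$ are Nisnevich sheaves, a discrete sheaf of sets satisfies Nisnevich hyperdescent and is objectwise Kan, and rigidity is exactly $\AA^1$-invariance (\cref{defn:A1-rigid}). By the characterization of $\AA^1$-local spaces recalled after the $\AA^1$-local projective model structure, $X$ is fibrant. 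Every representable $U$ is cofibrant in the projective structure. Hence
\[
[U,X]_{\AA^1} = \pi_0 \Map(U,X) = \pi_0 X(U) = \Hom_{\Sm_k}(U,X),
\]
the last equality because $X(U)$ is a discrete simplicial set. This gives (1), with the map $f \mapsto [f]$ equal to this identification.

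For (2), apply (1) for every $U \in \Sm_k$. The presheaf $U \mapsto [U,X]_{\AA^1}$ coincides with the representable presheaf of $X$, which is already a Nisnevich sheaf. So sheafification does nothing and $X \to \pi_0^{\AA^1}(X)$ is an isomorphism of sheaves. Consequently $X$ is $\AA^1$-connected exactly when the sheaf represented by $X$ is isomorphic to the one represented by $\Spec k$, i.e.\ the final sheaf. By Yoneda (both sides are representable on $\Sm_k$) this holds iff $X \cong \Spec k$.

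For (3), one direction is trivial. Conversely, suppose $\rm{h}(X)\simeq \rm{h}(Y)$ in $\HH(k)$, with mutually inverse isomorphisms $\phi$ and $\psi$. By (1) applied with $U = X$ and target $Y$, $\phi$ is represented by a unique morphism of schemes $f: X \to Y$. Likewise $\psi$ comes from a unique $g: Y \to X$. The composite $g\circ f$ and $\Id_X$ both map to $\psi\circ\phi = \Id$ in $[X,X]_{\AA^1}$, so injectivity in (1) gives $g\circ f = \Id_X$. Symmetrically $f\circ g=\Id_Y$. Functoriality of $f\mapsto[f]$ is clear since it is induced by the functor $\rm{h}$.

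The main obstacle is the fibrancy claim in the first step. It requires confirming that a discrete Nisnevich sheaf satisfies hyperdescent, and not merely \v{C}ech descent. I would argue this by noting that $X$ is $0$-truncated, so its hyperdescent reduces to the sheaf condition. Equivalently, $\L_{Nis}$ fixes $X$ and $\Sing_*^{\AA^1}(X)\simeq X$ by rigidity, so $\L_{mot}(X)\simeq X$ in the iterated description of \cref{subsec:L-mot}. If needed, I would simply cite \cite[Lemma 2.1.9]{asokmorel2011} for this step.
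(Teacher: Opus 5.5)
Your proposal is correct and takes essentially the paper's route. The paper gives no separate argument: it invokes the full faithfulness of $\rm{h}$ on $\AA^1$-rigid schemes from \cite[Lemma 2.1.9]{asokmorel2011} and reads off (1)--(3) as formal consequences, exactly as you do in (2) and (3). You additionally prove the cited input, namely that a rigid $X$ is $\AA^1$-local and hence $[U,X]_{\AA^1}=X(U)$ for \emph{every} smooth $U$, which is slightly stronger than full faithfulness on rigid schemes and is what item (1) as stated actually requires.
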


\subsubsection*{Local nature} 
$\AA^1$-rigid schemes demonstrate one of the several stark dichotomies that exist between classical topology and algebraic geometry. Classically, every smooth manifold admits a local base that consists of smooth contractible neighborhoods, and thereby, the local system is a collection of connected spaces. In contrast, every scheme that is locally of finite type has a base that built out of products of $\GG_m$s (\cite[Lemma 2.3]{choudhury2024}), in particular, they are product of $\AA^1$-rigid varieties, whence in general, is not $\AA^1$-connected unless it is trivial (\cref{A1-rigid:mainresult} (2)). Nevertheless, they are important as they form the building blocks of the $\AA^1$-model structure:

\begin{lemma}
A smooth scheme of finite type is $\AA^1$-local fibrant if and only if it is $\AA^1$-rigid.
\end{lemma}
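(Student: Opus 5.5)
To prove that a smooth scheme $X$ of finite type over $k$ is $\AA^1$-local fibrant if and only if it is $\AA^1$-rigid, I will use the characterization recalled after the $\AA^1$-local projective model structure. A motivic space $Z$ is $\AA^1$-local (that is, fibrant in $\mcal{H}(k)$) precisely when three conditions hold: it takes values in Kan complexes, it satisfies Nisnevich hyperdescent, and $Z(U)\to Z(U\times\AA^1)$ is a weak equivalence of simplicial sets for all $U\in\Sm_k$. I will view $X$ through the Yoneda embedding followed by the discrete simplicial structure, $\Sm_k\xrightarrow{Y}\Psh(\Sm_k)\xrightarrow{D}\sPsh(\Sm_k)$. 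Then $X$ is the discrete simplicial presheaf $U\mapsto \Hom_{\Sm_k}(U,X)$.

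The first two conditions hold for every $X$ and I will dispose of them first. A constant simplicial set is a Kan complex, so the first condition is automatic. For the second, a representable functor is a Nisnevich sheaf; indeed it is even an fpqc sheaf, so it satisfies the Brown--Gersten property of \cref{defn:Nis-sheaf} for every elementary distinguished square. A sheaf of sets regarded as a discrete simplicial presheaf satisfies hyperdescent: for a hypercover $U_\bullet\to V$, the homotopy limit of the discrete cosimplicial diagram $X(U_\bullet)$ is just its equalizer in degrees $0,1$, and this equalizer is $X(V)$ by the sheaf property. This is the only step with any content, and to be safe I would cite the standard fact that $0$-truncated sheaves are hypercomplete, or alternatively \cite[Lemma 2.1.9]{asokmorel2011}.

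The third condition is where rigidity enters. Between discrete simplicial sets, a map is a weak equivalence if and only if it is a bijection of sets. Hence the condition for $Z=X$ says exactly that $\pr^*:\Hom_{\Sm_k}(U,X)\to\Hom_{\Sm_k}(U\times\AA^1,X)$ is bijective for all $U\in\Sm_k$. That is precisely \cref{defn:A1-rigid}. This gives both directions of the equivalence at once: $X$ is $\AA^1$-local if and only if the third condition holds, if and only if $X$ is $\AA^1$-rigid.

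One subtlety needs checking for the converse direction. The characterization above is stated as a sufficient condition ("is $\AA^1$-local if"), but I need it as a two-way statement. I would justify the necessity of the $\AA^1$-invariance condition directly from the definition of $\AA^1$-local. An $\AA^1$-local $Z$ has $\Map(U,Z)\simeq\Map(U\times\AA^1,Z)$, and since $U$ and $U\times\AA^1$ are cofibrant representables, these mapping spaces compute to $Z(U)$ and $Z(U\times\AA^1)$. So if $X$ is local fibrant, then $\pr^*$ is a weak equivalence of discrete sets, hence a bijection, and $X$ is rigid.
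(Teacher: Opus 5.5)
Your proof is correct. The paper gives no argument for this lemma; it only cites \cite[Lemma 4.1.8]{asok2021A1}. Your reduction is the standard argument behind that reference: a representable is a discrete Nisnevich sheaf, hence automatically objectwise Kan and satisfying hyperdescent, so $\AA^1$-local fibrancy reduces to $\Hom_{\Sm_k}(U,X)\to\Hom_{\Sm_k}(U\times\AA^1,X)$ being a weak equivalence of discrete simplicial sets, i.e.\ a bijection. Your separate check of necessity, via $\Map(U,Z)\cong Z(U)$ for representable $U$, is also the right way to upgrade the one-directional characterization quoted from \cite{antieau2017primer} to an equivalence.
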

\begin{proof}
\cite[Lemma 4.1.8]{asok2021A1}.
\end{proof}

\subsection{\texorpdfstring{$\mathbb{A}^{1}$}{A1}-Chain Connectedness}\label{subsec:A1-chain-connectedness}
We now focus on the analogous notion of path connectedness in algebraic geometry. Recall from topology that a space $X$ is \emph{path connected} if any two given points $x,y$ can be joined by a map $\gamma:[0,1]\to X$ such that $\gamma(0)= x$, $\gamma(1)= y$. The analogous notion in $\AA^1$-homotopy, established by Asok-Morel (\cite{asokmorel2011}), is that of being "connected by the images (chains) of the affine line".

\begin{defn}
Given $X\in \Sm_k$ and a finitely generated separable extension of $L/k$, and points $x_0, x_1 \in X(L)$ an \emph{elementary $\AA^1$ equivalence} between $x_0$ and $x_1$ is a morphism $f : \AA^1 \to X$ such that $f(0) = x_0$ and $f(1)= x_1$. Two points $x,y\in X(L)$ are $\AA^1$-equivalent if they are equivalent with respect to the equivalence relation generated by elementary $\AA^1$-equivalence.
\end{defn}

\begin{defn}\label{defn:A1-chainconn}
A $k$-scheme $X$ is \emph{$\AA^1$-chain connected} if for any finitely generated separable field extension $L/k$, $X(L)$ is non-empty and for all rational points $x, y \in X(L)$, there exists an integer $N$ and a sequence of points $x = x_0,\dots, x_{N-1},x_N =y\in X(L)$ and an elementary $\AA^1$-weak equivalences $f_1,\dots,f_N: \AA^1_L \to X$ such that $f_i(0)= x_{i-1}$ and $f_i(1)= x_i$ for $1\le i\le N$: any two points can be connected by the images of a chain of maps from $\AA^1$.
\end{defn}

Alternatively, one says that a variety $X$ is $\AA^1$-chain connected if the set of $\AA^1$-equivalence classes of $L$-points $X(L)/\sim$ consists of exactly one element.

\begin{defn}
Let $X\in \Sm_k$. Then the \emph{sheaf of $\AA^1$-chain connected component of $X$} is defined as 
        $$\pi_0^{ch}(X) := \pi_0^s(\Sing_*^{\AA^1}(X)).$$
We say that $X$ is $\AA^1$-chain connected if $\pi_0^{ch}(X)\simeq \Spec k$.
\end{defn}

Various relations have been studied between $\pi_0^{\AA^1}$ and $\pi_0^{ch}$ (for example, see \cite{asokmorel2011, balwe2015a1}). We will list a few that lie within the scope of our script.
\begin{lemma}
For $X\in \Sm_k$, the map $\phi: \pi_0^{ch}(X)\to \pi_0^{\AA^1}(X)$ is an epimorphism of Nisnevich sheaves.
\end{lemma}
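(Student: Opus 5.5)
The natural route is to factor $\phi$ through the canonical epimorphism from $X$ itself and use the unstable $0$-connectivity theorem (\cref{unstable-0-connect:thm}).

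First, I construct the map $\phi$. There is a canonical morphism of spaces $X \to \Sing_*^{\AA^1}(X)$, given by inclusion of $0$-simplices. By \cref{Sing=L_A1:equiv}, $\Sing_*^{\AA^1}$ computes $\L_{\AA^1}$, so there is a further canonical morphism $\Sing_*^{\AA^1}(X) \to \L_{mot}(X)$, obtained from the first stage $\L_{\AA^1}\L_{Nis}$ of the colimit defining $\L_{mot}$. Applying $\pi_0^s$, which is Nisnevich-sheafified $\pi_0$ of the simplicial presheaf, gives
$$\pi_0^s(X) \to \pi_0^s(\Sing_*^{\AA^1}(X)) = \pi_0^{ch}(X) \xrightarrow{\ \phi\ } \pi_0^s(\L_{mot}X) \simeq \pi_0^{\AA^1}(X),$$
where the last identification is the one recorded in the remark following \cref{defn:A1-conn}. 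Since $X$ is a scheme, it is simplicially discrete, so $\pi_0^s(X) = X$ as a Nisnevich sheaf. The composite is therefore the canonical map $X \to \pi_0^{\AA^1}(X)$.

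Second, I use that this composite is an epimorphism of Nisnevich sheaves. This is exactly the content of \cref{unstable-0-connect:thm}, applied to the space $\mathcal F = X$.

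Third, I conclude by a formal categorical fact. If $g \circ h$ is an epimorphism in the topos $\Shv_{Nis}(\Sm_k)$, then $g$ is an epimorphism. Indeed, for any pair $a, b$ with $a g = b g$, we get $a g h = b g h$, hence $a = b$. Equivalently, one can argue stalkwise: surjectivity on Henselian local stalks of the composite forces surjectivity of the second factor. Taking $g = \phi$ and $h$ the map $X \to \pi_0^{ch}(X)$ gives the result.

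The main obstacle is the first step: checking that $\phi$, defined via $\Sing_*^{\AA^1} \to \L_{mot}$, really makes the composite agree with the canonical map appearing in the $0$-connectivity theorem. This amounts to the naturality of the unit maps $X \to \Sing_*^{\AA^1}X \to \L_{mot}X$ under the identification $\pi_0^{\AA^1} = \pi_0^s \circ \L_{mot}$. I would verify this directly from the construction of $\L_{mot}$ as an iterated colimit, where each stage receives a compatible map from the previous one. Once that is settled, the rest is formal.
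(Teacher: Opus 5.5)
Your argument is correct and is the standard one. The paper states this lemma without proof, taking it from Asok--Morel, and the argument there also factors the $0$-connectivity epimorphism $X\to\pi_0^{\AA^1}(X)$ of \cref{unstable-0-connect:thm} through $X\to\pi_0^{ch}(X)$ and concludes that the second factor of an epimorphic composite is an epimorphism. The compatibility you flag as the main obstacle is immediate if you obtain $\L_{\AA^1}X\to\L_{mot}X$ from the universal property of $\L_{\AA^1}$ (as $\L_{mot}X$ is $\AA^1$-invariant), since then $X\to\L_{\AA^1}X\to\L_{mot}X$ is the unit of $\L_{mot}$ by construction.
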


In topology, any path-connected space is also connected. But the converse is not true (e.g., topologist's sine curve). The following facts show us the striking affinity for schemes as well:
\begin{prop}
If $X\in \Sm_k$ is $\AA^1$-chain connected, then it is $\AA^1$-connected.
\end{prop}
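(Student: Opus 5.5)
The plan is to factor the comparison through the chain-connected component sheaf. If $X$ is $\AA^1$-chain connected, then $\pi_0^{ch}(X)\simeq \Spec k$. The preceding lemma provides an epimorphism of Nisnevich sheaves $\phi:\pi_0^{ch}(X)\to \pi_0^{\AA^1}(X)$. A quotient of the terminal sheaf is again terminal, so it suffices to show that $\pi_0^{\AA^1}(X)$ is nonempty on every stalk, or at least that $\phi$ lands surjectively onto a sheaf admitting a section.

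Concretely, I would first reinterpret \cref{defn:A1-chainconn} stalkwise. Points of the Nisnevich site over $k$ correspond to henselian local essentially smooth $k$-schemes. Via \cref{A1-chain-conne-sep-ext:lemma}-style reduction, the values of $\pi_0^{ch}(X)$ that matter can be tested on $\Spec L$ for finitely generated separable extensions $L/k$: it is a standard fact (Asok–Morel) that $\AA^1$-invariant-type sheaves such as $\pi_0^{ch}$ and $\pi_0^{\AA^1}$ of smooth schemes are detected on such fields. On $\Spec L$, the set $\pi_0^{ch}(X)(L)$ is the quotient $X(L)/\sim$ of $L$-points by the relation generated by elementary $\AA^1$-equivalences. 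This holds because $\Sing_*^{\AA^1}(X)(L)$ has $0$-simplices $X(L)$ and $1$-simplices $\Hom(\AA^1_L,X)$, and $\pi_0^s$ of a simplicial set is the coequalizer of $d_0,d_1$ on $1$-simplices. Chain connectedness says exactly that this quotient is a single point, since $X(L)\neq\emptyset$.

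Next, I would use the map $X\to\pi_0^{ch}(X)\to\pi_0^{\AA^1}(X)$ together with \cref{unstable-0-connect:thm}, which gives that $X\to\pi_0^{\AA^1}(X)$ is a Nisnevich epimorphism. Hence $\pi_0^{\AA^1}(X)(L)$ receives a surjection from $X(L)$ that factors through $X(L)/\sim$, a singleton. It follows that $\pi_0^{\AA^1}(X)(L)$ is a point for every such $L$. Since the structure map also gives a morphism $\pi_0^{\AA^1}(X)\to\pi_0^{\AA^1}(\Spec k)=\Spec k$, I conclude that $\pi_0^{\AA^1}(X)\to\Spec k$ is an isomorphism on all field-valued points.

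The main obstacle is the last passage from field-valued points to an isomorphism of Nisnevich sheaves. An epimorphism whose source is the terminal sheaf forces the target to be a quotient of a point, so it is either the point or has an empty stalk. The latter is excluded because $X(L)\neq\emptyset$ yields sections, and henselian local stalks receive points from their generic fibers. I would handle this cleanly by avoiding stalks entirely. The epimorphism $\phi$ from $\pi_0^{ch}(X)\simeq\Spec k$ has image $\pi_0^{\AA^1}(X)$, and the composite $\Spec k\to\pi_0^{\AA^1}(X)\to\Spec k$ is the identity. So $\phi$ is also a monomorphism, hence an isomorphism.
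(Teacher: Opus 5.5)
Your last paragraph is a complete proof, but it takes a different route from the one the paper cites.

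\textbf{Your route.} You read the paper's sheaf-level definition $\pi_0^{ch}(X)\simeq\Spec k$ literally, so the preceding lemma gives an epimorphism $\Spec k\to\pi_0^{\AA^1}(X)$. Any morphism out of a terminal object is a monomorphism, and in a topos a morphism that is both mono and epi is an isomorphism. So the conclusion is purely formal from the epimorphism lemma.

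\textbf{The cited route.} The argument behind the paper's citation (Asok--{\O}stv{\ae}r, going back to Asok--Morel) starts from the field-wise Definition~\ref{defn:A1-chainconn}. There one only knows that $X(L)/{\sim}$ is a point for finitely generated separable $L/k$. It uses surjectivity of $X(L)\to\pi_0^{\AA^1}(X)(L)$ to get $\pi_0^{\AA^1}(X)(L)=\ast$, which is essentially your third paragraph. It then concludes by Morel's theorem that $\AA^1$-connectedness of a space can be tested on sections over such fields.

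\textbf{What each buys.} Your argument needs nothing beyond the epimorphism lemma, but its hypothesis is a priori stronger. Triviality of the sheaf $\pi_0^{ch}(X)$ requires $X(U)/{\sim}$ to be a point for every henselian local essentially smooth $U$, not only for fields. The field-wise condition is not known to imply this in general. So if you want the statement for the standard field-wise notion, your opening line is unavailable, and you must finish from your third paragraph via Morel's detection result.

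\textbf{Corrections.} The ``standard fact'' in your second paragraph is misstated. The sheaves $\pi_0^{ch}$ and $\pi_0^{\AA^1}$ need not be $\AA^1$-invariant; for $\pi_0^{\AA^1}$ this is the failure of Morel's conjecture recalled in the paper. What is true is only the narrower statement that \emph{triviality} of $\pi_0^{\AA^1}(X)$ is detected on finitely generated separable extensions. Also drop the worry about an empty stalk: the map from $\Spec k$ already supplies a global section, so that case cannot occur.
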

\begin{proof}
\cite[Proposition 4.2.4]{asok2021A1}.
\end{proof}

Asok and Morel also conjectured (\cite[Conjecture 2.2.8]{asokmorel2011}) that for smooth schemes over $k$, the above map $\phi$ is an isomorphism, that is, it is also a monomorphism. They also observed that this should hold provided $\Sing^{\AA^1}_*(X)$ is $\AA^1$-local for a space $X$ and showed that this is indeed true for smooth proper varieties (\cite[Corollary 2.4.4]{asokmorel2011}). However, this has now proven to be false in generality due to the work of \cite[\S 4.1]{balwe2015a1}. This tells us that being $\AA^1$-chain connected is indeed stronger than being $\AA^1$-connected.
\medskip

\begin{defn}\label{defn;covered-by-affine}
We say that an $n$-dimensional smooth $k$-variety $X$ is \emph{covered by affine spaces} if $X$ admits an open affine cover by finitely many copies of $\AA^n_k$ such that the intersection\footnote{this is automatic if $k$ is an infinite field} of any two copies of $\AA^n_k$ has a $k$-point.
\end{defn}

\begin{example}
The following are some (non-)examples of $\AA^1$-chain connected varieties:
\begin{itemize}
\item Any variety that is covered by affine spaces in the sense of \cref{defn;covered-by-affine} is $\AA^1$-chain connected. These include rational smooth proper varieties of dimension $\le 2$, smooth proper toric varieties, and generalized flag varieties for connected reductive groups over $k$.
\item The Koras-Russell threefolds (\cite[Example 2.28]{DPO2019}) and Generalized Koras-Russell varieties are $\AA^1$-chain connected (cf. \cref{X-p-isA1-connected}).
\item The Ramanujam surface \cref{eg:Ramanujamsurf} and the tom Dieck-Petrie surfaces \cref{eg:tomDieck-petri} are not $\AA^1$-chain connected.
\end{itemize}
\end{example}

Generalizing the sheaf of $\AA^1$-chain connected components, the authors in \cite{balwe2015a1} construct the \emph{universal $\AA^1$-invariant sheaf} in the following sense: Set $\mcal{S}(-):= \pi_0^{ch}(-)$, then for any Nisnevich sheaf $\mcal{F}$, the canonical morphism $\mcal{F}\to \mcal{S}(\mcal{F})$ is surjective and so consider 
    $$\mcal{F}\to \mcal{S}(\mcal{F}) \to \mcal{S}^2 (\mcal{F}) \to \dots \to \mcal{S}^n(\mcal{F}) \to \dots$$
where $\mcal{S}^{n+1} (\mcal{F})$ is inductively defined as $\mcal{S}(\mcal{S}^n(\mcal{F}))$, for all $n\in \mathbb{N}$.
The \emph{universal $\AA^1$-invariant sheaf is defined as}
        $$\mcal{L}(\mcal{F}) := \lim_{\substack{n}} \mcal{S}^n (\mcal{F})$$
Then they show \cite[\S 2]{balwe2015a1} that for any sheaf $\mcal{F}$, $\mcal{L}(\mcal{F})$ is $\AA^1$-invariant and for any $\AA^1$-invariant sheaf $\mcal{G}$, any morphism $\mcal{F} \to \mcal{G}$ uniquely factors via $\mcal{F}\to \mcal{L}(\mcal{F})$. Moreover, if $\pi_0^{\AA^1}(\mcal{F})$ is $\AA^1$-invariant, then the canonical map $\mcal{L}(\mcal{F}) \to \pi_0^{\AA^1}(\mcal{F})$ is an isomorphism.

\subsection{Functoriality}\label{sect:functoriality}
Given any category, a vital tool to establish is its functoriality with respect to its morphisms. The functoriality in motivic homotopy theory is well-established and facilitates handling certain problems from a categorical viewpoint. For a comprehensive overview of functoriality, we recommend the following resources: \cite[\S 3]{MV99}, \cite{Ayoub2007six, CD2019}. For other recent accounts, one can refer to \cite{hoyois2017six, ELSO22, bachmann2024stronglya1, rondigs2025grothendieck}. In alignment with the material presented in our script, we will restrict ourselves to the base change functoriality, which we shall apply to the special case when the spaces are $\AA^1$-contractible.

\subsubsection{The Base Change Functor}
Let $f:T \to S$ be a morphism of base schemes\footnote{By a base scheme, we mean a Noetherian separated scheme of finite Krull dimension}. Then the pullback along $f$ defines a functor 
\begin{align*}
    &f^{\bullet}: \Sm_S\longrightarrow \Sm_T\\
    & \hspace{10mm} U \hspace{2mm} \longmapsto U_T:= U \times_S T.
\end{align*}
Precomposition with $f^{\bullet}$, we obtain another colimit preserving functor at the level of motivic spaces defined as
\begin{align*}
    & f_*: \Spc_T \longrightarrow \Spc_S\\
    & \hspace{7mm} \mcal{F} \hspace{5mm} \longmapsto \mcal{F} \circ f^{\bullet}:= U\mapsto \mcal{F}(U_T). 
\end{align*}
By left Kan extension (cf. \cref{app:Kan-ext}), we see that $f_*$ admits a left adjoint $f^*: \Spc_S \to \Spc_T$ on the level of motivic spaces
        $$f^* \dashv f_* $$
which is a strict symmetric monoidal category and preserves limits. Since every (pointed) motivic space is a colimit of representable motivic spaces (cf. \cref{prop:Rep-psh-colimits}), we can characterize $f^*$ by the following formula
\begin{align}\label{pshv-pullback}
    f^*(X_+) = (X \times_S T)_+
\end{align}
for all $X\in \Sm_S$. 

\subsubsection{Smooth Base Change}
If furthermore, $f: T \to S$ is a smooth morphism of base schemes, then composition with $f$ defines a functor $f_{\bullet}: \Sm_T\to \Sm_S$. Precomposition with this functor defines another functor $f^{\bigstar}: \Spc_S \to \Spc_T$ which itself admits a left adjoint $f_{\sharp}: \Spc_T \to \Spc_S$ via (enriched) Kan extension, that is 
    $$ f_{\sharp} \dashv f ^{\bigstar} \dashv f_{*}$$
Again, using the fact that any motivic space is a colimit of representable ones, $f_\sharp$ can be characterized by the formula
    $$f_\sharp(X\xrightarrow{g} T)_+ = (X \xrightarrow{g} T \xrightarrow{f} S) $$
for every $X\in \Spc_T$. In other words, $f_\sharp$ is simply induced by the forgetful functor $\mcal{U}: \Sm_S\to \Sm_T$. On the other hand, if $g : Z \to S$ is a morphism, then the canonical $S$-morphism $Z \times_S T$ defines a map $B(Z) \to f_*f^{\bigstar} B(Z)$ which is natural in $Z$ and $B \in \Spc_T$, and consequently, a natural transformation $\Id_{\Spc_S}\to f_*\circ f^\bigstar$. The following lemma says these two functors agree whenever $f$ is a smooth morphism:
\begin{lemma}
If $f: X \to Y$ is a smooth morphism of base schemes, the adjoint 
         $$f^* \to f^\bigstar $$
of the natural transformation $\Id_{\Spc_S}\to f_*\circ f^\bigstar$ is a natural transformation.
\end{lemma}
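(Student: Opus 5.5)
The plan is to read the statement as asserting that the canonical comparison map $f^*\to f^{\bigstar}$ is a natural isomorphism of functors $\Spc_S\to\Spc_T$, where $f^{\bigstar}$ denotes precomposition with $f_{\bullet}:\Sm_T\to\Sm_S$, $(V\to T)\mapsto (V\to T\xrightarrow{f} S)$. This comparison map is the mate of $\Id_{\Spc_S}\to f_*\circ f^{\bigstar}$ under $f^*\dashv f_*$. The strategy is to check the isomorphism on representables and then extend to all spaces, using that every motivic space is a colimit of representables (\cref{prop:Rep-psh-colimits}).

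\textbf{Step 1: both functors preserve colimits.} The functor $f^*$ preserves colimits because it is a left adjoint. The functor $f^{\bigstar}$ is precomposition with $f_{\bullet}$. Colimits in $\Spc_S=\sPsh(\Sm_S)$ and in $\Spc_T$ are computed objectwise, so $f^{\bigstar}$ preserves all colimits (and all limits). Alternatively, one can note that $f^{\bigstar}$ has the left adjoint $f_\sharp$ and the right adjoint $f_*$.

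\textbf{Step 2: comparison on representables.} For $X\in\Sm_S$ and $V\in\Sm_T$, the universal property of the fibre product gives a bijection, natural in $V$ and $X$:
\[
f^{\bigstar}(h_X)(V)=\Hom_S(V,X)\cong \Hom_T(V,X\times_S T)=h_{X\times_S T}(V).
\]
Since $f$ is smooth, $X\times_S T\in\Sm_T$, so this is an isomorphism of representables $f^{\bigstar}h_X\cong h_{X\times_S T}=f^*h_X$, where the last equality is formula \eqref{pshv-pullback}. The simplicial and pointed versions follow levelwise, and the pointed case is compatible with $(-)_+$.

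\textbf{Step 3: identify the mate with the Yoneda isomorphism.} This is the step I expect to be the main obstacle, because it requires unwinding the mate construction. Evaluate the unit $h_X\to f_*f^{\bigstar}h_X$ at $U\in\Sm_S$: it sends $u:U\to X$ to the base change $u_T:U_T\to X$, regarded as an $S$-map out of $U_T$. Transposing along $f^*\dashv f_*$ and using Yoneda, the mate $f^*h_X=h_{X_T}\to f^{\bigstar}h_X$ corresponds to the element of $f^{\bigstar}h_X(X_T)=\Hom_S(X_T,X)$ given by the projection $\mathrm{pr}_1:X_T\to X$. This is exactly the image of $\Id_{X_T}$ under the bijection of Step 2, so on representables the mate coincides with that isomorphism.

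\textbf{Step 4: extend to all spaces.} The mate is a natural transformation between two colimit-preserving functors, and it is an isomorphism on representables. Writing an arbitrary $\mathcal F\in\Spc_S$ as a colimit of representables, it is therefore an isomorphism on $\mathcal F$. Hence $f^*\cong f^{\bigstar}$.

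If the statement is intended at the level of $\mcal H(-)$, one further checks that $f^{\bigstar}$ preserves the generating Nisnevich and $\AA^1$-equivalences. This holds because base change along $f$ sends elementary distinguished squares to elementary distinguished squares and sends $U\times\AA^1\to U$ to $U_T\times\AA^1\to U_T$. Consequently the isomorphism descends to the localized categories.
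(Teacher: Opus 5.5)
The paper states this lemma without any proof, so there is no argument of the paper's to compare yours against. As literally written, the conclusion is automatic, since the mate of a natural transformation is always a natural transformation. You are right that the meaningful content is that the mate $f^*\to f^{\bigstar}$ is a natural \emph{isomorphism}.

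Your proof of that is correct. Both functors preserve colimits. On a representable $h_X$, the mate is the Yoneda bijection $\Hom_T(V,X\times_S T)\to\Hom_S(V,X)$, $g\mapsto \mathrm{pr}_1\circ g$. The general case then follows by writing a space as a colimit of representables, working levelwise in the simplicial direction.

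There are a few small slips, none of which affects the argument:
\begin{itemize}
\item Smoothness of $f$ is what makes $f_{\bullet}$, and hence $f^{\bigstar}$, defined at all. By contrast, $X\times_S T\in\Sm_T$ holds for every $f$ by base change.
\item The unit sends $u\colon U\to X$ to the composite $U_T\to U\xrightarrow{u}X$, i.e.\ to $\mathrm{pr}_1\circ u_T$. It does not send it to $u_T$ itself, which lands in $X_T$.
\item In Step 3, it is the \emph{inverse} of your Step 2 bijection that sends $\Id_{X_T}$ to $\mathrm{pr}_1$.
\end{itemize}

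A purely formal route is also available. The transformation $\Id_{\Spc_S}\to f_*f^{\bigstar}$ is exactly the unit of the adjunction $f^{\bigstar}\dashv f_*$ that the paper records just before the lemma. That adjunction is obtained by precomposition from $f_{\bullet}\dashv f^{\bullet}$ on smooth schemes, whose counit is the projection $U\times_S T\to U$. Under $f^*\dashv f_*$, the mate of the unit of a second adjunction with the same right adjoint $f_*$ is the canonical comparison between two left adjoints of $f_*$. It is therefore an isomorphism with no computation needed. Your Yoneda check is the hands-on version of this, and it has the advantage of exhibiting the isomorphism $f^{\bigstar}h_X\cong h_{X\times_S T}$ explicitly.
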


\begin{lemma}
If $f: X\to Y$ is a morphism of base schemes. Then $f^*$ preserves finite objects, dualizable objects. Furthermore, if $f$ is smooth, then $f_{\sharp}$ preserves finite objects.
\end{lemma}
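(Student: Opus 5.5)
The plan is to work with the explicit descriptions of $f^*$ and $f_\sharp$ on representables given above, and with formal properties of left adjoints and symmetric monoidal functors. Throughout, I take a \emph{finite object} of $\mcal{H}(S)_\bullet$ (or of $\Spc_{S,\bullet}$) to mean an object of the smallest full subcategory that contains the representables $X_+$ for $X\in\Sm_S$ and is closed under finite colimits and retracts. Here $f^*$ and $f_\sharp$ also denote the functors induced on the $\AA^1$-homotopy categories, constructed in the first step.

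\textbf{Step 1: descending to $\mcal{H}$.} First I check that $f^*$, and $f_\sharp$ when $f$ is smooth, pass to the $\AA^1$-homotopy categories. Both are left adjoints on $\Spc$, so they preserve colimits. It therefore suffices that they send the generating maps to motivic equivalences. There are two kinds of generating maps.
\begin{itemize}
\item \emph{Nisnevich maps.} For an EDS in $\Sm_S$, its base change along $f$ is again an EDS in $\Sm_T$, since \'etale maps, open immersions and the isomorphism condition on closed complements are all stable under pullback. So $f^*$ sends \v{C}ech (Nisnevich) covers to \v{C}ech covers.
\item \emph{$\AA^1$-projections.} $f^*$ sends $U\times\AA^1\to U$ to $U_T\times\AA^1\to U_T$.
\end{itemize}
For $f_\sharp$ the same holds, because $f_\sharp$ is just composition with $f$, and an EDS over $T$ remains an EDS over $S$. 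Hence both functors are left Quillen for the $\AA^1$-local projective structures, and $\L_{mot}\circ f^*$ and $\L_{mot}\circ f_\sharp$ induce the derived functors on $\mcal{H}(-)_\bullet$.

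\textbf{Step 2: finite objects.} By Step 1, the derived functors $f^*$ and $f_\sharp$ preserve homotopy colimits, and they preserve retracts, as any functor does. So it suffices to check what they do to generators.
\begin{itemize}
\item By formula \eqref{pshv-pullback}, $f^*(X_+)=(X\times_S T)_+$, and $X\times_S T\in\Sm_T$ because smoothness, separatedness and finite type are stable under base change.
\item For $f$ smooth, $f_\sharp$ sends $(X\to T)_+$ to $(X\to T\to S)_+$. This lies in $\Sm_S$ because a composite of smooth, separated, finite type morphisms has the same properties.
\end{itemize}
An induction on the number of finite colimit and retract steps needed to build a finite object then shows that both functors preserve finite objects.

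\textbf{Step 3: dualizable objects.} I use that $f^*$ is strong symmetric monoidal on $\mcal{H}(-)_\bullet$. On representables this holds because
\[
(X\times_S Y)\times_S T\cong X_T\times_T Y_T,
\]
and the unit $S_+$ goes to $T_+$. This extends to all spaces because $f^*$ and the smash product both commute with colimits in each variable, and $\L_{mot}$ preserves finite products by \cref{Prop:L_mot-preserve}. Now let $A$ be dualizable with dual $A^\vee$, evaluation $A^\vee\wedge A\to 1$ and coevaluation $1\to A\wedge A^\vee$. Applying $f^*$ and composing with the monoidality isomorphisms yields maps $f^*A^\vee\wedge f^*A\to 1$ and $1\to f^*A\wedge f^*A^\vee$. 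These satisfy the triangle identities, since $f^*$ preserves composites and the coherence isomorphisms. Hence $f^*A$ is dualizable with dual $f^*A^\vee$.

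\textbf{Main obstacle.} The step needing real care is Step 1 together with the monoidality in Step 3. One must make sure that $f^*$ commutes with motivic localization, i.e.\ that $\L_{mot}f^*\simeq\L_{mot}f^*\L_{mot}$, so that the monoidal structure genuinely descends to $\mcal{H}$. I would handle this through the left Quillen property and the cofibrancy of representables in the projective model structure, which make the derived functor agree with $f^*$ on representables. I would cite \cite{hoyois2017six} for the comparison of $\L_{mot}$ with pullback if needed.
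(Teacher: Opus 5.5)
Your proof is correct, and it is essentially the argument the paper relies on: the paper's own proof is only the citation \cite[Lemma 3.11]{rondigs2025grothendieck}, and your argument is the standard proof behind it. Both $f^*$ and, for smooth $f$, $f_\sharp$ are left Quillen and send each $X_+$ to a representable, so they preserve the closure of the representables under finite homotopy colimits and retracts; and $f^*$ is strong symmetric monoidal, so it carries evaluation and coevaluation maps, and hence duals, to duals. The paper never defines ``finite object'', so fixing it as you did is the right move, and your Step 1 correctly isolates the only non-formal input, namely that $f^*$ and $f_\sharp$ preserve Nisnevich and $\AA^1$-equivalences.
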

\begin{proof}
\cite[Lemma 3.11]{rondigs2025grothendieck}.
\end{proof}

\textsc{Convention recap:} For a morphism of $S$-schemes $f:X \to Y$, some authors (by abusive of terminology) use that $f$ is an $\AA^1$-weak equivalence in $\HH(S)$ or that $f$ is an isomorphism of motivic spaces. We have made a clear distinction in the following sense: we will adopt to say either that $f$ is an $\AA^1$-weak equivalence in $\Spc_S$ or equivalently, that $f$ is an isomorphism in $\HH(S)$.
\medskip

The following crucial fact gives us the concise base change functoriality relating the category of motivic spaces and the associated $\AA^1$-homotopy category. We will denote by $Lf^*$ (resp. $RF_*$) the left (resp. right) derived Quillen functor on the level of $\AA^1$-homotopy categories (cf. \cref{app:Quillen-func}).

\begin{prop}\cite{DMO25}
Let $f: T\to S$ be a morphism of schemes, let $\Spc_S$ and $\Spc_T$ be the category of motivic spaces endowed with their respective local projective model structures, and let $\HH(S)$ and $\HH(T)$ be their respective localizations with their $\AA^1$-local projective model structure. Then the following holds:
\begin{enumerate}
  \item The adjoint pair $(f^*,f_*)$ induces Quillen adjunctions $$\Spc_S \stackrel{\overset{f_*}{\longleftarrow}}{\underset{f^*}{\longrightarrow}} \Spc_T  \quad \textrm{and}  
  \quad \HH(S) \stackrel{\overset{Rf_*}{\longleftarrow}}{\underset{Lf^*}{\longrightarrow}}\HH(T).$$
  \item If $f :T\to S$ is smooth, then the adjoint pair $(f_\sharp,f^{\bigstar})$ induces Quillen adjunctions 
  $$\Spc_T \stackrel{\overset{f^\bigstar}{\longleftarrow}}{\underset{f_\sharp}{\longrightarrow}}\Spc_S \quad  \textrm{and} 
  \quad \HH(T)\stackrel{\overset{Lf^\bigstar}{\longleftarrow}}{\underset{Rf_\sharp}{\longrightarrow}}\HH(S).$$
\end{enumerate}
\end{prop}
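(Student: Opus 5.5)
The plan is to check the Quillen conditions directly, first for the local projective model structures on $\Spc_S$ and $\Spc_T$, and then to pass to the $\AA^1$-local projective structures using the universal property of left Bousfield localization. For (1), I first show that $f_*$ is right Quillen for the objectwise (global) projective structures. Fibrations and weak equivalences there are objectwise, and $(f_*\mcal{F})(U)=\mcal{F}(U_T)$. Hence $f_*$ preserves objectwise fibrations and trivial fibrations by inspection. Equivalently, $f^*$ sends the generating projective cofibrations $\partial\Delta^n\times U\to\Delta^n\times U$ to $\partial\Delta^n\times U_T\to \Delta^n\times U_T$, which are again generating cofibrations, by formula \eqref{pshv-pullback}.

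Next I pass to the Nisnevich-local projective structures. By Hirschhorn's criterion for left Bousfield localizations (as in \cref{app:Bousfield}), it suffices to show that $Lf^*$ sends the localizing maps on $\Spc_S$ to weak equivalences in $\Spc_T$. These maps are the hocolim maps of \v{C}ech nerves of Nisnevich covers, or equivalently the maps $\mathrm{hocolim}(U\leftarrow p^{-1}U\to V)\to X$ attached to each EDS (\cref{EDS}). Since $f^*$ is a left adjoint and preserves representables via $X\mapsto X_T$, it preserves homotopy pushouts of representables. Base change of an EDS along $T\to S$ is again an EDS: étaleness, open immersions, and the isomorphism $p^{-1}(X\bs U)\cong X\bs U$ are all stable under base change, the last on underlying reduced subschemes. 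So these maps go to local weak equivalences.

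For the $\AA^1$-localization I do the same with the maps $U\times\AA^1_S\to U$, which go to $U_T\times\AA^1_T\to U_T$. Both localizations are left Bousfield localizations of the left proper, cellular projective structure, so the localized adjunction stays Quillen. This gives $(Lf^*,Rf_*)$ on $\HH(S)\rightleftarrows\HH(T)$.

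For (2), with $f$ smooth, $f_\sharp(U\to T)=(U\to T\to S)$ is again representable. So $f_\sharp$ sends generating projective cofibrations to generating projective cofibrations. It also sends EDS over $T$ to squares that are EDS over $S$, since composition does not change the square, and it sends $U\times_T\AA^1_T\to U$ to $U\times_S\AA^1_S\to U$. The same localization argument therefore shows that $f_\sharp$ is left Quillen, with right adjoint $f^\bigstar$, at both levels.

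The main obstacle is the localization step. One must make sure that "weak equivalence" is tested after cofibrant replacement, and that the cofibrancy of representables in the projective structure makes $f^*$ and $f_\sharp$ agree with their derived functors on the localizing set. I would handle this by using that the localizing sets consist of maps between cofibrant objects, with the hocolims modeled by cofibrant mapping cylinders. Then Hirschhorn, Thm.~3.3.20, applies directly.
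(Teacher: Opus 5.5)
Your proposal is correct and follows essentially the same route as the paper: first the Quillen adjunction on the global projective structures (the paper cites Lurie, HTT A.2.8.7), then the universal property of left Bousfield localization (HTT A.3.7.9 in the paper, Hirschhorn 3.3.20 in yours), checking that base change, respectively composition with a smooth $f$, preserves the Nisnevich localizing maps and the $\AA^1$-projections. The only difference is cosmetic: the paper phrases the Nisnevich localizing maps via hypercovers, whereas you use elementary distinguished squares or \v{C}ech covers; these give the same localization here because the base schemes are Noetherian of finite Krull dimension, where Morel--Voevodsky's Brown--Gersten criterion applies.
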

\begin{proof} 
The fact that $(f^*, f_*)$ (resp. $(f_\sharp, f^{\bigstar})$ when $f$ is smooth) induces a Quillen adjunction between the corresponding model categories is a consequence of using the projective model structure, and this follows from \cite[Proposition A.2.8.7]{lurie2009HTT}. Indeed, the first assertion follows from the observation that if $U_\bullet \to X$ is a Nisnevich hypercover of a smooth $S$-scheme $X$, then $(U_T)_{\bullet}\to X_T$ is a Nisnevich hypercover of $X_T$ and that for every smooth $S$-scheme $X$, $(X\times_S \AA^1_S)\cong X_T\times_T \AA^1_T$ which implies by \cite[Proposition A.3.7.9]{lurie2009HTT} that the Quillen adjunction 
$$\Spc_S \stackrel{\overset{f_* \circ \mathrm{id}}{\longleftarrow}}{\underset{\mathrm{id} \circ f^*}{\longrightarrow}} \Spc_T $$ 
induces a Quillen adjunction 
$$\HH(S) \stackrel{\overset{\mathrm{id}\circ f^* \circ \mathrm{id}}{\longleftarrow}}{\underset{\mathrm{id} \circ f_*\circ \mathrm{id}}{\longrightarrow}}\HH(T)$$ 
The second assertion follows for the same reasons after observing that when $f: T\to S$ is smooth, a Nisnevich hypercover $U_\bullet \to X$ of a smooth $T$-scheme $X$ is again a Nisnevich hypercover of $X$ considered as a smooth $S$-scheme via the composition with $u$ and that for every smooth $T$-scheme $X$, $X\times_T \AA^1_T\cong X\times_S \AA^1_S$ as schemes over $S$.
\end{proof}

We now present several consequences of this base change functoriality, which will play a crucial role in \cref{chp4}. 

\begin{corollary}\label{cor:pushpull-A1-weak} \cite{DMO25}
Let $f: T\to S$ be a morphism of schemes. Then the following holds:
\begin{enumerate}
\item For every morphism $u:Y\to X$ between smooth $S$-schemes which is an $\AA^1$-weak equivalence in $\Spc_S$, the morphism $u_T: Y_T\to X_T$ is an $\AA^1$-weak equivalence in $\Spc_T$.  
\item If $f: T\to S$ is smooth, then every morphism $u: Y\to X$ between smooth $T$-schemes which an $\AA^1$-weak equivalence in $\Spc_T$ is also an $\AA^1$-weak equivalence in $\Spc_S$.
\end{enumerate}
\end{corollary}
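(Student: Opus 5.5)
For part (1), I will derive everything from the Quillen adjunction $(Lf^*, Rf_*)$ between $\HH(S)$ and $\HH(T)$ given by the preceding proposition. A left Quillen functor preserves weak equivalences between cofibrant objects. In the local projective model structure every representable, and in particular every smooth $S$-scheme, is cofibrant. So for $u\colon Y\to X$ an $\AA^1$-weak equivalence between smooth $S$-schemes, $f^*(u)$ is an $\AA^1$-weak equivalence in $\Spc_T$, with no cofibrant replacement needed. It then remains to identify $f^*(u)$. Formula (\ref{pshv-pullback}) says $f^*$ sends the representable of $X\in\Sm_S$ to the representable of $X\times_S T = X_T$. By naturality of the left Kan extension, $f^*(u)$ is the representable of $u_T\colon Y_T\to X_T$, which proves (1).

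For part (2), I will use the adjunction $(f_\sharp, f^{\bigstar})$ in the same way, now for $f$ smooth. Again $f_\sharp$ is left Quillen for the $\AA^1$-local projective structures, and smooth $T$-schemes are cofibrant. Hence $f_\sharp(u)$ is an $\AA^1$-weak equivalence in $\Spc_S$. The formula for $f_\sharp$ on representables shows that $f_\sharp$ sends $(X\to T)$ to $(X\to T\to S)$. So $f_\sharp(u)$ is exactly $u$ viewed as a morphism of smooth $S$-schemes, which gives (2).

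The only delicate point is the passage from Quillen adjunctions on the localized structures to the statement about the underlying morphisms. I must check that weak equivalences between cofibrant objects are preserved by left Quillen functors (Ken Brown's lemma), and that representables are indeed cofibrant in the $\AA^1$-local projective model structure. The second fact holds because cofibrations are unchanged under left Bousfield localization, and representables are projective-cofibrant. I expect this bookkeeping to be the main step to verify; after that, both parts are formal.
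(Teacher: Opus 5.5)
Your proposal is correct and follows essentially the same argument as the paper. Both use the left Quillen functors $f^*$, and $f_\sharp$ when $f$ is smooth. Representables are cofibrant in the local projective structure and stay cofibrant after left Bousfield localization, so Ken Brown's lemma makes $f^*(u)=u_T$ (respectively $f_\sharp(u)=u$ regarded over $S$) an $\AA^1$-weak equivalence.
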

\begin{proof}
For the first assertion, we consider the following commutative diagram 
\[\begin{tikzcd}
    \Spc_S  & \Spc_T  \\ 
    \HH(S) & \HH(T)
    \arrow["f^*", from=1-1, to=1-2]
    \arrow["Lf^*", from=2-1, to=2-2]
    \arrow["\mathrm{id}", from=1-1, to=2-1] 
   \arrow["\mathrm{id}",from=1-2, to=2-2]
\end{tikzcd}\]
The functor $ f^*$: $ \Spc_S \to \Spc_T$ maps the presheaves associated to the smooth $S$-schemes $X$ and $Y$ to the presheaves associated to the smooth $T$-schemes $X_T$ and $Y_T$ respectively and maps the natural transformation associated to $u: Y\to X$ onto that associated to $u_T: Y_T\to X_T$. On the other hand, by construction of the projective model structure on $\Spc_S$, the presheaves associated to $X$ and $Y$ are cofibrant objects of $\Spc_S$, whence of $\HH(S)$ by definition of left Bousfield localization. Thus, $u: Y\to X$ is an $\AA^1$-weak equivalences between cofibrant objects of $\Spc_S$, and since $f^*$ is a left Quillen functor (that is, using the adjunction $f^*\dashv f_*$), it follows from Brown's lemma (see \cref{ken-brown-lemma}) that the image  $u_T: Y_T\to X_T$ of $u: Y\to X$ is $\AA^1$-weak equivalence in $\Spc_T$. The second assertion follows from the same reasoning using the commutative diagram 
\[\begin{tikzcd}
	 \Spc_T  & \Spc_S  \\
	 \HH(T) & \HH(S) 
	\arrow["f_\sharp", from=1-1, to=1-2]
    \arrow["Lf_\sharp", from=2-1, to=2-2]
    \arrow["\mathrm{id}", from=1-1, to=2-1] 
   \arrow["\mathrm{id}",from=1-2, to=2-2]
\end{tikzcd}\]
and the fact that $f_\sharp$ maps the presheaves associated to the smooth $T$-schemes $X$ and $Y$ to those associated to $X$ and $Y$ considered as smooth $S$-schemes via the composition with $f: T\to S$. 
\end{proof}

The following portrays that relative $\AA^1$-weak equivalences behave well under smooth base change.
\begin{corollary}\label{lem:3-from-2}\cite{DMO25}
Let $f: Y\to X$ and $g: Z\to Y$ be smooth morphisms between smooth schemes over a scheme $S$. If $g:Z\to Y$ is an $\AA^1$-weak equivalence in $\Spc_Y$ and $f:Y\to X$ is an $\AA^1$-weak equivalence in $\Spc_X$, then $h= f\circ g:Z\to X$ is an $\AA^1$-weak equivalence in $\Spc_X$.
\end{corollary}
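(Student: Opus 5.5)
The plan is to reduce everything to the base $X$ by means of \cref{cor:pushpull-A1-weak}(2), and then to use the two-out-of-three property of $\AA^1$-weak equivalences in $\Spc_X$.

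Since $f: Y\to X$ is smooth, $Y$ is a smooth $X$-scheme. The morphism $g: Z\to Y$ is a smooth morphism of smooth $Y$-schemes, namely from $Z$ to the terminal object $Y$ of $\Sm_Y$, and by hypothesis it is an $\AA^1$-weak equivalence in $\Spc_Y$. We apply \cref{cor:pushpull-A1-weak}(2) to the smooth base morphism $f: Y\to X$. The functor $f_\sharp$ sends the representable presheaves of $Z$ and $Y$ in $\Spc_Y$ to those of $Z$ and $Y$, now regarded as smooth $X$-schemes via $f\circ g$ and $f$. It sends the map $g$ to the same underlying morphism $g$, viewed as a morphism of smooth $X$-schemes. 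Hence $g: Z\to Y$ is an $\AA^1$-weak equivalence in $\Spc_X$.

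Next we form the commutative triangle in $\Sm_X$ consisting of $g: Z\to Y$, $f: Y\to X$ and $h=f\circ g: Z\to X$, all considered as morphisms of smooth $X$-schemes. The map $f$ is an $\AA^1$-weak equivalence in $\Spc_X$ by hypothesis, and $g$ is one by the previous paragraph. In $\HH(X)$ both $f$ and $g$ therefore become isomorphisms. Since $\HH(X)$ is the homotopy category of a model category, its isomorphisms compose, so $h$ becomes an isomorphism as well. Equivalently, the weak equivalences of a model structure are closed under composition, which is the two-out-of-three axiom. Thus $h$ is an $\AA^1$-weak equivalence in $\Spc_X$.

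The only step that needs care is the first: one must check that the ``same'' morphism $g$, read in $\Spc_Y$ and in $\Spc_X$, really is the image of $g$ under $f_\sharp$. This holds because $f_\sharp$ on representables is simply composition with $f$, as recorded in the formula for $f_\sharp$ preceding the proposition. Smoothness of $Z$ and $Y$ over $X$ is preserved because compositions of smooth morphisms are smooth. Nothing beyond this bookkeeping should be required.
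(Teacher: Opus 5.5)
Your proposal is correct and follows the paper's proof: you use \cref{cor:pushpull-A1-weak}(2) along the smooth map $f$ to see that $g$ is an $\AA^1$-weak equivalence in $\Spc_X$, and then conclude by the two-out-of-three axiom in $\Spc_X$. Your extra bookkeeping (that $f_\sharp$ acts on representables by composition with $f$) is exactly what the paper's proof of \cref{cor:pushpull-A1-weak}(2) relies on.
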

\begin{proof} Since $f: Y\to X$ is a smooth morphism, \Cref{cor:pushpull-A1-weak} (2) implies that $g: Z \to Y$ is an $\AA^1$-weak equivalence in $\Spc_X$ and the assertion then follows from the "two out of three" axiom for weak equivalence in a model category (cf. \cref{defn:model-cats} [2.]). 
\end{proof}

\begin{remark} In the setting of \Cref{lem:3-from-2}, the implication "\emph{$f$ and $g$ are $\AA^1$-weak equivalence in $\Spc_X$ $\Rightarrow$ $g$ is an $\AA^1$-weak equivalence in $\Spc_Y$}" does not hold in general, see \Cref{ex:A1-cont-not-factorize} and \Cref{exa:form-generic-fiber}.
\end{remark}

\subsection{Base Change under \texorpdfstring{$\mathbb{A}^{1}$}{A1}-Contractibility}\label{subsec:Base-change}
We now apply these results to the case when the space is $\AA^1$-contractible, which we will need in both \cref{chp4} and \cref{chp5}.
\begin{prop} \label{pullbackfunctor} \cite{DMO25}
Let $u: X \to S$ be a smooth $\AA^1$-contractible $S$-scheme. Then for any arbitrary morphism of schemes $f: T \to S$, the induced map $u_T: X_T\to T$ is a smooth $\AA^1$-contractible $T$-scheme, where we have denoted the scheme-theoretic pullback by $X_T:= X\times_S T$.
\end{prop}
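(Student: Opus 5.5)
The claim follows directly from \cref{cor:pushpull-A1-weak} (1), applied to the structure morphism $u$ itself. Smoothness first: smooth morphisms are stable under base change, so $u_T: X_T = X\times_S T\to T$ is smooth. It is also separated and of finite type, as $u$ is. Hence $X_T\in \Sm_T$ and $u_T$ is a morphism in $\Sm_T$.

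For $\AA^1$-contractibility, view $u: X\to S$ as a morphism between smooth $S$-schemes, where $S$ is the final object $S\xrightarrow{\mathrm{id}} S$ of $\Sm_S$. By \cref{A1-contractible:defn}, the hypothesis says exactly that $u$ is an $\AA^1$-weak equivalence in $\Spc_S$. Apply \cref{cor:pushpull-A1-weak} (1) with $Y = X$ and target the $S$-scheme $S$. It gives that $u_T: X\times_S T\to S\times_S T$ is an $\AA^1$-weak equivalence in $\Spc_T$. Since $S\times_S T\cong T$ canonically, with $u_T$ becoming the structure morphism of $X_T$, this is precisely the statement that $X_T$ is $\AA^1$-contractible over $T$.

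The only point needing care is that \cref{cor:pushpull-A1-weak} holds for arbitrary $f$, not only smooth ones. In its proof this comes from the pullback $f^*$ being a left Quillen functor for the $\AA^1$-local projective structures, together with representables being cofibrant. Ken Brown's lemma then shows that $f^*$ preserves weak equivalences between representables. One also uses the formula $f^*(X) = X\times_S T$ on representables, which identifies $f^*(u)$ with $u_T$ and $f^*(S)$ with the final object $T$. I expect no further obstacle: the statement is a formal consequence of the base-change functoriality already established.
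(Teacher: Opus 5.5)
Your proposal is correct and matches the paper's proof, which consists only of the remark that the statement is a straightforward application of \cref{cor:pushpull-A1-weak} (1) to the structure morphism $u$, viewed as a morphism of smooth $S$-schemes. Your checks that smoothness is stable under base change, and your explanation of why the corollary applies to arbitrary $f$, make explicit what the paper leaves implicit.
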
 
\begin{proof}
This is a straightforward application of \Cref{cor:pushpull-A1-weak} (1).
\end{proof}

This proposition has several interesting implications, as presented in \cite{DMO25}: 

\begin{corollary}\label{basechange:imperfect} 
Let $X$ be a smooth $\AA^1$-contractible scheme over a field $k$. Then for every field extension $k\subset L$ (separable or not), $X_L:=X\times_{\Spec(k)} \Spec(L)$ is an $\AA^1$-contractible $L$-scheme.
\end{corollary}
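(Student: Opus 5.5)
This is a direct specialization of \cref{pullbackfunctor}, with $S=\Spec(k)$ and $T=\Spec(L)$. The morphism $f:\Spec(L)\to\Spec(k)$ is induced by the inclusion $k\subset L$; it is an arbitrary morphism of schemes, and \cref{pullbackfunctor} puts no hypothesis on $f$.

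The steps, in order, are as follows.

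First, by \cref{A1-contractible:defn}, the hypothesis says that the structure morphism $u:X\to\Spec(k)$ is an $\AA^1$-weak equivalence in $\Spc_k$. Since $X$ is smooth over $k$, it is an object of $\Sm_k$.

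Second, apply \cref{pullbackfunctor}, or equivalently \cref{cor:pushpull-A1-weak}~(1), to the morphism $u:X\to\Spec(k)$ between the smooth $k$-schemes $X$ and $\Spec(k)$. This gives that the base change $u_L:X_L\to\Spec(k)\times_{\Spec(k)}\Spec(L)\cong\Spec(L)$ is an $\AA^1$-weak equivalence in $\Spc_L$.

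Third, check that $X_L$ is a legitimate object of $\Sm_L$. Smoothness, separatedness and finite type are all stable under arbitrary base change, so $X_L$ is a smooth separated $L$-scheme of finite type. This holds regardless of whether $L/k$ is separable.

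Hence $X_L$ is $\AA^1$-contractible over $L$.

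The only point deserving attention is why separability is irrelevant. The underlying argument is that $f^*:\Spc_k\to\Spc_L$ is a left Quillen functor for the $\AA^1$-local projective model structures. It sends representables to representables via $f^*(X)=X_L$, and representables are cofibrant, so Ken Brown's lemma shows that $f^*$ preserves weak equivalences between them. None of this uses any property of $f$ beyond its being a morphism of base schemes.

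One might worry that $\Spec(L)$ must be Noetherian of finite Krull dimension to serve as a base scheme. It is, since it is the spectrum of a field. So there is no real obstacle, and the proof is a one-line invocation of \cref{pullbackfunctor}.
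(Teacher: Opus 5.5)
Your proposal is correct and matches the paper's proof: the paper likewise specializes \cref{pullbackfunctor} (that is, \cref{cor:pushpull-A1-weak}~(1), via the left Quillen functor $f^*$ and Ken Brown's lemma) to the morphism $\Spec(L)\to\Spec(k)$, and cites Severitt only for the already-known finite separable case. Your remarks on why separability is irrelevant and why $\Spec(L)$ is an admissible base scheme are accurate.
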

\begin{proof}
A proof for the case when $L/k$ is a finite separable field extension is given in \cite[Proposition 6.2.15]{severitt2006motivic}. But this in fact extends to any arbitrary field extensions due to \cref{pullbackfunctor}.
\end{proof}

\begin{corollary} \label{cor:A1-cont-fibers} 
Let $f: X \to S$ be a smooth $\AA^1$-contractible $S$-scheme. Then for every point $s$ of $S$ (closed or not) with residue field $\kappa(s)$, the scheme theoretic fiber $X_s:=X\times_{S} \Spec \kappa(s)$ of $f$ over the point $s$ is a smooth $\mathbb{A}^1$-contractible $\kappa(s)$-scheme.
\end{corollary}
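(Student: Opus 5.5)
This is a direct specialization of \cref{pullbackfunctor}. For a point $s\in S$, closed or not, I would take $T=\Spec \kappa(s)$ and let $f=i_s\colon \Spec\kappa(s)\to S$ be the canonical morphism. The map $i_s$ factors as $\Spec\kappa(s)\to\Spec\mathcal{O}_{S,s}\to S$, so it is a morphism of schemes. Then $X_T = X\times_S\Spec\kappa(s)$ is exactly the scheme-theoretic fiber $X_s$.

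Next I would record the two required properties of $X_s$.

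\textbf{Smoothness.} Smoothness of $X\to S$ is stable under arbitrary base change, so $X_s\to\Spec\kappa(s)$ is smooth (and separated of finite type). Hence $X_s$ lies in $\Sm_{\kappa(s)}$.

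\textbf{$\AA^1$-contractibility.} By hypothesis $X\to S$ is an $\AA^1$-weak equivalence in $\Spc_S$. Applying \cref{cor:pushpull-A1-weak}(1) to the morphism $u=f\colon X\to S$, viewed as a morphism of smooth $S$-schemes with $S=\mathrm{id}_S$ the terminal object, the base change $X_s\to S\times_S\Spec\kappa(s)=\Spec\kappa(s)$ is an $\AA^1$-weak equivalence in $\Spc_{\kappa(s)}$. This is precisely \cref{pullbackfunctor}.

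The only point needing care is that the base-change functoriality (\cref{cor:pushpull-A1-weak}) was set up for morphisms of base schemes, which the running convention takes to be Noetherian, separated and of finite Krull dimension. For $\Spec\kappa(s)$ this is automatic, since a field is Noetherian of dimension $0$. Also, $i_s$ need not be of finite type when $s$ is not closed, but the proof of the Quillen adjunction $(f^*,f_*)$ uses no finiteness assumption on $f$. It only needs that pullback preserves Nisnevich hypercovers and commutes with $\times\AA^1$, and both hold for arbitrary $f$. With that checked, the corollary follows immediately.
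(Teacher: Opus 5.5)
Your proposal is correct and matches the paper's approach: the paper obtains this corollary directly as the special case $T=\Spec\kappa(s)$ of \cref{pullbackfunctor}, which is itself \cref{cor:pushpull-A1-weak}(1) applied to $X\to S$. Your remarks on smoothness under base change and on $\Spec\kappa(s)$ satisfying the base-scheme conventions just make explicit what the paper leaves implicit.
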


\begin{corollary}\label{basechange:closedimm}
Let $i: Z \hookrightarrow S$ be a closed immersion of schemes. If $f:X\to S$ is a smooth $\AA^1$-contractible $S$-scheme, then $\mathrm{pr}_2: X\times_S Z \to Z$ is an $\AA^1$-contractible $Z$-scheme.
\end{corollary}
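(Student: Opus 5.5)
The plan is to read \cref{basechange:closedimm} as the special case of \cref{pullbackfunctor} in which the base change morphism $f\colon T\to S$ is the closed immersion $i\colon Z\hookrightarrow S$. Nothing about $i$ beyond its being a morphism of base schemes is needed, since \cref{cor:pushpull-A1-weak} (1) holds for arbitrary morphisms. I would proceed in three steps.

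\textbf{Step 1: identify the pullback.} The scheme-theoretic pullback $X_Z:=X\times_S Z$ comes with the second projection $\mathrm{pr}_2\colon X\times_S Z\to Z$, and this projection is exactly $u_T$ in the notation of \cref{pullbackfunctor}, with $T=Z$ and $u=f$. Smoothness is stable under base change, so $\mathrm{pr}_2$ is smooth. Separatedness and finite type are also stable under base change, so $X_Z\in \Sm_Z$. Finally, $Z$ is again Noetherian, separated and of finite Krull dimension, being closed in $S$, so it is a legitimate base scheme in the sense of the running convention.

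\textbf{Step 2: apply \cref{cor:pushpull-A1-weak} (1).} By hypothesis, $f\colon X\to S$ is an $\AA^1$-weak equivalence in $\Spc_S$ between the smooth $S$-schemes $X$ and $S$. Applying \cref{cor:pushpull-A1-weak} (1) with $f$ replaced by $i$ shows that $f_Z\colon X\times_S Z\to S\times_S Z$ is an $\AA^1$-weak equivalence in $\Spc_Z$.

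\textbf{Step 3: conclude.} Under the canonical isomorphism $S\times_S Z\cong Z$, the map $f_Z$ becomes $\mathrm{pr}_2$. Hence $\mathrm{pr}_2$ is an $\AA^1$-weak equivalence in $\Spc_Z$, which is precisely the statement that $X\times_S Z$ is $\AA^1$-contractible over $Z$ in the sense of \cref{A1-contractible:defn}.

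The only point needing care is checking that $Z$ satisfies the standing hypotheses on base schemes, which was handled in Step 1. Otherwise the statement is a direct corollary, and I expect no genuine obstacle.
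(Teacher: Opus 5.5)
Your proposal is correct and follows the paper's route. The paper gives no separate proof of this corollary; it lists it as the special case $T=Z$, $f=i$ of \cref{pullbackfunctor}, which is itself a direct application of \cref{cor:pushpull-A1-weak}~(1), exactly as in your Steps 2--3 (your Step 1 check that $Z$ is an admissible base and $X\times_S Z\in\Sm_Z$ is a sensible addition).
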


The following example complementing \Cref{basechange:imperfect} illustrates the fact that the $\mathbb{A}^1$-contractibility property does not necessarily descend under arbitrary morphisms $f: T\to S$: 

\begin{example} \label{ex:forms-A1}
Let $k$ be an imperfect field of characteristic $p>0$ with perfection $k^{\mathrm{perf}}$ and let $g:\Spec k^{\mathrm{perf}} \to \Spec k$ be the associated base change morphism. Let $f: X\to \Spec k$ be a non-trivial $k$-form of the affine line $\mathbb{A}^1_k$. Since the affine line does not admit non-trivial separable form  \cite{russell1970forms}, it follows that the base change $X_{k^{\mathrm{perf}}}$ of $X$ by $g$  is isomorphic to $\mathbb{A}^1_{k^{\mathrm{perf}}}$, whence that $X_{k^{\mathrm{perf}}}$ is an $\mathbb{A}^1$-contractible $k^{\mathrm{perf}}$-scheme. On the other hand, $X$ is not an $\mathbb{A}^1$-contractible $k$-scheme. Indeed, $X$ being a non-trivial $k$-form of $\mathbb{A}^1_k$, it follows from \cite[Lemma 1.1]{russell1970forms} that either $X(k)=\emptyset$ or $X$ is isomorphic to the complement in the projective line $\mathbb{P}^1_k$ of a closed point $x$ whose residue field $\kappa(x)$ is a purely inseparable extension of $k$. In the first case, the non-$\mathbb{A}^1$-contractibility of $X$ over $k$ follows from \Cref{S-point}, and in the second case, it follows from \cite[Theorem 4.1]{achet2017picard} which asserts that such a non-trivial form $X$ has a non-trivial Picard group (cf. \cref{A1-cont-trivial-Pic}).
\end{example}

\begin{remark}\cite[Remark 3.17]{Mad}
If $u: X\to S$ is a smooth $\AA^1$-contractible scheme in $\Spc_S$, then its base change with respect to an arbitrary morphism $f: T\to S$ is recovered as that of the (usual) scheme-theoretic pullback. As a consequence, this implies crucially that the following diagram 
\[\begin{tikzcd}
	{\Sm_S} &&& {\Sm_T} \\
	{\Spc_S} &&& {\Spc_T} \\
	{\HH(S)} &&& {\HH(T)}
	\arrow["{f^{\bullet}}", from=1-1, to=1-4]
	\arrow[hook,"Y", from=1-1, to=2-1]
	\arrow[hook,"Y", from=1-4, to=2-4]
	\arrow["{f^*}", from=2-1, to=2-4]
	\arrow["{\L_{mot,S}}"', from=2-1, to=3-1]
	\arrow["{\L_{mot,T}}", from=2-4, to=3-4]
	\arrow["{Lf^*}"', from=3-1, to=3-4]
\end{tikzcd}\]
commutes. Recall the notations from \cref{sect:functoriality}. Here, $Lf^*: \Spc_S \to \Spc_T$ is the induced derived pullback functor on the respective $\AA^1$-homotopy categories and $\L_{mot,T}$ (resp. $\L_{mot,S}$) is the corresponding motivic localization functor defined on $\Spc_T$ (resp. $\Spc_S$). In other words, for a smooth $\AA^1$-contractible scheme $u: X\to S$, the derived pullback can be retrieved as the scheme-theoretic pullback and vice-versa:
\begin{equation}
\begin{split}
Lf^*(\L_{mot,S}(X)) 
        & \simeq Lf^*(\Id_S) \quad (X\ \text{is}\ \AA^1 \text{-contractible}) \\
        & \simeq \Id_T \quad (\text{functoriality of}\ Lf^*)  \\
        & \simeq \L_{mot,T}(T)  \\
        & \simeq \L_{mot,T}(X_T) \quad (\cref{pullbackfunctor}) \\
Lf^*(\L_{mot,S}((X))
        & \simeq \L_{mot,T}(f^*(X)) \quad (\text{\cref{pshv-pullback}})\\
\end{split}
\end{equation}
\end{remark}

\subsection{Relative \texorpdfstring{$\mathbb{A}^{1}$}{A1}-Contractibility of Smooth Schemes}\label{intro:rel-A1-contr}
We will now definitively clarify the meaning of the phrase \emph{relative $\AA^1$-contractibility} that we extensively use in the chapters ahead.
\begin{defn}
For any $k$-schemes $X,Y\in \Sm_k$, we say that a morphism $f:X\to Y$ is \emph{relatively $\AA^1$-contractible} if $f$ is an $\AA^1$-weak equivalence in the category of motivic $Y$-spaces $\Spc_Y$ or equivalently, $X$ is an $\AA^1$-contractible scheme in the $\AA^1$-homotopy category of $Y$, denoted $\HH(Y)$.
\end{defn}
To emphasize this relativeness, we shall call the morphism $f$ a \emph{relative $\AA^1$-weak equivalence} of motivic spaces. Being "relatively" $\AA^1$-contractible is a subtle notion, and the need for such a distinction in a general setting is demonstrated by the following example. Observe also that this example is a non-pathological and, in fact, a fundamental instance:

\begin{example}\label{ex:A1-cont-not-factorize} \cite[Example 3.15]{Mad}
Consider the smooth morphism of $k$-schemes 
\begin{equation}
\begin{split}
   & g: Z:= \mathbb{A}^2_k\to \mathbb{A}^1_k =: Y \\
   & (x,y) \mapsto x^2 y^2 + y 
\end{split}
   \hspace{12mm}
   \begin{tikzcd}
	{\AA^2_k} && {\AA^1_k} \\
	& {\Spec(k)}
	\arrow["g", from=1-1, to=1-3]
	\arrow[from=1-1, to=2-2]
	\arrow[from=1-3, to=2-2]
  \end{tikzcd}
\end{equation}
We have that both $Y$ and $Z$ is $\AA^1$-contractible over $X$. Therefore, $g$ is an $\mathbb{A}^1$-weak equivalence in $\Spc_k$. But $g$ cannot be an $\AA^1$-weak equivalence in $\Spc_{\AA^1_k}$. Suppose it was so, then by  \Cref{cor:A1-cont-fibers}, every fiber of $f$ over a point of $\mathbb{A}^1_k$ would be an $\mathbb{A}^1$-contractible scheme over the corresponding residue field. But observe that the fiber of $f$ over $0$ is the disjoint union of a copy of $\mathbb{A}^1_k$ and a copy of $\GG_{m,k}$. The latter component is not even $\mathbb{A}^1$-connected (\cref{Gm-A1-rigid}) in $\Spc_k$ and so the fiber cannot be $\AA^1$-connected, whence cannot be $\AA^1$-contractible (see \cref{eg:S-point}). Therefore, we conclude that $g$ cannot be a relative $\AA^1$-weak equivalence.
\end{example}

Retreating to \Cref{cor:pushpull-A1-weak} (1), one is intrigued to ask the following: for a morphism $f: Y\to X$ between smooth $S$-schemes, under what assumptions is there a (suitable) morphism $u: T\to S$ such that whenever the base change $f_T: Y_T\to X_T$ of $f$ is an $\AA^1$-weak equivalence in $\Spc_T$, then $f: Y\to X$ is an $\AA^1$-weak equivalence in $\Spc_S$. The \Cref{ex:forms-A1} illustrates that faithful flatness of $g$ is, for instance, not sufficient. Alternatively, we can ask the following question:

\begin{question}\label{fiber:pullback}
For a smooth morphism of schemes $f: X \to S$, does the property that all fibers of $f$ over points of $s$ are $\AA^1$-contractible over the corresponding residue fields imply that $X$ is an $\AA^1$-contractible $S$-scheme?
\end{question}

We note that such a desired property (\cref{fiber:pullback}) holds true in the stable $\AA^1$-homotopy category $\mathrm{SH}(S)$ owing to the fact that the family of functors $i_s^{!}:\mathrm{SH}(S)\to \mathrm{SH}(\kappa(s))$, $s\in S$, is conservative (see e.g. \cite[B20]{DJK21}). A recent remark due to \cite[Proposition 2.1]{realetale2025} points out that this property is, to our much surprise, also true unstably)! In fact, the solution to \cref{fiber:pullback} follows as a consequence of the \emph{gluing theorem} (\cite[Theorem 2.21]{MV99}), and this remains true for any Grothendieck topology that is at least as fine as the Nisnevich topology.

\begin{theorem}\label{MV:gluing theorem}
Let $S$ be any locally Noetherian scheme of finite Krull dimension. Then for any motivic space $\mcal{F}$, the square
\[\begin{tikzcd}
	{j_{\#} j^* \mcal{F}} & {\mcal{F}} \\
	U & {i_*i^*(\mcal{F})}
	\arrow[from=1-1, to=1-2]
	\arrow[from=1-1, to=2-1]
	\arrow[from=1-2, to=2-2]
	\arrow[from=2-1, to=2-2]
\end{tikzcd}\]
is a homotopy cocartesian in $\mcal{H}(S)$. Here, $j: U \hookrightarrow S$ is associated with the open immersion of an open subscheme $U\subset S$.
\end{theorem}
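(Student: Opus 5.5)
The plan follows the strategy of \cite[\S 3, Theorem 2.21]{MV99}. Write $Z := S \bs U$ with its reduced structure and $i: Z \hookrightarrow S$ for the closed immersion. Recall that $j_\# j^* \mcal{F} \to U$ is the image under $j_\#$ of $j^*\mcal{F} \to *_U$, where $j_\#(*_U) = U$.

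Since $j_\# j^* \mcal{F} \to \mcal{F}$ is a monomorphism after replacing it by a cofibration, the claim is equivalent to the following: the induced map on cofibres
$$\mcal{F}/j_\# j^*\mcal{F} \;\longrightarrow\; i_*i^*\mcal{F}/i_*i^*(j_\# j^*\mcal{F})$$
is an isomorphism in $\mcal{H}(S)_\bullet$. Here the target is $i_*i^*\mcal{F}$, pointed by the image of $U$, because $i^* j_\# = \emptyset$.

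\textbf{Step 1: reduction to representables.} Every motivic space is a homotopy colimit of representables $X \in \Sm_S$. The functors $j^*$, $j_\#$ and $i^*$ are left Quillen, so they commute with homotopy colimits. The only delicate point is $i_*$. I would show that $i_*: \mcal{H}(Z) \to \mcal{H}(S)$ preserves $\AA^1$-weak equivalences and commutes with the relevant homotopy colimits. The argument checks this on Nisnevich stalks: for a henselian local essentially smooth $V \to S$, the scheme $V_Z$ is either empty or henselian local, so $i_*$ is computed stalkwise by evaluation and is exact on stalks. Its compatibility with $\Sing^{\AA^1}_*$ follows from $(V\times\AA^1)_Z = V_Z\times\AA^1$. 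Granting this, it suffices to treat $\mcal{F} = X$ with $X$ smooth over $S$. The statement then becomes
$$X/X_U \xrightarrow{\ \simeq\ } i_*(X_Z)$$
in $\mcal{H}(S)_\bullet$, where $X_Z := X\times_S Z$.

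\textbf{Step 2: the stalk computation.} I would evaluate both sides on a henselian local $V$.

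If $V$ maps into $U$, then $V_Z = \emptyset$. Both sides are then a point: the left because $X_U(V) = X(V)$, the right because $i_*(X_Z)(V) = X_Z(\emptyset) = *$.

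Otherwise $V_Z \neq \emptyset$, $V$ is local, and every map $V \to X_U$ would force $V \to U$. So $(X/X_U)(V) = X(V)$ with a disjoint base point, and the map to $X_Z(V_Z)$ is restriction. Since $(V, V_Z)$ is a henselian pair and $X \to S$ is smooth, Hensel's lemma shows that $X(V) \to X_Z(V_Z)$ is surjective.

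\textbf{Step 3: the main obstacle.} The remaining and hardest step is showing that this surjection becomes a weak equivalence after applying $\Sing^{\AA^1}_*$. This amounts to showing that the "space of lifts" of a point of $X_Z(V_Z)$ is $\AA^1$-contractible.

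First I would reduce to $X$ étale over $\AA^n_S$, which holds Nisnevich-locally on $X$. Then I would build an explicit $\AA^1$-homotopy between two lifts $s_0, s_1 : V \to X$ that agree on $V_Z$:
\begin{itemize}
\item Take the linear interpolation $t s_1 + (1-t)s_0$ in $\AA^n_V$. It agrees with $s_0$ on $V_Z \times \AA^1$.
\item Lift this interpolation through the étale map to $X$, using henselianity of the pair $(V\times\AA^1,\, V_Z\times\AA^1)$ along the locus where the two sections coincide.
\item The henselianity of this pair may fail. In that case I would instead henselize $V\times\AA^1$ along $V_Z\times\AA^1$, and argue via the standard trick of \cite{MV99} that the simplicial set obtained from these "local" $\AA^1$-homotopies computes the same $\Sing^{\AA^1}_*$ up to weak equivalence.
\end{itemize}
The same argument applied in families over $\Delta^n_{\AA^1}$ yields contractibility of the fibres in all simplicial degrees, not only connectedness.

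Combining Steps 1–3 gives the equivalence on stalks, hence in $\mcal{H}(S)_\bullet$, and therefore the homotopy cocartesian square.
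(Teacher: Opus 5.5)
The paper gives no argument of its own here; it cites Morel--Voevodsky. Measured against that, your Steps 1 and 2 are the right start: reduction to a representable $X$, stalks at henselian local $V$, and Hensel surjectivity of $X(V)\to X_Z(V_Z)$. There is, however, a genuine gap in Step 3, and that step carries the whole content of the theorem.

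You try to build an honest morphism $V\times\AA^1\to X$ lifting $(1-t)s_0+ts_1$. But $(V\times\AA^1,V_Z\times\AA^1)$ is not a henselian pair unless the ideal of $V_Z$ is nil: for a non-nilpotent $a$ in that ideal, $1+at$ is not a unit. So such lifts do not exist in general, and your fallback ``henselize and use the standard trick'' is exactly the missing argument. Even granting contractible fibres, you give no mechanism that turns a degreewise surjection with contractible fibres into a weak equivalence after $\Sing^{\AA^1}_*$. In any case $\Sing^{\AA^1}_*$ is evaluated on $V\times\Delta^n_{\AA^1}$, which is not henselian local, so your stalk formulas do not apply there.

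Your opening reformulation also needs repair. Unstably, an equivalence of cofibres does not make a square homotopy cocartesian. Moreover $i_*i^*\mcal{F}$ is not pointed by $U$: one has $i_*\emptyset=U$, and $U$ has no sections over any $V$ meeting $Z$. Since $j_\#j^*\mcal{F}\to\mcal{F}$ is a monomorphism, what you should prove is that $P:=X\sqcup_{X_U}U\to Q:=i_*X_Z$ is an $\AA^1$-weak equivalence. On a henselian local $V$ meeting $Z$ this map is just $X(V)\to X_Z(V_Z)$, with no extra base point.

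The idea that closes the gap, and which underlies the cited argument, is never to build homotopies of scheme maps over $V\times\AA^1$. The steps are as follows.
\begin{enumerate}
\item By your Step 2, $P\to Q$ is an epimorphism of Nisnevich sheaves. Hence $Q$ is locally weakly equivalent to the \v{C}ech nerve of $P\to Q$, and it suffices that the projections $P\times_Q\cdots\times_Q P\to P$ be $\AA^1$-weak equivalences.
\item Applying Step 1 to a Zariski cover, you may assume there is an \'etale map $p\colon X\to\AA^n_S$. Let $P_0,Q_0$ be the analogous sheaves for $\AA^n_S$.
\item The map $(f,g)\mapsto\bigl(f,(p f,p g)\bigr)$ gives an isomorphism $P\times_Q P\cong P\times_{P_0}(P_0\times_{Q_0}P_0)$. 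You check this on henselian local stalks, using that \'etale lifts along the henselian pair $(V,V_Z)$ exist and are unique.
\item The map $(a,b,t)\mapsto\bigl(a,(1-t)a+tb\bigr)$ is an $\AA^1$-homotopy of sheaves over $P_0$ retracting $P_0\times_{Q_0}P_0$ onto the diagonal. It preserves the condition $a|_{V_Z}=b|_{V_Z}$ and pulls back along $P\to P_0$.
\end{enumerate}
Here $t$ is a function on the test scheme, so henselianity of $V\times\AA^1$ is never needed. Higher fibre powers are handled identically. Your ingredients, \'etale coordinates and linear interpolation, are the right ones; they just have to be applied at the level of sheaves rather than to individual lifts.
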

\begin{proof}
This is \cite[Theorem 2.21]{MV99}.
\end{proof}

The following is a powerful consequence of \cref{MV:gluing theorem}.

\begin{prop}\cite{DMO25} \label{fiberwise=relative} \label{pointwise:phenomenon}
For a scheme $f: X \to S$ smooth over a Noetherian scheme $S$ of finite Krull dimension, the following properties are equivalent:
\begin{enumerate}
\item The morphism $f:X\to S$ is an $\AA^1$-weak-equivalence in $\Spc_S$.
\item For every point $s$ of $S$ with residue field $\kappa(s)$, the scheme theoretic fiber of $f$ over $s$ is a smooth $\mathbb{A}^1$-contractible $\kappa(s)$-scheme.
\end{enumerate}
\end{prop}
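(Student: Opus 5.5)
The implication (1) $\Rightarrow$ (2) is immediate from what precedes. If $f$ is an $\AA^1$-weak equivalence in $\Spc_S$, apply \cref{pullbackfunctor} (equivalently \cref{cor:A1-cont-fibers}) to the morphism $\Spec \kappa(s)\to S$. This shows that $X_s\to \Spec\kappa(s)$ is an $\AA^1$-weak equivalence in $\Spc_{\kappa(s)}$. The substance lies in (2) $\Rightarrow$ (1), which I would prove by Noetherian induction on $S$ using the gluing theorem \cref{MV:gluing theorem}.

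For the converse, I would show the stronger statement that $\L_{mot}(X)\to \L_{mot}(S)=*$ is an equivalence in $\HH(S)$. Consider the class $\mathcal{C}$ of closed subschemes $Z\subseteq S$ for which $X_Z\to Z$ fails to be an $\AA^1$-weak equivalence in $\Spc_Z$. Hypothesis (2) is stable under base change to $Z$, since the fibers of $X_Z$ are fibers of $X$ and $\AA^1$-contractibility is insensitive to nilpotents. Suppose $\mathcal{C}$ is non-empty. By Noetherianity it has a minimal element; replacing $S$ by it, we may assume the statement holds over every proper closed subscheme of $S$. We may also replace $S$ by $S_{\mathrm{red}}$, using that $i_*$ for a nil-immersion is an equivalence on $\HH$, which follows from \cref{MV:gluing theorem} with $U=\emptyset$.

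\textbf{Step 1: a dense open.} Pick a generic point $\eta$ of $S$; by (2), $X_\eta\to\Spec\kappa(\eta)$ is an $\AA^1$-weak equivalence. I would spread this out: $\HH$ is compatible with the cofiltered limit $\Spec\kappa(\eta)=\lim U$ over open neighbourhoods $U$ of $\eta$, by continuity of $\Spc$ and $\L_{mot}$ for limits of schemes with affine transition maps. Since $X$ is of finite presentation, the equivalence $X_\eta\simeq *$ is witnessed by finitely many data: a point and an $\AA^1$-chain homotopy, or more invariantly, the vanishing of the relevant $\pi_*^{\AA^1}$ computed by finite-type data. These data should descend to some open $U\ni\eta$, giving $X_U\simeq U$ in $\HH(U)$. \textbf{This is the step I expect to be the main obstacle.} I would first try the continuity property of $\HH(-)$ on compact objects, since $X$ is compact in $\Spc_S$ and $*$ is compact. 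The mapping spaces $\mathrm{Map}_{\HH(U)}(X_U,U)$ and $\mathrm{Map}_{\HH(U)}(U,X_U)$ then converge to those over $\eta$, so both a map and the homotopies exhibiting it as inverse to $f$ exist over some $U$.

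\textbf{Step 2: glue.} Let $j:U\hookrightarrow S$ and $i:Z=S\setminus U\hookrightarrow S$, with $Z$ a proper closed subscheme. By minimality, $X_Z\to Z$ is an equivalence in $\HH(Z)$, so $i^*\L_{mot}X\simeq *$; by Step 1, $j^*\L_{mot}X\simeq *$. Apply \cref{MV:gluing theorem} to $\mcal F=X$ and to $\mcal F=S$. The map $f$ induces a map of homotopy cocartesian squares $j_\sharp j^*\mcal F\to \mcal F$, $U\to i_*i^*\mcal F$ in $\HH(S)$, where $U$ here stands for $j_\sharp *$. It is an equivalence on the three corners other than $\mcal F$, because $i_*$ and $j_\sharp$ preserve equivalences. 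Hence $\L_{mot}X\to\L_{mot}S$ is an equivalence, contradicting the choice of $S$. Thus $\mathcal{C}=\emptyset$ and (1) holds.
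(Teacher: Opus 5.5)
Your argument is correct, but it is organised differently from the paper's.

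The paper first uses that being an $\AA^1$-weak equivalence is Nisnevich-local on the base (citing Bachmann--Hoyois) to reduce to $S$ the spectrum of a Henselian local ring, and then inducts on $\dim S$. In its gluing square the open piece is the punctured spectrum $U=S\setminus\{s\}$, handled by the induction hypothesis. The closed piece is the closed point, handled directly by the fibre hypothesis, and the dimension-zero case is reduced to a field by nil-invariance.

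You instead run a Noetherian induction on closed subschemes and reverse the roles. The closed complement $Z$ is handled by minimality, while the dense open $U$ comes from spreading out the generic fibre, so all the work sits in your Step 1. The gluing step itself is the same as the paper's.

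Step 1 goes through only via the continuity route you mention last. A rational point, $\AA^1$-chains, or the vanishing of finitely many homotopy sheaves would not witness contractibility. The continuity argument runs as follows.
\begin{itemize}
\item For $S$ reduced, $\Spec\kappa(\eta)=\lim U$ along affine neighbourhoods, and $\HH(\Spec\kappa(\eta))\simeq\operatorname{colim}_U\HH(U)$ in $\mathrm{Pr}^L$.
\item $\L_{mot}X_U$ and $U$ are compact, because $\HH(U)\subset\Spc_U$ is closed under filtered colimits (Nisnevich excision and $\AA^1$-invariance are finite-limit conditions on a Noetherian finite-dimensional base).
\item Since the target is terminal, you only need to spread out a section $\sigma$ of $X_\eta\to\Spec\kappa(\eta)$ in $\HH$ and a homotopy $\sigma\circ f_\eta\simeq\mathrm{id}$. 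Both are $\pi_0$-classes of filtered colimits of mapping spaces, so both exist over some $U$.
\end{itemize}

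As for what each route buys: the paper keeps the finiteness input hidden in the conservativity of pullbacks to Henselizations. That is itself a continuity statement, since a Henselization is a limit of Nisnevich neighbourhoods, and it takes the argument outside the finite-type world. Your route stays with opens and closed subschemes of $S$, makes the continuity input explicit, and avoids induction on Krull dimension. It also yields a useful spreading-out statement as a by-product: $\AA^1$-contractibility of the generic fibre already holds over a dense open.
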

\begin{proof} 
Since our hypotheses differ slightly from those in \cite[Proposition~2.1]{realetale2025}, we will now sketch the argument. The implication $(1) \implies (2)$ follows from \cref{cor:A1-cont-fibers}. Let us now prove the converse: given a smooth scheme $f: X\to S$ over a scheme $S$ of finite Krull dimension, we want to show that if $f_s: X_s \to \Spec \kappa(s)$ is $\AA^1$-weak equivalence in $\Spc_{\kappa(s)}$ for every point $s \in S$, then $f: X\to S$ is a relative $\AA^1$-weak equivalence in $\Spc_S$. We argue by induction on $\dim S$. The case $\dim S=0$ reduces to the observation that a smooth morphism 
\[ 
X\longrightarrow S, \qquad S= \Spec R 
\]
with $R$ Artinian local, is an $\AA^1$-weak equivalence if and only if 
\[
X\times_{S}S_{\mathrm{red}} \longrightarrow  S_{\mathrm{red}} 
\]
is an $\AA^1$-weak equivalence (see \Cref{remark:nilimmersion}). Assume the claim holds for all schemes of dimension $\le d$ and let $\dim S= d+1$. Since the property for $f:X \to S$ to be an $\AA^1$-weak equivalence in $\Spc_S$ is local on the base $S$ with respect to  Nisnevich hypercovers, it suffices to verify the assertion in the case where $S$ is the spectrum of an local ring $R$ of dimension $d$, which we can further assume to be Henselian (this is due to \cite[Proposition A.3]{bachmann2021norms}).
\medskip

Let $j: U \to S$ be the open immersion and $i:\{s\} \to S$ be the complementary closed immersion. The open immersion $j$ provides us an adjoint pair $(j^*\dashv j_*)$ given by $j^*:\HH(S) \to \HH(U)$, its right adjoint $j_*:\HH(U) \to \HH(S)$. Furthermore, since $j$ is an open immersion (whence a smooth morphism), it provides another adjoint pair $(j_\sharp\dashv j^*)$ given by $j^*$ and its left adjoint $j_\sharp:\HH(U) \to  \HH(S)$. Similarly, the closed immersion $i$ provides us an adjoint pair $(i^*\dashv i_*)$ given by $i^*: \HH(S) \to \HH(\kappa(s))$ and its right adjoint $i_*:\HH(\kappa(s)) \to\HH(S)$.
\medskip

Recall from \cref{sect:functoriality} that we have $j_{\sharp} j^* X = j_\sharp (X_U)$, where $X\times_S U \xrightarrow{\pr_2} U\xrightarrow{j} S$ is viewed as a smooth $S$-scheme via the composition $\mathrm{pr}_2:X\times_S U\to U$ with the open immersion $j:U\to S$. We have a canonical map $\widetilde{\pr_1} :j_\sharp j^* X \to X$ that corresponds to the immersion $\mathrm{pr}_1: X\times_S U \to X$ and another canonical map $\widetilde{\pr_2}: j_\sharp j^* X\to j_\sharp U$ that corresponds to the projection $\mathrm{pr}_2:X\times_S U\to U$. On the other hand, the base change by $i: \{s\}\to S$ induces a canonical morphism $\psi: X \to i_*i^* X$ in $\HH(S)$ and we trivially have a canonical map $\widetilde{\psi}: j_\sharp U \to i_*i^* X$. These maps assemble into a commutative square 
\begin{equation}\label{diag:MV-cocartesian}
\begin{tikzcd}
   j_{\sharp} j^*X \arrow [r,"\widetilde{\pr_1}"] \arrow[d,"\widetilde{\pr_2}",swap] & X \arrow[d,"\psi"] \\
   j_\sharp U \arrow[r,"\widetilde{\psi}",swap] & i_*i^*X
\end{tikzcd}
\end{equation}
in $\HH(S)$. Since $U:= X\bs \{s\}$ has dimension strictly smaller than $d$, the restriction $f_U: X_U \to U$ is an $\AA^1$-weak equivalence in $\HH(U)$ by the induction hypothesis and so
\[\begin{tikzcd}
    {X_s} & X & {X_U} \\ {\Spec \kappa(s)} & S & U
   \arrow["{\mathrm{pr}_1}", from=1-1, to=1-2]
   \arrow["{f_s}"', from=1-1, to=2-1]
   \arrow["f"', from=1-2, to=2-2]
   \arrow["{\mathrm{pr}_1}"', from=1-3, to=1-2]
   \arrow["{f_U}", from=1-3, to=2-3]
   \arrow[from=2-1, to=2-2] 
   \arrow["j", hook', from=2-3, to=2-2]
\end{tikzcd}\]
we have that $j_\sharp (f_U):j_\sharp (j^*X) = j_\sharp X_U\to j_\sharp U$ is an $\AA^1$-weak equivalence in $\HH(S)$. It follows that the canonically induced map of presheaves 
\begin{align}\label{map1}
            X \xrightarrow{\simeq} j_{\sharp}U \cup_{j_{\sharp}X_U} X
\end{align}
is an $\AA^1$-weak equivalence in $\Spc_S$. On the other hand, the proof of \cite[Theorem 2.21]{MV99} shows that \cref{diag:MV-cocartesian} is a homotopy co-cartesian and so the canonically induced map of presheaves 
\begin{align}\label{map2}
    j_{\sharp}U\cup_{j_\sharp X_U} X \xrightarrow{\simeq} i_*i^* X
\end{align}
is an $\AA^1$-weak equivalence in $\Spc_S$. Combining \cref{map1} and \cref{map2}, we see that the induced map 
\begin{align}\label{map3}
        X \xrightarrow{\simeq} i_*i^*(X)
\end{align}
is an $\AA^1$-weak equivalence in $\Spc_S$. But then we have $i_*i^*(X) = i_* (X_s)$ and by assumption $f_s:X_s\to \Spec(\kappa(s))$ is an $\AA^1$-weak equivalence in $\Spc_{\kappa(s)}$ which then implies that the motivic space $i_*(X_s)$ is $\AA^1$-contractible in $\Spc_S$ due to functoriality. The conclusion that $f:X\to S$ is an $\AA^1$-weak equivalence in $\Spc_S$ follows from the $\AA^1$-weak equivalence of \cref{map3}.
\end{proof}

\begin{remark} \label{remark:nilimmersion}
Suppose $S$ is a base scheme and let $i\colon S_{\mathrm{red}} \hookrightarrow S$ be the reduction (a closed immersion defined by a nilpotent ideal, whose open complement is empty). Then the pullback $i^\ast\colon H(S)\longrightarrow  H(S_{\mathrm{red}})$ along $i$ is an equivalence of unstable motivic homotopy categories on account of the Morel-Voevodsky localization theorem \cite[Theorem 3.2.21]{MV99}.
\end{remark}

\begin{remark}
The cautious reader might have noticed a subtlety: our model structure (projective version) of $\Spc_S$ is different from that of the model structure used in \cite{MV99} (injective version). While it is a good catch, also note that both of these model structures are give rise to the same $\AA^1$-homotopy category, essentially because the class of $\AA^1$-weak equivalences are the same in both these structures (one may also wish to refer to \cite[Theorem 2.2]{isaksen2005flasque} or \cite[Proposition 8.1]{dugger2001universal}).
\end{remark}

\subsubsection{Formal consequences of $\AA^1$-contractibility}
\begin{example}
If the base field $k$ admits an embedding into $\CC$, then any $\AA^1$-contractible scheme is topologically contractible (\cite[Lemma 2.5]{asok2007unipotent}).
\end{example}

\begin{example}\label{A1-cont-trivial-Pic}
An $\AA^1$-contractible scheme has a trivial Picard group. In fact, every vector bundle on an affine $\AA^1$-contractible scheme has to be trivial (\cite[Corollary 6.2]{asok2007unipotent}).
\end{example}

\begin{example}
Any non-trivial $\AA^1$-rigid scheme (e.g., $\GG_m$) cannot be $\AA^1$-contractible, a fact attributed to the non-triviality of the $\AA^1$-connected component sheaf.
\end{example}

\begin{example}\label{eg:S-point}
Any smooth $k$-variety $X$ of dimension $n$ is $\AA^1$-contractible if and only if $X$ is $\AA^1$-connected and $\pi_i^{\AA^1}(X,x)$ is trivial, for all $0\le i\le n$. This is a natural consequence of \cref{Whitehead-thm}. Hence, such $X$ admits an $S$-point (cf. \cref{S-point}).
\end{example}

\begin{example}
Any $\AA^1$-contractible smooth affine scheme $X$ has the integral motivic cohomology of the ground field, i.e., $\mathrm{H}^{*,*}(X,\ZZ)\simeq \rm{H}^{*,*}(\Spec k, \ZZ)$. In particular, in relation to the higher Chow groups of Bloch, all the Chow groups vanish for these schemes $\rm{CH}^{*}(X) \simeq \Spec k$ (\cite[Corollary 6.1]{asok2007unipotent}).
\end{example}

\subsubsection{Homotopy purity}
Purity isomorphism is an indispensable tool in the light of computation in $\AA^1$-homotopy theory and connects it to algebraic $K$-theory and motivic cohomology. It is an analog of the tubular neighborhood theorem in differential topology and was established by Morel-Voevodsky \cite[Theorem 2.23]{MV99}.

\begin{theorem}\label{purity-theorem}
Let $S$ be any Noetherian scheme of finite Krull dimension. Let $i: Z \hookrightarrow X$ be a closed embedding of smooth schemes over $S$. Then there is a canonical isomorphism in $\mcal{H}_{\bullet}(k)$ of the form
                $$\frac{X}{X \bs i(Z)} \simeq \Th(N_Z X) $$
where $\rm{Th(-)}$ is the Thom space associated to the normal bundle $N_Z X$ of $Z$ in $X$.
\end{theorem}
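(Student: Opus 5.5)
We would prove the homotopy purity statement following the strategy of Morel--Voevodsky, namely deformation to the normal cone. Write $N := N_Z X$ and let $D := D_Z X \to \AA^1_S$ be the deformation space. Concretely, $D$ is the complement of the strict transform of $X\times\{0\}$ in the blow-up $\mathrm{Bl}_{Z\times\{0\}}(X\times \AA^1_S)$. It is a smooth $S$-scheme containing $Z\times\AA^1_S$ as a closed subscheme. Its fibre over $1$ is the pair $(X,Z)$, and its fibre over $0$ is the pair $(N, Z)$, with $Z$ embedded by the zero section. This gives a zig-zag of pointed motivic spaces
\begin{align*}
\frac{X}{X\bs Z} \xrightarrow{i_1} \frac{D}{D\bs (Z\times\AA^1)} \xleftarrow{i_0} \frac{N}{N\bs Z} = \Th(N).
\end{align*}
The claim reduces to showing that both $i_0$ and $i_1$ are $\AA^1$-weak equivalences in $\Spc_{S,\bullet}$. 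The isomorphism is then $i_0^{-1}\circ i_1$. One should also check that this is independent of choices, which holds because the construction is canonical.

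The first step is locality in $X$. Both sides of each map are cofibres of open immersions, and by the lemma that an elementary distinguished square is cocartesian in $\Shv(\Sm_S)$, the quotient $X/(X\bs Z)$ depends only on a Nisnevich (even étale-along-$Z$) neighbourhood of $Z$. Precisely: if $p: V\to X$ is étale and induces an isomorphism $p^{-1}(Z)\cong Z$, then $V/(V\bs Z)\to X/(X\bs Z)$ is an isomorphism of Nisnevich sheaves. The deformation space and the normal bundle are compatible with such étale maps and with Zariski restriction. Using Mayer--Vietoris for Zariski covers together with this excision, we reduce to the case where $X$ is covered by opens on which the statement is already known. In that case the weak equivalence glues, because homotopy pushouts preserve weak equivalences.

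The second step handles the local model. Locally on $X$ around a point of $Z$, smoothness of $Z$ and $X$ over $S$ gives an étale map $X \to \AA^n_Z$-type coordinates. More precisely, Zariski locally there is an étale $X\to Z\times\AA^c$ under which $Z$ is the preimage of $Z\times\{0\}$ and which is an isomorphism on $Z$. By the excision above we may replace $(X,Z)$ by $(Z\times\AA^c, Z\times 0)$. In this split case, $D$ is isomorphic to $(Z\times\AA^c)\times\AA^1$ compatibly with $Z\times\AA^1$. Hence both $i_0$ and $i_1$ are inclusions of fibres of a product with $\AA^1$, which are $\AA^1$-weak equivalences by $\AA^1$-invariance (\cref{L_A1-finite-pdts} and the projection $X\times\AA^1\to X$ being an equivalence). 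For the quotients we use that cofibres of weak equivalences of cofibration diagrams are weak equivalences.

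The main obstacle is the gluing and reduction step. One must make precise that the class of pairs $(X,Z)$ for which $i_0$ and $i_1$ are equivalences is local for Zariski covers of $X$ and invariant under étale neighbourhoods of $Z$. This requires a careful homotopy-pushout argument, with induction on the number of opens in the cover. One also has to verify that the deformation space commutes with étale base change, where flatness makes the blow-up commute. We would first prove the Nisnevich-excision invariance of all three quotients, and then run this induction. We expect the local coordinate existence over a general Noetherian $S$ to be standard, by EGA IV 17.
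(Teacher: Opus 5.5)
Your route is the Morel--Voevodsky argument that the paper cites as \cite[Theorem 2.23]{MV99} without reproducing it. The ingredients are the same: deformation to the normal cone, excision for \'etale maps that are isomorphisms over $Z$ (which also applies to $D$ and $N$ by flat base change), Mayer--Vietoris gluing over finite Zariski covers, and the split case where $D\cong X\times\AA^1$. Those parts are fine.

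The step that fails as written is the local model. It is not true that Zariski locally on $X$ there is an \'etale map $X\to Z\times\AA^c$ for which $Z$ is the exact preimage of $Z\times\{0\}$ and maps isomorphically onto it. Composing with $\mathrm{pr}_1$ would give a morphism from a Zariski open $U\subset X$ to $Z\cap U$ that is an isomorphism on $Z\cap U$, and such maps need not exist. For instance, let $Z\subset\AA^2_k$ be an affine open of an elliptic curve $E$. Such a map would be a morphism $U\to E$ that is non-constant on $Z\cap U$. But it must be constant on every line through a fixed point of $Z\cap U$, because rational curves map constantly to $E$, so it is constant on $U$, a contradiction.

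The repair uses only your excision lemma, applied twice instead of once. The correct local statement (EGA IV, 17.12) is this: Zariski locally on $X$ there is an \'etale $p:X\to\AA^{n-c}_S\times\AA^c_S$ with $Z=p^{-1}(\AA^{n-c}_S\times 0)$. Hence $q:=p|_Z:Z\to\AA^{n-c}_S$ is \'etale.

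Put $W:=X\times_{\AA^n_S}(Z\times\AA^c)$, formed along $q\times\mathrm{id}$. Both projections $W\to X$ and $W\to Z\times\AA^c$ are \'etale. The preimage of $Z$, which coincides with the preimage of $Z\times 0$, is $Z\times_{\AA^{n-c}_S}Z$. In it the diagonal is open and closed, because $q$ is \'etale and separated. Deleting the non-diagonal part gives a zig-zag $X\leftarrow W'\to Z\times\AA^c$. Both legs are \'etale, the closed subschemes are their exact preimages, and the legs map them isomorphically. So $(X,Z)$ is connected to the split pair by a zig-zag, not a single map.

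Opens $U'$ of such a local piece again admit a zig-zag: shrink $W'$ and replace $Z\times\AA^c$ by $(Z\cap U')\times\AA^c$. Hence your induction over finite Zariski covers goes through, and with this correction the proof is complete.
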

A quick illustration: suppose a scheme $X$ has an open cover $\{U\}$ for which the normal bundle trivializes, and then we have that 
    $$U/(U \bs (U \cap Z)) \simeq (U \cap Z)_+ \wedge (\AA^n / \Absn) $$
The following lemma will be useful for us later in \cref{chp5}.

\begin{lemma}\label{purity:cor-VoeZ/2}
Let $Z\subset X$ be any closed smooth subscheme of a smooth scheme $X$. Let $f: X\to X'$ be a morphism such that $X'$ and $Z':= f^{-1}(Z)$ are smooth schemes and the induced map $\phi: N_{Z'}X'\to f^*N_ZX$ is an isomorphism. Let $\phi'$ be the Thom space defined by $\phi$. Then the square
\[\begin{tikzcd}
    X'/(X'\bs Z') \arrow [r] \arrow [d] & X/(X\bs Z) \arrow[d] \\ 
    \Th(N_{Z'}X') \arrow [r,"\phi'"] & \Th(N_Z X)
\end{tikzcd} \]
commutes.
\end{lemma}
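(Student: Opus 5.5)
The plan is to reduce the statement to the naturality of the homotopy purity isomorphism of \cref{purity-theorem}, using its construction via deformation to the normal cone. Write $D_Z X$ for the deformation space: the complement in $\mathrm{Bl}_{Z\times 0}(X\times\AA^1)$ of the strict transform of $X\times 0$. It is smooth over $S$, contains $Z\times\AA^1$ as a closed smooth subscheme, has fibre $X$ over $1$ and fibre $N_ZX$ over $0$. The purity isomorphism is the zig-zag
\[
X/(X\bs Z)\xrightarrow{\ i_1\ } D_ZX/(D_ZX\bs Z\times\AA^1)\xleftarrow{\ i_0\ } N_ZX/(N_ZX\bs Z)=\Th(N_ZX),
\]
where Morel--Voevodsky show that both maps are $\AA^1$-weak equivalences. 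So the square in the statement is obtained by pasting two squares, one for $i_1$ and one for $i_0$, and it suffices to show that each of these commutes.

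First, I use functoriality of the deformation construction. The morphism $f\colon X'\to X$ satisfies $f^{-1}(Z)=Z'$ scheme-theoretically. By the universal property of blow-ups, $f\times\mathrm{id}$ lifts to a map $\mathrm{Bl}_{Z'\times 0}(X'\times\AA^1)\to \mathrm{Bl}_{Z\times0}(X\times\AA^1)$. The inverse image of the exceptional divisor is the exceptional divisor, and the strict transform of $X'\times 0$ maps into the strict transform of $X\times 0$ or its complement appropriately. Granting this, we get a morphism $D(f)\colon D_{Z'}X'\to D_ZX$ with three properties:
\begin{itemize}
\item it sends $Z'\times\AA^1$ into $Z\times\AA^1$;
\item it sends the complement of $Z'\times\AA^1$ into the complement of $Z\times\AA^1$, because $f^{-1}(Z)=Z'$ holds fibrewise;
\item it restricts over $1$ to $f$ and over $0$ to the linear map $N_{Z'}X'\to N_ZX$ induced by $df$, which is $\phi$ followed by the projection $f^*N_ZX\to N_ZX$.
\end{itemize}
This gives maps of pairs, hence maps of the cofibres (pointed quotients), and the two squares involving $i_1$ and $i_0$ then commute strictly, already at the level of pointed sheaves. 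Applying $\L_{mot}$, inverting the weak equivalence $i_0$ on both sides, and pasting gives the desired commutativity in $\mcal{H}_\bullet(S)$.

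The main obstacle is the second step: checking that $D(f)$ is well defined on the open pieces. Concretely, one must show that no point outside the strict transform of $X'\times 0$ maps into the strict transform of $X\times 0$, and that the complement of $Z'\times\AA^1$ maps outside $Z\times\AA^1$. I would verify this Zariski locally, where $Z$ is cut out by a regular sequence $g_1,\dots,g_c$. The deformation space is then explicitly $\Spec\mathcal{O}_X[t,u_1,\dots,u_c]/(tu_i-g_i)$. The hypothesis that $\phi$ is an isomorphism means that $g_i\circ f$ is a regular sequence defining $Z'$ with the same codimension $c$. Hence $D_{Z'}X'$ is locally $\Spec\mathcal{O}_{X'}[t,u_i]/(tu_i-f^*g_i)$, the map $D(f)$ is the evident ring map, and both conditions can be read off directly. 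The remaining compatibility, identifying the zero fibre with $\phi'$, is then the standard identification of $\mathrm{gr}$ of the ideal filtration with the normal bundle.
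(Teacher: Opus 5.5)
Your proposal is correct. You read $f$ as a morphism $X'\to X$, which is what the statement intends despite the typo $f\colon X\to X'$. Your route is also essentially the paper's: the paper only cites Voevodsky's Lemma~2.1, and that lemma rests on exactly this naturality of the deformation-to-the-normal-cone zig-zag defining the purity isomorphism.

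One small simplification: $D_ZX=\Spec_X\mathcal{O}_X[t,t^{-1}\mathcal{I}_Z]$ is functorial for any morphism of pairs, so the hypothesis on $\phi$ is needed only to see, via injectivity of $\phi$, that the complement of the zero section of $N_{Z'}X'$ maps into the complement of the zero section of $N_ZX$.
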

\begin{proof}
This is due to \cite[Lemma 2.1]{voevodsky2003Z/2}.
\end{proof}

\subsubsection{Motivic excision}
The following theorem portrays an analog to the \emph{excision style} result for $\AA^1$-homotopy groups for smooth schemes, which we shall need in \cref{chp5}.
\begin{theorem}
Let $k$ be an infinite field. Suppose $X$ is $\AA^1$-connected, and $j: U \to X$ is an open immersion of an $\AA^1$-connected scheme whose closed complement is everywhere of codimension $d\ge 2$. Fix a base point $x\in U(k)$. If furthermore $X$ is $\AA^1$-$m$-connected, for $m \ge d-3$, then the canonical morphism
    $$j_*: \pi_i^{\AA^1}(U,x) \to \pi_i^{\AA^1}(X,x)  $$
is an isomorphism for $0\le i \le d-2$ and an epimorphism for $i=d-1$.
\end{theorem}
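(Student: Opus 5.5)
The plan is to compare $U$ and $X$ through the homotopy cofiber sequence supplied by homotopy purity (\cref{purity-theorem}), and then transfer the resulting connectivity from suspension spectra back to unstable homotopy sheaves. Let $Z = X\setminus U$ with the reduced structure. Since $k$ is infinite and we work Nisnevich-locally, we may first reduce to the case where $Z$ is smooth. Indeed, $Z$ admits a stratification by smooth locally closed pieces, each of codimension $\ge d$. Removing the strata one at a time, starting from the singular locus of the largest-dimensional stratum, factors $j$ as a composite of open immersions, each with smooth closed complement of codimension $\ge d$. Isomorphism and epimorphism in the stated ranges compose, so it suffices to treat a single step. One must check that every intermediate open is $\AA^1$-connected; this comes out of the argument by induction, since $\pi_0^{\AA^1}$ is itself covered by the statement at $i=0$.

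For $Z$ smooth, \cref{purity-theorem} gives a cofiber sequence $U \to X \to \Th(N_Z X)$ in $\mcal{H}_\bullet(k)$. Nisnevich-locally, $\Th(N_ZX)$ is $Z_+\wedge (\AA^d/\AA^d\setminus\{0\}) \simeq Z_+ \wedge S^{d}\wedge \GG_m^{\wedge d}$. By Morel's stable and unstable connectivity theorem, this is $\AA^1$-$(d-1)$-connected. The cofiber of $U\to X$ is therefore $(d-1)$-connected. The task is to deduce that the map is $(d-1)$-connected in the sense of the statement: isomorphism on $\pi_i^{\AA^1}$ for $i\le d-2$ and epimorphism for $i=d-1$.

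To pass from the cofiber to the fiber, I would use a motivic Blakers–Massey (relative Hurewicz) theorem, as in Asok–Wickelgren–Williams. If $U$ is $\AA^1$-$a$-connected and the map $U\to X$ has cofiber that is $\AA^1$-$b$-connected, then the comparison between the homotopy fiber and the loop space of the cofiber is roughly $(a+b)$-connected. Here $U$ is $\AA^1$-connected by hypothesis, and we bootstrap $U$'s connectivity from that of $X$. The requirement that $X$ be $\AA^1$-$m$-connected with $m\ge d-3$ is what makes the connectivity estimate reach the range $i\le d-1$. More precisely, one shows inductively on $i$ that $\pi_i^{\AA^1}(U)\to\pi_i^{\AA^1}(X)$ is an isomorphism. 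This gives $\pi_i^{\AA^1}(U)=0$ for $i\le \min(m,d-2)$, which feeds back into the Blakers–Massey bound. The long exact sequence of the fibration $F\to U\to X$ then yields the claimed isomorphisms and the epimorphism at $i=d-1$.

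I expect the main obstacle to be the Blakers–Massey step. Only $\pi_0$-connectivity of $U$ is given a priori, so the bound must be run carefully. At $i=0,1$ one instead uses the strong $\AA^1$-invariance of $\pi_1^{\AA^1}$ and the computation of the first nonvanishing homotopy sheaf of $\Th$ via $K^{\MW}_d$-type sheaves. The reduction to smooth $Z$ is a secondary obstacle, and this is where the infinite-field hypothesis enters: it provides Gabber-type presentation lemmas for Nisnevich-local excision.
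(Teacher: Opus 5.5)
The paper gives no argument of its own here; it simply cites \cite[Theorem 4.1]{asok2009A1-excision}. Your proposal therefore has to stand on its own, and as written it has two genuine gaps.

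The first is the reduction to smooth $Z$. Factoring $j$ through the opens $X\setminus Z_i$ and applying the smooth case to each factor requires every intermediate open to be $\AA^1$-connected, since your Blakers--Massey step uses the connectivity of the open. Your justification is circular: the statement at $i=0$ takes the $\AA^1$-connectedness of the open as a hypothesis rather than proving it. Nothing in a purity cofiber sequence produces it either. Whether an open subscheme whose complement has codimension $\ge 2$ in an $\AA^1$-connected smooth scheme is again $\AA^1$-connected is the open question recalled just before \cref{X-p-isA1-connected}, and that is exactly why the theorem assumes it for $U$. In addition, over an imperfect field a reduced $Z$ need not admit any stratification by $k$-smooth pieces. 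The stratification, or more directly an argument via Gabber's presentation lemma (which is where $k$ infinite genuinely enters), should only be used to show that the single cofiber $X/U$ is $\AA^1$-$(d-1)$-connected. You never need to apply the theorem to intermediate opens.

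The second, more serious gap is the passage from the cofiber to the map in low degrees. Connectivity of the cofiber says nothing about $\pi_1$ of a map between spaces that are merely connected. Already in topology, $BG\to *$ for an acyclic group $G$ has contractible cofiber but is not $\pi_1$-injective. So with $U$ only $\AA^1$-connected, Blakers--Massey gives nothing at $i=1$, and your induction on $i$ has no base case. This holds even for $d\ge 4$, where $X$ is $\AA^1$-simply connected but you cannot conclude that $U$ is. For $d=2,3$ the hypothesis $m\ge d-3$ gives no simple connectivity at all. Yet the statement still asserts that $\pi_1^{\AA^1}(U)\to\pi_1^{\AA^1}(X)$ is an epimorphism for $d=2$ and an isomorphism for $d=3$. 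Your appeal to ``strong $\AA^1$-invariance of $\pi_1^{\AA^1}$ and the first nonvanishing homotopy sheaf of $\Th$'' does not supply this, since $\Th(N_ZX)$ is already $\AA^1$-simply connected.

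What is missing is geometric input on the open immersion itself rather than on its cofiber. You need purity for torsors under strongly $\AA^1$-invariant sheaves of groups $G$: the restriction $H^1(X;G)\to H^1(U;G)$ is injective when the complement has codimension $\ge 2$ and bijective when it has codimension $\ge 3$. This parallels the strictly $\AA^1$-invariant case, whose cohomology is computed by Gersten/Rost--Schmid complexes that agree for $X$ and $U$ in codimensions $<d$. Combine it with Morel's identification of pointed $G$-torsors with $\Hom(\pi_1^{\AA^1}(-),G)$, that is, the theory of $\AA^1$-coverings. Only after $\pi_1$ is settled, passing to $\AA^1$-universal covers when $X$ is not simply connected, does the $(d-1)$-connectivity of $X/U$ give the remaining degrees. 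At that stage you can use $\AA^1$-homology with Hurewicz-type arguments, or your Blakers--Massey estimate.
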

\begin{proof}
This is due to \cite[Theorem 4.1]{asok2009A1-excision}.
\end{proof}

%-------------------------------------------------------------------------
\section{Milnor-Witt \texorpdfstring{$K$}{K}-Theory}\label{sect:Milnor-Witt}
The objective of this short section is to introduce the fundamentals of the Milnor-Witt $K$-theory, only highlighting the basic features. We redirect the inquisitive readers to \cref{app:MWK} for an expansive background. Recall that the \emph{Milnor Witt $K$-theory} is essentially an extension of Milnor's $K$-theory $K^{\M}_*(F)$, which wisely mixes the generators and relations of Milnor $K$-theory and the Grothendieck Witt ring. It is isomorphic to the Grothendieck ring (\cref{app:Gw-ring}) in degree 0 and the Witt ring (\cref{app:Witt-ring}) in negative degrees.

\begin{defn}
Let $F$ be any arbitrary field with its units $F^{\times}$. The \emph{Milnor-Witt $K$-theory} or the \emph{Milnor-Witt ring}, denoted as $K^{\MW}_*(F)$, is the $\ZZ$-graded associative unital ring freely generated by the formal symbols $[a]$ of degree +1, for $a\in F^{\times}$ and a symbol $\eta$ of degree -1 (called the \emph{Hopf element}) subjected to the following relations: 

\begin{enumerate}\label{MW:relations}
\item $[a][1-a] = 0$, for any $a\ne 0,1$ (Steinberg relation)
\item $[ab] = [a]+[b]+\eta[a][b]$, for any $a,b\in F^{\times}$
\item $\eta[a]=[a]\eta$, for any $a\in F^{\times}$
\item $\eta(\eta[-1]+2) = 0$
\end{enumerate}
\end{defn}

For units $a_i\in F^\times$, the symbols $[a_1,\cdots,a_n]$ usually means the product of $[a_1]\cdot \dots \cdot[a_n]$ (for $i = 1,\dots,n$) and $\eta[a]$ means the product $\eta \cdot [a]$. For $a \in F^\times$, the class $\Lin a\Rin:= 1+\eta[a]$. By construction, the association 
                $$ F \mapsto K^{\MW}_*(F) $$
is functorial. Recall from \cref{app:MWK} that the Milnor Witt $K$-theory is related to the Milnor $K$-theory (\cite{milnor1970algebraic}) by killing $\eta$, that is, $K^{\MW}_*/(\eta) = K^{\M}_*(F)$. By convention, the image of the $n$-tuple product $[a_1,\dots,a_n]$ under this map is denoted by $\{a_1,\dots, a_n\}$. We will need the description of the following element of $GW(F)$, which in particular, states that the Milnor-Witt $K$-theory is $\epsilon$-commutative for $\epsilon:= - \Lin -1 \Rin$. For all $n\ge 0$ (see also \cref{MW:epsilon-defn}), we have that 
    $$n_{\epsilon}:= \sum_{i=1}^{n} \Lin (-1)^{i-1} \Rin \quad \text{and}\quad  (-n)_{\epsilon}:= \epsilon n_{\epsilon}.$$

\subsubsection{Residue homomorphism}
Briefly speaking, there is a homomorphism of (cochain) complex called the \emph{residue homomorphism} with coefficients in the Milnor-Witt sheaf. Let $F$ be a field and $v:F\to \ZZ\cup\{-\infty\}$ be a discrete valuation with residue field $\kappa(v)$ and valuation ring $\mathcal{O}_v$ and choose a uniformizing parameter $\pi = \pi_v$ of $v$ (see \cref{app:residue-homom} for more background).
\begin{theorem}
There is a unique homomorphism of graded Abelian groups 
        $$\partial^{\pi}_v : K^{\MW}_*(F)\to K^{\MW}_{*-1}(\kappa(v))$$
of degree -1 such that $\partial_v^\pi$ commutes wit the multiplication by $\eta$ and 
\begin{enumerate}
\item $\partial_v^\pi([\pi, u_1,\dots,u_n]) = [\Bar{u_1},\dots,\Bar{u_n}]$
\item $\partial_v^\pi([u_1,\dots,u_n])=0$
\end{enumerate}
\end{theorem}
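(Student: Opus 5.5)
Since $\partial_v^\pi$ is to be a homomorphism on a ring presented by generators and relations, I plan to follow Morel's strategy: build a larger graded ring in which the residue map is visibly a projection, then descend to $K^{\MW}_*(F)$. Let $\xi$ be a formal variable of degree $+1$ with $\xi^2=[-1]\xi$, and define the graded ring
\[
\Theta_*:=K^{\MW}_*(\kappa(v))[\xi]/(\xi^2-[-1]\xi),
\]
which is free as a left $K^{\MW}_*(\kappa(v))$-module on $1$ and $\xi$. I will write down a ring homomorphism $\Theta:K^{\MW}_*(F)\to\Theta_*$ by prescribing it on generators. Set $\eta\mapsto\eta$. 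For $a=u\pi^n$ with $u\in\mathcal{O}_v^\times$, set
\[
[a]\mapsto [\bar u]+\Lin \bar u\Rin\, n_\epsilon\,\xi .
\]
This is forced if $[\pi]\mapsto\xi$, $[u]\mapsto[\bar u]$ and relation (2) hold, since by relation (2) $[u\pi^n]=[u]+\Lin u\Rin[\pi^n]$ and $[\pi^n]=n_\epsilon[\pi]$. Once $\Theta$ exists, I define $\partial_v^\pi$ as the $\xi$-coefficient. This is additive, of degree $-1$, and commutes with $\eta$ because $\eta$ is central. The two normalizations follow immediately: for units $u_i$ one has $\Theta([u_1,\dots,u_n])=[\bar u_1,\dots,\bar u_n]$, which has no $\xi$-term, and $\Theta([\pi,u_1,\dots,u_n])=\xi[\bar u_1,\dots,\bar u_n]$, whose coefficient is $[\bar u_1,\dots,\bar u_n]$ after commuting $\xi$ past the symbols.

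The main work is checking that $\Theta$ respects relations (1)–(4), and I expect the Steinberg relation (1) to be the obstacle. Relations (3) and (4) hold automatically because they hold in $K^{\MW}_*(\kappa(v))$ and $\eta$ is central in $\Theta_*$. For relation (2), I will expand both sides using the multiplicativity $\Lin\bar u\bar u'\Rin=\Lin\bar u\Rin\Lin\bar u'\Rin$ and the identity $(n+m)_\epsilon=n_\epsilon+\Lin(-1)^n\Rin m_\epsilon$. The cross term $\eta\, n_\epsilon m_\epsilon\,\xi^2$ will be simplified using $\xi^2=[-1]\xi$ together with $\eta[-1]n_\epsilon=\Lin(-1)^n\Rin-1$. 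This is a finite computation. For relation (1), with $a\neq 0,1$, I will split by valuation.
\begin{itemize}
\item If $v(a)>0$, then $1-a$ is a unit with residue $1$, so $[\overline{1-a}]=0$ and $\Lin\overline{1-a}\Rin=1$; hence $\Theta([1-a])=0$.
\item If $v(a)<0$, I apply the same argument to $1-a^{-1}$ and use $[a][1-a]=\pm[a^{-1}][1-a^{-1}]$-type identities, derived from relation (2) and $[a][-a]=0$.
\item If $v(a)=0$ and $v(1-a)=0$, the relation reduces to the Steinberg relation in $\kappa(v)$.
\item If $v(a)=0$ and $v(1-a)>0$, then $\bar a=1$, so $[\bar a]=0$ and $\Lin\bar a\Rin=1$, and the product vanishes.
\end{itemize}
To handle the negative-valuation case cleanly, I will first establish the auxiliary identity $[a][-a]=0$ in $K^{\MW}_*(F)$, which is standard.

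For uniqueness, I will use that $K^{\MW}_*(F)$ is generated additively by $\eta^m[a_1,\dots,a_n]$. Each $a_i=u_i\pi^{k_i}$, and relation (2) rewrites every symbol as a combination of products of $[u]$'s and $[\pi]$'s. Using $\epsilon$-commutativity and $[\pi][\pi]=[-1][\pi]$ (which follows from $[\pi][-\pi]=0$), each generator becomes a sum of terms of the form $[u_1,\dots,u_n]$ and $[\pi,u_1,\dots,u_n]$. On such terms $\partial_v^\pi$ is prescribed by the two normalizations and commutes with $\eta$, so $\partial_v^\pi$ is uniquely determined.
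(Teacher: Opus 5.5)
Your strategy is Morel's construction; the paper cites his Theorem 3.15 without reproducing it. Your uniqueness argument and the checks of relations (2)--(4) are fine, but the existence part has a genuine gap in the definition of $\Theta_*$. Since $K^{\MW}_*(\kappa(v))$ is only $\epsilon$-graded commutative, the ring $K^{\MW}_*(\kappa(v))[\xi]/(\xi^2-[-1]\xi)$ is not determined until you say how $\xi$ passes coefficients. The answer is forced: $[\pi][u]=\epsilon[u][\pi]$ in $K^{\MW}_*(F)$, so you need $\xi\alpha=\epsilon^{n}\alpha\xi$ for $\alpha$ of degree $n$. This rule is consistent: $\epsilon$ is central with $\epsilon^2=1$, and $\eta\epsilon=\eta$ keeps $\eta$ central. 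Moreover $\xi^2-[-1]\xi$ is then central because $\epsilon[-1]=[-1]$, so the quotient is free on $1,\xi$.

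If instead $\xi$ is central, which is what your phrase ``commuting $\xi$ past the symbols'' presupposes, then $\Theta$ is not well defined. For $a=u\pi^{-1}$ your formula gives $\Theta([a])\Theta([1-a])=-h[\bar u]\,\xi=-[\bar u^2]\,\xi$, where $h=1+\langle -1\rangle$, and this is nonzero as soon as $\bar u^2\neq 1$. With the correct rule, $\xi[\bar u_1,\dots,\bar u_n]=\epsilon^n[\bar u_1,\dots,\bar u_n]\,\xi$. So normalization (1) holds only if you write elements as $\alpha+\xi\beta$ and set $\partial_v^\pi(x)=\beta$; your left convention $\alpha+\beta\xi$ would produce $\epsilon^n[\bar u_1,\dots,\bar u_n]$.

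The second gap is the case $v(a)<0$ of the Steinberg relation. Until the relations are verified, $\Theta$ is only defined on the free algebra on the symbols and $\eta$. So you cannot import $[a][-a]=0$ from $K^{\MW}_*(F)$: its proof uses the Steinberg relation at $a$ and at $a^{-1}$, which is circular here. What you need is $\Theta([a])\Theta([-a])=0$ in $\Theta_*$, and this is the whole content of the case. Indeed $1-a=-a(1-a^{-1})$ with $1-a^{-1}$ a unit of residue $1$, so your formula gives $\Theta([1-a])=\Theta([-a])$ on the nose.

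For $a=u\pi^n$, use $\langle-\bar u\rangle[\bar u]=[\bar u]$, $\langle\bar u\rangle[-\bar u]=[-\bar u]$, $\langle-1\rangle[-1]=-[-1]$ and the twisted rule. The $\xi$-coefficient of $\Theta([a])\Theta([-a])$ is then $n_\epsilon([\bar u]+\epsilon[-\bar u])-n_\epsilon^2[-1]=n_\epsilon[-1]-n_\epsilon^2[-1]$. This vanishes because $h[-1]=0$ and $\epsilon[-1]=[-1]$ make $n_\epsilon[-1]$ equal to $[-1]$ or $0$ according to the parity of $n$. This is exactly where the $\epsilon$-twist enters: with central $\xi$ the same computation leaves $n_\epsilon h[\bar u]$. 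With these two repairs, the other three Steinberg cases, relations (2)--(4), and uniqueness go through as you describe.
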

We call such a $\partial_v^\pi$ the \emph{residue homomorphism}. It turns out that the homomorphism $\partial_v^{\pi}$ depends on the chosen valuation $\pi$. This is the reason for \cite{morel2012A1topology} to introduce the theory of \emph{twisted Milnor Witt $K$-theory}. After undergoing certain modifications as explained in \cref{app:twists}, we can now conveniently set the \emph{Milnor-Witt $K$-theory twisted by $L$} as        
    $$K^{\MW}_n(F,L) := K^{\MW}_n(F)\otimes_{\ZZ[{F^\times}]} \ZZ[L^\times]$$
The twisted homomorphism
    $$\partial_v: K^{\MW}_*(F,L)\to K^{\MW}_{*-1}(\kappa(v), (\mathfrak{m}_v/\mathfrak{m}^2_v)^*\otimes L)$$
is given by $\partial_v(\alpha \otimes l) = \partial_v^\pi(\alpha)\otimes \overline{\pi}^* \otimes l$, for $l\in L$ well-defined and is independent of the uniformizer $\pi$ (cf. \cref{app:lemma-residue-free}). Here, $L$ is any line bundle and $(\mathfrak{m}_v/\mathfrak{m}^2_v)^*$ represents the relative tangent space, which, by definition, is the dual of the relative cotangent space $\mathfrak{m}_v/\mathfrak{m}^2_v$ (seen as $\kappa(v)$-vector spaces).

\subsubsection{Motivic Brouwer degree}
An important invariant from the Milnor-Witt $K$-theory is the motivic version of the Hopf-Brouwer map. It gives a description of the non-trivial $\AA^1$-homotopy sheaf of the motivic sphere $\AA^n \bs \{0\}$ (for $n\ge 2$) as a consequence of $\AA^1$-Hurewicz theorem.

\begin{theorem}\cite[Theorem 6.40]{morel2012A1topology}
For $n\ge 2$, we have a canonical isomorphism of strictly $\AA^1$-invariant sheaves 
            $$\pi^{\AA^1}_{n-1}(\AA^n\bs\{0\}) \cong \pi_n^{\AA^1}((\PP^1)^{\wedge n}) \cong K^{\MW}_n $$
\end{theorem}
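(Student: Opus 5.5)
The plan is to follow Morel's strategy: reduce the computation to the unstable Hurewicz theorem together with the identification of the free strictly $\AA^1$-invariant sheaf on $\GG_m^{\wedge n}$. First, I would use the Nisnevich cofiber sequence $\Absn \to \AA^n \to \AA^n/\Absn$. Since $\AA^n$ is $\AA^1$-contractible, this gives $\AA^n/\Absn\simeq \Sigma(\Absn)$. Homotopy purity (\cref{purity-theorem}) applied to $\{0\}\subset \AA^n$ identifies $\AA^n/\Absn$ with $\Th(\AA^n)$, which is $(\PP^1)^{\wedge n}\simeq S^n\wedge \GG_m^{\wedge n}$. Induction on $n$ then gives $\Absn\simeq S^{n-1}\wedge\GG_m^{\wedge n}$, starting from $\AA^1\bs\{0\}=\GG_m$ and using the standard decomposition $\AA^{n}\bs\{0\}\simeq \Sigma(\AA^{n-1}\bs\{0\})\wedge \GG_m$ obtained from the open cover $(\AA^{n-1}\bs 0)\times\AA^1\cup \AA^{n-1}\times\GG_m$. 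This reduces both displayed isomorphisms to computing $\pi^{\AA^1}_{n-1}(S^{n-1}\wedge\GG_m^{\wedge n})$. The equality with $\pi_n^{\AA^1}((\PP^1)^{\wedge n})$ is the suspension isomorphism in the stable range, given by Morel's $\AA^1$-Freudenthal theorem (or simply by the same Hurewicz argument applied to both).

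Second, I would invoke Morel's unstable connectivity theorem: $S^{n-1}\wedge \GG_m^{\wedge n}$ is $\AA^1$-$(n-2)$-connected. For $n\ge 2$, the $\AA^1$-Hurewicz theorem then identifies $\pi_{n-1}^{\AA^1}$ with $H_{n-1}^{\AA^1}$. By suspension this equals $\tilde H_0^{\AA^1}(\GG_m^{\wedge n})$, that is, the free strictly $\AA^1$-invariant sheaf generated by the pointed sheaf $\GG_m^{\wedge n}$. For $n\ge 2$ this is a sheaf of abelian groups, and the $n=1$ issue of $\pi_1$ being nonabelian does not arise.

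The main obstacle is the final identification $\tilde H_0^{\AA^1}(\GG_m^{\wedge n})\cong K^{\MW}_n$. To prove it, I would construct a map $\GG_m^{\wedge n}\to K^{\MW}_n$, $(a_1,\dots,a_n)\mapsto[a_1,\dots,a_n]$. This requires showing that $K^{\MW}_n$ is strictly $\AA^1$-invariant as an unramified sheaf, via the residue homomorphisms $\partial_v$ (twisted as above) and a Gersten/Rost-style complex with homotopy invariance. Next I would show universality. Strictly $\AA^1$-invariant sheaves are determined by their sections on finitely generated field extensions together with residue data, so it suffices to check that, for any strictly $\AA^1$-invariant $M$ and any pointed map $\GG_m^{\wedge n}\to M$, the induced symbol map on fields satisfies relations (1)–(4) of $K^{\MW}_*$. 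The Steinberg relation should come from the $\AA^1$-homotopy $\PP^1\bs\{0,1,\infty\}$ argument (Hu–Kriz style $\GG_m\wedge\GG_m$ computation). The relations involving $\eta$ should come from the Hopf map $\Abs\to\PP^1$, whose induced map realizes $\eta$. Finally, I would verify that the extension defined on fields glues to a sheaf morphism by checking compatibility with the residue maps. I would first try reducing everything to fields via Morel's theory of unramified sheaves; I expect this to be the delicate technical core.
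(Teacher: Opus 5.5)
Your route is Morel's: rewrite $\Absn\simeq S^{n-1}\wedge\GG_m^{\wedge n}$, apply the $\AA^1$-connectivity and Hurewicz theorems, and identify $\tilde H_0^{\AA^1}(\GG_m^{\wedge n})$ with $K^{\MW}_n$. The paper gives no argument beyond citing \cite[Theorem 6.40]{morel2012A1topology}, so this is the intended proof. There is, however, a genuine gap at $n=2$. Your sentence that ``the $n=1$ issue of $\pi_1$ being nonabelian does not arise'' is exactly where it hides. For $n=2$ the sheaf you must compute is $\pi_{n-1}^{\AA^1}=\pi_1^{\AA^1}(\Abs)$. The space $\Abs\simeq S^1\wedge\GG_m^{\wedge 2}$ is only $\AA^1$-connected, not $\AA^1$-simply connected, since its $\pi_1^{\AA^1}$ is the nonzero sheaf you are computing. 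Morel's Hurewicz theorem in degree $1$ only says that $\pi_1^{\AA^1}(\Abs)\to H_1^{\AA^1}(\Abs)$ is the universal map to a strictly $\AA^1$-invariant sheaf; it does not say this map is an isomorphism. Nothing in your argument distinguishes $S^1\wedge\GG_m^{\wedge 2}$ from $S^1\wedge\GG_m=\PP^1$, whose $\pi_1^{\AA^1}$ is non-abelian. The Freudenthal alternative fails for the same reason: for a merely $\AA^1$-connected $X$, the map $\pi_1^{\AA^1}(X)\to\pi_2^{\AA^1}(\Sigma X)$ sits at the edge of the stable range and is just this Hurewicz/abelianization map. So, granting your last step, you have proved $\pi_n^{\AA^1}((\PP^1)^{\wedge n})\cong K^{\MW}_n$ for all $n\ge 2$, because that space is $\AA^1$-$(n-1)$-connected. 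But you have proved $\pi_{n-1}^{\AA^1}(\Absn)\cong K^{\MW}_n$ only for $n\ge 3$.

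The missing input is that $\pi_1^{\AA^1}(\Abs)$ is abelian. One way to get it: the first-column map $SL_2\to\Abs$ is a Zariski-locally trivial $\AA^1$-bundle, hence an $\AA^1$-weak equivalence, as recorded in \cref{sec:motivic-spheres}. Since $\L_{mot}$ preserves finite products (\cref{Prop:L_mot-preserve}), $SL_2$ is a group object in $\mathcal{H}(k)$. The Eckmann--Hilton argument then makes $\pi_1^{\AA^1}(\Abs)\cong\pi_1^{\AA^1}(SL_2)$ abelian. This sheaf is strongly $\AA^1$-invariant (\cref{thm:A1-htpysheaves-invariant}) and abelian, so it is strictly $\AA^1$-invariant by Morel's theorem. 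It therefore coincides with its universal strictly $\AA^1$-invariant quotient $H_1^{\AA^1}(\Abs)\cong\tilde H_0^{\AA^1}(\GG_m^{\wedge 2})$, and the rest of your argument goes through.

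Separately, your last step is where almost all of Morel's work lies, and the relations (1)--(4) cannot be checked literally in an arbitrary strictly $\AA^1$-invariant $M$. Such an $M$ only receives degree-$n$ symbols and carries no $\eta$. What you actually need is an explicit set of generators for the kernel of $\ZZ[(F^\times)^{n}]\to K^{\MW}_n(F)$, together with a check that these hold in $M$. This map is onto for $n\ge1$ because $\eta[a][b]=[ab]-[a]-[b]$.
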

The advantage of such a theorem is that it helps us to understand the endomorphism ring $[\AA^n\bs\{0\}, \AA^n\bs \{0\}]_{\AA^1}$, for any $n\ge 2$. 

\begin{corollary}\cite[Corollary 6.43]{morel2012A1topology}\label{A1-Brouwer-degree}
Let $(n,i) \in \mathbb{N}^2$ and $(m,j) \in \mathbb{N}^2$ be pairs of integers. For $n\ge 2$, we have a canonical isomorphism:
\[ 
Hom_{\mathcal{H}_{\bullet}(k)}(S^m\wedge (\GG_m)^{\wedge j}, S^n \wedge (\GG_m)^{\wedge i}) = 
\begin{cases*}
0                & if $m < n$.  \\
K^{\MW}_{i-j}(k)   & if $m=n$ and $i>0$  \\
0               & if $m=n$, $j>0$ and $i=0$\\
\ZZ             & if $m=n$ and $j=i=0$
\end{cases*} 
\]
\end{corollary}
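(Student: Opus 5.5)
The plan is to compute these mapping sets with $\AA^1$-obstruction theory on the target, in the same way one computes $[S^m, S^n]$ classically. Write $Y := S^n \wedge (\GG_m)^{\wedge i}$ and $X := S^m \wedge (\GG_m)^{\wedge j}$, with $n \ge 2$.

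\textbf{Step 1: connectivity and the bottom homotopy sheaf of $Y$.} By Morel's unstable connectivity theorem, $Y$ is $\AA^1$-$(n-1)$-connected. Since $n \ge 2$, it is in particular $\AA^1$-simply connected, so the $\AA^1$-Hurewicz theorem gives
\[
\pi_n^{\AA^1}(Y) \cong \mathbf{H}_n^{\AA^1}(Y) \cong \widetilde{\mathbf{H}}_0^{\AA^1}\bigl((\GG_m)^{\wedge i}\bigr),
\]
the free strictly $\AA^1$-invariant sheaf on $(\GG_m)^{\wedge i}$.
\begin{itemize}
\item For $i > 0$ this sheaf is $\mathbf{K}^{\MW}_i$. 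Using $\AA^n \bs \{0\} \simeq S^{n-1} \wedge \GG_m^{\wedge n}$, this is exactly the content of the preceding theorem, \cite[Theorem 6.40]{morel2012A1topology}, and the same argument applies for general $i > 0$.
\item For $i = 0$ the space $Y$ is the simplicial sphere, and $\pi_n^{\AA^1}(S^n) = \ZZ$ is the constant sheaf.
\end{itemize}

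\textbf{Step 2: the case $m < n$.} Use the Postnikov tower of the $\AA^1$-fibrant replacement of $Y$, whose layers are $K(\pi_q^{\AA^1}(Y), q)$ for $q \ge n$. Maps from $X$ into such a layer are computed by Nisnevich cohomology:
\[
[X, K(\pi_q, q)] = \widetilde{H}^q_{Nis}\bigl(S^m \wedge \GG_m^{\wedge j}, \pi_q\bigr) = \widetilde{H}^{q-m}_{Nis}\bigl(\GG_m^{\wedge j}, \pi_q\bigr).
\]
For $q - m > 0$ this vanishes, because $\GG_m^{\wedge j}$ is built from smooth schemes over $\Spec k$ of Nisnevich cohomological dimension $0$ in the relevant sense. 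Equivalently, one can use that $S^m \wedge \GG_m^{\wedge j}$ is an $m$-dimensional $\AA^1$-cell complex mapping into an $(n-1)$-connected space. Either way the whole obstruction tower is trivial for $m < n$, and so the Hom-set is $0$.

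\textbf{Step 3: the case $m = n$.} The same tower shows that only the bottom layer contributes:
\[
[S^n \wedge \GG_m^{\wedge j}, Y] \cong \widetilde{H}^0\bigl(\GG_m^{\wedge j}, \pi_n^{\AA^1}(Y)\bigr) = \bigl(\pi_n^{\AA^1}(Y)\bigr)_{-j}(k),
\]
the $j$-fold contraction evaluated on $k$. Here I will use two contraction facts:
\begin{itemize}
\item $(\mathbf{K}^{\MW}_i)_{-1} = \mathbf{K}^{\MW}_{i-1}$, coming from the residue map $\partial_v$ described above. Iterating gives $K^{\MW}_{i-j}(k)$ when $i > 0$.
\item For the constant sheaf, $\ZZ_{-1} = 0$, because $\ZZ(\GG_m) = \ZZ(\Spec k)$. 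This gives $0$ when $i = 0$ and $j > 0$, and $\ZZ$ when $i = j = 0$.
\end{itemize}

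\textbf{Expected main obstacle.} The hard input is Step 1: identifying the free strictly $\AA^1$-invariant sheaf on $\GG_m^{\wedge i}$ with $\mathbf{K}^{\MW}_i$. This needs Morel's theory of strictly $\AA^1$-invariant sheaves and the Gersten-type presentation of $\mathbf{K}^{\MW}$. I would take it from the preceding theorem rather than reprove it. Everything after that is formal obstruction theory together with the contraction computations.
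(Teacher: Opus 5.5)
\textbf{Gap in Step 2.} Your overall plan is viable: connectivity plus Hurewicz for the bottom homotopy sheaf, a Postnikov-tower reduction to reduced cohomology of $\GG_m^{\wedge j}$, then contractions. The values you reach in Steps 2 and 3 are also correct. But both steps rest on the vanishing $\widetilde{H}^p_{Nis}(\GG_m^{\wedge j},M)=0$ for all $p>0$ and $M=\pi_q^{\AA^1}(Y)$, and the reason you give for it is false.

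The schemes $\GG_m^{\times a}$ from which $\GG_m^{\wedge j}$ is built have Nisnevich cohomological dimension $a$, not $0$. For a general Nisnevich sheaf (for instance an extension by zero from a dense open subscheme), their cohomology in degrees $1,\dots,a$ need not vanish. Your fallback, treating $S^m\wedge\GG_m^{\wedge j}$ as an ``$m$-dimensional $\AA^1$-cell complex'', is not an available theorem. It presupposes exactly that $\GG_m$ behaves $0$-dimensionally for maps into $\AA^1$-local targets. So Step 2, and the claim in Step 3 that only the bottom layer contributes, are unsupported as written, and this vanishing is the non-formal content of the corollary.

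\textbf{How to close it.} The vanishing holds only because the coefficients are strictly $\AA^1$-invariant; this is automatic here since $q\ge n\ge 2$. Let $U\in\Sm_k$ and consider the localization sequence for $U\times\{0\}\subset U\times\AA^1$. Use $\AA^1$-invariance together with Morel's purity isomorphism $H^{p+1}_{U\times 0}(U\times\AA^1,M)\cong H^p_{Nis}(U,M_{-1})$. The sequence is split by the unit section, giving
\[
H^p_{Nis}(U\times\GG_m,M)\cong H^p_{Nis}(U,M)\oplus H^p_{Nis}(U,M_{-1}).
\]
Iterating yields $\widetilde{H}^p_{Nis}(\GG_m^{\wedge j},M)\cong H^p_{Nis}(\Spec k,M_{-j})$. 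This vanishes for $p>0$ because $\Spec k$ is a Nisnevich point: the ``cohomological dimension $0$'' belongs to $\Spec k$, not to $\GG_m^{\wedge j}$. Equivalently, $\Omega_{\GG_m}K(M,p)\simeq K(M_{-1},p)$.

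With this lemma in place, two more observations finish the argument. First, the towers $[S^{m}\wedge\GG_m^{\wedge j},Y[q]]$ and $[S^{m+1}\wedge\GG_m^{\wedge j},Y[q]]$ stabilize, so the Milnor $\lim^1$ term vanishes and the Postnikov tower converges. Second, your Steps 2 and 3 then give $[S^m\wedge\GG_m^{\wedge j},Y]_{\AA^1,\bullet}\cong\pi_m^{\AA^1}(Y)_{-j}(k)$. This is the identification behind the result; the paper does not prove it but cites Morel, where it comes from his contraction theory for strictly $\AA^1$-invariant sheaves and $\GG_m$-loop spaces.

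\textbf{Smaller point on Step 1.} The preceding theorem in the paper only covers $\pi_n^{\AA^1}((\PP^1)^{\wedge n})$, i.e.\ the case $i=n$. For general $i>0$, saying ``the same argument applies'' is not enough. You should invoke Morel's identification of the free strictly $\AA^1$-invariant sheaf on $\GG_m^{\wedge i}$ with $\mathbf{K}^{\MW}_i$.
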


Coupling the above results, one obtains that 
        $$[\AA^n\bs\{0\}, \AA^n\bs\{0\}]_{\AA^1} = K^{\MW}_0(k).$$ 
If $f:\Absn \to \Absn $ is morphism in $\Spc_k$, then the class $[f]$ in $K_0^{\MW}(k)\cong \GW(k)$ is called the \emph{motivic Brouwer degree of $f$}. To compute this degree, we can use the following cohomological technique.
\begin{lemma}\label{A1degreemap}
Let $f:\Absn\to \Absn$ be a morphism in $\mathcal{H}(k)$. Then $f$ is an isomorphism if and only if 
        $$f^*: H^{n-1}(\Absn, K^{\MW}_n)\to H^{n-1}(\Absn, K^{\MW}_n)$$
is an isomorphism of Milnor-Witt $K$-theory groups. In other words, the degree of $f$ is 1.
\end{lemma}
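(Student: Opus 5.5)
Since $\AA^n\bs\{0\}$ is $\AA^1$-$(n-2)$-connected, I will prove the lemma by combining the $\AA^1$-Whitehead theorem (\cref{Whitehead-thm}) with Morel's computation $\pi^{\AA^1}_{n-1}(\Absn)\cong K^{\MW}_n$ and a Hurewicz-type identification of the relevant cohomology group. The first step is connectivity. For $n\ge 2$, $\Absn\simeq S^{n-1}\wedge\GG_m^{\wedge n}$ is $\AA^1$-$(n-2)$-connected (Morel's unstable connectivity), so $\pi_i^{\AA^1}(\Absn)=0$ for $i\le n-2$, and its first non-trivial sheaf is $\pi^{\AA^1}_{n-1}\cong K^{\MW}_n$. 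Next, \cref{A1-Brouwer-degree} gives $[\Absn,\Absn]_{\AA^1}\cong K^{\MW}_0(k)=\GW(k)$, so $f$ is determined up to $\AA^1$-homotopy by its degree $\deg(f)\in\GW(k)$. Pointedness is harmless: $\Absn$ is $\AA^1$-simply connected for $n\ge 3$, and for $n=2$ the action of $\pi_1$ on free classes should be handled through the same identification.

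The second step is to identify $H^{n-1}(\Absn,K^{\MW}_n)$ with $\Hom(K^{\MW}_n,K^{\MW}_n)=K^{\MW}_0(k)$, naturally in $f$. Because $\Absn$ is $(n-2)$-connected, the $\AA^1$-Hurewicz theorem and the representability $H^{n-1}(\mathcal X,M)=[\mathcal X,K(M,n-1)]$ for strictly $\AA^1$-invariant $M$ give the chain
\[
H^{n-1}(\Absn,K^{\MW}_n)\cong \Hom(\pi^{\AA^1}_{n-1}(\Absn),K^{\MW}_n)=\Hom(K^{\MW}_n,K^{\MW}_n)\cong K^{\MW}_0(k),
\]
where the last step is Morel's computation of endomorphisms of $K^{\MW}_n$ as contractions. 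Under this chain, $f^*$ corresponds to multiplication by $\deg(f)$. Alternatively, one can compute with the Gersten (Rost--Schmid) complex: $H^{n-1}(\Absn,K^{\MW}_n)\cong H^n_{\{0\}}(\AA^n,K^{\MW}_n)\cong K^{\MW}_0(k)$ by the localization sequence and $\AA^1$-invariance, with $f^*$ acting through its degree.

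The third step is the conclusion. If $f$ is an isomorphism in $\mathcal H(k)$, then $f^*$ is trivially an isomorphism. Conversely, if $f^*$ is bijective, then multiplication by $\deg f$ on $\GW(k)$ is surjective, so $\deg f$ is a unit. To get from a unit degree to an equivalence, I will take $g$ of inverse degree. Since composition corresponds to multiplication in $\GW(k)$, $g\circ f$ and $f\circ g$ have degree $1$ and are therefore $\AA^1$-homotopic to the identity by the injectivity in \cref{A1-Brouwer-degree}. Hence $f$ is an isomorphism. If the statement's phrase "degree is 1" is meant literally rather than as "degree is a unit", I will read it as saying that $f^*$ is the identity after the identification above, and the same argument covers that case.

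The main obstacle is the naturality in the second step: showing that $f^*$ on cohomology is precisely multiplication by $\deg(f)$, and that composition of self-maps corresponds to the ring product in $\GW(k)$. I would try first to use the Hurewicz identification $[\Absn,K(K^{\MW}_n,n-1)]\cong\Hom(\pi_{n-1},K^{\MW}_n)$, which is natural by construction. This reduces everything to the fact that Morel's isomorphism $[\Absn,\Absn]\to\operatorname{End}(K^{\MW}_n)$ is induced by $\pi_{n-1}^{\AA^1}$ and is multiplicative.
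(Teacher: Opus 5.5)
Your proposal is correct and is essentially the standard argument behind \cite[Lemma 2.2]{DF18}, to which the paper simply defers: Morel's identification $[\Absn,\Absn]_{\AA^1}\cong K^{\MW}_0(k)$ via $\pi^{\AA^1}_{n-1}$, the natural identification of $H^{n-1}(\Absn,K^{\MW}_n)$ with $K^{\MW}_0(k)$ on which $f^*$ acts as multiplication by $\deg f$, and the observation that $f$ is invertible exactly when $\deg f$ is a unit. The one thing to drop is your claim that the literal ``degree $1$'' reading is covered by the same argument: that reading is false, since for instance the automorphism $(x_1,\dots,x_n)\mapsto(-x_1,x_2,\dots,x_n)$ has degree $\langle -1\rangle$, which differs from $1$ in $\GW(k)$ whenever $-1$ is not a square in $k$, so the right reading of the last sentence of the statement is $\deg f\in\GW(k)^{\times}$.
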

\begin{proof}
The reader is redirected to \cite[Lemma 2.2]{DF18}.
\end{proof}

% \afterpage{\blankpage
% \thispagestyle{empty}}

\begin{savequote}
Whoever fights monsters should see to it that in the process he does not become a monster. And if you gaze long enough into an abyss, the abyss will gaze back into you.
\qauthor{"Beyond Good and Evil" by Friedrich Nietzsche}
\end{savequote}
\chapter{Affine Algebraic Geometry}\label{chp3}
\markboth{III Affine Algebraic Geometry}{}

{\small
{\fontfamily{bch}\selectfont
\subsubsection{\textsc{Chapter Summary}}
We dedicate this chapter to appreciating the main results of our script from the perspective of affine algebraic geometry. We will present a survey of some of the fundamental open problems that we will later connect with the context of motivic homotopy theory. This chapter begins with an overview of problems in \cref{AAG:overview} and discusses the quest for the characterization of the polynomial ring in $n$-variables $k[x_1,\dots,x_n]$ or equivalently, the affine $n$-space $\AA^n_k:= \Spec k[x_1,\dots, x_n]$ in \cref{sect:char-affinespaces}. Of particular importance is the Linearization Problem (\cref{linearize-conj}) and its connection with motivic homotopy theory, which is illustrated via an illuminative baby example (\cref{C*-actions:KR3F}). Its grown-up version would be rigorously discussed in \cref{chp5}. We end the section with the famous "Zariski Cancellations" in \cref{sec:Zariskicancellations} in an algebro-geometric context. This chapter solely focuses on the background knowledge from the affine algebraic geometry, with intermediate guiding comments to the chapters further ahead. The reader is also advised to skim the chapter, if need be, and return when the relevant backward references arise.
}}

\subsubsection{Affine Fiber Spaces and Local Triviality}
Flat morphisms of finite presentation with fibers isomorphic to affine spaces have been intensively studied in Affine Algebraic Geometry ("AAG", in short), in connection to the Zariski Cancellation Problem and and the Dolga\v{c}ev-Ve\v{i}sfe\v{i}ler Problem (see e.g. \cite{Kra96} and the references therein for an overview). The main utility of the following definitions is that, from the motivic perspective, such morphisms provide an abundant source of relatively $\AA^1$-contractible schemes.

\begin{defn}\label{def:A1-fib-space}
An \emph{$\AA^n$-fiber space} over a scheme $S$ is a smooth morphism of finite presentation $f:X\to S$ whose fibers over all points $s\in S$
    $$X_s := X\times_S \mathrm{Spec}(\kappa(s)) \xrightarrow{\simeq } \AA^n_{\kappa(s)}$$ 
are isomorphic to the affine space $\AA^n$ over the corresponding residue field $\kappa(s)$.
\end{defn}

\begin{defn}
An $\AA^n$-fiber space $f:X\to S$ as above is called a \emph{locally trivial $\AA^n$-bundle} in the Zariski (resp. Nisnevich, resp. \'etale) topology if every $s\in S$ admits a Zariski (resp. Nisnevich, resp. \'etale)  neighborhood $U$ in $S$ such that 
    $U\times _S X\cong U\times \AA^n$ as schemes over $U$. 
\end{defn}

\begin{remark}
The above notion of $\AA^n$-fiber space corresponds to that of "$\AA^n$-fibration" sometimes called "affine fibration" classically used by some authors in AAG, (see e.g. \cite{asanuma1987polynomial, sathaye1983polynomial,BhD92,DF10}). Some other authors also define $\AA^n$-fibrations (\cite{KM78, KW85}) in this context by requiring only that the general closed fibers are isomorphic to $\AA^n$ over the corresponding residue fields and other fibers are geometrically integral. In the language of Asanuma, morphisms with all closed fibers isomorphisms to $\AA^n_R$ are called pseudo-polynomial $R$-algebras over $n$-variables. Our proposed terminology is intended to clarify the notion and to make a clear distinction from the topological or model categorical notion of fibration. 
\end{remark}

%-------------------------------------------------------------------------------
\section{A Quick Overview}\label{AAG:overview}
\begin{enumerate}
\item \textbf{The Dolga\v{c}ev-Ve\v{i}sfe\v{i}ler Conjecture:} A general form of the conjecture can be stated as follows:
\begin{qstn*}
Let $S$ be the spectrum of any regular local ring and let $f: X\to S$ be an $\AA^n$-fiber space. Then does $f$ necessarily admit a locally trivial $\AA^n$-bundle structure in the Zariski topology?
\end{qstn*}
Anticipating the affirmation of this question, one can show that the following special case admits a positive solution:
\begin{qstn*}
Is every $\AA^n$-fibre space $X\to \AA^m$ isomorphic to the trivial fibre space $\AA^m\times \AA^n\to \AA^m$?
\end{qstn*}
Indeed, due to the result of Bass-Connell-Wright (\cite{BCW76}), we have that every $\AA^n$-bundle over an \emph{affine} variety is isomorphic to a vector bundle. Since the base is an affine space, invoking the Quillen-Suslin theorem \cite{Qui76}, we get that any vector bundle over the affine space $\AA^m$ is trivial.
\medskip

This conjecture was originally stated in \cite[Conjecture 1]{Veisfeiler1974unipotent}, where one can additionally find several other implications and connections with the theory of algebraic groups and $k$-forms of affine spaces. The case when $n= m= 1$ was proven to be true in the same paper (see [\textit{ibid}, Proposition 3.7]). The case when $n=1, m\ge 0$ was answered positively in \cite{KW85} and the case when $n=2,m=1$ was answered positively in \cite{kaliman2001families}. The case when $n=m=2$ was studied in \cite{DF10, blanc2022bivariables}. For more background on this conjecture, we redirect the reader to \cite[\S 4]{gupta2022Zariski}. In the context of our script, we will formulate an answer to this question in relative settings in low dimensions in the language of motivic homotopy theory in \cref{sect:reldim=1} and \cref{sect:reldim=2}.

\item \textbf{The Automorphism Problem:} This problem asks for a complete description of the automorphism ring of the polynomial ring $k^{[n]}$ or, equivalently, to determine the group, $\Aut(\AA^n_k)$, called the \emph{affine Cremona group}. We have a complete understanding in dimension 1: $\Aut(\AA^1_k) \cong (k,+) \rtimes (k^\times,\times)$ and dimension 2: given by the amalgamated product, see \cref{Aut(A2):amalgamated}. The problem seems to be notoriously difficult in full generality with only partial results known in dimensions $\ge 3$. A related question of interest is to study under what conditions for a variety $X$, does the $\Aut(X) \cong \Aut(\AA^n)$ imply that $X \cong \AA^n$. For example, under the condition that $X$ is affine, the group $\Aut(X)$ obtains the structure of an \emph{ind-group} and this enables the characterization of $\Aut(\AA^n)$ as follows \cite[Theorem 1.1]{kraft2017automorphism}:
\begin{theorem*}
If $X$ is a connected affine variety, then every isomorphism of ind-groups between $\Aut(X)$ and $\Aut(\AA^n)$ is induced by an isomorphism $X\xrightarrow{\cong} \AA^n$ of varieties.    
\end{theorem*}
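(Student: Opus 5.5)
The plan is to reconstruct the geometry of $X$ from two pieces of group-theoretic data inside $\Aut(X)$: a maximal torus, and the unipotent one-parameter subgroups that this torus normalizes. Fix an isomorphism of ind-groups $\varphi\colon \Aut(\AA^n)\xrightarrow{\simeq}\Aut(X)$. Let $T=\GG_m^n\subset\Aut(\AA^n)$ be the diagonal torus.

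The first step uses the fact that an isomorphism of ind-groups sends algebraic subgroups to algebraic subgroups, preserving tori and unipotent elements. Hence $\varphi(T)$ is an $n$-dimensional torus acting faithfully on $X$, and in particular $\dim X\ge n$. I then want to show that $\dim X=n$, so that $X$ is a toric variety for $\varphi(T)$. The invariant I would use is the centralizer, which on the $\AA^n$ side satisfies $\mathrm{Cent}_{\Aut(\AA^n)}(T)=T$.

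\begin{itemize}
\item Suppose $\dim X>n$. Then the generic $\varphi(T)$-orbits have positive-dimensional quotient, and I would exhibit elements of the centralizer of $\varphi(T)$ outside $\varphi(T)$. These come from automorphisms that rescale along the fibres of the quotient by invariant functions, that is, multiplication by $T$-invariant families of one-parameter subgroups.
\item Making this rigorous is the first delicate point. I would try it first via the dimension of the centralizer as an ind-variety, which an ind-isomorphism preserves.
\end{itemize}

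Granting this, $X$ is a normal affine toric variety: normality should follow because $\Aut(X)$ acts with an open orbit, and I would argue that point separately. So $X=\Spec k[\sigma^\vee\cap M]$ for some cone $\sigma$.

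The second step compares root subgroups. On $\AA^n$, the $\GG_a$-subgroups normalized by $T$ are the modifications $x_i\mapsto x_i+c\,x^{m}$ with $x^m$ free of $x_i$. Their $T$-weights form the explicit set of Demazure roots of the positive orthant. Under $\varphi$, these correspond bijectively to the $\GG_a$-subgroups of $\Aut(X)$ normalized by $\varphi(T)$, and the weights match after the induced identification of character lattices $M\cong\ZZ^n$. By Demazure and Liendo, the latter subgroups are exactly the root subgroups of $X$, with weights given by the Demazure roots of $\sigma$. Since the set of Demazure roots determines the rays of $\sigma$ (each ray $\rho$ is the unique ray on which the roots $e$ with $\langle e,\rho\rangle=-1$ are negative), $\sigma$ equals the orthant up to $\mathrm{GL}_n(\ZZ)$. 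Therefore $X\cong\AA^n$ equivariantly.

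In the final step I compose $\varphi$ with conjugation by this $T$-equivariant isomorphism. This yields an ind-automorphism $\psi$ of $\Aut(\AA^n)$ preserving $T$ and every root subgroup. After a further inner twist by an element of $N(T)$ and by a diagonal rescaling, $\psi$ becomes the identity on $T$ and on all root subgroups. These generate a subgroup that is dense in $\Aut(\AA^n)$ in the ind-topology, because it contains the tame automorphisms. Since $\psi$ is a continuous homomorphism, $\psi=\mathrm{id}$, and $\varphi$ is induced by the isomorphism $X\cong\AA^n$.

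I expect the main obstacle to be the centralizer argument that forces $\dim X=n$ together with normality. The weight-matching and density steps are comparatively formal once $X$ is known to be toric.
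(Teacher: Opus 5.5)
\textbf{Main gap: irreducibility and the dimension step.} Your proposal breaks down where you need $X$ to be irreducible of dimension $n$. Irreducibility is never addressed. The inference ``$\varphi(T)$ acts faithfully, hence $\dim X\ge n$'' is false for reducible connected $X$: $\GG_m^2$ acts faithfully by scaling the coordinates on $\{xy=0\}\subset\AA^2$. More seriously, the centralizer of the torus cannot detect $\dim X>n$. Rescalings $x\mapsto\lambda(f(x))\cdot x$ along the fibres need non-constant \emph{invertible} $T$-invariant functions, and these need not exist. In fact the implication you want fails outright. Take $C=\{u^2=v^3\}$ with $\GG_m$ acting by weights $(3,2)$, and let $Y$ be a smooth affine curve with $\Aut(Y)=\{1\}$ and $\mathcal{O}(Y)^*=k^*$ (a general curve of genus at least $3$ minus a point). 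Every morphism $\AA^1\to Y$ is constant, so every automorphism of $X=Y\times C$ permutes the curves $\{y\}\times C$, and one finds $\Aut(X)=\GG_m$. Thus $\dim X=2>1$, yet the torus is its own centralizer, also as an ind-variety. The torus data used in your first step therefore cannot force $\dim X=n$; you have to bring in the unipotent part of $\Aut(\AA^n)$.

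\textbf{A route that works, and two further errors.} Run your centralizer idea with the translation group $\mathrm{Tr}_n\cong\GG_a^n$ instead of $T$; this also makes the toric and Demazure-root detour unnecessary. An automorphism of $\AA^n$ commuting with all translations is itself a translation, so $U=\varphi(\mathrm{Tr}_n)$ is a commutative unipotent group with centralizer $U$ in $\Aut(X)$. For $f\in\mathcal{O}(X)^U$ and $\partial\in\mathrm{Lie}\,U$, the derivation $f\partial$ is locally nilpotent and commutes with $\mathrm{Lie}\,U$. Hence $\exp(sf\partial)$ centralizes $U$, so it lies in $U$, and therefore $f\partial\in\mathrm{Lie}\,U$. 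If $X$ is irreducible and $f$ is non-constant, the $f^j\partial$ are linearly independent, contradicting $\dim\mathrm{Lie}\,U=n$; so $\mathcal{O}(X)^U=k$. Irreducibility itself follows from connectedness by the same trick, applied to a nonzero $U$-invariant vanishing on all components except one on which $U$ acts non-trivially. Then $k(X)^U=k$, so $U$ has a dense orbit, which is closed by Kostant--Rosenlicht. Faithfulness of the commutative group $U$ forces trivial stabilizers, hence $X\cong U\cong\AA^n$.

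Two other justifications in your outline are also wrong as stated. First, an open orbit does not give normality: the cuspidal curve $C$ above has $\Aut(C)=\GG_m$ acting with an open orbit. Second, in the last step, density of the tame subgroup in the ind-topology is not an established fact you can invoke for $n\ge 3$. Also, a single diagonal rescaling normalizes the scalars on all root subgroups only after you prove, from commutator relations, that those scalars are multiplicative in the weights.
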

For a generalization and more interesting characterizations, \cite{kraft2021affine} and references therein.

\item \textbf{The Embedding Problem:} For a field $k$ and integers $n\ge m\ge 0$, is every embedding of $\AA^m_k \hookrightarrow \AA^n_k$ necessarily equivalent to the standard embedding of the coordinates? We present a comprehensive picture of this problem in \cref{sect:embedding problem}.

\item \textbf{Linearization Conjecture:} This concerns studying if every $\GG_m$-action on $\AA^n$ equivalent to the linear action. The precise statement and background are given in \cref{linearize-conj}.

\item \textbf{Zariski Cancellation Problem:} ZCP asks if $\AA^n$ is "cancellative", which we shall discuss more in \cref{sec:Zariskicancellations}. 

\item \textbf{Jacobian Conjecture:} For $n\geq 0$, if $f: \CC^n\to \CC^n$ be a polynomial map with $\det f\in \CC^{\times}$, then is $f$ invertible? Stated in colloquial terms,
\begin{center}
If $f: \CC^n \to \CC^n$ is a polynomial map such that its derivative is non-singular at every point, then $f$ is bijective. 
\end{center}
This simple-looking statement has paramount consequences and relations with most problems in AAG. It is listed as one of the foundational problems of this century (\cite[Problem 16]{smale1998mathematical}). The conjecture originates due to Keller in dimension 2 \cite{keller1939ganze} and has been widely open ever since. For more background and references, see \cite{jelonek1992jacobian, rudin1995injective, van1997polynomial}.
\end{enumerate}

%--------------------------------------------------------------------
\section{Characterization of the Affine Space}\label{sect:char-affinespaces}
Fix a field $k$. Recall that by an \emph{affine 
$n$-space over $k$}, we mean the $n$-dimensional affine variety given by the Zariski spectrum of the $n$-dimensional polynomial ring over $k$, i.e.,             
                $$\AA^n_k:= \Spec k[x_1,\dots,x_n].$$
As discussed above, several attempts were and are being made to characterize this object in an algebro-geometric way. This section exhibits some of the key directions and results obtained as a result of the tremendous amount of work from the past decades. This will also be an opportunity for us to provide a taste of where our structure results fit into the overall framework in cohesion with motivic homotopy theory.

\subsection{The AMS Theorem and the Embedding Problem} \label{sect:embedding problem}
The story in this direction begins with a purported proof of the following lemma due to Segre \cite{segre1956corrispondenze} in an attempt to prove the 2-dimensional Jacobian conjecture (See \cite[\S 1]{van2004aroundA-Mtheorem}). The lemma was then independently proven by \cite{moh1975embeddings} and \cite{suzuki1974proprietes}, which is now famously called the \emph{Abhyankar-Moh-Suzuki theorem}, or simply the AMS theorem:

\begin{lemma}\label{lemma-segre}
Let $f(t), g(t)\in \CC[t]$ be non-constant polynomials such that $\CC[f,g] = \CC[t]$, then either $\text{deg} \ f \mid \text{deg}\ g$ (or) $\text{deg}\ g \mid \text{deg}\ f$. 
\end{lemma}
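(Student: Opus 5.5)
The plan is to prove the lemma by the Abhyankar--Moh theory of the place at infinity of the plane curve parametrised by $(f,g)$, arguing by contradiction. Put $n=\deg f$, $m=\deg g$ and $d=\gcd(n,m)$, and suppose that $d<\min(n,m)$, i.e.\ neither degree divides the other.

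\textbf{Step 1: the curve and its semigroup.} The hypothesis $\CC[f,g]=\CC[t]$ says that $t\mapsto (f(t),g(t))$ is a closed embedding $\AA^1_{\CC}\hookrightarrow \AA^2_{\CC}$. Its image $C$ has defining polynomial
$$F(x,y)=\mathrm{Res}_t\bigl(f(t)-x,\ g(t)-y\bigr),$$
which is irreducible. After normalising leading coefficients, the weighted leading form of $F$ is $(y^{n/d}-x^{m/d})^{d}$. The curve $C$ is smooth and has exactly one place at infinity, namely $t=\infty$. Let $v=-\deg_t$ be the valuation at that place, and set
$$\Gamma=\{\deg_t h : 0\neq h\in \CC[f,g]\}\subset \mathbb{N}.$$
Since $\CC[f,g]=\CC[t]$, we have $1\in\Gamma$, hence $\Gamma=\mathbb{N}$. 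In particular the conductor of $\Gamma$ is $0$; equivalently, $\Gamma$ has no gaps.

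\textbf{Step 2: describe $\Gamma$ by approximate roots.} Following Abhyankar--Moh, take successive approximate roots of $F$ with respect to $y$ (taking $n\ge m$ after swapping if needed). These produce generators $\delta_0=n,\ \delta_1=m,\ \delta_2,\dots,\delta_h$ of $\Gamma$. Put $d_1=n$, $d_{i+1}=\gcd(d_i,\delta_i)$, $n_i=d_i/d_{i+1}$, so that $d_{h+1}=1$. The key structural facts are:
\begin{itemize}
\item the \emph{semigroup inequality} $n_i\delta_i>\delta_{i+1}$;
\item $n_i\delta_i$ lies in the semigroup generated by $\delta_0,\dots,\delta_{i-1}$.
\end{itemize}
From these one obtains the conductor formula
$$c(\Gamma)=\sum_{i=1}^{h}(n_i-1)\delta_i-\delta_0+1.$$

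\textbf{Step 3: contradiction.} Assume $d<\min(n,m)$. Then $n_1=n/d\ge 2$ and, since $d_2=d>1$, we have $h\ge 2$. Because every $\delta_i\ge 1$ and the $n_i\ge 2$, the conductor formula gives
$$c(\Gamma)\ \ge\ (n_1-1)\,m-n+1+\sum_{i\ge 2}(n_i-1)\delta_i .$$
I then show this is strictly positive, using $\delta_1=m$, $n_1 m\ge 2m$ and the fact that $n_1m$ is a multiple of $n$. This contradicts $c(\Gamma)=0$ from Step 1. It follows that $d=\min(n,m)$, i.e.\ $\deg f\mid \deg g$ or $\deg g\mid\deg f$.

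\textbf{Main obstacle.} The hard part is Step 2: proving the semigroup inequality $\delta_{i+1}<n_i\delta_i$ and the resulting conductor formula. This needs the Newton--Puiseux expansion of $y$ as a series in $x^{-1/n}$ at the unique place at infinity, together with control of the degrees of the approximate roots, which is the technical heart of Abhyankar--Moh. I would first try the Puiseux route: write $y=\sum a_j x^{j/n}$, read off the characteristic exponents, identify the $\delta_i$ with the intersection multiplicities of the approximate roots with $C$, and deduce the inequality from the strict decrease of the successive gcd's $d_i$. If that becomes unwieldy, a fallback is to prove the stronger statement that the embedding is rectifiable, by induction on $n+m$ using elementary automorphisms $(x,y)\mapsto(x,\,y-cx^{k})$ that lower $\deg g$ whenever $n\mid m$. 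But that induction still needs the divisibility claim as its base step, so the semigroup argument cannot be avoided.
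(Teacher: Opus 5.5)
The paper gives no proof of its own here; it only attributes the lemma to Abhyankar--Moh and Suzuki. Your outline is the Abhyankar--Moh argument: the semigroup of the unique place at infinity, generated by the values $\delta_i$ of the approximate roots, is forced to be $\mathbb{N}$. So the route is right, and granting the Abhyankar--Moh semigroup theorem it proves the statement. Step 3 does need some repair.

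\emph{The slip about $h$.} The gcd $d$ can be $1$ (e.g.\ $n=3$, $m=2$), so ``$d_2=d>1$, hence $h\ge 2$'' is false. This is harmless, since you only use $\sum_{i\ge 2}(n_i-1)\delta_i\ge 0$.

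\emph{The positivity bound.} The facts $n_1m\ge 2m$ and $n\mid n_1m$ do not by themselves give positivity. What you need is $n_1m=\mathrm{lcm}(n,m)\ge 2n$, which is exactly where $m\nmid n$ enters. Then $c(\Gamma)\ge n-m+1>0$.

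\emph{A cleaner finish.} Since $(n_i-1)d_{i+1}=d_i-d_{i+1}$, one has $n-1=\sum_i(n_i-1)d_{i+1}$. Hence $c(\Gamma)=\sum_{i=1}^h(n_i-1)(\delta_i-d_{i+1})$, and every summand is $\ge 0$ because $d_{i+1}\mid\delta_i$. So $c(\Gamma)=0$ forces $\delta_1=\gcd(n,\delta_1)$, i.e.\ $\delta_1\mid n$.

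\emph{Justifying $\delta_1=m$.} Write $F=y^n+a_{n-1}(x)y^{n-1}+\cdots$. The first approximate root is $y+\tfrac1n a_{n-1}(x)$, where $a_{n-1}(x)=-\sum_{f(\tau)=x}g(\tau)$ has $x$-degree at most $m/n$. So $a_{n-1}(f)$ has $t$-degree a multiple of $n$ and at most $m$, which gives $\delta_1=m$ whenever $n\nmid m$. Combined with $\delta_1\mid n$, this yields the dichotomy directly, without any swapping.

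\emph{The main obstacle.} The conductor formula is pure arithmetic for telescopic semigroups, so the real content of Step 2 is two facts.
(a) $\deg_t G_i(f,g)=\delta_i$ for the approximate roots $G_i$; this is where the Puiseux expansion in $x^{-1/n}$ is needed.
(b) The equality $\Gamma=\langle\delta_0,\dots,\delta_h\rangle$. It is proved by expanding any $h$ with $\deg_y h<n$ in $x,G_1,\dots,G_h$ with exponents $0\le\alpha_i<n_i$, and observing that such monomials have pairwise distinct $t$-degrees, so no cancellation can occur.
You state (b) only in passing, but it is indispensable: without it, knowing $\Gamma=\mathbb{N}$ says nothing about the conductor of $\langle\delta_0,\dots,\delta_h\rangle$.

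The inequality $\delta_{i+1}<n_i\delta_i$ is used precisely to obtain telescopy at infinity. It guarantees that $G_i^{n_i}-G_{i+1}$, which has $y$-degree $<n/d_{i+1}$, has $t$-degree exactly $n_i\delta_i$ after substituting $(f,g)$. The distinct-degrees argument plus a congruence modulo $d_i$ then places $n_i\delta_i$ in $\langle\delta_0,\dots,\delta_{i-1}\rangle$. Your remark that the rectification induction cannot replace this step is correct.
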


This innocent statement has far-reaching geometric consequences in the world of AAG, including the embedding problem for surfaces \cite[\S 1.6]{moh1975embeddings}. In view of the application to the latter problem, one unanimously refers to the following as the AMS theorem:

\begin{theorem}\label{A-M-S theorem}
For any algebraically closed field $k$ of characteristic zero, every embedding $f: \AA^1_k \hookrightarrow \AA^2_k$ is equivalent\footnote{Two embeddings $f,g:\CC\to \CC^n$ are equivalent if there exists a polynomial automorphism $F:\CC^n\to \CC^n$ such that $F \circ g = f$} to the standard embedding. In other words, $f$ is a coordinate in $\AA^2_k$.
\end{theorem}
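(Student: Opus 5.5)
The plan is to reduce the theorem to the degree-divisibility statement of \cref{lemma-segre} and then run a descending induction on degrees using triangular (de Jonqui\`eres) automorphisms of $\AA^2_k$. A closed embedding $\AA^1_k\hookrightarrow \AA^2_k$ is the same as a surjection $k[x,y]\to k[t]$, that is, a pair of polynomials $f(t),g(t)$ with $k[f,g]=k[t]$. Equivalence of embeddings means composing with an element of $\Aut(\AA^2_k)$, that is, applying a $k$-algebra automorphism to the pair $(x,y)$. So it suffices to exhibit an automorphism $F$ carrying $(f,g)$ to $(t,0)$, or to any pair of the form $(at+b,\ \text{const})$ with $a\neq 0$, which a further affine change turns into the standard embedding.

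\textbf{Step 1: transfer \cref{lemma-segre} to $k$.} The lemma is stated over $\CC$; I would pass to an arbitrary algebraically closed field $k$ of characteristic zero by the Lefschetz principle. The coefficients of $f$ and $g$, together with those of the finitely many polynomial expressions in $f,g$ witnessing $t\in k[f,g]$, generate a finitely generated field $k_0\subset k$ of characteristic zero. This field embeds into $\CC$. Both the relation $k_0[f,g]=k_0[t]$ and the degrees of $f$ and $g$ are preserved under such field embeddings, so the divisibility conclusion holds over $k$ as well.

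\textbf{Step 2: induction on $\deg f+\deg g$.} If one of $f,g$ is constant, say $g=c$, then $k[f]=k[t]$ forces $\deg f=1$. Translating $y\mapsto y-c$ and applying an affine change in $x$ then gives the standard embedding.

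Otherwise both are non-constant, and by Step 1 we may assume, after swapping coordinates, that $\deg f\mid \deg g$, say $\deg g = d\deg f$. Write the leading coefficients as $\mathrm{lc}(g)$ and $\mathrm{lc}(f)$, and put $c=\mathrm{lc}(g)/\mathrm{lc}(f)^d$. The triangular automorphism $(x,y)\mapsto (x,\,y-cx^d)$ replaces $g$ by $g-cf^d$. This polynomial has strictly smaller degree, and we still have $k[f,g-cf^d]=k[f,g]=k[t]$. The sum of degrees strictly drops, so the induction terminates in the constant case treated above. Composing all the automorphisms used yields $F$.

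\textbf{Main obstacle.} The whole weight of the argument lies in \cref{lemma-segre}, which I take as given from the paper. If it had to be proved, the first thing I would try is the Abhyankar--Moh method of approximate roots, or equivalently Newton--Puiseux expansions of the curve $\{(f(t),g(t))\}$ at its unique place at infinity. That is exactly where characteristic zero is used: in characteristic $p$, pairs such as $(t^{p^2},\,t+t^{p})$-type constructions violate the divisibility. The only other delicate point is checking in Step 1 that the hypothesis $k[f,g]=k[t]$ genuinely descends to the finitely generated subfield $k_0$. This holds because $t$ is a polynomial expression in $f$ and $g$ with finitely many coefficients, and those coefficients can be adjoined to $k_0$.
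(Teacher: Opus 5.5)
Your proposal is correct and follows the paper's route. The paper gives no independent argument: it treats the theorem as equivalent to the degree-divisibility statement \cref{lemma-segre} (citing Abhyankar--Moh and Suzuki), and your Lefschetz transfer together with the triangular-automorphism degree reduction is exactly the standard implication it leaves implicit. One small correction to your side remark: the literal pair $(t^{p^2},\,t+t^{p})$ does not violate divisibility, since $p\mid p^2$; the paper's characteristic-$p$ example is $(t^{p^2},\,t^{p(p+1)}+t)$, where neither $p^2$ nor $p^2+p$ divides the other.
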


Recall that by the standard embedding, we mean $t \mapsto (t,0)$. In the situations as above, we say that the embedding $f$ is \emph{rectifiable} and otherwise, call it \emph{non-rectifiable}. A simple example of a rectifiable embedding is as follows:

\begin{example}
The map $f:\CC\to \CC^2$ given by $t\mapsto (t,t^2)$ is rectifiable. It is rectified by the automorphism $F\in \Aut(\CC^2)$ given by $F(X,Y)\mapsto (X, Y-X^2)$. Note that $F$ is indeed an automorphism with the inverse given by $(X, Y) \mapsto (X, Y+X^2)$.
\end{example}

A zoo of non-rectifiable embeddings into surfaces exists over a field of positive characteristics. In literature, these are also called the \emph{non-trivial lines} and their existence was first demonstrated as early as the 1950s by \cite{segre1956corrispondenze} and later extensively by \cite{nagata1972automorphism}. The one given below is a simple example of a non-rectifiable embedding:
\begin{example}
Let $k$ be any field of characteristic $p>0$. Then consider two polynomials 
\begin{align*}
 f(t)= t^{p^2} \quad \quad g(t)= t^{p(p+1)}+t.
\end{align*}
Observe that we have $k[f(t), g(t)]= k[t]$, but neither of the degrees of $f$ or $g$ divides the other. In the view of equivalence between \cref{lemma-segre} and \cref{A-M-S theorem}, it amounts to saying that the embedding 
\begin{align*}
\AA^1_k \to \AA^2_k, \quad \text{defined as} \quad  t\mapsto (t^{p^2}, t^{p(p+1)}+t)
\end{align*}
is non-rectifiable.
\end{example}

\begin{remark}
The embedding problem is also referred to as the \emph{Epimorphism Problem} in the view of correspondence between affine varieties and their coordinate rings. In fact, the epimorphism problem asks the following: Fix a field $k$ of characteristic zero, let $\phi:k[X, Y]\twoheadrightarrow k[X]$ be an epimorphism of $k$-algebras with kernel $F=ker(\phi)$, then is $k[F]^{[1]} \cong k[X, Y]$?
\end{remark}

\subsubsection{Generalizations of the Embedding Problem}
There are essentially two ways to generalize \cref{A-M-S theorem}. 
\begin{enumerate}
\item For any field $k$ of characteristic zero, is every embedding $\AA^1_k \hookrightarrow \AA^n_k$ rectifiable? When $k=\CC$, due to \cite{jelonek1987embedding, crachiola2008algebraic}, we now have that it is indeed true in dimensions $n\geq 4$. However, over reals $k=\RR$, Shastri \cite{shastri1992knots} constructed an embedding, now called the \emph{Shastri embedding}, of the trefoil knot (which is the simplest non-trivial knot) into $\AA^3_{\RR}$ and proved that it is non-rectifiable (see \cite[P. 1582]{gupta2022Zariski}).

\item Another type of generalization to this problem is now called the \emph{Abhyankar-Sathaye Conjecture}, which states algebraically asks: for $f\in \CC[X_1,\dots, X_n]$ such that $\CC[X]/(f)\simeq \CC^{[n-1]}$, is $f$ a coordinate? i.e., $\CC[X] = \CC[f, f_2,\dots,f_n]$, for some $f_i\in \CC[X]$. In geometric terms, this refers to the rectifiability of $\AA^{n-1}_k \hookrightarrow \AA^n_k$. This conjecture is open for all $n \geq 3$. For a recent survey of known special cases and a general overview, we redirect the readers to \cite[\S 5]{gupta2022Zariski} and the references therein.
\end{enumerate}
In a vast generality, one is tempted to ask the following question, which a priori seems heavily challenging:
\begin{qstn*}
For $2 \leq m \leq n-1$, is every embedding $\AA^m\hookrightarrow \AA^n$ rectifiable?
\end{qstn*}
This question, viewed in correspondence with the epimorphism statement, is solved in various cases. Interested readers can find relevant references in \cite[\S 2]{duttagupta2015epi}.

\subsection{Linearization Problem}\label{linearize-conj} 
The Linearization Problem (LP) aims to understand $\AA^n$ via certain algebraic (reductive) group actions. From time immemorial, studying a space (algebraic variety) by means of (an algebraic) group action has proven to be a highly efficient tool. The main class of groups that one usually exploits is the unipotent groups (e.g., vector groups $\GG_a^n$, for $n\geq 1$), the reductive groups (e.g., algebraic tori $\GG_m^n$, for $n\geq 1$), and the subgroups of the automorphism group of affine spaces $\Aut(\AA^n)$. The LP is of paramount interest in AAG as a part of the quest to characterize $\AA^n_k$ among various topologically contractible varieties. It can be stated as follows:

\begin{qstn*}
Fix a reductive (algebraic) group $G$. For every action $G \curvearrowright \AA^n$, can we find a suitable polynomial change of coordinates such that it becomes equivalent to a linear action?
\end{qstn*}

Recall that for any multiplicative algebraic group $G$ with its group of units, $G^{\times}$ and a $G$-variety $X$, a group action $h: G\times X\to X$ is said to be \emph{linear} if it is conjugate to an action of $GL_n$ on $X$.  Put alternatively, there exists an automorphism $\phi: \AA^n\to \AA^n$ such that for any $\lambda \in G^{\times}$, we have that $\phi \circ h(\lambda) \circ \phi^{-1}$ is linear. 

\subsubsection{In dimensions 1 and 2}
The LP has been settled positively in dimensions 1 and 2, essentially due to the complete understanding we have of the automorphism group $\Aut(\AA^n)$ for $n\le 2$. Recall that a polynomial map $\psi:\AA^n\to \AA^n$ is an isomorphism if and only if it is bijective. In other words, if $\psi= (\psi_1,\dots, \psi_n)$ is given by a set of regular functions, then the above amounts to saying that the set of polynomials $\{\psi_j\}$ generates the polynomial ring $k[x_1,\dots,x_n]$. In particular, the LP takes its simplest form in dimension 1, since the group $\Aut(\AA^1)$ is completely determined by the affine group via  $x \mapsto ax+b$, for $a\in k^\times$ and consequently $\Aut(\AA^1) \cong \GG_a\rtimes \GG_m$, for the affine linear group $\GG_a$. Thus, in particular, every $\GG_m$-action on $\AA^1$ is linearizable. The situation has already become complicated for surfaces: it holds in dimension 2 essentially due to the elegant characterization of $\Aut(\AA^2)$ due to the work of \cite{Kulk1953polynomial, gizatullin1975automorphisms} that we describe below.

\begin{theorem}\label{Aut(A2):amalgamated}
For any base field $k$, the group $\Aut(\AA^2)$ is the amalgamated product $\mathcal{A}_2 *_{\mathcal{B}_2} \mathcal{J}_2$. Here, $\mathcal{A}_2$ and $\mathcal{J}_2$ are the subgroups of $\Aut(\AA^2)$ representing affine transformations and triangular transformations and $\mathcal{B}_2$ denotes their intersection.
\end{theorem}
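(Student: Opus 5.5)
The plan is to prove the Jung--van der Kulk theorem by the classical degree-reduction argument, arranged so that the normal form in the amalgamated product can be read off directly. First I will fix the notation. $\mathcal{A}_2$ is the group of affine automorphisms $(x,y)\mapsto A(x,y)+b$ with $A\in GL_2(k)$ and $b \in k^2$. $\mathcal{J}_2$ is the de Jonquières group $(x,y)\mapsto (ax+c,\, by+p(x))$ with $a,b\in k^\times$, $c\in k$ and $p\in k[x]$. Their intersection $\mathcal{B}_2$ consists of the triangular maps with $\deg p\le 1$. The theorem then reduces to two claims:
\begin{enumerate}
\item \emph{generation}: every automorphism is a product of elements of $\mathcal{A}_2\cup\mathcal{J}_2$;
\item \emph{freeness of the amalgam}: a reduced word $g_1\cdots g_r$ is never the identity. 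Here the $g_i$ alternate between $\mathcal{A}_2\setminus\mathcal{B}_2$ and $\mathcal{J}_2\setminus\mathcal{B}_2$, and $r\ge 2$.
\end{enumerate}

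For generation I will take $F=(f,g)\in\Aut(\AA^2)$, with multidegree $\deg F=\deg f+\deg g$, and induct on this degree. If $\deg F=2$, then $F$ is affine and we are done. Otherwise, I will look at the highest-degree homogeneous components $\bar f$ and $\bar g$. The key lemma is that the Jacobian condition $\det J(F)\in k^\times$, together with invertibility, forces $\bar f$ and $\bar g$ to be algebraically dependent. For homogeneous polynomials this means $\bar f^{\,n}=\lambda\bar g^{\,m}$ for suitable $m,n$. Using this I will show that one degree divides the other, say $\deg f = d\cdot\deg g$ and $\bar f=\lambda \bar g^{\,d}$. This is exactly \cref{lemma-segre} in its two-variable form. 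Composing $F$ with the triangular map $(x,y)\mapsto(x-\lambda y^d,\,y)$ on the left then strictly lowers the degree, and induction finishes the argument.

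I expect the main obstacle to be this divisibility step over an arbitrary field $k$. In characteristic $p$ it fails for embeddings of lines, as the non-rectifiable example above shows, so the proof must genuinely use that $F$ is an automorphism and not merely an embedding. My first attempt is the approach of van der Kulk. Since $F$ is invertible, the inverse $F^{-1}=(u,v)$ satisfies $u(f,g)=x$ and $v(f,g)=y$. I will compare leading forms in these identities: the top-degree terms must cancel, and this forces the Newton polygons of $f$ and $g$ to be ``proportional''. From that, a Euclidean-algorithm style argument on the degrees yields $\deg f\mid \deg g$ or $\deg g\mid\deg f$. This avoids characteristic-zero tools such as Puiseux expansions.

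For freeness, I will track the multidegree along a reduced word. An element of $\mathcal{J}_2\setminus\mathcal{B}_2$ has degree $\ge2$. An element of $\mathcal{A}_2\setminus\mathcal{B}_2$ is a genuine linear map that mixes the coordinates, so it moves the leading form $y^{\deg p}$ off the triangular direction. I will show by induction on $r$ that the degree of $g_1\cdots g_r$ equals the product of the degrees of its $\mathcal{J}_2$-letters, and hence is at least $2$ whenever $r\ge2$. This uses the observation that composing with a non-triangular affine map never cancels leading terms. The exception is a word consisting of a single affine letter, which is not reduced in the relevant sense. Therefore a reduced word is never the identity, and this gives the amalgamated product structure $\mathcal{A}_2 *_{\mathcal{B}_2}\mathcal{J}_2$.
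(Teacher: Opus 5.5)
The paper gives no proof of this theorem; it quotes it from \cite{Kulk1953polynomial, gizatullin1975automorphisms}, so your outline has to stand on its own.

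\textbf{What is fine.} The skeleton is the standard one, and the routine parts are fine in outline.
\begin{itemize}
\item Freeness via multiplicativity of degree along reduced words works once you fix the composition order. With your convention, the top-degree part of $(ax+c,by+p(x))$ is $x^{\deg p}$ in the second coordinate, not $y^{\deg p}$. Also, $a\notin\mathcal{B}_2$ means that the first coordinate of $a$ involves $y$.
\item Your reduction map $(x-\lambda y^{d},y)$ is not in $\mathcal{J}_2$ as you defined it. It is the conjugate of an element of $\mathcal{J}_2$ by the coordinate swap, which is harmless.
\item The algebraic dependence of $\bar f,\bar g$ is correct, but in characteristic $p$ it is invertibility, not the Jacobian, that gives it: $J(x^p,y)=0$ although $x^p,y$ are independent. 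The argument is this. If $\bar f,\bar g$ were independent, then $\deg u(f,g)$ would equal the $(\deg f,\deg g)$-weighted degree of $u$. Then $u(f,g)=x$ and $v(f,g)=y$ would force $\deg F=2$.
\end{itemize}

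\textbf{The genuine gap} is the divisibility step, which is the whole content of the theorem.
\begin{itemize}
\item Dependence only yields $\bar f=cH^{e_1}$ and $\bar g=c'H^{e_2}$ over $\bar k$, with $\gcd(e_1,e_2)=1$. Your sketch has no mechanism that excludes $e_1,e_2\ge 2$.
\item Cancellation of top forms in $u(f,g)=x$ only says that the top weighted part of $u$ is divisible by $c'^{e_1}X^{e_2}-c^{e_2}Y^{e_1}$.
\item ``Proportional Newton polygons'' are perfectly compatible with $e_1,e_2\ge 2$. A Euclidean-type argument on $(\deg f,\deg g)$ cannot turn this data into divisibility.
\end{itemize}
In characteristic $p$, no argument that uses only degrees and leading forms can work. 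Take $\bar f=x^{p^2}$ and $\bar g=x^{p(p+1)}$. This data is exactly that of the paper's pair $P=t^{p^2}$, $Q=t^{p(p+1)}+t$. For that pair, $Q^p-P^{p+1}=t^p$ and $Q-(Q^p-P^{p+1})^{p+1}=t$, so cancellations reach degree $1$ although neither degree divides the other. A genuinely two-dimensional input is therefore required, and your sketch does not identify one.

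\textbf{How to close it.} In characteristic $0$, your remark about \cref{lemma-segre} can be made rigorous.
\begin{itemize}
\item Take a line $\ell(t)=(\alpha t+\beta,\gamma t+\epsilon)$ with $\bar f(\alpha,\gamma)\bar g(\alpha,\gamma)\neq 0$.
\item Then $F\circ\ell$ is a closed embedding of $\AA^1$, so $k[f(\ell(t)),g(\ell(t))]=k[t]$, and these two polynomials have degrees $\deg f$ and $\deg g$.
\item The lemma, which holds over any field of characteristic $0$, then gives divisibility.
\item After that, $\bar f=\lambda\bar g^{d}$ with $\lambda\in k$ follows from the dependence relation.
\end{itemize}
In positive characteristic you must import a proof valid over all fields, such as van der Kulk's original argument or a geometric argument via the induced birational self-map of $\PP^2$. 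That is exactly what the paper relies on by citation.
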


In particular, this answers the LP positively in dimension 2.
\begin{corollary*}
Every algebraic subgroup of $\Aut(\AA^2)$ is either conjugate to a subgroup of $\mathcal{A}_2$ or $\mathcal{J}_2$. In particular, every reductive subgroup of $\Aut(\AA^2)$ is conjugate to a subgroup of $GL_2$.
\end{corollary*}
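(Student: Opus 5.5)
The plan is to read the corollary off \cref{Aut(A2):amalgamated} through Bass--Serre theory, and then to handle the reductive case by a separate Levi-type conjugation inside each factor.

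\textbf{Step 1: the Bass--Serre tree.} The amalgam $\Aut(\AA^2)=\mathcal{A}_2 *_{\mathcal{B}_2} \mathcal{J}_2$ acts without inversions on its Bass--Serre tree $T$. The vertices of $T$ are the cosets $g\mathcal{A}_2$ and $g\mathcal{J}_2$, and the edges are the cosets $g\mathcal{B}_2$. The vertex stabilizers are exactly the conjugates of $\mathcal{A}_2$ and of $\mathcal{J}_2$.

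So it suffices to prove the following claim: an algebraic subgroup $G\subset \Aut(\AA^2)$ fixes a vertex of $T$. By Serre's fixed point criterion, a group acting on a tree fixes a vertex as soon as it has a bounded orbit. It is therefore enough to show that the elements of $G$ have bounded length in the amalgam.

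\textbf{Step 2: bounded degree implies bounded length.} An algebraic subgroup has bounded degree: it is the image of a variety $G\to \Aut(\AA^2)$, so the degrees of its components are bounded by some $d$. The needed input is the multiplicativity of degrees in reduced words. If $g=a_1j_1a_2\cdots$ is a reduced word with $a_i\in \mathcal{A}_2\setminus\mathcal{B}_2$ and $j_i\in\mathcal{J}_2\setminus\mathcal{B}_2$, then $\deg g=\prod \deg j_i$, and each $\deg j_i\ge 2$. This is the standard Jung--van der Kulk degree computation: the affine letters do not alter the leading forms in a way that causes cancellation.

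Consequently the number of triangular letters in any element of $G$ is at most $\log_2 d$. Every element of $G$ then moves a fixed base vertex a bounded distance, so $G$ has a bounded orbit. By Serre's criterion $G$ fixes a vertex, and hence $G$ is contained in a conjugate of $\mathcal{A}_2$ or of $\mathcal{J}_2$.

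I expect this degree-multiplicativity step to be the main obstacle. It has to be stated carefully for reduced words, including the edge cases where the word begins or ends with an element of $\mathcal{B}_2$, and over an arbitrary base field.

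\textbf{Step 3: reductive subgroups of $\mathcal{A}_2$.} Let $G$ be reductive. We have $\mathcal{A}_2=\GL_2\ltimes \GG_a^2$, where $\GG_a^2$ is the group of translations. The intersection $G\cap\GG_a^2$ is a normal unipotent subgroup of $G$, so it is trivial. Hence $G$ maps isomorphically onto its image $\bar G\subset\GL_2$, and $G$ is the graph of a $1$-cocycle $\bar G\to k^2$. Two such subgroups are conjugate by a translation exactly when their cocycles differ by a coboundary. When $G$ is linearly reductive, $H^1(\bar G,k^2)=0$, so $G$ is conjugate by a translation into $\GL_2$.

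\textbf{Step 4: reductive subgroups of $\mathcal{J}_2$.} Elements of $\mathcal{J}_2$ have the form $(ax+b,\,cy+p(x))$. This gives a decomposition $\mathcal{J}_2=\GG_m^2\ltimes U$, where $U$ is a unipotent ind-group. Since $G$ has bounded degree, $G$ lies in a finite-dimensional solvable subgroup of $\mathcal{J}_2$. A reductive subgroup of a connected solvable group is a torus, which is conjugate into the maximal torus $\GG_m^2\subset\GL_2$. Finite reductive subgroups are handled by the same averaging argument on the unipotent radical, as in Step 3.

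In positive characteristic, Steps 3 and 4 need linear reductivity. I would state the result in that generality, or else restrict to tori, for which the vanishing of $H^1$ holds in every characteristic.
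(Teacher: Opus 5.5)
Your proof is correct and is the standard way to derive what the paper simply asserts, without proof, as a consequence of \cref{Aut(A2):amalgamated}: Serre's theorem that bounded subgroups of an amalgam lie in a conjugate of a factor, with boundedness supplied by Jung--van der Kulk degree multiplicativity, followed by Levi-type conjugation inside each factor. The only slip is in Step 4, where a linearly reductive subgroup of a connected solvable group is diagonalizable but need be neither a torus nor finite; your $H^1$-vanishing argument, run by d\'evissage of the unipotent radical through $T$-stable vector groups, handles all such $G$ at once. Your characteristic-$p$ caveat is genuinely needed, because the second assertion fails as stated there: $\ZZ/p$ acting by $(x,y)\mapsto(x+1,y)$, and in characteristic $2$ even $SL_2$ acting on the plane $\{ax^2+xy+cy^2\}$ of binary quadratic forms, act without fixed points and hence are not conjugate into $GL_2$.
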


\begin{corollary*}
Every locally nilpotent derivation on $\AA^2$ is equivalent to the one given by $f(x)\frac{\partial}{\partial y}$, for a polynomial $f(x)$. In other words, every $\GG_a$-action on $\AA^2$ is equivalent to one of the form $t\cdot (x,y)\mapsto (x,y+ tf(x))$.
\end{corollary*}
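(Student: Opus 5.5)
The plan is to deduce the statement from \cref{Aut(A2):amalgamated} and the preceding corollary on algebraic subgroups. Throughout I assume $\mathrm{char}\,k=0$. This lets me pass freely between $\GG_a$-actions on $\AA^2$ and locally nilpotent derivations $D$ of $k[x,y]$ via $t\mapsto \exp(tD)$. Under this dictionary, conjugating the action by $\phi\in\Aut(\AA^2)$ corresponds to replacing $D$ by $\phi^{*}D(\phi^{*})^{-1}$. So it suffices to show that every non-trivial $\GG_a$-action is conjugate to $t\cdot(x,y)=(x,y+tf(x))$; the trivial action is the case $f=0$.

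\textbf{Step 1 (reduction to two cases).} The image $G$ of $\GG_a\to\Aut(\AA^2)$ is an algebraic subgroup: it is the image of an algebraic group under a morphism of ind-groups, hence closed and of dimension $\le 1$. By the corollary following \cref{Aut(A2):amalgamated}, $G$ is conjugate either into the affine group $\mathcal{A}_2$ or into the triangular group $\mathcal{J}_2$. After conjugating, the derivation $D=\frac{d}{dt}\big|_{t=0}$ therefore lies in the Lie algebra of one of these groups.

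\textbf{Step 2 (affine case).} If $G\subset\mathcal{A}_2$, then $D$ is an affine vector field $D=(\ell_1+c_1)\partial_x+(\ell_2+c_2)\partial_y$, where $\ell_1,\ell_2$ are linear forms. Local nilpotence forces the linear part $N=(\ell_1,\ell_2)$ to be a nilpotent matrix. After a linear change of coordinates, either $N=0$ or $N$ is the standard nilpotent $x\partial_y$.
\begin{itemize}
\item If $N=0$, then $D$ is a constant field. Rotating coordinates gives $D=c\,\partial_y$, which is the case $f=c$.
\item If $N=x\partial_y$, then $D=c_1\partial_x+(x+c_2)\partial_y$. If $c_1=0$, the translation $x\mapsto x+c_2$ gives $f(x)=x$. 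If $c_1\ne 0$, this field is in particular triangular, and I hand it to Step 3.
\end{itemize}

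\textbf{Step 3 (triangular case).} If $G\subset\mathcal{J}_2$, then $D=(a x+b)\partial_x+q(x)\partial_y$ for some $a,b\in k$ and $q\in k[x]$ (after absorbing any $y$-coefficient, which nilpotence forces to vanish). Since $D(x)=ax+b$ and $D$ is locally nilpotent, we get $a=0$.
\begin{itemize}
\item If $b=0$, then $D=q(x)\partial_y$ directly, i.e.\ $f=q$.
\item If $b\ne 0$, let $Q$ be an antiderivative of $q/b$ and use the triangular coordinate change $y'=y-Q(x)$. Then $D(y')=q-bQ'(x)=0$ and $D(x)=b$, so $D=b\,\partial_x$ in the coordinates $(x,y')$. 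Swapping the two coordinates yields $b\,\partial_y$, which is the constant case.
\end{itemize}

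\textbf{Main obstacle.} I expect the delicate point to be Step 1. One has to justify that the image of $\GG_a$ is genuinely an algebraic subgroup of the ind-group $\Aut(\AA^2)$, so that the conjugacy corollary applies. One also has to check that conjugation in $\Aut(\AA^2)$ corresponds exactly to conjugating the derivation; this identification is where characteristic zero is used. In positive characteristic the additive polynomials in $t$ break the $\exp$ correspondence, and the argument would have to track $q_t(x)$ directly rather than through a single derivation. If the ind-group argument proves awkward, I would instead apply Rentschler's direct argument: use the degree function together with the kernel $\ker D$, which has transcendence degree one and is factorially closed, hence equals $k[u]$ for some variable $u$.
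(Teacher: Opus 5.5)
Your proposal is correct and follows the paper's route. The paper gives no separate argument and simply records this statement as a consequence of the amalgamated product structure in \cref{Aut(A2):amalgamated}, via the conjugacy of algebraic subgroups into $\mathcal{A}_2$ or $\mathcal{J}_2$. Your Steps 2--3 carry out the normalization inside each factor correctly.

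The point you flag in Step 1 is not a real obstacle. It only needs the image of $\GG_a$ to have bounded degree, which is automatic because $\exp(tD)$ is polynomial in $t,x,y$.

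Your restriction to $\mathrm{char}\,k=0$ is genuinely necessary, not just convenient, and it is implicit in the paper. In characteristic $p$, the derivation $D=\partial_x+x^{p-1}\partial_y$ is locally nilpotent with $D^p(y)=(p-1)!\neq 0$. On the other hand, $(f(x)\partial_y)^p=f^p\partial_y^p=0$, and conjugation preserves $D^p=0$, so $D$ is not conjugate to any $f(x)\partial_y$.
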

\begin{remark}

The subgroups of $\Aut(\AA^n)$ that are exclusively generated by $\mathcal{A}_n$ and $\mathcal{J}_n$ are called \emph{tame} automorphism. The difficulty in comprehending the $\Aut(\AA^n)$ is that we do not know if all of its subgroups are tame. Moreover, this characterization using the amalgamated product does not generalize to higher dimensions. For $ n\geq 4$, there exists a non-linearizable $\GG_m$-action on $\AA^n$ over fields of positive characteristics due to Asanuma (\cite{asanuma1994nonlinearizable}).
\end{remark}
Interestingly enough, the LP threw a curveball in dimension 3.

\subsection{\texorpdfstring{$\CC^{\times}$}{C}-actions on the Koras-Russell Threefolds}\label{C*-actions:KR3F}

In dimension $n=3$, \cite{koras1989linearization} studied a certain smooth affine variety defined by the explicit polynomial equation 
    $$\mathcal{R} := \{x^2 z + x+y^3+t^2=0\} \subset \AA^4_{\CC}$$
then called the \emph{Russell cubic}, eponymous to Russell who observed its similarities with $\CC^3$. There is a non-linearizable $\CC^{\times}$-action on $\mathcal{R}$ given by 
\begin{equation}
\sigma: \CC^{*}\times \mathcal{R} \to \mathcal{R}, \quad (\lambda, (x,y,z,t)) \mapsto (\lambda^6 x, \lambda^{-6}y, \lambda^{3}z,\lambda^2 t)
\end{equation}

\subsubsection{Why is this action non-linearizable?} 
It is a well-known fact that the existence of a fixed point set is a necessary condition for an action to be linearizable. This follows as a fact that the fixed point set is a subset of $\AA^m$ (for some $m$) and whose embedding into $\AA^n$ has to be equivalent to the linear one. Moreover, such a fixed point set also has to be connected. But if the action $\sigma$ defined above is linearizable, then so is its restriction to the sixth roots of unity $\mu_6 \subset \CC^{\times}$. But we notice that the corresponding fixed point is given by $\{x(1+xz)=0\}\subset \AA^2$, which is disconnected! It is important to note that $\mathcal{R}$ is topologically contractible (cf. \cite{zaidenberg2000exotic}) and hence topologically indistinguishable from $\AA^3_{\CC}$. To make things spicier, it was also shown \cite[Lemma 5.1]{kaliman2005actionsofC*C+} that $\mathcal{R}$ is diffeomorphic to $\RR^6$ as a real manifold, whence $\mathcal{R}$ has an incredible property of having the logarithmic Kodaira dimension equal to $\kappa(\mathcal{R})= -\infty$, which is that of $\AA^n_k$. 
\medskip 

So, the curveball here are the following: a) if $\mathcal{R}\cong \CC^3$, then this produces a counter-example to the LP in dimension 3 as $\sigma$ gives rise to a non-linearizable $\CC^{\times}$-action on $\CC^3$, b) if $\mathcal{R} \ncong \CC^3$, then this produces a counter-example to the characterization of $\AA^3$. Hence, the study of these affine varieties played a pivotal role in AAG. This motivated the authors in \cite{KR97} to completely classify all the prototypes similar to that of $\mathcal{R}$ that admit hyperbolic\footnote{those that have a unique fixed point and the weights of the induced linear actions on the tangent space at this point are all non-zero, and their product is negative.} $\CC^{\times}$-actions and to construct a family of threefolds that allegedly would produce a counter-example to this conjecture. These are called the \emph{Koras-Russell threefolds} and are broadly divided into three kinds depending on the number of $\GG_a$-actions that these threefolds admit. The Koras-Russell threefolds of \emph{first kind} admit more than one $\GG_a$-action and are given by the following equation:
\begin{equation}\label{KR3FIkind:equation}
\KK := \{x^m z + x+y^r+t^s=0 \}
\end{equation}
where $m,r,s \geq 2$ integers with gcd $(r,s)=1$. Note that Koras-Russell cubic threefolds $\mathcal{R}$ are a particular example of $\KK$ with $r=3$ and $s=2$. Those that are form the Koras-Russell threefolds of \emph{second kind} admit essentially only one $\GG_a$-action (\cite[\S 2]{petitjean2016autKR3FIIkind}) and are given by the following equation:
\begin{equation}\label{KR3FIIkind:equation}
   \Tilde{\KK} :=  \{x+ (x^n+z^k)^l y +t^s = 0\} \subset \AA^4
\end{equation}
where $n,k,s\geq 2,\ s> k,\ l \geq 1,\ \text{gcd}\ (k, s)=1 = \text{gcd}\ (k,n)$. Finally, there are other kinds of Koras-Russell threefolds (sometimes called the \emph{third kind} in the literature) that only admit trivial $\GG_a$-action (for more general types, we refer to \cite{KKMLR97}). It is important to note that all of these three kinds admit a hyperbolic $\GG_m$-action with a unique fixed point, by design! As far as this script is concerned, we shall only focus on the novel properties of the first kind \cref{KR3FIkind:equation}. The LP stood open in dimension 3 for some time, as it was difficult to conclude if the Koras-Russell threefolds were actually isomorphic to $\AA^3$ in the interest of settling the LP. This was finally settled by the work of Makar-Limanov \cite{makar1996hypersurface} by inventing a purely algebraic invariant (which he called \emph{AK-invariant} but later became an eponym) that emerged solely for this purpose!

\subsubsection{The Makar-Limanov and Derksen invariants} 
For any commutative ring $R$, the \emph{Makar-Limanov invariant} is defined as follows:
\begin{equation}
    \ML(R):= \bigg\{\bigcap \Ker(\partial)\mid \partial \in \LND(R) \bigg\} \subseteq R
\end{equation}
Here, $\partial$ is the locally nilpotent derivation which corresponds to free $\GG_a$-actions. In particular, for an affine variety $(X,\mathcal{O}_X)$, the $\ML(X)\subset \Gamma(X,\mathcal{O}_X)$ corresponds to all those regular functions on $X$ that are $\GG_a$-invariant. As a first instance, we have that $\ML(\AA^3_{\CC})= \CC$ as there are no non-constant regular functions on affine spaces. On the other hand, he proved that $\ML(\mathcal{K}) = \CC[x]$ which distinguished it from $\AA^3_{\CC}$. Thus, the LP was finally settled positively for threefold in 1997 (\cite{KKMLR97}). Following this, another similar invariant, called the \emph{Derksen invariant}, denoted by $\mathcal{D}(R)$, emerged (\cite{derksen1997constructive}) which for a ring $R$. It is defined as the sub-algebra generated by 
\begin{align}
    \mathcal{D}(R):= \bigg\{\bigcap \Ker \partial \mid \partial \in \LND(R)\bs \{0\} \bigg\} \subseteq R
\end{align} 
Alternatively, $\mathcal{D}(R)$ is the sub-ring of $R$ generated by the rings of invariants of all the non-trivial exponential maps of $R$. Though they seem closely related, neither of them depends on the other (cf. \cite{crachiola2003derksen}). The following calculations shows that $\KK$ is not isomorphic to $\AA^3_k$:

\begin{example}\label{KR3F-not-A3:eg}
Let $k$ be a field and let $R = k[x_1,\dots,x_n]$. Then $\ML(R) = k$ for all $n$. When $n \ge 2$, we have $\mathcal{D}(R) = R$\footnote{as an exception when $n=1$, $\mathcal{D}(R)=k$}. For the Koras-Russell threefolds $\KK$ with its coordinate ring $\Gamma(\KK)$, we have that $\ML(\KK) = k^{[1]}\ne k$ (\cite[Corollary 9.7]{freudenburg2006algebraic}) and $\mathcal{D}(\KK) = k^{[3]} \neq  \Gamma(\KK)$ which distinguishes it from $\AA^3_k$ ( \cite[Theorem 9.6]{freudenburg2006algebraic}).
\end{example}

Later, Kaliman \cite{kaliman2002polynomials} provided the following elegant argument to show that $\AA^3_{\CC}$ is non-isomorphic to $\mathcal{K}$:
\begin{theorem*}
For any field $k$ of characteristic zero, if the general fibers of a regular function $\AA^3_k\to \AA^1_k$ are isomorphic to $\AA^2_k$, then so is every other fiber.
\end{theorem*}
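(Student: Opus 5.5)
The plan is to reduce to $k=\CC$, show that each special fibre is a smooth, topologically contractible surface of log Kodaira dimension $-\infty$, and then conclude with the algebro-geometric characterization of $\AA^2_{\CC}$. Let $f\in k[x,y,z]$ be the regular function, and let $F_c:=f^{-1}(c)$ denote its fibre over $c\in\AA^1_k$.

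\textbf{Reduction to $\CC$.} First I reduce to $k=\CC$. The polynomial $f$ and the isomorphisms of its general fibres with $\AA^2$ are defined over a finitely generated subfield $k_0\subset k$. Such a $k_0$ embeds into $\CC$, so the standard Lefschetz principle lets me assume $k$ is algebraically closed and then equal to $\CC$. Being isomorphic to $\AA^2$ descends back along these field extensions: I would invoke Kambayashi's theorem that $\AA^2$ has no non-trivial separable forms. Over $\CC$, there is a finite set $\Sigma\subset\AA^1$ such that $f$ is a topologically locally trivial fibration with fibre $\CC^2$ over $\AA^1\setminus\Sigma$. It then suffices to treat a single special fibre $F_0$ with $0\in\Sigma$.

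\textbf{Topology of the special fibre.} Since each $f-c$ is a polynomial whose general fibres are irreducible, a Stein factorization argument shows that every $f-c$ is irreducible in the factorial ring $\CC[x,y,z]$. The next goal is to show that $F_0$ is reduced, smooth and topologically contractible. For the Euler characteristic I would use additivity:
\[
1=\chi(\CC^3)=\chi(\CC^2)\,\chi(\AA^1\setminus\Sigma)+\sum_{c\in\Sigma}\chi(F_c),
\]
which gives $\sum_{c\in\Sigma}\bigl(\chi(F_c)-1\bigr)=0$. This has to be combined with the vanishing-cycle and monodromy information of $f$ near $0$. Since $\CC^3$ is contractible and the general fibre is contractible, the long exact sequences of the pair $(\CC^3,\ f^{-1}(\Delta^{*}))$ over a small punctured disc $\Delta^{*}$ around $0$ show that $F_0$ is a homology point. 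Along the way I would rule out singular points, and a multiple fibre $F_0=mG$, by showing that each contributes strictly positively to the defect: a Milnor-number contribution or a monodromy contribution. Simple connectivity of $F_0$ then follows from a van Kampen argument around the fibre together with the Hurewicz theorem.

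\textbf{Algebraic structure and conclusion.} The remaining task is to show that the log Kodaira dimension satisfies $\bar\kappa(F_0)=-\infty$. The generic fibre $X_\eta$ of $f$ is a form of $\AA^2$ over $\CC(t)$, hence trivial over $\CC(t)$ in characteristic zero. So on a Zariski open $U\subset\AA^1$ the family is $U\times\AA^2$, and in particular it carries an $\AA^1$-ruling $U\times\AA^2\to U\times\AA^1$. Equivalently, there is a nonzero locally nilpotent derivation of $\CC[x,y,z]$ that kills $f$. I would take such a derivation and divide it by the appropriate power of $f$, so that it restricts to a nontrivial locally nilpotent derivation on $F_0$. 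This gives a $\GG_a$-action on $F_0$, so $F_0$ is $\AA^1$-ruled and $\bar\kappa(F_0)=-\infty$. Finally I invoke the Miyanishi–Sugie–Fujita characterization: a smooth, affine, topologically contractible complex surface with $\bar\kappa=-\infty$ is isomorphic to $\AA^2_{\CC}$.

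\textbf{Main obstacle.} The hardest step is the topological one: excluding singular or multiple special fibres using only the Euler characteristic identity. The identity only says that the defects sum to zero, not that each one vanishes. The first thing I would try is to prove positivity of each local contribution via the Milnor fibration at infinity, or equivalently via the absence of atypical values at infinity. If that fails, I would fall back on Kaliman's cylinder argument to make the $\AA^1$-ruling reach $F_0$ directly.
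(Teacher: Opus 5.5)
\paragraph{Main gap: smoothness and contractibility of $F_0$.}
Your plan needs the special fibre $F_0$ to be smooth and topologically contractible, and the mechanism you propose cannot deliver this. Every input your argument actually uses is also available for $\pr_x\colon \KK=\{x^mz+x+y^r+t^s=0\}\to\AA^1$, the Koras--Russell threefold of \cref{C*-actions:KR3F}. These inputs are:
\begin{itemize}
\item a smooth contractible affine threefold (over $\CC$, $\KK$ is contractible, and the Russell cubic is even diffeomorphic to $\RR^6$);
\item a regular function whose general fibres are $\AA^2$;
\item Euler characteristic additivity;
\item a cylinder over $x\neq 0$;
\item a locally nilpotent derivation that kills $f$ and stays nonzero on $F_0$.
\end{itemize}
For $\pr_x$, the fibres over $x=c\neq 0$ are $\AA^2_{y,t}$. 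The zero fibre $F_0=\Gamma_{r,s}\times\AA^1_z$ is contractible with $\chi(F_0)=1$, so your defect is $0$. Yet $F_0$ is singular along the whole line $\{x=y=t=0\}$.

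So the claim that singular points or multiple fibres contribute with a definite sign to the defect is false in the generality you need. Such sign control, via Milnor numbers and vanishing cycles at infinity, is only available when the singularities of $f$, including those at infinity, are isolated. That is essentially what you would have to prove.

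The tube argument fails for a related reason. Since $f$ is not proper, $F_0$ need not be a deformation retract of $f^{-1}(\Delta)$, because $0$ may be an atypical value at infinity. So the pair $(\CC^3,f^{-1}(\Delta^*))$ does not determine $H_*(F_0)$.

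Your fallback also targets the wrong thing. The $\AA^1$-ruling of $F_0$ is already supplied by your derivation-division trick. What is missing is smoothness, plus contractibility (or the UFD and trivial-units hypotheses of Miyanishi--Sugie). Without these the final characterization of $\AA^2$ cannot be invoked.

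Any correct proof must use a property of $\AA^3$ that $\KK$ lacks. This is exactly how the paper uses the theorem: to show $\KK\ncong\AA^3$. The paper gives no proof of its own. It cites \cite{kaliman2002polynomials} over $\CC$ and \cite[Theorem 4.2]{Daigle2009note} for arbitrary fields of characteristic zero, and that is where this step is carried out.

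\paragraph{Two smaller points.}
First, irreducibility of the special fibres does not follow from Stein factorization. For example, $f=x(xy+1)\in\CC[x,y,z]$ has irreducible general fibres $\cong\CC^*\times\CC$, but its zero fibre is reducible and even disconnected. In your situation irreducibility does hold, but the argument goes through the cylinder $f^{-1}(U)\cong U\times\AA^2$ with $U=\AA^1\setminus\Sigma$ (after enlarging $\Sigma$), which you only introduce later:
\begin{itemize}
\item the units of $\CC[x,y,z][\prod_{c\in\Sigma}(f-c)^{-1}]$ modulo $\CC^*$ are free on the prime factors of the $f-c$;
\item the units of $U\times\AA^2$ modulo $\CC^*$ are free on the $f-c$ themselves.
\end{itemize}
Comparing the two forces each $f-c$, $c\in\Sigma$, to be prime. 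Hence $F_0$ is integral, which you also need before dividing the derivation by a power of $f$.

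Second, in the reduction to $\CC$ you cannot place the isomorphisms of infinitely many general fibres over one finitely generated field. You have to pass through the generic fibre. Over $\CC$, a very general fibre is abstractly isomorphic to the geometric generic fibre, and Kambayashi's theorem then trivializes $X_\eta$. For a countable $k$ with the hypothesis only on $k$-rational fibres, this step itself requires an argument.
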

This was later extended to all fields of characteristic zero by \cite[Theorem 4.2]{Daigle2009note} via Lefschetz principle arguments. For instance, note that the generic fiber $\pr_x^{-1}(0)$ of $\KK$ with respect to the projection $\pr_x: \KK\to \AA^1$ is the cylinder on a cuspidal curve $\AA^1\times \{y^2+t^3\}$ and is singular. In particular, it cannot be isomorphic to $\AA^2$. A similar characterization was given by Miyanishi, which was later shown to fail in higher dimensions (see \cite[Theorem 1]{kaliman2000miyanishi} and references therein).

\begin{remark}
In the context of the embedding problem, the question of whether threefolds like $\KK$ were isomorphic to $\AA^3$ was then considered a potential threat to Abhyankar-Sathaye conjecture (see \cite[\S 2.2]{duttagupta2015epi}).
\end{remark}

\subsubsection{Where does motivic homotopy theory fit in here?} 
From the motivic viewpoint, the affine $n$-space $\AA^n$ forms primordial examples of contractible schemes (cf. \cref{A1-contr:egs}). Two varieties that are not isomorphic in $\mathcal{H}(k)$ (or non-$\AA^1$-weakly equivalent in $\Spc_k$) as spaces cannot be isomorphic as $k$-varieties. In particular, if $\KK$ was not $\AA^1$-contractible, then it cannot be isomorphic to the affine space $\AA^3$. Towards this pursuit, Asok \cite{asok2011a1} initiated a program to show that $\KK$ is not $\AA^1$-contractible and hence non-isomorphic to $\AA^3_{\CC}$. The idea was to produce possible obstructions in the $G$-equivariant homotopy theory. But as a key step in the overall program failed (cf. \cite[\S 1]{HKO16}), it was sadly unable to show that $\KK$ is not $\AA^1$-contractible. In the same paper (\textit{ibid}) Hoyois-Krishna-{\O}stv{\ae}r, in fact, proved the diametrically opposite result that $\KK$ is $\AA^1$-contractible in the stable homotopy category $\SH(\CC)$. Finally, Dubouloz-Fasel managed to prove the unstable $\AA^1$-contractibility of $\KK$ over fields of characteristic zero \cite{DF18}. We will give a comprehensive overview of these results in \cref{sec:KR-prototypes} and prove that these results can further be generalized from fields to a general base scheme. In all, it is an astonishing fact that this family of threefolds is completely invisible to the lens of powerful algebraic tools such as motivic cohomology (or equivalently, higher Chow groups) and carries trivial vector bundles, just like $\AA^3$ (\cite[\S 3]{HKO16}).

\begin{remark}
In relation to the Embedding Problem, it has been shown by \cite{dubouloz2010inequivalent} that there are inequivalent embeddings of Koras-Russell threefolds into $\AA^4$.
\end{remark}

\subsection{Cancellations in Algebraic Geometry}\label{sec:Zariskicancellations}
The Cancellation Problem, in a broad sense, is a statement about a certain property of affine $k$-algebras in the following sense: Let $A$ be any (say, finitely generated) affine algebra over a field $k$. Then for any $k$-algebra $B$,
 $$\text{does}\quad   A[x]\cong_k B[x]\quad  \text{imply}\quad  A\cong_k B\ ?$$
If this holds, we say that such a $k$-algebra $A$ is \emph{cancellative}. A particularly important version of this problem is when $A$ is the $n$-dimensional polynomial ring $A = k[x_1,\dots,x_n]$ and its cancellation is of paramount importance in AAG. This is because, in the light of Hilbert's Nullstellensatz, one can then transport this problem to the realm of algebraic geometry. In the ring theoretical context, it was posed by Zariski - hence, also referred to as the Zariski Cancellation Problem (ZCP) - at the Paris colloquium in 1949 (\cite{nagata1967theorem}, \S 5) stated as follows:

\begin{question}
Let $k$ be a (resp. an algebraically closed) field. Then for an $k$-algebra $B$ (resp. for an affine $k$-scheme $Y$), does $B\otimes k^{[1]}\cong k^{[n+1]}$ (resp. $Y \times \AA^1_k \cong \AA^{n+1}_k$) imply $B \cong k^{[n]}$ (resp. $Y \cong \AA^n_k$)?
\end{question}

Here, by $k^{[n]}$, we mean the polynomial ring in $n$-variables. An interesting counter-example when $k= \RR$ was constructed by \cite{hochster1972nonuniqueness} on the projective module defined by the tangent bundle over the real sphere is stably free but not free. The ZCP has been proven true over an arbitrary fields in dimension 1 and 2: For affine curves, it is due to \cite{abhyankar1972uniqueness} and for affine surfaces over a field of characteristic zero, it is due to \cite{miyanishi1980affine, fujita1979zariski} and over perfect fields due to \cite{russell1981affine} and more generally, due to \cite{bhatwadekar2015cancellation}. An alternative simplified proof over an arbitrary field was given by \cite{crachiola2008algebraic}. The problem now remains widely open in dimensions $n\geq 3$ over fields of characteristic zero. In dimension 3, the existence of the family of Koras-Russell threefolds of first kind $\KK$ as discussed above (\cref{KR3FIkind:equation}) is seen as a potential counter-example to ZCP. Dubouloz \cite{dubouloz2009cylinder} proved that all the cylinders over the Koras-Russell threefolds have trivial Makar-Limanov invariants,
    $$\ML(\KK \times \AA^1)\cong k \cong \ML(\AA^3) \cong \ML(\AA^n)$$ 
If this were proven otherwise, it would have answered that $\KK\times \AA^1$ cannot be isomorphic to $\AA^4$ and thus, closing the conjecture; but from the current conclusion, it remains to prove or disprove the isomorphism $\KK\times \AA^1 \cong \AA^4$.

\subsubsection{ZCP in Positive Characteristics}\label{ZCP:positive-char}
Due to the works of Gupta \cite{gupta2014ZCPpaper-1}, ZCP admits a negative solution in every dimension $n \geq 3$ over fields of positive characteristics. This counterexample essentially comes from another counterexample that was constructed in the context of attacking the Linearization Problem in positive characteristics. In particular, \cite[Theorem 2.2]{asanuma1994nonlinearizable} produced a $k$-algebra $A$ that is stably isomorphic $\AA^n$ and carrying a \emph{non-linearizable} $\GG_m$-action on $A$, for all $n \geq 4$ over an infinite field $k$ of characteristic $p>0$. The main ingredient towards the construction of such an $k$-algebra $A$ emerges from his following theorem:
\begin{theorem*}
Let $k$ be a field of characteristic $p>0$ and
    $$A = k[X,Y,Z,T]/\langle X^m Y + Z^{p^e}+T+T^{sp}\rangle$$
where $m,e,s$ are positive integers such that $p^e\nmid sp$ and $sp\nmid p^e$. Let $x$ denote the image of $X$ in $A$. Then $A$ satisfies the following two properties:
\begin{enumerate}
\item $A^{[1]}\cong_{k[x]} k[x]^{[3]} \cong_k k^{[4]}$
\item $A\ncong_{k[x]} k[x]^{[2]}$.
\end{enumerate}
\end{theorem*}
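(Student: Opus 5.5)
Write $B:=k[x]$ and let $x$ be the image of $X$. The plan is to prove the two claims separately.

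For (1), the idea is to make $A$ a trivial $\AA^1$-cylinder over $k[x]$ after one extra variable. First work over the localizations. Over $k[x,x^{-1}]$ the relation $x^mY=-(Z^{p^e}+T+T^{sp})$ can be solved for $Y$, so $A_x\cong k[x^{\pm1}][Z,T]$. Modulo $x$ we get $A/xA\cong k[Y,Z,T]/(Z^{p^e}+T+T^{sp})$, and this is $k[Y]\otimes C$, where $C=k[Z,T]/(Z^{p^e}+T+T^{sp})$ is the coordinate ring of a non-rectifiable line of the type in the example above. The fiber is not $\AA^2$ as an embedded curve, but it is abstractly $\AA^1\times\AA^1$, because $C\cong k[t]$ by Segre--Nagata. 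So $A$ is an $\AA^2$-fiber space over $\AA^1_x$; over the Henselization it is not trivial, yet it becomes trivial after adding one variable. I would follow Asanuma's structure theorem. Write $F:=Z^{p^e}+T+T^{sp}\in k[Z,T]$. Then $k[Z,T]/(F)=k[t]$, so $F$ is a ``residual coordinate up to a stable variable''. The expected key lemma is that $k[Z,T,W]=k[F,G,H]$ for suitable $G,H$; this is the stable rectifiability of $F$. Granting it, I would show directly that $A[W]=k[x][Y,Z,T,W]/(x^mY+F)$. Changing coordinates in $Z,T,W$ to $F,G,H$ turns this into $k[x][Y,F,G,H]/(x^mY+F)\cong k[x][Y,G,H]$, which is the first isomorphism. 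The second isomorphism, $k[x]^{[3]}\cong k^{[4]}$, is trivial.

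The main obstacle is producing the stable coordinate system. I would first try the explicit construction: with $C=k[t]$, write $Z=\alpha(t)$ and $T=\beta(t)$. By the Abhyankar--Eakin--Heinzer / Asanuma results, the module of Kähler differentials $\Omega_{k[Z,T]/k[F]}$ restricted to the fiber is stably free, hence free over $k[t]$. The Bass--Connell--Wright / Quillen--Suslin machinery then trivializes the $\AA^1$-bundle over a polynomial ring.

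For (2), I would argue by contradiction. Suppose $A\cong_{k[x]}k[x]^{[2]}$. Reducing modulo $x$ would give an isomorphism $k[Y,Z,T]/(F)\cong k^{[2]}$. That by itself is not contradictory, so the correct invariant is needed. Instead, compare the $x$-adic structure: $A\cong_{k[x]}k[x][u,v]$ together with $x^mY=-F$ would force $F$ to be a coordinate of $k[Z,T]$ modulo $x^m$, and lifting from the reduction would give that $F$ is a variable of $k[Z,T]$. That contradicts the Segre lemma analog, since the degrees $p^e$ and $sp$ do not divide each other. Making the ``residual variable'' step rigorous is the delicate point. Asanuma's criterion (an $x$-adic version of the epimorphism theorem) is what I would invoke there.
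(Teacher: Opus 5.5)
Your proof of (1) rests on a false lemma. The polynomial $F=Z^{p^e}+T+T^{sp}$ is not a variable of $k[Z,T,W]$, nor of $k[Z,T]^{[n]}$ for any $n$. Suppose $k[Z,T,W]=k[F,G,H]$, and base-change along $k[F]\to K:=k(F)$. This gives $X_\eta\times_K\AA^1_K\cong\AA^2_K$, where $X_\eta$ is the generic fibre of the smooth morphism $F\colon\AA^2_k\to\AA^1_k$. Cancellation for curves (Abhyankar--Eakin--Heinzer, or \cref{Cor:rel-ZCP:dim1} with $S=\Spec K$) then forces $X_\eta\cong\AA^1_K$.

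But $X_\eta$ is a non-trivial purely inseparable $K$-form of $\AA^1$ (cf.\ \cref{exa:form-generic-fiber}). To see this, work over $\bar k$, where every closed fibre $\{F=c\}=\{(Z-c^{1/p^e})^{p^e}+T+T^{sp}=0\}$ is again a line. A trivial generic fibre would then make $F\colon\AA^2_{\bar k}\to\AA^1_{\bar k}$ an $\AA^1$-fibre space. By \cref{1-dim DD} it would be a locally trivial, hence (over $\AA^1_{\bar k}$) trivial, $\AA^1$-bundle, so $F$ would be a variable of $\bar k[Z,T]$. That contradicts the non-rectifiability of this line.

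Working over $k[x]$ inside $k[x][Z,T,W]$ does not rescue the strategy either. Write $A^{[1]}=k[x][Z,T,W][F/x^m]$. A coordinate change of $k[x][Z,T,W]$ turning the centre $(x^m,F)$ into $(x^m,Z')$ with $Z'$ a variable is impossible, because reducing modulo $x$ would again make $F$ a variable of $k[Z,T,W]$. Your fallback fails too. The map $\Spec k[Z,T]\to\Spec k[F]$ is not an $\AA^1$-bundle, not even an $\AA^1$-fibre space. Freeness of $\Omega_{k[Z,T]/k[F]}=k[Z,T]\,dZ$ (as $dF=dT$) carries no information, since $(Z,F)$ has Jacobian $-1$ without being a coordinate system.

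So the stabilisation must use $x$ essentially, through a polynomial structure on $A^{[1]}$ not coming from $k[x][Z,T,W]$. The paper itself only cites Asanuma here. One way to do it, exactly as in \cref{Daniel:trick 4fold}, is the following.
\begin{itemize}
\item The formula $\lambda\cdot(Y,Z,T)=\bigl(Y-(F(Z,T+x^m\lambda)-F(Z,T))/x^m,\,Z,\,T+x^m\lambda\bigr)$ defines a free $\GG_a$-action on $\Spec A$; modulo $x$ it is $Y\mapsto Y-\lambda$.
\item Since $\Spec A\to\AA^2_{x,Z}$ is smooth of relative dimension one ($\partial F/\partial T=1$), the quotient $\mathfrak{S}$ is \'etale over $\AA^2_{x,Z}$ and an isomorphism over $x\neq0$.
\item Over $\{x=0\}$, $\mathfrak{S}$ is the \'etale cover $\{F=0\}\to\AA^1_Z$. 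In the parameter $u$ with $Z=-(u+u^{sp})$, $T=u^{p^e}$, this cover is $u\mapsto-(u+u^{sp})$.
\item As a sheaf on the small \'etale site, $\mathfrak{S}$ is terminal over $x\neq0$, so by gluing it is determined by this cover.
\item The ring $A'=k[x][Y,Z,T]/(x^mY+Z+T+T^{sp})\cong k[x][Y,T]$ with the analogous action has the same cover. Hence $\Spec A\times_{\mathfrak{S}}\Spec A'$ is a $\GG_a$-torsor over both affine schemes, so it is trivial, and $A^{[1]}\cong_{k[x]}(A')^{[1]}\cong k[x]^{[3]}$.
\end{itemize}

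For (2), you name the right target: $A\cong_{k[x]}k[x]^{[2]}$ forces $F$ to be a variable of $k[Z,T]$. But you do not prove it. There is nothing to ``lift'', since $F$ has coefficients in $k$; the entire content is that implication. It is a genuine theorem (Asanuma's criterion) and must be cited, not asserted.
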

Using the techniques developed by Makar-Limanov and Crachiola, \cite{gupta2014ZCPpaper-1} proved that such an $k$-algebra $A$ indeed produces a counter-example to ZCP in dimension 3, and soon after also extended this to all dimensions $n \geq 4$ \cite{gupta2015survey} and thus completely settled the ZCP over fields of positive characteristics. In eponym, we call these affine varieties the \emph{Asanuma-Gupta varieties}. It is crucial to note that the major tool that was exploited in all these instances is the \emph{existence of the non-trivial line} and its generalization over fields of positive characteristics. Let us recall that a \emph{non-trivial line} is polynomial $f\in k^{[2]}$ such that 
 $$k^{[2]}/(f) \cong k^{[1]}\quad \text{but}\quad  k[f]^{[1]} \ncong k^{[2]}$$
(that is, $f$ cannot be made as a coordinate). For the Gupta's counter-example, the polynomial equation 
            $$f(Z,T):= Z^{p^e}+T+T^{sp}$$
with $sp\nmid p^e, \ p^e\nmid sp$ served as a non-trivial line. Such a line is an example of an embedding of $\AA^1 \hookrightarrow \AA^2$ that is not the standard inclusion of coordinates.

\begin{remark}
As a result of the structure theorem obtained in \cite{DMO25}, we will see in \cref{sect:reldim=1} and \cref{sect:reldim=2} that the variant of the Zariski cancellation works over a more general base scheme in relative dimensions up to 2.
\end{remark}

\subsection{Asanuma's Stable Structure Theorem}\label{Asanuma-stable-structure}
Recall that the \emph{Dolga\v{c}ev-Ve\v{i}sfe\v{i}ler Conjecture} asks if over a regular local ring $R$, an $\AA^n$-fiber space can be equipped with a Zariski locally trivial $\AA^n$-bundle structure. As noted previously, no complete cases have been known from dimensions $n \geq 3$. However, from dimensions $n \ge 4$, the conjecture fails over fields of positive characteristic, which can be seen as a consequence of either of the following facts:
\begin{itemize}
\item presence of non-trivial lines (equivalently, non-rectifiable embeddings of $\AA^1$),
\item presence of non-linearizable $\GG_m$-actions on $\AA^n_k$,
\end{itemize}
We will state and review this aspect from the motivic viewpoint in \cref{sect:reldim=2}. We shall conclude this chapter by mentioning a crucial output of Asanuma, called the \emph{Stable Structure theorem} (\cite[Theorem 3.4]{asanuma1987polynomial}) stated in the AAG language. For this, we shall quickly recall the following definition:

\begin{defn}
For smooth $k$-schemes $X$ and $Y$, we say that $X$ is \emph{stably $k$-isomorphic} to $Y$, if there exists $i,j \geq 0$ such that 
        $$X\times_k \AA^i \cong Y \times_k \AA^j. $$ 
In ring-theoretic terms, we say that a $k$-algebra $A$ is \emph{stably polynomial} if for some $l,m \geq 0$, $A\otimes_k k^{[i]} \cong k^{[j]}$.
\end{defn}

When translated to motivic homotopy theory, it provides us with an elegant technique to produce a plenitude of $\AA^1$-contractible schemes relative over a base scheme. The following simple-looking commutative algebra fact expresses a potential limitation of the motivic homotopical techniques towards the classification of problems such as Dolga\v{c}ev-Ve\v{i}sfe\v{i}ler Conjecture or equivalently, the triviality of (Zariski) locally trivial $\AA^n$-bundles over affine spaces:

\begin{theorem}\label{Asanuma-Stab-Strc:thm}
Let $R$ be a Noetherian ring and let $A$ be a finitely generated flat $R$-algebra such that $A\otimes \kappa(p)\cong\kappa(p)^{[n]}$ for any prime ideal $p$ of $R$. Then $A$ is an $R$-subalgebra (up to an isomorphism) of a polynomial ring  $R^{[m]}$ (for some $m$) such that 
$$A^{[m]}\cong Sym_{R^{[m]}}(\mathfrak{P}\otimes R^{[m]}),$$
where $\mathfrak{P}$ is a projective $A$-module.
\end{theorem}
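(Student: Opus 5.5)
The strategy is to realize $\Spec A$ as a closed subscheme of an affine space over itself, and then recognize the ambient ring as a symmetric algebra on the conormal module, using that all fibres are polynomial rings.

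\textbf{Step 1 (smoothness).} Since $A$ is flat and finitely generated, and $A\otimes_R\kappa(p)\cong\kappa(p)^{[n]}$ is smooth over $\kappa(p)$ for every prime $p$, the fibrewise criterion for smoothness makes $R\to A$ smooth of relative dimension $n$. Hence $\Omega_{A/R}$ is a projective $A$-module of rank $n$.

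\textbf{Step 2 (presentation and conormal module).} Choose generators and a surjection $\varphi: R^{[m]}=R[x_1,\dots,x_m]\twoheadrightarrow A$. Tensoring over $R$ with $A$ gives a surjection of $A$-algebras
$$\Phi: A^{[m]}=A\otimes_R R^{[m]}\twoheadrightarrow A,\qquad a\otimes x_i\mapsto a\,\varphi(x_i).$$
It has a section, namely the structure map $A\to A^{[m]}$. By smoothness, the kernel $J$ is locally generated by a regular sequence, and the conormal sequence
$$0\to J/J^2\to \Omega_{A^{[m]}/A}\otimes A\to \Omega_{A/A}=0$$
identifies $J/J^2\cong A^{m}$, which is free. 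Running the same construction with the roles of the two tensor factors exchanged, $A^{[m]}$ is an $R^{[m]}$-algebra containing $R^{[m]}$. This exhibits the required embedding of $A$ as an $R$-subalgebra, up to isomorphism, inside a polynomial ring built from $R^{[m]}$. The projective module $\mathfrak P$ will be the conormal module of the closed immersion $\Spec A\hookrightarrow \AA^m_A$ obtained from $\varphi$ over the second factor; by the conormal sequence $0\to I/I^2\to \Omega_{R^{[m]}/R}\otimes A\to\Omega_{A/R}\to 0$ (with $I=\ker\varphi$), it is the projective complement $I/I^2$ of $\Omega_{A/R}$.

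\textbf{Step 3 (symmetric algebra structure).} I would show that the relevant tubular neighbourhood is in fact the whole polynomial ring, that is, $A^{[m]}\cong \mathrm{Sym}(\mathfrak P\otimes R^{[m]})$. The plan is local-to-global. First prove the isomorphism after localizing at each prime $p$ of $R$ and reducing modulo $p$. There, the fibre $A\otimes\kappa(p)$ is a polynomial ring, and a surjection $\kappa(p)^{[n+m]}\to\kappa(p)^{[n]}$ with a section has polynomial kernel data, so its source is a polynomial ring over its target. Next, lift from the fibre to the complete local ring by successive approximation modulo powers of $p$; the projectivity of $\mathfrak P$ makes the obstructions vanish. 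Then descend to the local ring by faithfully flat descent. Finally, glue over $\Spec R$ using a Bass--Connell--Wright type argument, which says that a locally symmetric algebra over an affine base is globally $\mathrm{Sym}$ of its (projective) linear part.

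\textbf{Main obstacle.} I expect the hardest point to be the passage from ``polynomial on every fibre'' to ``polynomial over the local ring $R_p$'' in Step 3. Flatness plus fibrewise polynomiality does not in general give local polynomiality; this is exactly the content of the Dolga\v{c}ev--Ve\v{i}sfe\v{i}ler question. The way around it is that we only need polynomiality after adding the extra $m$ variables. My first attempt would therefore be to construct explicit fibrewise coordinates on $A^{[m]}$ from the generators $x_i-\varphi(x_i)$ of $J$ together with lifts of a basis of $\mathfrak P$. I would then show these generate $A^{[m]}$ over $R^{[m]}$ by a Nakayama argument over each local ring $R_p$, where stabilization by $m$ variables absorbs the failure of local triviality.
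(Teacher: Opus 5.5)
Your Step 1 is correct; it is also part of Asanuma's statement that $\Omega_{A/R}$ is projective of rank $n$. The paper itself does not reprove the theorem; it quotes \cite[Theorem 3.4]{asanuma1987polynomial}. Beyond Step 1, however, the proposal does not prove the theorem.

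\textbf{The embedding and the module $\mathfrak P$.} Step 2 never produces the embedding $A\hookrightarrow R^{[m]}$. What you show is that $A$ and $R^{[m]}$ both sit inside $A^{[m]}=A\otimes_R R^{[m]}$, and that is a polynomial ring over $A$, not over $R$. The embedding is a substantive half of the theorem. Without it, $\mathfrak P\otimes_A R^{[m]}$ is not even defined as an $R^{[m]}$-module. Your identification $\mathfrak P=I/I^2$ is also wrong. The split sequence $0\to I/I^2\to A^m\to\Omega_{A/R}\to 0$ shows that $I/I^2$ has rank $m-n$. Then $\mathrm{Sym}_{R^{[m]}}(\mathfrak P\otimes R^{[m]})$ would have relative dimension $2m-n$ over $R$, whereas $A^{[m]}$ has relative dimension $m+n$. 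Already $A=R^{[n]}$, presented by $m>n$ generators, is a counterexample. The module must have rank $n$; in Asanuma's theorem it is $\Omega_{A/R}$, transported along the embedding.

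\textbf{Step 3 aims at a false statement.} The only $R^{[m]}$-algebra structure you put on $A^{[m]}$ is the obvious one, through the second tensor factor. For that structure the isomorphism you want is false in general. Suppose $A\otimes_R R^{[m]}\cong\mathrm{Sym}_{R^{[m]}}(P)$ as $R^{[m]}$-algebras. Base change along $R^{[m]}\to R$, $x_i\mapsto 0$, gives $A\cong\mathrm{Sym}_R(P\otimes_{R^{[m]}}R)$. Then $\Spec A\to\Spec R$ would be a Zariski locally trivial $\AA^n$-bundle, which contradicts \cref{eg:Asanuma3F}. So stabilizing in the obvious variables can never absorb the failure of local triviality, and no Nakayama argument can produce it.

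\textbf{The individual local steps also fail.}
\begin{itemize}
\item The fibrewise fact you quote is trivially true for $\Phi$, since globally $A^{[m]}=A[x_1-\varphi(x_1),\dots,x_m-\varphi(x_m)]$. It therefore says nothing about a structure over $R^{[m]}$. As a general principle it is false in positive characteristic. A non-trivial line $h\in k^{[2]}$ gives a surjection $k^{[2]}\to k^{[2]}/(h)\cong k^{[1]}$ with a section. Yet $k^{[2]}$ is not a polynomial ring over the image of that section, since otherwise $h$ would be a coordinate.
\item Successive approximation from the closed fibre only controls the $\mathfrak m$-adic completion of $A$. That completion is the completion of $R_p^{[n]}$ for every smooth $A$ whose closed fibre is $\kappa(p)^{[n]}$, so it cannot tell whether $\hat R\otimes_R A$ is polynomial.
\item Faithfully flat descent of an isomorphism type only yields forms.
\end{itemize}

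\textbf{What is missing.} You need the genuinely global input of Asanuma's theorem. This is an $R$-embedding $A\hookrightarrow R^{[m]}$, and through it a \emph{different} copy of $R^{[m]}$ inside $A^{[m]}$, over which $A^{[m]}$ is the symmetric algebra of $\Omega_{A/R}\otimes_A R^{[m]}$. The isomorphism holds only as $R$-algebras, not over the obvious $R^{[m]}$.
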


\cref{Asanuma-Stab-Strc:thm} asserts that over a Noetherian ring $R$, any finitely generated flat $R$-algebra $A$ with all fibers isomorphic to a polynomial ring in $m$-variables is a stably polynomial ring, that is, $A^{[k]}\cong R^{[m]}$, for some integers $k,m\ge 0$. Translated to algebro-geometric language, it asserts the following: 

\begin{theorem*}
Over a locally Noetherian scheme $S$, any \emph{affine} morphism $f: X\to S$ that is an $\AA^n$-fiber space is Zariski locally \emph{stably} isomorphic to a locally trivial $\mathbb{A}^m$-bundle (for some positive integer $m\ge 0$).
\end{theorem*}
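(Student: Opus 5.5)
The plan is to reduce the geometric statement to the ring-theoretic \cref{Asanuma-Stab-Strc:thm} by working Zariski locally on $S$, and then to shrink the base further so that the projective module it produces becomes free.

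\textbf{Step 1: reduce to an affine base.} The assertion is Zariski local on $S$, so fix $s\in S$ and an affine open neighbourhood $U=\Spec R$ of $s$ with $R$ Noetherian. Since $f$ is affine, $X_U:=f^{-1}(U)=\Spec A$ is affine, where $A=\Gamma(X_U,\mathcal{O}_{X_U})$. Because $f$ is smooth of finite presentation (\cref{def:A1-fib-space}), $A$ is a finitely generated flat $R$-algebra. For every prime $\mathfrak{p}\subset R$, the fiber hypothesis $X_{\mathfrak{p}}\cong \AA^n_{\kappa(\mathfrak{p})}$ says exactly that $A\otimes_R\kappa(\mathfrak{p})\cong \kappa(\mathfrak{p})^{[n]}$. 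So the hypotheses of \cref{Asanuma-Stab-Strc:thm} hold.

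\textbf{Step 2: apply Asanuma's theorem.} \cref{Asanuma-Stab-Strc:thm} gives an integer $m$ and an isomorphism $A^{[m]}\cong \mathrm{Sym}_{R^{[m]}}(\mathfrak{P}\otimes R^{[m]})$, with $\mathfrak{P}$ a finitely generated projective module. Geometrically, this says that $X_U\times_U\AA^m_U$ is isomorphic to the total space $\mathbb{V}(\mathfrak{P}\otimes R^{[m]})$ of a vector bundle over $\AA^m_U$.

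\textbf{Step 3: trivialize locally.} I read $\mathfrak{P}$ as a projective $R$-module extended to $R^{[m]}$; this is the content of the formula $\mathfrak{P}\otimes R^{[m]}$. Then $\mathfrak{P}$ is locally free on $U$, so there is a basic open $D(g)\ni s$ on which $\mathfrak{P}_g\cong R_g^{\,r}$. Localizing the isomorphism of Step 2 at $g$ gives
\[
X_{D(g)}\times_{D(g)}\AA^m_{D(g)}\cong \AA^{m+r}_{D(g)}.
\]
Comparing relative dimensions on the fibers gives $r=n$. Thus over the Zariski neighbourhood $D(g)$ of $s$, the scheme $X$ is stably isomorphic to the trivial $\AA^{m+n}$-bundle, which is in particular a locally trivial affine-space bundle. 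Since $s$ was arbitrary, this proves the claim.

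\textbf{Main obstacle.} The delicate point is Step 3, namely that the projective module can be trivialized by Zariski localization on the base $S$ rather than on $\AA^m_U$. If $\mathfrak{P}$ were only known to be a projective $R^{[m]}$-module, a Zariski neighbourhood of $s$ in $S$ would not obviously trivialize it. One would then need a Quillen–Suslin/Lindel-type result for projective modules over polynomial rings over local rings, and that is not available for general Noetherian $R$. I would therefore first check in Asanuma's original statement \cite[Theorem 3.4]{asanuma1987polynomial} that $\mathfrak{P}$ is a projective $R$-module, so that $\mathfrak{P}\otimes R^{[m]}$ is extended from $R$.

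If that fails, I would weaken the conclusion. One still gets a Zariski-local stable isomorphism with the vector bundle $\mathbb{V}(\mathfrak{P}\otimes R^{[m]})\to \AA^m_U$, which is a Zariski locally trivial affine-space bundle over $\AA^m_U$. Viewed over $U$, it is a stably trivial fiber space, and this suffices for the motivic applications that follow.
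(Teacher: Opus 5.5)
Your Step 3 fails, and the check you propose at the end comes out negative. In \cref{Asanuma-Stab-Strc:thm} the module $\mathfrak{P}$ is a projective \emph{$A$-module}: the paper says so explicitly, and in Asanuma's paper it is $\Omega_{A/R}$. So $\mathfrak{P}\otimes R^{[m]}$ means $\mathfrak{P}\otimes_A R^{[m]}$, formed along the embedding $A\hookrightarrow R^{[m]}$. This is a rank-$n$ projective $R^{[m]}$-module with no reason to be extended from $R$, so inverting some $g\in R$ does not make it free. Trivializing it over $\AA^m_{D(g)}$ would require at least that it become free over $\mathcal{O}_{S,s}^{[m]}$. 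That is a Bass--Quillen type property, and it fails for general Noetherian local rings: for a reduced non-seminormal local ring $R$, such as the local ring of a cusp, Traverso--Swan give $\Pic(R[t])\neq\Pic(R)=0$. It is known only in special cases, such as regular local rings containing a field. So your isomorphism $X_{D(g)}\times_{D(g)}\AA^m_{D(g)}\cong\AA^{m+n}_{D(g)}$ is not justified by anything in the theorem.

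Your fallback is the correct proof, and it is precisely the paper's. The paper presents the statement as the geometric translation of \cref{Asanuma-Stab-Strc:thm}: over each affine open $U=\Spec R$ one has $X_U\times_U\AA^m_U\cong\mathbb{V}(\mathfrak{P}\otimes_A R^{[m]})$. This is the total space of a vector bundle, hence of a Zariski locally trivial $\AA^n$-bundle, over $\AA^m_U$.

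One correction to your wording: over $U$ this space is not known to be \emph{stably trivial}. It is an $\AA^{n+m}$-fiber space over $U$ (fibers are free by Quillen--Suslin) that happens to be a vector bundle over $\AA^m_U$. That is all the ensuing motivic corollary uses, since the composite $\mathbb{V}(\cdot)\to\AA^m_U\to U$ is an $\AA^1$-weak equivalence in $\Spc_U$.

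The stronger form you were after, a vector bundle over the base itself and hence Zariski-locally stably trivial, is obtained in the paper only for regular bases. It comes from \cite[Corollary 3.5]{asanuma1987polynomial}, used in the proof of \cref{thm:An-fiber-space-contractible}. The paper's passing remark that $A$ is stably polynomial needs the same caveat.
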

This result is critical in the motivic homotopy theory illustrated by the following fact \cite[Lemma 3.3.3]{asok2021A1}:
\begin{lemma}\label{stableiso:A1-weq}
Stably isomorphic smooth varieties are $\AA^1$-weakly equivalent.
\end{lemma}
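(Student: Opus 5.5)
The plan is to show that if $X\times_k \AA^i \cong Y\times_k \AA^j$ for some $i,j\ge 0$, then the images of $X$ and $Y$ in $\HH(k)$ are isomorphic. The argument uses the $\AA^1$-invariance built into the construction of $\HH(k)$. The same argument works verbatim over any base scheme $S$ in place of $k$.

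First, I claim that for every $U\in\Sm_k$ and every $n\ge 0$, the projection $\pr_1: U\times_k \AA^n_k\to U$ is an $\AA^1$-weak equivalence in $\Spc_k$. For $n=1$ this holds because $\pr_1$ belongs to the class $J$ of morphisms inverted by $\L_{\AA^1}$; it is therefore inverted by $\L_{mot}$. For general $n$ I will factor
\[
U\times\AA^n \to U\times\AA^{n-1}\to\cdots\to U\times\AA^1\to U,
\]
where each arrow is a projection of the form $V\times\AA^1\to V$ with $V=U\times\AA^{m}$ smooth. Each arrow is then an $\AA^1$-weak equivalence, and composing them via two-out-of-three finishes the claim. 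Alternatively, one can invoke the example following \cref{L_A1-finite-pdts}, which uses that $\L_{mot}$ preserves finite products (\cref{Prop:L_mot-preserve}) together with the $\AA^1$-contractibility of $\AA^n$ from \cref{A1-contr:egs}.

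Next, I fix an isomorphism of $k$-schemes $\phi: X\times\AA^i\xrightarrow{\cong} Y\times\AA^j$. Since $\phi$ is an isomorphism in $\Sm_k$, it is in particular an $\AA^1$-weak equivalence, because the Yoneda embedding sends isomorphisms to isomorphisms. This gives the zigzag
\[
X \xleftarrow{\ \pr_1\ } X\times\AA^i \xrightarrow{\ \phi\ } Y\times\AA^j \xrightarrow{\ \pr_1\ } Y,
\]
in which every arrow is an $\AA^1$-weak equivalence. In $\HH(k)$ all three arrows become isomorphisms, so $X\simeq Y$ there, which is the meaning of $X$ and $Y$ being $\AA^1$-weakly equivalent.

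There is no serious obstacle here. The only point needing care is the definitional one: the conclusion is an isomorphism in the homotopy category, realized by a zigzag, rather than a direct morphism $X\to Y$. To get an actual map, I would take the composite $X\xrightarrow{x\mapsto (x,0)} X\times\AA^i\xrightarrow{\phi} Y\times\AA^j\xrightarrow{\pr_1} Y$. Its first arrow is a section of the weak equivalence $\pr_1$, hence itself a weak equivalence by two-out-of-three, so the whole composite is an $\AA^1$-weak equivalence.
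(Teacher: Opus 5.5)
Your argument is correct: the zigzag $X \leftarrow X\times\AA^i \xrightarrow{\phi} Y\times\AA^j \to Y$ consists of projections and an isomorphism, each an $\AA^1$-weak equivalence, and this is exactly the standard reason for the lemma. The paper gives no proof of its own and cites \cite[Lemma 3.3.3]{asok2021A1}, where the fact likewise follows from the projections $U\times\AA^n\to U$ being $\AA^1$-weak equivalences.
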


\begin{corollary*}
If $f: X\to S$ is any affine morphism that is also an $\AA^n$-fiber space over a Noetherian scheme $S$, then $X$ is stably isomorphic to a Zariski locally trivial $\AA^{n+m}$-bundle (for some $m$). In particular, in the view of \Cref{stableiso:A1-weq} $X$ is a $\AA^1$-contractible scheme over $S$.
\end{corollary*}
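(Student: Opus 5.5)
The plan is to prove the statement Zariski-locally on $S$ using \cref{Asanuma-Stab-Strc:thm}, then glue by taking one large enough number of extra affine variables, which is possible because $S$ is quasi-compact. The ``in particular'' then follows from \cref{stableiso:A1-weq}.

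\textbf{Step 1 (local input).} Since $f$ is affine and $S$ is locally Noetherian, I can cover $S$ by affine opens $U=\Spec R$ with $R$ Noetherian. Over such a $U$, set $A=\Gamma(f^{-1}(U),\mathcal{O}_X)$. Then $A$ is a finitely generated flat $R$-algebra: it is of finite presentation and smooth, hence flat. Its fibres are $A\otimes\kappa(\mathfrak p)\cong\kappa(\mathfrak p)^{[n]}$ because $f$ is an $\AA^n$-fiber space. So \cref{Asanuma-Stab-Strc:thm} applies and gives
$$A^{[m_U]}\cong \mathrm{Sym}_{R^{[m_U]}}\big(\mathfrak P\otimes_R R^{[m_U]}\big)$$
for some $m_U$. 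I read the projective module $\mathfrak P$ as a finitely generated projective $R$-module, extended to $R^{[m_U]}$; this is how Asanuma's statement is usually used. Because $\mathfrak P$ is projective over $R$, it is locally free on $\Spec R$. Refining $U$ by principal opens $D(g)$ on which $\mathfrak P_g\cong R_g^{\,r}$, I get isomorphisms of $R_g$-algebras $A_g^{[m_U]}\cong R_g^{[m_U+r]}$. Comparing fibres forces $r=n$. Geometrically, for this refined cover $\{V\}$ of $S$ I obtain $V$-isomorphisms $f^{-1}(V)\times_V\AA^{m_V}_V\cong\AA^{n+m_V}_V$.

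\textbf{Step 2 (uniformising and gluing).} The main claim is proved by a single global construction rather than by gluing the local isomorphisms. As $S$ is Noetherian, hence quasi-compact, finitely many opens $V_1,\dots,V_N$ suffice. Put $m:=\max_i m_{V_i}$. Multiplying each local isomorphism by $\AA^{m-m_{V_i}}$ gives
$$f^{-1}(V_i)\times_{V_i}\AA^m_{V_i}\cong\AA^{n+m}_{V_i}\qquad\text{for every } i.$$
Now set $Y:=X\times_S\AA^m_S$, which is a global $S$-scheme, with projection $p\colon Y\to S$. Its restriction to each $V_i$ is $f^{-1}(V_i)\times_{V_i}\AA^m_{V_i}$, which is trivial. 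Hence $Y\to S$ is a Zariski locally trivial $\AA^{n+m}$-bundle. No compatibility of the local trivialisations is needed, since the transition automorphisms of $\AA^{n+m}$ are allowed to be arbitrary and $Y$ itself is globally defined. By construction $Y=X\times_S\AA^m_S$, so $X$ is stably isomorphic (with $i=m$, $j=0$) to a Zariski locally trivial $\AA^{n+m}$-bundle over $S$.

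\textbf{Step 3 (contractibility).} $Y\to S$ is $\AA^1$-contractible in $\Spc_S$ because it is a Zariski locally trivial bundle with $\AA^1$-contractible fibres (\cref{A1-contr:egs}(3)). Alternatively, one can apply \cref{fiberwise=relative}, since all fibres of $Y\to S$ are affine spaces. The projection $Y=X\times_S\AA^m_S\to X$ is an $\AA^1$-weak equivalence, by the relative form of \cref{stableiso:A1-weq}, i.e. $\AA^1$-invariance over $S$. Two-out-of-three then shows that $X\to S$ is an $\AA^1$-weak equivalence.

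\textbf{Main obstacle.} The delicate step is Step 1: extracting from \cref{Asanuma-Stab-Strc:thm} a projective module that is defined over $R$ itself, and not merely over $R^{[m]}$. If one only knew that $\mathfrak P$ is a projective $R^{[m]}$-module, then Zariski-local freeness over $\Spec R$ would require a Bass--Quillen type statement, which is not available for arbitrary Noetherian $R$. If that reading fails, my fallback is to complement $\mathfrak P$ by some $\mathfrak Q$ with $\mathfrak P\oplus\mathfrak Q$ free and to pass to the vector bundle of $\mathfrak Q$. This still gives $\AA^1$-contractibility, because vector bundles are $\AA^1$-equivalences. However, it yields a genuine stable isomorphism with an affine-space bundle only if that complementary bundle is itself stably polynomial over the base.
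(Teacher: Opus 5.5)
Your overall route is the paper's own: apply Asanuma's theorem Zariski-locally on $S$, then use \cref{stableiso:A1-weq}. Your Step 2, which glues through the globally defined $Y=X\times_S\AA^m_S$ over a finite cover, is correct and more careful than the paper, which does not address gluing at all.

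The gap is in Step 1. In \cref{Asanuma-Stab-Strc:thm} the module $\mathfrak P$ is a projective $A$-module (in Asanuma's formulation it is $\Omega_{A/R}$), not a projective $R$-module. The module $\mathfrak P\otimes R^{[m]}$ is formed through the embedding $A\hookrightarrow R^{[m]}$, and it is not in general extended from $R$. So the theorem only gives that $f^{-1}(U)\times_U\AA^m_U$ is the total space of a rank $n$ vector bundle over $\AA^m_U$, not over $U$.

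To conclude that this is Zariski-locally on $U$ isomorphic to $\AA^{n+m}$, you need this projective $R^{[m]}$-module to be locally extended from $R$. That is a Bass--Quillen statement. It holds for $R$ regular and containing a field; compare the regularity hypothesis in \cref{thm:An-fiber-space-contractible}, where the paper invokes \cite[Corollary 3.5]{asanuma1987polynomial} to obtain a vector bundle over $S$ itself. It fails for general Noetherian $R$: for example, $\Pic(R[x])\neq\Pic(R)=0$ for a non-seminormal local ring $R$ such as the local ring of a cusp.

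Your complement $\mathfrak Q$ does not repair this. As you note yourself, it only exhibits a vector bundle over $X\times_U\AA^m_U$ whose total space is an affine space, and that is not a stable isomorphism. The paper's own derivation paraphrases Asanuma as ``$A^{[k]}\cong R^{[m]}$'' and makes exactly the same leap. You have located a real weak point in the source, but your proposal does not close it. By this route, the first assertion is justified only under an extra hypothesis on $S$.

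The ``in particular'' survives independently of this. You do not need any complement:
\begin{itemize}
\item $f^{-1}(U)\times_U\AA^m_U\to\AA^m_U$ is a vector bundle, hence an $\AA^1$-weak equivalence in $\Spc_{\AA^m_U}$, and so in $\Spc_U$ by \cref{cor:pushpull-A1-weak}(2).
\item Two-out-of-three with $\AA^m_U\to U$ and $f^{-1}(U)\times_U\AA^m_U\to f^{-1}(U)$ then shows that $f^{-1}(U)\to U$ is an $\AA^1$-weak equivalence.
\end{itemize}
This is only local on $S$, so you still have to pass from the cover to $S$. The cleanest fix is to drop Asanuma for this part and apply \cref{fiberwise=relative} directly, since every fibre of $f$ is $\AA^n_{\kappa(s)}$. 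The paper mentions this alternative after \cref{thm:An-fiber-space-contractible}.
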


Thus, \cref{Asanuma-Stab-Strc:thm} provides us a fruitful way to harness several families of $\AA^1$-contractible schemes. Crucially, observe that from the motivic standpoint, this theorem precisely portrays that the $\AA^1$-homotopy theoretical techniques alone cannot distinguish $\AA^n$-fiber spaces from the Zariski locally trivial ones.

% \afterpage{\blankpage}

\begin{savequote}
Have you also learned that secret from the river that there is no such thing as time? That the river is everywhere at the same time, at the source and at the mouth, at the waterfall, at the ferry, at the current, in the ocean and in the mountains, everywhere, and that the present only exists for it, not the shadow of the past nor the shadow of the future.
\qauthor{"Siddhartha" by Hermann Hesse.}
\end{savequote}
\chapter{Relative $\AA^1$-Contractibility of Smooth Schemes}\label{chp4}
\markboth{IV Relative $\AA^1$-Contractibility}{}

The results presented in this chapter are sourced from the article \cite{DMO25}.

{\small
{\fontfamily{bch}\selectfont
\subsubsection{\textsc{Chapter Summary}}
Recall that a smooth $k$-scheme $X$ is said to be \emph{$\AA^1$-contractible} if the structure map $X \to \Spec k$ is an $\AA^1$-weak equivalence in $\Spc_k$. It turns out that in relative dimensions $n<3$, the notion of $\AA^1$-contractibility is strong enough to characterize $\AA^n$. In this chapter, we provide a comprehensive explanation of this statement and demonstrate that the $\AA^1$-homotopy theory detects Zariski-local trivial $\AA^n$-bundles in relative dimensions $n < 3$. We begin with the case of \'etale schemes over an arbitrary base in \cref{sect:Rel-etale schemes}. In \cref{sect:dim=1}, we study the relative $\AA^1$-contractibility of smooth curves over a normal scheme and provide a characterization of the affine 1-space relative to this setting. We systematically study the relative $\AA^1$-contractibility of smooth surfaces in \cref{sect:reldim=2} and provide a characterization of the affine 2-space over certain Dedekind schemes. In both these settings, we obtain a positive solution to the relative variant of the Zariski Cancellation Problem as a consequence. Following this, we analyze and review the common knowledge for higher relative dimensions $n\geq 4$ in \cref{sect:higherdimensions} and prove that $\AA^n$-fiber spaces provide abundant examples of relatively $\AA^1$-contractible spaces.
}}

%------------------------------------------------------------------------
\section{Relative \texorpdfstring{$\AA^1$}{A1}-Contractibility of \'Etale Schemes}\label{sect:Rel-etale schemes}

We begin by establishing the structure theorem in relative dimension 0, i.e., for \'etale schemes, in terms of $\AA^1$-contractibility and enhance this to arbitrary base schemes. An important consequence here is the identification of \'etale schemes with that of $\AA^1$-rigid schemes. The case of zero-dimensional smooth $k$-schemes is equivalent to that of the characterization of \'etale $k$-algebras. This gives us the following straightforward statement.

\begin{lemma}\label{0-dim prop}\cite{DMO25}
Let $X$ be an \'etale scheme over an arbitrary field $k$. Then the canonical map $f: X \to \Spec k$ is an $\AA^1$-weak equivalence in $\Spc_k$ if and only if $f$ is an isomorphism in $\Sm_k$.
\end{lemma}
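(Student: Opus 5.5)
One direction is immediate: if $f$ is an isomorphism in $\Sm_k$, then it is in particular an isomorphism after applying $\L_{mot}$, hence an $\AA^1$-weak equivalence in $\Spc_k$. The content is the converse. My plan is to show that étale $k$-schemes are $\AA^1$-rigid, and then invoke \cref{A1-rigid:mainresult}.

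First I will use the structure of étale $k$-schemes. Since $X$ is étale over a field (and of finite type, being an object of $\Sm_k$), we have $X\cong \coprod_{i=1}^r \Spec L_i$, where each $L_i/k$ is a finite separable field extension.

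Next I will show that $X$ is $\AA^1$-rigid, using the criterion \cref{A1-chain-conne-sep-ext:lemma}. For a finitely generated separable extension $L/k$, I must check that
\[
\Hom(\Spec L, X)\to \Hom(\AA^1_L, X)
\]
is a bijection. Because $\AA^1_L$ is connected, any morphism $\AA^1_L\to X$ lands in a single component $\Spec L_i$. Such a morphism corresponds to a $k$-algebra map $L_i\to L[t]$. The image of this map consists of elements algebraic over $k$, and since $L$ is algebraically closed in $L[t]$ (the only elements of $L[t]$ algebraic over $k$ lie in $L$), the map factors through $L$. Hence it factors through the projection $\AA^1_L\to\Spec L$, which gives surjectivity; injectivity holds because the projection has a section (\cref{rem:A1-invar:onto}). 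Alternatively, one could simply cite that $0$-dimensional schemes are $\AA^1$-rigid, as noted after \cref{A1-chain-conne-sep-ext:lemma}.

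Finally, with both $X$ and $\Spec k$ $\AA^1$-rigid, \cref{A1-rigid:mainresult}(3) shows that $h(X)\simeq h(\Spec k)$ in $\HH(k)$ forces $X\cong \Spec k$. There is a subtlety here: part (3) gives an abstract isomorphism, whereas the statement asserts that $f$ itself is an isomorphism. To get this, I will use part (1), the full faithfulness of $h$ on $\AA^1$-rigid schemes. Since $h(f)$ is invertible with inverse $g\in[\Spec k,X]_{\AA^1}=\Hom_{\Sm_k}(\Spec k,X)$, faithfulness gives $f\circ g=\Id$ and $g\circ f=\Id$ in $\Sm_k$. Concretely, this says $r=1$ and $L_1=k$.

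I expect the only real step to be the rigidity argument, namely the claim that $L$ is algebraically closed in $L[t]$. This is elementary: any nonconstant polynomial is transcendental over $L$.
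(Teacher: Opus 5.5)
Your proof is correct, but it takes a different route from the paper.

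The paper's argument has two steps and never mentions rigidity:
\begin{enumerate}
\item An $\AA^1$-contractible $X$ is $\AA^1$-connected, hence connected, so $X\cong\Spec k'$ for a single finite separable extension $k'/k$.
\item $\AA^1$-connected smooth schemes over a field admit a rational point (\cref{S-point}), which forces $k'=k$.
\end{enumerate}

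You instead prove directly that \'etale $k$-schemes are $\AA^1$-rigid. Your algebraicity argument works; even more simply, any subfield of $L[t]$ lies in $L$ because $L[t]^\times=L^\times$. You then let the full faithfulness in \cref{A1-rigid:mainresult}(1) do the work: the inverse of $h(f)$ is represented by an honest $k$-point $g$, and $g\circ f=\Id_X$ already holds in $\Sm_k$.

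Comparing the two:
\begin{itemize}
\item The paper's route is shorter and uses only two formal consequences of contractibility, namely $\AA^1$-connectedness and a rational point.
\item However, its step ``$\AA^1$-connected, in particular connected as a topological space'' is asserted without justification. The natural justification, mapping a disconnected $X$ to the $\AA^1$-invariant sheaf $\Spec k\sqcup\Spec k$, is itself a rigidity argument.
\item Your route makes that input explicit. The rigidity statement you isolate is exactly what the paper later generalizes to integral bases in \cref{0dim:rigid}.
\item The price of your route is relying on the heavier Asok--Morel full-faithfulness result rather than just the existence of rational points.
\end{itemize}

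Incidentally, the subtlety you flag with part (3) is vacuous. Since $\Spec k$ is terminal in $\Sm_k$, any $k$-isomorphism $X\cong\Spec k$ is necessarily $f$, so part (3) alone would already suffice.
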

\begin{proof}
One implication is obvious. Assume that $X$ is $\AA^1$-contractible. Since $f: X\to  \Spec k$ is \'etale, $X$ is a disjoint union of spectra of finite separable field extensions of $k$. As every $\AA^1$-contractible space is $\AA^1$-connected (in particular, connected as a topological space), we have that $X \cong  \Spec k'$, for some finite separable extension $k'$ of $k$. But by \Cref{S-point}, any $\AA^1$-contractible scheme over $k$ admits a section, and so $k'\cong k$.
\end{proof}

\begin{theorem}\label{0-dim:etale-schemes-iso}\cite{DMO25}
Let $X$ be an \'etale scheme of finite presentation over an arbitrary base scheme $S$. Then the canonical morphism $f: X\to S$ is a relative $\mathbb{A}^1$-weak equivalence in $\Spc_S$ if and only if $f$ is an isomorphism in $\Sm_S$.
\end{theorem}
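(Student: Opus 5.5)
The implication from isomorphism to relative $\AA^1$-weak equivalence is immediate, so the plan concerns only the converse. Assume $f\colon X\to S$ is étale of finite presentation and an $\AA^1$-weak equivalence in $\Spc_S$. The strategy is to reduce to the field case \cref{0-dim prop} fibrewise, and then promote the fibrewise statement to a global one by a standard criterion from scheme theory.

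\textbf{Fibres.} By \cref{cor:A1-cont-fibers}, for every point $s\in S$ the fibre $X_s=X\times_S\Spec\kappa(s)$ is an $\AA^1$-contractible $\kappa(s)$-scheme. It is étale over $\kappa(s)$, so \cref{0-dim prop} gives $X_s\cong\Spec\kappa(s)$. Hence $f$ is surjective and has geometric fibres consisting of exactly one reduced point. In other words, $f$ is étale and universally injective, hence radicial.

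\textbf{Global step.} A morphism that is étale (so flat and locally of finite presentation) and radicial is an open immersion (EGA IV, 17.9.1). Since $f$ is also surjective, it is an isomorphism.

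\textbf{Main obstacle.} The delicate point is that no Noetherian or finite-dimensionality hypothesis is placed on $S$, whereas the fibre criterion \cref{cor:A1-cont-fibers} comes from \cref{pullbackfunctor}, which uses only base change along $\Spec\kappa(s)\to S$. I would therefore check that this base-change argument, via \cref{cor:pushpull-A1-weak}(1), is valid for arbitrary base schemes. If that causes trouble, the finite-presentation hypothesis allows a limit argument: write $S$ locally as $\Spec$ of a filtered colimit of finitely generated $\ZZ$-algebras, descend $f$ to some $X_\lambda\to S_\lambda$, and apply the argument there. Only the fibrewise statement is needed, however, and that is exactly what pullback to residue fields provides.
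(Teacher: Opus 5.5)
Your proposal is correct and follows the paper's proof essentially verbatim: it reduces fibrewise via \cref{cor:A1-cont-fibers} and \cref{0-dim prop}, uses that \'etale and universally injective implies open immersion (the paper cites Stacks Tag 02LC where you cite EGA IV 17.9.1), and concludes by surjectivity. The limit-argument fallback in your last paragraph is unnecessary, and it would not help anyway, because the weak-equivalence hypothesis is given over $S$, not over the $S_\lambda$; the paper simply applies the pullback to residue fields directly.
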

\begin{proof}
One direction is obvious. Assume that $f:X\to S$ is a relative $\AA^1$-weak equivalence in $\Spc_S$. For every point $s\in S$ with residue field $\kappa(s)$, it follows from \Cref{cor:A1-cont-fibers} that $f_s := \pr_2: X_s =X\times_{S} \Spec \kappa(s)\to \Spec \kappa(s)$ is a relative $\mathbb{A}^1$-weak equivalence in $\Spc_{\kappa(s)}$, whence, by \Cref{0-dim prop}, that the morphism $f_s$ is an isomorphism of schemes. Thus, $f: X\to S$ is a bijective morphism with trivial, hence radicial, residual extensions from which it follows that $f: X \to S$ is universally injective. Since $f$ is on the other hand \'etale, it is an open immersion (see \cite[\href{https://stacks.math.columbia.edu/tag/02LC}{Tag 02LC}]{stacks-project}). But as $f$ is also surjective\footnote{the surjectivity follows from the fact that the fibers cannot be empty for $\AA^1$-contractible schemes}, the conclusion follows.
\end{proof}

Recall from \cref{defn:A1-rigid} that $\AA^1$-rigid schemes are the analogs of discrete objects in the motivic homotopy theory. The authors in \cite[Example 2.1.10]{asokmorel2011} observe that any 0-dimensional scheme over a field is $\AA^1$-rigid. This observation admits a generalization over a base scheme as follows:

\begin{prop}\label{0dim:rigid}\cite{DMO25}
An \'etale scheme of finite presentation $f: X\to S$ over an integral scheme $S$ is $\AA^1$-rigid over $S$.
\end{prop}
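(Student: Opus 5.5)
We have to show that for every $U\in\Sm_S$ the pullback along the projection $\pr: U\times_S\AA^1_S\to U$ gives a bijection
$$\Hom_S(U,X)\longrightarrow \Hom_S(U\times_S\AA^1_S, X).$$
This is the relative analogue of \cref{defn:A1-rigid}. Injectivity is automatic, since $\pr$ has the zero section (\cref{rem:A1-invar:onto}). So everything reduces to surjectivity: every $S$-morphism $g: U\times_S\AA^1_S\to X$ should factor through $\pr$. Since $\Hom_S(-,X)$ is a Zariski sheaf, we may assume $U$ is connected and, where convenient, affine.

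The plan is to use the structure of \'etale morphisms, through sections. An $S$-morphism $h:T\to X$ is the same thing as a section of the \'etale, hence unramified, morphism $X_T:=X\times_S T\to T$. We first reduce to the case where $X\to S$ is separated, or else bypass that issue, and then argue as follows. Sections of an unramified morphism are open immersions, and for separated unramified morphisms they are also closed immersions. So a section of $X_{U\times\AA^1}\to U\times\AA^1$ is determined by a connected component of $X_{U\times\AA^1}$ mapping isomorphically onto $U\times\AA^1$. It therefore suffices to show that the connected components of $X_{U\times\AA^1}=X_U\times_U(U\times\AA^1)$ are exactly $W\times\AA^1$ for the connected components $W$ of $X_U$. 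Granting this, the section $g$ corresponds to a component $W\times\AA^1$ mapping isomorphically onto $U\times\AA^1$. Then $W\to U$ is an isomorphism, because it is the base change of that isomorphism along the zero section. Hence $g=g_0\circ\pr$ with $g_0=g\circ i_0$.

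For the key claim about connected components, we use that $\AA^1$ has geometrically connected fibres. The projection $X_U\times\AA^1\to X_U$ is flat and surjective, hence open, and it has connected fibres $\AA^1_{\kappa(x)}$. A surjective open map with connected fibres induces a bijection on connected components. This gives the claim with no hypothesis on $S$.

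The main obstacle is separatedness. Without it, a section of an \'etale morphism need not be closed, and indeed $X$ could be an affine line with doubled origin-type \'etale gluing. This is where the integrality of $S$ should enter. Over an integral $S$, I would instead argue pointwise. Two sections $g$ and $g_0\circ\pr$ of the unramified map $X_{U\times\AA^1}\to U\times\AA^1$ agree on an open set, namely their equalizer. This open set contains the zero section, and over each point $u\in U$ the fibre $X_u$ is a disjoint union of points. So the restrictions to $\AA^1_{\kappa(u)}$ (a connected space) of both sections land in one point of $X_u$ with trivial residue extension, by the field case (\cite[Example 2.1.10]{asokmorel2011}). Hence the two sections agree set-theoretically on every fibre. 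Since both are open immersions with the same image, and sections of an unramified map are determined by their image, they coincide. I expect checking this "same image implies equal" step carefully, and isolating exactly where integrality of $S$ is used (plausibly to rule out nilpotent or embedded phenomena so that the pointwise argument suffices), to be the delicate part.
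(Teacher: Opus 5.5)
Your final, pointwise argument is correct, and it takes a genuinely different route from the paper's.

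The paper forms $Z=X\times_{X\times_S X}(\AA^1_U\times_U\AA^1_U)$ via $\psi\times\psi$; this is closed because $X$ is separated, which is automatic for objects of $\Sm_S$ under the paper's conventions. It then applies the field case only over the generic point $\eta$ of $S$, to make $\psi$ constant over $U_0=h^{-1}(S_0)$. Finally it spreads this to all of $U$ using that $\pi$ is flat, hence open. Integrality of $S$ is needed exactly to have that generic point.

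You instead apply the field case at every $u\in U$ directly, since each $X\times_S\Spec\kappa(u)$ is \'etale over $\kappa(u)$. This gives $g|_{\AA^1_{\kappa(u)}}=(g\circ i_0\circ\pr)|_{\AA^1_{\kappa(u)}}$ as morphisms. So the two associated sections of $X\times_S(U\times_S\AA^1_S)\to U\times_S\AA^1_S$ agree at every point $y$, meaning as morphisms out of $\Spec\kappa(y)$. That is the right reading of your ``agree set-theoretically'', since a point of a fibre product is not determined by its two projections.

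The step you flag as delicate is immediate. Sections of an unramified morphism are open immersions, being base changes of the open diagonal. If two sections $s_1,s_2$ of $p$ have the same image $V$, then $p|_V$ is an isomorphism and both $s_1$ and $s_2$ are its inverse.

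Consequently your argument never uses integrality of $S$, separatedness of $X$, or connectedness of $U$. It proves the statement for any \'etale $X\to S$ over any base, and so answers affirmatively the question the paper raises right after the proposition.

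Your first route, through the connected components of $X_U\times_U\AA^1_U$, is also valid, but only for separated $X$, and the pointwise argument subsumes it. The paper's method offers a spreading-out-from-the-generic-point template; yours is shorter and strictly more general.
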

\begin{proof}
Let $h:U\to S$ be any smooth $S$-scheme and let $\psi:\AA^1_U\to X$ be an $S$-morphism. We can assume without loss of generality that $U$ is connected. Let $\pi:\AA^1_U\times_U \AA^1_U\to U$ be the flat morphism defined as the composition of the  projections $\mathrm{pr}_1:\AA^1_U\times_U \AA^1_U\to \AA^1_U$ and $\AA^1_U\to U$, and consider the following cartesian diagram 
\[
\begin{tikzcd}
 Z=X \times_{X\times_S X} (\AA^1_U\times_U \AA^1_U) 
  \arrow[r,""] \arrow[d,swap,"j"] 
&  X \arrow[d,"\Delta"] \\
 \AA^1_U\times_U \AA^1_U \arrow[r,"\psi\times \psi"]
 & X\times_S X
 \end{tikzcd}
\]
where $\Delta:X\to X\times_S X$ is the diagonal morphism. Since $f: X\to S$ is separated, $\Delta$ is a closed subscheme of $X\times_S X$ and hence $j: Z\to \AA^1_U\times_U \AA^1_U$ is a closed immersion as well. Since $\pi$ is flat, whence an open morphism, the image by $\pi$ of the Zariski open subset $\AA^1_U\times_U \AA^1_U\setminus j(Z)$ is a Zariski open subset $U'$ of $U$ consisting by construction of all points $u$ for which the morphism $\psi|_{\AA^1_u}:\AA^1_u\to X$ is not constant. The fiber $X_\eta$ of $f: X\to S$ over the generic point $\eta$ of $S$ being an \'etale scheme over the field of functions $K$ of $S$, it is $\AA^1$-rigid by the observation made above. It follows that  there exists a non-empty Zariski open subset $S_0\subset S$ such that letting $U_0=h^{-1}(S_0)$, the restriction $\psi|_{\AA^1_{U_0}}:\AA^1_{U_0}\to f^{-1}(S_0)$ of $\psi$ factors through $U_0$. This implies that $U'\subset U\setminus U_0$, whence that $U'=\emptyset$ (because $U'$ is open in $U$, $U\setminus U_0$ is closed in $U$ and $U$ is connected). Thus, $\psi:\AA^1_U\to X$ factors through the projection $\AA^1_U\to U$, which shows that $X$ is $\AA^1$-rigid. 
\end{proof}

\begin{question}
Does the \cref{0dim:rigid} still hold if $X$ is not separated?
\end{question}

%-----------------------------------------------------------------
\section{Relative \texorpdfstring{$\AA^1$}{A1}-Contractibility of Smooth Curves} \label{sect:dim=1}
In this section, we show that being affine is a consequence of $\AA^1$-contractibility for smooth curves (\cref{A1contracurves:affine}). We then present an extensive proof of the fact that $\AA^1$ is the only smooth $\AA^1$-contractible curve over any field (\cref{1-dim theorem}). Finally, we extend this result over a reasonably arbitrary base scheme in relative dimension 1 (\cref{1-dim DD}). Let us begin by recalling the following definition:

\begin{defn}\label{defn:k-form}
Let $Y$ be a $k$-variety. We say that $Y$ is a \emph{$k$-form} of $\AA^n_k$ if for some (say, finitely generated) field extension $L/k$ the scheme-theoretic base change            
$$Y_L:= Y \times_k \Spec L \cong \AA^n_L \cong \Spec L[x_1,\dots,x_n]$$ 
We then say that this $k$-form $Y$ is trivial if $L= \Spec k$ and non-trivial otherwise.
\end{defn}

\subsection{$\AA^1$-contractibility over fields}
Let us fix a field $k$. We begin with the following lemma that says that the $\AA^1$-contractibility of a curve $C$ necessarily implies that $C$ is affine.

\begin{lemma}\label{A1contracurves:affine}\cite{DMO25}
Any smooth $\AA^1$-contractible curve $C$ over a field is affine.
\end{lemma}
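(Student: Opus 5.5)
The plan is to argue by contradiction: if $C$ is not affine, then it must be proper, and a proper smooth $\AA^1$-contractible curve cannot exist. We may first reduce to the case where $C$ is connected. By \cref{eg:S-point}, an $\AA^1$-contractible scheme is $\AA^1$-connected, and $\pi_0^{\AA^1}$ surjects onto the sheaf of connected components. So $C$ is a smooth, hence normal, irreducible curve over $k$, and by \cref{S-point} it has a $k$-rational point. We then use the classical dichotomy for one-dimensional separated integral schemes of finite type over a field: $C$ is either affine or proper over $k$ (a curve that is not proper embeds as an open subscheme of its regular completion $\bar C$ minus a nonempty finite set of points, and is therefore affine). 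It therefore suffices to rule out $C$ proper.

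Assume now that $C$ is a smooth proper geometrically connected curve with a rational point; geometric connectedness follows from having a rational point together with normality. Its Picard group is $\Pic(C) \supset \Pic^0(C) \oplus \ZZ$, where the $\ZZ$ summand comes from the degree map, split by the rational point. In particular $\Pic(C) \neq 0$, since the rational point defines a degree-one line bundle. This contradicts \cref{A1-cont-trivial-Pic}, which says an $\AA^1$-contractible scheme has trivial Picard group. Alternatively, one can use $\CH^1(C) = \Pic(C) \cong \ZZ \oplus \dots$ together with the vanishing of Chow groups for $\AA^1$-contractible schemes. For this we need that $\Pic$ is $\AA^1$-representable on smooth schemes via $\PP^\infty$, i.e. $\Pic(X) = [X, \PP^\infty]_{\AA^1}$, so that an $\AA^1$-weak equivalence $C \to \Spec k$ induces $\Pic(C) \cong \Pic(k) = 0$.

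The main obstacle is making sure the Picard argument is valid in this generality, over an arbitrary (possibly imperfect) field and for proper rather than affine $C$. \cref{A1-cont-trivial-Pic} is phrased loosely, so I would justify it directly: $\Pic(X) \cong H^1_{Nis}(X, \GG_m)$, and $\GG_m$ is $\AA^1$-rigid (\cref{Gm-A1-rigid}). Hence $K(\GG_m,1) \simeq B\GG_m$ is $\AA^1$-local on smooth schemes, giving $\Pic(X) = [X, B\GG_m]_{\AA^1}$, which is $\AA^1$-invariant. The degree computation then only uses the rational point supplied by \cref{S-point}.

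One could instead argue via the genus. If $g \geq 1$, the curve is $\AA^1$-rigid and therefore not contractible by \cref{A1-rigid:mainresult}, and the $g=0$ case is handled by the Picard argument. The Picard argument, however, covers all genera at once, so I would use it.
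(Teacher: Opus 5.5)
Your argument is correct, but it takes a different route from the paper. Both proofs start from the dichotomy that a smooth separated curve is either affine or proper.

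In the proper case, the paper invokes the Asok--Morel classification of smooth proper curves up to $\AA^1$-weak equivalence (\cite[Proposition 2.1.12]{asokmorel2011}) to say that the only $\AA^1$-connected one is $\PP^1_k$. It then rules that out via $\Pic(\PP^1_k)\cong\ZZ$, and excludes the remaining candidates (positive-genus curves, forms of $\PP^1_k$ or $\GG_{m,k}$) case by case.

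You skip the classification entirely. For any proper curve $C$, a closed point $c$ gives $\mathcal{O}_C(c)$ of degree $[\kappa(c):k]>0$, so $\Pic(C)\neq 0$ regardless of genus or of whether $C$ is a nontrivial form. The only homotopical input is then the $\AA^1$-invariance of $\Pic$ on smooth schemes. This is more elementary and works uniformly over any field, including imperfect ones. In fact you need neither the rational point from \cref{S-point} nor geometric connectedness, since every closed point has positive degree. The paper's route gives more information, namely which proper curves are $\AA^1$-connected, but the lemma does not need it.

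One correction to your direct justification of \cref{A1-cont-trivial-Pic}. The $\AA^1$-rigidity of $\GG_m$ only gives $\AA^1$-invariance of $H^0(-,\GG_m)$. For $B\GG_m$ to be $\AA^1$-local you also need $H^1_{Nis}(-,\GG_m)=\Pic$ to be $\AA^1$-invariant on smooth schemes. This holds classically, because $\Pic=\mathrm{Cl}$ on regular schemes and $\mathrm{Cl}(X\times\AA^1)\cong\mathrm{Cl}(X)$. With both facts one gets $\Pic(X)=[X,B\GG_m]_{\AA^1}$, as in \cite[\S 4, Proposition 3.8]{MV99}.
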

\begin{proof}
Since $C$ is a smooth curve, it is either affine or projective. On the contrary, suppose that $C$ is projective. By the classification result of smooth proper (equivalently, projective in dimension 1) curves over a field $k$ up to $\AA^1$-weak equivalence \cite[Proposition 2.1.12]{asokmorel2011}, we have that the only $\AA^1$-connected curve is the projective line $\mathbb{P}^1_k$. But clearly $\PP^1_k$ cannot be $\AA^1$-contractible (for e.g., $\Pic(\PP^1) \cong \ZZ \ncong 0$; cf. \cref{A1-cont-trivial-Pic}). This uniqueness of $\PP^1_k$ among $\AA^1$-connected proper curves follows from the observation that other candidates, such as proper curves of positive genus or non-trivial $k$-forms of $\mathbb{P}^1_k$ or $\GG_{m,k}$, are all $\AA^1$-rigid varieties, hence cannot be $\AA^1$-contractible. Thus, $C$ has to be affine.
\end{proof}

As a consequence of this, one can uniquely characterize the affine line among $\AA^1$-contractible curves over arbitrary fields (see also \cite[Claim 5.7]{asok2007unipotent}).

\begin{theorem}\label{1-dim theorem}
A smooth curve $C$ over an arbitrary field $k$ is $\AA^1$-contractible in $\mathcal{H}(k)$ if and only if $C$ is isomorphic to the affine line $\AA^1_k$.
\end{theorem}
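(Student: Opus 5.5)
The implication ``$C\cong\AA^1_k \Rightarrow$ $\AA^1$-contractible'' is immediate from \cref{A1-contr:egs}. For the converse, assume $C$ is a smooth $\AA^1$-contractible curve over $k$. By \cref{A1contracurves:affine}, $C$ is affine. By \cref{S-point} (a contractible scheme is $\AA^1$-connected, hence has a $k$-point), $C(k)\neq\emptyset$. By \cref{A1-cont-trivial-Pic}, $\Pic(C)=0$ and, more precisely, $\Gamma(C,\mathcal{O}_C)^\times=k^\times$. The latter holds because a non-constant unit gives a non-constant morphism $C\to\GG_m$, while $[C,\GG_m]_{\AA^1}=\GG_m(k)$ by \cref{A1-rigid:mainresult}(1) applied to the $\AA^1$-rigid scheme $\GG_m$ (\cref{Gm-A1-rigid}) and the equivalence $C\simeq\Spec k$. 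The plan is to use these invariants to force $C\cong\AA^1_k$, working first over the separable closure and then descending.

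\textbf{Step 1 (geometric shape).} Let $\bar C$ be the regular projective completion of $C$ and $D=\bar C\setminus C$, a non-empty finite set of closed points. By \cref{basechange:imperfect}, $C_{k^s}$ is still $\AA^1$-contractible, so it is also $\AA^1$-connected. By the Asok--Morel analysis already used in the proof of \cref{A1contracurves:affine}, any curve that is not $\AA^1$-uniruled is $\AA^1$-rigid and hence not $\AA^1$-connected. Therefore $\bar C_{k^s}$ must be a regular genus-zero curve with a rational point, i.e.\ $\bar C\cong\PP^1_k$, since $C(k)\neq\emptyset$ and a conic with a rational point is $\PP^1$. (If $k$ is imperfect, one instead works with regular genus zero via the arithmetic genus; I would treat this through Russell's forms as below.)

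\textbf{Step 2 (one point at infinity).} Since $C$ is an open subset of $\PP^1_k$ with complement $D=\{x_1,\dots,x_r\}$, the localization sequence gives $\Pic(C)=\ZZ/\gcd(\deg x_i)$ and $\mathcal{O}(C)^\times/k^\times\cong\ZZ^{r-1}$. Triviality of the units forces $r=1$, and triviality of $\Pic$ forces $\deg x_1=1$. Hence $D$ is a single $k$-rational point and $C\cong\PP^1_k\setminus\{\infty\}=\AA^1_k$.

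\textbf{Main obstacle.} The delicate point is Step 1 over imperfect fields, where $\bar C$ may be regular but not smooth, so that $C$ could be a non-trivial form of $\AA^1$. I would handle this exactly as in \cref{ex:forms-A1}. Smoothness of $C$ together with $C_{\bar k}\cong\AA^1$ (which follows from Step 2 applied over $\bar k$) makes $C$ a $k$-form of $\AA^1$. By Russell, a non-trivial form either has no $k$-point or has non-trivial Picard group by Achet's theorem. Both possibilities are excluded by \cref{S-point} and \cref{A1-cont-trivial-Pic}, so $C\cong\AA^1_k$.
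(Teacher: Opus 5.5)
Your proposal is correct and follows essentially the paper's route: base change to the algebraic closure via \cref{basechange:imperfect}, identify the smooth completion with $\PP^1$ by the genus/rigidity argument, and force a single point at infinity (you via the unit and Picard count from the localization sequence, the paper via $\AA^1$-rigidity of $\GG_m$ minus points). You then descend using Russell's description of forms of $\AA^1$ together with Achet's Picard theorem, exactly as in \cref{ex:forms-A1}; your direct Steps 1--2 over $k$ give a forms-free argument when $k$ is perfect, and for imperfect $k$ you rightly fall back on that same forms argument.
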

\begin{proof}
One implication is obvious: the affine line $\AA^1_k$ is $\AA^1$-contractible over any field. Suppose that $C$ is a smooth $\AA^1$-contractible curve. To begin with, let us assume $k$ is algebraically closed. Then one knows that the (regular) compactification $P$ of $C \hookrightarrow P$ over perfect fields is, in fact, a smooth curve. Moreover, by the classification result and the genus argument mentioned in \cref{A1contracurves:affine}, one again concludes that $P \cong \mathbb{P}^1_k$. By the definition of the compactification, we have that $C = \PP^1 \backslash Z$, where $Z \hookrightarrow P$ is the closed complement $Z:= \PP^1 \bs C$. As $C$ is $\AA^1$-contractible and in particular, $\AA^1$-connected (\cref{eg:S-point}), $Z$ cannot have more than one closed point. On contrary, suppose that $Z= \{p_1,p_2 \}$, then we see that 
        $$C\cong \PP^1\bs Z \cong \PP^1\bs \{p_1,p_2\}\cong \GG_{m,k}.$$
But from the $\AA^1$-rigidity of $\GG_m$ (\cref{Gm-A1-rigid}), we have that $\GG_{m,k}$ cannot be $\AA^1$-connected; let alone $\GG_{m,k} \bs \{p_1,\dots,p_n\}$. Hence, $Z = \{p_1\}$. Thus, one concludes that $C \cong \PP^1_k \backslash \{pt\} \cong \AA^1_k$. Now, let $k$ be arbitrary. Let $\widetilde{k}$ be an algebraic closure of $k$. Then, by \Cref{basechange:imperfect},
    $$C_{\widetilde{k}}:=C\times_{\Spec k} \Spec \widetilde{k} \to \Spec \widetilde{k}$$
is a smooth $\AA^1$-contractible curve over $\widetilde{k}$, which, by the previous case, is isomorphic to $\AA^1_{\widetilde{k}}$. Thus, $C$ is a $k$-form of $\AA^1_k$, which automatically, due to \Cref{ex:forms-A1}, implies that it has to be the trivial $k$-form of $\AA^1$. This completes the proof.
\end{proof}

\begin{remark}
For an alternate proof over algebraic closed fields of characteristic zero using $\GG_a$-actions, see \cite[Corollary 1.29]{freudenburg2006algebraic}.
\end{remark}

\begin{corollary}\label{A1-fiberspace}\cite{DMO25}
For any base scheme $S$, if $f: X\to S$ is a relative $\AA^1$-weak equivalence in $\Spc_S$ of relative dimension 1, then $X$ is an $\AA^1$-fiber space.
\end{corollary}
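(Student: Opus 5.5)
Since $f:X\to S$ is a relative $\AA^1$-weak equivalence in $\Spc_S$, we first pass to the fibers. By \cref{cor:A1-cont-fibers}, for every point $s\in S$ (closed or not) the scheme-theoretic fiber $X_s=X\times_S\Spec\kappa(s)$ is a smooth $\AA^1$-contractible $\kappa(s)$-scheme. Here $f$ is implicitly smooth, of finite presentation, and of relative dimension $1$, so each $X_s$ is a smooth $\kappa(s)$-scheme of pure dimension $1$. The fiber can have no zero-dimensional components, because smoothness of relative dimension $1$ forces every component to have dimension $1$. It is also non-empty, since an $\AA^1$-contractible scheme admits a rational point (\cref{eg:S-point}, \cref{S-point}).

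The key point is that $X_s$ must be a \emph{curve} in the sense of \cref{1-dim theorem}, namely connected. An $\AA^1$-contractible space is $\AA^1$-connected. If $X_s$ were a disjoint union $X_s=X'\sqcup X''$ of non-empty open subschemes, then $\pi_0^{\AA^1}(X_s)$ would surject onto the sheaf represented by the two-point étale scheme $\Spec\kappa(s)\sqcup\Spec\kappa(s)$. That scheme is $\AA^1$-rigid, so by \cref{unstable-0-connect:thm} and \cref{A1-rigid:mainresult}(2) $X_s$ could not be $\AA^1$-connected. Alternatively, one can invoke the argument already used in \cref{0-dim prop}: $\AA^1$-connectedness forces topological connectedness. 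Hence $X_s$ is a connected smooth curve over $\kappa(s)$, and it is geometrically integral after any separable base change for the same reason.

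We then apply \cref{1-dim theorem} to $X_s\to\Spec\kappa(s)$, which is valid over an arbitrary field (perfect or not, thanks to \cref{basechange:imperfect} and \cref{ex:forms-A1}). This gives $X_s\cong\AA^1_{\kappa(s)}$ for every $s\in S$. Together with the smoothness and finite presentation of $f$, this is exactly \cref{def:A1-fib-space} with $n=1$, so $f$ is an $\AA^1$-fiber space.

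The only delicate step is the connectedness of the fibers. \cref{1-dim theorem} is stated for "a smooth curve", so we must make sure no disconnected or non-equidimensional fiber slips through. I expect to handle this cleanly via $\AA^1$-connectedness as above; everything else is a direct chaining of \cref{cor:A1-cont-fibers} and \cref{1-dim theorem}.
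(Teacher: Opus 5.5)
Your proposal is correct and follows the paper's proof: you pass to the fibers by base change (\cref{pullbackfunctor}, of which \cref{cor:A1-cont-fibers} is the special case you cite) and apply \cref{1-dim theorem} to each $X_s$. The connectedness check you add is left implicit in the paper and is sound, though \emph{surject} overstates it: it suffices that the non-constant morphism $X_s\to\Spec\kappa(s)\sqcup\Spec\kappa(s)$ would have to factor through $\pi_0^{\AA^1}(X_s)\cong\Spec\kappa(s)$, which is absurd.
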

\begin{proof}
For any point $s\in S$, consider the base change of $f$ along $\Spec\ \kappa(s)\to S$. By \Cref{pullbackfunctor}, each fiber $X_s:= X\times_S \Spec \kappa(s)$ is a smooth $\AA^1$-contractible curve in $\mathcal{H}(\kappa(s))$. By \Cref{1-dim theorem}, we have that $X_s\cong \AA^1_{\kappa(s)}$.
\end{proof}

\subsection{Relative dimension 1}\label{sect:reldim=1}
By exploiting the \Cref{def:A1-fib-space} and \cref{1-dim theorem}, we furthermore extend this characterization over a base scheme via the following structure result.

\begin{theorem}\label{1-dim DD}\cite{DMO25}
For a smooth separated morphism of finite presentation $f:X\to S$ and relative dimension $1$ over a Noetherian normal scheme $S$, the following are equivalent:
\begin{itemize}
\item[1.] $f: X\to S$ is a relative $\AA^1$-weak equivalence in $\mathrm{Spc}_S$,
\item[2.] $f: X\to S$ is an $\AA^1$-fiber space,
\item[3.] $f: X\to S$ is a Zariski locally trivial $\AA^1$-bundle. 
\end{itemize}
\end{theorem}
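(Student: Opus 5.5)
I will prove the cycle of implications $(1)\Rightarrow(2)\Rightarrow(3)\Rightarrow(1)$.

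The implication $(1)\Rightarrow(2)$ is exactly \cref{A1-fiberspace}. By \cref{cor:A1-cont-fibers}, every fiber $X_s$ is a smooth $\AA^1$-contractible curve over $\kappa(s)$. Then \cref{1-dim theorem} gives $X_s\cong\AA^1_{\kappa(s)}$.

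The implication $(3)\Rightarrow(1)$ is formal. The property of being an $\AA^1$-weak equivalence in $\Spc_S$ can be checked fiberwise by \cref{fiberwise=relative}. For a Zariski locally trivial $\AA^1$-bundle every fiber is $\AA^1_{\kappa(s)}$, which is $\AA^1$-contractible.

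The real content is $(2)\Rightarrow(3)$: an $\AA^1$-fiber space of relative dimension $1$ over a Noetherian normal base is Zariski locally trivial. This is the relative-dimension-one case of the Dolga\v{c}ev--Ve\v{i}sfe\v{i}ler problem, and I would invoke or adapt the Kambayashi--Wright theorem \cite{KW85}. The steps are as follows.

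\begin{enumerate}
\item Reduce to $S=\Spec R$ with $R$ a Noetherian normal local domain. Local triviality is Zariski local, and the components of a normal Noetherian scheme are integral.
\item Treat the generic fiber. $X_\eta\cong\AA^1_K$, where $K=\mathrm{Frac}(R)$. Choose a coordinate $t\in\Gamma(X_\eta,\mathcal{O})$. Since $X$ is of finite presentation, after shrinking to a nonempty open set $X$ is trivial over a dense open set of $S$.
\item Extend triviality over codimension-one points. Over a height-one prime, $R_{\mathfrak p}$ is a DVR, so one can use the classical result that an $\AA^1$-fiber space over a DVR (or over a regular base of dimension $\le 1$) is trivial. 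Concretely, rescale $t$ by a power of the uniformizer so that it restricts to a coordinate on the closed fiber. This uses flatness, the fact that the closed fiber is $\AA^1_{\kappa}$, and an argument that the special fiber is a principal divisor.
\item Extend to all of $S$ by normality ($R=\bigcap_{\mathrm{ht}\,\mathfrak p=1}R_{\mathfrak p}$). Show $X$ is affine over $S$, for instance because $X$ is separated with affine fibers and one can use a relative projective compactification $\bar X\subset\PP^1$-bundle whose boundary is a section. Then show $\Gamma(X,\mathcal O_X)=R[t]$ by comparing with $R_{\mathfrak p}[t]$ in codimension one, using the reflexivity or $S_2$ property of $f_*\mathcal O_X$.
\item Show that the coordinates from different height-one primes glue to a single coordinate. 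I expect the freedom $t\mapsto at+b$, with $a\in R^\times_{\mathfrak p}$, to be absorbable locally, giving an $\AA^1$-bundle with transition functions in $\GG_a\rtimes\GG_m$.
\end{enumerate}

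The main obstacle is steps 4--5: passing from codimension-one triviality to global Zariski local triviality, and in particular establishing affineness of $f$ from separatedness alone. If a direct argument stalls, I would fall back on citing \cite{KW85}, where Kambayashi--Wright prove that $\AA^1$-fibrations over normal (locally factorial in some versions) Noetherian bases are locally trivial.

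The counterexamples promised in \cref{non-examples:reldim1} show that normality is genuinely needed. Over a non-normal base, for example a cuspidal curve, one can glue non-trivially, so the argument must use normality precisely at step 4.
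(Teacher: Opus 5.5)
Your cycle of implications is sound and matches the paper. $(1)\Rightarrow(2)$ is \cref{A1-fiberspace}. $(3)\Rightarrow(1)$ is formal: the paper quotes \cref{A1-contr:egs} rather than \cref{fiberwise=relative}, and either works. The paper, like your fallback, rests $(2)\Rightarrow(3)$ on \cite{KW85}, so with that citation your proof is the paper's.

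The direct argument you sketch for $(2)\Rightarrow(3)$, however, has a genuine gap in steps 4--5, and it is a missing idea rather than a technicality. Fix a generic coordinate $t$. The trivialization at a height-one prime $\mathfrak p$ is then a coordinate $a_{\mathfrak p}t+b_{\mathfrak p}$ with $a_{\mathfrak p}\in K^\times$ and $b_{\mathfrak p}\in K$, not with $a_{\mathfrak p}\in R_{\mathfrak p}^\times$. The valuations $v_{\mathfrak p}(a_{\mathfrak p})$ define a Weil divisor class on $\Spec R$; even when that class vanishes, the $b_{\mathfrak p}$ pose a Cousin problem. The identity $R=\bigcap_{\mathrm{ht}\,\mathfrak p=1}R_{\mathfrak p}$ resolves neither.

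For instance, let $R=\mathcal{O}_{\AA^2_k,0}$ with punctured spectrum $U$, and pull back $SL_2\to\AA^2_k\setminus\{0\}$ to $U$. This is the $\GG_a$-torsor with \v{C}ech cocycle $(xy)^{-1}$ on $D(x)\cap D(y)$. All its fibers are $\AA^1$ and all its height-one localizations are trivial. Yet it is a non-trivial $\AA^1$-bundle, since $(xy)^{-1}\neq 0$ in $H^1(U,\mathcal{O}_U)\cong H^2_{\mathfrak m}(R)$, and rescaling by $\mathcal{O}(U)^\times=R^\times$ does not help. So height-one data alone cannot give $\Gamma(X,\mathcal{O}_X)=R[t]$ or a global coordinate. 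The fibers over points of codimension $\ge 2$ must be used, and your sketch has no mechanism for that.

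The affineness claim in step 4 also fails as stated: being separated with affine fibers does not imply affine. \cref{Winkelmann:example} is a separated $\AA^2$-fiber space over the regular base $\AA^2_{\ZZ}$ with strictly quasi-affine total space. In relative dimension $1$, affineness is a consequence of the theorem, not an input.

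The argument the paper recalls from \cite{KW85} avoids both problems. It inducts on $\dim S$ instead of patching in codimension one, and never needs $f$ to be affine.
\begin{itemize}
\item \emph{DVR case.} A generic coordinate times a suitable power of $\pi$ is a regular function on the normal scheme $X$, giving a birational $S$-morphism $h\colon X\to\AA^1_S$. Rescaling only makes $h_s$ non-zero, not a coordinate as you wrote; when $h_s$ is constant, $h$ is modified as in \cite[Proposition 1.4]{KW85}. Zariski's Main Theorem then makes the resulting quasi-finite map an open immersion. It is an isomorphism, because an open subscheme of $\AA^1_\kappa$ isomorphic to $\AA^1_\kappa$ is all of it.
\item \emph{Local case, $\dim R\ge 2$.} Induction gives local triviality over $S'=S\setminus\{s\}$, and \cite[\S 2.2--2.3]{KW85} upgrades this to $X|_{S'}\cong\AA^1_{S'}$. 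By the example above, this is the step where the closed fiber $X_s\cong\AA^1_{\kappa(s)}$ has to enter.
\item \emph{Conclusion.} The coordinate extends across $X_s$, which has codimension $\ge 2$ in $X$. The map $h^*\Omega_{\AA^1_S/S}\to\Omega_{X/S}$ of invertible sheaves is an isomorphism off codimension $2$, hence everywhere. So $h$ is \'etale, and one concludes as in the DVR case.
\end{itemize}
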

\begin{proof}
The implication $(1) \iff (2)$ is a consequence of \cref{fiberwise=relative} and \cref{1-dim theorem}. Indeed assuming (1), we have that for any $s\in S$, the base change morphism $f_s: X_s \to \Spec \kappa(s)$ is an $\AA^1$-weak equivalence in $\Spc_{\kappa(s)}$, equivalently, $X_s$ is a smooth $\AA^1$-contractible curve in $\mathcal{H}(\kappa(s))$, whence by \cref{1-dim theorem} imply that $X_s \cong \AA^1_{\kappa(s)}$. The \cref{fiberwise=relative} now glues this fibrewise data to produce that $X$ is an $f: X\to S$ is an $\AA^1$-weak equivalence in $\Spc_S$. The other implication follows directly from \cref{fiberwise=relative}. The implication $(3) \implies (1)$ follows from \cref{A1-contr:egs}. The implication $(2) \Rightarrow (3)$ is due to the characterization of Zariski-locally trivial $\AA^1$-bundles by Kambayashi-Wright \cite{KW85}. For the convenience of the reader, we now briefly recall the scheme of the proof given in \cite{KW85}.
\medskip

We begin with the case where $S$ is the spectrum of a discrete valuation ring $R$, with uniformizing parameter $\pi$, we denote by $\eta$ the generic point of $S$ with corresponding residue $K=\mathrm{Frac}(R)$ and by $s$ the closed point of $S$, with residue field $\kappa$. Since $f:X\to S$ is an $\AA^1$-fiber space, the fiber $X_\eta$ is isomorphic to $\AA^1_K=\mathrm{Spec}(K[x_0])$. Viewing $x_0$ as a rational function $X$, we can assume up to changing $x_0$ by $\pi^n x_0$ for suitable $n\in \mathbb{Z}$ if necessary, that $x_0$ is actually a regular function on $X$ and that its restriction to the closed fiber $X_s\cong \AA^1_\kappa$ of $f: X\to S$ is non-zero. The inclusion $R[x_0]\hookrightarrow  \Gamma(X,\mathcal{O}_X)$ then corresponds to an $S$-morphism $h:X\to \AA^1_S=\mathrm{Spec}(R[x_0])$ which restricts to an isomorphism over the generic point of $S$. If the restriction $h_{s}: X_s \to \AA^1_s$ of $h_0$ over the closed point $s$ of $S$ is constant, then arguing as in the proof of \cite[Proposition 1.4]{KW85}, there exists an $S$-morphism $\tau:\AA^1_S\to \AA^1_S$ and a  quasi-finite $S$-morphism $h':X\to \AA^1_S$ both restricting to isomorphisms over the generic point of $S$ such that $h=h'\circ \tau$, whence an open immersion by Zariski Main Theorem \cite[Corollaire 8.12.10]{EGAIV-3-Grothendieck1966}, from which it follows that $h':X\to S$ is an \'etale morphism restricting to an isomorphism over the generic point of $S$. On the other hand, since the restriction $h'_s: X_s= \AA^1_\kappa \to \AA^1_\kappa=\AA^1_s$ is \'etale, it is an isomorphism. This implies that $h'$ is an \'etale bijective morphism with trivial residual extensions, whence an isomorphism. 
\medskip

The case where $\dim S=1$ being settled, the proof then proceeds by an induction on the dimension of the base scheme $S$: letting $S$ be the spectrum of Noetherian normal local ring of dimension $n\geq 2$ and letting $s$ be its closed point, we can assume by induction that the restriction $f':X'\to S'$ of $f: X\to S$ over $S':=S\setminus\{s\}$ is a Zariski locally trivial $\AA^1$-bundle. The argument in \cite[$\S$ 2.2-2.3]{KW85} applies verbatim to give the conclusion that $f':X'\to S'$ is isomorphic to the  trivial $\AA^1$-bundle $\AA^1_{S'}\to S'$. The diagram in which all the horizontal maps are open immersions determines a rational map of $S$-schemes $h: X \dashrightarrow \AA^1_S$
\[\begin{tikzcd}
   X \arrow[d,"f"] & X'\cong \AA^1_{S'} \arrow[d] \arrow[l] \arrow[r] & \AA^1_S \arrow[d] \\ S & S' \arrow[l] \arrow[r] & S 
\end{tikzcd} \]  
which restricts to an isomorphism over $S'$. Since $\AA^1_S$ is affine, $h$ is uniquely determined by rational function on $X$, which is regular outside $X_s$, and since $X$ is normal because $f: X\to S$ is flat and $S$ is normal and $X_s$ has codimension $\geq 2$ in $X$, it follows that the rational map $h$ is in fact an everywhere defined $S$- morphism $h: X\to \AA^1_S$. Moreover, since $h$ restricts to an isomorphism over $S'$, the canonical homomorphism $h^*\Omega_{\AA^1_S/S}\to \Omega_{X/S}$ is an isomorphism outside $X_s$, whence is an isomorphism since $h^*\Omega_{\AA^1_S/S}$ and $\Omega_{X/S}$ are invertible sheaves and $X_s$ has codimension $\geq 2$ in $X$ (cf. \cite[Theorem 5.10.5]{EGAIV-2-Grothendieck1965elements}). In sum, $h: X\to \AA^1_S$ is an \'etale morphism of $S$-schemes restricting to an isomorphism over $S'$. Since $X_s\cong \AA^1_{\kappa(s)}$, we conclude as in the previous case that the restriction $h_s:X_s\to \AA^1_{\kappa(s)}$ is an isomorphism as well and  finally that $h:X\to \AA^1_S$ is an isomorphism of $S$-schemes.
\end{proof}

\cref{1-dim DD} provides us with an elegant characterization to identify the affine line relative to a base scheme. 

\begin{corollary}\label{A1unique:DD}\cite{DMO25}
A smooth separated scheme $f:X\to S$ of finite presentation and relative dimension $1$ over a Noetherian normal scheme $S$ is isomorphic to $\mathbb{A}^1_S$ if and only if all of the following holds:
\begin{itemize}
\item[1.] $X$ is a smooth $\mathbb{A}^1$-contractible curve in $\mathcal{H}(S)$,
\item[2.] the relative canonical sheaf of differentials $\Omega_f$ of $f$ is trivial, and
\item[3.] $f$ has a section.
\end{itemize}
\end{corollary}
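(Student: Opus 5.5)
The forward direction is immediate: $\AA^1_S$ is $\AA^1$-contractible over $S$ by \cref{A1-contr:egs}, its relative differentials $\Omega_{\AA^1_S/S}=\mathcal{O}\,dx$ are free, and the zero section is a section. The converse is the substance. Assuming (1)--(3), I first invoke \cref{1-dim DD}: condition (1) says exactly that $f$ is a relative $\AA^1$-weak equivalence in $\Spc_S$. Hence $f$ is a Zariski locally trivial $\AA^1$-bundle, so there is an open cover $\{S_i\}$ of $S$ with isomorphisms $\phi_i\colon X|_{S_i}\cong \AA^1_{S_i}$. The task is then to show that such a bundle is trivial once conditions (2) and (3) hold.

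To do this I analyse the transition functions. On $S_{ij}=S_i\cap S_j$ the composite $\phi_j\circ\phi_i^{-1}$ is an $S_{ij}$-automorphism of $\AA^1_{S_{ij}}$, that is, an $\mathcal{O}(S_{ij})$-algebra automorphism of $\mathcal{O}(S_{ij})[x]$, after shrinking to affine opens. Since $S$ is normal, hence reduced, such an automorphism has the form $x\mapsto a x+b$ with $a\in\mathcal{O}^\times$ and $b\in\mathcal{O}$. For a general ring $R$ an automorphism of $R[x]$ can have nilpotent higher coefficients, but reducedness kills these: the composite of the automorphism with its inverse must be the identity, which forces the higher coefficients to vanish. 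Consequently $X$ is an affine-linear bundle, a torsor under the vector bundle $L$ with transition cocycle $\{a_{ij}\}$, and the data of $X$ are the class $[a_{ij}]\in H^1(S,\mathcal{O}^\times)$ together with an affine twist $\{b_{ij}\}$.

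Condition (2) identifies $L$. The differential $dx_i$ is a local generator of $\Omega_{X/S}$, and it transforms by $dx_j=a_{ij}\,dx_i$. Hence $\Omega_{X/S}\cong f^*L^{\vee}$ for the line bundle $L$ determined by $\{a_{ij}\}$, up to the choice of dual convention. Triviality of $\Omega_f$ then gives $f^*L\cong\mathcal{O}_X$. Pulling back along the section $\sigma$ of condition (3) yields $L\cong\sigma^*f^*L\cong\mathcal{O}_S$. So the linear part of the cocycle is a coboundary, and after rescaling the coordinates $x_i$ we may assume $a_{ij}=1$.

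It remains to remove the translation part. The section $\sigma$ gives local functions $c_i=x_i\circ\sigma\in\mathcal{O}(S_i)$, and we replace $x_i$ by $x_i-c_i$. With $a_{ij}=1$, the cocycle relation $x_j=x_i+b_{ij}$ evaluated along $\sigma$ gives $b_{ij}=c_j-c_i$, so the new transition maps are the identity. The coordinates then glue to a global function $x$, which defines an isomorphism $X\cong\AA^1_S$ over $S$. The main obstacle I expect is the step showing that the transitions are affine-linear, together with the comparison of $\Omega_{X/S}$ with $f^*L$. The section itself is what makes the descent from $f^*L\cong\mathcal{O}_X$ to $L\cong\mathcal{O}_S$ work; without it one would need an argument that $f^*\colon\Pic(S)\to\Pic(X)$ is injective.
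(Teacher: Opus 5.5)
Your argument is correct and has the same structure as the paper's proof. \cref{1-dim DD} gives Zariski local triviality, and the structure group is affine-linear, so $X$ is a torsor under a line bundle $L$. Triviality of $\omega_f$ then kills $L$, and the section kills the remaining $\GG_{a,S}$-torsor.

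The one genuine difference is how you pass from $f^*L\cong\mathcal{O}_X$ to $L\cong\mathcal{O}_S$. The paper invokes the isomorphism $f^*\colon\Pic(S)\to\Pic(X)$ for such bundles over a normal base (Magid). You instead use $\sigma^*f^*=\mathrm{id}$ on $\Pic(S)$, which is more elementary and needs nothing beyond hypothesis (3).

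The paper's route buys a stronger intermediate statement: conditions (1) and (2) alone already make $X$ a $\GG_{a,S}$-torsor. The section is then used only once, to kill a class in $H^1(S,\mathcal{O}_S)$. Your route spends the section twice but avoids the Picard comparison entirely.

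Your explicit use of the reducedness of $S$ to rule out nilpotent higher coefficients in the transition automorphisms is also worthwhile. It fills in a point the paper passes over when it identifies $\mathrm{Aut}(\AA^1)$ with $\GG_a\rtimes\GG_m$, which is only true over reduced bases.
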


\begin{proof} 
One direction is clear: the affine line $\AA^1_S$ is $\AA^1$-contractible over $S$, has trivial sheaf of relative differential $\Omega_{\AA^1_S/S}$ and admits a section given by $0_S: S\to \AA^1_S$. 
\medskip

Now suppose the converse: since $f: X\to S$ is $\AA^1$-contractible, it is a Zariski locally trivial bundle by \cref{1-dim DD}. Since the automorphism group functor $\mathrm{Aut}(\AA^1)$ of the affine line is the affine group $\mathbb{G}_a\rtimes \mathbb{G}_m$, every Zariski locally trivial $\AA^1$-bundle over $S$ is an affine-linear bundle, that is, is isomorphic over $S$ to the total space of a torsor $\rho: P\to S$ under a certain line bundle $p: L= \mathbb{V}(\mathcal{L})\to S$ on $S$ for some invertible sheaf $\mathcal{L}$ on $S$. The latter invertible sheaf $\mathcal{L}$ is uniquely determined up to isomorphism by the property that under the isomorphism $f^*:\Pic(S)\to \Pic(X)$ induced by $f$ (see e.g.  \cite[Theorem 5]{magid1975picard}) the class of $f^*\mathcal{L}$ equals that of $\omega_f$. The assumption that $\omega_f$ is isomorphic to $\mathcal{O}_X$ implies that $\mathcal{L}\cong \mathcal{O}_S$, whence that $f:X\to S$ is isomorphic over $S$ to the total space of a $\mathbb{G}_{a,S}$-torsor $P\to S$, and the assumption that $f$ has a section implies in turn that $P\to S$ is the trivial $\mathbb{G}_{a,S}$-torsor $\mathbb{G}_{a,S}\to S$, whence that $X$ is isomorphic to $\AA^1_S$. 
\end{proof} 

\begin{remark}
As a consequence of the fact that every vector bundle torsor over an affine scheme is trivial, it follows from the proof of \Cref{A1unique:DD} that a smooth $\AA^1$-contractible scheme $f: X\to S$ of finite presentation and relative dimension $1$ over an affine Noetherian normal scheme $S$ always has a section. So, for a Noetherian normal \emph{affine} scheme $S$, relative $\AA^1$-contractibility and triviality of the relative canonical sheaf are sufficient to characterize the affine line $\AA^1_S$ among smooth $S$-schemes of finite presentation and relative dimension $1$ over $S$.
\end{remark}

On the other hand, if $S$ is not affine, the existence of a section in \cref{A1unique:DD} is a necessary condition, as illustrated by the following example.
\begin{example}
Consider the quasi-projective scheme $Y := (\PP^1\times\PP^1) \bs \Delta(\PP^1)$ obtained by deleting the diagonal $\Delta (\PP^1) := \text{Im} (\Delta:\PP^1 \hookrightarrow \PP^1\times_{\Spec k} \PP^1)$. Let $\pr_1: Y\to \PP^1$ be a morphism defined by the projection onto the first coordinate. Observe that $\pr_1$ is a Zariski locally trivial $\AA^1$-bundle and it is an $\AA^1$-weak equivalence in $\Spc_{\PP^1}$. Moreover, the scheme $Y$ is affine as it is given by the complement of an ample divisor in $\PP^1\times\PP^1$. However, the morphism $\pr_1$ cannot be a trivial torsor over $\PP^1$ as $Y$ does not admit any section, since any section necessarily intersects the diagonal $\Delta(\PP^1)$.
\end{example}

\subsubsection{Zariski Cancellation for Relative Curves}
As a consequence of \Cref{A1unique:DD}, we obtain the affirmation of the following relative variant of the Zariski Cancellation Problem for curves.

\begin{corollary}\label{Cor:rel-ZCP:dim1}\cite{DMO25}
Let $f: X\to S$ be a smooth separated scheme of finite presentation and relative dimension $1$ over a normal scheme $S$ such that $X\times_S \AA^n_S\cong \AA^{n+1}_S$ for some $n\geq 1$. Then $X$ is $S$-isomorphic to $\AA^1_S$. 
\end{corollary}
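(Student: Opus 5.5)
The strategy is to verify the three hypotheses of \cref{A1unique:DD} for $X$ and conclude from there. The one subtlety is that \cref{A1unique:DD} is stated over a Noetherian normal base, whereas here $S$ is only normal. I will handle this either by a limit argument ($f$ is of finite presentation, so the statement descends to Noetherian normal bases locally) or simply by reading the running Noetherian convention into the hypothesis. In any case, the question is Zariski local on $S$ once a section is produced, because a section together with a trivial $\omega_f$ makes $X$ globally the trivial $\GG_{a,S}$-torsor.

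\emph{Relative $\AA^1$-contractibility.} Consider the projection $X\times_S \AA^n_S\to X$. It is the pullback of $\AA^n_S\to S$ along $f$, hence a trivial $\AA^n$-bundle, and therefore an $\AA^1$-weak equivalence in $\Spc_X$. Since $f$ is smooth, \cref{cor:pushpull-A1-weak}(2) shows it is also an $\AA^1$-weak equivalence in $\Spc_S$. The composite $X\times_S\AA^n_S\cong \AA^{n+1}_S\to S$ is an $\AA^1$-weak equivalence in $\Spc_S$ by \cref{A1-contr:egs}. By two-out-of-three, $f$ is an $\AA^1$-weak equivalence in $\Spc_S$. This gives condition 1 of \cref{A1unique:DD}.

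\emph{Section.} Let $\sigma$ be the zero section of $\AA^{n+1}_S$. Transporting it through the isomorphism gives a section of $X\times_S\AA^n_S\to S$, and composing with the projection to $X$ yields a section of $f$. This gives condition 3.

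\emph{Triviality of $\Omega_f$.} This is the step I expect to be the main obstacle. Write $p:X\times_S\AA^n_S\to X$ for the projection. The relative cotangent sheaf decomposes as
\[
\Omega_{X\times\AA^n/S}\cong p^*\Omega_{X/S}\oplus \mathcal{O}^{n},
\]
and via the isomorphism with $\AA^{n+1}_S$ this sheaf is free of rank $n+1$. Taking determinants gives $p^*\Omega_{X/S}\cong\mathcal{O}$. Restricting along the zero section $X\hookrightarrow X\times_S\AA^n_S$ of $p$ then gives $\Omega_{X/S}\cong\mathcal{O}_X$, so condition 2 holds. (Alternatively, one could use that $p^*:\Pic(X)\to\Pic(X\times\AA^n)$ is injective, since $p$ has a section, but this requires no further input.)

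With all three conditions verified, \cref{A1unique:DD} gives $X\cong\AA^1_S$. The only care needed is the bookkeeping for the Noetherian reduction. If it is needed, I would cover $S$ by affine opens $\Spec R$ and write $R$ as a filtered colimit of finitely generated normal subrings over which both $X$ and the isomorphism descend. Normality of these approximating rings is delicate, so in practice I would simply adopt the standing Noetherian convention.
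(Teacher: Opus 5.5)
Your proposal is correct and follows the paper's proof: you verify the three hypotheses of \cref{A1unique:DD} and then invoke it. Relative $\AA^1$-contractibility comes from two-out-of-three applied to $X\times_S\AA^n_S\to X\to S$, the section comes from the zero section of $\AA^{n+1}_S$, triviality of $\omega_f$ comes from $p^*\omega_f\cong\mathcal{O}$, and the paper likewise tacitly relies on the standing Noetherian convention. The one small difference is that you get $\omega_f\cong\mathcal{O}_X$ by pulling back along the zero section of $p$, which only needs injectivity of $p^*$ on $\Pic$; this is slightly more elementary than the paper's appeal to $\pr_1^*:\Pic(X)\to\Pic(X\times_S\AA^n_S)$ being an isomorphism.
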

\begin{proof} 
Since the relative canonical sheaf $\omega_{X \times_{S} \AA^n_S/S}$ is isomorphic to 
    $$\pr_1^*\omega_f\otimes \pr_2^*\omega_{\AA^{n+1}_S /S} \cong \pr_1^*\omega_f$$ 
we infer from the isomorphism $X\times_S \AA^n_S\cong \AA^{n+1}_S$ that $\pr_1^*\omega_f$ is trivial. However, since $\pr_1^*:\mathrm{Pic}(X)\to \mathrm{Pic}(X\times_S \AA^n_S)$ is an isomorphism, we have that $\omega_f$ is the trivial sheaf. On the other hand, $f$ has a section given by the composition of the zero section $s_0: S\to \AA^{n+1}_S$ with the projection $\pr_1: \AA^{n+1}_S \cong X\times_S \AA^n_S\to X$. Finally, since $\pr_1: X \times_S \AA^n_S\to X$ and $\AA^{n+1}_S\to S$ are $\AA^1$-weak equivalences in $\mathrm{Spc}_S$ so is $f: X\to S$ and the conclusion then follows from \Cref{A1unique:DD}.
\end{proof}

\subsection{Some Counterexamples over Non-normal Base} \label{non-examples:reldim1}
We will now present some counterexamples to the \cref{1-dim DD}, especially when the base scheme is not normal. In particular, these provide examples of relatively $\AA^1$-contractible schemes which are not Zariski locally trivial $\AA^1$-bundles. The first example is due to Madhav Nori (\cite[\S 3]{KW85}) that gives us a geometric instance of an $\AA^1$-fiber space that cannot be a Zariski locally trivial $\AA^1$-bundle over a singular (even $\AA^1$-contractible!) scheme.

\begin{example}
Let $k$ be algebraically closed field and let $C:=\{V^2-W^3 =0\}\subset \AA^2_k$ be the cuspidal curve which is singular at the point $p= (0,0)$. Consider the map 
\begin{align*}
    & F: \AA^1_k \to C\times \mathbb{P}^1_k\\
    & \hspace{10mm} t \mapsto \bigg((t^2,t^3),[t:1]\bigg)  
\end{align*}
The map $F$ is a closed immersion. Let $X = C\times \PP^1_k \bs Z$ be the open subscheme of $C\times \PP^1_k$, where $Z:= \textrm{Im}(F)$. Now the projection $\pr_1: C\times \PP^1_k\to C$ is a (trivial) $\PP^1_k$-fibre space which restricts to the map $\phi: X\to C$. The map $\phi$ is clearly an $\AA^1$-fiber space since for all points $s\in X$, the fiber $\phi^{-1}(s)\cong (\PP^1_k\bs [t:1])\cong \AA^1_k$.
\medskip

We now claim that $\phi$ is \emph{not} a Zariski locally trivial $\AA^1$-bundle in any open set around the point $p$. Suppose by contrary, $\phi:X \to C$ has a locally trivial $\AA^1$-structure around $p$, then by shrinking $C$ about the point $p$, we have an isomorphism $f: C\times \AA^1_k\to X$ over $C$.
\[ \begin{tikzcd}[row sep=tiny]
& X \arrow[dd,"\phi"] \\
C\times \AA^1_k \arrow[ur,"\cong"] \arrow[dr] & \\
& C
\end{tikzcd} \]
Now, considering the composite $C\times \AA^1_k \to X \hookrightarrow C\times \PP^1_k$ restricted over the fiber over any point $x\in C$ gives us an open immersion $\AA^1_k\hookrightarrow \PP^1_k$. This extends uniquely to an isomorphism (say) $\sigma_x:\PP^1_k \to \PP^1_k$. To conclude, we observe that the map 
\begin{align*}
        & C\times \PP^1_k \to C\times \PP^1_k   \\
        & \hspace{3mm} (x,y)\mapsto \bigg(x, \sigma_x(y)\bigg)
\end{align*} 
carries the singular curve $C\times \{\infty\}$ birationally onto the smooth curve $Z$, which is not possible.
\end{example}

The second example is derived from \cite{Ha75}. The base scheme $S$ is not regular, and the $\AA^1$-fiber space over it we consider cannot be stably Zariski locally trivial (for the same reason it is not Zariski locally trivial). Nevertheless, due to \cref{1-dim DD}, we now know that the $\AA^1$-fiber space considered is indeed relatively $\AA^1$-contractible!

\begin{example}\label{ex:Hamann-example}Let $k$ be a field of characteristic $p>0$, let $S\subset \AA^2_k= \Spec k[x,y]$ be singular, non semi-normal, curve with equation $$x^{p+1}-y^p=0$$ 
and let $X\subset S\times_k \mathrm{Spec}(k[u,v])$ be the affine surface with equation 
            $$u^p+xv^p-v = 0.$$
Denote by $A$ and $B$ the coordinate rings of $S$ and $X$ respectively. The restriction to $X$ of the projection $\pr_S$ is an affine $\AA^1$-fiber space $f:X \to S$ whose restriction over the complement $S^*\cong \Spec k[t^{\pm 1}]$, where $t= x^{-1}y$, of the unique singular point $o=(0,0)$ of $S$ is isomorphic to  trivial $\AA^1$-bundle 
        $$S^*\times _k \AA^1_k=\Spec  k[t^{\pm 1}][\tilde{u}]$$
over $S^*$, where $\tilde{u} = u+tv$. It is known in contrast that there is no Zariski open neighborhood of $o$ in $S$ over which $f: X\to S$ restricts to a trivial $\AA^1$-bundle \cite[Lemma 1.6]{Ha75}, in particular, $f: X\to S$ is not a Zariski locally trivial $\AA^1$-bundle. Nevertheless, it can be verified by direct computation that the morphism of $S^*$-schemes 
\begin{align*}
    & X\times_{S^*} \AA^1_{S^*}\to \AA^2_{S^*}\\
    & \hspace{8mm} (\tilde{u},w)\mapsto (z_1,z_2)= \bigg(\tilde{u}-t(\tilde{u}+tw)^p,w+(\tilde{u}+tw)^p\bigg)
\end{align*}
is an isomorphism (with inverse given by 
    $$(z_1,z_2)\mapsto \bigg (z_1+t(z_1+tz_2)^p,z_2-(z_1+tz_2)^p) \bigg)$$ 
which further extends over $o$ to an isomorphism of $S$-schemes $X\times_S \AA^1_S\to \AA^2_S$. It then follows due to \cref{1-dim DD} that $f:X\to S$ is an $\AA^1$-weak equivalence in $\Spc_S$. 
\end{example}

Some other examples of $\AA^1$-fiber spaces $f:X\to S$ over non-normal curves $S$ which are not a Zariski locally trivial $\AA^1$-bundle are given \cite[$\S$ 3.4]{KW85}.

\begin{example}\label{Kambayashi:example:non-normalbase}
As a third example, consider $S$ to be any rational affine plane cuspidal curve over a field $k$ of characteristic zero with equation 
            $$\{x^p-y^q=0\},$$
where $p,q\geq 2$ are relatively prime integers. Let $f:X\to S$ be the restriction of the projection $\mathrm{p}_S:\mathbb{P}^1_S \to S$ to the complement $X$ of the Zariski closure $Z \subset \mathbb{P}^1_S$ of the graph $\Gamma\subset \AA^1_S$ of the normalization morphism $\nu:\AA^1_k\to S$ of $S$. By construction, every such $f: X\to S$ is an $\AA^1$-fiber space restricting to a trivial $\AA^1$-bundle over the complement of the unique singular point $c= (0,0)$ of $C$, but whose restriction over any Zariski open neighborhood $U$ of $c$ in $C$ cannot be trivial. Indeed, otherwise if such a trivialization $X|_U\cong \AA^1_U$ existed then the inclusion of $U$-schemes 
        $$\AA^1_U\cong X|_U\hookrightarrow \mathbb{P}^1_U$$ 
would extend to a morphism $\mathbb{P}^1_U\to \mathbb{P}^1_U$ mapping the singular curve $U=\mathbb{P}^1_U \bs \AA^1_U$ birationally onto the smooth curve $Z|_U\cong \AA^1_U$, which is not possible. Nevertheless, we see that all of these $\AA^1$-fiber spaces $f:X \to S$ are $\AA^1$-weak equivalences in $\Spc_S$.
\end{example}

The final example is of a different flavor: the base is a normal scheme, but this example illustrates the importance in \Cref{1-dim DD} of considering \emph{all} fibers of $f: X\to S$ and not only those over closed points of $S$, even when $S$ is defined over an algebraically closed field.

\begin{example} \label{exa:form-generic-fiber} 
Let $k$ be an algebraically closed field of characteristic $p>0$. Consider the morphism 
\begin{align*}
    & f: \Spec k[x,y] \cong \AA^2_k := X \longrightarrow S := \Spec k[t] \cong \AA^1_k \\
    & \hspace{34mm} (x,y)\longmapsto x^{p^2}-y-y^{p^2+p}.
\end{align*}
Then $f$ is a smooth morphism between $\AA^1$-contractible $k$-schemes, which has all its closed fibers isomorphic to $\AA^1_k$. Indeed, given $\lambda\in k$, as $k$ is algebraically closed we can write $\lambda=-\mu^{2p}$ for some $\mu \in k$ so that $f^{-1}(\lambda)$ equals the smooth curve in $\AA^2_k$ with equation 
        $$(x+\mu)^{p^2}-y-y^{p^2+p}=0, $$ 
which is the image of $\mathbb{A}^1_k$ by the closed immersion $\mathbb{A}^1_k \to \mathbb{A}^2_k$ defined by 
        $$ s \mapsto (s+s^{p^2+p}-\mu,s^{p^2}).$$
On the other hand, the fiber of $f$ over the generic point of $\AA^1_k$ is the purely inseparable $k(t)$-form $Y$ of $\mathbb{A}^1_{k(t)}$ with equation                 
        $$x^{p^2}-y-y^{p^2+p}-t=0$$
in $\AA^2_{k(t)}$. So, $f: X\to S$ is not an $\AA^1$-fiber space. However, on the other hand, since $Y$ is not $\AA^1$-contractible by \Cref{1-dim theorem}, it follows from \Cref{pullbackfunctor} that $f: \AA^2_k\to \AA^1_k$ is not a relative $\AA^1$-weak equivalence in $\Spc_{\AA^1}$.
\end{example}

%---------------------------------------------------------------------
\section{Relative $\AA^1$-Contractibility of Smooth Surfaces}\label{sect:reldim=2}
In this section, we continue to investigate the characterization in dimension 2. We begin with a recent result of Choudhury-Roy on the uniqueness of the affine plane (\cref{A2isunique}). As in the case for curves, we systematically establish that being affine is a consequence of $\AA^1$-contractibility for smooth surfaces (\cref{A1contrasurface:affine}). This seemingly simple fact allows us to, in fact, extend the aforementioned uniqueness of the affine plane over perfect fields (\cref{A2unique:perfect}), leaving the story open for imperfect fields. We will digress a bit over imperfect fields using the theory of non-trivial $k$-forms of $\AA^2_k$. Finally, all these will allow us to state an elegant characterization of Dedekind schemes $D$ in relative dimension 2 (\cref{2-dim DD}). As a consequence, we all characterize the affine plane over $D$ (\cref{A2unique:DD}) and present the Zariski Cancellation in this context (\cref{Cor:rel-ZCP:dim2}).

\subsection{\texorpdfstring{$\AA^1$}{A1}-Contractibility over Fields}
Recall from \cref{intro:exoticsurfaces} that the topological contractibility alone is not sufficient to characterize the affine 2-space. On the other hand, tools coming from birational geometry, such as the logarithmic Kodaira dimension (affine version of the usual Kodaira dimension), proved to be very useful for studying the exotic complex structures arising from surfaces such as \cref{eg:Ramanujamsurf}, \cref{eg:tomDieck-petri}, and their generalized products (see \cite[\S 1.5]{zaidenberg2000exotic}). The seed for characterizing the affine plane primarily stems from the work of Miyanishi via $\GG_a$-actions \cite{Miyanishi75algebraic}. However, it was recently proved by Choudhury-Roy (\cite[Theorem 1.1]{choudhury2024}) that the $\AA^1$-contractibility is a much stronger invariant compared to topological contractibility and the log Kodaira invariant when it comes to the characterization of $\AA^2$. We will now state their theorem and give an overview of the machinery involved in their proof.
\begin{theorem}\label{A2isunique}
A smooth affine surface $X$ over a field $k$ of characteristic zero is $\AA^1$-contractible in $\mathcal{H}(k)$ if and only if it is isomorphic to $\AA^2_{k}$.
\end{theorem}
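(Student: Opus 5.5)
The implication "$X\cong \AA^2_k \Rightarrow X$ is $\AA^1$-contractible" is immediate from \cref{A1-contr:egs}, so the work is the converse. My plan is to reduce first to an algebraically closed field, prove the result there with the structure theory of open surfaces, and then descend back to $k$.

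Over $\bar k$ the steps run as follows.

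\textbf{Step 1 (reduction).} Since $\AA^1$-contractibility is stable under arbitrary field extension (\cref{basechange:imperfect}), $X_{\bar k}$ is a smooth affine $\AA^1$-contractible surface over $\bar k$.

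\textbf{Step 2 (topological invariants).} Over $\bar k$ I would extract the invariants that $\AA^1$-contractibility forces:
\begin{itemize}
\item $X_{\bar k}$ has a $\bar k$-point (\cref{S-point});
\item it has trivial Picard group and $\Gamma(\mathcal{O}^\times)=\bar k^\times$, since $\GG_m$ is $\AA^1$-rigid and $\pi_0^{\AA^1}$ of the contractible space is trivial (\cref{A1-cont-trivial-Pic});
\item it is $\AA^1$-connected, in the strong sense that $\pi_0^{\AA^1}$ is trivial on all finitely generated extensions.
\end{itemize}
Because we are in characteristic zero, Lefschetz-principle arguments (or the fact that $\AA^1$-contractible $\Rightarrow$ topologically contractible after an embedding into $\CC$, as in the formal consequences after \cref{remark:nilimmersion}) also give that $X_{\bar k}$ is acyclic with trivial units and Picard group.

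\textbf{Step 3 (log Kodaira dimension).} The key step is to show $\bar\kappa(X_{\bar k})=-\infty$. My route is to prove that $\AA^1$-connectedness over $\bar k(X)$-points forces $X$ to be $\AA^1$-uniruled, i.e.\ covered by images of $\AA^1$. The idea is that the generic point must be $\AA^1$-chain equivalent to a rational point. By the Asok--Morel classification cited earlier, $\AA^1$-chain connected (or even $\AA^1$-connected) varieties are log-uniruled, and log-uniruledness forces $\bar\kappa=-\infty$ (Keel--McKernan in dimension 2).

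\textbf{Step 4 (recognising $\AA^2$).} Once $\bar\kappa=-\infty$, I apply Miyanishi--Sugie--Fujita. A smooth affine surface with $\bar\kappa=-\infty$ contains an $\AA^1$-cylinder, hence admits an $\AA^1$-fibration over a curve. Acyclicity together with trivial units and Picard group then forces this fibration to be over $\AA^1$ with no degenerate fibres. By Miyanishi's characterization of $\AA^2$ this gives $X_{\bar k}\cong\AA^2_{\bar k}$.

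\textbf{Step 5 (descent).} We now know $X$ is a $k$-form of $\AA^2_k$. By Kambayashi's theorem, separable forms of $\AA^2$ are trivial. In characteristic zero every form is separable, so $X\cong\AA^2_k$.

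I expect Step 3 to be the main obstacle. Passing from the homotopical statement (triviality of $\pi_0^{\AA^1}$, which is only a sheaf-level condition and need not equal chain connectedness in general) to the geometric statement that $X$ is $\AA^1$-ruled is delicate. I would try to use that the sheaf $\pi_0^{\AA^1}$ surjects onto the universal $\AA^1$-invariant quotient $\mathcal{L}$ of \cite{balwe2015a1}. If $X$ were not $\AA^1$-ruled, I would argue via the minimal model theory of log surfaces (Iitaka, Kawamata) that $X$ has a nontrivial $\AA^1$-rigid "log quotient", yielding a non-constant $\AA^1$-invariant map out of $X$, which contradicts contractibility. This is exactly the point where characteristic zero and the log MMP enter.
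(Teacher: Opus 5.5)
Your Steps 1, 2, 4 and 5 are fine and match the paper's outline: units and Picard group come from contractibility, the Miyanishi--Sugie characterization applies once $\bar\kappa=-\infty$ is known, and Kambayashi's theorem gives the descent. The genuine gap is Step 3, which is where the whole content of the theorem sits.

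Asok--Morel give $\AA^1$-uniruledness for $\AA^1$-\emph{chain} connected varieties. Your parenthetical ``or even $\AA^1$-connected'' is exactly what is not available. Your generic-point argument needs $X(F)/{\sim}$ to be a single class for $F=\bar k(X)$, and this does not follow from $\pi_0^{\AA^1}(X)(F)=*$. The comparison map $\pi_0^{ch}(X)\to\pi_0^{\AA^1}(X)$ is only an epimorphism, and it is known to fail to be injective in general (Balwe--Hogadi--Sawant). Nothing guarantees a priori that an $\AA^1$-contractible variety is $\AA^1$-chain connected.

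Going from triviality of $\pi_0^{\AA^1}$ back to actual curves requires working with the iterates $\mathcal{S}^n(X)$, i.e.\ with morphisms $\AA^1_F\to\mathcal{S}(X)$ that lift to $X$ only Nisnevich-locally. One must then show that such homotopies still force genuine non-constant maps $\AA^1\to X$. This is precisely Choudhury--Roy's dichotomy, which the paper invokes: either there is a non-constant $\AA^1$ through every $k$-point, or there is a non-constant $H\colon\AA^1_Y\to X$ whose image has $2$-dimensional closure.

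Your fallback does not substitute for this. There is no known construction of an $\AA^1$-rigid ``log quotient'' of a non-$\AA^1$-ruled surface, and the natural candidates are excluded here. An acyclic surface has $H^1(X,\QQ)=0$ and trivial units. Hence it admits no non-constant morphism to any $\AA^1$-rigid curve, since the normalization of the target would have to be $\AA^1$ or $\PP^1$. For example, the $\CC^*$-fibration of a tom Dieck--Petrie surface, which has $\bar\kappa=1$, has non-rigid base.

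A related point concerns the base change. The pointwise conclusion that can actually be extracted, ``dominated by images of $\AA^1$'', upgrades to $\AA^1$-uniruledness only over an \emph{uncountable} field, by a countability argument on the families of $\AA^1$'s. That is why the paper base changes to an uncountable algebraically closed field $L\supset k$ rather than to $\bar k$, which may be countable (e.g.\ $\overline{\QQ}$). It then gets $\bar\kappa(X_L)=-\infty$ from Iitaka (the easy direction, which the paper cites), concludes $X_L\cong\AA^2_L$, and descends exactly as in your Step 5. Your generic-point route would have sidestepped uncountability, but only if its input, chain-connectedness at the generic point, were available.
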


\subsubsection{Strategy of the proof} 
The work of Miyanshi-Sugie (\cite[\S 4.1]{miya1981noncomplete},\cite{miyanishi1980affine}) provides an algebraic characterization for a non-singular affine surface $X = \Spec (A)$ over an algebraically closed field $k$ of characteristic zero. They proved that $X \cong \AA^2$ if and only if 
\begin{itemize}
\item $A$ is a unique factorization domain with $A^\times = k^\times$,
\item $X$ has the logarithmic Kodaira dimension $\bar{\kappa}(X)= -\infty$.
\end{itemize}
Assuming the $\AA^1$-contractibility of $X$, we get that the Picard group is trivial and the group of units $\GG_{m,k}(X)\cong k^\times$. It remains to show that $\bar{\kappa}(X)=-\infty$. To achieve this, the authors in \cite[\S 4]{choudhury2024} study the notion of a variety being \emph{dominated by the images of $\AA^1$}, which in simple terms translates to "being able to cover up the given variety by affine lines". 

\begin{defn}\label{defn:A1-uniruled}
A $k$-variety $X$ over a field of characteristic zero is said to be    
\begin{enumerate}
\item[(a)] \emph{dominated by images of $\AA^1$} if there is an open dense subset $U\subset X$ such that, for every $p\in U(k)$, there is an $\AA^1$ in $X$ through $p$, that is, there exists a non-constant morphism from $\AA^1\to X$ whose image contains $p$)
\item[(b)] \emph{$\AA^1$-uniruled} if there is a dominant generically finite morphism $H: \AA^1_k\times_k Y\to X$ for some $k$-variety $Y$
\end{enumerate}
\end{defn}

In general, we have $(b)\implies (a)$; but if the base field $k$ is uncountable, then we have $(a)\iff (b)$. Exploiting such $\AA^1$-connectedness-type results, Choudhury-Roy showed a crucial fact that for any $\AA^1$-connected surface over an algebraically closed field $k$ of characteristic zero, one of the following occurs (\cite[Theorem 4.9]{choudhury2024}):
\begin{itemize}
\item there is a non-constant $\AA^1$ through all $x \in X(k)$,
\item there is a non-constant morphism $H: \AA^1_Y\to X$ for some irreducible $Y\in \Sm_k$ such that the dimension of the closure of the image of $H$ is at least 2.
\end{itemize}
Thus, when $k$ is uncountable, the above says that $X$ has to be $\AA^1$-uniruled, whence by \cite{iitaka1977logarithmic} has negative log Kodaira dimension. Now, given any field $k$ of characteristic zero, the idea is to embed $k$ as a subfield of an uncountable algebraically closed field $L$ and realize the extension $L/k$ as the filtered colimit of finitely generated sub-extensions of $k$. Translated to algebro-geometric terms, these extensions correspond to morphisms of (affine) smooth schemes with affine transition maps, whence by \cite[Corollary 1.24]{MV99} preserves $\AA^1$-contractibility under pullbacks, i.e., $X_L:= X\times_{\Spec k} \Spec L \to \Spec L$ is an $\AA^1$-contractible variety. Supported by the previous justifications, we have that $X_L \cong \AA^2_L$. In other words, $X$ is a $k$-form of $\AA^2$. To conclude, observe that due to \cite[Theorem 3]{kambayashi1975absence} this has to be a trivial $k$-form and so $X\cong \AA^2_k$.
\medskip

As a result, we have that a complex affine surface $X$ is $\AA^2_{\CC}$ if and only if it is topologically contractible and $\AA^1$-connected. Hence, connecting back to our story from \cref{intro:exoticsurfaces}, this result provides us with a proof that the Ramanujam surface and the tom Dieck-Petrie surfaces cannot be $\AA^1$-connected, which independently implies that they can neither be $\AA^1$-chain connected nor be $\AA^1$-contractible. Moreover, \cref{A2isunique} also answered the conjecture of \cite[Question 2.34]{DPO2019} that any $\AA^1$-contractible surface indeed has negative log Kodaira dimension (as they are $\AA^1$-connected!). To this end, for a smooth affine surface $X$, we have the following implications:
\begin{align*} 
 \AA^1\text{-contractible} \implies  \AA^1\text{-connected} \implies \AA^1\text{-uniruled} \implies \bar{\kappa}(X) = -\infty 
\end{align*}
Having said this, it is natural to ask the following extension.
\begin{question}\label{uniqueA2:perfect}
Can we extend \cref{A2isunique} over arbitrary fields?
\end{question}

\subsection{Extension to Perfect Fields}
We will first show that the affineness is a consequence of $\AA^1$-contractibility for smooth surfaces (\cref{A1contrasurface:affine}) and extend the \cref{A2isunique} over perfect fields (\cref{A2unique:perfect}).

\begin{lemma}(\cite{DMO25}) \label{A1contrasurface:affine}
Any smooth $\AA^1$-contractible surface is necessarily affine.
\end{lemma}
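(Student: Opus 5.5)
The natural strategy is the surface analogue of \cref{A1contracurves:affine}: show that a non-affine smooth surface cannot be $\AA^1$-contractible. Let $X$ be a smooth $\AA^1$-contractible surface over a field $k$. We first reduce to an algebraically closed field. By \cref{basechange:imperfect}, $X_{\bar k}$ is again $\AA^1$-contractible. Affineness descends along the faithfully flat morphism $\Spec \bar k\to\Spec k$ (Serre's criterion, or fpqc descent of affineness). So we may assume $k=\bar k$.

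The invariants we will use are those listed as formal consequences of $\AA^1$-contractibility. By \cref{A1-cont-trivial-Pic}, $\Pic(X)=0$. Motivic cohomology agrees with that of $\Spec k$, so $\CH^*(X)$ is trivial in positive degrees. Since $\GG_m$ is $\AA^1$-rigid and $X$ is $\AA^1$-connected, $\mathcal{O}(X)^\times=k^\times$. We now analyse $X$ through a smooth projective compactification $X\subset \bar X$ with boundary $B=\bar X\setminus X$. Such a compactification exists by Nagata, and in dimension $2$ a resolution of the boundary singularities is available in every characteristic.

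If $B$ has no component of codimension one, then $B$ is finite or empty. Since $\bar X$ is normal, $H^0(X,\mathcal O)=H^0(\bar X,\mathcal O)=k$. Moreover $\CH^1(X)=\Pic(\bar X)$, because removing finitely many points does not change $\Pic$. Hence $\Pic(\bar X)=0$, which is impossible for a projective surface, since an ample class is non-trivial. So $B$ contains a divisor $D=\bigcup D_i$.

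The localization sequence
\[
\bigoplus_i \ZZ[D_i]\to \Pic(\bar X)\to \Pic(X)=0
\]
then shows that the classes $[D_i]$ generate $\Pic(\bar X)$. Combining this with $\mathcal{O}(X)^\times=k^\times$ (so no non-trivial principal divisor is supported on $D$), the $[D_i]$ form a basis of $\Pic(\bar X)=\mathrm{NS}(\bar X)$. By Hodge index the intersection form on $\mathrm{NS}(\bar X)\otimes\QQ$ is non-degenerate, and it is the matrix $(D_i\cdot D_j)$. Therefore some positive combination $H=\sum a_iD_i$ is ample, or at least big with support exactly $D$. Goodman's criterion then says that $X=\bar X\setminus \operatorname{Supp}H$ is affine once $\operatorname{Supp}H=B$; this uses that the remaining isolated boundary points can be absorbed, e.g.\ via Zariski's theorem on the finite generation of the section ring for surfaces. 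Equivalently, every complete curve in $X$ would have a non-zero class in $\Pic(\bar X)$ orthogonal to all $D_i$, and such classes must vanish.

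The step I expect to be the main obstacle is passing from ``the boundary divisors span a non-degenerate lattice'' to ``some effective combination is ample with support the whole boundary''. Two difficulties arise. First, we must rule out complete curves $C\subset X$; for these I would use $[C]\in \CH^1(X)=0$, together with the projection formula, to get $C\cdot H$-type contradictions. Second, we need to check the connectedness of the support, for which Goodman's theorem or Zariski–Fujita is the tool I would try first.

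An alternative route, if this becomes delicate, goes through the classification of $\AA^1$-connected surfaces. A non-affine normal surface with trivial units and trivial $\Pic$ contains a complete curve or has a non-divisorial boundary. In the proper case, surfaces such as rational smooth proper ones are $\AA^1$-connected but have $\Pic\neq0$, which gives a contradiction exactly as with $\PP^1$ in \cref{A1contracurves:affine}.
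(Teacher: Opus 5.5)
Your reduction to $k=\bar k$ and the lattice bookkeeping are fine. The boundary classes $[D_i]$ do form a $\ZZ$-basis of $\Pic(\bar X)=\mathrm{NS}(\bar X)$, and this rules out complete curves in $X$. The gap is the next step, in three places:
\begin{enumerate}
\item Non-degeneracy of $(D_i\cdot D_j)$ does not give a positive combination $\sum a_iD_i$ that is ample.
\item Isolated boundary points cannot be ``absorbed'': deleting a point from an affine surface destroys affineness.
\item Neither Goodman's nor Fujita's criterion can supply connectedness of the boundary. Fujita's takes it as a hypothesis. Goodman's characterization (the boundary supports an ample divisor after blowing up boundary points) already implies it, since supports of ample divisors are connected.
\end{enumerate}
In fact no argument using only the invariants you invoke can work. $\AA^2_k\setminus\{0\}$ is a smooth non-affine surface with $\Pic=0$, units $k^\times$ and $\CH^{>0}=0$. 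It contains no complete curves and is even $\AA^1$-chain connected, so your alternative route through $\AA^1$-connected surfaces fails on it as well. In the completion $\mathbb{F}_1=\mathrm{Bl}_0\PP^2$ its boundary is $E\sqcup L$: the exceptional curve and a line missing $0$. The classes $[E],[L]$ form a basis of $\Pic(\mathbb{F}_1)$ with intersection matrix $\mathrm{diag}(-1,1)$. Yet $(aE+bL)\cdot E=-a<0$, so no divisor supported on the whole boundary is ample. In the $\PP^2$-completion the boundary is $L\sqcup\{0\}$, and the isolated point is exactly the obstruction.

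The missing idea is that the boundary $\partial X$ of an $\AA^1$-contractible surface is connected. Proving this needs an invariant sensitive to the ``ends'' of $X$. The paper gets it from the stable $\AA^1$-homotopy type at infinity. For $X$ $\AA^1$-contractible, $\Pi^\infty_k(X)$ agrees in $\SH(k)$ with $\Pi^\infty_k(\AA^2_k)\simeq\Sigma^\infty_{\PP^1}(\AA^2_k\setminus\{0\})_+$. The description of $\Pi^\infty_k(X)$ as the punctured tubular neighbourhood of an SNC boundary \cite{DDO2022punctured} then forces $\partial X$ to be connected. Only with connectedness in hand does your computation (boundary components freely generating $\Pic(\bar X)$, trivial units) feed into Fujita's theorem \cite{fujita1982topology}. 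That theorem shows $\partial X$ supports an effective ample divisor, whence $X$ is affine.
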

\begin{proof}
For this proof, we will use arguments from the stable homotopy type of punctured tubular neighborhood (cf. \cref{app:Mot-top-infty}). To show that a smooth $\AA^1$-contractible surface $X$ is affine, it suffices to show that $X$ is \emph{connected at infinity}, in the sense that for every smooth proper completion $X \hookrightarrow \bar{X}$ of $X$, the boundary $\partial{X} = \bar{X}\setminus X$ is a connected scheme. For now, let us assume that $X$ is indeed connected at infinity. By the existence of resolution of singularities for surfaces, we can assume that $\partial{X}$ is a simple normal crossing (SNC) divisor on $\bar{X}$. Since $X$ is $\AA^1$-contractible,  $\mathcal{O}_X(X)^* = k^*$ and $\mathrm{Pic}(X)$ is trivial, from which it follows that $\mathrm{Pic}(\bar{X}) \cong \mathrm{Cl}(\bar{X})$ equals the free abelian group generated by the divisor classes in $\mathrm{Cl}(\bar{X})$ of the irreducible components of $\partial{X}$. If $\partial{X}$ is in addition connected, then by applying the proof due to \cite[P. 512, Theorem, item 3]{fujita1982topology}, one concludes that $\partial{X}$ is the support of an effective ample divisor on $\bar{X}$, which implies that $X$ is affine. 
\medskip

Now it remains to argue that a smooth $\AA^1$-contractible surface is connected at infinity. For this we use a key property of the stable $\AA^1$-homotopy type at infinity: the stable $\AA^1$-homotopy type of $X$ at infinity is isomorphic (in the stable $\AA^1$-homotopy category $\SH(k)$) to that of $\AA^2_k$ (cf. \cref{eg:SH-infty-is-A^d}), in particular, to the $\PP^1$-suspension spectrum $\Sigma_{\PP^1}^{\infty}(\AA^2_k\bs \{0\})_+$ of the smooth connected scheme $\AA^2_k\setminus\{0\}$. This identification combined with the description in \cite{DDO2022punctured} of the the stable $\AA^1$-homotopy type of $X$ at infinity in terms of the boundary divisor $\partial{X} = \bar{X}\setminus X$ in a smooth SNC completion $\bar{X}$ of $X$ implies that $\partial{X}$ must be connected.
\end{proof}

Essentially, using this powerful fact that any smooth $\AA^1$-contractible surface is automatically affine, we can promote \cref{A2isunique} to perfect fields.

\begin{theorem}\label{A2unique:perfect}\cite{DMO25}
A smooth surface $X$ over a perfect field $k$ is $\AA^1$-contractible in $\mathcal{H}(k)$ if and only if it is isomorphic to $\AA^2_k$.
\end{theorem}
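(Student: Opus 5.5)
The plan is to reduce to the characteristic zero case (\cref{A2isunique}) after replacing that input by one valid in any characteristic, using the affineness result \cref{A1contrasurface:affine} to remove the affineness hypothesis. One direction is immediate, since $\AA^2_k$ is $\AA^1$-contractible by \cref{A1-contr:egs}. For the converse, let $X$ be a smooth $\AA^1$-contractible surface over the perfect field $k$. By \cref{A1contrasurface:affine}, $X$ is affine. From \cref{A1-cont-trivial-Pic} and the $\AA^1$-rigidity of $\GG_m$ (\cref{Gm-A1-rigid}), $\Pic(X)=0$ and $\mathcal{O}(X)^\times = k^\times$. Because $\Pic$ vanishes and $X$ is smooth, $\mathcal{O}(X)$ is a UFD. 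By \cref{eg:S-point}, $X(k)\neq\emptyset$.

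Next, base change to an algebraic closure $\bar k$; this keeps $X$ $\AA^1$-contractible by \cref{basechange:imperfect}. Over $\bar k$ the goal is the Miyanishi-type criterion: a smooth affine factorial surface with trivial units and negative log Kodaira dimension, or more robustly one containing an $\AA^1$-cylinder, is $\AA^2$. In positive characteristic the log Kodaira dimension is unreliable, so the condition to aim for is the existence of a nontrivial $\GG_a$-action, equivalently an $\AA^1$-fibration $X_{\bar k}\to \AA^1$. Miyanishi's characterization of $\AA^2$ holds over any algebraically closed field: a factorial affine surface with trivial units admitting an $\AA^1$-fibration over $\AA^1$ is $\AA^2$. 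To obtain the $\AA^1$-fibration, use the $\AA^1$-chain connectedness ideas of Choudhury--Roy (\cite[Theorem 4.9]{choudhury2024}). $\AA^1$-connectedness forces $X_{\bar k}$ to be covered by images of $\AA^1$, hence to be $\AA^1$-uniruled after passing to an uncountable algebraically closed field $L\supset \bar k$. For affine surfaces, $\AA^1$-uniruledness gives an $\AA^1$-ruling, that is, a dense open $\AA^1$-cylinder $U\times\AA^1$ (Miyanishi--Sugie). In characteristic $p$ one must check that the ruling is not merely a quasi-ruling by purely inseparable forms of $\AA^1$. Having this, $X_L\cong\AA^2_L$, and a descent argument along the filtered colimit of finitely generated subextensions gives $X_{\bar k}\cong\AA^2_{\bar k}$.

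Finally, descend from $\bar k$ to $k$. Since $k$ is perfect, $\bar k/k$ is separable, so $X$ is a separable $k$-form of $\AA^2_k$. Kambayashi's theorem on the absence of nontrivial separable forms of $\AA^2$ (\cite[Theorem 3]{kambayashi1975absence}) then yields $X\cong\AA^2_k$. Perfectness is used exactly here, in line with the failure over imperfect fields in \cref{ex:forms-A1}.

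The main obstacle is the middle step in positive characteristic: producing a genuine $\AA^1$-ruling, and not a ruling by inseparable forms, from $\AA^1$-connectedness, and confirming that the Choudhury--Roy dichotomy does not use characteristic zero in any essential way. If that argument breaks, the fallback is the stable homotopy type at infinity (\cref{app:Mot-top-infty}), which agrees with that of $\AA^2$. One would use it to show that the boundary of an SNC completion is a rational tree of the right shape, and then apply Ramanujam/Gizatullin-type arguments to deduce the existence of an $\AA^1$-fibration over $\AA^1$.
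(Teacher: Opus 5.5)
Your outer frame matches the paper's proof: affineness from \cref{A1contrasurface:affine}, trivial units and $\Pic$ (hence factoriality), base change to $\bar k$, and descent to $k$ by Kambayashi's theorem, which is where perfectness enters. The gap is the middle step over $\bar k$. It is the only place where positive characteristic actually matters, and you leave it open. The chain you propose there does not go through as stated. First, \cite[Theorem 4.9]{choudhury2024} is proved over algebraically closed fields of characteristic zero. Second, the passage from $\AA^1$-uniruledness to an $\AA^1$-cylinder is Miyanishi--Sugie's theorem (via $\bar\kappa=-\infty$), which is also a characteristic-zero result. In characteristic $p$ the difficulty is more basic than excluding quasi-rulings by purely inseparable forms of $\AA^1$. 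A dominant family of images of $\AA^1$ may itself be inseparable, and then the usual argument bounding $\bar\kappa$ by pulling back log pluricanonical forms breaks down. The projective analogue fails outright: Shioda's unirational Fermat surfaces of degree $p+1$, $p\ge 5$, are covered by rational curves yet are of general type. Your fallback via the stable homotopy type at infinity and Gizatullin-type arguments is only a sketch. As written, the proposal therefore does not prove the theorem in positive characteristic.

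The paper does not try to build an $\AA^1$-fibration by hand. It proceeds in three steps:
\begin{enumerate}
\item It takes from \cite[Theorem 2.3]{Choudhury2024A1type} a non-constant homotopy $H\colon W\times\AA^1\to X$ with $H(0)$ dominant.
\item It deduces $\bar\kappa(X)=-\infty$. This is the characteristic-zero statement \cite[Corollary 2.4]{Choudhury2024A1type}, extended to perfect fields using resolution of singularities for surfaces and the positive-characteristic theory of $\bar\kappa$ in \cite{russell1981affine}.
\item It concludes by \cite[Theorem 2]{russell1981affine}, which characterizes $\AA^2$ among smooth affine factorial surfaces with trivial units and $\bar\kappa=-\infty$, in positive characteristic as well.
\end{enumerate}
In this route the quasi-ruling problem is absorbed into Russell's theorem. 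If you want to complete your own version, the missing input is separability of the covering family. Take a family whose time-zero map is dominant and separable, as for a homotopy through the generic point $\Spec k(X)$, which is separable over the perfect field $k$. After removing Frobenius factors in the $\AA^1$-variable and restricting to a general curve $C$, you get a dominant generically \'etale map $C\times\AA^1\to X$. Pulling back log pluricanonical forms along it gives $\bar\kappa(X)=-\infty$ in any characteristic. You should then finish with Russell's theorem rather than Miyanishi--Sugie.
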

\begin{proof}
One direction is obvious: $\AA^2_k$ is an $\AA^1$-contractible surface over any field $k$. Suppose that $X$ is $\AA^1$-contractible, then we have that $X$ is affine due to \cref{A1contrasurface:affine}. In essence, once we have that $X$ is an affine $\AA^1$-contractible smooth surface, a proof can be obtained by manipulating the arguments from \cite{russell1981affine, Choudhury2024A1type}. To begin with, recall that due to \cite{kambayashi1975absence}, $\AA^2$ does not admit non-trivial forms over a perfect field. Hence, it suffices to show that the base change of $X$ to an algebraic closure of $k$ is isomorphic to $\AA^2$. So, we can henceforth assume, without loss of generality, that $k$ is an algebraically closed field. Since $X$ is $\AA^1$-contractible, it is in particular $\AA^1$-connected. So, \cite[Theorem 2.3]{Choudhury2024A1type}, implies that there exists an irreducible smooth $k$-scheme $W$ and a non-constant homotopy $H : W \times_k \AA^1_k\to X$ that $H(0)$ is dominant. Now, \cite[Corollary 2.4]{Choudhury2024A1type} provides us that any $\AA^1$-connected $k$-variety over an algebraically closed field of characteristic zero has negative logarithmic Kodaira dimension. However, in view of the existence of resolution of singularities for surfaces, this fact can be extended to perfect fields as well, where the appropriate extension of the theory of logarithmic Kodaira dimension is available (for a detailed account on this, see \cite{russell1981affine}). Moreover, the $\AA^1$-contractibility of $X$ implies that $\mathcal{O}_X(X)^*=k^*$ and that $\mathrm{Pic}(X)$ is trivial and in addition, we have that $X$ is also affine (\cref{A1contrasurface:affine}), the conclusion is immediate by appealing to \cite[Theorem 2]{russell1981affine}.
\end{proof}

\begin{corollary}\label{A2-fiberspace}\cite{DMO25}
Let $X$ be any smooth separated scheme over a base scheme $S$ with perfect residue fields. Then the canonical morphism $f: X\to S$ is a relative $\AA^1$-weak equivalence in $\Spc_S$ of relative dimension 2, then $X$ is an $\AA^2$-fiber space.
\end{corollary}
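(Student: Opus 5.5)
The plan mirrors the proof of \cref{A1-fiberspace}, replacing the curve classification \cref{1-dim theorem} by the surface classification \cref{A2unique:perfect}. Fix a point $s\in S$ with residue field $\kappa(s)$, which is perfect by hypothesis. Base change along $\Spec \kappa(s)\to S$ and apply \cref{pullbackfunctor}, or more directly \cref{cor:A1-cont-fibers}. This shows that the fiber $X_s:=X\times_S\Spec\kappa(s)\to \Spec\kappa(s)$ is a smooth $\AA^1$-contractible $\kappa(s)$-scheme. Because $f$ is smooth of relative dimension $2$, the fiber $X_s$ is a smooth surface over $\kappa(s)$, in the sense that it is equidimensional of dimension $2$.

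Next I apply \cref{A2unique:perfect} to $X_s$ over the perfect field $\kappa(s)$ to get $X_s\cong \AA^2_{\kappa(s)}$. That theorem already incorporates \cref{A1contrasurface:affine}, so no affineness hypothesis on $X_s$ is needed. Before invoking it I must make sure $X_s$ is a surface in the sense required there, namely connected and separated.

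\begin{itemize}
\item Separatedness is inherited from $X$ by base change.
\item Connectedness follows from the $\AA^1$-connectedness of any $\AA^1$-contractible space, via \cref{eg:S-point} and \cref{unstable-0-connect:thm}. A disconnected $X_s$ would have a nontrivial $\pi_0^{\AA^1}$, since a decomposition into clopen pieces gives a map to $\Spec\kappa(s)\sqcup\Spec\kappa(s)$, which is $\AA^1$-rigid.
\item Nonemptiness also follows, since $\AA^1$-contractible schemes have a $\kappa(s)$-point by \cref{S-point}.
\end{itemize}

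Since this holds for every $s\in S$, and $f$ is smooth and of finite presentation (or of finite type over a locally Noetherian base, per the running convention), $f$ is an $\AA^2$-fiber space in the sense of \cref{def:A1-fib-space}.

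The only real subtlety is the perfectness of the residue fields, including those at non-closed points. This is precisely why the hypothesis is stated for all residue fields, because \cref{A2unique:perfect} is unavailable over imperfect fields, where nontrivial forms of $\AA^2$ may occur. I expect no other obstacle: all the substantive work is contained in \cref{A2unique:perfect} and the base-change result \cref{pullbackfunctor}.
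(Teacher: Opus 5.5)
Your proposal is correct and takes the same route as the paper. Base change to each residue field via \cref{pullbackfunctor} to get a smooth $\AA^1$-contractible surface $X_s$ over the perfect field $\kappa(s)$, then apply \cref{A2unique:perfect} to get $X_s\cong\AA^2_{\kappa(s)}$; your extra checks that $X_s$ is nonempty, connected and separated are sound and just make explicit what the paper leaves implicit.
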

\begin{proof}
For any point $s\in S$, consider the base change of $f$ along $\Spec\ \kappa(s)\to S$. By \Cref{pullbackfunctor}, each fiber $X_s \to \Spec\ \kappa(s)$ is a relative $\AA^1$-weak equivalence. By \Cref{A2unique:perfect}, we have that $X_s \cong \AA^2_{\kappa(s)}$.
\end{proof}
Having said this, we mention that such a characterization is fairly open over fields of positive characteristics, which heavily depends on establishing a suitable notion of being (separably) $\AA^1$-uniruledness for $k$-varieties.

\subsection{Non-trivial $k$-forms over \texorpdfstring{$\AA^2_k$}{A2}} \label{sect:non-trivial-forms-A2}
One approach to analyze the proof of \cref{A2unique:perfect} over imperfect fields is to study and classify the possible $k$-forms of the affine plane $\AA^2_k$. The following example shows that a non-trivial $k$-form of $\AA^2$ need not necessarily be $\AA^1$-contractible. 

\begin{example}
Let $k$ be a field of characteristic $p>0$ and let $k'$ be a purely inseparable extension of $k$ of degree $p$, so $k'= k(t^{\frac{1}{p}})$ for some (in fact any) element $t$ of $k^\times$ which is not a $p$-th power. Then the quotient 
 $$ U=(\mathfrak{R}_{k'/k} \mathbb{G}_{m,k'}) / \mathbb{G}_{m,k}$$
where $\mathfrak{R}_{k'/k}$ denote the Weil restriction (cf. \cref{app:weil-restriction}), is a smooth connected unipotent $k$-group of dimension $p-1$. Explicitly, by Osterl\'e (\cite{oesterle1984nombres}), $U$ is isomorphic to the subgroup of $\mathbb{G}_{a,k}^p$ defined by the equation 
    $$ x_{0}^p+tx_1^{p}+\cdots +t^{p-1}x_{p-1}^p-x_{p-1}=0.$$
By \cite[\S 4.1]{achet2019picard}, we have $\Pic(U)\cong \mathbb{Z}/p\mathbb{Z}$. This, in particular, implies that $U$ cannot be an $\mathbb{A}^1$-contractible $k$-variety.
\end{example}

\subsubsection{Why is $\Pic(U)$ non-trivial?}
To see that $\Pic(U)$ is non-trivial, we use that $\mathrm{R}_{k'/k} \mathbb{G}_{m,k'}$ is isomorphic to the principal open subset $N_{k'/k}\neq 0$ of $\mathbb{A}_k^p$, where $N_{k'/k}$ is the norm of the field extension, which implies that 
\begin{align*}
 &\hspace{10mm} H^0(\mathrm{R}_{k'/k} \mathbb{G}_{m,k'}, \mathcal{O}_{\mathrm{R}_{k'/k} \mathbb{G}_{m,k'}}^*)/k^*=N_{k'/k}\\ 
 & \textrm{and}\\
 & \hspace{10mm}\Pic(\mathrm{R}_{k'/k} \mathbb{G}_{m,k'})=H^1(\mathrm{R}_{k'/k} \mathbb{G}_{m,k'}, \mathcal{O}_{\mathrm{R}_{k'/k} \mathbb{G}_{m,k'}}^*)=0.
\end{align*}
From the exact sequence of $k$-group schemes $$0\to \mathbb{G}_{m,k}\to \mathrm{R}_{k'/k} \mathbb{G}_{m,k'}\to U\to 0,$$ 
and $H^0(U,\mathcal{O}_U^*)=k^*$, we get the short exact sequence
{\small
\begin{align*}
1\to H^0(\mathrm{R}_{k'/k} \mathbb{G}_{m,k'}, \mathcal{O}_{\mathrm{R}_{k'/k} \mathbb{G}_{m,k'}}^*)/k^*=\mathbb{Z}\langle N_{k'/k}\rangle \to H^0(\mathbb{G}_{m,k}, \mathcal{O}_{\mathbb{G}_{m,k}}^*)\cong \mathbb{Z}\to \Pic(U)\to 0
\end{align*}}
in which the image of $N_{k'/k}$ in $\mathbb{Z}$ is simply $p$, which gives $\Pic(U)\cong \mathbb{Z}/p\mathbb{Z}$. Moreover, by Osterl\'e (\cite{oesterle1984nombres}), the group $U$ is known to be \emph{$k$-wound}: every $k$-morphism $\mathbb{A}^1_k\to U$ is constant with image a $k$-rational point of $U$. The conventional definition of $k$-wound due to Tits is formulated by asking that there is no non-constant $k$-group scheme homomorphism $\mathbb{G}_{a,k}\to U$, but this property is equivalent to the apriori stronger requirement that there is no non-constant morphism $\mathbb{A}^1_k\to U$  (see \cite[Proposition B.3.2]{conrad2015pseudo} for a justification).

\begin{remark}
We know that $\AA^2$ does not admit a non-trivial $k$-form over a separably closed extension \cite[Theorem 3]{kambayashi1975absence}. By observing more closely to the properties of the construction due to Osterl\'e (\cite{oesterle1984nombres}), one see as a natural connection to the approach of \cite{Choudhury2024A1type} in proving that when $k$ is an algebraically closed field of characteristic zero a smooth $\mathbb{A}^1$-connected $k$-variety admits non-constant morphism from $\mathbb{A}^1_k$. 
\end{remark}

In light of the above example, we pose the following natural question. 
\begin{question}\cite{DMO25}
Does there exist a non-trivial $k$-form of $\AA^2_k$ over an imperfect field $k$ which is $\AA^1$-contractible?
\end{question}
Any candidate satisfying the question aforementioned would produce a counterexample to the characterization of $\AA^2$ over imperfect fields.

\subsection{Relative Dimension 2}\label{sect:reldim=2}
Recall that a \emph{Dedekind scheme} is a separated integral locally Noetherian scheme $D$ whose local rings at its closed points are discrete valuation rings. Notable examples are the Zariski spectrum of fields, the Zariski spectrum of Dedekind domains, such as rings of integers of number fields, and smooth curves over a field.
\medskip

We begin with a theorem of A. Sathaye that, in algebro-geometric language, conveys that any $\AA^2$-fiber space over a discrete valuation ring of characteristic zero is necessarily a Zariski locally trivial $\AA^2$-bundle.

\begin{theorem}\label{sathayethm} 
Let $R$ be an equicharacteristic zero rank one discrete valuation ring. Let the unique maximal ideal of $R$ be $tR$. Let $K= \text{Quot}(tR)$ and $k = R/tR$ and let $A= R[X_1,\dots,x_n]$ be an affine domain over $R$ such that 
$$A \otimes_R K\cong K^{[2]} \quad \text{and} \quad A\otimes_R k\cong k^{[2]}.$$
Then $A\cong R^{[2]}$.
\end{theorem}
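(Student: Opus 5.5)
This is Sathaye's theorem. The plan is to produce a variable $u\in A$ whose fibrewise images are coordinates, and then show that the $R[u]$-algebra $A$ is an $\AA^1$-fibre space over the regular two-dimensional base $\Spec R[u]$, so that Kambayashi--Wright style arguments (as recalled in the proof of \cref{1-dim DD}) give $A=R[u]^{[1]}=R^{[2]}$.

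First we reduce to the case where $k$ is algebraically closed, or at least to an extension with the same conclusion. Since everything is characteristic zero, one could pass to the strict henselization, or to an unramified extension $R\subset R'$ with residue field $\bar k$, prove $A\otimes_R R'\cong R'^{[2]}$, and descend. I would try to avoid descent by working directly, but I keep this option in mind.

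The key step is to find $u\in A$ such that $\bar u\in A\otimes_R k$ and $u_K\in A\otimes_R K$ are both coordinates of the respective polynomial rings. Start from a coordinate $x$ of $A_K=K[x,y]$; after scaling by a power of $t$, $x\in A$ with nonzero reduction $\bar x\in k[X,Y]$. Since $A$ is flat with integral fibres, $tA$ is prime and $A$ is a UFD-like object: it is regular, $A[1/t]=K^{[2]}$, and $A/tA=k^{[2]}$, so by Nagata $A$ is factorial. I then analyse the closed fibre of the pencil $\{x=c\}$. The generic fibres $x=c$ are lines $\AA^1_K$, hence the special fibre of the closure of each, $\bar x-\bar c$, is a curve in $\AA^2_k$ that is a degeneration of $\AA^1$'s. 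By a semicontinuity/Euler-characteristic argument in characteristic zero, every irreducible component of such a fibre is a rational curve with one place at infinity. The AMS theorem (\cref{A-M-S theorem}) applied componentwise, together with the degree divisibility of \cref{lemma-segre}, then lets us modify $x$ by $R$-automorphisms of $A_K$ preserving $A$ (triangular changes $x\mapsto t^{-m}(x-p(y))$ with suitable $p$) to decrease $\deg\bar x$ or remove multiple components. I expect this to be the main obstacle: the induction on $\deg\bar x$, equivalently on the multiplicity of the closed fibre of the pencil, must terminate with $\bar x$ an irreducible polynomial with $k[X,Y]/(\bar x)\cong k^{[1]}$. By AMS, such $\bar x$ is a coordinate. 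The tool I would try first is Kaliman's theorem stated earlier (general fibres $\AA^2$ implies all fibres), but in its curve version: if the general fibres of $x$ are lines, the special ones are too.

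Given $u$, consider the morphism $\Spec A\to\Spec R[u]=\AA^1_R$. Over the generic point of $R$ it is the projection $K[u,v]\to K[u]$, and over the closed point it is $k[\bar u,\bar v]\to k[\bar u]$. So it is a smooth affine morphism whose fibres over all points of the regular two-dimensional scheme $\AA^1_R$ are affine lines. This makes it an $\AA^1$-fibre space over a normal Noetherian base, and \cref{1-dim DD} shows it is a Zariski locally trivial $\AA^1$-bundle. Since $\Pic(\AA^1_R)=0$ and the base is affine, the affine-linear bundle argument in \cref{A1unique:DD} makes it trivial. Here $\omega$ is trivial because $\Pic(A)=0$ by factoriality, and a section exists by affineness. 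Hence $A\cong R[u]^{[1]}=R^{[2]}$.
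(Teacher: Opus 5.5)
There is a genuine gap: the existence of the residual variable $u$, on which everything rests, is never established. (The thesis itself gives no proof here; it only cites \cite[Theorem 1]{sathaye1983polynomial}.)

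Your second half is fine. Suppose $u\in A$ has $u_K$ a variable of $A_K$ and $\bar u$ a variable of $A/tA$. The fibrewise flatness criterion shows $A$ is flat over $R[u]$; you should say this, since you call the morphism smooth without justifying flatness. Then $\Spec A\to\Spec R[u]$ is an $\AA^1$-fibre space over a regular base. Kambayashi--Wright makes it an affine-linear bundle, and $\Pic(R[u])=0$ together with affineness makes it trivial. But given these $\AA^1$-fibration results, producing such a $u$ is essentially the whole theorem, and this is where Sathaye's work lies. What you offer for it is a programme, not a proof, and several of its ingredients fail as stated.

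First, the ``curve version of Kaliman'' is false in the form you need. In $A=R[X,Y]$, the element $x=tX+Y^2$ is a variable of $A_K=K[x,Y]$, and every fibre $x=c$ over $K$ is a line. Yet the closed fibres $Y^2=\bar c$ are double lines or pairs of lines. So a $K$-variable of $A$ does not in general give an $\AA^1$-fibration over $R[x]$; you must change $x$, and that is the entire difficulty.

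Second, suppose you prove that every component of $\bar x=\bar c$ is rational with one place at infinity; you only assert this via an unspecified Euler-characteristic argument. That still does not make the components smooth: a cuspidal curve $Y^a=X^b$ has both properties. So the AMS theorem, which concerns closed embeddings of $\AA^1$, cannot be applied componentwise.

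Third, the move $x\mapsto t^{-m}(x-p(y))$ keeps $x$ a $K$-variable only when $y$ is a complement of $x$ in $A_K$. It yields a new element of $A$ only if $\bar x$ is a polynomial in the reduction of a suitably rescaled such complement. Even if you showed $\bar x=\phi(\bar w)$ for some coordinate $\bar w$ of $A/tA$, nothing guarantees that $\bar w$ is the reduction of an element of $A\cap(K^*y+K[x])$. You also give no decreasing invariant that forces the induction to terminate. Until you supply an actual construction of $u$, the proof is incomplete at its central step.
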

\begin{proof}
This is due to \cite[Theorem 1]{sathaye1983polynomial}.
\end{proof}
\begin{remark}
In relation to the embedding problem \cref{sect:embedding problem}, this theorem provides the following solution for affine three spaces: 
Let $k$ be a field of characteristic zero. Let $f\in k^{[3]}$ be an element in the polynomial ring $k[X,Y,Z]$. Further, assume that 
$$k[X,Y,Z]\otimes_{k[f]} k[f]\cong k(f)^{[2]}$$ and all the partial derivatives have no common zeroes in $k^3$, i.e.,
$$\frac{\partial f}{\partial X}=\frac{\partial f}{\partial Y}= \frac{\partial f}{\partial Z}=0.$$ 
Then $k[X,Y,Z]/(f-\lambda)\cong k^{[2]}$, for every $\lambda\in k$. In other words, it implies that 
        $$k[X,Y,Z]\cong k[f]^{[2]}.$$ 
For more background, we redirect the reader to \cite{sathaye1983polynomial, miyanishi1984algebrotopA3}.
\end{remark}

For our purposes, the remarkable result (\cref{sathayethm}) coupled with functoriality in the motivic homotopy theory gives us the following structure theorem (\cref{2-dim DD}) and consequently, an elegant characterization of $\AA^2$ in relative dimension 2 (\cref{A2unique:DD}):

\begin{theorem}\label{2-dim DD} \cite{DMO25}
Let $X$ be a smooth scheme of relative dimension $2$ over a Dede-kind scheme $D$ with characteristic zero residue fields. Furthermore, let the canonical morphism $f: X \to S$ be affine. Then the following are equivalent:
\begin{enumerate}
\item $f$ is a relative $\AA^1$-weak equivalence in $\Spc_S$,
\item $f$ is a Zariski locally trivial $\AA^2$-bundle.
\end{enumerate}
\end{theorem}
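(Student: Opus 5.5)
The implication $(2)\Rightarrow(1)$ is formal. A Zariski locally trivial $\AA^2$-bundle has all fibers isomorphic to $\AA^2_{\kappa(s)}$, and these are $\AA^1$-contractible. \cref{fiberwise=relative} then gives that $f$ is a relative $\AA^1$-weak equivalence; alternatively one can appeal directly to \cref{A1-contr:egs}(3). Throughout, I read $S$ as the Dedekind scheme $D$.

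For $(1)\Rightarrow(2)$, the first step is to reduce to fibers. By \cref{cor:A1-cont-fibers}, every fiber $X_s\to\Spec\kappa(s)$ over a point $s\in D$ is a smooth $\AA^1$-contractible surface. This includes the generic point, whose residue field is the function field $K$ of $D$. All residue fields have characteristic zero, so they are perfect, and \cref{A2unique:perfect} (or \cref{A2-fiberspace}) gives $X_s\cong\AA^2_{\kappa(s)}$ for every $s$. Hence $f$ is an affine $\AA^2$-fiber space. Note that $K$ also has characteristic zero, since it specializes to residue fields of characteristic zero at closed points; if $D$ is a field there is nothing further to do.

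The second step is local triviality. Zariski local triviality is a local question on $D$. Fix a closed point $s$ with local ring $R=\mathcal{O}_{D,s}$, a DVR with uniformizer $t$. Since $f$ is affine, $X\times_D\Spec R=\Spec A$ with $A$ a finitely generated flat $R$-algebra. The ring $A$ is a domain: it is flat over $R$, so it is $t$-torsion free, and $A\otimes_R K\cong K^{[2]}$ is a domain. Moreover $A\otimes_R K\cong K^{[2]}$ and $A\otimes_R\kappa(s)\cong\kappa(s)^{[2]}$, and $R$ is equicharacteristic zero. \cref{sathayethm} then yields $A\cong R^{[2]}$.

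Finally, the isomorphism $\Spec A\cong\AA^2_R$ is given by finitely many polynomial identities among finitely many elements. Since $f$ is of finite presentation, it spreads out over some open neighbourhood $U\ni s$, so $X_U\cong\AA^2_U$. Covering $D$ by such $U$ gives (2).

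The main obstacle is the application of \cref{sathayethm}. That result needs $R$ to be equicharacteristic zero, which requires $K$ itself to have characteristic zero. This is automatic here, since a DVR with residue field of characteristic zero contains $\QQ$. The substantive input, that each fiber is genuinely $\AA^2$ rather than merely $\AA^1$-contractible, is supplied by \cref{A2unique:perfect}, which relies on the affineness result \cref{A1contrasurface:affine}.
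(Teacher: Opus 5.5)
Your proposal is correct and follows the paper's route: you identify each fibre as $X_s\cong\AA^2_{\kappa(s)}$ via \cref{A2-fiberspace} (i.e.\ \cref{A2unique:perfect}), then apply Sathaye's theorem \cref{sathayethm} over each local ring $\mathcal{O}_{D,s}$. You also spell out steps the paper leaves implicit, namely that $A$ is a domain, that the DVR contains $\QQ$, and that the trivialization over $\Spec\mathcal{O}_{D,s}$ spreads out to an open neighbourhood of $s$; all of these are sound.
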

\begin{proof}
The implication $(2)\implies (1)$ is clear: a Zariski locally trivial $\AA^2$-bundle is a relative $\AA^1$-weak equivalence. For the converse, suppose that $X$ is relatively $\AA^1$-contractible of dimension 2 over $D$. Then, by \Cref{A2-fiberspace}, $f: X\to D$ is an $\AA^2$-fiber space over a discrete valuation ring of characteristic zero. The conclusion is evident from \cref{sathayethm}.
\end{proof}

The hypothesis that $f: X\to D$ is an affine morphism is indispensable, as in comparison to the one-dimensional case (where this hypothesis was not needed), which raises the following question:

\begin{question}\cite{DMO25}
Is there a relatively $\mathbb{A}^1$-contractible  $\mathbb{A}^2$-fiber space $f: X\to D$ for which $f$ is not an affine morphism?
\end{question}

As a consequence of \cref{2-dim DD}, we have the unique characterization of the affine plane over a Dedekind scheme.
\begin{corollary}\label{A2unique:DD}\cite{DMO25}
A smooth affine scheme $f:X\to D$ over an \emph{affine} Dedekind scheme $D$ with characteristic zero residue fields is isomorphic to $\mathbb{A}^2_D$ if and only if both of the following holds:
\begin{enumerate}
\item $X$ is a smooth $\mathbb{A}^1$-contractible surface in $\mathcal{H}(D)$, and
\item the relative canonical sheaf of differentials $\omega_f$ is trivial.
\end{enumerate}
\end{corollary}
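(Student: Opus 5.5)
The plan is to model the argument on the proof of \cref{A1unique:DD}, with \cref{2-dim DD} playing the role of \cref{1-dim DD}. The forward direction is immediate. $\AA^2_D$ is relatively $\AA^1$-contractible over $D$, for instance as a trivial $\AA^2$-bundle via \cref{A1-contr:egs}, and its relative canonical sheaf $\omega_{\AA^2_D/D}=\det\Omega_{\AA^2_D/D}$ is free with generator $dx\wedge dy$.

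For the converse, assume $X$ is $\AA^1$-contractible in $\mathcal{H}(D)$ and $\omega_f$ is trivial. Since $f$ is affine and $D$ is a Dedekind scheme with characteristic zero residue fields, \cref{2-dim DD} shows that $f$ is a Zariski locally trivial $\AA^2$-bundle. Because $D$ is affine (and Noetherian), the Bass--Connell--Wright theorem \cite{BCW76} recalled in the overview of the Dolga\v{c}ev--Ve\v{i}sfe\v{i}ler conjecture then gives an $S$-isomorphism $X\cong \mathbb{V}(\mathcal{E})$, the total space of a rank $2$ vector bundle $\mathcal{E}$ on $D$.

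It remains to show that $\mathcal{E}$ is trivial, and this is where $\omega_f$ enters. For a vector bundle $p:\mathbb{V}(\mathcal{E})\to D$ one has $\Omega_{\mathbb{V}(\mathcal{E})/D}\cong p^*\mathcal{E}^{\vee}$ (or $p^*\mathcal{E}$, depending on convention). Hence $\omega_f\cong p^*\det\mathcal{E}^{\pm1}$. The pullback $p^*:\Pic(D)\to\Pic(X)$ is an isomorphism, since $D$ is regular (cf.\ \cite{magid1975picard}). So triviality of $\omega_f$ forces $\det\mathcal{E}\cong\mathcal{O}_D$.

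Finally we use the structure of projective modules over a Dedekind domain $A=\mathcal{O}(D)$. By Steinitz, a finitely generated projective module of rank $2$ splits as $P\cong A\oplus\det P$, so $\det\mathcal{E}$ trivial gives $\mathcal{E}\cong\mathcal{O}_D^{2}$ and therefore $X\cong\AA^2_D$.

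The main obstacle is the reduction from a Zariski locally trivial $\AA^2$-bundle to a vector bundle. This is exactly why the affineness of $D$ is needed: over a non-affine base an $\AA^2$-bundle need not be linear, and triviality of $\omega_f$ alone would not suffice without a section. I would check that Bass--Connell--Wright applies to affine Noetherian bases in our generality. If it does not, I would argue directly: over each trivializing open the transition automorphisms of $\AA^2$ over a Dedekind ring can be tamed, using the amalgamated structure of \cref{Aut(A2):amalgamated} generically and Sathaye's theorem locally, so that the bundle becomes affine-linear, and then kill the translation part because torsors under vector bundles on an affine base are trivial.
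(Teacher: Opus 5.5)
Your proposal is correct and follows the paper's proof essentially step for step: \cref{2-dim DD} gives a Zariski locally trivial $\AA^2$-bundle, and Bass--Connell--Wright turns it into a rank-$2$ vector bundle $\mathcal{E}$ over the affine base; then $\omega_f\cong p^*\det\mathcal{E}$ together with injectivity of $p^*$ on Picard groups forces $\det\mathcal{E}\cong\mathcal{O}_D$, and the splitting $\mathcal{E}\cong\mathcal{O}_D\oplus\det\mathcal{E}$ over a one-dimensional base concludes. The differences are only cosmetic: you cite Steinitz where the paper cites Serre's splitting theorem (equivalent here), and your fallback argument in the last paragraph is unnecessary, since Bass--Connell--Wright applies directly over an affine Noetherian base.
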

\begin{proof} 
One direction is obvious: the affine space $\AA^2_D$ is smooth, $\AA^1$-contractible, and the relative canonical sheaf $\omega_{\AA^2_D/D}$ is trivial.
\medskip

Suppose the converse: by \Cref{2-dim DD}, we have that $f: X\to D$ is a Zariski locally trivial $\AA^2$-bundle. Since $D$ is affine, by \cite[Theorem 4.9]{BCW76}, the map $f:X\to D$ is in fact isomorphic over $D$ to a vector bundle $p: E= \Spec_D(\mathrm{Sym}^{\bullet} \mathcal{E})\to D$ for some locally free sheaf $\mathcal{E}$ of rank $2$ on $D$. Taking relative differentials, we have that $p^{*}\mathcal{E}\cong \Omega_f$ and consequently, applying the $\det$ functor, that $p^{*}(\det \mathcal{E})\cong \omega_f$. On the other hand, since $D$ is $1$-dimensional, it follows from \cite[Theorem 1]{SerreModulesprojectifs} that $\mathcal{E}$ is isomorphic to the direct sum of $\mathcal{O}_D$ and an invertible sheaf $\mathcal{L}$ such that $f^*\mathcal{L}\cong \omega_f$. Since $f^*: \Pic(D)\to \Pic(X)$ is an isomorphism, the triviality of $\omega_f$ is equivalent to that of $\mathcal{E}$ and the assertion follows.
\end{proof}

\subsubsection{Zariski Cancellation for Relative Surfaces}
An important consequence is the relative variant of Zariski Cancellations over certain Dedekind schemes.

\begin{corollary}\label{Cor:rel-ZCP:dim2}\cite{DMO25}
Let $D$ be an affine Dedekind scheme with characteristic zero residue fields. Then for a smooth affine scheme $f: X\to D$ of relative dimension 2 over a Dedekind scheme $D$, the isomorphism $X\times_D \AA^1_D \cong \AA^3_D$ implies that $X \cong \AA^2_D$.
\end{corollary}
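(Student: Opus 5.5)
The plan is to follow the proof of \cref{Cor:rel-ZCP:dim1} closely and reduce to \cref{A2unique:DD}. That corollary requires two inputs. First, $f: X\to D$ must be a smooth $\AA^1$-contractible surface in $\mathcal{H}(D)$. Second, the relative canonical sheaf $\omega_f$ must be trivial. I would verify these in turn, after which \cref{A2unique:DD} gives $X\cong \AA^2_D$ directly. In this setting $D$ is affine, so no section hypothesis is needed.

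\textbf{Step 1: $\AA^1$-contractibility.} The projection $\pr_1: X\times_D \AA^1_D \to X$ is an $\AA^1$-weak equivalence in $\Spc_D$, since it is a trivial $\AA^1$-bundle (\cref{A1-contr:egs}). The structure map $X\times_D\AA^1_D\cong \AA^3_D\to D$ is also an $\AA^1$-weak equivalence. This structure map factors as $f\circ \pr_1$. By two-out-of-three in the model category $\Spc_D$, $f$ is then an $\AA^1$-weak equivalence, i.e.\ $X$ is $\AA^1$-contractible over $D$. Smoothness of $f$ is part of the hypotheses, and so is relative dimension $2$ (this also follows from comparing fiber dimensions with $\AA^3_D$).

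\textbf{Step 2: triviality of $\omega_f$.} The relative canonical sheaf of the product satisfies
$$\omega_{X\times_D\AA^1_D/D}\cong \pr_1^*\omega_f\otimes \pr_2^*\omega_{\AA^1_D/D}\cong \pr_1^*\omega_f.$$
Since $\AA^3_D$ has trivial relative canonical sheaf, $\pr_1^*\omega_f$ is trivial. The next step is to show that $\pr_1^*:\Pic(X)\to \Pic(X\times_D\AA^1_D)=\Pic(X\times \AA^1)$ is injective; it is in fact split by the zero section $X\to X\times_D\AA^1_D$. Pulling back along the zero section $s_0$ gives $\omega_f\cong s_0^*\pr_1^*\omega_f\cong \mathcal{O}_X$. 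Using the splitting avoids needing homotopy invariance of $\Pic$ for the possibly non-normal case, although $X$ is regular here anyway.

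\textbf{Step 3: conclusion.} Combining Steps 1 and 2, \cref{A2unique:DD} applies, since $D$ is affine Dedekind with characteristic zero residue fields and $X\to D$ is smooth and affine. This yields $X\cong\AA^2_D$.

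I expect no serious obstacle here. The only point needing care is Step 2, which requires the product formula for relative canonical sheaves and a correct identification of the trivial canonical sheaf on $\AA^3_D$. That identification is transported through the given isomorphism, and it holds because triviality of a line bundle is isomorphism-invariant.
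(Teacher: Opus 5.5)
Your proposal is correct and follows essentially the same route as the paper. Both arguments trivialize $\omega_f$ via the product formula for relative canonical sheaves, deduce relative $\AA^1$-contractibility of $X$ over $D$ from the isomorphism with $\AA^3_D$, and conclude with \cref{A2unique:DD}. There are two small differences, both valid and slightly more self-contained. You split $\pr_1^*$ on Picard groups with the zero section, whereas the paper appeals to $\pr_1^*$ being an isomorphism on $\Pic$. You also use a direct two-out-of-three argument, whereas the paper cites that stably isomorphic smooth schemes are $\AA^1$-weakly equivalent.
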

\begin{proof}
Again, the given isomorphism $X\times_D \AA^1_D\cong \AA^3_D$ implies that $\omega_{\AA^3_D/D}=p_1^*\omega_f\otimes p_2^*\omega_{\AA^2_D/D}\cong p_1^*\omega_f$ and the induced isomorphism of Picard groups $p_1^*:\mathrm{Pic}(X)\to \mathrm{Pic}(X\times_D \AA^2_D)$ implies that $\omega_f$ has to be trivial. But $X$ is stably isomorphic to an affine space, which implies that $X$ is eventually $\AA^1$-contractible in $\mathcal{H}(D)$ (see references from \cref{Asanuma-stable-structure}). The desired conclusion now follows from \Cref{A2unique:DD}.
\end{proof}

\subsubsection{Counter-examples to \cref{2-dim DD} when char $k = p> 0$}
The following examples show that the analog of \Cref{2-dim DD} does not hold in positive characteristics.
\begin{example}\label{eg:Asanuma3F}
Let $k$ be a field of characteristics $p>0$. Consider the affine three-fold $V = \Spec R$ where 
\begin{align*}
    R = \frac{k[X,Y,Z,T]}{\Lin X^m Y - Z^{p^e}-T-T^{sp} \Rin} \quad \text{where}\quad m\geq 2,\quad p^e \nmid sp\ \text{and}\ sp\nmid p^e.
\end{align*}    
Define a map $\pi_x: V\to \AA^1_k$ given by the projection onto the $X$-coordinate. This map $\pi_X$ is an $\AA^2$-fiber space. From \cite[Theorem 3.4]{asanuma1987polynomial}, it follows that the ring $R$ is stably a polynomial ring over $k[X]$. In particular, this shows that the affine variety $V$ is relatively $\AA^1$-contractible in $\mathcal{H}(\AA^1_k)$ and consequently also $\AA^1$-contractible in $\mathcal{H}(k)$. However, $\pi_X$ is \emph{not} a Zariski locally trivial $\AA^2$-bundle (\textit{ibid}, Theorem 5.1).
\end{example}

The following example provides us with an $\AA^2$-fiber space over a discrete valuation ring with residue fields of positive characteristics, which portrays the obstruction to extending \Cref{Cor:rel-ZCP:dim2}.

\begin{example}
Let $R$ be a Noetherian integral domain and let $p$ be any non-invertible prime in $R$. Choose $\pi$ to be any non-invertible, non-zero divisor in $R$. Define an $R$-algebra 
        $$A = R[X,Y,Z]/\Lin -X^{p^e}+Y+Y^{sp}+\pi Z \Rin,$$
where $e,s$ are positive integers such that $sp \nmid p^e$ and $p^e\nmid sp$. Then for an affine variety $f: X:= \Spec A \to \Spec R$, we have the following due to \cite[Corollary 5.3]{asanuma1987polynomial}: 
\begin{enumerate}
\item The map $f: X \to \Spec R$ is an $\AA^2$-fiber space (\cref{def:A1-fib-space}),
\item The affine variety $X$ is stably $\AA^1$-contractible, that is, $X\times_{\Spec R} \AA^1\cong \AA^3$
\item The affine variety is not the affine 2-space, $X \ncong \AA^2_R$.
\end{enumerate}
The property (2) and (3) gives the negative solution to the Zariski cancellation problem over $A$.
\end{example}

\subsubsection{Vignette}
Let $X$ be a smooth scheme of dimension $n<3$ over a field $k$. Then, due to the characterization results that we have presented in this chapter, we obtain the following beautiful characterization:
\begin{itemize}
\item $X$ is $\AA^1$-contractible $\iff$ $X\cong \AA^n$ and so its logarithmic Kodaira is negative, $\bar{\kappa}(X) = -\infty$. When $n=2$ with $k$ uncountable, $X$ being $\AA^1$-connected already implies that $\bar{\kappa}(X)= -\infty$ (however, the converse is false: take $X= \GG_m \times \AA^1_k$).
\end{itemize}
This makes one ponder the following questions.
\begin{question}
To what extent does $\AA^1$-contractibility imply $\AA^1$-chain connectedness from dimensions $\ge 3$ and vice-versa?
\end{question}
\begin{question}
Do $\AA^1$-contractible varieties have negative logarithmic Kodaira and vice-versa from dimensions $\ge 3$?
\end{question}

%--------------------------------------------------------------
\section{Relative \texorpdfstring{$\AA^1$}{A1}-Contractibility in Higher Dimensions} \label{sect:higherdimensions}
The characterization of Zariski locally trivial $\AA^n$-bundles up to  $\AA^1$-contractibility does not hold from relative dimensions $n \geq 3$. In this chapter, we will explain this perspective and present a geometric proof of the fact that $\AA^n$-fiber bundles produce an abundance of examples of relatively $\AA^1$-contractible schemes (\cref{thm:An-fiber-space-contractible}). This will be explained via a concrete illustration (\cref{Winkelmann:example}).

\subsubsection{The characterization of $\AA^n$ in higher dimensions}
The characterization of affine spaces up to $\AA^1$-contractibility is no longer true starting from dimensions $n \geq 3$, even over the "kindest" base fields. For fields of positive characteristics, this can already be deduced by combining the negative solution of Zariski Cancellations as discussed in \cref{ZCP:positive-char} and \cref{stableiso:A1-weq}. Over a field of characteristic zero, the Koras-Russell threefolds give the first examples of a smooth $\AA^1$-contractible variety that is not isomorphic to $\AA^3_k$ as introduced in \cref{C*-actions:KR3F} (we will study this family in detail in \cref{chp5}).
\medskip

In higher dimensions, the beautiful work of Asok-Doran \cite{asok2007unipotent} established that from dimensions $n \geq 4$, there exist abundant examples of smooth $n$-schemes that are $\AA^1$-contractible but are not isomorphic to $\AA^n$. They demonstrated this by constructing everywhere stable $\GG_a$-actions on affine spaces with strictly quasi-affine quotients. Towards this, they exploited an \emph{algebraic characterization} (\textit{ibid, Theorem 4.20}) that, in essence, characterizes when a given $\GG_a$-action is \emph{everywhere stable}. Their main result is the following (\cite[Theorem 5.1]{asok2007unipotent}):
\begin{theorem*}
For every $m\geq 4$, there exists a denumerably infinite collection of pairwise non-isomorphic $m$-dimensional exotic $\AA^1$-contractible varieties, each admitting an embedding into a smooth affine variety with pure codimension 2 smooth boundary.
\end{theorem*}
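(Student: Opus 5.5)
The plan is to realise each exotic variety as the geometric quotient of a free $\GG_a$-action on an affine space, following the strategy of Asok--Doran. Fix $m\ge 4$ and set $n=m+1$. The aim is to construct a countable family of locally nilpotent derivations $\partial_i$ on $k[x_1,\dots,x_n]$, $i\in\mathbb{N}$, whose associated $\GG_a$-actions on $\AA^n_k$ are \emph{everywhere stable}: free, with a geometric quotient $X_i:=\AA^n/\GG_a$ that is a smooth separated $m$-dimensional scheme and over which $\AA^n\to X_i$ is a $\GG_a$-torsor. I would first take the triangular model of Winkelmann's free action on $\AA^4$ (see \cref{Winkelmann:example}) and extend it to higher dimensions. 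This can be done by adding variables on which the derivation acts triangularly, or by taking products with affine spaces and modifying them. A numerical parameter, say the degree of a coefficient polynomial, then produces infinitely many actions in each dimension. Freeness and properness of the action are to be checked through the algebraic characterisation \cite[Theorem 4.20]{asok2007unipotent}: the ideal generated by the images $\partial_i(x_j)$ must be the unit ideal, and a local slice condition must hold on a cover.

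Given such an action, $\AA^1$-contractibility follows formally. The quotient map $q_i:\AA^n\to X_i$ is a $\GG_a$-torsor. Since $\GG_a$ is special, the torsor is Zariski locally trivial, so $q_i$ is a Zariski locally trivial $\AA^1$-bundle. By \cref{A1-contr:egs}(3), or by Mayer--Vietoris gluing along a trivialising cover, $q_i$ is an $\AA^1$-weak equivalence. Because $\AA^n$ is $\AA^1$-contractible, two-out-of-three shows that $X_i\to\Spec k$ is an $\AA^1$-weak equivalence.

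Exoticity comes from affineness. Since $\AA^n$ is affine, the quotient $X_i$ is quasi-affine, and it embeds as an open subscheme of $Y_i:=\Spec k[x_1,\dots,x_n]^{\partial_i}$. I would show two things: the invariant ring is finitely generated (for these explicit triangular derivations, by exhibiting generators), and $Y_i\setminus X_i$ is non-empty, smooth, and of pure codimension $2$. Then $\Gamma(X_i,\mathcal{O})=\Gamma(Y_i,\mathcal{O})$ by Hartogs, so $X_i$ cannot be affine; otherwise $X_i=Y_i$. In particular $X_i\ncong\AA^m_k$, and $Y_i$ supplies the required embedding with pure codimension $2$ smooth boundary.

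For pairwise non-isomorphism, note that an isomorphism $X_i\cong X_j$ induces $\Gamma(X_i)\cong\Gamma(X_j)$, hence $Y_i\cong Y_j$ carrying boundary to boundary. I would therefore distinguish the boundaries $Y_i\setminus X_i$ by a discrete invariant, such as the number of components or the degree or genus of a curve appearing in them, and this invariant should vary with the parameter $i$. I expect the main obstacle to be designing the derivations so that three things hold simultaneously: the everywhere-stability criterion is satisfied, the invariant ring is explicitly computable with smooth codimension-$2$ boundary, and the boundary invariant genuinely changes with $i$. I would first test all of this on the $\AA^5$ case, which gives $m=4$, and then propagate it to all $m\ge4$ by adding triangular variables.
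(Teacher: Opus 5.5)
Everything you say about what happens \emph{once} suitable actions are in hand is correct, and it is the route the paper attributes to Asok--Doran. A $\GG_a$-torsor $\AA^{m+1}\to X$ gives $\AA^1$-contractibility. Hartogs in the normal affine hull $\Spec\Gamma(X,\mathcal{O}_X)$ rules out affineness. Because the open immersion $X\hookrightarrow\Spec\Gamma(X,\mathcal{O}_X)$ is canonical, the codimension-$2$ boundary is an isomorphism invariant. The genuine gap is that the proposal never produces the family, which is the entire content of the theorem; you defer it as ``the main obstacle''. Your starting point is also misnamed. The free action of \cref{Winkelmann:example} lives on $\AA^5$. On $\AA^4$, an everywhere stable triangular action always has quotient $\AA^3$ \cite[Theorem 2.1]{deveney2004triangular}, so nothing exotic can be grown from there.

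The missing construction uses precisely the parameter you guessed. On $\AA^5=\Spec k[x,y,t_1,t_2,t_3]$ set $z=xt_2-yt_1$ and $\partial_Q=x\partial_{t_1}+y\partial_{t_2}+Q(z)\partial_{t_3}$. Here $Q\in k[z]$ has $d\geq 1$ distinct non-zero roots $a_1,\dots,a_d\in k$ (e.g.\ $Q=\prod_{i=1}^d(z-i)$ in characteristic zero; $Q=z-1$ recovers \cref{Winkelmann:example}). The functions $z$, $u=yt_3-Q(z)t_2$ and $v=xt_3-Q(z)t_1$ are invariant and satisfy $yv-xu=zQ(z)$. The invariants $x=\partial_Q t_1$, $y=\partial_Q t_2$ and $Q(z)=\partial_Q t_3$ generate the unit ideal, since $z\in(x,y)$ and $Q(0)\neq 0$. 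On the three principal opens, the slices $t_1/x$, $t_2/y$ and $t_3/Q(z)$ let you check by hand that $(x,y,z,u,v)$ makes $\AA^5$ a Zariski-locally trivial $\GG_a$-torsor over
\[X_Q=V_Q\setminus\{x=y=0,\ Q(z)=0\},\qquad V_Q=\{yv-xu=zQ(z)\}\subset\AA^5_{x,y,z,u,v}.\]
Several things then follow:
\begin{itemize}
\item $V_Q$ is smooth because $zQ(z)$ is separable.
\item The boundary $V_Q\setminus X_Q$ is the disjoint union of the $d$ planes $\{x=y=0,\ z=a_i\}$.
\item Hartogs gives $k[\AA^5]^{\partial_Q}=\Gamma(V_Q,\mathcal{O})$, so $V_Q$ is the affine hull, finite generation is automatic, and $X_Q$ is strictly quasi-affine.
\end{itemize}
For $m\geq 4$, take $X_Q\times\AA^{m-4}$, the quotient of $\AA^{m+1}$ by $\partial_Q$ extended by zero on the new variables. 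Its affine hull is the smooth variety $V_Q\times\AA^{m-4}$, and its boundary is $d$ disjoint copies of $\AA^{m-2}$. The number $d$ of boundary components then separates the members of the family, exactly as in your last paragraph.
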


In fact, starting from dimension $\ge 6$, there exists a moduli of exotic smooth varieties arising from the quotient of an action of an unipotent group on an affine space. We include below the statement for the reader's clarity (\cite[Theorem 5.3]{asok2007unipotent}).
\begin{theorem*}
For every $ m\geq 6$ and every $n\geq 0$, there exists a connected $n$-dimensional scheme $S$ and a smooth morphism $f: X\to S$ of relative dimension $m$, whose fibers over $k$-points are $\AA^1$-contractible and strictly quasi-affine and non-isomorphic. 
\end{theorem*}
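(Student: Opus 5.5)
The plan is to realize $X$ as a relative quotient of a family of everywhere stable $\GG_a$-actions on an affine space, following the strategy of Asok--Doran recalled above. Concretely, I would take $S$ to be a suitable Zariski open subset of $\AA^n_k$ (or a product of configuration spaces), with parameter $\lambda\in S$. Over $\AA^{m+1}_S$ I would write down a triangular locally nilpotent $S$-derivation $\partial_\lambda$ whose coefficients depend polynomially on $\lambda$. These coefficients should be chosen so that two conditions hold:
\begin{enumerate}
\item by the algebraic characterization of everywhere stable actions (\cite[Theorem 4.20]{asok2007unipotent}), the values $\partial_\lambda(x_i)$ generate the unit ideal fibrewise, so the associated $\GG_a$-action on each fibre $\AA^{m+1}_\lambda$ is free with local slices;
\item the ring of invariants, and the locus where the quotient map fails to be a trivial torsor, carry a boundary whose isomorphism class genuinely varies with $\lambda$.
\end{enumerate}
The room needed for this second condition is where I expect $m\ge 6$ to be used.

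With such a family in hand, the first step is to form the geometric quotient $X:=\AA^{m+1}_S/\GG_{a,S}$. I would show it is a smooth separated $S$-scheme of relative dimension $m$, with $q:\AA^{m+1}_S\to X$ a $\GG_a$-torsor, hence Zariski locally trivial because $\GG_a$-torsors over affine pieces are trivial. Fibrewise, $q_\lambda$ is then an $\AA^1$-bundle, so $q_\lambda$ is an $\AA^1$-weak equivalence (\cref{A1-contr:egs}). Since $\AA^{m+1}_{\kappa(\lambda)}$ is $\AA^1$-contractible, each fibre $X_\lambda$ is $\AA^1$-contractible. This fibrewise statement is all that is claimed, although \cref{fiberwise=relative} would even upgrade it to relative contractibility over $S$.

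The second step is strict quasi-affineness. $X_\lambda$ embeds as an open subscheme of its affine hull $\overline{X}_\lambda=\Spec \mathcal{O}(\AA^{m+1})^{\GG_a}$, provided the invariant ring is finitely generated. Arranging finite generation is part of choosing $\partial_\lambda$, for instance by using Weitzenb\"ock-type or explicit triangular derivations whose invariants can be computed. If $X_\lambda$ were affine, then $q_\lambda$ would be a trivial torsor and $X_\lambda$ would be a retract of $\AA^{m+1}$ with $X_\lambda\times\AA^1\cong\AA^{m+1}$. I would rule this out by exhibiting the complement $\overline{X}_\lambda\setminus X_\lambda$ as a nonempty closed subset of pure codimension $2$, as in \cite[Theorem 5.1]{asok2007unipotent}. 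Then $X_\lambda$ is not affine, since it has the same global functions as $\overline{X}_\lambda$ but is a proper open subset.

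The final step, which I expect to be the main obstacle, is pairwise non-isomorphism of the fibres over $k$-points. The invariant I would use is the pair $(\overline{X}_\lambda,\ \overline{X}_\lambda\setminus X_\lambda)$. Any isomorphism $X_\lambda\cong X_\mu$ induces an isomorphism of rings of global functions, hence of affine hulls, carrying one boundary onto the other. It therefore suffices to design $\partial_\lambda$ so that this boundary is (a cylinder over) a smooth variety whose isomorphism class has $n$ continuous moduli read off from $\lambda$, for example a product of elliptic curves or of hyperplane arrangements with prescribed cross-ratios, embedded so that the boundary has codimension $2$. Then, after shrinking $S$ so that the resulting moduli map $S\to$ (moduli of boundaries) is injective on $k$-points, the fibres are pairwise non-isomorphic. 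The delicate points are making the boundary computable from the invariants, and checking that the automorphisms of the ambient hull cannot identify distinct parameter values. For the latter, I would first try to show that every automorphism of $\overline{X}_\lambda$ preserves a canonical fibration whose fibres recover the moduli.
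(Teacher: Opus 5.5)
The thesis gives no argument for this statement; it quotes \cite[Theorem 5.3]{asok2007unipotent}, where the examples arise as quotients of free unipotent group actions on affine spaces. Your architecture (torsor $\Rightarrow$ $\AA^1$-contractibility, affine hull $\Rightarrow$ strict quasi-affineness, boundary $\Rightarrow$ non-isomorphism) is therefore the expected one, but as written the proposal has genuine gaps.

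First, the local-triviality step is wrong as stated. That the $\partial_\lambda(x_i)$ generate the unit ideal only says the action has no fixed points, hence is free in characteristic zero. A free $\GG_a$-action on affine space need not be locally trivial: Winkelmann exhibited a free triangular action on $\AA^4$ that is not. What you need is that the elements $\partial_\lambda(f)$ with $\partial_\lambda^{2}(f)=0$ generate the unit ideal, since each such $g=\partial_\lambda(f)$ gives the slice $f/g$ over $\{g\neq 0\}$. This holds, for instance, when every $\partial_\lambda(x_i)$ lies in $\ker\partial_\lambda$, as for the derivation $x\partial_{t_1}+y\partial_{t_2}+(xt_2-yt_1-1)\partial_{t_3}$ behind \cref{Winkelmann:example}. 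Without it you do not know that the quotient is a separated scheme over which $q$ is a Zariski-locally trivial torsor, and that is exactly what your $\AA^1$-contractibility argument uses.

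Second, and more seriously, the step that carries the content of the theorem is only announced. You never write down $\partial_\lambda$, compute its invariants, or identify the boundary, and the hypothesis $m\geq 6$ plays no role. The invariant you propose also needs repair:
\begin{itemize}
\item The boundary $\overline{X}_\lambda\setminus X_\lambda$ is closed in the affine scheme $\overline{X}_\lambda$, hence affine, so it cannot be built from products of elliptic curves.
\item When, as is typical for these quotients, the boundary is only a cylinder $E_\lambda\times\AA^k$, reading off the moduli of $E_\lambda$ is a cancellation problem. Cancellation fails in general (Danielewski surfaces), so you need something like $E_\lambda$ of non-negative logarithmic Kodaira dimension, so that the Iitaka--Fujita strong cancellation theorem applies.
\item Shrinking $S$ to an open subset cannot make a finite-to-one moduli map injective on $k$-points when the non-injectivity comes from a symmetry (e.g.\ reordering marked points). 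The family itself must have a generically injective moduli map.
\end{itemize}

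One way to supply the missing mechanism with tools already in the thesis: let $E=V(f_1,\dots,f_r)\subset\AA^N$ be a smooth affine subvariety of codimension at least $2$. The scheme $\{\sum_i f_i(x)y_i=z(z-1)\}\setminus\{f_1=\dots=f_r=0,\ z=1\}$, generalizing \cref{Winkelmann:example}, is a separated $\AA^r$-fibre space over $\AA^N$, hence $\AA^1$-contractible via \cref{thm:An-fiber-space-contractible}. Its affine hull is the smooth hypersurface and its boundary is $E\times\AA^r$. This reduces the problem to choosing a family of such $E$ with $n$ genuine moduli that survive cancellation.
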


It is important to note that all the varieties obtained as a result of this construction are strictly\footnote{quasi-affine but not affine, e.g., $\AA^n \backslash \{0\}$, for $n \geq2$} quasi-affine smooth $\AA^1$-contractible schemes which are not isomorphic to $\AA^n$. In fact, a simplest prototype of such an occurrence, a smooth (exotic) fourfold, was already studied by Winkelmann in the context of complex geometry.
\begin{theorem}
The following result has been abridged from several results sourced from \cite[\S 2]{winkelmann1990free}:
\begin{enumerate}
\item There exists a unipotent group $U \cong (\CC^1,+)$-action on $\CC^5$ given by a certain quadratic transformation such that the quotient $X = \CC^5/U$ is diffeomorphic to $\CC^4$, but not biholomorphic. In particular, $X$ is a strictly quasi-affine variety that is a non-Stein manifold (hence $X\ncong \CC^4$). 
\item There exists a unipotent group $U\cong (\CC^2,+)$-action on $\CC^6$ given by certain affine linear transformation such that the quotient $X =\CC^6/U$ is diffeomorphic to $\CC^4$, but not biholomorphic. In particular, $X$ is a strictly quasi-affine variety that is a non-Stein manifold (hence $X\ncong \CC^4$). 
\end{enumerate}
\end{theorem}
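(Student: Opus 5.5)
The plan is to follow Winkelmann's explicit construction and treat both items in parallel. The three things to prove are that the quotient exists as a smooth quasi-affine variety, that it is contractible and diffeomorphic to $\RR^8\cong\CC^4$, and that it is not Stein. For (1) I would take coordinates $(x_1,x_2,y_1,y_2,z)$ on $\CC^5$ and the $\GG_a$-action
$$t\cdot(x_1,x_2,y_1,y_2,z)=(x_1,x_2,\,y_1+tx_1,\,y_2+tx_2,\,z+t(1+x_1y_2-x_2y_1)).$$
It is a quadratic transformation, and it is free: if $x_1=x_2=0$ then the $z$-coordinate moves by $t\neq 0$.

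For (2) I would take the analogous affine-linear action of $\GG_a^2$ on $\CC^6$ built from the same pattern. Freeness is checked the same way, by a direct inspection of stabilizers.

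The first step is the existence of a geometric quotient. I would show the action is proper, i.e. that $U\times\CC^5\to\CC^5\times\CC^5$ is a closed embedding. This amounts to showing that $t$ is a regular function of a pair $(p,t\cdot p)$ on the graph: on $\{(x_1,x_2)\neq 0\}$ one recovers $t$ from the $y$-coordinates, and near $x=0$ one recovers it from the $z$-coordinate, since $1+x_1y_2-x_2y_1$ is a unit there. Properness and freeness give a separated quotient $X=\CC^5/U$ which is a smooth algebraic space; using the explicit invariants I would check that it is in fact a scheme. Moreover $\CC^5\to X$ is a principal $\GG_a$-bundle, Zariski-locally trivial because $H^1$ of an affine scheme with coefficients in $\GG_a$ vanishes. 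Since the fibres are contractible, $X$ is homotopy equivalent to $\CC^5$, hence contractible.

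Next I would compute the invariant ring $\CC[\CC^5]^U$. It contains $x_1$, $x_2$, $f=x_1y_2-x_2y_1$, and the invariants $x_iz-(1+f)y_i$, and I expect these to generate it. This gives a morphism $\phi\colon X\to Y:=\Spec\CC[\CC^5]^U$. I would show $\phi$ is an open immersion whose image misses a nonempty closed subset of codimension $\ge 2$, namely the orbit-image of $\{x_1=x_2=0\}$, where the invariants collapse. Then $X$ is quasi-affine. It is not affine, because $\mathcal{O}(X)=\mathcal{O}(Y)$ and an affine $X$ would force $X\cong Y$.

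Non-Steinness then follows from Hartogs: every holomorphic function on $X$ extends across the codimension-$\ge 2$ complement in the normal variety $Y$. Hence $X$ is not holomorphically convex (it is not holomorphically separated from boundary points of $Y$), and so $X\ncong\CC^4$.

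For the diffeomorphism with $\CC^4$ I would argue topologically. $X$ is an open contractible manifold of real dimension $8\ge 5$, so by Siebenmann's criterion, as in \cref{thm:char-of-Rn}, it suffices to show that $X$ is simply connected at infinity; the Poincaré–Lefschetz count as in \cref{eg:homol-infty-sphere} already makes the homology at infinity agree with that of $\SS^7$. To get simple connectivity at infinity I would exhibit neighbourhoods of infinity fibred over those of $\CC^5$ modulo a proper free $\RR^2$-action, or use the explicit invariant coordinates to write $X$ as a union of two $\CC^4$-charts, $\{x_1\neq 0\}$ and $\{x_2\neq 0\}$ up to bundle twists, glued along a simply connected overlap.

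I expect this step, simple connectivity at infinity, to be the main obstacle. The algebraic parts (freeness, properness, the invariant ring, quasi-affineness) are direct computations, whereas controlling $\pi_1^\infty$ requires an explicit exhaustion. If the chart approach fails, I would fall back on Winkelmann's argument through the real-analytic structure of the $\CC$-principal bundle. For (2), all the steps repeat with $\GG_a^2$ in place of $\GG_a$.
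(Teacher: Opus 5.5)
The paper gives no proof of its own: it cites Winkelmann and, in \cref{Winkelmann:example}, records the quotient as $V\setminus F_1$. I compare against that description. Your action agrees, up to a linear change of coordinates, with the torsor there, and freeness, properness and the Hartogs/non-Stein step are fine.

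\textbf{The complement of $X$ in $Y$ is misidentified.} With $u_i=x_iz-(1+f)y_i$ the invariants satisfy $x_1u_2-x_2u_1+f(1+f)=0$, which is the paper's $V$ after setting $z=-f$. The orbit-image of $\{x_1=x_2=0\}$ is the plane $\{x=0,\ f=0\}$ (the paper's $F_0$). On this plane $u_i=-y_i$ separate the orbits, so nothing collapses, and the plane lies in $\phi(X)$. What is actually missed is the other component $\{x=0,\ f=-1\}$ of the fibre of $Y$ over $x=0$ (the paper's $F_1$). No point of $\CC^5$ maps there, because $x=0$ forces $f=0$.

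The clean way to prove the open immersion is to use the slices $y_1/x_1$, $y_2/x_2$, $z/(1+f)$. They trivialize $\CC^5\to Y$ over $\{x_1\neq 0\}$, $\{x_2\neq0\}$ and $\{1+f\neq0\}$, and these three opens cover exactly $Y\setminus F_1$. This shows at once that $X\cong Y\setminus F_1$ is a scheme and that $\CC^5\to X$ is a Zariski-locally trivial $\GG_a$-bundle. Hartogs on the smooth $Y$ then gives $\CC[\CC^5]^U=\mathcal{O}(X)=\mathcal{O}(Y)$, which is your generation claim.

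\textbf{The diffeomorphism with $\CC^4$ is the genuine gap, and the chart argument cannot work.}
\begin{enumerate}
\item $\{x_1\neq0\}\cup\{x_2\neq0\}$ misses the plane $\{x=0,\ f=0\}\subset X$.
\item Each of these opens is $\CC^*\times\CC^3$ (solve the relation for $u_2$, resp.\ $u_1$), not $\CC^4$.
\item Their overlap $(\CC^*)^2\times\CC^2$ has $\pi_1=\ZZ^2$, so it is not simply connected.
\item Van Kampen on an open cover of $X$ computes $\pi_1(X)$, which you already know is trivial, not $\pi_1^\infty(X)$.
\end{enumerate}
``Falling back on Winkelmann'' is not an argument.

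Here is a route that works.
\begin{enumerate}
\item $\pi_1(Y)=\pi_1(X)=1$, since $F_1$ has real codimension $4$.
\item A generic squared distance on the smooth affine $4$-fold $Y$ is a proper Morse function with finitely many critical points, all of index $\le 4$. So $Y$ is collared at infinity by its link $L$, and $L\hookrightarrow Y$ is $3$-connected; hence $\pi_1(L)=1$.
\item The complement of a large compactum of $X$ is the union of two pieces. One is an end of $Y$ minus $F_1$, still simply connected by codimension. The other is a punctured tubular neighbourhood of $F_1\cong\CC^2$, which is homotopy equivalent to $S^3$.
\item These two pieces meet in a connected set, so van Kampen gives $\pi_1^\infty(X)=1$.
\end{enumerate}
Also, \cref{thm:char-of-Rn} only yields a homeomorphism. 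For ``diffeomorphic'' you need Stallings' theorem in dimension $8$, i.e.\ uniqueness of smooth structures on $\RR^n$ for $n\neq 4$.

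\textbf{Part (2) is not addressed.} You never write down the affine-linear $\GG_a^2$-action. ``All steps repeat'' has to be checked afresh: trivial stabilizers for a two-dimensional group, properness, the quotient and its missing codimension-$2$ locus, and the behaviour at infinity.
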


We will revisit this prototype shortly (cf. \cref{Winkelmann:example}). Intrigued by these results, one can ponder on the following question (\cite[Question 5.10]{asok2007unipotent}):
\begin{question}
Does there exist a 3-dimensional quasi-affine quotient of $\AA^4$ by $\GG_a$?.
\end{question}
As a special case, it has been shown that any everywhere stable $\GG_a$ action on $\AA^4$ associated to the triangular locally nilpotent derivation always produces an affine space quotient, that is, $\AA^4/\GG_a \cong \AA^3$ (\cite[Theorem 2.1]{deveney2004triangular}).
\medskip

Therefore, in coherence with our analysis on low relative dimensions, the program of characterizing the affine $n$-space via $\AA^n$-fiber spaces fails in higher dimensions. In other words, for every $ n\geq 3$, one can now produce relatively $\AA^1$-contractible schemes that are no longer $\AA^n$-fiber spaces. We give several examples of this fact in this current section (note that this would already fail in low relative dimensions under certain assumptions: see \cref{ex:Hamann-example} and \cref{Kambayashi:example:non-normalbase}). 

\subsection{Families of \texorpdfstring{$\AA^1$}{A1}-Contractible Schemes via \texorpdfstring{$\AA^n$}{An}-fiber Spaces: A Geometric Proof}\label{sect:ADF-family}
In \cite{ADF2017smooth}, the authors study the representability of motivic spheres by smooth schemes over a commutative unital ring $k$ and they show that a certain family of (split) smooth affine quadric hypersurfaces $Q_{2n}$ in $\AA^{2n+1}$ (defined below) have the $\AA^1$-homotopy type of a motivic sphere (see \cref{sec:motivic-spheres}), hence solving a representability problem for these classes of motivic spheres.
\begin{equation}\label{ADF:Q2n-defn}
\begin{aligned}
    & Q_{2n-1}:= \Spec k[x_1,\dots,x_n,y_1,\dots,y_n]/\langle \sum_i x_iy_i-1  \rangle\\
    & Q_{2n} := \Spec k[x_1,\dots,x_n,y_1,\dots,y_n,z]/\langle \sum_ix_iy_i-z(1+z) \rangle
\end{aligned}
\end{equation}
In particular, \cite[Theorem 2.2.5]{ADF2017smooth} shows that for any 
commutative ring $k$, 
\begin{align}\label{eqn:ADF-Q2m}
Q_{2n-1} \simeq (S^1)^{n-1}\wedge (\GG_m)^{n}  (\PP^1)^{\wedge n}\quad   \text{and} & \quad   Q_{2n}\simeq (S^1)^{n}\wedge (\GG_m)^n \quad \text{for all}\ n \ge 0. 
\end{align}
These results were already known to be true in the $S^1$-stable $\AA^1$-homotopy category due to Morel (See \textit{ibid}, P. 2), and the \cref{eqn:ADF-Q2m} shows that it is even true unstably! The proof essentially takes into account that $\Sm_k$ is equipped with the Nisnevich topology, which brings the motivic purity (\cref{purity-theorem}) into the act. In the same paper, they also construct the following example: let 
    $$E_n := \{x_1 = \dots =x_n=0; z = -1\} \subset Q_{2n}$$
be the closed subscheme of $\AA^{2n+1}$. Then by setting, 
    $$X_{2n} := Q_{2n}\bs E_n \subset \AA^{2n+1}$$
they show that each of these subvarieties $X_{2n}$ are strictly quasi-affine varieties that are $\AA^1$-contractible over $\Spec k$. Furthermore, they also proved that $X_{2n}$'s cannot be obtained as the quotient of an unipotent group action on affine spaces. This strikes a stark dichotomous behavior with topology, where every such contractible space is indeed a free quotient of an affine space. Aligned with our context, we show that all these $X_{2n}$'s produce abundant examples of $\AA^1$-contractible schemes \emph{relative to an affine space} base. To make this more comprehensible, let us now prove a fact that the $\AA^n$-fiber spaces \emph{always} form a class of relatively $\AA^1$-contractible spaces (\cref{thm:An-fiber-space-contractible}). One major consequence of this will be that the family of smooth quasi-affine subvarieties $X_{2n}$'s constructed above even becomes relatively $\AA^1$-contractible. The following theorem is, in essence, a consequence of Asanuma's Stable Structure theorem from \cref{Asanuma-stable-structure}.

\begin{theorem} \label{thm:An-fiber-space-contractible} \cite{DMO25}
Let $f: X\to S$ be a separated $\mathbb{A}^n$-fiber space over a regular scheme $S$. Then $X$ is a relatively $\mathbb{A}^1$-contractible scheme in $\mathcal{H}(S)$.
\end{theorem}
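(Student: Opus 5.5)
The plan is to reduce to the fibrewise criterion of \cref{fiberwise=relative}. Together with \cref{cor:A1-cont-fibers}, that proposition says a smooth morphism $f:X\to S$ over a Noetherian base of finite Krull dimension is a relative $\AA^1$-weak equivalence in $\Spc_S$ if and only if every scheme-theoretic fiber $X_s$ is an $\AA^1$-contractible $\kappa(s)$-scheme. By \cref{def:A1-fib-space}, an $\AA^n$-fiber space is a smooth morphism of finite presentation with $X_s\cong \AA^n_{\kappa(s)}$ for every point $s\in S$, closed or not. Each such fiber is $\AA^1$-contractible over $\kappa(s)$ by \cref{A1-contr:egs}~(1). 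The gluing argument then yields the conclusion directly.

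The steps, in order, are as follows.
\begin{enumerate}
\item Since the property of being a relative $\AA^1$-weak equivalence is local on $S$ for the Nisnevich (in particular Zariski) topology, reduce to $S$ Noetherian of finite Krull dimension. For instance, take $S=\Spec R$ with $R$ a regular local ring, as in the proof of \cref{fiberwise=relative}. Here I am implicitly using that the paper's running convention on base schemes (Noetherian, finite Krull dimension) applies to the regular scheme $S$.
\item Verify that $f$ is smooth. This is built into \cref{def:A1-fib-space}; separatedness is included in the hypothesis so that $X\in\Sm_S$.
\item Apply \cref{fiberwise=relative} with fibers $\AA^n_{\kappa(s)}$.
\end{enumerate}

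As an independent sanity check, and as the route the paper hints at, one can instead argue in the affine case via \cref{Asanuma-Stab-Strc:thm}. Zariski locally on $S=\Spec R$, the ring $A=\Gamma(X,\mathcal O_X)$ is flat and finitely generated with polynomial fibers. Hence $A^{[m]}\cong \mathrm{Sym}_{R^{[m]}}(\mathfrak P\otimes R^{[m]})$, so $X\times_S\AA^m_S$ is a vector bundle over $\AA^m_S$. Regularity of $S$ makes the two projections in $X\leftarrow X\times_S\AA^m_S\to \AA^m_S\to S$ into $\AA^1$-weak equivalences: the vector bundle is handled by \cref{A1-contr:egs}~(3), and the projection $X\times\AA^m\to X$ by \cref{L_A1-finite-pdts}. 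Two-out-of-three then gives the claim.

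The main obstacle lies in the reduction step, in two places. First, \cref{fiberwise=relative} was stated for Noetherian bases of finite Krull dimension, whereas the theorem only says ``regular''. I would handle this by working Zariski locally with regular local rings, which have finite dimension, together with the locality of $\AA^1$-weak equivalences. Second, $X$ need not be affine over $S$. This is where I would avoid Asanuma's theorem and rely solely on the fibrewise criterion, which needs no affineness.
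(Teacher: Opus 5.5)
Your argument is correct, and it follows a different route from the one the paper writes out; the paper mentions your approach only in the remark after the theorem.

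\textbf{Your route.} You apply the fibrewise criterion \cref{fiberwise=relative} directly. An $\AA^n$-fiber space is smooth, and separated by hypothesis. Every fiber over every point of $S$, closed or not, is $\AA^n_{\kappa(s)}$ and hence $\AA^1$-contractible. The gluing theorem then concludes.

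\textbf{The paper's route.} It builds an explicit chain of elementary $\AA^1$-equivalences.
\begin{enumerate}
\item It reduces to $S$ affine.
\item It applies the Jouanolou--Thomason lemma to obtain an affine vector bundle torsor $\rho\colon V\to X$ of some rank $r$. This is where regularity and separatedness enter.
\item Since each $X_s\cong\AA^n_{\kappa(s)}$ is affine, $V_s\to X_s$ is trivial, and by Quillen--Suslin $V_s\cong\AA^{n+r}_{\kappa(s)}$. So $V\to S$ is an \emph{affine} $\AA^{n+r}$-fiber space.
\item Asanuma's stable structure theorem then applies to $V$ and makes $V\times_S\AA^m_S$ a vector bundle over $S$.
\item Smooth base change (\cref{cor:pushpull-A1-weak}~(2)) and \cref{lem:3-from-2} carry contractibility from $V\times_S\AA^m_S$ to $V$, and from $V$ to $X$.
\end{enumerate}
The Jouanolou step is exactly the device that removes the affineness obstruction you flagged in your sanity check.

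\textbf{What each buys.} Your proof is much shorter and strictly more general. It never uses regularity, Quillen--Suslin or Asanuma, and it shows that every $\AA^n$-fiber space over a Noetherian base of finite Krull dimension is relatively $\AA^1$-contractible, non-normal bases included. However, it rests entirely on the heavy input behind \cref{fiberwise=relative}: Morel--Voevodsky gluing, reduction to Henselian local bases, and invariance under nil-immersions. The paper's proof uses only that vector bundles, vector bundle torsors and projections from $\AA^m$ are $\AA^1$-equivalences, together with smooth base change and two-out-of-three. It exhibits the equivalence geometrically, at the cost of the regularity hypothesis.

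\textbf{Two small points.}
\begin{itemize}
\item Your worry about finite Krull dimension is moot. The paper's running convention makes every base scheme Noetherian of finite Krull dimension, so \cref{fiberwise=relative} applies to $S$ as it stands. No passage to local rings is needed, and such a passage would not be a Zariski-local reduction anyway.
\item In your sanity check, regularity plays no role in making the projections $\AA^1$-equivalences; that holds over any base.
\end{itemize}
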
 
\begin{proof} 
Observe that the assumptions made on $f: X\to S$ are stable under base change to a cover of $S$ by Zariski affine open subsets. In this view, we can assume, without loss of generality, that $S$ is affine. Indeed, this follows from the local nature of $\AA^1$-contractibility - that being a Nisnevich locally trivial $\AA^1$-bundle can be tested on affine open neighborhoods. Since $S$ is regular and $f:X \to S$ is a smooth separated morphism, $X$ is a separated regular scheme. So, by \emph{Jouanolou–Thomason homotopy lemma} (see e.g. \cite[Lemma 3.1.4]{asok2021A1}) there exists a Zariski locally trivial torsor  $\rho: V\to X$  under a vector bundle $p: E\to X$, say of rank $r$, whose total space $V$ is an affine scheme. For every point $s\in S$, the induced $E_s$-torsor 
    $$\rho_s:V_s\to X_s\cong \AA^n_{\kappa(s)}$$
is isomorphic to the trivial one $p_s: E_s\to X_s$ because $X_s$ is affine. Since on the other hand $E_s$ is isomorphic to the trivial vector bundle on $X_s\cong \AA^n_{\kappa(s)}$ by Quillen-Suslin theorem \cite{Qui76}, we conclude that $\rho_s: V_s\to X_s$ is isomorphic to the trivial $\AA^r$-bundle
 $$\mathrm{pr}_2:\AA^r_{\kappa(s)}\times_{\kappa(s)}\AA^n_{\kappa(s)}\to \AA^n_{\kappa(s)}.$$ 
Thus, $g= f \circ \rho: V\to S$ is an affine $\AA^{n+r}$-fiber space over the regular affine scheme $S$. Now it follows from \cite[Corollary 3.5]{asanuma1987polynomial} that there exists $m\geq 0$ such that $g\circ \mathrm{pr}_1:V\times_S \mathbb{A}_S^m\to S$ is isomorphic to a vector bundle $q:F\to S$ of rank $n+r+m$ over $S$.
\[ \begin{tikzcd}
	 & V  & & V \times_S \AA^m_S=F \\
  X \\
  S & & & \AA^m_S 
  \arrow[swap, "f", from=2-1, to=3-1]
  \arrow[swap, "\rho", from=1-2, to=2-1]
  \arrow["g", from=1-2, to=3-1]
  \arrow["\pr_1", from= 1-4, to=1-2]
  \arrow["\pr_2", from= 1-4, to=3-4]
  \arrow[from= 3-4, to=3-1]
   \arrow[swap, "q", from= 1-4, to=3-1]
 \end{tikzcd} \]
This implies in particular that 
            $$g\circ \pr_1:V\times_S \AA_S^m\to S$$ 
is $\AA^1$-weak equivalence in $\Spc_S$. Since $\pr_1:V\times_S \AA^m_S\to V$ is an $\AA^1$-weak equivalence in $\mathrm{Spc}_V$ whence in $\Spc_S$ by \Cref{cor:pushpull-A1-weak} (2), it follows from \Cref{lem:3-from-2} that $g:V\to S$ is an $\AA^1$-weak equivalence in $\Spc_S$. Since on the other hand $\rho:V\to X$ is an $\AA^1$-weak equivalence in $\Spc_X$ whence in $\Spc_S$ by \Cref{cor:pushpull-A1-weak} (2), we conclude again from \Cref{lem:3-from-2} that $f:X \to S$ in $\AA^1$-weak equivalence in $\mathrm{Spc}_S$.
\end{proof}

\begin{remark}    
Let us also note that one can now prove \cref{thm:An-fiber-space-contractible} using the highly sophisticated algebraic techniques coming from motivic homotopy theory (e.g., via \cref{fiberwise=relative}). Nevertheless, we would like to stress that the proof presented here is useful, as it establishes the geometric mechanism of the aforementioned algebraic fact!
\end{remark}

\subsubsection{Illustrative Example: Family of $\AA^1$-Contractible Smooth Quadrics}
We show that the family of smooth affine quadrics as in \cref{ADF:Q2n-defn} appears as a torsor under a vector bundle projected to an affine space. In particular, these quadrics illustrate \Cref{thm:An-fiber-space-contractible} for a separated $\mathbb{A}^2$-fiber space $f:X\to \mathbb{A}^2_{\mathbb{Z}}$ with strictly quasi-affine total space $X$. The following example also shows that subvariety considered by \cite{winkelmann1990free} gives us a simplest illustration of the \cref{thm:An-fiber-space-contractible} in relative dimension 4:

\begin{example}\label{eg:notZLT}\label{Winkelmann:example}\cite{DMO25}
Let $S=\Spec \mathbb{Z}[x,y] =\AA^2_{\mathbb{Z}}$ and consider the smooth affine fourfold 
$$V:=\{xv-yu+z(z-1)=0\} \subset S\times_{\ZZ}\AA^3_{\ZZ} = \Spec \ZZ[x,y][z,u,v].$$
The projection $\pr_1|_V: V\to S$ is a smooth morphism whose restriction over $S\bs \{(0,0)\}$ is a Zariski locally trivial $\mathbb{A}^2$-bundle. On the other hand, $(\pr_{1}|_V)^{-1}(0,0)$ is the disjoint union of two copies 
\begin{align*}
   F_0 = \{x=y=z=0\}\quad  &\text{and}\quad  F_1=\{x=y=0, z= 1\}  
\end{align*}
of $\AA^2_{\ZZ}$. The restriction of $\pr_{1}|_V$ to $X:= V \bs F_1$ is then an $\AA^2$-fiber space $f:X \to S$ whose total space $X$ is a strictly quasi-affine $k$-variety. It follows in particular that $f$ cannot be a locally trivial $\mathbb{A}^2$-bundle, neither in Zariski topology nor in any of the finer topologies such as Nisnevich, \'etale, fppf, or fpqc. Nevertheless, as first noticed in \cite{asok2007unipotent}, $X$ is $\AA^1$-contractible over $\Spec \ZZ$. We now observe that for essentially the same reasons, $X$ has the stronger property to be relatively $\AA^1$-contractible over $S$. Namely, let 
  $$h = \pr_1: W: =S \times_{\ZZ} \Spec \ZZ[t_1,t_2,t_3] \cong \AA^5_{\ZZ} \to S$$ 
be the trivial $\mathbb{A}^3$-bundle over $S$ and consider the $S$-morphism $g:W\to X$ defined by 
$$(t_1,t_2,t_3)\mapsto (u,v,z)=(yt_3-(xt_2-yt_1-1)t_2,xt_3-(xt_2-yt_1-1)t_1,xt_2-yt_1)$$
so that we have the following commutative diagram 
\[\begin{tikzcd}[sep = large]
	{W=\mathbb{A}^5_{\mathbb{Z}}} \\
	& X \\
	& {S=\mathbb{A}^2_{\mathbb{Z}}} \\
	& {\mathrm{Spec}(\mathbb{Z})}
	\arrow["g", from=1-1, to=2-2]
	\arrow["h", from=1-1, to=3-2]
	\arrow[shift right=2, from=1-1, to=4-2]
	\arrow["{f}", from=2-2, to=3-2]
	\arrow[from=3-2, to=4-2]
\end{tikzcd}\]
The morphism $g:W \to X$ is a $\mathbb{G}_{a,X}$-torsor \cite[\S 3]{winkelmann1990free}, whence is a relative $\AA^1$-weak equivalence in $\Spc_X$ and in $\Spc_S$ by \Cref{cor:pushpull-A1-weak} (2). On the other hand, $h: W\to S$ being a trivial $\mathbb{A}^3$-bundle, it is a relative $\AA^1$-weak equivalence in $\Spc_S$. These two properties imply, in turn, by \Cref{lem:3-from-2} that $f:X \to S$ is a relative $\AA^1$-weak equivalence in $\mathrm{Spc}_S$.
\end{example}

\begin{remark} 
Several higher dimensional analogues of the setup in \Cref{eg:notZLT} have been successively constructed and studied in \cite{asok2007unipotent, ADF2017smooth, ADO23} leading to many new examples of smooth strictly quasi-affine $\AA^1$-contractible schemes $X$ over an affine base scheme $B$ having the structure of $\AA^n$-fiber spaces $f:X \to \AA^n_B$ over the affine space $\AA^n_B$. This holds, for instance, for the family of smooth quasi-affine schemes 
{\small
$$f_{n,\mathbf{m}}:=\pr_{x_1,\ldots, x_n}: X_{n,\mathbf{m}}=:\big\{\sum_{i=1}^n x_i^{m_i}y_i+z(z-1)=0\}\bs \{x_1=\ldots =x_n=z-1=0\big\} \to \AA^n_B,$$ }
where $\mathbf{m}=(m_1,\ldots, m_n)$ is multi-index of positive integers. In the case where $B$ is regular, \Cref{thm:An-fiber-space-contractible} implies that for such schemes, the $\AA^n$-fiber space $f_{n,\mathbf{m}}:X_{n,\mathbf{m}}\to \AA^n_B$ is an $\AA^1$-weak equivalence in $\Spc_{\AA^n_B}$, providing in turn, since $\AA^n_B$  is $\AA^1$-contractible over $B$, a different proof that all these schemes are $\AA^1$-contractible over $B$. 
\end{remark}

To conclude, recall that the Dolga\v{c}ev-Ve\v{i}sfe\v{i}ler Conjecture admits a negative solution over fields of positive characteristics due to the counter-example of Asanuma (\cref{eg:Asanuma3F}) and thus, leaving it wide open over fields of characteristic zero. The only known cases are due to \cite{KM78, KW85} in dimension 1 and \cite{sathaye1983polynomial} in dimension 2, whenever the base schemes contain a field of characteristic zero. On the other hand, starting from dimension $n\ge 3$, we have seen in this chapter that there are numerous sources of relatively $\AA^1$-contractible schemes non-isomorphic to $\AA^n$. Hence, the relative $\AA^1$-weak equivalence of the morphism $f:X \to S$ does not, in general, allow us to conclude that $X$ is an $\AA^n$-fiber space whenever $n\ge 3$. Moreover, even if we do know that $X$ is an $\AA^n$-fiber space, one has no clue whether it will be a locally trivial $\AA^n$-bundle in Zariski (or even in finer topologies)!

% \afterpage{\blankpage
% \thispagestyle{empty}}

\begin{savequote}
Never outshine the master.
\qauthor{"The 48 Laws of Power" by Robert Greene}   
\end{savequote}
\chapter{Koras-Russell Prototypes and Exotic Motivic Spheres}\label{chp5}
\markboth{V Koras-Russell Prototypes and Exotic Motivic Spheres}{}

The results presented in this chapter are sourced from the article \cite{Mad}.

{\small
{\fontfamily{bch}\selectfont
\subsubsection{\textsc{Chapter Summary}}
We establish the $\AA^1$-contractibility of the classical Koras-Russell threefolds over perfect fields (\cref{A1-cont-over-perfect-fields}). This will allow us to extend its relative $\AA^1$-contractibility over integers (\cref{KR3FoverZ}) and, further, over Noetherian schemes (\cref{KR3Fmain:Noetherian}) by exploiting the intricate relationship between the relative and fiberwise data (\cref{pointwise:phenomenon}) in the unstable setting. Following this, we study the generalized prototype of Koras-Russell threefolds in higher dimensions and extend their $\AA^1$-contractibility over perfect fields (\cref{KR3Fprototypes:field-A1-cont:perfect}), and systematically relatively over Noetherian schemes (\cref{KR3F:prototypes-A1-cont-base:Noetherian}). As a major application of these results, we will prove the existence of exotic spheres in motivic homotopy theory in every dimension $\ge 4$ (\cref{exotic-motspheres-countereg}) over infinite perfect fields. 
}}

%--------------------------------------------------------------------
\section{The Family of Koras-Russell Threefolds}\label{sec:KR-prototypes}
Recall that a smooth affine scheme $X\to S$ of dimension $n$ is said to be \emph{exotic} if it is relatively $\AA^1$-contractible but not isomorphic, as $S$-schemes, to the affine $n$-space $\AA^n_S$. Moreover, we have seen that even under the assumption that $X$ is $\AA^1$-contractible over any "nicest" base fields, the characterization of $\AA^n_k$ breaks away in higher dimensions. The same is true in dimension 3, and this is due to the \emph{Koras-Russell threefolds}, whose sibling was already introduced in \cref{C*-actions:KR3F}. In what follows, we will discuss the family of Koras-Russell threefolds of the first kind in detail, which by now has become an independent and vibrant area of study in its own right, interpolating between motivic homotopy theory and affine algebraic geometry. Building on this, we study the higher-dimensional prototypes of these Koras-Russell threefolds, called \emph{generalized Koras-Russell varieties} in arbitrary dimensions $n \ge 4$.

\subsection{Extension to Perfect Fields}\label{sect:Koras-Rusell-perfect}
We begin with the definition of the classical Koras-Russell threefolds.

\begin{defn}\label{defn:KR3F}
Let $k$ be any perfect field. Then we define the \emph{Koras-Russell threefold of first kind} $\KK$ as the smooth affine variety described by the explicit polynomial equation 
\begin{equation}\label{KR3F;equation}
\KK := \{x^mz = x+ y^r+t^s\}\subset \AA^4_k
\end{equation}
where the integers $m\ge 2$ and $r,s\ge 1$ such that $\text{gcd}\ (r,s) =1$.
\end{defn}
\begin{figure}[h]
    \centering
    \includegraphics[width=8cm, height=6cm]{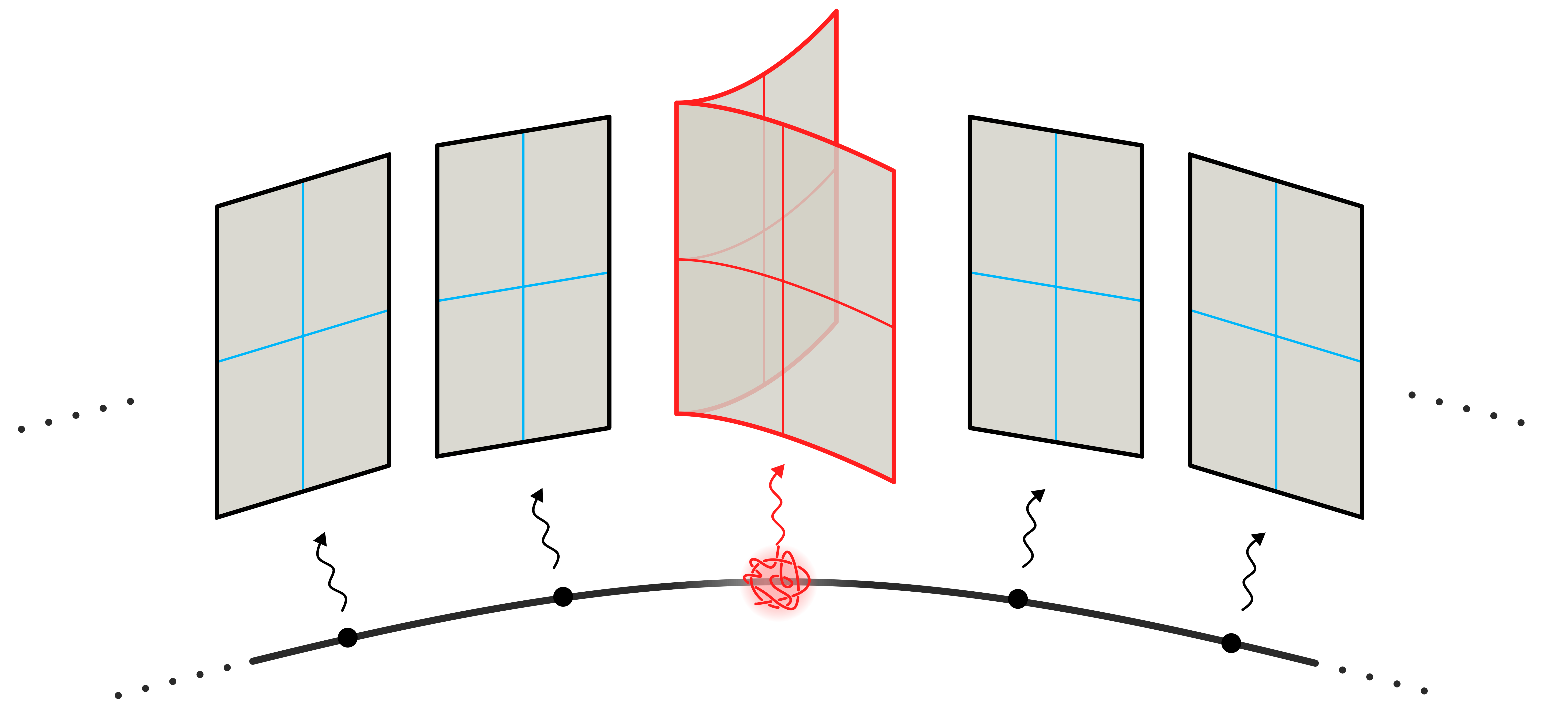}
    \caption{Fibers of $\KK$ when projected to $x$-axis; the generic fiber is shown in red}
    \label{Book-surface}
\end{figure}
Traditionally, Koras-Russell threefolds have been studied extensively only over a field of characteristic zero. But in the light of coprimality of the integers $r,s$ and the usefulness of the coordinate "$x$" to make the Jacobian criterion for smoothness work, it is also well-defined over any perfect field. In particular, this means $\KK$ is defined over the finite fields $\FF_p$ and over arithmetic curves (e.g., the integers $\Spec \ZZ$). The affine variety $\KK$ exhibits the following incredible properties:
\begin{itemize}
\item $\KK$ is topologically contractible (\cite{KR97}), whenever $k$ admits an embedding into $\CC$, 
\item The cylinders $\KK\times \AA^1$ have trivial Makar-Limanov value over char $k=0$, $\ML(\KK\times \AA^1) \cong \ML(\Spec k)$ (\cite{dubouloz2009cylinder}) 
\item  $\KK$ is $\AA^1$-contractible both in the stable $\AA^1$-homotopy category \cite{HKO16} and in the unstable $\AA^1$-homotopy category \cite{DF18} when the base field is of characteristic zero,
\item $\KK$ is not isomorphic to $\AA^3_{\CC}$ as witnessed by the Makar-Limanov invariant: $\ML(\KK)\ncong \ML(\AA^3_{\CC})$.
\end{itemize}
All these properties make $\KK$ an algebro-geometric analog of the Whitehead manifold $\mcal{W}$ from \cref{intro:exotic-threefolds}, and hence make it difficult to construct algebraic invariants to distinguish it among $\AA^1$-contractible smooth affine threefolds. Closely associated with this first kind, there are certain threefolds of the second kind $\widetilde{\KK}$ as illustrated in \cref{KR3FIIkind:equation}, whose stable $\AA^1$-contractibility has been proven in \cite{HKO16}. As we will shortly see that both of these threefolds contain a special affine line $L:=\{x=y=t=0\}$. But the crucial step is to conclude that $\KK \bs L \simeq \AA^2\bs\{0\}$, which, as the authors in \cite{DF18} remark, is not apparent for the second kind $\Tilde{\KK}$. The main tool that makes this possible for the first kind is \cref{exist:algspace} whose analog remains open for the second kind!
\medskip

In \cite[Theorem 1.1]{DF18}, the authors prove the $\AA^1$-contractibility of the Koras-Russell threefold of first kind over a field of characteristic zero. In the following sections, we prove that this, in fact, can be extended over any perfect field and over certain base schemes. To give the context of the strategy, we shall now begin by explaining the elements from \cite{DF18}.

\subsection{The Unstable \texorpdfstring{$\AA^1$}{A1}-Contractibility of $\KK$}
The basic ingredient towards the proof is the categorical tool known as the \emph{very weak five lemma} (cf. \cref{weak5lemma}). To adapt this to our situation, we first note that the threefold $\KK$ contains a hypersurface given by $\{z=0\}:= P\subset \KK$. It can be shown that it is isomorphic to $\AA^2$ with coordinates $y$ and $t$, that is, $P\cong \Spec k[y,t]$. More crucially, $\KK$ also contains an affine line given by 
    $$L:=\{x=y=t=0\} \cong \AA^1 \quad \text{such that}\quad  L \cong \Spec(k[z]).$$ 
Now, considering the complement, we have that $P\backslash L \simeq \AA^2\backslash \{0\}$ in $\Spc_k$. By the very definitions, the varieties $L$ and $P$ intersect transversally in $\AA^4$ at the point $(0,0,0,0)$ which gives us a pullback diagram as follows:
\begin{equation}
\begin{tikzcd}[sep= small]
    &&&&& {P\backslash L \simeq \mathbb{A}^2\backslash \{0\}} & {} & {P\simeq \mathbb{A}^2} \\ \\{} 
    &&&&& {\mathcal{K}\backslash L} && {\mathcal{K}}
    \arrow[from=1-6, to=1-8]
    \arrow["i",from=1-6, to=3-6]
    \arrow["\phi",from=1-8, to=3-8]
    \arrow[from=3-6, to=3-8]
\end{tikzcd}
\end{equation}
Consider the associated cofiber sequence given by the following commutative diagram
\begin{equation}\label{setup:diagram1}
\begin{tikzcd}[sep= small]
    &&&&& {\mathbb{A}^2\backslash \{0\}} & {} & {\mathbb{A}^2} && {\mathbb{A}^2/\mathbb{A}^2\backslash\{0\}} \\  \\
    {} &&&&& {\mathcal{K}\backslash L} && {\mathcal{K}} && {\mathcal{K}/\mathcal{K}\backslash{L}}
	\arrow[from=1-6, to=1-8]
	\arrow["i"', from=1-6, to=3-6]
	\arrow[from=1-8, to=1-10]
	\arrow["\phi", from=1-8, to=3-8]
	\arrow["j", from=1-10, to=3-10]
	\arrow[from=3-6, to=3-8]
	\arrow[from=3-8, to=3-10]
\end{tikzcd}
\end{equation}
In the light of \cref{weak5lemma}, we only need to show that the morphisms $i$ and $j$ are $\AA^1$-weak equivalences in $\Spc_k$.

\subsection{\texorpdfstring{$\AA^1$}{A1}-weak Equivalence of \texorpdfstring{$j$}{j}} \label{A1-weakeq-of-j}
The $\AA^1$-weak equivalence of the morphism of schemes $j:\AA^2/\AA^2\backslash\{0\}\to \KK/\KK\backslash L$ follows from the purity isomorphism in motivic homotopy theory. The defining equation of $\KK$ gives us a description of the normal cone of $L$ inside $\KK$, which can be seen by rewriting
        $$\KK:= \{x(x^{m-1 }z-1) = y^r+t^s\}$$ 
and the fact that the regular function ($x^{m-1}z-1$) is invertible in $L$ (in fact, it is equal to -1 on $L)$, we have that the normal cone $N_{\KK}(L)$ is generated by $y$ and $t$. Coupling this data with the homotopy purity (\cref{purity-theorem}), we have that 
    $$\KK/(\KK\backslash{L}) \simeq \Th(N_{\KK}(L))\simeq L_+\wedge (\PP^1)^{\wedge2}.$$ 
Again via the purity isomorphism, the quotient space $\AA^2/\AA^2\backslash \{0\}$ can be identified with $(\PP^1)^{\wedge2}$ (see for e.g., \cite[\S 4.6]{antieau2017primer} or \cite[\S 2.1]{ADF2017smooth}). The inclusion $\Spec k \simeq \{0\}\hookrightarrow L\simeq \AA^1$ induces a the composite 
$$\Spec k \wedge (\PP^1)^{\wedge 2}\simeq \AA^2/\AA^2\backslash \{0\} \xrightarrow{j} \KK/\KK\backslash L \simeq L_+\wedge (\PP^1)^{\wedge 2}$$
which is an $\AA^1$-weak equivalence (cf. \cite[Lemma 2.1]{voevodsky2003Z/2}) and so is $j$.

\subsection{$\AA^1$-weak Equivalence of $i$}\label{A1-weakeq-of-i}
To show that the morphism $i:\AA^2\backslash \{0\} \to \KK\backslash L$ is an $\AA^1$-weak equivalence, we first produce an explicit $\AA^1$-weak equivalence $g: \KK\backslash L\to \AA^2\backslash \{0\}$ in \cref{explicit-weq}. Then using the functoriality of $\AA^1$-Brouwer degree (\cref{A1degreemap}), we show that the composite $g \circ i: \AA^2\bs \{0\}\to \AA^2\bs \{0\}$ is an $\AA^1$-weak equivalence in $\Spc_k$.

\subsubsection{The explicit $\AA^1$-weak equivalence via Zariski local triviality}\label{explicit-weq}
The upshot is to produce an $\AA^1$-weak equivalence $g: \KK\bs L\to \AA^2\bs \{0\}$ by constructing a Zariski locally trivial $\AA^1$-bundle over $\AA^2\bs\{0\}$. Consider the affine scheme $\KK(s):=\{x^mz=x+y^r+t^s\}$ and the associated strictly quasi-affine scheme $\KK(s)\bs \{0\}$. Observe that when $s= 1$, we have that $\KK(1):= \{x^mz = x+y^r+t\}\cong \{x^m z-x-y^r=t\}$ which by the change of coordinates is isomorphic to $\AA^3_k$. As a result, we have
        $$\KK(1) \bs L \cong \AA^3\bs L\simeq \AA^2\bs \{0\} \times \AA^1\simeq \AA^2\bs \{0\}.$$ 
The following result is proven in \cite[Proposition 3.1]{DF18} for fields of characteristic zero. Here, we show that this trick can be extended to perfect fields as well. In particular, we prove the existence of a quasi-affine fourfold $W$ that is the total space of a Zariski locally trivial $\AA^1$-bundle over $\mcal{K}(s)\bs L$, for all $s\geq 1$. This fourfold $W$ is produced by exploiting the Danielewski fiber product trick (\cite{Danielewski1989cancellation}) that was initially constructed to produce a counterexample to the Zariski Cancellation Problem for surfaces (for more background, see \cite{dubouloz2007addDanielewski}).

\begin{prop}\label{Daniel:trick 4fold}\cite{Mad}
Let $k$ be any perfect field. Then for every $s\geq 2$, there exists a smooth strictly quasi-affine fourfold $W$ which is simultaneously the total space of a Zariski locally trivial $\AA^1$-bundles over $\KK(s)\bs L$, for all $s\ge 1$.
\end{prop}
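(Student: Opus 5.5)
I plan to follow the construction of \cite[Proposition 3.1]{DF18} and check that every step uses only $\gcd(r,s)=1$, the invertibility of $x^{m-1}z-1$ along $L$, and Zariski-local algebra valid in any characteristic; no characteristic-zero input should be needed.

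\textbf{Step 1: a $\GG_a$-torsor $\KK(s)\bs L\to$ a non-separated surface.} The projection $(x,y,t)$ gives a map $\KK(s)\bs L\to \AA^3$. Over $\{x\neq 0\}$ we can solve $z=x^{-m}(x+y^r+t^s)$. Over $x=0$ the fiber is $\{y^r+t^s=0\}\times\AA^1_z$ with $L$ removed, which is the cuspidal curve minus the origin times $\AA^1$. I will instead use the map $\KK(s)\bs L\to \AA^2\bs\{0\}$, $(x,y,z,t)\mapsto (x,\,y^r+t^s$-type coordinates$)$ as in \cite{DF18}. More precisely, I will use the following description: $\KK(s)\bs L$ is the total space of a $\GG_a$-torsor over an algebraic space $\mathfrak{S}_s$. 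This is the analogue of \cref{exist:algspace}. The space $\mathfrak{S}_s$ is obtained from $\AA^2\bs\{0\}$ by replacing the punctured line $\{x=0\}$ by the nonseparated curve $\{y^r+t^s=0\}\bs\{0\}\cong \GG_m$, glued to the $x$-axis with multiplicity $m$. The $\GG_a$-action comes from the locally nilpotent derivation $x^m\partial_y + r y^{r-1}\partial_z$, or its $t$-analogue. Its invariants and its freeness off $L$ are verified by hand, and these computations are characteristic-free once $\gcd(r,s)=1$.

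\textbf{Step 2: the fiber product trick.} Given two such torsors, $\KK(s)\bs L\to\mathfrak{S}$ and $\KK(1)\bs L\to\mathfrak{S}$, over the same base $\mathfrak{S}$, I set $W:=(\KK(s)\bs L)\times_{\mathfrak{S}}(\KK(1)\bs L)$. This is the Danielewski trick. The two projections are $\GG_a$-torsors over the respective threefolds, hence torsors under line bundles. They are Zariski locally trivial because $\GG_a$-torsors over schemes are trivial on affine opens, since $H^1(\text{affine},\mathcal{O})=0$.

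The key issue is to make the base independent of $s$. Both torsors should live over the same space $\mathfrak{S}$, the plane $\AA^2_{(x,u)}$ with origin deleted and the punctured line $x=0$ doubled or multiplied suitably. Here $u$ is a parameter on the cusp curve, and $\{y^r+t^s=0\}\cong \Spec k[\tau]$ via $(y,t)=(\tau^s,-\tau^r)$ up to sign adjustments, which works in any characteristic. I expect this identification to be the main obstacle: showing that the quotient of $\KK(s)\bs L$ is the same algebraic space for all $s\ge1$. Perfectness enters here, through normality and the smoothness of these quotient spaces and through the identification of the punctured cusp with $\GG_m$ on reduced fibers.

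\textbf{Step 3: the properties of $W$.} $W$ is a scheme, because a torsor over a scheme is a scheme. It is smooth of dimension $4$ because it is smooth over the smooth threefold $\KK(s)\bs L$. It is quasi-affine because it is a $\GG_a$-torsor over a quasi-affine scheme, hence affine over it. It is strictly quasi-affine because $\Gamma(W,\mathcal{O})$ is not finitely generated, or because $W$ maps onto $\AA^2\bs\{0\}$ with codimension-$2$ complement behaviour. In particular, taking $s=1$ yields $W\to\KK(1)\bs L\simeq\AA^2\bs\{0\}$, as needed downstream.
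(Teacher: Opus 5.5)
Your architecture matches the paper's: a common base $\mathfrak S$, the fiber product $W$, and Zariski local triviality of the pulled-back $\GG_a$-torsor. Your justification of that last point via $H^1(\text{affine},\mathcal O)=0$ is a fine substitute for the specialness of $\Aut(\AA^1)$. The gap is the only non-formal step, which you postpone as ``the main obstacle'': that $\KK(s)\bs L$ and $\KK(1)\bs L$ are $\AA^1$-bundles over the \emph{same} algebraic space. This is the entire content of \cref{exist:algspace}.

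Your description of that base is also off. The quotient map is $q=\pr_{x,t}$, so $\mathfrak S$ lies over $\AA^2_{(x,t)}\bs\{0\}$, not over $(x,y^r+t^s)$ or an $\AA^2_{(x,u)}$ with $u$ a cusp parameter. It is obtained by replacing the punctured line $C_0=\{x=0,\ t\neq0\}$ with the degree-$r$ cover $C_1^{(s)}=\{y^r+t^s=0\}\to C_0$. The integer $m$ plays no role in $\mathfrak S$; it only enters the transition functions $u\mapsto u'+x^{-m}(\sigma_i(x)-\sigma_j(x))$ of the bundle.

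The missing idea is that this cover is independent of $s$ as a $C_0$-scheme. With your own parametrization $(y,t)=(\pm\tau^s,-\tau^r)$, one gets $C_1^{(s)}\cong\Spec k[\tau^{\pm1}]$ with $t=-\tau^r$ for every $s$; equivalently, use $y\mapsto \pm y^at^{-b}$ with $as-br=1$. You wrote down the right parametrization but did not draw this conclusion.

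You would then still have to show the quotient is determined by $C_1\to C_0$. The paper does this \'etale-locally, in three steps:
\begin{enumerate}
\item Pass to the Galois closure $C=\Spec R\to C_0$ with group $G$.
\item Over $B=R[x]$, write $y^r+t^s+x=\prod_{i=1}^r(y-\sigma_i(x))+x^m s(x,y)$, where $\sigma_i(0)$ are the $r$ roots of $y^r+t^s$ in $R$. Then the open set $\{t\neq0\}$ of $\KK(s)\bs L$, base-changed to $\Spec B$, becomes a Zariski-locally trivial $\AA^1$-bundle over the scheme $S$ obtained by gluing $r$ copies of $\Spec B$ along $\{x\neq0\}$.
\item Take $\mathfrak S_t=S/G$, which depends only on $C_1\to C_0$.
\end{enumerate}

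Two further points fail as written.

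\textbf{The $\GG_a$-action in characteristic $p$.} The derivation $x^m\partial_y+ry^{r-1}\partial_z$ does not exponentiate in characteristic $p$. You need the explicit action
$$\lambda\cdot(x,y,z,t)=\bigl(x,\ y+x^m\lambda,\ z+x^{-m}((y+x^m\lambda)^r-y^r),\ t\bigr).$$
At $x=0$ this reduces to $z\mapsto z+ry^{r-1}\lambda$. So the action is free off $L$ only when $p\nmid r$; if $p\mid r$ it fixes all of $\{x=0\}$. Your claim that the computations are characteristic-free once $\gcd(r,s)=1$ is therefore false.

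The ``$t$-analogue'' does not rescue the comparison with $\KK(1)$. Its quotient lies over $(x,y)$, and the relevant cover $\{y^r+t^s=0\}\to\{y\neq0\}$ has degree $s$. That degree is $1$ for $\KK(1)$, whose quotient is just the scheme $\AA^2\bs\{0\}$, so the two bases differ. When $p\mid r$ you must instead exchange the roles of $(y,r)$ and $(t,s)$ and compare with $\{x^mz=x+y+t^s\}\cong\AA^3$. This is what the paper's reduction to $p\nmid r$ amounts to.

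\textbf{Strict quasi-affineness.} None of your arguments for this works. $SL_2\to\AA^2\bs\{0\}$ is a $\GG_a$-torsor with affine total space, over a strictly quasi-affine base that it surjects onto. This defeats both the ``torsor over a strictly quasi-affine scheme'' argument and the ``maps onto $\AA^2\bs\{0\}$'' argument. Non-finite generation of $\Gamma(W,\mathcal O_W)$ is asserted without proof. The paper's one-line argument via the affine morphism $W\to\KK(s)\bs L$ has the same defect. Only quasi-affineness, which your argument does establish, is used downstream in \cref{g-is-weq}.
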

\begin{proof}
The desired fourfold $W$ is constructed as the fiber product of strictly quasi-affine schemes over an algebraic space $\mathfrak{S}$ whose existence we prove in \cref{exist:algspace}.
\[\begin{tikzcd}
	W & {\KK(s)\backslash L} \\
	{\KK(1)\backslash L} & {\mathfrak{S}}
	\arrow["{\pr_s}", from=1-1, to=1-2]
	\arrow["{\pr_1}"', from=1-1, to=2-1]
	\arrow[from=1-2, to=2-2]
	\arrow[from=2-1, to=2-2]
\end{tikzcd}\]
\cref{exist:algspace} shows that for all $s\geq 1$, the quasi-affine threefolds $\mcal{K}(s)\bs L$ all have a structure of an \'etale locally trivial $\AA^1$-bundle $\rho:\KK(s) \to \mathfrak{S}$ fibered over the same algebraic space $\mathfrak{S}$. A priori, by taking the fiber product 
    $$W:= (\KK(1)\bs L) \times_{\mathfrak{S}} (\KK(s)\bs L),$$
We see that $W$ only has the structure of an algebraic space. But notice that the natural projections $\pr_s: W\to \mcal{K}(s)\bs L$ (for all $s\ge 1$) are an affine morphism and $\mcal{K}\bs L$ is a strictly quasi-affine scheme, which consequently implies that $W$ also has to be a strictly quasi-affine scheme. The fact that this \'etale equivalence descends also to the Zariski topology is because the transition group, which is $\Aut(\AA^1) = \GG_m \ltimes \GG_a$, is special \cite[\S 5]{Grothendieck1958torsion}.
\end{proof}

As an immediate consequence of \cref{Daniel:trick 4fold}, we get our desired result.
\begin{corollary}\label{g-is-weq}
The map $g: \KK(s)\bs L \to \AA^2\bs \{0\}$ is an $\AA^1$-weak equivalence for all $s\geq 1$.
\end{corollary}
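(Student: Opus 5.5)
The idea is to run the two Zariski locally trivial $\AA^1$-bundle structures on the fourfold $W$ of \cref{Daniel:trick 4fold} against each other, and then use the case $s=1$, where everything is explicit. First, \cref{Daniel:trick 4fold} gives projections
\[
\pr_1: W \to \KK(1)\bs L
\qquad\text{and}\qquad
\pr_s: W\to \KK(s)\bs L .
\]
Both are Zariski locally trivial $\AA^1$-bundles. By \cref{A1-contr:egs}(3), each such bundle is an $\AA^1$-weak equivalence in $\Spc_k$. Indeed, Zariski locally on the base the map is a projection $U\times\AA^1\to U$, and Zariski (hence Nisnevich) descent glues these local equivalences. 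This yields a zigzag
\[
\KK(s)\bs L \xleftarrow{\ \pr_s\ } W \xrightarrow{\ \pr_1\ } \KK(1)\bs L
\]
of $\AA^1$-weak equivalences.

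Next, I use the explicit description of the case $s=1$ recorded just before the proposition. The change of coordinates $t\mapsto x^mz-x-y^r$ identifies $\KK(1)$ with $\AA^3_k=\Spec k[x,y,z]$. Under this identification $L=\{x=y=t=0\}$ becomes the line $\{x=y=0\}$, since $t=0$ is forced there. Hence $\KK(1)\bs L\cong (\AA^2\bs\{0\})\times \AA^1_z$. The projection $(x,y,z)\mapsto (x,y)$ is then a trivial $\AA^1$-bundle, and so it is an $\AA^1$-weak equivalence $\KK(1)\bs L\to \AA^2\bs\{0\}$.

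Composing, I obtain an isomorphism $g$ in $\mathcal{H}(k)$, namely $g=(\text{proj})\circ\pr_1\circ\pr_s^{-1}$. If an honest morphism of schemes is wanted rather than a zigzag, I would instead take $(x,y)$ as the coordinates on $\KK(s)\bs L$. The map $\KK(s)\bs L\to\AA^2\bs\{0\}$, $(x,y,z,t)\mapsto(x,y)$, is well defined, because $x=y=0$ forces $t^s=0$ and hence lands in $L$. I would then check that this map agrees with the composite of the zigzag through $W$. This comes down to checking that both bundle maps in the Danielewski fiber product are compatible with the coordinates $(x,y)$. That should hold because the algebraic space $\mathfrak{S}$ of \cref{exist:algspace} is built over the $(x,y)$-data.

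The main obstacle is this compatibility, together with the verification that the $\AA^1$-bundle over $\KK(s)\bs L$ is genuinely Zariski locally trivial over a perfect field. The latter rests on \cref{exist:algspace} and on $\Aut(\AA^1)$ being special. Given those, the remaining steps are formal consequences of two-out-of-three in $\Spc_k$.
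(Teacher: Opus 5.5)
Your zigzag argument is correct and is essentially the paper's proof. The paper's one-line proof applies the fact that Zariski locally trivial $\AA^1$-bundles are $\AA^1$-weak equivalences to the two projections $W\to \KK(s)\bs L$ and $W\to\KK(1)\bs L$ of \cref{Daniel:trick 4fold}, together with $\KK(1)\bs L\cong(\AA^2\bs\{0\})\times\AA^1$. So $g$ is the resulting isomorphism in $\HH(k)$, and that is all the later Milnor--Witt argument uses.

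You should, however, discard your last paragraph. The map $h\colon(x,y,z,t)\mapsto(x,y)$ is well defined on $\KK(s)\bs L$, but for $s\geq 2$ it is \emph{not} an $\AA^1$-weak equivalence. So the compatibility you single out as the main obstacle is false, not merely unverified.

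First, $\mathfrak{S}$ is not built over $(x,y)$. In \cref{exist:algspace} it sits over $\AA^2\bs\{0\}$ with coordinates $(x,t)$, since the quotient map is $q=\pr_{x,t}$. The two factors of $W$ therefore share only $x$ and $t$. Over $\{x\neq 0\}$ one has $W\cong U_x\times\AA^1_{y^{(1)}}\times\AA^1_{y^{(s)}}$, and your zigzag composite is $(x,y^{(1)})$, which does not come from a function on $\KK(s)\bs L$.

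Second, it fails even up to homotopy. Precompose $h$ with the inclusion $i$ of $P\bs L\cong\AA^2_{y,t}\bs\{0\}$ to get $(y,t)\mapsto(-(y^r+t^s),y)$. The naive homotopy $(-(\lambda y^r+t^s),y)$ stays inside $\AA^2\bs\{0\}$ and connects this to $(y,t)\mapsto(-t^s,y)$. The Brouwer degree of the latter is $s_{\epsilon}$ up to a unit, so it has rank $s$ and is not invertible in $\GW(k)$. A self-map of $\AA^2\bs\{0\}$ with non-unit degree is not an $\AA^1$-weak equivalence. Since $i$ is one (\cref{A1-weakeq-of-i}, which uses only the zigzag $g$), two-out-of-three rules out $h$.

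The same computation with $r$ in place of $s$ shows that $q=\pr_{x,t}$ is not an $\AA^1$-weak equivalence either. So $g$ genuinely exists only as a zigzag, and that is enough.
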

\begin{proof}
This follows from the observation that Zariski locally trivial $\AA^n$-bundles preserve $\AA^1$-weak equivalence (see e.g., \cite[Lemma 3.1.3]{asok2021A1}).
\end{proof}

\subsection{Strategy to Construct \texorpdfstring{$\mathfrak{S}$}{S} over Perfect Fields}
Briefly speaking, the existence of the claimed algebraic space follows due to a fundamental fact in moduli theory that if an algebraic group acts freely on a scheme, then the quotient space exists and is an algebraic space (see \cite{knutson2006algebraic} or \cite[\href{https://stacks.math.columbia.edu/tag/071R}{Tag 071R}]{stacks-project}). When the char $k=0$, producing such a $\GG_a$-action on the affine variety $\KK$ is equivalent to giving a locally nilpotent derivation $D$ on its coordinate ring $\Gamma(\KK)$ defined via the exponential map (cf. \cref{G_a-LND}). However, if the char $k= p >0$, this simple correspondence does not a priori apply, for example, $1/p!$ in the exponential function is not well-defined. Nevertheless, one still gets a correspondence by considering a locally finite iterative higher derivation instead of a single locally nilpotent derivation. The former is obtained by considering the truncated version of $\exp(tD)$ (see for e.g., \cite[Part I, \S 1]{miyanishi1978lectures}, \cite[\S 5]{okuda2004kernels}). Therefore, assuming the existence of such a family of iterative higher derivations, we have that for a $\GG_a$-action on $\KK(s)$ its algebraic quotient coincides with the projection $\pr_{x,t}:\AA^4\to \AA^2$ restricted to $q: \KK(s) \to \AA^2$ with the fixed point locus $L$. Hence, we obtain an induced fixed point free $\GG_a$-action on $\KK(s) \bs L$ such that 
\[\begin{tikzcd}
	{\mathcal{K}(s)\backslash L} && {\mathbb{A}^2\backslash \{0\}} \\
	& {(\KK(s)\bs L)/\GG_a}
	\arrow["q",from=1-1, to=1-3]
	\arrow[from=1-1, to=2-2]
	\arrow[from=2-2, to=1-3]
\end{tikzcd}\]
it factors through an \'etale $\GG_a$-torsor over its geometric quotient $(\KK(s)\bs L)/ \GG_a$ as above. The desired algebraic space $\mathfrak{S}$ can be constructed by an \'etale equivalence relation. Let us consider the fibers of the morphism $q:\mcal{K}\bs L\to \AA^2\bs \{0\}$. Observe that for the fibers over points $x\neq 0$, the Zariski locally trivial $\AA^1$-bundle is actually a trivial $\AA^1$-bundle because if $x\neq 0$, one has an element of the form $a = f(y)/x^m$, for some suitable polynomial $f(y)\in k[y]$ such that $D(a)= 1$, (i.e., $a$ is a slice). In other words, for a principal affine open set $U_x:=\{x\neq 0\}$ and a point $b \in U_x$, we have that $q^{-1}(b)\cong \AA^2\bs \{0\} \times \AA^1:= \Spec k[x^{\pm 1},t] \times \Spec k[y]$. 
\medskip

On the other hand, the fiber over the punctured affine line $C_0:= \{x=0\} = \Spec k[t^{\pm 1}]$ is given by $C\times \Spec k[z]$ where               
$$C:=\Spec k[t^{\pm 1},y]/\langle y^r +t^s\rangle.$$
The curve $C \to C_0$ can be identified with a finite \'etale cover of degree $r$ with respect to the projection $(y,t) \mapsto t$. This is the exact crossroad of the fields of positive characteristics and that of characteristic zero. If char $ k= p\mid r$, then one cannot form a finite \'etale cover as claimed above. But since we have that gcd $(r,s)= 1$, one can rework the proof as follows. If $p\mid r$, then one can then consider the finite \'etale cover 
    \begin{align*}
      &\Spec k[t^{\pm 1},y]/\langle y^r +t^s\rangle=: C \to C_0:=\Spec(k[t^{\pm 1}]) \\
      &\hspace{40mm} (y,t)\mapsto y    
    \end{align*}
of degree $s$ with respect to the other coordinate, where we can still extract the roots (i.e., $(y^r)^{1/s}$ exists in $k$) since in this case $p\nmid s$ by the coprimality.

\begin{lemma}\label{exist:algspace}\cite{Mad}
For any perfect field $k$, there exists a smooth algebraic space $\delta: \mathfrak{S}\to \AA^2_k\bs \{0\}$ such that for every $s\geq 1$, the map $q: \KK(s)\bs L\to \AA^2_k\bs \{0\}$ factors through an \'etale-locally trivial $\AA^1$-bundle $\rho: \KK(s) \bs L \to \mathfrak{S}$.
\end{lemma}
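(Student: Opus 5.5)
We construct $\mathfrak{S}$ directly by gluing, following the scheme of \cite[Proposition 3.1]{DF18}. The construction does not go through a $\GG_a$-quotient; it glues explicit pieces along the description of the fibers of $q:\KK(s)\bs L\to \AA^2\bs\{0\}$ given just before the statement. Cover $\AA^2\bs\{0\}=\Spec k[x,t]\bs\{0\}$ by $U_x=\{x\neq 0\}$ and $U_t=\{t\neq 0\}$. Over $U_x$ we have $z=(x+y^r+t^s)/x^m$, so $q^{-1}(U_x)\cong U_x\times \Spec k[y]$ is a trivial $\AA^1$-bundle; there $\mathfrak{S}$ should simply be $U_x$. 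The difficulty is over $U_t$, near the punctured line $C_0=\{x=0,t\neq 0\}$. The fiber there is $C\times \AA^1$, with $C=\{y^r+t^s=0\}\subset \GG_{m}\times\AA^1$ a finite étale cover of $C_0$ of degree $r$ or $s$. So $\mathfrak{S}$ must be $U_t$ with $C_0$ replaced by $C$: a non-separated algebraic space obtained by "doubling" $C_0$ along the étale cover $C\to C_0$.

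\textbf{Étale-local model.} Work over $U_t$ and first assume $p\nmid r$. Set $V:=\Spec k[x,t^{\pm1},w]/\langle w^r+t^s\rangle$, which is finite étale of degree $r$ over $U_t$ and has Galois group $\mu_r$ (étale-locally). The key observation is that $w$ is the "value of $y$ modulo $x$", so we compute on the fiber product $\KK(s)\bs L\times_{U_t}V$. There
\[
y^r-w^r=y^r+t^s\equiv -x \pmod{x^m},
\]
and $y^r-w^r=\prod_{\zeta\in\mu_r}(y-\zeta w)$ with the factors pairwise comaximal away from $w=0$, which holds because $t$ is invertible. Hence, locally over $V$, exactly one factor $y-w$ (on the relevant component) is divisible by $x$, and we set $y=w+x\,u$. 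Then the equation becomes $x^m z=x(\text{unit}\cdot u+\dots)$, which lets us solve for $u$ as a polynomial-in-$z$ expression up to a unit. This exhibits the relevant component as $V'\times\AA^1$, where $V'\subset V$ is the open piece on which the chosen factor is the one divisible by $x$. Define $\mathfrak{S}$ over $U_t$ as the quotient of the étale equivalence relation on $V\sqcup U_x\cap U_t$ that identifies points with the same image in $U_t$ off $x=0$ and keeps the $r$ sheets over $x=0$ distinct. Concretely, this is the étale relation $R=V\times_{U_t}V$ with the diagonal components over $x=0$ removed. Gluing this with $U_x$ gives $\mathfrak{S}$, and the map $\rho$ is induced by $(x,t,y)\mapsto (x,t,\,y \bmod x)$. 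Because no parameter $s$ enters except through $w^r=-t^s$, the same $\mathfrak{S}$ works for all $s\ge1$, provided $\mathfrak{S}$ is built from $C$ defined by $y^r+t^s$. Since $C\cong C_0$-covers for different $s$ differ, we really take $\mathfrak{S}$ to be the doubling of $C_0$ along $\Spec k[t^{\pm1}][w]/(w^r-t)$ after a reparametrization $t\mapsto t^s$ (an automorphism of $\GG_m$ when we precompose appropriately on $U_t$), which is independent of $s$.

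\textbf{Positive characteristic.} If $p\mid r$, then $p\nmid s$ by coprimality. We exchange the roles of $y$ and $t$: the cover $C\to \Spec k[y^{\pm1}]$, $(y,t)\mapsto y$, is étale of degree $s$, and we repeat the construction with $w$ an $s$-th root and the factorization $t^s-w^s=\prod(t-\zeta w)$. Perfectness of $k$ ensures that the relevant roots of unity behave and the covers are geometrically reduced, so that smoothness of $\mathfrak{S}$ follows from étaleness of $V\to U_t$. Finally, $\rho$ is an étale-locally trivial $\AA^1$-bundle because on each étale chart it was shown to be a product.

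\textbf{Main obstacle.} The hardest step is verifying that $R\rightrightarrows V\sqcup U_x$ is a genuine étale equivalence relation, namely that removing the off-diagonal components over $x=0$ leaves an open and closed subscheme. The other hard part is making the trivialization $y=w+xu$ rigorous on each sheet, which requires checking the comaximality of the factors in the mixed-characteristic case.
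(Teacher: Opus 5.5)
Your global strategy is sound, and it is a genuinely different presentation from the paper's. The paper passes to the Galois closure $C\to C_0$ with group $G$, glues $r$ copies of $C\times\AA^1_x$ along $\{x\neq 0\}$ into a non-separated scheme $S$, and sets $\mathfrak{S}_t=S/G$. You instead take $V=C_1\times\AA^1_x$ and $\mathfrak{S}_t=V/R$ with $R=\Delta_V\sqcup\bigl((V\times_{U_t}V\setminus\Delta_V)\cap\{x\neq 0\}\bigr)$. This produces the same space and uses only the tautological root $w$: no roots of unity or $G$-equivariant lifts are needed, since $y^r-w^r=(y-w)Q(y,w)$ with $Q(w,w)=rw^{r-1}$ a unit suffices. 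What you call the main obstacle is easy. Since $V\to U_t$ is \'etale and separated, $\Delta_V$ is open and closed, so $R$ is open in $V\times_{U_t}V$ and hence \'etale over $V$. It is not closed, and that failure is exactly the non-separatedness of $\mathfrak{S}$. Transitivity holds because related points have the same $x$. In one place you write that the \emph{diagonal} components over $x=0$ are removed; it must be the off-diagonal ones.

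The genuine gap is the trivialization. With $y=w+xu$ the equation becomes $x^{m-1}z=1+uQ(w+xu,w)$, which for $m\geq 2$ is again of Koras--Russell type. On $x=0$ it pins $u=-1/(rw^{r-1})$ and leaves $z$ free, while for $x\neq 0$ a given $z$ leaves a degree-$r$ equation in $u$. So neither $u$ nor $z$ is a fibre coordinate, and you do not get $V'\times\AA^1$.

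The missing idea is the one behind the paper's lifts $\sigma_{\bar g}(x)$: lift the root to order $m$. Since $rw^{r-1}$ is a unit (this is where $p\nmid r$ enters), there is $\sigma=w+a_1x+\dots+a_{m-1}x^{m-1}$ with $a_i\in k[t^{\pm 1},w]/(w^r+t^s)$ and $\sigma^r+t^s+x=x^mc$. Writing $y^r-\sigma^r=(y-\sigma)\widetilde{Q}$, the equation reads $x^m(z-c)=(y-\sigma)\widetilde{Q}$. Let $W'\subset q^{-1}(U_t)\times_{U_t}V$ be the open set where $x\neq 0$ or $\widetilde{Q}$ is invertible. On $W'$, $v=(y-\sigma)/x^m=(z-c)/\widetilde{Q}$ is regular and gives $W'\cong V\times\AA^1_v$. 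Moreover:
\begin{itemize}
\item $W'\to q^{-1}(U_t)$ is an \'etale surjection (over $x=0$ take $w=y$);
\item the projection $W'\to V$ descends to $\rho_t$, because two points of $W'$ over a point with $x=0$ both have $w=y$ (this is the rigorous form of your ``$y\bmod x$'');
\item consequently $q^{-1}(U_t)\times_{\mathfrak{S}_t}V\cong W'\cong V\times\AA^1$.
\end{itemize}

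Two further steps are wrong. First, your argument for independence of $s$ fails: $t\mapsto t^s$ is not an automorphism of $\GG_m$ for $s\geq 2$, and the covers do \emph{not} differ. The correct reason, which is the paper's remark that $C_1$ does not depend on $s$, is coprimality. If $as+br=1$, then $u=y^at^b$ identifies $\{y^r+t^s=0\}$ with $\{u^r=(-1)^a t\}$ over $C_0$, with inverse $y=(-1)^bu^s$. The sign is absorbed by $u\mapsto -u$ when $r$ is odd, and $a$ is forced to be odd when $r$ is even. Hence $V$, $R$ and $\mathfrak{S}$ do not depend on $s$, for $s$ prime to $r$, which is implicitly required.

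Second, the case $p\mid r$ cannot be handled by swapping $y$ and $t$. That factors $\pr_{x,y}$ rather than $q=\pr_{x,t}$, through a cover of degree $s$, so the space would depend on $s$. In fact no factorization of $\pr_{x,t}$ exists in that case. Since $ry^{r-1}=0$, $\partial_y$ is a nowhere-vanishing vector field on $\KK(s)$ killing $x$ and $t$. It is tangent to the fibres of $\rho$ over $x\neq 0$, which are the curves $\AA^1_y$, hence to all fibres of $\rho$ by density. But the fibres of $\rho$ over $x=0$ are reduced lines on which $y$ is constant, so they cannot be tangent to $\partial_y$. What is legitimate, and all the paper's ``without loss of generality'' can mean, is to use the symmetry $(y,r)\leftrightarrow(t,s)$ of $\KK$ to arrange $p\nmid r$ before applying the lemma to a fixed pair $(r,s)$.
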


The working mechanism of the proof follows from a more general principle involved in constructing an algebraic space from a surface by replacing a desired curve by its finite \'etale covering (\cite[\S 1.0.1]{duboulozfinston2014}). For our context, we will now explain the strategy and the earlier-promised \'etale equivalence following \cite{DF18}.

\begin{proof}
Without loss of generality, let us assume that char $k= p \nmid r$. Observe that the strictly quasi-affine threefolds $\KK\bs L$ are covered by two principal affine opens, (say) $V_x:=\{x\ne 0\}$ where it is already a trivial $\AA^1$-bundle as observed above, and $V_t:= \{t\neq 0\}$ where the suitable modification has to be implemented. It is now enough to build an algebraic space $\mathfrak{S}_t$ locally on $V_t$ and prove that it compatibly extends to the desired algebraic space globally. 
\medskip

We first prove that there (locally) exists an algebraic space $\delta_t: \mathfrak{S}_t \to U_t$ such that $q|_{{V_t}}: V_t \to U_t$ factors through an \'etale locally trivial $\AA^1$-bundle $\rho_t: V_t\to \mathfrak{S}_t$. Moreover, such a factorization via $\delta_t$ should be an isomorphism when restricted to the overlap $U_{xt}:= U_x\cap U_t$. Here, $U_x= \Spec k[x^{\pm1},t]$ and $U_t = \Spec k[x,t^{\pm1}]$ are corresponding principal affine open subsets in $\AA^2\bs\{0\}$. The desired (global) algebraic space $\delta: \mathfrak{S}\to \AA^2\bs \{0\}$ is then obtained by gluing $U_x$ and $\mathfrak{S}_t$ by the identity along $U_{xt}$ and such that $\delta^{-1}_t(U_{xt}) \cong U_{xt}$.
    \[ \begin{tikzcd}
	{\mathcal{K}(s)\backslash L } && {\mathbb{A}^2\backslash\{0\}} \\
	\\  {\mathfrak{S}}
	\arrow["q", from=1-1, to=1-3]
	\arrow["\rho", from=1-1, to=3-1]
	\arrow["\delta", from=3-1, to=1-3]
         \end{tikzcd}
        \hspace{8mm}
        \begin{tikzcd}
	{V_t} && {U_t} \\
	\\	{\mathfrak{S}_t}
	\arrow["q_{\vert_{V_t}}", from=1-1, to=1-3]
	\arrow["\rho_t", swap, from=1-1, to=3-1]
	\arrow["\delta_t", swap, from=3-1, to=1-3]
        \end{tikzcd} \]
Let $h:\Spec R =: C\to C_0$ be the Galois closure of the finite \'etale map $h_0:C_1 \to C_0$ with 
    $$C_1:= \Spec \big(k[t^{\pm1}][y]\big) / \big(y^r+t^s\big) .$$ 
In other words, $C$ gives the normalization of $C_1$ in the Galois closure $\kappa$ with respect to the field extension 
     $$k(t)\hookrightarrow k(t)[y] / \big(y^r+t^s \big).$$ 
The above-mentioned Galois closure $\kappa$ is precisely obtained by adjoining the $r$th-roots to $t^s$, which is valid since $ p\nmid r$ by assumption. By the coprimality of $r$ and $s$, this is equivalent to just adjoining the $r$th-roots to $t$ along with all the $r$th-roots of $-1$. Thus, also note that neither $\kappa$ nor $C_1$ or $C$ depend on $s$. By construction, this gives us an \'etale $G$-torsor $h: C\to C_0$ with the Galois group $G = \text{Gal} (\kappa/k(t))$ and a factorization 
        $$h:C\xrightarrow{h_1} C_1\xrightarrow{h_0} C_0$$ 
where $h_1:C\to C_1$ is an \'etale torsor under some subgroup $H\le G$ of index $r$ equal to the degree of $h_1$. This corresponds to the fact that we have made an $r$-fold Galois cover of $C_0$. To complete the proof, we show that there exists a scheme $S$ obtained by gluing $r$ copies of $S_{\barg}$, where $\barg \in G/H$, by the identity along curves $C_0\times C$ such that the group $G$ acts freely on $S_{\barg}$ giving rise to a geometric quotient $\mathfrak{S}_t:= S/G$ in the category of algebraic spaces in the form of an \'etale $G$-torsor over $\mathfrak{S}$. Moreover, this local morphism glues to a global $G$-invariant morphism which then descends to the corresponding (global) quotient. By definition, the polynomial 
        $$ R[y] \ni y^r+t^s = \prod_{\Bar{g} \in G/H} \big(y-\lambda_{\Bar{g}}\big),$$
splits over $\kappa$ for some elements $\lambda_{\Bar{g}}\in R$ and $\Bar{g}\in G/H$. The Galois group $G$ then acts transitively on the quotient $G/H$ via 
         $$g' \cdot \lambda_{\Bar{g}} = \lambda_{\overline{{(g')^{-1}}\cdot g}}.$$
Since $h:C_1 \to C_0$ is \'etale, we have that for distinct elements $\barg,\barg'\in G/H$, the element $\lambda_{\barg}-\lambda_{\barg'}$ is an invertible regular function on $C$. Consequently, we can find a collection of elements $\sigma_{\barg(x)}\in B =: R[x]$ with corresponding residue classes $\lambda_{\barg}\in R= B/xB$ modulo $x$ on which $G$ acts via 
    $$ g'\cdot \sigma_{\barg}(x) \mapsto \sigma_{\overline{(g')^{-1}\cdot g}}(x) $$
along with a $G$-invariant polynomial $s(x,y)\in B[y]$ such that in the ring $B[y]$, we have 
\begin{equation}\label{termsofroots}
    y^r+t^s+x = \prod_{\barg\in G/G} (y-\sigma_{\barg}(x))+x^m s(x,y).
\end{equation}
Now observe that the principal open subset $V_t$ under the base change along $B$, given by $\hat{V}_t:=  V_t\times_{U_t} \Spec(B)$, is isomorphic to the closed subvariety of $\Spec(B[y,z_1])$ defined by the equation
\begin{equation}
   \hat{V}_t \cong \bigg\{ x^m z_1 = \prod_{\barg\in G/H} (y-\sigma_{\barg}(x))\bigg\} \subset \Spec(B[y,z_1])
\end{equation}
where $z_1= z-s(x,y)$. As mentioned previously, the regular function $\lambda_{\barg}-\lambda_{\barg'}$ is invertible whenever $g\neq g'$, the closed subscheme 
$\{x=0\} \subset \hat{V}_t$ is the disjoint union of $r$-closed subschemes $D_{\barg} \cong \Spec R[z_1]$ with corresponding defining ideals $(x,y-\sigma_{\barg}(x))\in \Gamma(\hat{V}_t,\mcal{O}_{\hat{V}_t})$ on which $G$ acts by permutation. Furthermore, the affine variety $\hat{V}_t$ is covered by affine open subsets of the form $\hat{V}_{t,\barg}$, where 
    $$\hat{V}_{t,\barg} = \hat{V}_t\bs \bigcup_{\barg'\in (G/H)\bs\{\barg\} }D_{\barg}$$
for $\barg \in G/H$. Now, due to the splitting of roots as in \cref{termsofroots}, we have a well-defined rational map
\begin{equation*}
    \hat{V}_{t,\barg} \dashrightarrow \Spec(B[u_{\barg}]),\quad 
    \big(x,y,z_1\big)\mapsto \left (x, \frac{y-\sigma_{\barg}(x)}{x^m}= \frac{z}{\prod_{\barg'\in (G/H)\bs \{\barg'\}}(y- \sigma_{\barg'}(x))} \right)
\end{equation*}
which is in fact an isomorphism of schemes over $\Spec B$. As a consequence, one verifies that the morphism $\hat{q}: \hat{V}_t=V_t\times_{U_t} \Spec B \to \Spec B$ is faithfully flat and that $\hat{q}$ factors through a Zariski locally trivial $\AA^1$-bundle $\hat{\rho}:\hat{V}_t\to S$ over the scheme $\hat{\delta}: S \to \Spec B$. The latter scheme $S$ is obtained by gluing $r$ copies of $S_{\barg}$, for $\barg\in G/H$, of $\Spec B \cong C\times \AA^1$ by identity along the principal open subsets 
$$\Spec B_x\cong C\times \Spec k[x^{\pm1}] \subset S_{\barg}.$$
Put differently, the morphism $\hat{\rho}_t: \hat{V}_t\to S$ is a Zariski-locally trivial $\AA^1$-bundle with local trivializations $\hat{V}_t\ \vert_{S_{\barg}}\cong \Spec B[u_{\barg}]$ and transition functions over $S_{\barg}\cap S_{\bar{g}'}\cong \Spec B_x$ of the form 
        $$u_{\barg} \mapsto u_{\barg'}+ x^{-m}( \sigma_{\barg}(x) -\sigma_{\barg'}(x)).$$
The action of $G$ on $\hat{V}_t$ descends to a fixed point-free action on $S$ defined locally by 
    $$S_{\barg}\ni (c,x)\mapsto (g'\cdot c, x) \in S_{\overline{(g')^{-1}\cdot g}}.$$
This $G$-action produces a quotient which is a scheme only if $h_0:C_1 \to C_0$ is an isomorphism; nevertheless always exists as an algebraic space in the form of an \'etale $G$-torsor $S \to \mathfrak{S}_t:= S/G$. By construction of $\mathfrak{S}_t$, we get the following cartesian square
\[\begin{tikzcd}
        \hat{V}_t && V_t \cong {\hat{V}_t}/G \\ \\
	S && \mathfrak{S}_t = S/G
	\arrow[from=1-1, to=1-3]
	\arrow["\hat{\rho}_t"', from=1-1, to=3-1]
	\arrow["\rho_t", from=1-3, to=3-3]
	\arrow[from=3-1, to=3-3]
\end{tikzcd}\]
with the horizontal maps being \'etale $G$-torsors. Consequently, the induced morphism $\rho_t: V_t\to \mathfrak{S}_t$ is an \'etale locally trivial $\AA^1$-bundle. It remains to observe that the $G$-invariant morphism 
    $$\pr_1 \circ \hat{\delta}: S\to \Spec B \cong U_t \times_{C_0} C \to  U_t$$ 
descends to a morphism $\delta_t: \mathfrak{S}_t \to U_t$ and $\hat{\delta}$ restricts to an isomorphism outside $\{x=0\}\subset U_t$. In contrast, over the punctured fiber, we have that the $\delta^{-1}(\{x=0\})$ is isomorphic to the quotient of $C \times G/H$ by the diagonal action of $G$, and hence to $C/H\cong C_1$. Thus, the algebraic space $\mathfrak{S}_t$ extends to globally $\delta: \mathfrak{S} \to \AA^2\bs\{0\}$.
\end{proof}

\begin{remark}
The perfectness assumption is not strict in \cref{exist:algspace}. In fact, any base field of char $k= p \mid r$ (resp. $p \mid s$) where one can extract the $s$th root of $y$ (resp. $r$th root of $t$) has the desired properties furnished in \cref{exist:algspace}.
\end{remark}

\subsection{\texorpdfstring{$\AA^1$}{A1}-Brouwer Degree}
Let $g:\mcal{K}\bs L\to \AA^2\bs \{0\}$ be the $\AA^1$-weak equivalence from \cref{g-is-weq}. Recall that our aim is to show that the morphism $i: \AA^2\bs \{0\}\to \KK \bs L$ is an $\AA^1$-weak equivalence in $\Spc_k$. To this end, it is enough to show that the composite $i\circ g: \Abs \to \Abs$
$$\Abs\xrightarrow{i} \mcal{K}\bs L \xrightarrow{g}\Abs $$
is an $\AA^1$-weak equivalence in $\Spc_k$. This follows due to the two-out-of-three property of model categories (\cref{defn:model-cats}). To show that $i\circ g$ is an $\AA^1$-weak equivalence, we use the associated $\AA^1$-Brouwer degree map from the motivic homotopy theory (\cref{A1-Brouwer-degree}). The morphism $g$ being an $\AA^1$-weak equivalence, we can identify $\mcal{K}\bs L$ in the Milnor-Witt $K$-theory as $H^1(\mcal{K}\bs L, K_2^{\MW}) = K_0^{\MW}(k)\cdot \mu$, where $\mu = g^{*}(\xi)$ is the pullback of the explicit generator. In the view of \cref{A1degreemap}, it is enough to show that the induced map on the Milnor-Witt $K$-theory sheaves by $i$
$$i^*: H^1(\mcal{K}\bs L, K^{\MW}_2) \to  H^1(\Abs,K^{\MW}_2)$$ 
is an isomorphism of groups. The diagram \cref{setup:diagram1} induces the following commutative diagram in Milnor-Witt $K$-theory:
\begin{equation}
\begin{tikzcd}[sep= tiny, every matrix/.append style={nodes={font=\scriptsize}}, every label/.append style={font=\small}]
        {H^1(\mathcal{K}\backslash L, K_2^{\MW})} && {H^2(\mathcal{K}/(\mathcal{K}\backslash L), K_2^{\MW})} && {H^2(\mathcal{K},K_2^{\MW})} && {H^2(\mathcal{K}\backslash L, K_2^{\MW})} \\
	\\
        {H^1(\mathbb{A}^2\backslash \{0\},K_2^{\MW})} && {H^2((\mathbb{P}^1)^{\wedge 2}, K_2^{\MW})} && {H^2(\mathbb{A}^2, K_2^{\MW})} && {H^2(\mathbb{A}^2\backslash \{0\}, K_2^{\MW})}
	\arrow["\partial", from=1-1, to=1-3]
	\arrow["{i^*}"', from=1-1, to=3-1]
	\arrow[from=1-3, to=1-5]
	\arrow["{j^*}"', from=1-3, to=3-3]
	\arrow[from=1-5, to=1-7]
	\arrow[from=1-5, to=3-5]
	\arrow["{i^*}"', from=1-7, to=3-7]
	\arrow["{\partial'}"', from=3-1, to=3-3]
	\arrow[from=3-3, to=3-5]
	\arrow[from=3-5, to=3-7]
\end{tikzcd}
\end{equation}
Since $j$ is an $\AA^1$-weak equivalence, this implies that $j^*$ is an isomorphism. By $\AA^1$-homotopy invariance of Milnor-Witt $K$-theory, we have that it vanishes for affine spaces, in particular, $H^*(\AA^2, K_2^{\MW})=0$, which, due to the exactness, renders that $\partial'$ is also an isomorphism. Observe that the groups $H^1(\mcal{K}\bs L, K^{\MW}_2)$ and $H^2(\mcal{K}/(\mcal{K}\bs L), K^{\MW}_2)$ are actually free $K^{\MW}_0$-modules of rank 1 (\cite[Lemma 4.5]{asok2014algebraicspheres}). This implies that $\partial$ is $K^{\MW}_0(k)$-linear and, in particular, an injective group homomorphism. Thus, it remains to only show that $\partial$ is surjective. This is proven below in \cref{cocyle-is-surjective} by showing that every cocycle of $H^2(\mcal{K}/(\mcal{K}\bs L),K^{\MW}_2)$ is a boundary of some cocycle of $H^1(\mcal{K}\bs L, K^{\MW}_2)$. In other words, for every such $\beta \in H^2(\mcal{K}/(\mcal{K}\bs L), K^{\MW}_2)$, there exists a cocycle $\alpha\in H^1(\mcal{K}\bs L, K^{\MW}_2)$ such that $\partial(\alpha)= \beta$.

\begin{prop}\label{cocyle-is-surjective}\cite{Mad}
For any perfect field $k$, the connecting homomorphism 
 $$\partial: H^1(\mcal{K}\bs L, K^{\MW}_2)\to H^2(\mcal{K}/(\mcal{K}\bs L),K^{\MW}_2)$$ 
is surjective. 
\end{prop}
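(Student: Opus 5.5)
We will realize the connecting homomorphism $\partial$ concretely in the Rost--Schmid (Gersten) complex with coefficients in $K^{\MW}_2$. By homotopy purity (\cref{purity-theorem}), $H^2(\mcal{K}/(\mcal{K}\bs L),K^{\MW}_2)$ is the cohomology of $\KK$ with supports in $L$. Since $L$ has codimension $2$ in $\KK$, this group is identified with the Rost--Schmid cohomology of $L\cong \AA^1_k$ twisted by $\det N_{\KK}(L)$, namely with $H^0(L,K^{\MW}_0(\det N))\cong K^{\MW}_0(k)$, because $N_{\KK}(L)$ is trivialized by $\bar y,\bar t$ (\cref{A1-weakeq-of-j}). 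Its generator $\beta$ is the class $\langle 1\rangle\otimes(\bar y\wedge\bar t)^\vee$ at the generic point of $L$. Since $\partial$ is $K^{\MW}_0(k)$-linear, it suffices to show that $\beta$ lies in the image of $\partial$. Equivalently, we must exhibit a cycle $\alpha$ in codimension $1$ on $\KK\bs L$, a sum of elements of $K^{\MW}_1(\kappa(D))$ over divisors $D$, whose Rost--Schmid residue on $\KK$ at the generic point of $L$ is exactly $\beta$.

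The natural candidate is built from the hypersurface $D:=\{y=0\}\cap\KK$ (or $\{t=0\}$, chosen according to whether $p\nmid r$ or $p\nmid s$ when $\operatorname{char}k=p$). On $D$ the equation becomes $x(x^{m-1}z-1)=t^s$. Away from $L$ we take the symbol $\alpha=[x]\otimes \bar y^\vee$, or rather its normalized form $[t]\otimes\bar y^\vee$ corrected by units. We first check that $\alpha$ is a cycle on $\KK\bs L$: its residues at codimension-$2$ points of $D\bs L$ must vanish. This holds because $x$ (or $t$) is a unit on $D\bs(D\cap\{x=0\})$, and on $D\cap\{x=0\}=\{x=y=t=0\}=L$ the only zero locus lies inside $L$. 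We then compute the residue of $\alpha$ along $L$ inside $D$. The function $x$ vanishes to order $1$ at the generic point of $L$ in $D$ up to the unit $(x^{m-1}z-1)$, which equals $-1$ on $L$, while $t^s$ has order $s$. We therefore obtain $\partial(\alpha)=\langle\pm1\rangle\cdot\beta$ or $s_\epsilon\cdot\beta$, depending on which uniformizer is used.

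The main obstacle is that $D$ need not be regular along $L$, so naive residues pick up multiplicities $s_\epsilon$ or $r_\epsilon$, which are not units in $\GW(k)$ in general. This is exactly where the argument of \cite{DF18} uses two divisors and the coprimality $\gcd(r,s)=1$. We plan to compute $\partial$ of both $[t]\otimes\bar y^\vee$ supported on $\{y=0\}$ and $[y]\otimes\bar t^\vee$ supported on $\{t=0\}$, obtaining $s_\epsilon\beta$ and $r_\epsilon\beta$ up to units. We then choose integers $a,b$ with $ar+bs=1$ and form the corresponding $\GW$-combination, using the identities $(ab)_\epsilon=a_\epsilon b_\epsilon$ and $n_\epsilon+m_\epsilon=(n+m)_\epsilon$ when $n$ is even, so that the image of $\partial$ contains $1_\epsilon\beta=\beta$. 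Over a perfect field of characteristic $p$ we must verify that these residue computations remain valid. Only the Rost--Schmid formalism over perfect fields is needed, and it is available there, together with the fact that the relevant curve $\{y^r+t^s=0\}$ is normalized by $\AA^1$ with no characteristic restriction because $\gcd(r,s)=1$. Hence the same combination works, and surjectivity follows.
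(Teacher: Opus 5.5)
\textbf{The gap: the cocycle check fails.} You overlook the second component of $\{y=0\}\cap\{t=0\}$ on $\KK$, namely $L'=\{y=t=0,\ x^{m-1}z=1\}$, which lies in $\KK\bs L$. On $M=\{y=0\}\cap\KK$ one has $x(x^{m-1}z-1)=t^s$. So $t$ is \emph{not} a unit on $M\bs(M\cap\{x=0\})$: it vanishes along $L'$, where $x$ is a unit and $t$ is a uniformizer of $\mathcal{O}_{M,L'}$. Hence $[t]\otimes\bar y^\vee$ has residue $\langle\pm1\rangle$ at $L'$ and defines no class in $H^1(\KK\bs L,K^{\MW}_2)$. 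Likewise, $[y]\otimes\bar t^\vee$ on $N=\{t=0\}\cap\KK$ has residue $\langle\pm1\rangle$ at both $L$ and $L'$. Your claimed values $s_\epsilon\beta$ and $r_\epsilon\beta$ for these two symbols are also wrong. Since $t$ (resp.\ $y$) is a uniformizer at $L$, their residues at $L$ are units times $\beta$; if they were cocycles, coprimality would be unnecessary. Nor do any multiplicities come from singularities of $D$: both $M$ and $N$ are smooth along $L$, because the $x$-partial of the defining equation equals $-1$ along $\{x=0\}$.

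\textbf{How to repair it.} Keep the symbol you wrote first and then abandoned. On $M$ and on $N$ the function $x$ is regular and vanishes only along $L$ (on $M$, $x=0$ forces $t=0$; on $N$ it forces $y=0$). So $[x]\otimes\bar y^\vee$ on $M$ and $[x]\otimes\bar t^\vee$ on $N$ are genuine cocycles on $\KK\bs L$. In $\mathcal{O}_{M,L}$ one has $x=u\,t^s$ with $u=(x^{m-1}z-1)^{-1}$ and $\bar u=-1$, so $[x]=[u]+\langle u\rangle s_\epsilon[t]$ has residue $\langle -1\rangle s_\epsilon$. Similarly, $x=u\,y^r$ in $\mathcal{O}_{N,L}$ gives residue $\langle -1\rangle r_\epsilon$. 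Thus the image of $\partial$ contains $s_\epsilon\beta$ and $r_\epsilon\beta$, up to the orientation sign of the twist. Your B\'ezout step with $ar+bs=1$ then produces $\beta$.

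\textbf{Comparison with the paper.} This repaired route is a valid alternative to the paper's proof. The paper keeps $[y]\otimes\bar t$ on $N$, whose residue at $L$ is exactly $\beta$, and spends coprimality at $L'$ instead. It cancels the residue at $L'$ with a combination of $\langle x\rangle[y]\otimes\overline{x^{m-1}z-1}$ and $\langle -x\rangle[t]\otimes\overline{x^{m-1}z-1}$, supported on $\{x^{m-1}z=1\}$, which misses $L$. The repaired version avoids that singular divisor entirely and needs no characteristic-dependent choice of $D$.
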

Except for our style of presentation, the contents of the proof are essentially the same as in \cite[Proposition 3.3]{DF18}. We will include it here for concreteness.
\begin{proof}
Due to \cite{fasel2020Chow-Witt}, we have the relation (cf. \cref{app:Chow-Witt-reduced-coh})
    $$H^n(\KK /(\KK\bs L), K^{\MW}_*) \cong H_L^n(\KK,K^{\MW}_*)$$
for all $\forall\ n\in \ZZ$. In particular, we have that $H^2(\KK/(\KK\bs L), K^{\MW}_2) \cong H_L^2(\KK,K^{\MW}_2)$. Recall from \cref{A1-weakeq-of-j} that the normal bundle of $L$ in $\KK$ is explicitly generated by the sections $y$ and $t$. Using the localization exact sequence of Chow-Witt theory (\cite[\S 10.4]{fasel2008groupeC-W}), this provides us with a long exact sequence and, as a result, also the generators for $H^2_L(\KK, K^{\MW}_2)$ explicitly as the following class of cocycle:
\begin{equation}
    \beta:= \langle1\rangle\otimes \bar{t} \otimes \bar{y} \in K^{\MW}_0(\kappa(L), \wedge^2 \mathfrak m_L/ \frak m^2_L). 
\end{equation}
Now, to prove our claim, it is enough to show that this cocycle is indeed a boundary of a cocycle $\alpha \in H^1(\KK\bs L, K^{\MW}_2)$. The proof exploits several machinery developed in \cite[\S 5.1]{morel2012A1topology}. Consider the integral subvarieties of $\KK$ as follows: $M:= \{y=0\}\subset \KK$ and $N:=\{t=0\}\subset \KK$ and $L':=\{y=t=0,\ x^{m-1}z=1 \}\subset \KK$. By the construction, we have $M\cap N = L\bigsqcup L'$. By the definition of $M$ and $N$, we have that the element $\gamma:= [y]\otimes \bar{t}\in K^{\MW}_1(\kappa(N), \frak{m}_N/\frak{m}^2_N)$ has a non-trivial boundary only on $L$ and $L'$ with same values of the boundaries in respective residue fields, that is, the boundary of $\gamma$ on $L$ is given by $\langle1\rangle \otimes \bar{t} \wedge \bar{y}\in K^{\MW}_0(\kappa(L),\wedge^2 (\frak{m}_L/ \frak{m}^2_L))$ and on $L'$ given by $\langle1\rangle \otimes \bar{t} \wedge \bar{y}\in K^{\MW}_0(\kappa(L'),\wedge^2 (\frak{m}_{L'}/ \frak{m}^2_{L'}))$. Eventually, it turns out that $\gamma \in  K^{\MW}_1(\kappa(N),\frak{m}_N/\frak{m}_N^2)$ is not a cocycle on $\KK \bs L$, but we can produce one by doing the following modification.
\medskip

Note that the symbol $\alpha':= [x^{m-1}z-1]\otimes \bar{y}\in K^{\MW}_1(\kappa(M), \frak{m}_M/\frak{m}^2_M)$ has non-trivial boundary only on $L'$. Indeed, from the equation of $\KK$, we have that $x(x^{m-1}z-1) = y^r+t^s$ and again since $r,s$ are coprime, we have that any prime containing $x^{m-1}z-1$ and $y$ must also contain the $t$ and in turn also the prime ideal corresponding to $L'$ by the definition of $L'$. We now compute the boundary of $\alpha'$ on $L'$, taking advantage of the above-mentioned primality condition of the ideals. Since $x$ is invertible in $L'$, we can write $(x^{m-1}z -1) = (y^r+t^s)/x = x^{-1} y^r+ x^{-1} t^s \in \mcal{O}_{\frak{m}_{L'}}$. Hence, on $L'$, the boundary of $\alpha'$ is the same as the boundary of $[x^{-1}t^s]\otimes \bar{y}$. The latter can be computed using the explicit relations given in \cite[Lemma 3.5]{morel2012A1topology}. In particular, we have that 
    $$[x^{-1}t^s] = [x^{-1}] + \langle x^{-1} \rangle [t^s] = [x^{-1}] + \langle x \rangle [t^s].$$
Now due to \cref{MW:epsilon-defn}, we have that $[t^s] = s_{\epsilon}[t]$. Finally, using the boundary formulas as given in \cite[Proposition 3.17]{morel2012A1topology}, we obtain the boundary of $\alpha'$ as 
$$d (\alpha') =  \langle x \rangle s_{\epsilon}\otimes \bar{y}\wedge \bar{t} = \langle -x\rangle s_{\epsilon}\otimes \bar{t}\wedge \bar{y}.$$
Set $S:= \{x^{m-1} z=1 \}\subset \KK$ as a codimension 1 subvariety of $\KK$. Since $M$ and $S$ are different codimension 1 subvarieties, we compute the boundary as follows:
$$d([y, x^{m-1}z-1]) = [x^{m-1}z-1]\otimes \bar{y} +\epsilon [y]\otimes \overline{x^{m-1}z-1} $$
As $d^2=0$ and $\epsilon = -\Lin -1\Rin$, we have
$$ d([y] \otimes \overline{x^{m-1}z-1}) = \langle -1 \rangle\ d( [x^{m-1}z-1]\otimes \bar{y}) = \langle x \rangle s_{\epsilon}\otimes \bar{t}\wedge \bar{y}.$$ Now $x$ is an unit in $S$, we have that $d(\Lin x\Rin [y]\otimes \overline{x^{m-1}z-1}) = s_{\epsilon} \otimes \bar{t}\wedge \bar{y}$. Similarly, $d([x^{m-1}z-1] \otimes \bar{t}) = \Lin x\Rin r_{\epsilon} \otimes \bar{t}\wedge \bar{y}$ and as a consequence, we have 
$$d([t]\otimes \overline{x^{m-1}z-1}) = \Lin -1\Rin d([x^{m-1}z-1]\otimes \bar{t})= \Lin -x \Rin r_{\epsilon}\otimes \bar{t}\otimes \bar{y}. $$ 
Thus, $d(\Lin-x\Rin[t]\otimes x^{m-1}z-1) =r_{\epsilon} \otimes \bar{t}\wedge \bar{y}$. Now, choose $g,h\in \mathbb{N}$ such that $gr-hs=1$, which is possible as $(r,s)=1$. For any integers $p,q\in \ZZ$, we have $(pq)_{\epsilon} = p_{\epsilon} q_{\epsilon}$, and so we have $g_{\epsilon} r_{\epsilon} - h_{\epsilon} s_{\epsilon} = \Lin \pm 1 \Rin$ \footnote{it is odd if $gr$ is odd and $\Lin-1\Rin$ otherwise}. In the end, we have obtained that $d(g_{\epsilon}\Lin-x\Rin [t]  \otimes \overline{x^{m-1}z-1}) - h_{\epsilon}\Lin x\Rin [y] \otimes (\overline{x^{m-1}z-1})) = \Lin \pm1 \Rin \otimes \bar{t}\wedge \bar{y}$. This gives us the desired cocycle obtained by modifying $\alpha'$ by setting 
$$\alpha:=  [y]\otimes \bar{t} - \Lin\pm 1\Rin (g_{\epsilon} \Lin -x \Rin[t]\otimes \overline{(x^{m-1}z-1)} - h_{\epsilon} \Lin x\Rin[y]\otimes \overline{(x^{m-1}z-1)})$$
that maps to the generator of $H^2_L(X,K^{\MW}_2)$ under  $\partial$.
\end{proof}

\subsection*{A note on the \emph{lazy} proof}
In the case when char $k=0$, the authors in \cite{DF18} present another proof, which they call the \emph{lazy proof}, exploiting the stable $\AA^1$-contractibility of $\KK$ (\cite[Theorem 4.2]{HKO16}) over a field of characteristic zero. The proof strongly in the stable context relies on the deep fact that every smooth quasi-projective scheme is dualizable in the stable homotopy category $\SH(k)$. This is, in turn, proven by exploiting the existence of resolution of singularities in characteristic zero. Hence, assuming the existence of resolution of singularities over perfect fields, the lazy proof can very well be adapted to our situation.
\medskip

\begin{remark}\cite{Mad}
The family of smooth affine threefolds $\KK$ produces the first example of exotic varieties in mixed characteristics. For instance, this has the consequence that the affine variety $\KK\to \Spec\FF_p$ is a smooth $\AA^1$-contractible threefold which is not isomorphic to the Asanuma-Gupta threefolds (\cref{ZCP:positive-char}). Thereby, all these threefold add to the list of "potential" counter-examples to the Zariski Cancellation in positive characteristics. In view of this distinction, we can propose the following action plans:
\begin{enumerate}\item Construct an invariant to distinguish between a given smooth $\AA^1$-contractible affine threefold and Koras-Russell threefolds.
\end{enumerate}
and in analogy with \cref{A1contracurves:affine} and  \cref{A1contrasurface:affine}:
\begin{enumerate}
\item[2.] Prove or disprove the fact that every smooth $\AA^1$-contractible threefold is necessarily affine. To disprove, it suffices to find a strictly quasi-affine smooth $\AA^1$-contractible threefold.
\end{enumerate}
\end{remark}

\subsection{Relative \texorpdfstring{$\AA^1$}{A1}-Contractibility of \texorpdfstring{$\KK$}{K}}
We can now extend the unstable $\AA^1$-contractibility of $\KK$ over integers and over an arbitrary Noetherian scheme with perfect residue fields.
\begin{prop}\label{A1-cont-over-perfect-fields}\cite{Mad}
For any perfect field $k$, the canonical morphism $f:\KK\to \Spec k$ is an $\AA^1$-weak equivalence in $\Spc_k$.
\end{prop}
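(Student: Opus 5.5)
The plan is to assemble the ingredients developed in the preceding subsections into the cofiber-sequence argument of \cite{DF18}, using the very weak five lemma (\cref{weak5lemma}). Consider the commutative diagram \cref{setup:diagram1}, whose rows are the cofiber sequences
\[
\AA^2\bs\{0\}\to \AA^2 \to \AA^2/(\AA^2\bs\{0\}), \qquad \KK\bs L\to \KK\to \KK/(\KK\bs L).
\]
The vertical maps are $i$, $\phi$ and $j$, induced by the inclusion $P=\{z=0\}\cong \AA^2\hookrightarrow \KK$. The very weak five lemma reduces the claim to showing that $i$ and $j$ are $\AA^1$-weak equivalences in $\Spc_k$. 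Once this is done, $\phi: \AA^2\to \KK$ is an $\AA^1$-weak equivalence. Since $\AA^2\to\Spec k$ is one as well, two-out-of-three gives that $f:\KK\to\Spec k$ is an $\AA^1$-weak equivalence.

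For $j$, I would argue as in \cref{A1-weakeq-of-j}; this step is characteristic free. Rewrite the equation as $x(x^{m-1}z-1)=y^r+t^s$. The factor $x^{m-1}z-1$ is a unit near $L$, so the normal bundle $N_{\KK}(L)$ is trivial and generated by $y$ and $t$. The purity theorem \cref{purity-theorem} and \cref{purity:cor-VoeZ/2} then identify $j$ with the map $(\PP^1)^{\wedge 2}\to L_+\wedge(\PP^1)^{\wedge2}$ induced by the inclusion $\{0\}\hookrightarrow L\cong\AA^1$. This map is an $\AA^1$-weak equivalence because $L\cong \AA^1$ is $\AA^1$-contractible.

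For $i$, I would use the explicit map $g:\KK\bs L\to\AA^2\bs\{0\}$. It is an $\AA^1$-weak equivalence by \cref{g-is-weq}, which rests on \cref{Daniel:trick 4fold} and \cref{exist:algspace}, both proved over any perfect field. By two-out-of-three it suffices to show that $g\circ i:\AA^2\bs\{0\}\to\AA^2\bs\{0\}$ is an $\AA^1$-weak equivalence. By \cref{A1degreemap}, this holds if $(g\circ i)^*$ induces an isomorphism on $H^1(-,K^{\MW}_2)$, i.e. if $i^*:H^1(\KK\bs L,K_2^{\MW})\to H^1(\AA^2\bs\{0\},K_2^{\MW})$ is an isomorphism.

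To check this, compare the long exact sequences in Milnor--Witt cohomology attached to the two cofiber sequences. The map $j^*$ is an isomorphism by the previous step, and the groups $H^*(\AA^2,K_2^{\MW})$ vanish in the relevant degrees, so $\partial'$ is an isomorphism. Both $H^1(\KK\bs L,K^{\MW}_2)$ and $H^2(\KK/(\KK\bs L),K^{\MW}_2)$ are free $K^{\MW}_0(k)$-modules of rank one, so $\partial$ is injective. The remaining and main obstacle is the surjectivity of $\partial$, which is exactly \cref{cocyle-is-surjective}. That proposition produces the explicit cocycle built from $[y]\otimes\bar t$ and corrections involving $x^{m-1}z-1$, using $\gcd(r,s)=1$, and it holds over any perfect field. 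With $\partial$ an isomorphism, the commutative square gives that $i^*$ is an isomorphism, which completes the argument.
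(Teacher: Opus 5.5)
Your proposal is correct and follows the paper's proof: you apply the very weak five lemma to \cref{setup:diagram1}, use purity for $j$, and for $i$ combine the weak equivalence $g$ with the Brouwer-degree criterion and \cref{cocyle-is-surjective}. One small fix, which the text also glosses over: rank-one freeness alone does not make $\partial$ injective (multiplication by $h=1+\langle -1\rangle$ on $\GW(k)$ kills $1-\langle -1\rangle$, which is nonzero when $-1$ is not a square in $k$), but a \emph{surjective} $K^{\MW}_0(k)$-linear map between free rank-one modules over a commutative ring is multiplication by a unit, so the surjectivity you already cite gives that $\partial$ is an isomorphism.
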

\begin{proof}
From \cref{A1-weakeq-of-j} and \cref{A1-weakeq-of-i}, we have that the morphisms $i$ and $j$ are $\AA^1$-weak equivalences. 
Consequently, $\phi:\AA^2 \to \KK$ is an $\AA^1$-weak equivalence in \cref{setup:diagram1} and $\AA^2$ is clearly $\AA^1$-contractible over $\Spec k$.
\end{proof}

\begin{theorem}\label{KR3FoverZ}\cite{Mad}
The Koras-Russell three fold of first kind $f: \KK\to \Spec \ZZ$ is relatively $\AA^1$-contractible in $\Spc_{\ZZ}$. In particular, $\KK$ is an $\AA^1$-contractible scheme in $\Spc_{\ZZ}$ that is not isomorphic to $\AA^3_{\ZZ}$.
\end{theorem}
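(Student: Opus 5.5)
The plan is to use the fiberwise criterion \cref{fiberwise=relative}, applied to the base $S=\Spec \ZZ$. This base is Noetherian of Krull dimension one, so the criterion is available.

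First, I check that $f:\KK\to\Spec\ZZ$ is smooth, so that \cref{fiberwise=relative} applies. For $F=x^mz-x-y^r-t^s$ we have $\partial F/\partial z=x^m$ and $\partial F/\partial x=mx^{m-1}z-1$. On the locus $x=0$ the latter equals $-1$, a unit, and on the locus $x\neq 0$ the former is a unit. So the Jacobian criterion holds in every fiber, and $\KK$ is smooth of relative dimension $3$ over $\ZZ$.

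Second, I identify the fibers. The points of $\Spec\ZZ$ are the generic point, with residue field $\QQ$, and the closed points $(p)$, with residue field $\FF_p$. Both kinds of residue field are perfect. The scheme-theoretic fiber over each point is the Koras-Russell threefold $\KK_{\kappa(s)}=\{x^mz=x+y^r+t^s\}$, given by the same equation over $\kappa(s)$. By \cref{A1-cont-over-perfect-fields}, each such fiber is $\AA^1$-contractible over $\kappa(s)$. Hence condition (2) of \cref{fiberwise=relative} holds, and so $f$ is an $\AA^1$-weak equivalence in $\Spc_{\ZZ}$.

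Third, I show that $\KK\not\cong\AA^3_{\ZZ}$. Suppose there were an isomorphism of $\ZZ$-schemes. Base changing it to $\QQ$ would give $\KK_{\QQ}\cong\AA^3_{\QQ}$, and base changing further would give $\KK_{\CC}\cong \AA^3_{\CC}$. This contradicts the Makar-Limanov computation $\ML(\KK)\neq\ML(\AA^3)$ of \cref{KR3F-not-A3:eg}; one could alternatively invoke Kaliman's fiber argument recalled in \cref{C*-actions:KR3F}.

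The main point requiring care is the applicability of \cref{fiberwise=relative} in this setting. Its inductive proof reduces to local Henselian bases; for $\Spec\ZZ$ this means the strict Henselizations of the local rings, whose generic and closed fibers are again exactly the fibers considered above. The remaining ingredients are the perfect-field result \cref{A1-cont-over-perfect-fields}, which covers all residue characteristics at once, including the $p\mid r$ case handled in \cref{exist:algspace}, and a classical non-isomorphism statement.
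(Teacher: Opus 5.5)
Your proposal is correct and follows the paper's proof: you check that the fibers over $\Spec\QQ$ and over each $\Spec\FF_p$ are $\AA^1$-contractible and glue them with \cref{pointwise:phenomenon}. The differences are cosmetic: you invoke \cref{A1-cont-over-perfect-fields} for the generic fiber as well, where the paper cites \cite{DF18}, and you spell out the smoothness check and the base-change Makar-Limanov argument for $\KK\not\cong\AA^3_{\ZZ}$, both of which the paper leaves implicit. One small correction to your last paragraph: the Nisnevich reduction uses Henselizations rather than strict Henselizations, and the generic fiber there lies over a proper algebraic extension of $\QQ$ rather than over $\QQ$ itself. This is harmless, since you apply \cref{fiberwise=relative} as a black box to $\Spec\ZZ$ and every residue field involved is still perfect.
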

\begin{proof}
The fibers of $f$ over the generic point $0 \in \Spec \ZZ$ are $\AA^1$-contractible, as the quotient field $\Spec \QQ$ has characteristic zero, for which the proof follows due to \cite{DF18}. Now, for all closed points $q \in \Spec\ZZ$, we have that the corresponding residue field is $\FF_q$ (which in particular is perfect), the $\AA^1$-contractibility over $\Spec\FF_q$ now follows from \cref{A1-cont-over-perfect-fields}. 
\[\begin{tikzcd}
	& {\KK_o} & \KK & {\KK_{q}} \\
	\\
	{\Spec \QQ \cong \Spec \kappa_{o}} && {\Spec \ZZ} && {\Spec \kappa_q \cong \Spec \FF_q}
	\arrow[from=1-2, to=1-3]
	\arrow["{{f_o}}"', from=1-2, to=3-1]
	\arrow["f", from=1-3, to=3-3]
	\arrow[from=1-4, to=1-3]
	\arrow["{{f_q}}", from=1-4, to=3-5]
	\arrow[from=3-1, to=3-3]
	\arrow[from=3-5, to=3-3]
\end{tikzcd}\]
Now, $f:\KK\to \Spec \ZZ$ is a morphism of motivic spaces whose fibers are all $\AA^1$-contractible over the corresponding residue fields. The proof now follows as a consequence of \cref{pointwise:phenomenon}.
\end{proof}

In fact, the above theorem can be generalized as follows.
\begin{theorem}\label{KR3Fmain:Noetherian}\cite{Mad}
Let $S$ be any Noetherian scheme with perfect residue fields. The Koras-Russell threefold of first kind $f: \KK \to S$ is relatively $\AA^1$-contractible in $\Spc_S$.
\end{theorem}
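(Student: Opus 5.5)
The plan is to reduce, exactly as in \cref{KR3FoverZ}, to a fiberwise statement and then invoke \cref{pointwise:phenomenon}. Here $\KK\to S$ means the base change $\KK_S := \KK_{\ZZ}\times_{\Spec\ZZ} S$ of the threefold defined by \cref{KR3F;equation} over $\ZZ$. First I check that $\KK_S\to S$ is smooth of relative dimension $3$. Smoothness is fiberwise for a finitely presented flat morphism. Flatness holds because the defining polynomial $x^mz-x-y^r-t^s$ has unit coefficient in $x$, so it is a nonzerodivisor modulo every maximal ideal of the base. Moreover, each fiber over $s\in S$ is the Koras-Russell threefold over $\kappa(s)$, which is smooth by the Jacobian criterion: $\partial/\partial z = x^m$ and $\partial/\partial x = mx^{m-1}z-1$ cannot vanish simultaneously, and this holds in every characteristic. \cref{pointwise:phenomenon} also requires $S$ to be Noetherian of finite Krull dimension. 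If finite dimension is not assumed, I will reduce to it by working Nisnevich-locally (on local rings, which are Noetherian of finite dimension), since $\AA^1$-weak equivalence is local on the base, as used in the proof of \cref{pointwise:phenomenon}.

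Next, for each point $s\in S$, the scheme-theoretic fiber is
$$(\KK_S)_s \cong \KK_{\ZZ}\times_{\Spec \ZZ}\Spec\kappa(s),$$
that is, the Koras-Russell threefold of first kind over the field $\kappa(s)$, which is perfect by hypothesis. \cref{A1-cont-over-perfect-fields} then gives that $(\KK_S)_s\to\Spec\kappa(s)$ is an $\AA^1$-weak equivalence in $\Spc_{\kappa(s)}$. This covers generic points of characteristic zero, where one may alternatively cite \cite{DF18}, as well as all positive-characteristic points. Since every fiber is $\AA^1$-contractible, the implication $(2)\Rightarrow(1)$ of \cref{pointwise:phenomenon} yields that $f$ is an $\AA^1$-weak equivalence in $\Spc_S$.

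An alternative route avoids fiberwise analysis over $S$ altogether. By \cref{KR3FoverZ}, $\KK_{\ZZ}\to\Spec\ZZ$ is $\AA^1$-contractible, and \cref{pullbackfunctor} applied to the structure morphism $S\to\Spec\ZZ$ gives the result directly, with no perfectness hypothesis on $S$ beyond what \cref{KR3FoverZ} used. I would mention this as a remark, but present the fiberwise argument as the main proof, since it matches the stated hypotheses.

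The main obstacle is only bookkeeping: verifying the finite-dimensionality and smoothness hypotheses of \cref{pointwise:phenomenon}, and making sure the base-changed fibers really are the perfect-field threefolds to which \cref{A1-cont-over-perfect-fields} applies. All substantive input is already contained in \cref{A1-cont-over-perfect-fields}.
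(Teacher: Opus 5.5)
Your main argument is the paper's proof: every fiber is $\AA^1$-contractible over its perfect residue field, and \cref{pointwise:phenomenon} glues the fibers. The paper cites \cite{DF18} at characteristic-zero points where you use \cref{A1-cont-over-perfect-fields} uniformly, and your explicit flatness check supplies the flatness that the paper's appeal to \cref{smooth:over-arbitrary-base} leaves out. Your alternative remark is also correct and in fact stronger, since \cref{KR3FoverZ} plus \cref{pullbackfunctor} for $S\to\Spec\ZZ$ needs no hypothesis on the residue fields of $S$, nor your hand-waved reduction to finite Krull dimension (which the paper's standing conventions assume anyway).
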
 
\begin{proof}
Let us first note that the canonical map $\KK \to S$ is well-defined and smooth due to \cref{smooth:over-arbitrary-base} as all the fibres of $f$ over either the quotient field or corresponding residue fields are smooth schemes. Now, choose a point $s \in S$ with residue fields $\kappa_s$. Now, if $\kappa_s$ has characteristic zero, then the $\AA^1$-contractibility of $\KK \to \Spec \kappa_s$ follows from \cite{DF18}. If char $\kappa_s = p >0$, it follows from \cref{KR3FoverZ} that $\KK \to \Spec \kappa_s$ is $\AA^1$-contractible since by assumption we have that $\kappa_s$ is a perfect field. Now, we have that all the fibres of the morphism $f:\KK\to S$ are $\AA^1$-contractible, and the proof follows due to \cref{pointwise:phenomenon}.
\end{proof}

\subsubsection{A Note on the Deformed Version of $\KK$}
Let us also mention a closely related family of affine threefolds called the \emph{deformed Koras-Russell threefolds} of the first kind studied by the authors in \cite{DPO2019}. They are defined by the equation:
\begin{align*}
   \KK(m,r,s,\phi) =: \KK(\phi) = \{x^m z = y^r + t^s+ x \phi(x,y,t) \}
\end{align*}
where $m,r,s\ge 2$ are integers with $r$ and $s$ are coprime and $\phi(x,y,t)\in k[x,y,t]$ such that $\phi(0,0,0)\in k^\times$. Observe that when $\phi(x,y,t)= a \in k^\times$, then $\KK(\phi)= \KK$ (\cref{defn:KR3F}). And when $\phi(x,y,t)= q(x)\in k[x]$ with $q(0)\in k^\times$, then the corresponding affine threefolds $\KK(q)$ has been shown to be $\AA^1$-contractible over a field of characteristic zero (\cite[Theorem 1.2]{DF18}). Now, for a general polynomial $\phi(x,y,t)$ satisfying $\phi(0,0,0)\in k^\times$, it is known that the corresponding affine varieties $\KK(\phi)$ form examples of the affine modification, in the sense of Kaliman-Zaidenberg (\cite{kalimanZaid1999affine}). In this direction, it has been shown that $\KK(\phi)$ is $\AA^1$-contractible in the stable $\AA^1$-homotopy category $\SH^{S^1}(k)$ (\cite[Theorem 4.7]{DPO2019}) after simplicial suspension over an algebraically closed field of characteristic zero. In addition, the authors in [\textit{ibid}] also study the notion of Koras-Russell fiber bundles, whose definition we include below.
\begin{defn}
Suppose $s(x) \in k[x]$ has positive degree and let $R(x,y,t) \in k[x,y,t]$. Define the closed subscheme $\XX(s,R)$ of $\AA^1_x \times \AA^3 = \Spec(k[x][y,z,t])$ by the equation:
        $$\{s(x)z = R(x,y,t) \}. $$
We say that the projection map $\pr_x: \XX(s,R)\to \AA^1_x$ defines a Koras-Russell fiber bundle if
\begin{enumerate}
\item $\XX(s,R)$ is a smooth scheme, and
\item For every zero $x_0$ of $s(x)$, the zero locus in $\AA^2 = \Spec(k[y,t])$ of the polynomial $R(x_0,y,t)$ is an integral rational plane curve with a unique place at infinity and at most unibranch singularities.
\end{enumerate}
\end{defn}  
The simplicial stable $\AA^1$-contractibility of $\XX(s, R)$ has been established over an algebraically closed field of characteristic zero (\cite[Theorem 4.13]{DPO2019}). In the light of \cref{A1-cont-over-perfect-fields}, we can now essentially lift these results over perfect fields as well\footnote{this is a work in progress}.

%--------------------------------------------------------------------------------------
\section{Generalized Koras-Russell Prototypes}\label{sect:generalized-KR}
In this section, we turn towards studying the higher-dimensional analog of the Koras-Russell threefolds, named as the \emph{generalized Koras-Russell varieties}. The authors in \cite{dubouloz2025algebraicfamilies} recently proved its unstable $\AA^1$-contractibility over a field of characteristic zero. In this spirit, we will now lift the $\AA^1$-contractibility from characteristic zero to arbitrary perfect fields and consequently over some Noetherian scheme. As a sequel, we shall see in \cref{sect:exotic-motivic-spheres} that these varieties are the crucial ingredients to produce exotic motivic spheres.
\medskip

Let us begin with the definition of these varieties. As for the notation, $k$ will denote a perfect field and $k^{[n]}$ will denote the polynomial ring in $n$-variables.

\begin{defn}\label{defn:KR3F-generalized}
Let $r,s \geq 2$ be any coprime integers. Consider the triple $(m,\ul{n},\psi)$ with integers $m\ge 0$ and a $(m+1)$-tuple $\ul{n} = (n_0,n_1,\dots,n_m)$ of multi-index with $n_i>1$ and a polynomial $\psi \in k^{[1]}$ such that $\psi(0)\neq 0$. Then consider the coordinate ring 
    $$R_m(\ul{n},\psi):= \frac{k[x_0, x_1\dots,x_m,y,z,t]}
    {\ul{x}^{\ul{n}}z + y^r+t^s+x_0\ \psi(\ul{x})}$$
where $\ul{x}^{\ul{n}}= \prod_{i=0}^{m} x_i^{n_i}$ and $\ul{x}= \prod_{i=0}^{m}x_i$. The \emph{generalized Koras-Russell varieties} are defined as the affine variety associated with this coordinate ring
\begin{equation}\label{fulleqn:GK-R3F}
    \XX_m(\ul{n},\psi):= \Spec R_m(\ul{n},\psi)
\end{equation}        
\end{defn}
For every $m\ge 0$, the so-obtained affine variety $\XX_{m}(\ul{n},\psi)$ is of dimension $(m+3)$ and is smooth by the Jacobian criterion, by the assumption that $\psi
(0)\neq 0$. Let us now look at a concise example of one such variety in relative dimension $m+3$ constructed over the affine space $\AA^{n-3}_k$. Putting $\psi(\ul{x}) = 1+\ul{x}+ \sum_{i=2}^{n-2}a_i \ul{x}^i$ which is clearly non-vanishing at the origin (as $\psi(0)=1 \ne 0$), we get that
\begin{equation}\label{deformed-example}
\XX_m(n,\psi) := \bigg\{\ul{x}^nz = y^r+t^s+x_0 (1+\ul{x}+\sum_{i=2}^{n-2}a_i \ul{x}^i)\bigg\}\subseteq \AA^{n-3}_k\times \AA^{m+3}_k
\end{equation}
with coordinates given by $\Spec (k[a_2,\dots,a_{n-2}][x_0,\dots,x_m,y,z,t])$. In particular, $\XX_m(n,\psi)$ is a smooth affine scheme under the projection map $\XX_m(n,\psi) \to \AA^{n-3}_k$ given by the projection onto the $a_i$'s.

\begin{theorem}\label{KR3Fprototypes:field-A1-cont:perfect}\cite{Mad}
Let $k$ be any perfect field. Then under the notations and assumptions as in \cref{defn:KR3F-generalized}, the canonical morphism $\XX_{m}(\ul{n},\psi) \to \Spec k$ is an $\AA^1$-weak equivalence in $\Spc_k$.
\end{theorem}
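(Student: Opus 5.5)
We transplant the Dubouloz--Fasel strategy used for \cref{A1-cont-over-perfect-fields} to $\XX:=\XX_m(\ul{n},\psi)$, with the \emph{very weak five lemma} again as the organizing tool. First we choose the analogous pair of subvarieties. Take the hypersurface $P:=\{z=0\}\subset \XX$. Since $\psi(0)\neq 0$, the defining equation can be solved for $x_0$ near the relevant locus. The cleaner choice, however, is to take $P$ as an affine space containing the codimension-two locus $L:=\{x_0=y=t=0\}$. On $L$ the equation forces $x_0\psi(\ul{x}) = -\ul{x}^{\ul{n}}z+\dots$, so $L\cong \AA^{m+1}=\Spec k[x_1,\dots,x_m,z]$.

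We aim for a transversal square
\[ P\bs L\simeq \AA^{m+3-1}\bs\AA^{m+1} \longrightarrow P,\qquad \XX\bs L\longrightarrow \XX. \]
Here $P\cong \AA^{m+2}$ in the coordinates $x_1,\dots,x_m,y,t$ (solving for $x_0$ as $x_0(\psi\text{-unit})=-(y^r+t^s)$ near $x_0=0$; if needed, replace $P$ by a suitable linear slice through $L$). This yields the cofiber diagram analogous to \cref{setup:diagram1}, with vertical maps $i:P\bs L\to \XX\bs L$ and $j:P/(P\bs L)\to \XX/(\XX\bs L)$. It then suffices to show that $i$ and $j$ are $\AA^1$-weak equivalences over the perfect field $k$.

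For $j$ we follow \cref{A1-weakeq-of-j}. Rewrite the equation as $x_0\bigl(\ul{x}^{\ul{n}}x_0^{-1}z+\psi(\ul{x})\bigr)=-(y^r+t^s)$. Along $L$ the second factor is the unit $\psi(0)\cdot(\text{unit})$, so the normal bundle of $L$ in $\XX$ is trivialized by $\bar y,\bar t$. Homotopy purity (\cref{purity-theorem}) together with \cref{purity:cor-VoeZ/2} then identifies both quotients with $L_+\wedge(\PP^1)^{\wedge 2}\simeq (\PP^1)^{\wedge 2}$ compatibly, since $L\cong\AA^{m+1}$ is contractible. This step is routine.

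For $i$ we first produce an explicit weak equivalence $g:\XX\bs L\to \AA^2\bs\{0\}\times\AA^{m}$, namely the map $(x_0,t;x_1,\dots,x_m)$ restricted appropriately. We do this by generalizing \cref{exist:algspace} and \cref{Daniel:trick 4fold}. There is a $\GG_a$-action (via a locally finite iterative higher derivation, which is valid in characteristic $p$) whose quotient map is the projection forgetting $y,z$. We construct an algebraic space $\mathfrak{S}$ over the base by replacing the divisor $\{\ul{x}=0\}$ by a Galois étale cover. Where $p\nmid r$ this cover extracts $r$-th roots of $t^s$; where $p\mid r$ we use $s$-th roots instead, which is allowed since $\gcd(r,s)=1$. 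The key observation is that the $\psi=1$, exponent $s=1$ variant is isomorphic to an affine space minus $L$. The Danielewski fiber product over $\mathfrak{S}$ then gives a Zariski locally trivial $\AA^1$-bundle over both sides, because $\Aut(\AA^1)$ is special, and this yields the weak equivalence $g$.

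Finally, we show that $g\circ i$ has unit $\AA^1$-Brouwer degree using \cref{A1degreemap}. This reduces to the surjectivity of $\partial:H^1(\XX\bs L,K^{\MW}_2)\to H^2_L(\XX,K^{\MW}_2)$, which we prove by copying \cref{cocyle-is-surjective}. We take $M=\{y=0\}$, $N=\{t=0\}$, the extra component $L'$ where the unit factor above vanishes, and $S$ the zero locus of that factor. We form $\alpha=[y]\otimes\bar t$ corrected by $g_\epsilon,h_\epsilon$ multiples with $gr-hs=1$. The boundary computations use only Morel's formulas, which hold over any field.

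We expect the main obstacle to be the construction of $\mathfrak{S}$ in positive characteristic with $m\geq1$. There the divisor $\{\ul{x}=0\}$ has several components, one for each $x_i$, so the étale-cover gluing has to be performed compatibly along all of them. We would try to reduce to the $m=0$ case by treating $x_1,\dots,x_m$ as parameters: work over $k[x_1^{\pm1},\dots]$ locally and glue. As a fallback, we would apply \cref{pointwise:phenomenon} to the projection to $\AA^m_{x_1,\dots,x_m}$, whose fibers are deformed threefolds of the form $\KK(q)$, and show each of these is contractible over its residue field (which is perfect only at closed points, a subtlety we must then handle).
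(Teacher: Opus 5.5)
Your main line breaks at the construction of $g$, and the obstruction is not the one you anticipate (compatible gluing in characteristic $p$). For $m\geq 1$, the natural $\GG_a$-action $y\mapsto y+\lambda\ul{x}^{\ul{n}}$ (with the induced action on $z$) fixes $\{\ul{x}=y=0\}$ pointwise. This locus contains $\{x_1=y=0,\ t^s+x_0\psi(0)=0\}$, which lies in $\XX\bs L$ as soon as $t\neq 0$. The same happens with $\{\ul{x}=t=0\}$ for the action through $t$ that you use when $p\mid r$. Equivalently, over a point of $(\AA^2\bs\{0\})\times\AA^m$ with $x_1=0$, the fiber of $q=(x_0,t,x_1,\dots,x_m)$ is $\{y^r=-(t^s+x_0\psi(0))\}\times\AA^1_z$, which is non-reduced along $t^s+x_0\psi(0)=0$. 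So the action on $\XX\bs L$ is not free, and the cover of the divisor component $\{x_1=0\}$ that \cref{exist:algspace} would need is ramified rather than \'etale. Hence there is no algebraic space $\mathfrak{S}$ and no Danielewski fiber product. This fails in every characteristic, including zero, and without $g$ the Brouwer-degree step has nothing to compare against. Separately, for non-constant $\psi$ your $P=\{z=0\}$ is $\{x_0\psi(\ul{x})=-(y^r+t^s)\}$, on which $x_0$ can only be solved for locally. It need not be an affine space: for $m=0$, $\psi(u)=1+u$, its complex points have Euler characteristic $1+(r-1)(s-1)\neq 1$. So even your cofiber square is not correctly set up.

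What is missing are the two reductions the paper uses. First, the stable isomorphism $\XX_m(\ul{n},\psi)\times\AA^1\cong\XX_m(\ul{n},1)\times\AA^1$ lets you take $\psi=1$. Second, instead of the codimension-two $L$, one removes the hypersurface $W_m=\{x_m=0\}\cong\AA^{m+2}$ and inducts on $m$. The substitution $Z=x_m^{n_m}z$ gives $\XX_m\bs W_m\cong\XX_{m-1}\times\GG_m$. Set $H_m=\{z=0\}\cong\AA^{m+2}$ and $P_m=H_m\cap W_m\cong\AA^{m+1}$. The left vertical map of the cofiber diagram is then $j_{m-1}\times\mathrm{id}_{\GG_m}$, a weak equivalence by induction starting from \cref{A1-cont-over-perfect-fields} at $m=0$. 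By purity, the right vertical map is the $\PP^1$-suspension of $P_m\hookrightarrow W_m$. Then \cref{weak5lemma} shows $H_m\to\XX_m$ is a weak equivalence, with no new algebraic space and no degree computation.

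Your fallback can also be completed, giving a different and shorter proof, once $\psi=1$. Consider the projection $\XX_m(\ul{n},1)\to\AA^m_{x_1,\dots,x_m}$ and a point with coordinates $a_i$ in $\kappa$. If some $a_i=0$, the fiber is $\AA^3_\kappa$; otherwise it becomes $\KK_\kappa$ after $z\mapsto -z\prod_i a_i^{n_i}$. Since $\KK$ is defined over the perfect prime field, \cref{A1-cont-over-perfect-fields} and \cref{basechange:imperfect} make every fiber contractible, imperfect residue fields included. The projection is smooth (where the $z$-derivative $\ul{x}^{\ul{n}}$ of the equation vanishes, its $x_0$-derivative equals $1$), so \cref{pointwise:phenomenon} applies. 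As written, though, you keep general $\psi$, whose fibers are $\KK(q)$ with non-constant $q$, and their contractibility in positive characteristic is not available.
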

\begin{proof}
The authors in \cite{dubouloz2025algebraicfamilies} have proven the $\AA^1$-contractibility of these varieties over fields of characteristic zero. We can now promote this result over perfect fields in the light of \cref{A1-cont-over-perfect-fields} following the original proof. Let us first note that all these varieties are all stably $k$-isomorphic (\cite[Proposition 3]{dubouloz2025algebraicfamilies}), that is, $\XX_{m}(\ul{n},\psi)\times \AA^1_k \cong \XX_m(\ul{n},1)\times \AA^1_k$, for any polynomial $\psi\in k^{[1]}$ as above. Hence, in the light of $\AA^1$-homotopy invariance (\cref{defn:A1-invar-sheaf}), it is enough to show that $\XX_m:= \XX_m(\ul{n},1)$ is $\AA^1$-contractible. Also, note that when $m=0$ and $\psi(\ul{x})=1$, $\XX_{0}(n_0,1)$ is precisely the family of Koras-Russell threefolds studied in the previous section, which we proved to be $\AA^1$-contractible (\cref{A1-cont-over-perfect-fields}). 
\medskip

Fix $A_{m-1} = k[x_1,\dots,x_{m-1},y,t]$, $R_m = A_{m-1}[x_0,x_m,z]/\big( \prod_{i=0}^{m}x_i^{n_i}z + y^r +t^s +x_0\big)$. Then let us consider the following closed subschemes in $\XX_m$
\begin{align}
& W_m = \{x_m=0\}\cong \Spec (R_m/x_m R_m)\cong \Spec(A_{m-1}[z]) \cong \AA^{m+2}_k \label{W_m} \\
& H_m = \{z=0\}\cong \Spec (R_m/z R_m)\cong \Spec(A_{m-1}[x_m])\cong \AA^{m+2}_k \label{H_m}\\
& P_m = W_m\cap H_m \cong \Spec (R_m/(x_m, z) R_m)\cong \Spec(A_{m-1}) \cong \AA_k^{m+1} \label{P_m}
\end{align}
Now consider the following canonical closed immersions $j_m: H_m\to \XX_m$ and $i_m: H_m\bs P_m\to \XX_m \bs W_m$. By setting $Z:= x_m z$, we have an isomorphism 
$$\XX_m\bs W_m \cong \Spec(A_{m-2}[x_0,x_{m-1},Z][x^{-1}_m]/(x_0^{n_0}\prod_{i\neq m}x_i^{n_i}Z +y^r+t^s+x_0)) \cong \XX_{m-1}\times \GG_{m,k} $$
for which $i_m$ equals the product 
$$j_{m-1}\times id_{\GG_{m.k}}: H_m\bs P_m \cong H_{m-1}\times_k \GG_{m.k} = \Spec(A_{m-2}[x_{m-1}][x_m^{\pm1}])\to \XX_{m-1}\times_k \GG_{m.k}$$
Now, by setting up the cofiber sequence as in \cref{setup:diagram1}, we get the following commutative square
\begin{equation}
\begin{tikzcd}[column sep=13pt, row sep=20pt]
   &&&&& {H_m\bs P_m}   && {H_m}   && {H_m/ (H_m\bs P_m)} \\\\ 
   &&&&& {\XX_m\bs W_m} && {\XX_m} && {\XX_m/(\XX_m\bs W_m)}	
        \arrow[from=1-6, to=1-8]
	\arrow["i_m"', from=1-6, to=3-6]
	\arrow[from=1-8, to=1-10]
	\arrow["j_m", from=1-8, to=3-8]
	\arrow["l_m", from=1-10, to=3-10]
	\arrow[from=3-6, to=3-8]
	\arrow[from=3-8, to=3-10]
\end{tikzcd}
\end{equation}
associated to the open immersions $H_m\bs P_m\to H_m$ and $\XX_m\bs W_m \to W_m$. Observe that $H_m$ is $\AA^1$-contractible by \cref{H_m} and so $\XX_m$ is $\AA^1$-contractible provided $j_m$ is an $\AA^1$-weak equivalence. For $m=0$, the morphism $j_0:H_0\to \XX_0$ is an $\AA^1$-weak equivalence due to \cref{A1-cont-over-perfect-fields}. We now establish this property for $m\ge 1$ by induction on $m$. 
\medskip

To show that $j_m$ is an $\AA^1$-weak equivalence, it is enough to show that $i_m$ and $l_m$ are $\AA^1$-weak equivalences in $\Spc_k$ by the \cref{weak5lemma}. The $\AA^1$-weak equivalence of $i_m$ follows from the fact that $i_m\simeq j_{m-1}\times id_{\GG_{m,k}}$ and  by the induction hypothesis, $j_{m-1}$ is an $\AA^1$-weak equivalence. On the other hand, $l_m$ is also an $\AA^1$-weak equivalence. Indeed, as $P_m$ is obtained as the transversal intersection of two smooth subvarieties inside $\XX_m$, the normal cone $N_{P_m}H_m$ of $P_m$ in $H_m$ is in fact the restriction of the normal cone $N_{W_m}\XX_m$ of $W_m$ in $\XX_m$ to $P_m$. But the normal cone of $N_{W_m} X_m$ is trivial, attributed to the analogous reason as in \cref{A1-weakeq-of-j}, whence the triviality of $N_{P_m}H_m$. The proof now follows from the homotopy purity \cref{purity-theorem} as all these varieties are smooth. In particular, 
$$H_m/(H_m\bs P_m)\simeq \Th(N_{P_m}H_m) \simeq  P_m\wedge \PP^1$$ 
and similarly, 
$$\XX_m/(\XX_m\bs W_m) \simeq \Th(N_{W_m}\XX_m) \simeq W_m\wedge \PP^1.$$
The map $l_m$ coincides with the following map observed under the isomorphisms mentioned above
$$\Th(N_{P_m}H_m)\simeq P_m\wedge \PP^1 \xrightarrow{\iota\wedge \PP^1} W_m\wedge \PP^1\simeq \Th(N_{W_m}\XX_m).$$
obtained as the $\PP^1$-suspension of the closed immersion $\iota: P_m\hookrightarrow W_m$. Hence, the $\AA^1$-weak equivalence of $\iota$ implies that of $l_m$.
\end{proof}

\begin{corollary}\label{KR3F:prototypes-A1-cont-base:Noetherian}\cite{Mad}
Let $S$ be any Noetherian scheme with perfect residue fields. Then under the assumptions as in \cref{fulleqn:GK-R3F}, the canonical morphism $\varphi: \XX_{m}(\ul{n},\psi) \to S$ is an $\AA^1$-weak equivalence in $\Spc_S$.
\end{corollary}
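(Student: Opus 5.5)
The plan mirrors the proofs of \cref{KR3FoverZ} and \cref{KR3Fmain:Noetherian}: reduce relative $\AA^1$-contractibility over $S$ to a fibrewise statement using \cref{pointwise:phenomenon}, and then apply the field case \cref{KR3Fprototypes:field-A1-cont:perfect} on each fibre.

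First I would make sense of $\XX_m(\ul{n},\psi)$ over $S$. Define it as the closed subscheme of $\AA^{m+4}_S$ given by the equation in \cref{defn:KR3F-generalized}. Here the coefficients of $\psi$ come from $\Gamma(S,\mathcal{O}_S)$ (for instance $\psi$ with integer coefficients pulled back from $\Spec\ZZ$), and we require $\psi(0)$ to be a unit. This is needed so that $\psi(0)\neq0$ in every residue field. Under this assumption, for every point $s\in S$ the fibre $\XX_m(\ul{n},\psi)_s$ is exactly the generalized Koras-Russell variety over $\kappa(s)$. Each such fibre is smooth by the Jacobian criterion, since the partial derivative with respect to $x_0$ together with the other partials has no common zero with the equation.

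Next I would establish that $\varphi$ is smooth. The morphism $\varphi$ is flat, being a hypersurface in $\AA^{m+4}_S$ whose defining equation is nonzero on every fibre. It is of finite presentation, and its fibres are smooth, so $\varphi$ is smooth. This is the same fibrewise smoothness criterion invoked as \cref{smooth:over-arbitrary-base} in the proof of \cref{KR3Fmain:Noetherian}. Since $S$ is Noetherian, the finite Krull dimension needed in \cref{pointwise:phenomenon} holds locally. Because $\AA^1$-weak equivalence is Zariski-local on the base, I may also reduce to affine opens of $S$ of finite dimension.

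Then I would apply the field case to each fibre. For each $s\in S$, the residue field $\kappa(s)$ is perfect by hypothesis, so \cref{KR3Fprototypes:field-A1-cont:perfect} shows that $\XX_m(\ul{n},\psi)_s\to\Spec\kappa(s)$ is an $\AA^1$-weak equivalence in $\Spc_{\kappa(s)}$. This covers both the characteristic zero residue fields and the positive characteristic ones, so no case split is needed. Finally, the implication $(2)\Rightarrow(1)$ of \cref{pointwise:phenomenon} glues these fibrewise statements into the conclusion that $\varphi$ is an $\AA^1$-weak equivalence in $\Spc_S$.

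The main obstacle I expect is the bookkeeping in the first step: ensuring the data $\psi$ is defined over $S$ with $\psi(0)$ invertible, so that every fibre really is a generalized Koras-Russell variety. The flatness verification needed for smoothness also has to be handled with some care. Once these are in place, the remainder is a formal consequence of \cref{pointwise:phenomenon}.
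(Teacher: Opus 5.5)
Your proposal is correct and follows the paper's proof: apply \cref{KR3Fprototypes:field-A1-cont:perfect} to each fibre over the perfect residue field $\kappa(s)$ and glue with \cref{pointwise:phenomenon}. Your bookkeeping adds details the paper leaves implicit: defining $\XX_m(\ul{n},\psi)$ over $S$ with $\psi(0)$ a unit, and deducing smoothness of $\varphi$ from fibrewise flatness together with smooth fibres, rather than from smooth fibres alone. One small correction: Noetherianity does not give finite Krull dimension Zariski-locally (in Nagata's infinite-dimensional Noetherian ring every nonempty open is infinite-dimensional), so the finite dimension required by \cref{pointwise:phenomenon} should be taken from the paper's standing convention on base schemes rather than deduced.
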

\begin{proof}
The strategy of proof is precisely the same as that of \cref{KR3Fmain:Noetherian}. Choose any point $s\in S$ and consider the induced morphism on the corresponding fiber $\varphi_s:\XX_m(\ul{n},\psi) \times_{\kappa_s} \Spec\kappa_s =: (\XX_m(\ul{n},\psi))_s \to \Spec\kappa_s$. 
\[\begin{tikzcd}
	{\XX_m(\ul{n},\psi)_s} && {\XX_m(\ul{n},\psi)} \\ \\
	{\Spec \kappa_s} && S
	\arrow[from=1-1, to=1-3]
	\arrow["{{\varphi_s} }"', from=1-1, to=3-1]
	\arrow["\varphi", from=1-3, to=3-3]
	\arrow[from=3-1, to=3-3]
\end{tikzcd}\]
Since by the hypothesis such an $\kappa_s$ is perfect, $\varphi^{-1}(s)$ is an $\AA^1$-weak equivalence in $\mcal{H}(\kappa_s)$ owing to \cref{KR3Fprototypes:field-A1-cont:perfect}. The proof now follows from \cref{pointwise:phenomenon}.
\end{proof}

\begin{remark}
The authors in \cite{dubouloz2025algebraicfamilies} remark that $\XX_m$ are "potential" counterexamples to the Zariski Cancellation over fields of characteristic zero in all dimensions $\ge 3$. Conforming to their fact, our extension over perfect fields \cref{KR3Fprototypes:field-A1-cont:perfect} opens up a path to view these varieties $\XX_m$ as "potential" counterexamples to the Zariski cancellations even over fields of positive characteristics. Moreover, none of these varieties can be isomorphic to Asanuma-Gupta varieties (the latter have smooth fibers while the generic fiber of $\XX_m$ is singular), making it a familiar, yet novel candidate in disguise.
\end{remark}

\subsubsection*{Moduli of Arbitrary Smooth Exotic Affine Varieties}
Recall from \cref{sect:higherdimensions} that the authors in \cite[Theorem 5.1]{asok2007unipotent} constructed an infinite collection of pairwise non-isomorphic exotic $\AA^1$-contractible strictly quasi-affine varieties of relative dimensions $\ge 4$. The picture in dimension 1 (\cref{1-dim theorem}) and dimension 2 (\cref{A2isunique}) being filled, the missing gaps were 
\begin{enumerate}
\item[1.] Does there exist an analogous picture for threefolds?
\end{enumerate}
This gap was answered by the authors in \cite{DF18}, exploiting the $\AA^1$-contractibility of Koras-Russell threefolds. In particular, they \cite[Corollary 1.3]{DF18} devised a general method to produce a moduli of arbitrary positive dimension of pairwise non-isomorphic, stably isomorphic, $\AA^1$-contractible smooth affine threefolds using techniques from \cite{dubouloz2011noncancellation} that were initially established over $\CC$.

\begin{enumerate}
\item[2.] If such families consistently arose out of only strictly quasi-affine schemes in dimension $\ge 4$, is there a (purely) affine family with such a property in dimensions $\ge 4$?
\end{enumerate}
The generalized Koras-Russell varieties $\XX_m(n,\psi)$ answer the question above.
\medskip

As a favorable addition, our results (\cref{KR3Fprototypes:field-A1-cont:perfect}) and \cref{KR3F:prototypes-A1-cont-base:Noetherian}) garnish this overall picture by producing a moduli of arbitrary positive dimension of pairwise non-isomorphic, stably isomorphic, $\AA^1$-contractible smooth affine schemes over a generalized base scheme in relative dimensions $\ge 3$.

%--------------------------------------------------------------
\section{Exotic Motivic Spheres}\label{sect:exotic-motivic-spheres}
{\small
In this section, we introduce the \emph{motivic spheres}, one of the most important objects of study in the motivic homotopy theory, and study the existence of exotic motivic spheres, a study that can be seen in contrast with that of exotic affine varieties from \cref{chp4}. To establish this, we first prove that the affine variety $\XX_m\bs \{\bullet\}$ are $\AA^1$-chain connected (\cref{X-p-isA1-connected}) and the quasi-affine varieties $\XX_m \bs \{\bullet\}$ provides a model for an exotic motivic spheres in all higher dimensions $\ge 4$ (\cref{exotic-motspheres-countereg}).}
\medskip

Recall from \cref{pointed-spaces}, the category of pointed $\AA^1$-homotopy category $\mcal{H}(S)_{\bullet}$ is symmetric monoidal with respect to the smash product. This can be exploited to obtain many more objects in $\mathcal{H}(S)_{\bullet}$.
\begin{prop}
For $X\in \Spc_{S\bullet}$, we have a canonical $\AA^1$-weak equivalence
        $$\Sigma X \simeq S^1 \wedge X$$
\end{prop}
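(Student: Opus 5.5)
The plan is to compare both sides as homotopy pushouts in the pointed category $\mcal{H}(S)_{\bullet}$, using the cofiber descriptions from \cref{pointed-spaces}. By definition, $\Sigma X$ is the homotopy pushout of $* \leftarrow X \to *$. On the other side, take the simplicial circle to be $S^1 = \Delta^1/\partial\Delta^1$, regarded as a constant pointed simplicial presheaf. This exhibits $S^1$ as the cofiber of $\partial\Delta^1_+ \to \Delta^1_+$, that is, as the pushout of $* \leftarrow S^0 \to \Delta^1$, where the map $S^0\to\Delta^1$ is a cofibration and $\Delta^1$ is simplicially contractible.

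Next, smash this description with $X$. Because $\mcal{H}(S)_{\bullet}$ is closed symmetric monoidal, the functor $-\wedge X$ is a left adjoint, with right adjoint the internal pointed mapping space. In particular it preserves colimits, and after cofibrant replacement of $X$ (say by the projective cofibrant replacement $Q$) it preserves homotopy pushouts. Applying it gives
\[
S^1\wedge X \simeq \operatorname{hocolim}\big(* \leftarrow S^0\wedge X \to \Delta^1_+\wedge X\big).
\]
Two identifications finish the argument. First, $S^0\wedge X \cong X$, since $S^0$ is the monoidal unit. Second, $\Delta^1_+\wedge X = (\Delta^1\times X)/(\Delta^1\times *)$ is the cylinder on $X$, and it is simplicially, hence $\AA^1$-weakly, contractible. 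This holds because $\Delta^1$ contracts to a vertex, so the reduced cone $\Delta^1\wedge X$ (with $\Delta^1$ pointed at $1$) is contractible. To be precise, one should use the reduced-cone presentation $S^1 = \Delta^1/\partial\Delta^1$ with $\Delta^1$ pointed, which gives the pushout of $* \leftarrow X \to C X$ with $CX\simeq *$. By homotopy invariance of homotopy pushouts, this is equivalent to the pushout of $* \leftarrow X\to *$, which is $\Sigma X$.

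The hardest step is making the homotopy-pushout claims rigorous in the $\AA^1$-local projective model structure. One needs that the relevant maps are cofibrations, so that ordinary pushouts compute homotopy pushouts, and that smashing with a cofibrant object is left Quillen. I would first verify this objectwise in simplicial sets, where it is the classical statement $\Sigma K\cong S^1\wedge K$. I would then note that both constructions are computed sectionwise on presheaves, and that $\L_{mot}$ preserves colimits and finite products (\cref{Prop:L_mot-preserve}). Together these transfer the objectwise equivalence to an $\AA^1$-weak equivalence.
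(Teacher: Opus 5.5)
Your argument is correct and is essentially the paper's. The paper also notes that $\Sigma X\simeq S^1\wedge X$ holds sectionwise in pointed simplicial sets, and transfers it to $\mcal{H}(S)_{\bullet}$ because $\L_{mot}$ preserves colimits and finite products (hence smash products and cofibers); your cone argument just spells out the sectionwise identity that the paper calls clear. One slip: $\Delta^1_+\wedge X$ is the reduced cylinder, which is equivalent to $X$, not contractible, so your displayed homotopy colimit mixes two presentations of $S^1$; the one that works is $*\leftarrow S^0\wedge X\to \Delta^1\wedge X=CX$ with $\Delta^1$ pointed at a vertex, which is exactly your ``to be precise'' correction.
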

\begin{proof}
Observe that the category of presheaves $\Psh(\Sm_S)$ is cocomplete and $\L_{mot}$ preserves coproducts. So, it suffices to compute the pushout at the level of simplicial presheaves, where the coproduct is computed levelwise. In the category of simplicial presheaves, it is clear that $\Sigma X(U) = S^1\wedge X(U)$, for any $U\in \Sm_S$. Hence, as pointed presheaves, we have that $\Sigma X = S^1 \wedge X$. Now to see this in fact holds in $\mcal{H}(S)_{\bullet}$, note that $\L_{mot}$ preserves the smash product operation as it preserves finite products, as well as coproducts and cofibers (\cref{Prop:L_mot-preserve}).
\end{proof}

\subsection{Spheres in Motivic Homotopy Theory}\label{sec:motivic-spheres}
An important application of the smash product structure is to obtain sphere objects in $\mcal{H}(S)_{\bullet}$. In motivic homotopy theory, we have two types of spheres:
\begin{enumerate}
\item The \emph{simplicial circle} $S^1$  comes from topology, and as a simplex it can be seen as  $S^1 = \Delta^1/\partial \Delta$ pointed by the image $\partial \Delta \hookrightarrow \Delta$. In analogy with topology, if $S^0:= \{0\} \sqcup \{1\}$, then $S^1$ is the pushout of the following diagram:
\[\begin{tikzcd}
	{S^0} & {\AA^1} \\
	{*} & {S^1}
	\arrow[from=1-1, to=1-2]
	\arrow[from=1-1, to=2-1]
	\arrow[from=1-2, to=2-2]
	\arrow[from=2-1, to=2-2]
\end{tikzcd}\]
In stark contrast with algebraic topology, $S^1$ exists as a nodal scheme. In particular, $S^1$ is not smooth as an algebraic sphere! The algebraic model is given by $S^1:= \AA^1/\{0\sim 1\}$.

\item The \emph{Tate circle} $\GG_{m,k} := \GG_m = \AA^1_k \backslash \{0\}$ pointed by 1 is an algebro-geometric sphere. It exists as a smooth scheme. Over a base scheme $S$, we define $\GG_{m,S} := \GG_m \times_{\Spec \ZZ} S$. 
\end{enumerate}

One forms a bi-graded (mixed) motivic sphere by smashing copies of both of these spheres (cf. \cref{pointed-spaces}). There are several varying notations for these motivic spheres. We will adopt the following convention that aligns with the grading of motivic cohomology (which is also widely in use):
    $$\SS^{p,q}:= (S^1)^{\wedge (p-q)} \wedge \GG_m^{\wedge q}$$ 
for all $p\ge q \geq 0$. For notational convenience, we will adapt to write $S^{p-q}$ for $(S^1)^{\wedge (p-q)}$ and $\GG_m^q$ for $\GG_m^{\wedge q}$.

\begin{example}
The following are some typical examples of motivic spheres. 
\begin{enumerate}
\item By definition, we have that $\SS^{1,0}= S^1$ and $\SS^{1,1} = \GG_m$. More generally, for every $n\geq 1$, we have $\SS^{2n-1,n} \simeq \AA^n\bs\{0\}$ (\cite[Proposition 4.40]{antieau2017primer}),
\item For every $n\geq 1$, we have a pushout diagram (\cite[Corollary 4.41]{antieau2017primer})
 \[\begin{tikzcd}
	{\Absn} & {\AA^n} \\
	{*} & {\AA^n / \Absn = \SS^{2n,n}}
	\arrow[from=1-1, to=1-2]
	\arrow[from=1-1, to=2-1]
	\arrow[from=1-2, to=2-2]
	\arrow[from=2-1, to=2-2]
\end{tikzcd}\]
which gives us the identifications
$$\SS^{2n,n} :=\Sigma_{S^1} \SS^{2n-1,n} \simeq \AA^n/({\AA^n\bs \{0\}}) \simeq \PP^n/(\PP^n\bs \{0\}) \simeq \PP^n/\PP^{n-1}.$$
In particular, for $n=1$, we get 
\[\begin{tikzcd}
	{\GG_m} & {\AA^1} \\
	{*} & {\PP^1}
	\arrow[from=1-1, to=1-2]
	\arrow[from=1-1, to=2-1]
	\arrow[from=1-2, to=2-2]
	\arrow[from=2-1, to=2-2]
\end{tikzcd}\]
and so $\SS^{2,1} \simeq \Sigma_s \GG_m \simeq  \PP^1$,
\item The smooth affine quadrics $Q_{2n}$'s discussed in \cref{sect:ADF-family} are motivic spheres. We have that 
\begin{align*}
Q_{2n-1}\simeq \SS^{2n-1,n}\quad \text{and}\quad  Q_{2n}\simeq \SS^{2n,n} \simeq (\PP^1)^{\wedge n}. 
\end{align*}
In combination with the (2) above, we obtain:
$$\Sigma_{S^1} \AA^n\bs \{0\} \simeq  (\PP^1)^{\wedge n}$$
for all $n \ge 1$.
\item Whenever $p > 2q$, it is known that the motivic spheres $\SS^{p,q}$ cannot be represented by a smooth scheme \cite[Proposition 2.3.1]{ADF2017smooth}.
\item The special linear group $SL_2$ is a motivic sphere $\SS^{3,2}$ of dimension 3. Indeed, the projection $\pi: SL_2 \to \AA^2\bs \{0\}$ is a Zariski locally trivial $\AA^1$-bundle, whence an $\AA^1$-weak equivalence,
\end{enumerate}
\end{example}

Following the study of exotic affine varieties in \cref{sec:KR-prototypes}, we are now interested in the compact\footnote{though the terminology might be misleading as a motivic sphere need not always be proper: e.g., $\GG_m$} analog.

\begin{defn}\label{exoticspheres:defn}
A (smooth) scheme $X \to S$ of finite type and of dimension $n$ will be called an \emph{exotic motivic sphere} if it is $\AA^1$-homotopic to the smooth scheme $\AA^n_S\bs\{0\}$ without being isomorphic as $S$-schemes, i.e.,
    $$X \simeq \AA^n_S \bs \{0\}\quad  \text{and}\quad  X\ncong \AA^n_S \bs\{0\}.$$ 
\end{defn}

\begin{remark}
In differential geometry, one talks about two kinds of spheres: the topological sphere $\SS^m$, which inherits the canonical subspace topology from $\RR^{m+1}$, and the smooth sphere $\SS^m$ with the standard differential structure. The distinction became evident as Milnor \cite{milnor1956spheres} discovered that one can equip $\SS^7$ with more than one smooth structure (in fact, 28 of them!), leading to the discovery of \emph{smooth exotic spheres} in differential geometry. He further proved that such a phenomenon consistently occurred in higher dimensions as well. Though our study of \emph{exotic motivic spheres} is purely algebraic, it would be worthwhile to explore whether there are plausible, intricate connections when one equips a smooth structure on these varieties.
\end{remark}

A prompt question to ponder is the existence of such exotic schemes over fields and subsequently over a base scheme. In what follows, we shall first give an overview of this problem and establish a counterexample for the following question:
\begin{question}\label{existence:exotic-spheres}
Let $S$ be a reasonably arbitrary base scheme. Then for a smooth scheme $X\to S$ of dimension $n$, does $X \simeq_{\AA^1} \AA^n_S \backslash \{0\}$ imply the isomorphism $X\cong  \AA^n_S \backslash \{0\}$ of $S$-schemes?
\end{question}

\subsection{Low Dimensional Motivic Spheres} \label{sect:overview-mot-sph}
In dimension 0, $\SS^{1,0}:= S^0 \cong \{0\}\cup \{1\}$ is the unique (singular) motivic sphere. In dimension 1, $\SS^{0,1} \cong \GG_m$ is the unique smooth motivic sphere up to isomorphism. This follows as a consequence of $\AA^1$-rigidity of $\GG_m$ as described below.
\begin{prop}\label{exotic-reducedbase-Gm:prop}\cite{Mad}
Over a reduced scheme $S$, if a smooth scheme $X\to S$ is $\AA^1$-homotopic to $\GG_{m,S}$, then it is isomorphic to $\GG_{m,S}$.
\end{prop}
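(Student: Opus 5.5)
The plan is to mimic the proof of \cref{A1-rigid:mainresult}(3), but relative to the base $S$, using the $\AA^1$-rigidity of $\GG_{m,S}$ over a reduced base (\cref{Gm-A1-rigid}). The key point is that over reduced $S$, $\GG_{m,S}$ represents the sheaf $U\mapsto \mathcal{O}(U)^\times$, which is $\AA^1$-invariant ($\mathcal{O}(U[x])^\times=\mathcal{O}(U)^\times$ for $U$ reduced; smooth $U$ over reduced $S$ is reduced). Hence $\GG_{m,S}$ is an $\AA^1$-invariant Nisnevich (even Zariski/étale) sheaf of sets, i.e.\ a discrete $\AA^1$-local object of $\Spc_S$, and therefore $\L_{mot}\GG_{m,S}\simeq \GG_{m,S}$. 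Consequently, for every $U\in\Sm_S$ one has $[U,\GG_{m,S}]_{\AA^1}=\Hom_{\Sm_S}(U,\GG_{m,S})$, and $\pi_0^{\AA^1}(\GG_{m,S})=\GG_{m,S}$ with trivial higher homotopy sheaves.

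Step two transports this to $X$. Fix an $\AA^1$-weak equivalence $\phi\colon X\simeq \GG_{m,S}$ in $\HH(S)$. Since $\GG_{m,S}$ is $\AA^1$-local, $\phi$ is represented by an honest morphism of presheaves $X\to \GG_{m,S}$, i.e.\ by a scheme morphism $u\colon X\to\GG_{m,S}$ over $S$ (by Yoneda, as $X$ is representable and cofibrant in the projective structure). I then want to show $X$ is itself $\AA^1$-rigid: $\pi_0^{\AA^1}(X)\cong \pi_0^{\AA^1}(\GG_{m,S})=\GG_{m,S}$, and by the unstable $0$-connectivity theorem (\cref{unstable-0-connect:thm}) the map $X\to\pi_0^{\AA^1}(X)$ is an epimorphism of Nisnevich sheaves; composing, $u$ (viewed as a map of sheaves) is a Nisnevich epimorphism. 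For injectivity, use the inverse: the map $\GG_{m,S}\to \L_{mot}X$ gives, for each henselian local $U$, sections; I would argue that two $U$-points of $X$ with the same image under $u$ are $\AA^1$-homotopic, and then exploit that $\Hom(U,X)\to[U,X]_{\AA^1}$ must be injective. This last injectivity is the main obstacle, since a priori $X$ need not be $\AA^1$-rigid.

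To handle it, I would reduce to fibers: by \cref{pullbackfunctor} applied to the weak equivalence $u$ (base changing along $\Spec\kappa(s)\to S$ via \cref{cor:pushpull-A1-weak}(1)), each fiber $X_s$ is a smooth $\kappa(s)$-scheme $\AA^1$-equivalent to $\GG_{m,\kappa(s)}$, hence a smooth curve (dimension: relative dimension one is forced by the statement's setting, or deduced from $\pi_0^{\AA^1}$ considerations). A smooth curve $\AA^1$-equivalent to an $\AA^1$-rigid scheme is $\AA^1$-disconnected-type; by the classification of curves (as in \cref{A1contracurves:affine} and \cref{1-dim theorem}) it is not $\PP^1$ or $\AA^1$ (those have trivial $\pi_0^{\AA^1}$), hence $X_s$ is $\AA^1$-rigid, and then \cref{A1-rigid:mainresult}(3) gives $u_s\colon X_s\xrightarrow{\cong}\GG_{m,\kappa(s)}$. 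So $u$ is a morphism of smooth $S$-schemes inducing isomorphisms on all fibers; flatness and the fiberwise criterion for isomorphisms (a fiberwise-isomorphism between flat finitely presented $S$-schemes, with $X\to S$ flat, is étale, bijective with trivial residue extensions, hence universally injective étale, hence an open immersion which is surjective) yield $u\colon X\cong \GG_{m,S}$, exactly as in the proof of \cref{0-dim:etale-schemes-iso}.
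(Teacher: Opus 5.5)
Your overall architecture is sound, but the step that carries all the content is not justified: you need each fibre $X_s$ to be $\AA^1$-rigid. You infer ``$X_s$ is neither $\PP^1$ nor $\AA^1$, hence $\AA^1$-rigid,'' and this implication is false over a general residue field $\kappa=\kappa(s)$. Two counterexamples:
a smooth conic without $\kappa$-points, and $\AA^1_{k'}$ regarded as a $\kappa$-curve for a nontrivial finite separable extension $k'/\kappa$. Each admits a nonconstant morphism $\AA^1_L\to X$ for a suitable finitely generated separable $L/\kappa$, so it is not $\AA^1$-rigid, yet it is neither $\PP^1_\kappa$ nor $\AA^1_\kappa$. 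Disconnected curves give further cases. The results you cite, \cref{A1contracurves:affine} and \cref{1-dim theorem}, concern $\AA^1$-connected or $\AA^1$-contractible curves and say nothing about which curves fail to be rigid.

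The missing idea is the paper's direct argument. It needs no classification and uses exactly the fact you already observed, that $u$ is a Nisnevich epimorphism. Suppose $X_s$ were not rigid; then \cref{A1-chain-conne-sep-ext:lemma} yields a nonconstant $H\colon\AA^1_L\to X_s$ with $L/\kappa$ finitely generated separable. Three facts then hold:
$H$ is dominant because $X_s$ is an irreducible curve;
$u_s$ is dominant because it identifies with the epimorphism $X_s\to\pi_0^{\AA^1}(X_s)\cong\pi_0^{\AA^1}(\GG_{m,\kappa})=\GG_{m,\kappa}$ of \cref{unstable-0-connect:thm};
and $u_s\circ H$ factors through $\Spec L$ because $\GG_{m,\kappa}$ is $\AA^1$-rigid (\cref{Gm-A1-rigid}).
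On generic points, $\kappa(X_s)$ is algebraic over a subfield whose image in $L(t)$ lies in $L$. So $\kappa(X_s)$ maps into the relative algebraic closure of $L$ in $L(t)$, which is $L$ itself; hence $H$ is constant, a contradiction.

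This argument needs $X_s$ connected, which you also never verify. It follows because $E=\Spec\kappa\sqcup\Spec\kappa$ is $\AA^1$-rigid, so $\Hom(X_s,E)\cong[X_s,E]_{\AA^1}\cong[\GG_{m,\kappa},E]_{\AA^1}\cong\Hom(\GG_{m,\kappa},E)$ has exactly two elements. Alternatively, your classification is correct for connected curves over $\overline{\kappa}$, and $\AA^1$-rigidity of $X_{\overline{\kappa}}$ implies that of $X_s$.

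Relative dimension one cannot be ``deduced from $\pi_0^{\AA^1}$ considerations.'' Indeed, $\GG_{m,S}\times_S\AA^1_S$ is $\AA^1$-equivalent to $\GG_{m,S}$ without being isomorphic to it, so dimension one has to be a hypothesis; the paper's proof likewise simply takes $X$ to be a curve.

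Granting this and the repaired rigidity step, the rest of your route is correct. It is in fact more complete than the paper's proof, which only treats a base field and asserts that the same argument works over reduced $S$. Your globalization is genuine: you represent the equivalence by an honest morphism $u$ using the $\AA^1$-locality of $\GG_{m,S}$, and base change to points with \cref{cor:pushpull-A1-weak}(1). You then get that $u_s$ itself is an isomorphism from the full faithfulness in \cref{A1-rigid:mainresult}. Finally you conclude with the fibrewise flatness criterion and ``\'etale, radicial, surjective $\Rightarrow$ isomorphism.'' The injectivity attempt in your second step is unnecessary and can be dropped.
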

\begin{proof}
We shall prove this over fields \footnote{The author thanks Biman Roy for his generous explanation of the proof over fields.} with a note that the same scheme of proof essentially would work over any reduced scheme due to \cref{Gm-A1-rigid}. Let $X$ be a smooth curve that is $\AA^1$-weakly equivalent to $\GG_m$. Then due to \cref{Gm-A1-rigid}, we have that $\pi_0^{\AA^1}(X)\simeq \pi_0^{\AA^1}(\GG_m) \simeq \GG_m$. Now we want to show that $X$ is also $\AA^1$-rigid. Suppose the contrary, then recall that we have a canonical surjection of Nisnevich sheaves $X\to \pi_0^{\AA^1}(X)$ (\cref{unstable-0-connect:thm}) which gives us a dominant morphism $\varphi: X\to \GG_m$. Since by supposition $X$ is not $\AA^1$-rigid, then there is a finite separable extension $L/k$ and a $\AA^1$-homotopy $H: \AA^1_L \to X$ such that the induced maps on the 0-section and 1-section of $\Spec L\to X$ do not agree $H(0) \ne H(1)$ (see \cref{A1-chain-conne-sep-ext:lemma}). Hence, $H$ is a dominant map which makes the composition $\varphi\circ H: \AA^1_L\to X\to  \GG_m$ constant (since $\GG_m$ is $\AA^1$-rigid). In particular, $\varphi$ is constant, which contradicts the fact that $\varphi$ is dominant. So, $X \cong \GG_m$.
\end{proof}

In dimension 2 over fields of characteristic zero, it has been recently proven (\cite[Theorem 3.1]{Choudhury2024A1type}) that if an open subscheme $U$ of an affine surface $X$ is $\AA^1$-weakly equivalent to $\AA^2 \bs \{0\}$, then $U \cong \AA^2\bs\{0\}$ as $k$-schemes. In view of the extension of the characterization of $\AA^2$ over perfect fields (\cref{A2unique:perfect}), it is likely that this result can be lifted to perfect fields as well. In dimension 3, the family of Koras-Russell threefolds of the first kind $\KK$ provides the necessary obstruction to this question (cf. \cite[\S 3]{Choudhury2024A1type}). However, the question remains widely open in dimensions $\ge 4$ over fields. We close this question in \cite{Mad} in all higher dimensions by providing an obstruction using the results from \cref{sec:KR-prototypes}, which we shall now discuss.

\subsection{Existence of Exotic Spheres in Higher Dimensions}\label{sect:exist-mot-sph-higherdim}
Any member of the family of \emph{generalized Koras-Russell varieties} provides us a way to create a counter-example to the \cref{existence:exotic-spheres} in every dimension $\ge 3$. However, accounting for the fact that all of these varieties $\XX_{m}(\ul{n},\psi)$ are stably isomorphic, it is enough to produce a counter-example using $\XX_m:= \XX_m(n,1)$. The proof that we present below is inspired by the case of threefolds (\cite[\S 3]{Choudhury2024A1type}).
\medskip

For simplicity, let us consider the family of smooth affine varieties defined as in \cref{deformed-example}:
\begin{equation}\label{counter-eg-GKR3F}
\XX_m := \{\ul{x}^n z = y^r+t^s + x_0\} \subset \AA^{m+4}_k \cong  \Spec(k[x_0,\dots,x_m,y,z,t]).
\end{equation}
The canonical morphism $\XX_m\to \Spec k$ makes it into a smooth affine scheme of dimension $m+3$ for all $m\ge 0$ and $n\ge 2$ and $\ul{x}:= \prod_{i=0}^{m} x_i$. This variety is obtained by taking $\psi(\ul{x})=1$ and a single index $\ul{n}=n$ in the original equation \cref{fulleqn:GK-R3F}. Observe that $p =(1,\dots,1,0,1,0)\in \XX_m$ is a rational point. Define the map 
\begin{align*}
        & \hspace{21mm} \phi: \XX_m \to \AA^{m+3}_k \\
        &(x_0,\dots,x_m,y,z,t)\mapsto (x_0,\dots,x_m,y,t)
\end{align*} 
(forgetting '$z$'). Then denote the image $\phi(p)= (1,\dots,1,0,0)=: q \in \AA^{m+3}_k$.

\begin{lemma}\label{tgtspaces-isomorphic}\cite{Mad}
Let $k$ be a field. For all $m\ge 0$, the induced map $d\phi_p: T_p \XX_m \to T_q\AA^{m+3}_k$ is an isomorphism of tangent spaces. As a consequence, we have that the normal bundle of $\XX_m$ at the point $p$ is trivial.
\end{lemma}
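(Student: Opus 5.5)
The plan is to compute the differential of $\phi$ directly from the Jacobian of the defining equation. Write $F := \ul{x}^n z - y^r - t^s - x_0 \in k[x_0,\dots,x_m,y,z,t]$, so that $\XX_m = \{F=0\}$. Because $\XX_m$ is a smooth hypersurface of dimension $m+3$ in $\AA^{m+4}_k$, the Zariski tangent space $T_p\XX_m$ is the kernel of $dF_p : k^{m+4}\to k$, hence has dimension $m+3$. The map $d\phi_p$ is the restriction to $T_p\XX_m$ of the linear projection $k^{m+4}\to k^{m+3}$ that forgets the $z$-coordinate. Source and target then both have dimension $m+3$, so it is enough to check that $d\phi_p$ is injective. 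Equivalently, the $z$-axis vector $\partial/\partial z$ should not lie in $T_p\XX_m$.

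Next comes the partial derivative at $p$. The $z$-coordinate of $p$ is $0$ and all $x_i$-coordinates equal $1$, so
$$\frac{\partial F}{\partial z}(p) = \ul{x}^n(p) = 1 \neq 0.$$
This shows $\partial/\partial z \notin \ker dF_p$, so $\ker(d\phi_p) = T_p\XX_m \cap k\cdot\partial/\partial z = 0$. I would also check that $p$ really lies on $\XX_m$: $\ul{x}^n z = 0$ and $y^r + t^s + x_0 = 0+0+1$. These do not match, so the coordinates of $p$ as printed must be adjusted, for example by taking $z=1$, i.e. $p=(1,\dots,1,0,1,0)$ read in the order $(x_0,\dots,x_m,y,z,t)$. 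That reading makes $\ul{x}^n z = 1 = x_0$. The computation of $\partial F/\partial z(p)$ only uses $x_i(p)=1$, so it is unaffected. With this, $d\phi_p$ is an isomorphism, and $\phi$ is étale at $p$ by the Jacobian criterion, since $\partial F/\partial z$ is a unit near $p$.

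For the consequence, I read ``normal bundle at $p$'' as the normal space of the closed point $p$ in $\XX_m$, i.e. $N_p\XX_m = T_p\XX_m$, viewed as a vector bundle over $\Spec\kappa(p)=\Spec k$ (here $p$ is rational). Any vector bundle over a field is trivial, so this part is formal. The real content, needed later for purity via \cref{purity:cor-VoeZ/2}, is that $d\phi_p$ identifies $N_p\XX_m$ with $N_q\AA^{m+3}_k = \phi^*N_q\AA^{m+3}_k$, which is canonically trivialized by the coordinates. That identification is exactly the isomorphism established above. I would also note that $\phi^{-1}(q)$ is a reduced point near $p$ by étaleness, so the hypothesis $Z'=f^{-1}(Z)$ of \cref{purity:cor-VoeZ/2} holds after shrinking to a neighborhood of $p$.

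The main obstacle is not the computation. It is bookkeeping: getting the rational point $p$ right so that it lies on $\XX_m$, and stating precisely which normal bundle is claimed trivial and in what neighborhood. The étaleness argument is needed to match the hypotheses of \cref{purity:cor-VoeZ/2}, since $\phi^{-1}(q)$ may contain other points away from $p$.
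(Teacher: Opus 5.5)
Your argument is correct and is essentially the paper's. The paper computes $\nabla f|_p=(n-1,n,\dots,n,0,1,0)$ and notes that forgetting $z$ is an isomorphism on its kernel; your observation $\partial F/\partial z(p)=\ul{x}^n(p)=1$ together with the dimension count is exactly the decisive part of that computation, so no explicit parametrization of $T_p\XX_m$ or case split on the parity of $m$ is needed. You also read the normal-bundle consequence the same way, via $d\phi_p$.

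One correction: no adjustment of $p$ is needed. In the coordinate order $(x_0,\dots,x_m,y,z,t)$, the printed point $p=(1,\dots,1,0,1,0)$ already has $z=1$ and lies on $\XX_m$. Moreover, $\phi$ restricts to an isomorphism $\XX_m\cap\{\ul{x}\neq 0\}\to\AA^{m+3}_k\cap\{\ul{x}\neq 0\}$, so $\phi^{-1}(q)=\{p\}$ and \'etaleness at $p$ holds without any shrinking argument.
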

\begin{proof}
Set the equation $f(x_0,\dots,x_m,y,z,t):= \{\ul{x}^n z = y^r +t^s+x_0 \}$. We compute the partial derivatives of $f$
\begin{align*}
\frac{\partial f}{\partial x_0}= nx_0^{n-1}x_1^n\dots x_m^n z-1 
\end{align*}
\begin{align*}
\frac{\partial f}{\partial x_j}= nx_0^n \dots x_{j-1}^n x_j^{n-1} x_{j+1}^n \dots x_m^n z, \quad \text{for all}\ 0<j\le m,
\end{align*}

\begin{align*} 
\frac{\partial f}{\partial y}= ry^{r-1}  &&
\frac{\partial f}{\partial z}= \ul{x}^n  &&
\frac{\partial f}{\partial t}=  s t^{s-1} 
\end{align*}
The total derivative $\nabla(f)$ of $f$ can be expressed as
$$\nabla f = \bigg ( (nx_0^{n-1}x_1^n\dots x_m^n z -1), \dots, (nx_0^n x_1^n\dots x_j^{n-1}, \dots, x_m^n z), (ry^{r-1}), (\ul{x}^n), (st^{s-1})  \bigg)$$
which at the point $p$ is given by $\nabla f|_{p} = (n-1, n,\dots,n,0,1,0)$. Recall that the tangent space of $\XX_m$ at $p$ is the kernel of the total derivative at $p$. And hence, depending on the parity of $m$, the tangent space at $p$ can be given as follows: If $m$ is even, we have 
$$T_p\XX_m = \{(a,b,-b,\dots,b,-b,c, -a(n-1),d) \mid a,b,c,d\in k \}\subset \AA^{m+4}_k$$
If $m$ is odd, we have
$$T_p \XX_m = \{(a,b,-b,\dots,0,c, -a(n-1),d) \mid a,b,c,d\in k \}\subset \AA^{m+4}_k$$
In the case when $m$ is odd (respectively even), the induced map $d\phi_p$ is given by the 
$$(a,b,-b,\dots,b,-b,c, -a(n-1),d)\mapsto (a,b,-b,\dots,b,-b,c, d)$$ and (respectively)
$$(a,b,-b,\dots,0,c, -a(n-1),d)\mapsto (a,b,-b,\dots,0,c, d)$$  which are clearly isomorphic. This, in particular, implies that the normal bundle of $\XX_m$ at $p$ is isomorphic to that of $N_{q}\AA^{m+3}_k$ and hence trivial.
\end{proof}

We next show that for the chosen rational point $p\in \XX_m$, the open subscheme $\XX_m \bs \{p\}$ is $\AA^1$-chain connected. In fact to prove our main result (\cref{exotic-motspheres-countereg}), we will only need that $\XX_m\bs \{p\}$ is $\AA^1$-connected. In general, such a question that for a smooth $\AA^1$-connected scheme $X$ if a certain open subscheme $U\subset X$ such that $\text{codim} X\bs U\ge 2$ is again $\AA^1$-connected, is open in general (\cite[Conjecture 2.18]{asok2009A1-excision}). However, this has been proven to be true for coordinate subspaces, i.e., when $X=\AA^n_k$, $k$ is any infinite field (\cite[Lemma 2.15]{asok2009A1-excision}).

\begin{lemma}\label{X-p-isA1-connected}\cite{Mad}
Let $k$ be any infinite perfect field. Then the strictly quasi-affine variety $\XX_m \bs \{p\}$ is $\AA^1$-chain connected.
\end{lemma}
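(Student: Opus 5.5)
We argue directly from the definition of $\AA^1$-chain connectedness (\cref{defn:A1-chainconn}), by exhibiting explicit chains of affine lines inside $\XX_m\bs\{p\}$ over every finitely generated separable extension $L/k$. Since the hypotheses are stable under such base change ($L$ is again infinite and perfect-enough for the explicit equations to make sense), it suffices to treat $L=k$, and to show two things: $\XX_m\bs\{p\}$ has a $k$-point, and every $k$-point of $\XX_m\bs\{p\}$ is $\AA^1$-chain equivalent to a fixed base point. Non-emptiness is immediate: for instance the point with $x_1=0$, $y=t=0$, $x_0=0$ and $z$ arbitrary lies on $\XX_m$ and differs from $p$. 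We take as base point $o=(0,\dots,0,0,0,0)$, which lies on $\XX_m$ and is distinct from $p$ because $p$ has $x_0=1$.

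The key structural input is the family of closed subschemes used in the proof of \cref{KR3Fprototypes:field-A1-cont:perfect}. On each hyperplane $W^{(j)}=\{x_j=0\}$ with $1\le j\le m$, the equation reduces to $x_0=-(y^r+t^s)$, so $W^{(j)}\cong\AA^{m+2}_k$ with coordinates $(x_i)_{i\ne 0,j},y,z,t$. Similarly, $H=\{z=0\}\cong\AA^{m+2}_k$ with coordinates $x_1,\dots,x_m,y,t$. Since $p$ has all $x_j=1$ and $z=0$, we have $p\notin W^{(j)}$ and $p\in H$. Consequently $W^{(j)}\subset \XX_m\bs\{p\}$ is an affine space, in which any two rational points are joined by a single line, and $H\bs\{p\}\cong\AA^{m+2}\bs\{pt\}$ is $\AA^1$-chain connected by the infinite-field result for complements of coordinate subspaces (\cite[Lemma 2.15]{asok2009A1-excision}). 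All these pieces contain or meet $o$, so they lie in one chain class.

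It remains to move an arbitrary rational point $P=(a_0,\dots,a_m,b,c,d)\neq p$ into $W^{(1)}\cup H$ by a line avoiding $p$. We use the $\GG_a$-type lines in which $z$ varies. Fix all coordinates except $x_0$ and $z$ (and, when needed, $y$ or $t$), and solve the defining equation for $z$ along a line in which $\ul{x}^n$ stays constant. When $\prod_{i\ge1}a_i=0$, the point $P$ already lies in some $W^{(j)}$. Otherwise we may use the line $\lambda\mapsto(a_0,\lambda a_1,\dots)$ with $z$ adjusted, or simpler, parametrize by varying $y$ together with $z$ over the locus where $x_0\ne0$, and then specialize to $z=0$ to land in $H$. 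The explicit parametrizations must be polynomial, which forces dividing by $\ul{x}^n$. We handle this by first moving $x_0$ so as to reach a fiber where the parameter line reaches $z=0$, using that $k$ is infinite to choose a generic value.

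The main obstacle is ensuring that each such line avoids $p$. Since $p$ is a single point and $k$ is infinite, we will perturb the choice of line, for example by a generic translation in the $y$- or $t$-direction, so that it misses $p$. If a given line happens to pass through $p$, we replace it by a two-step chain through a generic intermediate point. The key point is that the lines come in positive-dimensional families, so only finitely many of them, or a proper closed subset, can hit $p$.
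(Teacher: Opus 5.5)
Your third paragraph is where the whole lemma lives, and no line is actually constructed there. The moves you do describe cannot work.

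If the product $\ul{x}^n$ is a nonzero constant along a line, then each polynomial $x_i(\lambda)$ is constant. Such lines therefore stay in the fibre $\{(x_0,\dots,x_m)=(a_0,\dots,a_m)\}\cong\AA^2_{y,t}$. Moreover, every $L$-point of $H\cup\{\ul{x}=0\}$ satisfies $y^r+t^s+x_0=0$.

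So as long as $x_0$ stays equal to $a_0$, you can only land on $H$ or on some $W^{(j)}$ at an $L$-point of the curve $y^r+t^s=-a_0$. This covers all your explicit options; for instance, along $\lambda\mapsto(a_0,\lambda a_1,\dots)$, polynomiality of $z$ at $\lambda=0$ forces $y(0)^r+t(0)^s=-a_0$. Such points need not exist: for $(r,s)=(2,3)$, $k=\CC$, $L=\CC(u)$ and $a_0=-u$, the curve $y^2+t^3=u$ has no $L$-points (by Mason--Stothers).

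Hence the chain must contain a line along which $x_0$ changes, so along which $\ul{x}$ is non-constant. For such a line, polynomiality of $z$ is the divisibility condition $\ul{x}(\lambda)^n\mid y(\lambda)^r+t(\lambda)^s+x_0(\lambda)$ in $L[\lambda]$. ``First moving $x_0$ to a generic value'' names exactly this step without carrying it out, and genericity does not help with a divisibility condition. Avoiding $p$, which you call the main obstacle, is the routine part.

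The missing ingredient is the one the paper uses. Take polynomials $f,g\in k[w]$ with $w^n\mid f(w)^r+g(w)^s+w$. They come from Hensel lifting a nonzero $k$-point of the cusp $y^r+t^s=0$, such as $(1,-1)$ or $(-1,1)$; this works because $\gcd(r,s)=1$ makes $r$ or $s$ invertible in $k$. For $\alpha,a_1,\dots,a_m\in L^\times$ the map
\[
w\longmapsto\Bigl(\alpha w,\,a_1,\dots,a_m,\,f(\alpha w),\,\frac{f(\alpha w)^r+g(\alpha w)^s+\alpha w}{\alpha^n w^n\prod_{i\ge1}a_i^n},\,g(\alpha w)\Bigr)
\]
is a morphism $\AA^1_L\to\XX_m$. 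It joins the divisor $\{x_0=0\}\cong\AA^{m+1}\times\Gamma_{r,s}$ to the $\AA^2_{y,t}$-fibre over $(\alpha,a_1,\dots,a_m)$. That divisor is chain connected through the normalization of the cusp and is disjoint from $p$.

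Together with lines inside these fibres and your $W^{(j)}$, this gives the lemma. You avoid $p$ by replacing $g$ with $g+cw^n$ for a suitable $c$, and by using that $\AA^2\bs\{0\}$ is chain connected in the fibre over $(1,\dots,1)$.

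Two smaller points:
\begin{itemize}
\item $p=(1,\dots,1,0,1,0)$ has $z=1$, not $z=0$, since the equation forces $z=x_0=1$ there. So $p\notin H$ and $H\subset\XX_m\bs\{p\}$.
\item For $m=0$ there is no $W^{(1)}$, so the divisor $\{x_0=0\}$ must play the role of your skeleton.
\end{itemize}
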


\begin{proof}
The strategy of the proof takes into account the fact that the classical Koras-Russell threefolds $\KK$ are $\AA^1$-chain connected (\cite[Example 2.28]{DPO2019}). Let $L/k$ be any finitely generated field extension. Recall from \cref{defn:A1-chainconn} that we need to show that all the fibers and any points between fibers can be connected by elementary $\AA^1$-homotopies. Consider the projection 
\begin{align*}
    & \hspace{10mm} \pr:\XX_m\to \AA^1_k \quad  \text{given by} \\
    & (x_0,\dots,x_m,y,z,t) \mapsto x_0.
\end{align*}
First, let us observe that the fibers over points of $\GG_m$ are $\AA^1$-chain connected. Indeed, choose a point $\alpha\in \GG_m$. If $\alpha\ne 1$, then $\pr^{-1}(\alpha)\simeq \AA_k^{m+2}$ which is clearly $\AA^1$-chain connected and if $\alpha= 1$, we have $\pr^{-1}(\alpha) \simeq \AA^{m+2}_k\bs\{0\}$ which is again $\AA^1$-chain connected. In all, for every point $\alpha\in \GG_m$, any two points $L$-points in $\pr^{-1}(\alpha)$ can be connected via chains of $\AA^1_L$'s. The fiber over the point $\{x_0=0\}$ is given by $\pr^{-1}(0)\simeq  \AA_k^{m+1}\times_k \Gamma_{r,s}$, where $\Gamma_{r,s}:=\{y^r+t^s=0\}$ is the cuspidal curve. The cuspidal curve is $\AA^1$-chain connected as witnessed by \cref{A1-contr:egs} (2). Hence, the fiber $\pr^{-1}(0)$ is $\AA^1$-chain connected given by the na\"ive $\AA^1$-homotopy 
\begin{align*}
    & \AA^1_L \to \AA^{m+1}\times \Gamma_{r,s}\\
    & \gamma \mapsto (a_0 \gamma,\dots,a_m\gamma, b\gamma^s, c\gamma^r)
\end{align*} 
joining $(0,\dots,0)$ with the point $(a_0,\dots,a_m,b,c)$. Now, to conclude, we only have to show that points between different fibers can be joined via chains of $\AA^1_L$'s. For this, we can cook up an $\AA^1$-homotopy as follows: let $f(w)$ and $g(w)$ be two polynomials in $k[w]$ such that $w^n$ divides $f(w)^r+g(w)^s+w$. The existence of such polynomials can be demonstrated by lifting the polynomials $\text{mod}\ w^n$ by inducting on $n\ge 0$. The case $n=0$ is trivial and for the case $n=1$, choose $f(\alpha w) = 1$ and $g(\alpha w) = -1$. For the case $n=2$, choose $f(\alpha w) = 1+w$ and $g(\alpha w) = w-1$. By proceeding in the same fashion, we can obtain a general strategy by setting:
\begin{align*}
    &\hspace{10mm} f(\alpha w):= 1+a_0 w+ a_1 w^2 +\dots + a_{n-2} w^{n-1} \\
    & \text{and} \\
    &\hspace{10mm} g(\alpha w):= 1-w -\dots - w^{n-1}
\end{align*}
for some coefficients $a_i\in k$. One then has to choose $\{a_i\}$s such that the coefficients of $w_i$ vanish for all $i\le n-1$. Now, define a map $\theta: \AA^1_L \to \XX_m\bs \{p\}$ by
$$w \mapsto \left( \alpha w, 1,\dots,1, \frac{f(\alpha w)^r+ g(\alpha w)^s+ \alpha w-1}{(\alpha w)^n} ,f(\alpha w), g(\alpha w) \right).$$
This connects points from the fibre $\pr^{-1}(0)$ with points of the fiber $\pr^{-1}(\alpha)$, for $\alpha \in \GG_m$. Crucially, note that our chosen point $p=(1,\dots,1,0,1,0)$ does not lie in the image of $\theta$. Suppose on contrary, if $p\in \text{Im}(\theta)$, then we have that $\theta(a)= p= (1,\dots,1,0,1,0)$, for some $a\in \AA^1_L$. This gives us $\alpha w=1$, $f(\alpha w)=1$, and $g(\alpha w)=0$. Upon directly substituting these values and comparing with $p=(1,\dots,1,0,1,0)$, we see that
    $$\frac{f(\alpha w)^r+ g(\alpha w)^s+ \alpha w}{(\alpha w)^n}= 1 \ne 0.$$
This show that the quasi-affine variety $\XX_m\bs \{p\}$ is $\AA^1$-chain connected.
\end{proof}

We now have all the tools to prove our key result that answers \cref{existence:exotic-spheres}.

\begin{theorem}\label{exotic-motspheres-countereg} \cite{Mad}
Let $k$ be an infinite perfect field. Then for every $m\ge 0$, the strictly quasi-affine variety $\XX_m\bs \{p\}$ of dimension $m+3$ is $\AA^1$-homotopic to $\AA^{m+3}_k \bs\{0\}$ but is not isomorphic to $\AA^{m+3}_k\bs \{q\}$ as a $k$-scheme.
\end{theorem}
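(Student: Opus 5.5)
Two things must be shown: an $\AA^1$-weak equivalence $\XX_m\bs\{p\}\simeq \AA^{m+3}_k\bs\{0\}$, and the non-isomorphism. Throughout, set $d:=m+3\ge 3$.

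\textbf{The homotopy equivalence.} We compare the cofiber sequences of the two pairs $(\XX_m,\XX_m\bs\{p\})$ and $(\AA^{d}_k,\AA^{d}_k\bs\{q\})$, using the map $\phi$ (forget $z$). The map $\phi$ is étale at $p$ by \cref{tgtspaces-isomorphic}. We do not need $\phi^{-1}(q)=\{p\}$ globally. It is enough to pass to a Zariski neighbourhood $U$ of $p$ on which $\phi$ is étale and $\phi|_U^{-1}(q)=\{p\}$.

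\begin{enumerate}
\item Excision in the form of a Nisnevich distinguished square (\cref{EDS}, together with the lemma following it) gives
\[
\XX_m/(\XX_m\bs\{p\})\simeq U/(U\bs\{p\})\simeq \AA^{d}/(\AA^{d}\bs\{q\}).
\]
Alternatively, homotopy purity (\cref{purity-theorem}) together with \cref{purity:cor-VoeZ/2} and the trivial normal bundle from \cref{tgtspaces-isomorphic} identifies both quotients with $\Th(\mathcal{O}^{d}_{\mathrm{pt}})\simeq(\PP^1)^{\wedge d}=\SS^{2d,d}$.
\item Since $\XX_m$ is $\AA^1$-contractible by \cref{KR3Fprototypes:field-A1-cont:perfect}, the cofiber sequence $\XX_m\bs\{p\}\to\XX_m\to\SS^{2d,d}$ shows that the simplicial suspension satisfies $\Sigma(\XX_m\bs\{p\})\simeq \SS^{2d,d}\simeq\Sigma(\AA^{d}\bs\{0\})$.
\item To desuspend, we first need $\AA^1$-connectedness of $\XX_m\bs\{p\}$. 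This follows from \cref{X-p-isA1-connected}, since $\AA^1$-chain connected implies $\AA^1$-connected.
\item Next apply the motivic excision theorem (Asok's Theorem 4.1) to $U=\XX_m\bs\{p\}\subset X=\XX_m$. The complement has codimension $d\ge 3$, and $X$ is contractible, hence $m'$-connected for all $m'$. This gives $\pi_i^{\AA^1}(\XX_m\bs\{p\})=0$ for $i\le d-2$, so $\XX_m\bs\{p\}$ is $\AA^1$-$(d-2)$-connected.
\item By the $\AA^1$-Hurewicz theorem (Morel), the first nonvanishing homotopy sheaf is then $\pi_{d-1}^{\AA^1}(\XX_m\bs\{p\})\cong \mathbf{K}^{\MW}_{d}$, the same as for $\AA^{d}\bs\{0\}$. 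It is computed from the suspension via the Freudenthal-type isomorphism in the stable range.
\item Choose a generator and use \cref{A1-Brouwer-degree} to obtain a pointed map $g\colon \AA^{d}\bs\{0\}=\SS^{2d-1,d}\to \XX_m\bs\{p\}$ inducing an isomorphism on $\pi_{d-1}^{\AA^1}$.
\item Conclude that $g$ is an $\AA^1$-weak equivalence by one of two routes. Either use the Whitehead theorem (\cref{Whitehead-thm}) together with a comparison of the higher homotopy sheaves through the cofiber sequence. Or, following \cite[\S 3]{Choudhury2024A1type}, show that the composite $\AA^{d}\bs\{0\}\to\XX_m\bs\{p\}\to\XX_m/(\XX_m\bs\{p\})$ realises the standard suspension equivalence, and use that both spaces are $\AA^1$-$(d-2)$-connected, so that the suspension detects the equivalence in this range. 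The degree computation needed here is done with \cref{A1degreemap}.
\end{enumerate}

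\textbf{The non-isomorphism.} Suppose $\XX_m\bs\{p\}\cong\AA^{d}_k\bs\{0\}$. Since $d\ge 2$, both schemes have complement of codimension $\ge 2$ in a normal affine scheme. Hartogs extension therefore gives
\[
\Gamma(\XX_m\bs\{p\},\mathcal{O})=\Gamma(\XX_m,\mathcal{O}),\qquad \Gamma(\AA^{d}\bs\{0\},\mathcal{O})=k^{[d]}.
\]
Hence $\XX_m=\Spec\Gamma(\XX_m,\mathcal{O})\cong\AA^{d}_k$. But $\XX_m\not\cong\AA^{d}_k$, via the Makar-Limanov invariant (\cref{KR3F-not-A3:eg}) or, alternatively, via the singular fibre of $\pr_{x_0}$ in Kaliman's criterion. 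The Makar-Limanov input for general $m$ is the one used in \cite{dubouloz2025algebraicfamilies}, and we expect it to work in any characteristic via iterated higher derivations.

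\textbf{Main obstacle.} The hardest step is the desuspension, item 7. For this I would first try to reproduce the threefold argument of Choudhury--Roy verbatim, with the $\AA^1$-connectedness input now supplied by \cref{X-p-isA1-connected}.
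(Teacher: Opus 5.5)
Your steps 1--4 and your non-isomorphism argument are sound. Your étale-excision identification of the two quotients, and your Hartogs argument via global sections, are if anything cleaner than the paper's purity-plus-extension phrasing. The gap is step 7, which is where the argument has to be closed, and neither route you offer works.

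\begin{itemize}
\item Route 1 would need every $\pi_i^{\AA^1}$, and these are unknown for $\AA^{d}\bs\{0\}$ above $i=d-1$. Moreover, a cofiber sequence gives no long exact sequence of homotopy sheaves, so there is nothing to compare ``through'' it.
\item Route 2 rests on ``suspension detects the equivalence in this range''. Freudenthal-type statements only control $\pi_i^{\AA^1}$ for $i$ in a bounded range (roughly $i\le 2(d-2)$), never for all $i$, so they cannot produce an $\AA^1$-weak equivalence.
\item Step 6 is also unsupported. \cref{A1-Brouwer-degree} computes maps between spheres, not maps from a sphere into $\XX_m\bs\{p\}$.
\end{itemize}

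The missing idea is the $\AA^1$-homology Whitehead theorem for $\AA^1$-simply connected spaces (Shimizu, \cite{shimizu2022relative}). This is what the paper uses. It says that a morphism between $\AA^1$-simply connected pointed smooth schemes that induces isomorphisms on $\AA^1$-homology sheaves is an $\AA^1$-weak equivalence. Since $\widetilde{H}^{\AA^1}_i(X)\cong\widetilde{H}^{\AA^1}_{i+1}(\Sigma X)$ naturally, any such map whose simplicial suspension is an $\AA^1$-weak equivalence is itself one. Both spaces are $\AA^1$-simply connected: for $\XX_m\bs\{p\}$ by your step 4, since $d-2\ge 1$, and for $\AA^{d}_k\bs\{q\}$ by Morel.

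With this, steps 5--6 are unnecessary; use $\phi$ itself.
\begin{itemize}
\item In fact $\phi^{-1}(q)=\{p\}$ globally: at $q$ one has $\ul{x}^n=1$, which forces $z=1$. So $\phi$ restricts to a map $\XX_m\bs\{p\}\to\AA^{d}_k\bs\{q\}$.
\item Because $\XX_m$ and $\AA^{d}_k$ are $\AA^1$-contractible, the cofiber sequences identify $\Sigma\phi$ with the induced map $\XX_m/(\XX_m\bs\{p\})\to\AA^{d}_k/(\AA^{d}_k\bs\{q\})$.
\item Your step 1 shows that this map of quotients is an equivalence, so $\Sigma\phi$ is one, and the homology Whitehead theorem finishes the proof.
\end{itemize}
If you want to keep your $g$ instead, you would have to show that $\Sigma g$ is an endomorphism of $\SS^{2d,d}$ of invertible degree in $\GW(k)$, hence an equivalence. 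You would then still need the same homology Whitehead theorem.
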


\begin{proof}
Let $\XX_m$ be the smooth affine variety as described in \cref{counter-eg-GKR3F} with the rational point $p = (1,\dots,1,0,1,0)\in \XX_m$ and with its image $q = (1,\dots,1,0,0)\in \AA^{m+3}_k$. We first show that $\XX_m \bs \{p\}$ is $\AA^1$-homotopic to $\AA^{m+3}\bs\{q\}$. For this, let us set up the following commutative diagram whose rows are cofiber sequences:
\begin{equation}\label{setup:diagram}
\begin{tikzcd}[column sep=10pt, row sep=20pt]
&&&&& {\XX_m\bs \{p\}} & {} & {\XX_m} && {\XX_m/(\XX_m\bs \{p\})} \\
\\
{} &&&&& {\AA^{m+3}_k \bs \{q\}} && {\AA^{m+3}_k} && {\AA^{m+3}_k/ (\AA^{m+3}_k\bs \{q\})}
\arrow[from=1-6, to=1-8]
\arrow["\phi", from=1-6, to=3-6]
\arrow[from=1-8, to=1-10]
\arrow["\psi", from=1-8, to=3-8]
\arrow["\rho", from=1-10, to=3-10]
\arrow[from=3-6, to=3-8]	
\arrow[from=3-8, to=3-10]
\end{tikzcd}
\end{equation}
Due to \cref{KR3Fprototypes:field-A1-cont:perfect}, we have that $\psi: \XX_m \to \AA^{m+3}_k$ is an $\AA^1$-weak equivalence in $\Spc_k$. Next, we observe that $\rho$ is also an $\AA^1$-weak equivalence. Indeed, since the normal bundle associated to the closed immersion $\{q\}\hookrightarrow \AA^{m+3}_k$ is trivial, by the $\AA^1$-homotopy purity (\cref{purity-theorem}), we have that                
    $$\AA^{m+3}_k/ (\AA^{m+3}_k\bs \{q\})\simeq \Spec k\ \wedge (\PP^1)^{\wedge ({m+3})} \simeq (\PP^1)^{\wedge ({m+3})}.$$ 
Since $\{p\} \in \XX_m$ and $\XX_m$ are smooth subvarieties,  due to the aforementioned purity, we have 
    $$\XX_m/(\XX_m\bs \{p\}) \simeq \Th(N_{p}\XX_m). $$ 
By \cref{tgtspaces-isomorphic}, we have that the normal bundle $N_p \XX_m$ is isomorphic to a trivial bundle and hence that 
    $$\Th(N_{p} \XX_m)\simeq \{p\}\wedge (\PP^1)^{\wedge {(m+3)}} \simeq (\PP^1)^{\wedge {(m+3)}}.$$
Hence, we have that 
$$ (\PP^1)^{\wedge {(m+3)}} \simeq \XX_m/(\XX_m \bs \{p\})\xrightarrow{\rho} \AA_k^{m+3}/(\AA_k^{m+3}\bs \{q\}) \simeq (\PP^1)^{\wedge ({m+3})}$$
which due to the fact that the map $\rho$ is induced from the map $d\phi_p$ becomes an $\AA^1$-weak equivalence in the light of \cref{purity:cor-VoeZ/2}. The $\AA^1$-weak equivalence of $\rho$ implies, by definition, that the simplicial suspension 
$$\Sigma_{S^1} \phi: \Sigma_{S^1} \XX_m \bs\{p\} \to \Sigma_{S^1} \AA^{m+3}_k\bs\{q\}$$ 
is an $\AA^1$-weak equivalence of motivic spaces. Now observe that $\XX_m \bs \{p\}$ is $\AA^1$-chain connected from \cref{X-p-isA1-connected} and whence by \cite[Proposition 2.2.7]{asokmorel2011} is thereby $\AA^1$-connected as well. Moreover, it is of codimension $d=m+3$ in $\XX_m$, for $m\ge 0$. Thus, due to \cite[Theorem 4.1]{asok2009A1-excision}, this implies that the open subschemes $\XX_m \bs \{p\} \hookrightarrow \XX_m$ have the same $\AA^1$-homotopy sheaves in a range:
    $$\pi_i^{\AA^1}(\XX_m\bs\{p\}) \xrightarrow{\sim} \pi_i^{\AA^1}(\XX_m) \quad \text{for all}\quad  0\le i \le d-2 $$
and in particular, $\pi^{\AA^1}_1(\XX_m\bs \{p\})$ is trivial (since $\XX_m$ is $\AA^1$-contractible - see \cref{eg:S-point}), which by the $\AA^1$-Hurewicz theorem \cite[Theorem 6.35, Theorem 6.37]{morel2012A1topology} imply that $\phi$ is an $\AA^1$-homology equivalence. To conclude, observe that due to \cite[Theorem 1.1]{shimizu2022relative} any $\AA^1$-homology equivalence is an $\AA^1$-weak equivalence and that $\XX_m \bs \{p\} \simeq \AA_k^{m+3}\bs \{q\}$.
\medskip

Now, we show that these quasi-affine varieties cannot be isomorphic as schemes over $k$. Suppose, on the contrary, that there is an isomorphism 
    $$\sigma: \XX_m \bs\{p\}\to \AA_k^{m+3}\bs\{q\}$$ 
with an inverse 
    $$\tau:\AA_k^{m+3}\bs\{q\}\to \XX_m\bs\{p\}. $$   
Now since the point $\{p\}$ (respectively, $\{q\}$) is of codimesion at least 2 in $\XX_m$ (respectively, in $\AA^{m+3}_k$), the morphism $\sigma$ and $\tau$ can be continuously extended to a smooth morphism ${\sigma'}:\XX_m \to \AA_k^{m+3}$ and $\tau':\AA^{m+3}_k\to \XX_m$. Both the maps $\sigma'$ and $\tau'$ take the same values with the identity maps away from the complement of a rational point, which implies that both $\sigma'$ and $\tau'$ are isomorphisms. But due to the fact that $\ML(\XX_m)\ncong \ML(\AA^{m+3}_k)$ (cf. \cite[Proposition 3.4]{ghosh2023triviality}), this is absurd. Thus, the quasi-affine varieties $\XX_m\bs\{p\}$ and $\AA^{m+3}_k\bs\{q\}$ cannot be isomorphic as $k$-schemes. 
\end{proof}

\begin{remark}[Update!]
The assumption on the base field to be infinite in \cref{exotic-motspheres-countereg} is due to the limitation as expressed in \cite[Remark 2.16]{asok2009A1-excision}. In essence, the limitation is as follows: for a smooth scheme $X$ with an open immersion $U\hookrightarrow X$ and its closed complement $Z:= X\bs U$, if $X(k)\subseteq Z$, then clearly $U$ has no rational point, whence it cannot be $\AA^1$-connected. However, in the light of $\AA^1$-simply connectedness of $\XX_m\bs \{p\}$, the infinite assumption in \cref{exotic-motspheres-countereg} can be lifted due to \cite[Corollary 3.6]{shimizu2022relative}.
\end{remark}

\subsection{Base Change of Motivic Spheres}
In this short section, we will comment on the base change properties of motivic spheres. In anticipation of the existence of exotic motivic spheres over fields (\cref{exotic-motspheres-countereg}), it is natural to systematically ask for the verity over a base scheme $S$. The question of interest can be framed as follows:

\begin{question}\label{qstn:exot-mot-base-scheme}
Let $S$ be any "reasonable" base scheme. Then for every $m\ge 0$, is $\XX_m\bs \{p\}$ $\AA^1$-homotopic to $\AA^{m+3}_S\bs\{0\}$ and if so, are they isomorphic as $S$-schemes?
\end{question}

We anticipate that they should not be isomorphic as $S$-schemes, intuitively, as a result of the failure of relative Makar-Limanov invariants as in the case of fields. To show that they are $\AA^1$-homotopic, we propose to study the following approaches:

\subsubsection{Gluing motivic spheres}
One may choose to work fiberwise as in the instance of $\AA^1$-contractibility. For this, let us fix the base scheme $S$ to be a Noetherian scheme with infinite perfect residue fields. Let us now decipher the obstruction to taking up this approach. We know that $\XX_m$ is $\AA^1$-contractible over $S$ due to \cref{KR3F:prototypes-A1-cont-base:Noetherian}, hence we have that $f:\XX_m \to S$ is an $\AA^1$-weak equivalence of motivic $S$-spaces. For every point $s\in S$, let us consider the base change $f_s: (\XX_m)_s \to \Spec (\kappa_s)$, where $\kappa_s$ is the corresponding residue field at $s\in S$, which by hypothesis is (infinite) perfect. Thus, by \cref{exotic-motspheres-countereg}, we have that for every $s\in S$, the quasi-affine schemes $(\XX_m \bs \{p\})_s$ are $\AA^1$-homotopic to $\AA^{m+3}_{\kappa_s}\bs \{q\}$ in $\Spc_{\kappa_s}$. 
\[\begin{tikzcd}
	{\AA_{\kappa_s}^{m+3}\bs \{0\} \simeq (\XX_m\bs \{p\})_s} && {\XX_m\bs \{p\}} \\
	{\Spec \kappa_s} && S
	\arrow[from=1-1, to=1-3]
	\arrow["{f_s}"', from=1-1, to=2-1]
	\arrow["f", from=1-3, to=2-3]
	\arrow[from=2-1, to=2-3]
\end{tikzcd}\]
But since, in general, the $\AA^1$-homotopy classes of motivic spheres are not trivial (see \cref{A1-Brouwer-degree}). And so, knowing that all the fibers $(\XX_m\bs \{p\})_s$ are $\AA^1$-homotopic to $\AA^{m+3}_{\kappa_s}\bs \{q\}$ does not allow us conclude that the original space $\XX_m\bs \{p\}$ is $\AA^1$-homotopic to $\AA^{m+3}_S\bs \{q\}$. This can be explained by the lack of an analogous \emph{gluing lemma} for motivic spheres as we had for contractible objects (\cref{pointwise:phenomenon}). Anticipating such a gluing lemma for motivic spheres, it is straightforward to verify that there are exotic motivic spheres in relative dimensions $\ge 3$ over a such base scheme $S$.

\subsubsection{The relative approach}
Another approach to show that the varieties in \cref{qstn:exot-mot-base-scheme} are $\AA^1$-homotopic is to directly work over a base scheme $S$ and establish similar results following the footpaths of \cref{exotic-motspheres-countereg}. To begin with, the purity isomorphism (\cref{purity-theorem}) is available over any Noetherian scheme $S$. We believe that by suitably choosing base scheme $S$, one should retrieve back that $\XX_m\bs \{p\}$ is $\AA^1$-chain connected over $S$ by cooking up an explicit $\AA^1$-chain homotopy with the coefficients $\{a_i\}\in S$, as we did in \cref{X-p-isA1-connected}. It only remains to check to what extent we can extend $S$ to retrieve the following facts:
\begin{itemize}
\item "relative" $\AA^1$-excision style result over $S$ (cf. \cite[Theorem 4.1]{asok2009A1-excision}),
\item $\AA^1$-homology is the same as that of $\AA^1$-homotopy in a range (cf. \cite[Theorem 1.1]{shimizu2022relative}). 
\end{itemize}
 
\subsubsection{Uniqueness of motivic spheres in relative dimension 2}
In anticipation of the analogous gluing lemma, one could attempt to establish that there are no exotic motivic spheres in relative dimensions 2. 

\begin{conj}
Let $S$ be any Noetherian scheme with characteristic zero residue fields. Let $f: X\to S$ be a smooth scheme of relative dimension 2. Then if $X$ is $\AA^1$-homotopic to  $\AA^2_S\bs \{0\}$, then $X$ is isomorphic to $\AA^2_S\bs \{0\}$ as a $S$-scheme.
\end{conj}

The proposed strategy is as follows: first, due \cite[Theorem 3.1]{Choudhury2024A1type}, we have that over a field of characteristic zero, a smooth scheme $X\simeq \AA^2_k\bs\{0\}$ implies that $X\cong \AA^2_k\bs\{0\}$. Now, if $S$ is any Noetherian scheme with characteristic zero residue fields, then by the "anticipated" gluing lemma, we have that $X \simeq \AA^2_k\bs\{0\}$ implies that $X\cong \AA^2_k\bs\{0\}$ as $S$-schemes due to \cref{A2isunique}.

\pagestyle{plain} % comincia a mostrare il numero di pagina
\clearpage
\thispagestyle{empty}
\vspace*{\fill}
\begin{center}
Yay, you've reached the beginning of the end!
 \end{center}

% \afterpage{\blankpage
% \thispagestyle{empty}} 

\begin{savequote}
We are like children building a sand castle... The trick is to enjoy it fully but without clinging, and when the time comes, let it dissolve back into the sea.
\qauthor{"When Things Fall Apart" by Pema Chödrön}
\end{savequote}
\begin{appendix}
\chapter{All About Categories!}\label{app:category}
\markboth{Appendix}{}

\section{Grothendieck Topologies}\label{App;Groth-Top}
In this section, we will develop the notion of a Grothendieck topology. We will also illustrate the usefulness of the Nisnevich topology in comparison with the Zariski and the \'etale topologies. We shall be brief in our definitions and formulations, and we redirect readers to \cite{maclane2012sheaves} or \cite{vistoli2004notes} for more pedantic details.

\subsection{Sheaves on topological spaces}
For the readers' convenience, we shall recollect some basic notions of presheaves and sheaves from the topological setting. Let $X$ be a topological space and denote by $\mathcal{O}_X$ its collection of open subsets.
\begin{defn}
A \emph{presheaf} $F$ on $X$ is a contravariant functor 
       $$F:\mathcal{O}_X^{op}\to \Set$$
which assigns for every open subset $U\subset X$, a set $F(U)$. A \emph{sheaf} $\mathcal{F}$ on $X$ is a presheaf satisfying the following axiom: for every open covering $U= \bigcup_i U_i$ of an open subset $U$ of $X$, we have an equalizer diagram of the form 
\[\begin{tikzcd}
       {\mathcal{F}(U)} & {\prod_{i}\mathcal{F}(U_i) } & {\prod_{i,j} \mathcal{F}(U_i\times_U U_j)}
	\arrow["e", dashed, from=1-1, to=1-2]
	\arrow["p"', from=1-2, to=1-3]
	\arrow["q", shift left=3, from=1-2, to=1-3]
\end{tikzcd}\]
where for $t\in \mathcal{F}(U)$, $e(t) = \{ t{\vert_{U_i}} \mid i\in I \}$ and for a family $t_i\in \mathcal{F}(U_i)$, and set $U_{ij}:= U_i\times_U U_j$
   $$p\{t_i\} = \{ t_{i}|_{U_{ij}} \}, \quad q\{t_j\} = \{ t_{j}|_{U_{ij}} \}.$$
A morphism $\mathcal{F}\to \mathcal{G}$ of (pre-)sheaves is a natural transformation of functors.
\end{defn}

The above definition of a sheaf is equivalent to that of a presheaf satisfying the sheaf axiom (\cite[\href{https://stacks.math.columbia.edu/tag/006S}{Section 006S}]{stacks-project}). Of course, one can (and will) also talk about (pre)-sheaves of groups, rings, Abelian groups, modules, and so on by replacing the category of sets by the corresponding category in the above definition. All of these variants help one to talk about a notion of (pre-)sheaf in the concerned category.

\begin{example}
There are numerous examples of sheaves in nature:
\begin{itemize}
\item Let $X$ be a topological space and $k$ be a field (e.g., $\RR$, $\CC$,...). Then for every open $U\subseteq X$, the association  
    $$U \mapsto \{f:U \to k \mid f\ \text{is a continuous/smooth/differentiable $k$-valued function} \}$$
It is, in fact, a sheaf of rings.
\item The constant function on any topological space $X$ is a presheaf, while the locally constant function is a sheaf.
\item Let $X$ be a topological space and choose $x\in X$ and any set $S$. For any open set $U\subseteq X$, the presheaf 
\begin{equation*}
    \mathcal{F}(U) = 
    \begin{cases}
          S         & x \in U   \\
          \emptyset & x \notin U \\
    \end{cases}
\end{equation*}
is, in fact, a sheaf, called the \emph{sky scrapper sheaf}.
\end{itemize}
\end{example}

This notion of sheaves on a topological space can be generalized to any category $\mathscr{C}$. For any $X\in \mathscr{C}$, one considers maps of the form $\{U\to X\}$ that "cover $X$" (in a sense that will be made precise below). Then one can construct a topology with respect to these (open) coverings and define a sheaf as the corresponding contravariant functor that satisfies some analogous locality and gluing properties. This notion was established by Grothendieck and his collaborators in SGA IV for studying the cohomology theories of generalized spaces and is popularly known as the \emph{Grothendieck topology}. In a sense, it provides a way to "categorify" the notion of a sheaf from topological spaces to that of any small category $\mathscr{C}$. The collection of presheaves (resp. sheaves) on $\mathscr{C}$ forms an example of a \emph{functor category}. We denote the category of presheaf (resp. sheaf) on $\mathscr{C}$ by $\Psh(\mathscr{C})$ (resp. $\Shv(\mathscr{C})$).
\medskip

To define the notion of a Grothendieck topology, we shall formulate it via the notion of sieves. Let $\mathscr{C}$ be a (small) category.
\begin{defn}
Given an object $C\in \mathscr{C}$, a \emph{sieve} $S$ is a family of morphisms in $\mathscr{C}$, all with codomain $C$, such that 
        $$f\in S \Rightarrow f\circ g\in S$$
Whenever this composition makes sense. If $S$ is a sieve on $C$ and $h: D \to C$ is any arrow to $C$, then 
        $$h^*(S)= \{ g \mid \text{cod}(g) =D,\quad h\circ g \in S \}$$
is a sieve on $D$.
\end{defn}

There is a convenient alternative definition of a sieve: Let $U$ be an object of a category $\mathscr{C}$. A \emph{sieve} on $U$ is a sub functor of $h_U: \mathscr{C}^{op}\to \Set$, where $h_U:= \Hom(-,U)$ is the representable functor on $U$.

\begin{defn}
A \emph{Grothendieck topology} on a category $\mathscr{C}$ is a function $J$ which assigns to each object $C\in \mathscr{C}$, a collection $J(C)$ of sieves on $C$, in such a way that the following three axioms are satisfied:
\begin{enumerate}
\item (Maximality): The maximal sieve $t_C = \{f \mid \text{codomain}(f) = C\}$ is in $J(C)$,
\item (Stability): If $S\in J(C)$, then $h^*(S) \in J(D)$, for any arrow $h:C\to D$,
\item (Transitivity): If $S \in J(C)$ and $R$ is any sieve on $C$ such that $h^*(R) \in J(D)$, for all $h: D \to C$ in $S$, then $R\in J(C)$.
\end{enumerate}
\end{defn}

\begin{defn}
A \emph{site} is a pair $(\mathscr{C}, J)$ of a small category $\mathscr{C}$ equipped with a Grothendieck topology $J$. If $S \in J(C)$, one says that $S$ is a covering sieve, or that "$S$ covers $C$".
\end{defn}

\begin{example}
We shall now give some examples of sites.
\begin{itemize}
\item Let $X$ be a topological space; denote by $X_{op}$ the category in which the objects are the open subsets of $X$, and the arrows are given by inclusions. Then we get a Grothendieck topology on $X_{op}$ by associating with each open subset $U\subseteq X$, the set of open coverings of $U$. This defines a site of a topological space.
\item Let $\mathscr{C}= \rm{Top}$ is the category of topological spaces. If $U$ is a topological space, then a covering of $U$ will be a jointly surjective\footnote{that is, the set-theoretic union of their images equals $U$} collection of open embeddings\footnote{an open continuous injective map $V\to U$} $\{U_i\to U$\}. This is also called the \emph{classical topology} on $X$.
\item In the example above, if in addition, the covering families $\{U_i\to U $\} are also local homeomorphisms, then it is called the \emph{\'etale site} on $X$.
\end{itemize}
\end{example}

\subsection{Sheaves on Schemes}
We shall now develop the notion of sheaves on a site with a view towards application to algebraic geometry and eventually to the category of motivic spaces $\Spc_S$. Sheaves on a site can be defined in much the same way as sheaves on a topological space. Recall that a \emph{ringed space} is a pair $(X,\mathcal{O}_X)$, where $X$ is a topological space and $\mathcal{O}_X$ is a sheaf of rings on $X$. It is \emph{locally ringed} if, in addition, each stalk is a local ring. A scheme $(X,\mcal{O}_X)$ is a canonical example of a locally ringed space which is locally isomorphic to an affine scheme in the Zariski topology. In other words, for every point $p\in X$, there exists an open neighborhood $U$ such that $(U,\mathcal{O}_X|_{U})$ is isomorphic to an affine scheme $(\Spec R, \mathcal{O}_{\Spec R})$. One equips the category of schemes with a different Grothendieck topology that varies depending on the nature of the open covering defined.

\begin{example}
Let $T$ be a scheme with a family of coverings $\{f_i:T_i \to T \}_{i\in I}$ such that $T= \bigcup_i f_i(T_i)$, then we say that such a covering is called
\begin{itemize}
\item \emph{Zariski} if each $f_i$ is an open immersion,
\item \emph{\'etale} if each $f_i$ is \'etale, 
\item \emph{smooth} if each $f_i$ is smooth,
\item \emph{fppf}\footnote{in French for “fidèlement plat de présentation finie”.} (faithfully flat and finitely presented) if each $f_i$ is flat, locally of finite presentation,
\item \emph{fpqc}\footnote{in French for "fidèlement plat" and quasi-compact.} (faithfully flat and and quasi-compact) if each $f_i$ is flat and for each affine open $U\subseteq T$ there exists a finite set $K$, a map $\textbf{i}:K \to I$ and affine opens $U_{\textbf{i}(k)} \subset T_{\textbf{i}(k)}$ such that $U = \bigcup_{k\in K} f_{\textbf{i}(k)}(U_{\textbf{i}(k)})$.
\end{itemize}
\end{example}

We will now describe another important topology called the \emph{Nisnevich topology} that is well-suited for performing $\AA^1$-homotopy theory.

\subsection{The Nisnevich Site}\label{App:Nisnevich-top}
We begin with an alternative definition of the Nisnevich topology from the one presented in \cref{sect:Nisnevich}. This is due to \cite[Definition 3.7.1.1]{lurie2018SAG} (see also \cite[Lemma 1.5]{MV99}):

\begin{defn}
The \emph{Nisnevich topology} on $\Sm_S$ is the topology generated by those finite families of \'etale morphisms $\{p_i:U_i\to X\}_{i\in I}$ such that there is a finite sequence 
    $$\emptyset \subseteq Z_n \subseteq Z_{n-1}\subseteq \dots \subseteq Z_1 \subseteq Z_0 = X$$ 
of finitely presented closed subschemes of $X$ such that
        $$\bigsqcup_{i\in I} p_i^{-1} (Z_m\bs Z_{m+1})\to Z_m\bs Z_{m+1} $$
admits a section for $0\le m\le n-1$.            
\end{defn}

For a justification that this definition is indeed equivalent to that of being surjective on $k$-points, one can refer to \cite{hoyois2016trivial}. Yet another equivalent definition of the Nisnevich topology is the following: it is a Grothendieck topology that is generated by the elementary distinguished squares (\cite[Remark 2.3.3]{asok2021A1}). In terms of covering sieves, we have that the Nisnevich topology is the coarsest such that the empty sieve covers $\emptyset$, and for EDS  as above, the sieve on $X$ generated by $U \to X$ and $V \to X$ is a covering sieve. For any chosen definition, we have that the Nisnevich topology is finer than the Zariski topology but coarser than the \'etale topology:
\begin{align*}
    \text{\'etale} >> \text{Nisnevich} > \text{Zariski} 
\end{align*}
This dual nature makes the Nisnevich topology more suitable for doing motivic homotopy theory.

\subsubsection{Advantages of Nisnevich topology}\label{App:Nisnevich-useful}
\begin{enumerate} 
\item The Nisnevich cohomological dimension (and even the homotopy dimension) of a scheme with Krull dimension $d$ is equal to $d$ (this is similar to the Zariski topology): $H^i_{Nis}(S,\mathcal{F}) = 0$ for all $i>d$ and any sheaf $\mcal{F}$ of Abelian groups (\cite[Proposition 1.8]{MV99}).
\item For a field $k$, we have that any field has no non-trivial Nisnevich cohomology (similar to the Zariski topology): $H^i_{Nis}(\Spec k,\mathcal{F}) = 0$, for any sheaf $\mathcal{F}$ of Abelian groups.
\item Algebraic $K$-theory has Nisnevich descent (similar to the Zariski topology),
\item Pushforward along finite morphisms is an exact functor on Nisnevich sheaves of abelian groups (this behaves like the \'etale topology).
\item Nisnevich cohomology can be computed using \v{C}ech cohomology (similar to the \'etale topology): For any sheaf of abelian groups $\mathcal{F}$ on $Sm_S$ and any $n\ge 0$, we have an isomorphism of groups $\check{H}_{Nis}^n(S,\mathcal{F})\cong H_{Nis}^n(S,\mathcal{F})$ (\cite[Proposition 1.9]{MV99}), where the left hand side is the \v{C}ech cohomology of a space $X$. This fails for Zariski topology (see \cite[Example 1.10]{MV99}).
\item For closed immersions, Nisnevich sheaves exhibit purity isomorphism (like \'etale topology). As a consequence, we have that any smooth pair $(Z, X)$ is locally equivalent in the Nisnevich topology to a pair of the form $(\AA^m, \AA^n)$: See \cref{purity-theorem}.
\item Since the Nisnevich topology is coarser than the \'etale topology, it inherits the property of being subcanonical from \'etale topology. This has the consequence that any representable presheaf in the Nisnevich topology is automatically a representable sheaf (see e.g., \cite[\S 2.4]{asokmorel2011} or \cite[\href{https://stacks.math.columbia.edu/tag/03O4}{Remark 03O4}]{stacks-project}).
\end{enumerate}

% -----------------------------------------------------------------------------
\section{Model Categories}\label{app:model-cats}
The language of model categories was established by Quillen (\cite{quillen1967homotopical}) as a suitable framework to deal with homotopy theoretic formulations. This framework has nowadays become almost indispensable to study any category up to some homotopical context; the first step one does is to garnish the desired category with a model structure and re-frame the original question in this setup. The far-reaching adaptability of model categories is due to their concise description that allows one to study morphisms in a given category by just studying two specified classes of maps: weak equivalences and either fibrations or cofibrations. This is furthermore enhanced by a certain adjoint pair of (Quillen) functors that are defined at the level of the homotopy categories. The formalism of localization from the commutative algebra allows one to formally invert a chosen multiplicative system. In application to algebraic geometry, localizations help us to formally invert any chosen class of maps, producing a new, larger category where these chosen maps become isomorphisms. In this section, we will give a brief introduction and background to this machinery with a focus on the $\AA^1$-homotopy theory. For most parts, we will follow \cite{lurie2009HTT}.

\begin{defn}\label{defn:model-cats}
A \emph{model structure} on a category $\mathscr{C}$ consists of three distinguished classes of morphisms in $\mathscr{C}$
\begin{itemize}
\item \emph{cofibrations} $\mcal{C}$of
\item \emph{fibrations} $\mcal{F}$ib
\item \emph{weak equivalences} $\mcal{W}$eq
\end{itemize}
satisfying the following axioms: 
\begin{enumerate}
\item The category $\mathscr{C}$ admits (small) limits and colimits.
\item Given a composable pair of maps $X\xrightarrow{f} Y\xrightarrow{g} Z$, if any two of $g\circ f$, $f$, and $g$ are weak equivalences, then so is the third (this is called the \emph{two-out-of-three axiom}),
\item Suppose $f : X\to Y$ is a retract of $g : X\to Y$ : that is, suppose there exists a commutative diagram
\[\begin{tikzcd}
	X & {X'} & X \\
	Y & {Y'} & Y
	\arrow["i", from=1-1, to=1-2]
	\arrow["f"', from=1-1, to=2-1]
	\arrow["r", from=1-2, to=1-3]
	\arrow["g"', from=1-2, to=2-2]
	\arrow["f", from=1-3, to=2-3]
	\arrow["{i'}"', from=2-1, to=2-2]
	\arrow["{r'}"', from=2-2, to=2-3]
\end{tikzcd}\]
where $r\circ i =id_X$ and $r'\circ i' =id_Y$. Then
\begin{itemize}
\item[(i)]   If $g$ is a fibration, so is $f$,
\item[(ii)]  If $g$ is a cofibration, then so is $f$,
\item[(iii)] If $g$ is a weak equivalence, then so is $f$.
\end{itemize}
\item Given a diagram
\[\begin{tikzcd}
	A & X \\
	B & Y
	\arrow[from=1-1, to=1-2]
	\arrow["i"', from=1-1, to=2-1]
	\arrow["p", from=1-2, to=2-2]
	\arrow[dashed, from=2-1, to=1-2]
	\arrow[from=2-1, to=2-2]
\end{tikzcd}\]
A dotted arrow can be found rendering the diagram commutative if either
\begin{itemize}
\item[(i)] The map $i$ is a cofibration, and the map $p$ is both a fibration and a weak equivalence.
\item[(ii)] The map $i$ is both a cofibration and a weak equivalence, and the map $p$ is a fibration.
\end{itemize}
\item  Any map $X\to Z$ in $\mathscr{C}$ admits a functorial factorization as follows:
\begin{align*}
    &X\xrightarrow{f}Y \xrightarrow{g} Z \  &X\xrightarrow{f'}Y'\xrightarrow{g'} Z
\end{align*}    
where $f$ is a cofibration, $g$ is a trivial fibration, $f'$ is a trivial cofibration, and $g'$ is a fibration.
\end{enumerate}
A \emph{model category} is a category $\mathscr{C}$ equipped with a model structure.
\end{defn}

We will encounter plenty of examples of model categories in due course. Here is the simplest of the kind:
\begin{example}
Let $\mathscr{C}$ be any category that admits small limits and colimits. Then $\mathscr{C}$ can be endowed with the trivial model structure: 
\begin{itemize}
\item Weak equivalences $\mcal{W}$eq are isomorphisms of $\mathscr{C}$.
\item every morphism is a fibration $\mcal{F}$ib.
\item every morphism is a cofibration $\mcal{C}$of.
\end{itemize}
\end{example}

\begin{defn}
A morphism in a model category $\mathscr{C}$ is called a \emph{trivial cofibration} if it is both a cofibration and a weak equivalence. Similarly, $f$ is called a \emph{trivial fibration} if it is both a fibration and a weak equivalence.
\end{defn}

By axiom (1), any model category $\mathscr{C}$ has an initial object $\emptyset$ and a final object $*$. This gives us the following description:
\begin{defn}\label{cofib-replacement}
An object $X$ of $M$ is fibrant if $X\to *$ is a fibration, and $X$ is cofibrant if $\emptyset\to X$ is a cofibration. Given an object $X$ of $M$, an acyclic fibration $QX\to X$ such that $QX$ is cofibrant is called a \emph{cofibrant replacement}. Similarly, if $X\to RX$ is an acyclic fibration with $RX$ fibrant, then $RX$ is called a \emph{fibrant replacement} of $X$. 
\end{defn}
These replacements always exist by the axioms of model categories.

\subsection{Homotopy Category of a Model Category}
Broadly speaking, one has two different (yet equivalent) ways to construct a homotopy category associated with a model category. We will now briefly explain both of the viewpoints.

\subsubsection{Cylinder-Path Decomposition}
\begin{defn}
Let $\mathscr{C}$ be a model category containing an object $X$. A \emph{cylinder object} for $X$ is an object $C$ together with a diagram $X\sqcup X \xrightarrow{i} C \xrightarrow{j} X$ where $i$ is a cofibration, $j$ is a weak equivalence, and the composition $j\circ i$ is the fold map $X \sqcup X\to X$. Dually, a \emph{path object} for $Y\in \mathscr{C}$ is an object $P$ together with a diagram $Y\xrightarrow{q}P\xrightarrow{p} Y\times Y$ such that $q$ is a weak equivalence, $p$ is a fibration, and $p\circ q$ is the diagonal map $Y\to Y\times Y$.
\end{defn}

The existence of cylinder and path objects follows from the factorization Axiom (5) of \cref{defn:model-cats}. The following fact establishes an important tool for producing the homotopy category:
\begin{prop}\label{prop:cylinder-path-exists}
Let $\mathscr{C}$ be a model category. Let $X$ be a cofibrant object of $\mathscr{C}$, $Y$ a fibrant object of $\mathscr{C}$, and $f, g: X \to Y$ two maps. The following conditions are equivalent:
\begin{enumerate}
\item For every cylinder object $X\sqcup X \xrightarrow{j} C$ there exists a commutative diagram
\[\begin{tikzcd}
	{X\sqcup X} && C \\
	& Y
	\arrow["j", from=1-1, to=1-3]
	\arrow["{(f,g)}"', from=1-1, to=2-2]
	\arrow[from=1-3, to=2-2]
\end{tikzcd}\]
\item There exists a cylinder object $X\sqcup X \xrightarrow{j} C$ and a commutative diagram
\[\begin{tikzcd}
	{X\sqcup X} && C \\
	& Y
	\arrow["j", from=1-1, to=1-3]
	\arrow["{(f,g)}"', from=1-1, to=2-2]
	\arrow[from=1-3, to=2-2]
\end{tikzcd}\]
\item For every path object $P\xrightarrow{p} Y\times Y$ , there exists a commutative diagram
\[\begin{tikzcd}
	X && P \\
	& {Y\times Y}
	\arrow[from=1-1, to=1-3]
	\arrow["{(f,g)}"', from=1-1, to=2-2]
	\arrow["p", from=1-3, to=2-2]
\end{tikzcd}\]
\item There exists a path object $P\xrightarrow{p} Y\times Y$ and a commutative diagram
\[\begin{tikzcd}
	X && P \\
	& {Y\times Y}
	\arrow[from=1-1, to=1-3]
	\arrow["{(f,g)}"', from=1-1, to=2-2]
	\arrow["p", from=1-3, to=2-2]
\end{tikzcd}\]
\end{enumerate}
\end{prop}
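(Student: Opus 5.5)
The statement to prove is the equivalence of conditions (1)–(4) in \cref{prop:cylinder-path-exists} for $X$ cofibrant and $Y$ fibrant. I would argue by the cycle of implications $(1)\Rightarrow(2)\Rightarrow(4)\Rightarrow(3)\Rightarrow(1)$.

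The easy arrows come first. The implications $(1)\Rightarrow(2)$ and $(3)\Rightarrow(4)$ are immediate once we know that at least one cylinder object and one path object exist. This is supplied by the factorization Axiom (5) of \cref{defn:model-cats}: factor the fold map $X\sqcup X\to X$ as a cofibration followed by a trivial fibration, and factor the diagonal $Y\to Y\times Y$ as a trivial cofibration followed by a fibration.

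The heart of the argument is the passage from a left homotopy to a right homotopy, which gives $(2)\Rightarrow(3)$. Suppose we have a cylinder $X\sqcup X\xrightarrow{i}C\xrightarrow{j}X$ with $i=(i_0,i_1)$ and a homotopy $H:C\to Y$ restricting to $(f,g)$. Let $Y\xrightarrow{q}P\xrightarrow{p=(p_0,p_1)}Y\times Y$ be an arbitrary path object. I would set up the square
\[\begin{tikzcd} X \arrow[r,"q\circ f"] \arrow[d,"i_0"'] & P \arrow[d,"p"] \\ C \arrow[r,"(f\circ j,\,H)"'] & Y\times Y \end{tikzcd}\]
and check that it commutes. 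Three facts then have to be verified.
\begin{enumerate}
\item Since $X$ is cofibrant, $X\to X\sqcup X$ is a cofibration, being a pushout of $\emptyset\to X$. Hence $i_0$ is a cofibration.
\item By two-out-of-three applied to $j\circ i_0=\mathrm{id}_X$, the map $i_0$ is a weak equivalence, so it is a trivial cofibration.
\item The map $p$ is a fibration.
\end{enumerate}
Axiom (4)(ii) then yields a lift $K:C\to P$. The composite $K\circ i_1: X\to P$ satisfies $p\circ K\circ i_1=(f\circ j\circ i_1,\ H\circ i_1)=(f,g)$, which is the required right homotopy.

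The last arrow, $(4)\Rightarrow(1)$, is the exact dual. Given a path object $P$ with a right homotopy $K$, and an arbitrary cylinder $C$, I would lift in the square whose left map is $i:X\sqcup X\to C$ and whose right map is $p_0:P\to Y$. This requires two dual facts.
\begin{enumerate}
\item The map $p_0$ is a fibration, because it is the composite of the fibration $p$ with the projection $Y\times Y\to Y$, and that projection is a fibration since $Y$ is fibrant and fibrations are stable under pullback.
\item The map $p_0$ is a weak equivalence, by two-out-of-three applied to $p_0\circ q=\mathrm{id}_Y$.
\end{enumerate}
If I set up the square with the wrong ends, it need not commute. To fix this I would use the trivial fibration $p_0$ and the cofibration $i$: take top map $K\circ j$, bottom map $C\xrightarrow{j}X\xrightarrow{f}Y$, and adjust via the composite $C\to P\xrightarrow{p_1}Y$. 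Choosing the square so that it commutes is the one point I expect to be delicate.

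The main obstacle is the bookkeeping of which structure maps are (trivial) (co)fibrations. This relies on cofibrations being stable under pushout and fibrations under pullback, which I would state as standard consequences of the lifting axiom (4).
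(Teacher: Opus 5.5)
Your easy arrows and your implication $(2)\Rightarrow(3)$ are correct. The square with top $q\circ f$, left $i_0$, right $p$ and bottom $(f\circ j,H)$ commutes, $i_0$ is a trivial cofibration, and $K\circ i_1$ is a right homotopy from $f$ to $g$. Two small points. First, the implications you actually prove form the cycle $(1)\Rightarrow(2)\Rightarrow(3)\Rightarrow(4)\Rightarrow(1)$, not the one you announce at the start. Second, pushout-stability of cofibrations does not follow from axiom (4) alone; it needs the retract argument, which uses factorization together with the retract axiom. The paper itself states this proposition without proof, so what follows is measured against the standard argument.

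The genuine gap is in $(4)\Rightarrow(1)$, exactly at the point you flag as delicate. With left map $i\colon X\sqcup X\to C$ and right map $p_0\colon P\to Y$, the top map must be a map $X\sqcup X\to P$. Your proposed $K\circ j$ is a map $C\to P$, so the square is not well formed.

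If you read it as $K\circ j\circ i=(K,K)$, the square commutes, but then any lift $L$ satisfies $p_1\circ L\circ i=p_1\circ(K,K)=(g,g)$. That is a homotopy from $g$ to $g$, not from $f$ to $g$. To separate the endpoints, the two summands must be sent to different maps into $P$.

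The correct square, which is the honest dual of your $(2)\Rightarrow(3)$ square, is as follows. Take top $(q\circ f,\,K)\colon X\sqcup X\to P$, that is, the constant path at $f$ on the first summand and $K$ on the second. Take bottom $f\circ j\colon C\to Y$. It commutes because $p_0\circ(q\circ f,\,K)=(p_0 q f,\ p_0 K)=(f,f)=f\circ j\circ i$, using $p_0\circ q=\mathrm{id}_Y$ and $p_0\circ K=f$. Since $i$ is a cofibration and $p_0$ is a trivial fibration (by your argument), a lift $L\colon C\to P$ exists. Then $H:=p_1\circ L$ satisfies $H\circ i=(p_1 q f,\ p_1 K)=(f,g)$, which is the required left homotopy through the arbitrary cylinder $C$. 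With this square in place, the rest of your argument goes through.
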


If $\mathscr{C}$ is a model category containing a cofibrant object $X$ and a fibrant object $Y$, we say two maps $f,g: X \to Y$ are \emph{homotopic} if the hypotheses of \cref{prop:cylinder-path-exists} are satisfied and write $f\simeq g$. The relation $\simeq$ is an equivalence relation on $\Hom_{\mathscr{C}}(X,Y)$.

\begin{defn}
The homotopy category $h\mathscr{C}$ may be defined as follows:
\begin{itemize}
\item The objects of $h\mathscr{C}$ are the fibrant-cofibrant objects of $\mathscr{C}$.
\item The set $\Hom_{h\mathscr{C}}(X,Y)$ is the set of $\simeq$-equivalence classes of $\Hom_{\mathscr{C}}(X,Y)$, for all $X, Y \in h\mathscr{C}$.
\end{itemize}
\end{defn}

Note that the composition is well-defined in $h \mathscr{C}$: if $f \simeq g$, then $f\circ h \simeq g\circ h$ (this is clear from characterization (4) of \cref{prop:cylinder-path-exists}) and $h'\circ f\simeq  h' \circ g$ (this is clear from characterization (2) of \cref{prop:cylinder-path-exists}) for any maps $h,h'$ such that the compositions are defined in $\mathscr{C}$.

\subsubsection{Formal Inverses}
The other viewpoint involves formally adding a class of (inverse) maps to enlarge the class of weak equivalence $\mcal{W}eq$ of $\mathscr{C}$. Let $H(\mathscr{C})$ denote the category so obtained, then 
        $$H(\mathscr{C}) := \mathscr{C}[\mcal{W}eq]^{-1}.$$
If $X\in \mathscr{C}$ is cofibrant and $Y\in \mathscr{C}$ is fibrant, then homotopic maps $f,g : X\to Y$ have the same image in $H(\mathscr{C})$. Consequently, we obtain a functor $h\mathscr{C}\to H(\mathscr{C})$ which can be shown to be an equivalence. Thus, both of these methods produce similar homotopy theories at least up to an equivalence. One more important technique to point out in this direction is that there is a universal way of adjoining all the homotopy colimits to $\mathscr{C}$ to produce a homotopy category. This is called the \emph{universal homotopy theory} $U\mathscr{C}$ constructed by Dugger \cite{dugger2001universal}, which, though being a highly formal tool, has proven useful in the verification of certain generic properties of $\mathscr{C}$ by looking at certain universal cases. For our context, it can be shown that the universal homotopy associated to $\Sm_k$, $U(\Sm_k)_{\AA^1}$ is Quillen equivalent to that of the $\AA^1$-homotopy theory of Morel-Voevodsky $\mathscr{MV}_k$ (see \textit{ibid}, \S 8) for more details.

\subsection{Quillen Functors and Quillen Adjunctions}\label{app:Quillen-func}
Let $\mathscr{C}$ and $\mathscr{D}$ be model categories and suppose we are given a pair of adjoint functors 
        $$\mathscr{C} \xrightleftharpoons[G]{F} \mathscr{D} $$
(here $F$ is the left adjoint and $G$ is the right adjoint). The following conditions are equivalent:
\begin{enumerate}
\item The functor $F$ preserves cofibrations and trivial cofibrations.
\item The functor $G$ preserves fibrations and trivial fibrations.
\item The functor $F$ preserves cofibrations, and the functor $G$ preserves fibrations.
\item The functor $F$ preserves trivial cofibrations, and the functor $G$ preserves trivial fibrations.
\end{enumerate}

\begin{defn}
If any of this equivalent condition is satisfied, we say that the pair $(F, G)$ is a \emph{Quillen adjunction} between the model categories $\mathscr{C}$ and $\mathscr{D}$ and say that $F$ is a \emph{left Quillen functor} and that $G$ is a \emph{right Quillen functor}. In such a situation, one can show that $F$ preserves weak equivalences between cofibrant objects and $G$ preserves weak equivalences between fibrant objects.
\end{defn}

For a Quillen adjunction as above, one views the homotopy category $h\mathscr{C}$ as obtained from $\mathscr{C}$ by first passing to the full subcategory consisting of cofibrant objects and then inverting all weak equivalences. Applying a similar procedure with $\mathscr{D}$, we see that because $F$ preserves weak equivalence between cofibrant objects, it induces a functor $LF: h\mathscr{C} \to h\mathscr{D}$. This functor is called the \emph{left derived functor} of $F$. Dually, one defines the \emph{right derived functor} of $G$ as $RG: h\mathscr{D}\to h\mathscr{C}$. One can furthermore show that the derived functors $LF$ and $RG$ determine an adjunction between the homotopy categories $h\mathscr{C}$ and $h\mathscr{D}$:

\begin{prop}
Let $\mathscr{C}$ and $\mathscr{D}$ be model categories and let
                $$\mathscr{C} \xrightleftharpoons[G]{F} \mathscr{D} $$
be a Quillen adjunction. The following are equivalent:
\begin{enumerate}
\item The left derived functor $LF : h\mathscr{C}\to h\mathscr{D}$ is an equivalence of categories.
\item The right derived functor $RG : h\mathscr{D}\to h\mathscr{C}$ is an equivalence of categories.
\item For every cofibrant object $C\in \mathscr{C}$ and every fibrant object $D\in \mathscr{D}$, a map $C\to G(D)$ is a weak equivalence in $\mathscr{C}$ if and only if the adjoint map $F(C)\to D$ is a weak equivalence in $\mathscr{D}$.
\end{enumerate}
If the equivalent conditions listed above are satisfied, then we say that the adjunction $(F,G)$ gives a Quillen equivalence between the model categories $\mathscr{C}$ and $\mathscr{D}$.
\end{prop}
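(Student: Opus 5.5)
The plan is to work entirely with the derived adjunction $LF \dashv RG$ between $h\mathscr{C}$ and $h\mathscr{D}$, whose existence is recorded in the paragraph just before the statement. I will phrase everything through its derived unit $\eta\colon \mathrm{id} \to RG\circ LF$ and derived counit $\varepsilon\colon LF\circ RG \to \mathrm{id}$. One fact about model categories will be used repeatedly: a morphism of $\mathscr{C}$ is a weak equivalence if and only if its image in $h\mathscr{C}$ is an isomorphism. This is the saturation property, which follows from the identification $h\mathscr{C}\simeq \mathscr{C}[\mathcal{W}eq^{-1}]$ together with the cylinder--path description of \cref{prop:cylinder-path-exists}.

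\textbf{Step 1: (1)$\iff$(2).} This is formal. Since $LF \dashv RG$, if $LF$ is an equivalence then its right adjoint is a quasi-inverse, so $RG$ is an equivalence, and symmetrically. Equivalently, an adjunction is an adjoint equivalence exactly when $\eta$ and $\varepsilon$ are both natural isomorphisms, and either functor being an equivalence forces this.

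\textbf{Step 2: make the derived unit and counit explicit.} Take $C$ cofibrant and choose a fibrant replacement $F(C)\to R F(C)$ that is a trivial cofibration, using factorization axiom (5). Then $\eta_C$ is represented by the adjoint map $C \to G(RF(C))$. Dually, for $D$ fibrant choose a cofibrant replacement $QG(D)\to G(D)$. Then $\varepsilon_D$ is represented by the adjoint $F(QG(D)) \to D$. More generally, for $C$ cofibrant, $D$ fibrant and $f\colon C\to G(D)$ with adjoint $f^\flat\colon F(C)\to D$, the class of $f^\flat$ in $h\mathscr{D}$ equals $\varepsilon_D\circ LF([f])$, and symmetrically $[f] = RG([f^\flat])\circ \eta_C$. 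Checking these identifications carefully (independence of the chosen replacements, and compatibility with the adjunction bijection on homotopy classes) is the main technical obstacle. I would handle it by Ken Brown's lemma (\cref{ken-brown-lemma}): $F$ preserves weak equivalences between cofibrant objects and $G$ preserves those between fibrant objects, so all replacement choices give canonically isomorphic representatives.

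\textbf{Step 3: (1)$\Rightarrow$(3).} If $LF$ is an equivalence, then $\varepsilon$ is an isomorphism by Step 1. By Step 2, $[f^\flat]$ is an isomorphism if and only if $LF([f])$ is, and since an equivalence reflects isomorphisms, this holds if and only if $[f]$ is an isomorphism. Saturation in $\mathscr{C}$ and in $\mathscr{D}$ then translates this into: $f$ is a weak equivalence if and only if $f^\flat$ is.

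\textbf{Step 4: (3)$\Rightarrow$(1).} Assume (3). Apply it to $f\colon C\to G(RF(C))$, whose adjoint is the trivial cofibration $F(C)\to RF(C)$, a weak equivalence. Hence $f$, which represents $\eta_C$, is a weak equivalence, so $\eta$ is an isomorphism. Similarly, apply (3) to the weak equivalence $QG(D)\to G(D)$; its adjoint, which represents $\varepsilon_D$, is then a weak equivalence, so $\varepsilon$ is an isomorphism. Therefore $LF$ is an equivalence.
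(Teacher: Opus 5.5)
Your proof is correct. The paper gives no argument of its own here, only the citation \cite[Proposition A.2.5.1]{lurie2009HTT}. What you wrote is the standard derived unit/counit proof (essentially \cite[Proposition 1.3.13]{hovey2007model}), in which condition (3) is tested on exactly the two maps $C\to G(RF(C))$ and $F(QG(D))\to D$ that you use in Step 4.

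The citation buys the paper brevity, while your version buys self-containedness. Two of your inputs, however, deserve an explicit reference rather than the justification you sketch:
\begin{enumerate}
\item The nontrivial half of saturation ($f$ is a weak equivalence whenever its image in the homotopy category is invertible) does not follow from the cylinder--path characterization alone. It needs the Whitehead-type lemma that a homotopy equivalence between fibrant--cofibrant objects is a weak equivalence.
\item The identity $[f^\flat]=\varepsilon_D\circ LF([f])$ rests on the point-set adjunction descending to homotopy classes when $C$ is cofibrant and $D$ is fibrant. That comes from $F$ carrying a cylinder object of a cofibrant $C$ to a cylinder object of $F(C)$, because $F$ preserves coproducts, cofibrations and trivial cofibrations, and two-out-of-three applies. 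Ken Brown's lemma only handles the independence of the replacements $RF(C)$ and $QG(D)$.
\end{enumerate}
With those two facts cited, your Steps 1--4 go through as written.
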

\begin{proof}
See \cite[Proposition A.2.5.1.]{lurie2009HTT}.
\end{proof}

\subsubsection{Bousfield Localizations}\label{app:Bousfield}
Given a model category $\mathscr{C}$, we will now discuss a procedure to formally invert a certain class of maps to enlarge $\mathscr{C}$ to include more weak equivalences via a process called \emph{localization}. The reader should take note that this process of localizing maps does not turn the maps themselves into an isomorphism, but rather, it makes the images of those maps in the homotopy category into isomorphisms. As the image of a map in the homotopy category is an isomorphism if and only if the map is a weak equivalence, localizing a model category with respect to a class of maps amounts to making maps into weak equivalences rather than isomorphisms.
\begin{defn}
Let $M = (\mcal{W}eq, \mcal{C}of,\mcal{F}ib)$ be a simplicial model category with class of weak equivalences $\mcal{W}eq$. Suppose that $I$ is a set of maps in $M$. 
\begin{itemize}
\item An object $X$ of $M$ is \emph{$I$-local} if it is fibrant and if for all $i : A \to B$ with $i \in I$, the induced morphism on mapping spaces $i^*: \Map_{M}(B,X) \to \Map_M(A,X)$ is a weak equivalence of simplicial sets.  
\item A morphism $f : A \to B$ is an \emph{$I$-local weak equivalence} if for every $I$-local object $X$, the induced morphism on mapping spaces
$f^* : \Map_M (B,X) \to \Map_M (A,X)$ is a weak equivalence. 
\end{itemize}
\end{defn}

Let $\mcal{F}ib_I$ denote the class of maps satisfying the right lifting property with respect to $W_I$-acyclic cofibrations, denoted as $\{\mcal{W}eq_I\cap \mcal{C}of\}$. Then the model category structure on $M$ given by $(\mcal{W}eq_I,\mcal{C}of,\mcal{F}ib_I)$, if it exists, is called the \emph{left Bousfield localization} of $M$ with respect to $I$. Whenever it forms a model category, we denote this localization by $\L_I M$.

\begin{remark}
When it exists, the Bousfield localization of $M$ with respect to $I$ is universal with respect to Quillen pairs $F: M \rightleftharpoons N: G$ such that $LF(i)$ is a weak equivalence in $N$, for all $i \in I$.    
\end{remark}

The following theorem portrays the existence of such localization in good cases and an elegant characterization of fibrant objects in those cases:

\begin{theorem}
If $M$ is a left proper and combinatorial simplicial model category and $I$ is a set of morphisms in $M$, then the left Bousfield localization $\L_I M$ exists and inherits a simplicial model category structure from $M$. Moreover, the fibrant objects of $\L_I M$ are precisely the $I$-local objects of $M$.
\end{theorem}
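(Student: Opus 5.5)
The plan is to construct $\L_I M$ with cofibrations $\mcal{C}of$ (unchanged from $M$) and weak equivalences $\mcal{W}eq_I$, the $I$-local weak equivalences, and to verify the model axioms using J. Smith's recognition theorem for combinatorial model categories (as in \cite[Proposition A.2.6.15]{lurie2009HTT}). Smith's theorem requires four things:
\begin{enumerate}
\item $\mcal{W}eq_I$ has the two-out-of-three property and is closed under retracts;
\item $\mcal{W}eq_I$ is an accessible subcategory of the arrow category;
\item $\mcal{C}of\cap\mcal{W}eq_I$ is closed under pushouts and transfinite compositions;
\item every map with the right lifting property against the generating cofibrations lies in $\mcal{W}eq_I$.
\end{enumerate}
Throughout, I replace each $i\in I$ by a cofibration between cofibrant objects; this does not change the class of $I$-local objects. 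I also read $\Map_M$ as derived mapping spaces, i.e.\ computed after cofibrant replacement of the source.

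Conditions (1) and (4) are formal. A map $f$ lies in $\mcal{W}eq_I$ exactly when $f^*$ is a weak equivalence of simplicial sets on $\Map_M(-,X)$ for every $I$-local $X$, and weak equivalences of simplicial sets satisfy two-out-of-three and are closed under retracts. For (4), a map with the right lifting property against the generating cofibrations is a trivial fibration of $M$. Every weak equivalence of $M$ induces equivalences on derived mapping spaces into fibrant objects, so it lies in $\mcal{W}eq_I$.

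For (3), left properness is the key input: a pushout along a cofibration is a homotopy pushout in $M$. For each $I$-local $X$, the functor $\Map_M(-,X)$ sends homotopy pushouts to homotopy pullbacks of Kan complexes. Hence a base change of an $I$-equivalence cofibration is again one. For transfinite compositions, a transfinite composite of cofibrations is a homotopy colimit. The functor $\Map_M(-,X)$ sends it to a homotopy limit of a tower of weak equivalences, which is again a weak equivalence.

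I expect (2), accessibility, to be the main obstacle, and I would address it with an explicit localization functor. Let $J$ be the generating trivial cofibrations of $M$. Set
\[
J_I := J\cup\{\, i\,\square\,(\partial\Delta^n\hookrightarrow\Delta^n) \;:\; i\in I,\ n\ge 0 \,\},
\]
where $\square$ denotes the pushout-product. By the small object argument, $J_I$ yields an accessible functor $L$ together with a natural map $\mathrm{id}\to L$ that is a relative $J_I$-cell complex. By adjunction with the simplicial structure, $LX$ is fibrant and $I$-local. I would then check two things. First, $X\to LX$ is in $\mcal{W}eq_I$, since it is a cell complex built from maps in $\mcal{W}eq_I$, using (3). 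Second, $f\in\mcal{W}eq_I$ if and only if $Lf$ is a weak equivalence of $M$; this follows from two-out-of-three and a Yoneda argument on the homotopy category of $I$-local objects. Consequently $\mcal{W}eq_I$ is the preimage under the accessible functor $L$ of the accessible class $\mcal{W}eq$, and so it is accessible.

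With the model structure in place, the simplicial axiom (SM7) follows by checking pushout-products against generating cofibrations, using (3) and the adjunction. It remains to identify the fibrant objects. An object is fibrant in $\L_I M$ exactly when it has the right lifting property against $\mcal{C}of\cap\mcal{W}eq_I$. This property implies lifting against $J_I$, which gives fibrancy in $M$ together with $I$-locality. Conversely, suppose $X$ is fibrant and $I$-local, and $A\to B$ is a cofibration in $\mcal{W}eq_I$. Then $\Map(B,X)\to\Map(A,X)$ is a trivial Kan fibration, and hence surjective on vertices, which produces the required lifts.
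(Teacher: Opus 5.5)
Your overall architecture is the standard proof behind \cite[Proposition A.3.7.3]{lurie2009HTT}, and the paper merely cites that result. It consists of Smith's recognition theorem, left properness to make $\mcal{C}of\cap\mcal{W}eq_I$ stable under pushouts and transfinite composition, and accessibility via a small-object localization functor $L$ built from $J\cup\{i\,\square\,(\partial\Delta^n\hookrightarrow\Delta^n)\}$. Steps (1)--(4), the criterion $f\in\mcal{W}eq_I\iff Lf\in\mcal{W}eq$, and the implication ``fibrant in $\L_I M$ $\Rightarrow$ $I$-local'' are sound.

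The gap is in the converse. For an arbitrary cofibration $A\to B$ in $\mcal{W}eq_I$, you assert that $\Map(B,X)\to\Map(A,X)$ is a trivial Kan fibration. It is a Kan fibration by SM7 in $M$. Your only reason for it to be a weak equivalence is that these strict mapping spaces compute the derived ones. That holds when $A$ (hence $B$) is cofibrant, but fails in general. For example, in $G$-simplicial sets with the projective model structure (left proper, combinatorial, simplicial), $\Map(*,EG)=EG^G=\emptyset$, while the derived mapping space $EG^{hG}$ is contractible. Reading $\Map$ as derived does not help either: its vertices are maps out of a cofibrant replacement of $B$, so they do not give lifts $B\to X$. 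Your one-line SM7 argument has the same defect. The identity $\Map(B\otimes K,X)\cong\Map(B,X)^K$ only shows $f\otimes K\in\mcal{W}eq_I$, and hence $f\,\square\,j\in\mcal{W}eq_I$, when $f$ has cofibrant domain.

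Both points can be repaired with tools you already have.

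\emph{Fibrancy.} Let $X$ be $I$-local, and factor $X\to *$ in $\L_I M$ as a trivial cofibration $u\colon X\to X'$ followed by a fibration.
\begin{itemize}
\item $X'$ is $I$-local by your $J_I$ argument.
\item So $u$ is an $I$-equivalence between $I$-local objects, hence a weak equivalence of $M$ by your Yoneda argument, hence a trivial cofibration of $M$.
\item Since $X$ is fibrant in $M$, $u$ has a retraction.
\item Therefore $X\to *$ is a retract of the $\L_I M$-fibration $X'\to *$, so $X$ is fibrant in $\L_I M$.
\end{itemize}

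\emph{SM7.} Use left properness to reduce to cofibrant domains. Given $f\colon A\to B$ in $\mcal{C}of\cap\mcal{W}eq_I$:
\begin{itemize}
\item Take a cofibrant replacement $\tilde A\to A$.
\item Factor $\tilde A\to B$ as a cofibration $g\colon\tilde A\to\tilde B$ followed by a trivial fibration. Then $g$ is a cofibration between cofibrant objects lying in $\mcal{W}eq_I$.
\item By left properness, $A\sqcup_{\tilde A}\tilde B\to B$ is a weak equivalence.
\item Factoring this map and lifting exhibits $f$ as a retract of a pushout of $g$ followed by a trivial cofibration of $M$.
\end{itemize}
Now consider the class of cofibrations $f$ with $f\,\square\,(\partial\Delta^n\hookrightarrow\Delta^n)\in\mcal{W}eq_I$ for all $n$. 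It is weakly saturated, because $\mcal{C}of\cap\mcal{W}eq_I$ is now the class of trivial cofibrations of $\L_I M$. It contains cofibrations between cofibrant objects lying in $\mcal{W}eq_I$, by your adjunction argument. It contains trivial cofibrations of $M$, by SM7 in $M$. Hence it contains all of $\mcal{C}of\cap\mcal{W}eq_I$.
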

\begin{proof}
See \cite[Proposition A.3.7.3]{lurie2009HTT}.    
\end{proof}

For various other characterizations and existence of Bousfield localization for other model structures, one can refer to \cite[\S A.3.7]{lurie2009HTT} or \cite[Chapter 4]{hirschhorn2003model}. Dually for a right proper cellular model category, we have a theory of right Bousfield localization obtained by inverting the class of $\mcal{W}eq_I$-acyclic fibrations $\mcal{W}eq_I \cap \mcal{F}ib$. This is called \emph{cellularization} (see \cite[Chapter 5]{hirschhorn2003model}). The reader may also wish to refer to the classical reference \cite{bousfield1975localization} or a modern exposition \cite[\S 3.3]{antieau2017primer}.

We now turn to one of the most important model categories that has an elegant description and which has been well-studied in the literature.

\subsection{Combinatorial Model structures}
We will now briefly describe a special kind of model structure. A \emph{combinatorial model category} is a tractable model structure with a very strong control over the cofibrations in this model structure. It is a structure that is generated from small data = from a small set of (acyclic) cofibrations between small objects.

\begin{defn}
A model category $\mathscr{C}$ is \emph{left proper} if, for any pushout square
\[
\begin{tikzcd}
	{A} & {B} \\
	{A'} & {B'}
	\arrow["i", from=1-1, to=1-2]
	\arrow["j"', from=1-1, to=2-1]
	\arrow["{j'}", from=1-2, to=2-2]
	\arrow["{i'}"', from=2-1, to=2-2]
\end{tikzcd}
\]
in which $i$ is a cofibration and $j$ is a weak equivalence, the map $j$ is also a weak equivalence. Dually, $\mathscr{C}$ is \emph{right proper} if, for any pullback square
\[\begin{tikzcd}
	{X'} & {Y'} \\
	X & Y
	\arrow["{p'}", from=1-1, to=1-2]
	\arrow["{q'}"', from=1-1, to=2-1]
	\arrow["q", from=1-2, to=2-2]
	\arrow["p"', from=2-1, to=2-2]
\end{tikzcd}\]
in which $p$ is a fibration and $q$ is a weak equivalence, the map $q$ is also a weak equivalence. If every object in  $\mathscr{C}$ is cofibrant, then $\mathscr{C}$ is left proper.
\end{defn}

\begin{defn}\cite[Definition 5.5.0.1]{lurie2009HTT}
A category $\mathscr{C}$ is \emph{presentable} if it satisfies the following conditions:
\begin{enumerate}
\item The category $\mathscr{C}$ admits all (small) colimits,
\item There exists a (small) set $S$ of objects of $\mathscr{C}$ which generates $\mathscr{C}$ under colimits; in other words, every object of $\mathscr{C}$ may be obtained as the colimit of a (small) diagram taking values in $S$,
\item Every object in $\mathscr{C}$ is small. (Assuming (2), this is equivalent to the assertion that every object which belongs to $S$ is small.)
\item For any pair of objects $X,Y \in \mathscr{C}$, the set $\Hom_{\mathscr{C}}(X,Y)$ is small.
\end{enumerate}
\end{defn}

\begin{example}
For any model category $\mathscr{C}$, the category of presheaves $\Psh(\mathscr{C})$ is presentable. This follows from the fact that any presheaf is a colimit of representable ones (\cref{repr-psh-colimits}).
\end{example}

\begin{defn}
Let $\mathscr{C}$ be a presentable category and $\kappa$ a regular cardinal. We will say that a full subcategory $\mathscr{C}_0\subset \mathscr{C}$ is a \emph{$\kappa$-accessible subcategory} of $\mathscr{C}$ if the following conditions are satisfied:
\begin{enumerate}
\item The full subcategory $\mathscr{C}_0 \subset \mathscr{C}$ is stable under $\kappa$-filtered colimits.
\item There exists a (small) set of objects of $\mathscr{C}_0$ which generates $\mathscr{C}_0$ under $\kappa$-filtered colimits.
\end{enumerate}
We will say that $\mathscr{C}_0 \subset \mathscr{C}$ is an accessible subcategory if $\mathscr{C}_0$ is a $\kappa$-accessible subcategory of $\mathscr{C}$ for some regular cardinal $\kappa$.
\end{defn}

\begin{remark}
An $(\infty,1)$-category $\mathscr{C}$ is \emph{presentable} if $\mathscr{C}$ is accessible and admits small colimits.
\end{remark}

\begin{example}
The category of motivic spaces $\Spc_S \subset \Psh(Sm_S)$ is an accessible subcategory of $\Psh(Sm_S)$. This follows from \cite[Proposition 5.5.1.2]{lurie2009HTT} and the fact that $\Psh(Sm_S)$ is an accessible category.
\end{example}

\begin{defn}
A model category $\mathscr{C}$ is \emph{cofibrantly generated} if there are small sets of morphisms $I, J\subset Mor(\mathscr{C})$ such that
\begin{itemize}
\item $\mcal{C}of(I)$ is precisely the collection of cofibrations of $\mathscr{C}$,
\item $\mcal{C}of(J)$ is precisely the collection of acyclic cofibrations in $\mathscr{C}$,
\item $I$ and $J$ permit the small object argument.
\end{itemize}
\end{defn}
The key takeaway from the above definition is that in a cofibrantly generated model category, the collection of cofibrations and acyclic cofibrations can be characterized in a much simpler way: 
\begin{itemize}
\item[(a)] $\mcal{C}of(I)= llp(rlp (I)),$ 
\item[(b)] $\mcal{C}of(J) = llp(rlp (J)).$ 
\end{itemize}

And therefore the fibrations form precisely $rlp(J)$ and the acyclic fibrations precisely $rlp(I)$. Here, $llp = \text{left lifting property}$ and $rlp = \text{right lifting property}$. For equivalent definitions and various discussions of cofibrantly generated categories, one can refer to \cite[\href{https://ncatlab.org/nlab/show/cofibrantly+generated+model+category}{Cofib-gen}]{nLab-authors}. Since in practice these generators might be difficult to find, \cite[\S A.2.6]{lurie2009HTT} provides an alternative definition which puts more emphasis on the collection of weak equivalences in $\mathscr{C}$.

\begin{defn}
A model category $\mathscr{C}$ is \emph{combinatorial} if it is presentable as a category and cofibrantly generated as a model category.
\end{defn}
There are at least two brilliant ways of identifying combinatorial structure on $\mathscr{C}$:
\begin{enumerate}
\item A theorem due to Smith (\cite[Theorem 1.7]{beke2000}) characterizes combinatorial model categories in terms of weak equivalences and generating cofibrations, hence using only two-thirds of the input data explicitly required, which facilitates identifying combinatorial structures. For example, one can show that the left Bousfield localization of certain combinatorial model categories is again a combinatorial model category.

\item Dugger (\cref{app:duggers-thm}) identifies the combinatorial structures with those model categories that have a presentation in that they are Bousfield localizations of global model structures on simplicial presheaves, up to Quillen equivalence (\cite[Corollary 1.2]{dugger2001combinatorial}). This was an important tool in Lurie's proof to show that simplicial combinatorial model categories are precisely the models for locally presentable  $(\infty,1)$-categories.
\end{enumerate}

\begin{theorem}\label{app:duggers-thm}
Every combinatorial model category $\mathscr{C}$ is Quillen equivalent to a left Bousfield localization $\L_S \sPsh(K)$ f the global projective model structure on simplicial presheaves $\sPsh(K)_{proj}$ on a small category $K$
    $$\L_S \sPsh(K) \xrightarrow{\simeq} \mathscr{C} $$
where $S\subset \sPsh(K)$ is a well-defined subset.
\end{theorem}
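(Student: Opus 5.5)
The plan is to follow Dugger's strategy: build a Quillen pair out of $\sPsh(K)_{proj}$ for a well-chosen small category $K$, and then localize $\sPsh(K)$ at exactly those maps that this pair inverts. \emph{Choice of $K$.} Since $\mathscr{C}$ is combinatorial, I will fix a regular cardinal $\lambda$ with the following properties:
\begin{itemize}
\item $\mathscr{C}$ is locally $\lambda$-presentable;
\item the domains and codomains of the generating (acyclic) cofibrations $I,J$ are $\lambda$-small;
\item weak equivalences are stable under $\lambda$-filtered colimits.
\end{itemize}
The last property is the Smith/Dugger accessibility of $\mathcal{W}eq$, and I will assume it. I then take $K$ to be a small full subcategory of cofibrant, $\lambda$-presentable objects. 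It should be closed under the cofibrant replacement and factorization constructions up to isomorphism, and it should contain the generators of $\mathscr{C}$.

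\emph{The Quillen pair.} Choose functorial cosimplicial resolutions $\Gamma\colon K\to c\mathscr{C}$, Reedy cofibrant, with $\Gamma(k)^0\simeq k$. This is done by factoring the latching maps with the factorization axiom (5) of \cref{defn:model-cats}. Left Kan extension gives the realization functor
\[
\mathrm{Re}\colon \sPsh(K)\to \mathscr{C}, \qquad \Delta^n\times h_k\mapsto \Gamma(k)^n,
\]
with right adjoint $\mathrm{Sing}(X)(k)=\Hom(\Gamma(k)^\bullet,X)$. Reedy cofibrancy sends the generating projective cofibrations $\partial\Delta^n\times h_k\to\Delta^n\times h_k$ to the latching maps, which are cofibrations. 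Similarly, the generating acyclic cofibrations go to acyclic cofibrations, since $\Gamma(k)$ is homotopically constant. Hence $(\mathrm{Re},\mathrm{Sing})$ is a Quillen adjunction for the projective structure.

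\emph{The localizing set.} Let $S$ be a set of representatives of maps $f\colon A\to B$ between cofibrant, $\lambda$-small simplicial presheaves for which $\mathrm{Re}(f)$ is a weak equivalence; smallness guarantees that this is a set. The projective structure is left proper and combinatorial, so the theorem on Bousfield localizations stated earlier shows that $\L_S\sPsh(K)$ exists. By the universal property of left Bousfield localization recorded in the remark after that definition, $(\mathrm{Re},\mathrm{Sing})$ descends to a Quillen pair on $\L_S\sPsh(K)$.

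\emph{Quillen equivalence.} It remains to check criterion (3) of the Quillen equivalence proposition. This splits into two steps.
\begin{enumerate}
\item The derived counit $\mathrm{Re}(Q\,\mathrm{Sing}X)\to X$ is a weak equivalence for fibrant $X$. Here I would use that $X$ is a $\lambda$-filtered colimit of $\lambda$-presentable objects in $K$, so that the canonical homotopy colimit over $K\downarrow X$ maps by a weak equivalence to $X$. Since $\mathrm{Re}$ commutes with homotopy colimits and $\mathrm{Re}(h_k)\simeq k$, this identifies the counit.
\item Every map $g$ inverted by $L\mathrm{Re}$ is an $S$-local equivalence.
\end{enumerate}
Step 2 is the main obstacle. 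I would attack it by writing $g$, after cofibrant replacement, as a $\lambda$-filtered colimit of maps $g_\alpha$ between $\lambda$-small cofibrant presheaves. Then I would use that weak equivalences in $\mathscr{C}$ are stable under $\lambda$-filtered colimits, so that cofinally many $g_\alpha$ can be refined to maps already inverted by $\mathrm{Re}$, hence lying in $S$. Finally, $S$-local equivalences are closed under filtered homotopy colimits. Getting this cofinality argument right, in particular arranging the refinements compatibly in the filtered diagram, is where I expect the real work to lie.
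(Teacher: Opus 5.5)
\paragraph{Step 2: a genuine gap.} The paper does not argue this statement at all; it cites Dugger, so the real question is whether your reconstruction of Dugger's argument closes. The set-up is sound: the choice of $\lambda$ and $K$, the pair $(\mathrm{Re},\mathrm{Sing})$, existence of $\L_S\sPsh(K)$, descent of the adjunction, and the reduction to ``derived counit'' plus ``$\mathrm{Re}$ reflects weak equivalences''. But Step 2 has a genuine gap, because the tool you propose for it points in the wrong direction.

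Closure of $\mathcal{W}eq$ under $\lambda$-filtered colimits says that a filtered colimit of weak equivalences is a weak equivalence. You need a converse-type statement: a map $g$ with $\mathrm{Re}(Qg)$ a weak equivalence is a filtered colimit of maps between small objects that are themselves inverted by $\mathrm{Re}$. A given presentation $g=\operatorname{colim}g_\alpha$ may contain no such $g_\alpha$ at all (think of $S^\infty\to *$ as $\operatorname{colim}(S^n\to *)$ in $\sSet$), and nothing in your hypotheses produces the ``refinements''.

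What is missing is accessibility. By Smith's theorem (see Lurie's HTT, \S A.2.6), $\mathcal{W}eq$ is an accessible, accessibly embedded full subcategory of the arrow category of $\mathscr{C}$. Hence so is its preimage $\{g : \mathrm{Re}(Qg)\in\mathcal{W}eq\}$ under the accessible functor $\mathrm{Re}\circ Q$. So for some regular $\kappa\geq\lambda$, every such $g$ is a $\kappa$-filtered colimit of such maps between $\kappa$-presentable presheaves. Applying an accessible cofibrant replacement puts the pieces into $S$, now defined with $\kappa$ instead of $\lambda$. Since filtered colimits in $\sPsh(K)$ are homotopy colimits and $S$-local equivalences are closed under them, $g$ is an $S$-local equivalence.

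Alternatively, you can sidestep accessibility by localizing at a different set. Let $S$ consist of the derived unit maps $QA\to Q\,\mathrm{Sing}\,R\,\mathrm{Re}\,A$, for $A$ in a set of representatives of $\lambda$-small cofibrant presheaves and $R$ an accessible fibrant replacement. Once Step 1 is known, the triangle identity gives $\mathrm{Re}(S)\subset\mathcal{W}eq$. Choose $\lambda$ so that all $\Gamma(k)^n$ are $\lambda$-presentable. Then $Q$, $\mathrm{Re}$, $R$ and $\mathrm{Sing}$ commute with $\lambda$-filtered colimits, so the derived unit of any cofibrant $A$ is a retract of a $\lambda$-filtered colimit of maps in $S$, hence an $S$-local equivalence. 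Step 2 then follows by two-out-of-three in the naturality square of the derived unit.

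\paragraph{Step 1: also under-justified.} The object $\operatorname{hocolim}_{K\downarrow X}h_k$ is the cofibrant replacement of the discrete presheaf $\Hom_{\mathscr{C}}(-,X)|_K$. Its map to $\mathrm{Sing}\,X$ is not a projective weak equivalence: it compares the set $\Hom_{\mathscr{C}}(k,X)$ with the mapping space $\mathrm{Map}(\Gamma(k),X)$. So ``$\mathrm{Re}$ commutes with homotopy colimits and $\mathrm{Re}(h_k)\simeq k$'' does not identify the derived counit.

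The diagram that actually computes $\mathrm{Re}(Q\,\mathrm{Sing}\,X)$ is indexed by the simplices of $\mathrm{Sing}\,X$, i.e.\ by maps $\Gamma(k)\otimes\Delta^n\to X$. A $\lambda$-filtered presentation of $X$ by objects of $K$ need not be homotopy cofinal in that diagram.

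The missing step is a homotopy Yoneda statement: for $k\in K$, the map $h_k\to Q\,\mathrm{Sing}(Rk)$ is inverted by $\mathrm{Re}$, which yields the derived counit at $Rk$. This is where the closure of $K$ under cosimplicial frames and fibrant-cofibrant replacement has to be used. The case of a general fibrant $X$ then follows by writing it, up to weak equivalence and retracts, as a $\lambda$-filtered colimit of objects of $K$ and using closure of $\mathcal{W}eq$ under such colimits.
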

\begin{proof}
This is the main result of \cite{dugger2001combinatorial}.
\end{proof}

\subsection{Homotopy Theory on Simplicial Presheaves}\label{App:model:simp-pshv}
% Ref: pick out relevant references from \cite[\S 5].{severitt2006motivic}, also \cite[\S 3.1]{antieau2017primer}, Follow \cite[\S 3]{severitt2006motivic}.
In this section, we will briefly review the category of simplicial sets and the simplicial presheaves and equip it with a model categorical structure following \cite{hovey2007model, Goerss2009simplicial, lurie2009HTT}.

\subsubsection{The Category of Simplicial Sets}
\begin{defn}
For each $n\ge 0$, we let $[n]:= \{0< 1< \dots n-1 <n \}$ denote the linearly ordered set. The category of (combinatorial) simplices, denoted $\Delta$, is the category with:
\begin{itemize}
\item Objects: linearly ordered sets $[n]$ (a.k.a. string of relations $1\to 2\to\dots\to n$).
\item Morphisms: $\theta : [m] \to [n]$ is an order-preserving set function.
\end{itemize}
The category $\Delta$ is also called the \emph{ordinal number category}.
\end{defn}

If $\mathscr{C}$ is any category, a simplicial object of $\mathscr{C}$ is a functor $ \Delta^{op}\to \mathscr{C}$. Dually, a \emph{cosimplicial object} of $\mathscr{C}$ is a functor $\Delta\to \mathscr{C}$.

\begin{defn}
The category of \emph{simplicial sets}, denoted as $\sSet$, is a simplicial object in the category of sets, that is, $\sSet:= \Fun(\Delta^{op}, \Set)$. 
\end{defn}

Since the category of sets has all (small) limits and colimits, it follows that the category of simplicial sets also has all (small) limits and colimits.

\begin{example}\label{eg:n-simplex}
There is a standard covariant functor   
\begin{align*}
        & \Delta \to \rm{Top}\\
        & n \mapsto  |\Delta^n| 
\end{align*}
The \emph{topological $n$-simplex} $\mid \Delta^n \mid \subset \RR^{n+1}$ is the space 
    $$|\Delta^n| := \{(t_0,\dots, t_n)\in \RR^{n+1}: \sum_{t=0}^{n} t_i=1, t_i\ge 0 \} $$
with the subspace topology. The map $\theta^*: |\Delta^n|\to |\Delta^m|$ induced by $\theta:[m]\to [n]$ is defined by 
            $\theta_*(t_0,\dots, t_m) = (s_0,\dots, s_n)$
where 
\begin{equation*}
s_i = 
\begin{cases}
  0                          & \text{if}\quad   \theta^{-1} = \emptyset \\
  \sum_{j\in \theta^{-1}(i)} & \text{if}\quad \text{else} 
\end{cases}
\end{equation*}
For any $T\in \sSet$, the \emph{singular set} $\Sing(T)$ is the simplicial set given by 
                $$[n]\mapsto \hom(|\Delta^n, T|)$$
\end{example}
By convention, we extend this construction by setting $\Delta^{-1}=\emptyset$. The standard 0-simplex $\Delta^0$ is a final object of the category of simplicial sets, that is, it carries each $[n] \in \Delta^{op}$ to a set having a single element.

\begin{remark}
There is an equivalent definition of a simplicial set that is useful for computations. A simplicial set $S_{\bullet}$ can be viewed as a collection of sets $\{S_n\}_{n\ge 0}$ endowed with two additional structures:
\begin{align*}
    &d^i: [n-1]\to [n], \quad 0\le i\le n\quad (\text{cofaces}) \\
    & s^j: [n+1]\to [n], \quad 0\le j\le n\quad (\text{codegeneracies})
\end{align*}
Both of these maps $d^i$ and $s^j$ are expected to satisfy a bunch of equalities, called the \emph{simplicial identities} that can be found in any of the standard references (e.g., \cite[Chapter 1, \S 1]{Goerss2009simplicial}). Every order-preserving map from $[n]$ to $[m]$ can be factored as a composition of face and degeneracy maps, and so the structure of a simplicial set $S$ is completely determined by the sets $S_n$ for $n\ge 0$ together with these face and degeneracy maps.
\end{remark}

\begin{example}
The \emph{standard $n$-simplex}, simplicial $\Delta^n$ in the simplicial set category $\sSet$ is defined by 
            $$\Delta^n:= \hom_{\Delta}(-, [n]).$$
Put alternatively, $\Delta^n$ is the contravariant functor on $\Delta$ which is represented by $[n]$.
\end{example}

A morphism $f: X \to Y$ of simplicial sets, called the \emph{simplicial map}, is a natural transformation of contravariant set-valued functors defined on $\Delta$. The Yoneda lemma implies that simplicial maps $\Delta^n \to Y$ classify $n$-simplices of $Y$ in the sense that there is a natural bijection
            $$ \hom_{\sSet}(\Delta^n,Y) \cong Y_n $$
between the set $Y_n$ of $n$-simplices of $Y$ and the set $\hom_S(\Delta^n,Y)$. There is a standard simplex 
            $$ \iota_n := \textbf{1}_n\in \hom_{\Delta}([n],[n])$$
            
\begin{example}
The category $\Delta^n$ contains two important sub-complexes: The $k$-th horn $\Lambda^n_k$ $0 \le k \le n$ which is a sub-complex of $\Delta^n$ generated by all faces $d_j(\iota_n)$ except the $k$th face $d_k(\iota_n)$ and the boundary $\partial\Delta^n$, which is the smallest sub-complex of $\Delta^n$ containing the faces $d_j(\iota_n)$, $0 \le j \le n$ of the standard simplex $\iota_n$.
\end{example}

\subsubsection{Model structure on Simplicial Sets}\label{app:model-on-sSets}
The category $\sSet$ of simplicial sets has a combinatorial, left proper, and right proper model structure, which is usually referred to as the \emph{Kan model structure} defined as follows:
\begin{enumerate}
\item A map of simplicial sets $f : X\to Y$ is a \emph{cofibration} if it is a monomorphism: if the induced map $X_n\to Y_n$ is injective for all $n\ge 0$,
\item A map of simplicial sets $f: X\to Y$ is a \emph{fibration} if it is a Kan fibration: if for any diagram
\[\begin{tikzcd}
	{\Lambda^n_i} & X \\
	{\Delta^n} & Y
	\arrow[from=1-1, to=1-2]
	\arrow[hook, from=1-1, to=2-1]
	\arrow["f", from=1-2, to=2-2]
	\arrow[dashed, from=2-1, to=1-2]
	\arrow[from=2-1, to=2-2]
\end{tikzcd}\]
It is possible to supply the dotted arrow rendering the diagram commutative, where $\Lambda^n_i$ is the $i$-th horn of $\Delta^n$.
\item A map of simplicial sets $f : X\to Y$ is a \emph{weak equivalence} if the induced map of geometric realizations $|X| \to |Y|$ is a homotopy equivalence of topological spaces.
\end{enumerate}

Studying model structures on the category of simplicial sets is the fundamental step towards building model structures for the category of motivic $S$-spaces $\Spc_S$. This is essentially since the category of simplicial presheaves on which the category of spaces $\Spc_S$ is built is, in fact, enriched over $\sSet$ (such that the enrichment is compatible with the model structure). Such enriched categories are called \emph{Simplicial Categories}. Briefly, speaking a model category $\mathscr{C}$ is \emph{simplicial} if the following are satisfied:
\begin{enumerate}
\item There exists mapping space functor $\Map : \mathscr{C}^{op}\times \mathscr{C} \to \sSet$ 
\item There exists a product functor $\otimes: \sSet\times \mathscr{C} \to \mathscr{C}$ 
\end{enumerate}
with a compatibility that the product is associative, that is, 
\begin{itemize}
\item $(L\times K)\otimes X \simeq L\times (K\otimes X)$, natural in $X \in \mathscr{C}$, $K,L \in \sSet$
\item $\Delta^0$ is a unit object, i.e. $\Delta^0 \otimes X \cong X$.
\item for a $X \in \mathscr{C}$ and $K\in \sSet$, there are adjoint functor pairs  
        \begin{align*}
        \otimes X:\sSet \leftrightharpoons \mathscr{C}: Map(-,X)\\
        K\otimes -: \mathscr{C}\leftrightharpoons \sSet: (-)^{K}
        \end{align*}    
\end{itemize}

\begin{example}
The category $\sSet$ together with the model structure introduced above is a simplicial model category if we take
$K\otimes X:= K\times X$ (\cite[Proposition 11.5]{Goerss2009simplicial}).
\end{example}

\subsection{Category of Simplicial Presheaves}
The category of simplicial presheaves $\sPsh(\mathscr{C})$ on a model category $\mathscr{C}$ is the contravariant functor 
        $$\sPsh(\mathscr{C}) := \Fun(\mathscr{C}^{op}\to \sSet)$$
We shall now briefly discuss some of the relevant model structures on the category of simplicial presheaves. Several model structures on the category of presheaves have been studied throughout history (See \cite{jardine1987simplicial,heller1988homotopy, Goerss2009simplicial, hirschhorn2003model, dugger2001universal,dundas2003motivicfunctors, isaksen2005flasque, hovey2007model, rondigs2025grothendieck}). Classically speaking, one equips this category with four possible model structures: 
\begin{itemize}
\item (Global) injective and local injective model structure
\item (Global) projective and local projective model structure 
\end{itemize}
The injective model structure is the one where cofibrations are detected pointwise\footnote{also called as sectionwise or objectwise}, and fibrations are defined as those that satisfy the left lifting property with respect to trivial (also called as acyclic) cofibrations. On the other hand, the projective model structure is the one where weak equivalences stay the same, but fibrations are detected pointwise, and cofibrations are those morphisms that satisfy the left lifting property with respect to the acyclic fibrations. Hence, the injective model structure is more suited to describe cofibrations and the projective model structure to describe fibrations. The local analogs are then obtained by (Bousfield) localizing with respect to a suitable class of morphisms to produce certain additional weak equivalences that allow one to study the category locally.
\medskip

For the current exposition below, we will follow \cite{severitt2006motivic, antieau2017primer}. We will only build this theory on the level of presheaves, noting that all of these can also be lifted to the category of simplicial sheaves via the adjoint functor pair 
            $$a: \sPsh(T) \rightleftarrows \sShv(T): \iota_* $$
where $\iota$ is the inclusion functor and $a$ is the associated sheaf functor that is defined objectwise.
\begin{defn}
Let $T= (\mcal{C}, J)$ denote a small Grothendieck site. Then the simplicial objects denoted by 
        $$\sPsh(T):= \Fun(\Delta^{op}, \Psh(T))$$
are called the \emph{simplicial presheaves} on $T$. That is, $\sPsh(T)= \Fun(\mcal{C}^{op}, \sSet)$ which are $\sSet$-valued presheaves on $T$. Furthermore, we have that the pointed simplicial presheaves are the same as $\sSet$-valued presheaves on $T$, i.e., $\sPsh_*(T):= \Fun(\mcal{C}^{op}, \sSet_*)$.
\end{defn}

\begin{prop}\label{repr-psh-colimits}
Every simplicial presheaf is canonically isomorphic to a colimit of representable simplicial presheaves $R_X:= \Hom_{\mcal{C}}(-, X)$.
\end{prop}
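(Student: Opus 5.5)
The statement is \cref{repr-psh-colimits}: every simplicial presheaf $F\in\sPsh(\mathcal{C})$ is canonically isomorphic to a colimit of representable simplicial presheaves. I will reduce it to the set-valued density theorem (co-Yoneda lemma) and then handle the simplicial direction.

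\textbf{Step 1 (set-valued case).} For a presheaf of sets $P:\mathcal{C}^{op}\to\Set$, form the category of elements $\int P$. Its objects are pairs $(X,x)$ with $X\in\mathcal{C}$ and $x\in P(X)$, and its morphisms $(X,x)\to(Y,y)$ are maps $f:X\to Y$ with $f^*y=x$. By Yoneda, each $x\in P(X)$ corresponds to a map $R_X\to P$. These maps assemble into a cocone
\[
\textrm{colim}_{(X,x)\in\int P} R_X \longrightarrow P .
\]
To show this map is an isomorphism, evaluate at an object $U$; this is allowed because colimits in presheaf categories are computed objectwise. The colimit of the sets $\Hom(U,X)$ over $\int P$ is the quotient of $\coprod_{(X,x)}\Hom(U,X)$ by the relation generated by $(g\circ? )$-type identifications, namely $(X,x,f\circ g)\sim(Y,y,f)$ whenever $f^*y=x$. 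Each class contains the representative $(U,g^*x,\mathrm{id}_U)$. So the map to $P(U)$, which sends $(X,x,g)$ to $g^*x$, is a bijection: surjectivity comes from taking $\mathrm{id}_U$, and injectivity from the canonical representative. Canonicity holds because the diagram $\int P$ is functorial in $P$.

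\textbf{Step 2 (simplicial direction).} View $F$ as a functor $\Delta^{op}\times\mathcal{C}^{op}\to\Set$, that is, a presheaf on $\Delta\times\mathcal{C}$. Applying Step 1 there writes $F$ as a colimit of the representables $\Delta^n\times R_X$. To obtain representables $R_X$ in the sense of the paper (discrete simplicial presheaves), I use the canonical coend/coequalizer presentation instead. Each level $F_n$ is a colimit of $R_X$'s by Step 1, and $\Delta^n\times R_X\cong\textrm{colim}_{\Delta^n}R_X$. Then
\[
F\cong\int^{[n]}\Delta^n\times F_n,
\]
and expanding each $F_n$ via Step 1 exhibits $F$ as an iterated, hence single, colimit of representable presheaves $R_X$. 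This uses that colimits commute with colimits and with $\Delta^n\times-$, the latter being a left adjoint.

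\textbf{Main obstacle.} Step 1 is routine. The only delicate point is reconciling what ``representable simplicial presheaf'' means. If it means $\Delta^n\otimes R_X$, then Step 1 applied on $\Delta\times\mathcal{C}$ suffices. If it means constant-simplicial $R_X$, I need the coend identity $F\cong\int^{[n]}\Delta^n\times F_n$, which is the density theorem for $\sSet$ applied levelwise. I expect the write-up to state the first version and remark on the second.
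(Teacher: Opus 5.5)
Your Step 1 and the first half of Step 2 are correct, and they are essentially the paper's proof. The paper only cites Mac Lane's density theorem for set-valued presheaves. Applying it to $\sPsh(\mathcal{C})=\Fun((\Delta\times\mathcal{C})^{op},\Set)$ is exactly your move, which writes $F$ canonically as a colimit, over its category of elements, of the representables $\Delta^n\times R_X$. There is a small slip in Step 1: a morphism $f\colon (X,x)\to(Y,y)$ of $\int P$ identifies $(X,x,g)$ with $(Y,y,f\circ g)$, not $(X,x,f\circ g)$ with $(Y,y,f)$.

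The second half of Step 2 is wrong, and no argument can repair it.
\begin{itemize}
\item Colimits in $\sPsh(\mathcal{C})$ are computed pointwise in both variables. So a colimit of constant simplicial presheaves $P_j$ is the constant simplicial presheaf on $\mathrm{colim}_j P_j$, since every face and degeneracy map is a colimit of identities.
\item Hence $\Delta^n\times R_X$ with $n\geq 1$ is not a colimit of copies of $R_X$, and the identity $\Delta^n\times R_X\cong\mathrm{colim}_{\Delta^n}R_X$ is false.
\item The coend $\int^{[n]}\Delta^n\times F_n$ is a colimit of the non-discrete objects $\Delta^n\times F_n$. Expanding each $F_n$ only brings you back to the $\Delta^n\times R_X$.
\end{itemize}
In fact the discrete reading of the statement is itself false. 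Take $\mathcal{C}$ to be the terminal category: then $R_\ast=\Delta^0$, and $\Delta^1$ is not a colimit of copies of $\Delta^0$.

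For discrete $R_X$, only a homotopical version holds.
\begin{itemize}
\item Your coend identity says $F$ is the diagonal of the simplicial object $[n]\mapsto F_n$ of discrete presheaves. Hence $F\simeq\mathrm{hocolim}_{\Delta^{op}}F_n$.
\item Each discrete $F_n$ is objectwise weakly equivalent to $\mathrm{hocolim}_{(X,x)\in\int F_n}R_X$. At each $U$, the relevant category of elements splits into components indexed by $F_n(U)$, each with initial object $(U,p,\mathrm{id}_U)$; this is exactly your Step 1 computation.
\end{itemize}
Together these give Dugger's statement that every simplicial presheaf is a homotopy colimit of representables \cite{dugger2001universal}. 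Your remark on the second reading should say this, not assert a strict isomorphism.
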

\begin{proof}
This is due to \cite[Chapter III, \S 7, Theorem 1]{mac1998categories}.
\end{proof}

\begin{corollary}\label{prop:Rep-psh-colimits}
Every motivic space is a colimit of representable ones.
\end{corollary}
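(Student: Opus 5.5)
The statement \cref{prop:Rep-psh-colimits} is the specialization of \cref{repr-psh-colimits} to the site $T=(\Sm_S,\mathrm{Nis})$, together with the observation that passing to motivic spaces does not destroy colimits of representables. My plan is to first make precise what a ``motivic space'' is in this sentence. By \cref{defn:MHT}, $\Spc_S$ is the category of simplicial presheaves $\sPsh(\Sm_S)=\Fun(\Sm_S^{op},\sSet)$. The only subtlety is that $\Sm_S$ is essentially small rather than small. I will therefore fix a small skeleton, exactly as was done when choosing the set $J\subseteq I$ for $\L_{\AA^1}$, and work with presheaves on it. This does not change the presheaf category up to equivalence.

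The first step is to apply \cref{repr-psh-colimits} directly to any $\mcal{F}\in\Spc_S$. I will spell out the mechanism, since the objects are simplicial. Write $\mcal{F}$ as the coend
\[
\mcal{F}\cong\int^{[n]\in\Delta}\mcal{F}_n\times\Delta^n.
\]
Each level $\mcal{F}_n$ is a presheaf of sets. By the co-Yoneda lemma it is the colimit over its category of elements of the representables $R_U$, with $U\in\Sm_S$. Viewing each $R_U\times\Delta^n$ as a representable simplicial presheaf, i.e.\ representable for the site $\Delta\times\Sm_S$, or equivalently a colimit of discrete representables $R_U$ tensored with simplices, this exhibits $\mcal{F}$ as a colimit of representables.

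If the intended meaning of ``representable'' is the discrete objects $R_U$ alone, I will instead use the standard alternative. Every simplicial presheaf is the homotopy colimit, namely the realization of the simplicial diagram $[n]\mapsto\mcal{F}_n$, of the discrete presheaves $\mcal{F}_n$, each of which is a colimit of $R_U$'s. In the $\infty$-categorical reading of $\Spc_S$ used elsewhere in the paper, this is literally a colimit.

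Finally, if one wants the statement in $\mathcal{H}(S)$ rather than $\Spc_S$, I apply $\L_{mot}$. It preserves colimits by \cref{Prop:L_mot-preserve}(1), so $\L_{mot}\mcal{F}$ is the colimit of the $\L_{mot}R_U$. This is also the form used for the formula \cref{pshv-pullback}.

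The main obstacle I expect is purely bookkeeping. One must decide whether ``colimit'' means a strict colimit in $\sPsh$, requiring representables tensored with simplices, or a homotopy colimit, where discrete representables suffice. One must also handle the size issue for $\Sm_S$. I will state both readings and check the coend formula levelwise, which reduces everything to the set-valued co-Yoneda lemma.
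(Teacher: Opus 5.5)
Your proposal is correct and takes the paper's route: specialize \cref{repr-psh-colimits} to $\Sm_S$, since motivic spaces are by definition simplicial presheaves, and then use that $\L_{mot}$ preserves colimits (\cref{Prop:L_mot-preserve}(1)) to obtain the statement in $\mathcal{H}(S)$. Your extra bookkeeping is a genuine improvement over the paper's one-line argument. A strict colimit of discrete representables $R_U$ is always discrete, so one really needs either the objects $R_U\times\Delta^n$ or the homotopy-colimit reading, and your proposal supplies exactly that.
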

\begin{proof}
This follows from the fact that $\L_{mot}$ preserves colimits (See \cref{Prop:L_mot-preserve}). See also \cite[Lemma 2.4]{dundas2003motivicfunctors}.
\end{proof}

\begin{defn}
Let $f: X \to Y$ be a map of simplicial presheaves. Then $f$ is called a
\begin{itemize}
\item \emph{local weak equivalence}, if $f_*:\pi_0(X)\to \pi_0(Y)$ is a bijection and for all $n>0$ and $U\in \mathcal{C}$, the induced map $\pi_n(X|_U, x)\to \pi_n(Y|_{U},f(x)$ is an isomorphism of sheaves for all base points $x\in X(U)_0$.
\item \emph{objectwise weak equivalence} (resp. (co-)fibrations if for all $U\in \mathcal{C}$, the induced map $f(U): X(U)\to Y(U)$ is a weak equivalence (resp. (co-)fibration) in $\sSet$.
\end{itemize}
\end{defn}

There is a closed symmetric monoidal structure on simplicial presheaves (\cite[Chapter XI, \S 1]{mac1998categories}). Let $X, Y$ be simplicial presheaves. Define the simplicial function complex 
        $$\Map(X,Y)_n := \Hom_{\sPsh(T)}(X\times \Delta^n, Y)$$
where $\Delta^n$ is taken as the constant simplicial presheaf. The internal function complex is given by 
        $$\ul{\Map}(X,Y)(U):= \Map(X|_{U}, Y|_U) $$
on simplicial sheaves, and one shows that this complex is again a simplicial presheaf. The category of simplicial presheaf, together with the categorical product
        $$ - \times - :\sPsh(T)\times \sPsh(t)\to \sPsh(T) $$
is symmetric monoidal, where $X\times Y$ is denoted by $X\otimes Y$. Moreover, they are closed since 
        $$ - \otimes X: \sPsh(T) \rightleftarrows \sPsh(T): \ul{\Map}(X,-) $$
The following is a version of the Yoneda lemma over $\sSet$.
\begin{lemma}
Let $X\in \mcal{C}$ and $F\in \sPsh(T)$. Then we have that 
        $$\Map(R_X, F) \cong F(X) \in \sSet.$$
\end{lemma}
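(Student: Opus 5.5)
The plan is to evaluate both sides on a simplicial set level by level and reduce to the ordinary Yoneda lemma for the representable presheaf $R_X$. By definition of the simplicial function complex, the $n$-simplices of $\Map(R_X,F)$ are
$$\Map(R_X,F)_n=\Hom_{\sPsh(T)}(R_X\times\Delta^n,F),$$
where $\Delta^n$ denotes the constant simplicial presheaf. So it suffices to produce, for each $n\ge 0$, a bijection between $\Hom_{\sPsh(T)}(R_X\times\Delta^n,F)$ and $F(X)_n$, natural in $[n]\in\Delta$. These bijections then assemble into an isomorphism of simplicial sets.

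The first step is to rewrite $R_X\times\Delta^n$. Products of presheaves are computed objectwise, so $(R_X\times\Delta^n)(U)_k=\Hom_{\mcal C}(U,X)\times\Hom_\Delta([k],[n])$. A map $R_X\times\Delta^n\to F$ is a family of maps of sets
$$\Hom_{\mcal C}(U,X)\times\Delta^n_k\to F(U)_k,$$
natural in both $U\in\mcal C$ and $[k]\in\Delta$. Equivalently, it is a map of presheaves of sets on the product category $\mcal C\times\Delta$, from the representable presheaf $\Hom_{\mcal C\times\Delta}(-,(X,[n]))$ to the bifunctor $(U,[k])\mapsto F(U)_k$. Here $F$ is viewed as a presheaf on $\mcal C\times\Delta$, using that $\sPsh(T)=\Fun((\mcal C\times\Delta)^{op},\Set)$.

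The second step applies the classical Yoneda lemma on $\mcal C\times\Delta$. It says this hom-set is in bijection with the value of the bifunctor at $(X,[n])$, namely $F(X)_n$. Explicitly, a map $\varphi$ is sent to the element $\varphi_{X,[n]}(\mathrm{id}_X,\iota_n)$, where $\iota_n$ is the standard simplex. Conversely, an element $a\in F(X)_n$ is sent to the map $(g,\theta)\mapsto \theta^*g^*a$.

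The third step checks that the bijection is natural in $[n]$, so that it respects the face and degeneracy maps. This follows because the Yoneda bijection is natural in the representing object $(X,[n])$. It is also natural in $F$, and if one wants naturality in $X$ as well, this comes from the same naturality statement.

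The only real obstacle is bookkeeping. One must set up the identification of simplicial presheaves with presheaves on $\mcal C\times\Delta$ correctly, and confirm that $R_X\times\Delta^n$ is exactly the representable presheaf at $(X,[n])$. With that in place, the result is a direct consequence of the Yoneda lemma.
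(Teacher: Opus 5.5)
Your proof is correct: identifying $R_X\times\Delta^n$ with the representable presheaf on $\mathcal{C}\times\Delta$ at $(X,[n])$ and applying the ordinary Yoneda lemma is exactly the argument behind the statement, with naturality in $[n]$ taking care of faces and degeneracies. The paper gives no separate proof and simply presents the lemma as a version of the Yoneda lemma over $\sSet$, so your write-up is a faithful expansion of that same approach.
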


\begin{lemma}
Let $F,G$ be simplicial presheaves and $X\in \mcal{C}$. Then 
$$\ul{\Map}(R_X, G) = G(-\times X) \quad \text{and} \quad  \ul{\Map}(F,G)(X) = \Map(F, G(-\times X)) $$
for presheaves.
\end{lemma}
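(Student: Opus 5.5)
The plan is to compute both sides degreewise in the simplicial direction, reducing everything to the simplicial Yoneda lemma stated just above, $\Map(R_X,F)\cong F(X)$, together with the closed monoidal structure $-\otimes X \dashv \ul{\Map}(X,-)$ on $\sPsh(T)$. Throughout I assume that $\mcal{C}$ has finite products, so that $U\times X$ makes sense; this is needed for $G(-\times X)$ to be defined and holds for $\Sm_S$. The key auxiliary fact is that the representable functor preserves products, $R_U\times R_X\cong R_{U\times X}$, which is itself an instance of the Yoneda lemma.

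\textbf{Step 1: a formula for simplices of the internal mapping complex.} First I rewrite the $n$-simplices of $\ul{\Map}(F,G)(U)$ via the adjunction. By definition these are maps $F|_U\times\Delta^n\to G|_U$. I will identify them with maps
$$R_U\times\Delta^n\longrightarrow \ul{\Map}(F,G).$$
Using $-\otimes F\dashv\ul{\Map}(F,-)$, these correspond to maps
$$R_U\times F\times\Delta^n\longrightarrow G.$$
This identification of the "restriction to $U$" definition with the adjunction description is where care is needed. I expect it to be the main technical point: one must check that restricting to the slice over $U$ corresponds exactly to taking the product with $R_U$. I would verify this directly from the definition of $F|_U$ on the comma category $\mcal{C}/U$.

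\textbf{Step 2: the first formula.} Take $F=R_X$. Then
$$R_U\times R_X\times\Delta^n\cong R_{U\times X}\times\Delta^n,$$
so by Step 1 the $n$-simplices of $\ul{\Map}(R_X,G)(U)$ are $\Map(R_{U\times X},G)_n$. The simplicial Yoneda lemma identifies this with $G(U\times X)_n$. These bijections are natural in $[n]\in\Delta$ and in $U$, which gives $\ul{\Map}(R_X,G)\cong G(-\times X)$.

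\textbf{Step 3: the second formula.} Evaluating Step 1 at $U=X$, the $n$-simplices of $\ul{\Map}(F,G)(X)$ are maps $F\times R_X\times\Delta^n\to G$. I need to match these with maps $F\times\Delta^n\to G(-\times X)$, which are the $n$-simplices of $\Map(F,G(-\times X))$. Both sides are functors of $F$ that send colimits to limits. Since every simplicial presheaf is a colimit of representables (Proposition \ref{repr-psh-colimits}), and products with $R_X$ and with $\Delta^n$ commute with colimits in a presheaf topos, it suffices to treat the case $F=R_V$. In that case both sides equal $G(V\times X)_n$: the left side by Step 2, and the right side by the simplicial Yoneda lemma applied to $G(-\times X)$.

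Finally, I would check naturality in $V$, so that these bijections for representables glue along the colimit presentation of $F$. This completes the second formula.
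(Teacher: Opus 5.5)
Your Steps 1 and 2 are correct. Step 1 follows at once from the simplicial Yoneda lemma $H(U)_n\cong\Hom(R_U\times\Delta^n,H)$ and the closed structure $-\times F\dashv\ul{\Map}(F,-)$. The point you flag is the standard equivalence $\sPsh(\mcal{C}/U)\simeq\sPsh(\mcal{C})/R_U$, which sends $F|_U$ to $F\times R_U\to R_U$ and is also what makes that adjunction hold.

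The paper states this lemma without proof, so I am judging your argument on its own, and Step 3 has a genuine gap. The reduction ``it suffices to treat the case $F=R_V$'' fails. Colimits in $\sPsh(\mcal{C})$ are computed levelwise, so any colimit of the discrete presheaves $R_V$ is again discrete, i.e.\ constant in the simplicial direction. Your colimit argument therefore proves the second formula only for presheaves of sets. It does not cover an arbitrary simplicial presheaf such as the constant presheaf $\Delta^1$. Proposition~\ref{repr-psh-colimits} is correct only if ``representable'' means the representables of $\sPsh(\mcal{C})=\Psh(\mcal{C}\times\Delta)$, namely the objects $R_V\times\Delta^m$. Read literally, with discrete $R_V$, it is false, and your proof inherits that error.

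The repair is easy, in either of two ways.

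\emph{Fix the generators.} Run the same reduction with the generators $R_V\times\Delta^m$, checking naturality in both $V$ and $[m]$. For such $F$, both sides of the second formula in degree $n$ become $\Hom_{\sSet}(\Delta^m\times\Delta^n,G(V\times X))$. This uses $\Hom(R_V\times K,H)\cong\Hom_{\sSet}(K,H(V))$ for a simplicial set $K$, which is slightly more than the Yoneda statement you quote.

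\emph{Drop the colimit argument.} This is cleaner. By Step 2, $G(-\times X)\cong\ul{\Map}(R_X,G)$, so
\[
\Map(F,G(-\times X))_n\cong\Hom(F\times\Delta^n,\ul{\Map}(R_X,G))\cong\Hom(F\times\Delta^n\times R_X,G)\cong\ul{\Map}(F,G)(X)_n .
\]
The middle isomorphism is the adjunction $-\times R_X\dashv\ul{\Map}(R_X,-)$, and the last is your Step 1 at $U=X$. Everything is visibly natural in $[n]$, so the second formula follows directly from the first, with no generation argument needed.
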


All these can also be formulated for pointed simplicial presheaves. Furthermore, it is a closed symmetric monoidal category with respect to the smash product 
        $$- \wedge X: \sPsh_*(T) \rightleftarrows \sPsh(T) : \ul{\Map}_*(X,-)$$

\subsubsection{Model Structure on Simplicial Presheaves}
We are now ready to define the (local) model structures on simplicial presheaves. We begin with the following simplicial structure.
\begin{defn}
Let $X, Y$ be simplicial presheaves and $K$ be a simplicial set considered as a constant presheaf. Then define
                $$K\otimes X:= K\times X $$
and $\Map(X,Y)$ is the simplicial function complex declares as $X^K:= \ul{\Map}(K,X)$ and $\ul{\Map}(K,X)(U):= \Map(K, X(U))$. Then 
            $$-\otimes X: \sSet \rightleftarrows \sPsh(T): \Map(X,-) $$
and
            $$K\otimes -: \sPsh(T) \rightleftarrows \sPsh(T): (-)^K $$
are an adjoint pair of functors.
\end{defn}

We now define its homotopy category. 
\begin{defn}
Let $X$ be a simplicial presheaf. Define $\pi_0(X)$ as the sheafified presheaf 
            $$U\mapsto \pi_0(X(U)) $$
for $U\in \mcal{C}$, where $\pi_0(X(U))$ is just the connected components of the simplicial set $X(U)$. Let $U\in \mcal{C}, x\in X(U)_0,$ and $n>0$. Define $\pi_n(X|_U, x)$ as the sheafified presheaf 
        $$(V\to U) \mapsto \pi_n(X(V), x_V ) $$
of homotopy groups of simplicial sets on the site $T\downarrow U$ and $x_V\in X(V)_0$ is just the image of $x$ under $X(U)_0 \to X(V)_0$. Here, $X\downarrow U$ is the restricted pointed simplicial presheaf, which is defined via the composition
    $$X|_U:= (\mcal{C} \downarrow U)^{op} \xrightarrow{(\iota_U)^{op}} \mcal{C}^{op} \xrightarrow{X} \sSet_*$$
\end{defn}      

\begin{defn}
Let $f: X \to Y$ be a map of simplicial presheaves. Then $f$ is called a 
\begin{itemize}
\item \emph{local weak equivalence} if $f_*:\pi_0(X)\to \pi_0(Y)$ is an isomorphism of sheaves and for all $n>0$ and $U\in \mcal{C}$, the induced map $f_*: \pi_n(X|_U,x)\to \pi_n(Y|_U, f(x)$ is an isomorphism of sheaves for all base points $x\in X(U)_0$,
\item \emph{objectwise weak equivalence} if for all $U\in \mcal{C}$, $f(U): X(U)\to Y(U)$ is a weak equivalence in $\sSet$,
\item \emph{objectwise (co-)fibration} if for all $U\in \mcal{C}$, $f(U): X(U)\to Y(U)$ is a (co)-fibration in $\sSet$.
\end{itemize}
\end{defn}

\subsubsection{The Four Model Structures on Simplicial Presheaves}
The following theorems establish the (local) injective model structure on simplicial presheaves due to Jardine (\cite{jardine1987simplicial}):
\begin{theorem}
The category of simplicial presheaves on a small Grothendieck site together with the distinguished classes of maps 
\begin{itemize}
\item (local) objectwise weak equivalences
\item objectwise cofibrations
\item injective fibrations, those maps having the right lifting property with respect to acyclic (trivial) objectwise cofibrations.
\end{itemize}
form a proper simplicial cofibrantly generated model category and is called the \emph{(local) injective model structure} (cf. \cite[Theorem 1.1]{blander2001local}).
\end{theorem}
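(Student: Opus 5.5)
The statement is Jardine's theorem on the (local) injective model structure on simplicial presheaves over a small Grothendieck site. The plan is to build the objectwise (global) injective structure first, and then obtain the local one as a left Bousfield localization of it.

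\textbf{Global injective structure.} Take the weak equivalences and cofibrations to be objectwise, and the fibrations to be maps with the right lifting property against objectwise trivial cofibrations.

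\begin{enumerate}
\item The category $\sPsh(T)$ is presentable, since by \cref{repr-psh-colimits} every object is a colimit of the representables $R_X\times\Delta^n$, a small set.
\item Cofibrations are the monomorphisms. They are generated by the small set $I$ of monomorphisms between $\kappa$-bounded simplicial presheaves, for a regular cardinal $\kappa$ exceeding the cardinality of the morphism set of $\mathscr{C}$.
\item The weak equivalences satisfy two-out-of-three and are closed under retracts, because this is checked objectwise in $\sSet$ (\cref{app:model-on-sSets}).
\item Monomorphisms that are weak equivalences are stable under pushout and transfinite composition, again objectwise.
\item Smith's recognition theorem (\cite[Theorem 1.7]{beke2000}) then produces a combinatorial model structure. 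Its remaining hypothesis, that $rlp(I)$ consists of weak equivalences, holds because such maps are objectwise trivial Kan fibrations.
\end{enumerate}

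\textbf{Properness and simplicial enrichment.} Left properness is immediate, since every object is cofibrant. Right properness is inherited from the right properness of $\sSet$: injective fibrations are objectwise Kan fibrations, being in particular projective fibrations, and pullbacks are computed objectwise. The axiom SM7 for the tensoring $K\otimes X=K\times X$ follows from the pushout-product property of monomorphisms in $\sSet$, applied objectwise.

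\textbf{Local structure.} I would obtain it as the left Bousfield localization of the global structure at the set of hypercovers $U_\bullet\to X$ with $X\in\mathscr{C}$, up to a cardinality bound. This localization exists by the existence theorem for left proper combinatorial simplicial model categories stated in \cref{app:Bousfield}, and it keeps the same cofibrations. It remains to show that the resulting weak equivalences are exactly the local weak equivalences defined through the homotopy sheaves $\pi_n$.
\begin{itemize}
\item In one direction, hypercovers are local weak equivalences.
\item Conversely, a local weak equivalence between fibrant objects, meaning objects satisfying hyperdescent, is an objectwise weak equivalence. For this I would use Jardine's boolean-localization argument, or Dugger--Hollander--Isaksen's characterization of local fibrant objects.
\end{itemize}
Right properness of the local structure needs a separate argument: local weak equivalences are stable under pullback along local fibrations, which holds because the sheaves $\pi_n$ are compatible with pullback and fibre sequences of sheaves.

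\textbf{Main obstacle.} The step I expect to be hardest is identifying the localized weak equivalences with the sheaf-theoretic local weak equivalences. This is where hypercover descent is needed, not just \v{C}ech descent. For a non-hypercomplete site one must localize at hypercovers; the Nisnevich site has finite cohomological dimension by \cref{App:Nisnevich-useful}, so the two notions agree there. I would first try Jardine's original approach: prove that local weak equivalences satisfy the pushout and transfinite-composition conditions by reducing to points or boolean localizations, and then apply Smith's theorem directly with local weak equivalences. This route would bypass the Bousfield machinery entirely.
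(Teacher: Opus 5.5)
The paper does not prove this statement; it simply cites Jardine's theorem in the form of Blander's Theorem~1.1. Your construction of the global injective structure is the standard one and is essentially fine, with one omission. Smith's theorem needs more than two-out-of-three, $\mathrm{rlp}(I)\subseteq W$ and the pushout/transfinite-composition condition. It also needs the class $W$ of weak equivalences to be an accessible, accessibly embedded subcategory of the arrow category, or at least to satisfy the solution-set condition at $I$, which accessibility implies. This hypothesis is what replaces Jardine's bounded-cofibration lemma, so $\mathrm{rlp}(I)\subseteq W$ is not the only remaining hypothesis. For objectwise weak equivalences it holds, because the weak equivalences of $\sSet$ form an accessible class and evaluation at each object preserves filtered colimits. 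It still has to be checked, and the same issue arises for local weak equivalences if you apply Smith's theorem directly.

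The genuine gap is the identification of the weak equivalences $W_H$ of the localization at hypercovers with the local weak equivalences $W_{\mathrm{loc}}$. Your two bullet points do not achieve it.

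Knowing that hypercovers lie in $W_{\mathrm{loc}}$ gives $W_H\subseteq W_{\mathrm{loc}}$ only if $W_{\mathrm{loc}}$ is stable under the operations that generate $W_H$. These are two-out-of-three, which is not formal for the sheaf-theoretic definition since basepoints only lift locally and up to homotopy, together with pushouts and transfinite composites of monomorphisms lying in $W_{\mathrm{loc}}$. Concretely, you need the $H$-local fibrant replacement $A\to\hat A$, produced by the small object argument from objectwise trivial cofibrations and horns on hypercovers, to be a local weak equivalence.

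The reverse inclusion needs the same fact, because you must pass from $f$ to $\hat f$ before applying ``a local weak equivalence between hyperdescent objects is objectwise''. This closure of local trivial cofibrations under pushout and transfinite composition is exactly the content of Jardine's theorem. It is where Boolean localization (or stalks, when the site has enough points) is needed; the Bousfield machinery relocates this step rather than removing it.

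Moreover, invoking Dugger--Hollander--Isaksen here is circular. Their characterization of the local fibrant objects as objectwise fibrant presheaves satisfying hyperdescent is a theorem about the already existing Jardine--Blander local structure. Your final paragraph correctly names the missing ingredient, but as written the outline reduces the theorem to that ingredient without supplying it.
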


For instance, the $\AA^1$-homotopy theory as established by \cite{MV99} uses this model structure. There is an analogous model structure obtained by swapping the cofibrations with fibrations due to Bousfield and Kan (\cite{bousfield1972homotopy}):
\begin{theorem}
The category of simplicial presheaves on a small Grothendieck site together with the distinguished classes of maps 
\begin{itemize}
\item (local) objectwise weak equivalences
\item objectwise fibrations
\item projective cofibrations, those maps having the left lifting property with respect to acyclic (trivial) objectwise fibrations.
\end{itemize}
form a proper simplicial cellular/combinatorial model category and is called the \emph{(local) projective model structure} (cf. \cite[Theorem 1.4]{blander2001local}, \cite[Proposition 3.32]{antieau2017primer}).
\end{theorem}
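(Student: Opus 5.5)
The plan is to build the global projective structure first and obtain the local one from it by a left Bousfield localization.

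\textbf{Global projective structure.} I would construct it by transfer along the adjunctions
$$ -\times R_U:\sSet \rightleftarrows \sPsh(T): \mathrm{ev}_U ,\qquad U\in\mathcal{C}.$$
\begin{enumerate}
\item Take as generating cofibrations $I=\{\partial\Delta^n\times R_U\to \Delta^n\times R_U\}$ and as generating trivial cofibrations $J=\{\Lambda^n_k\times R_U\to\Delta^n\times R_U\}$, with $n\geq 0$, $0\le k\le n$ and $U\in\mathcal{C}$.
\item By the Yoneda lemma for $\sSet$ stated above, a map has the right lifting property against $J$ (resp.\ $I$) if and only if it is objectwise a Kan fibration (resp.\ an objectwise trivial fibration).
\item Since $\mathcal{C}$ is small and $\sPsh(T)$ is presentable (every object is a colimit of representables, \cref{repr-psh-colimits}), the small object argument applies. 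This gives both functorial factorizations.
\item The remaining input for Kan's recognition theorem is that relative $J$-cell complexes are objectwise weak equivalences. This holds because colimits are computed objectwise and Kan model structure trivial cofibrations in $\sSet$ are closed under pushouts, coproducts and transfinite composition.
\item Cofibrations are then $\mathrm{llp}(\mathrm{rlp}(I))$ by definition, and the structure is cofibrantly generated on a presentable category, hence combinatorial. It is cellular because the domains and codomains of $I$ and $J$ are finite and projective cofibrations are monomorphisms.
\item The simplicial structure comes from $K\otimes X=K\times X$. The pushout-product axiom only needs to be checked on generators, where it reduces to the axiom in $\sSet$.
\item Properness is objectwise. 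Projective cofibrations are in particular objectwise monomorphisms, fibrations are objectwise, and pushouts and pullbacks are computed objectwise, so both properness statements reduce to properness of the Kan model structure.
\end{enumerate}

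\textbf{Local projective structure.} I would apply the Bousfield localization theorem recalled above (left proper and combinatorial) to the set $S$ of maps $\mathrm{hocolim}\, \mathcal{U}_\bullet\to R_X$, where $\mathcal{U}_\bullet\to R_X$ ranges over a set of representatives of hypercovers in the site. This produces a left proper, combinatorial, simplicial model category with the same cofibrations. Its fibrant objects are the objectwise Kan presheaves satisfying hyperdescent.

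The central step is to show that the $S$-local weak equivalences coincide with the local weak equivalences defined through the sheaves $\pi_0$ and $\pi_n(X|_U,x)$. I would argue in two directions.
\begin{itemize}
\item Every local weak equivalence between objectwise fibrant objects factors, up to objectwise equivalence, through hypercover-type maps. This is a Dugger--Hollander--Isaksen style argument via local lifting of spheres.
\item Conversely, every map in $S$ is a local weak equivalence, because hypercovers induce isomorphisms on homotopy sheaves. The class of local weak equivalences is closed under the operations generating $S$-local equivalences (2-out-of-3, homotopy pushouts, transfinite composites), since these can be checked on homotopy sheaves.
\end{itemize}

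\textbf{Expected main obstacle.} I expect the hardest step to be this identification, together with right properness of the \emph{local} structure. Right properness amounts to the claim that pullback along an objectwise fibration preserves local weak equivalences. For this I would first try Jardine's approach: replace the site by a Boolean localization (or use stalks when the site has enough points), where local weak equivalences become ordinary weak equivalences and objectwise fibrations stay fibrations. Right properness of $\sSet$ then gives the result.
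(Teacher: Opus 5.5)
Your outline is correct, and it necessarily takes a different route from the paper, because the paper gives no argument for this statement at all: it only points to Blander and Antieau--Elmanto.

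\textbf{Comparison with the paper.} What you propose is essentially the Dugger--Hollander--Isaksen proof. You transfer the global projective structure along $-\times R_U\dashv\mathrm{ev}_U$ by Kan's recognition theorem. You then left Bousfield localize at hypercovers, using the localization theorem for left proper combinatorial simplicial model categories recalled in the appendix. Next you identify the $S$-local equivalences with Jardine's local weak equivalences. Finally you get right properness from stalks or Boolean localization. The global half is fine as written.

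Compared with the bare citation, your route buys three things:
\begin{itemize}
\item an explicit description of the local fibrant objects (objectwise Kan presheaves with hyperdescent);
\item left properness, combinatoriality and the simplicial (and, via Hirschhorn, cellular) structure of the local model structure for free from the localization theorem;
\item the clarification that in the local structure the fibrations are the $S$-local fibrations, not all objectwise fibrations as a literal reading of the statement suggests.
\end{itemize}
The price is that the two genuinely hard inputs, the hypercover identification and right properness, are imported from Dugger--Hollander--Isaksen and Jardine.

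\textbf{Three steps need tightening.}

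\emph{Closure under homotopy pushouts.} In showing that $S$-local equivalences are local weak equivalences, this closure cannot be ``checked on homotopy sheaves.'' No exactness property of homotopy sheaves yields it; it is itself proved by reducing to simplicial sets via stalks or Boolean localization. You can bypass the saturation argument entirely:
\begin{itemize}
\item Jardine's local injective structure exists, and its fibrant objects are objectwise Kan.
\item Because each $\mathrm{hocolim}\,U_\bullet\to R_X$ is a local weak equivalence, these fibrant objects satisfy hyperdescent, so they are $S$-local.
\item Hence an $S$-local equivalence induces equivalences of derived mapping spaces into every Jardine-fibrant object, and is therefore a local weak equivalence.
\end{itemize}

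\emph{The converse direction.} It should be stated precisely as: a local weak equivalence between objectwise fibrant presheaves with hyperdescent is an objectwise weak equivalence. Apply this to an $S$-local $Z$ and its Jardine-fibrant replacement. This shows $S$-local objects are local, so local weak equivalences are $S$-local equivalences. The proof runs as follows:
\begin{itemize}
\item An objectwise fibration that is a local weak equivalence has the local right lifting property against $\partial\Delta^n\subset\Delta^n$.
\item Iterating these local lifts builds a hypercover $U_\bullet\to R_X$ over which the lifting problem is solved; this is the mechanism behind Verdier's hypercovering theorem.
\item Hyperdescent then descends the solution to $R_X$ up to homotopy.
\end{itemize}
Your phrase ``factors through hypercover-type maps'' is not a precise substitute for this.

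\emph{Set-theoretic size.} Hypercovers do not form a set, since the coproducts $U_n$ can be arbitrarily large. You need a cardinality bound; Dugger--Hollander--Isaksen show that a bounded set of hypercovers yields the same localization.
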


The projective model category structure has a special universal property highlighted by Dugger \cite{dugger2001universal}: it is the initial model category into which $C$ embeds. It is essential to note that both of these model structures are Quillen equivalent (see e.g., \cite[Theorem 6.2.11]{severitt2006motivic}) and hence produce the same homotopy theory.
\medskip

We will end this chapter with a collection of useful notions from model categories.
\subsubsection{Kan Extensions}\label{app:Kan-ext}
Let us now define the Kan extensions. The Kan extension of a functor $F:\mathscr{C}\to \mathscr{D}$, if it exists, is one of the best approximations to finding an extension from another category $\mathscr{C}'$ to that of $\mathscr{C}$. In other words, the Kan extension of $F$ with respect to a functor $p: \mathscr{C'}\to \mathscr{D}$ is the functor that makes the following diagram
\[\begin{tikzcd}
	{\mathscr{C}} & {\mathscr{D}} \\
	{\mathscr{C}'} 
	\arrow["F", from=1-1, to=1-2]
	\arrow["p"', from=1-1, to=2-1]
	\arrow["{?}"', from=2-1, to=1-2]
\end{tikzcd}\]
commutative. In this way, it extends the domain of $F$ through $p$ from $\mathscr{C}$ to  $\mathscr{C}'$. Dually, there is also a notion of Kan lift that is analogously defined to lift any functor. Here we give an official definition of this fact (\cite[\href{https://kerodon.net/tag/02Y1}{Tag 02Y1}]{kerodon}):
\begin{defn}
Let $F:\mathscr{C}\to \mathscr{D}$ be a functor between categories and let $\mathscr{C}^0 \subset \mathscr{C}$ be a full subcategory. We say that $F$ is \emph{Left Kan extended from $\mathscr{C}^0$} if, for every object $C \in \mathscr{C}$, the collection of morphisms $\{F(u): F(C_0)\to F(C)\}_{u:C_0\to C}$ exhibits $F(C)$ as a colimit of the diagram
 $$(\mathscr{C}^0 \times_{\mathscr{C}} \mathscr{C}_{/C}) \to \mathscr{C}^0 \hookrightarrow \mathscr{C} \xrightarrow{F} \mathscr{D}.$$
\end{defn}
Such a functor, if it exists, is unique. In the above, $\mathscr{C}_{/C}$ we mean the \emph{overcategory} whose objects are maps $f: X\to C$ and morphisms between $f: X\to C$ and $g: Y\to C$ is a morphism $h: X\to Y$ such that the obvious commutative triangle commutes.

\begin{prop}
Let $F, G:\mathscr{C} \to \mathscr{D}$ be functors between categories, and suppose that $F$ is left Kan extended from a full subcategory $\mathscr{C}^0 \subset \mathscr{C}$. Then the restriction map 
 $$\{\text{Natural transformation from $F$ to $G$} \to {\text{Natural transformation from $F_{\vert{\mathscr{C}^0}}$ to $G_{\vert{\mathscr{C}^0}}$}} \} $$
is bijective. In particular, the functor $F$ can be recovered (up to canonical isomorphism) from the restriction $F_{\vert{\mathscr{C}^0}}$
\end{prop}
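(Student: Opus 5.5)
The plan is to use the universal property of the colimits exhibiting each $F(C)$, as in the definition of being left Kan extended from $\mathscr{C}^0$. For each $C\in\mathscr{C}$, write $\mathscr{C}^0_{/C}:=\mathscr{C}^0\times_{\mathscr{C}}\mathscr{C}_{/C}$. Its objects are pairs $(C_0,u)$ with $C_0\in\mathscr{C}^0$ and $u:C_0\to C$. By hypothesis, the maps $F(u):F(C_0)\to F(C)$ form a colimit cocone for the diagram $(C_0,u)\mapsto F(C_0)$.

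I will prove injectivity first. Let $\eta:F\to G$ be any natural transformation and let $(C_0,u)\in\mathscr{C}^0_{/C}$. Naturality gives
\[
\eta_C\circ F(u)=G(u)\circ\eta_{C_0}.
\]
The right-hand side only involves $\eta|_{\mathscr{C}^0}$. So two transformations with the same restriction agree after precomposition with every leg $F(u)$ of the colimit cocone. The uniqueness clause of the colimit then forces them to agree at $C$.

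For surjectivity, start with $\eta^0:F|_{\mathscr{C}^0}\to G|_{\mathscr{C}^0}$. For each $C$, I take the maps $\lambda_{(C_0,u)}:=G(u)\circ\eta^0_{C_0}:F(C_0)\to G(C)$ and check that they form a cocone. Let $w:(C_0,u)\to(C_0',u')$ be a morphism, so $u'\circ w=u$. Using naturality of $\eta^0$ on $\mathscr{C}^0$, which is a full subcategory,
\[
G(u')\,\eta^0_{C_0'}\,F(w)=G(u')\,G(w)\,\eta^0_{C_0}=G(u)\,\eta^0_{C_0}.
\]
The universal property then gives a unique $\eta_C:F(C)\to G(C)$ with $\eta_C\circ F(u)=G(u)\circ\eta^0_{C_0}$ for all $(C_0,u)$.

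Next I check that these $\eta_C$ form a natural transformation. Let $f:C\to D$. Both $\eta_D\circ F(f)$ and $G(f)\circ\eta_C$ are maps out of the colimit $F(C)$. Precompose each with a leg $F(u)$. The first gives $\eta_D\circ F(fu)=G(fu)\,\eta^0_{C_0}$, using the defining property of $\eta_D$ at the object $(C_0,fu)$ of $\mathscr{C}^0_{/D}$. The second gives $G(f)\,G(u)\,\eta^0_{C_0}$, which is the same map. So the two composites are equal.

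It remains to check that $\eta$ restricts to $\eta^0$. Take $C\in\mathscr{C}^0$. Then $(C,\mathrm{id}_C)$ is an object of $\mathscr{C}^0_{/C}$, and the defining identity at this object reads $\eta_C=\eta^0_C$. Hence the restriction map is bijective.

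For the final claim, let $F'$ be another functor left Kan extended from $\mathscr{C}^0$, with an isomorphism $\alpha^0:F|_{\mathscr{C}^0}\cong F'|_{\mathscr{C}^0}$. Apply the bijection in both directions to lift $\alpha^0$ and its inverse to transformations $\alpha:F\to F'$ and $\beta:F'\to F$. The composites $\beta\alpha$ and $\alpha\beta$ restrict to identities. By injectivity they must equal the identities. Hence $F\cong F'$ canonically.

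The only step needing care is the cocone compatibility. That is where fullness of $\mathscr{C}^0$ is used: it guarantees that $w$ lies in $\mathscr{C}^0$, so naturality of $\eta^0$ applies to it. Everything else is formal uniqueness of maps out of colimits.
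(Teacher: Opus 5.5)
Your proof is correct and complete. The paper states this proposition without proof (it is quoted from Kerodon), and your argument is the standard one in that source: injectivity from the uniqueness clause of the colimits exhibiting each $F(C)$; surjectivity by building $\eta_C$ from the cocone $G(u)\circ\eta^0_{C_0}$, then checking naturality and recovering $\eta^0$ via $(C,\mathrm{id}_C)$; and uniqueness of the extension by lifting $\alpha^0$ and its inverse.

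One small correction to your closing remark. Morphisms of $\mathscr{C}^0\times_{\mathscr{C}}\mathscr{C}_{/C}$ lie in $\mathscr{C}^0$ by the very definition of the fiber product, so your cocone check does not actually use fullness. Fullness only ensures that this fiber product agrees with the full subcategory of $\mathscr{C}_{/C}$ on the objects $(C_0,u)$ with $C_0\in\mathscr{C}^0$.
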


The existence of such an extension can be tested by looking at a certain colimit in $\mathscr{D}$.
\begin{prop}
Let $\mathscr{C}$ be a category, let $\mathscr{C}^0 \subset \mathscr{C}$ be a full subcategory, and let $F_0: \mathscr{C}^0\to \mathscr{D}$ be a functor between categories. Then the following conditions are equivalent:
\begin{enumerate}
\item There exists a functor$F :C\to D$ which is left Kan extended from $\mathscr{C}^0$ and satisfies $F_{\vert_{\mathscr{C}^0}}\to F_0$
\item For every object $C \in \mathscr{C}$, the diagram
    $$(\mathscr{C}^0 \times_{\mathscr{C}} \mathscr{C}_{/C}) \to \mathscr{C}^0 \xrightarrow{F_0} \mathscr{D}.$$
\end{enumerate}
\end{prop}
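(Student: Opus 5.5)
The plan is to treat the two implications separately. The implication from (1) to (2) is essentially a restatement of the definition: if $F$ is left Kan extended from $\mathscr{C}^0$ and $F_{\vert_{\mathscr{C}^0}}\cong F_0$, then for each $C\in\mathscr{C}$ the family $\{F(u)\}_{u:C_0\to C}$ exhibits $F(C)$ as a colimit of $\mathscr{C}^0\times_{\mathscr{C}}\mathscr{C}_{/C}\to\mathscr{D}$. Transporting along the isomorphism $F_{\vert_{\mathscr{C}^0}}\cong F_0$ shows that the diagram built from $F_0$ admits a colimit. I read condition (2) as saying exactly this: for every $C$, the displayed diagram admits a colimit in $\mathscr{D}$.

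For the converse I would construct $F$ explicitly, in three steps.

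\emph{Objects.} For each $C\in\mathscr{C}$, choose a colimit $F(C)$ of the diagram $D_C:\mathscr{C}^0\times_{\mathscr{C}}\mathscr{C}_{/C}\to\mathscr{D}$, $(C_0,u)\mapsto F_0(C_0)$, together with its structure maps $\lambda_{(C_0,u)}:F_0(C_0)\to F(C)$. When $C\in\mathscr{C}^0$, the index category has the terminal object $(C,\mathrm{id}_C)$, because $\mathscr{C}^0$ is full. Hence $F_0(C)$ with the maps $F_0(u)$ is already a colimit, and I will make that choice, so that $F(C)=F_0(C)$ on the nose.

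\emph{Morphisms.} For $f:C\to C'$, postcomposition $(C_0,u)\mapsto(C_0,f\circ u)$ defines a functor $f_!:\mathscr{C}^0_{/C}\to\mathscr{C}^0_{/C'}$ with $D_{C'}\circ f_!=D_C$. The maps $\lambda'_{(C_0,f\circ u)}$ therefore form a cocone on $D_C$ with vertex $F(C')$. By the universal property there is a unique $F(f):F(C)\to F(C')$ with $F(f)\circ\lambda_{(C_0,u)}=\lambda'_{(C_0,fu)}$.

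\emph{Functoriality and the Kan extension property.} Both $F(g\circ f)$ and $F(g)\circ F(f)$ satisfy the defining equation for the cocone indexed by $(gf)_!=g_!f_!$, so uniqueness forces them to agree; the same argument gives $F(\mathrm{id})=\mathrm{id}$. On $\mathscr{C}^0$, both $F(f)$ and $F_0(f)$ satisfy the defining equation (using $\lambda_{(C_0,u)}=F_0(u)$), so $F_{\vert_{\mathscr{C}^0}}=F_0$. Finally, with this identification $\lambda_{(C_0,u)}=F(u)$: taking $f=u$ and evaluating on the terminal object $(C_0,\mathrm{id})$ gives $F(u)=\lambda_{(C_0,u)}$. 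Thus $F$ is left Kan extended by construction.

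The main obstacle is bookkeeping rather than mathematics. The object-level choices must be made compatibly on $\mathscr{C}^0$, which is where the terminal-object observation is essential. One must also check that the structure maps of the chosen colimits coincide with the maps $F(u)$ demanded by the definition of being left Kan extended. I would handle both points through the uniqueness clause of the universal property, as above, rather than through any explicit computation.
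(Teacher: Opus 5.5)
The paper does not prove this proposition; it quotes it from Kerodon without argument. Condition (2) is also truncated there: it should end ``admits a colimit in $\mathscr{D}$'', which is how you read it. So there is no proof in the paper to compare with, and your argument stands on its own.

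Your argument is the standard construction and it is correct. You are right to read the arrow in (1) as an equality or isomorphism. If it meant an arbitrary natural transformation $F_{\vert_{\mathscr{C}^0}}\to F_0$, then (1)$\Rightarrow$(2) would fail: when $\mathscr{D}$ has an initial object, the constant functor at it is left Kan extended from $\mathscr{C}^0$ and maps to every $F_0$. With your reading, the rest goes through. The terminal object $(C,\mathrm{id}_C)$ for $C\in\mathscr{C}^0$ lets you arrange $F_{\vert_{\mathscr{C}^0}}=F_0$ strictly. Uniqueness in the universal property gives functoriality. Evaluating the defining identity of $F(u)$ at $(C_0,\mathrm{id}_{C_0})$ gives $F(u)=\lambda_{(C_0,u)}$, which is exactly the left Kan extension condition.
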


Among all the mathematical formulations, Kan extensions are one of the most ubiquitous occurrences in nature. Here are a few of them.
\begin{example}
One may think of left Kan extensions as a generalization of colimits and dually. Let $\mathscr{C}$ be any category that is both complete and cocomplete (eg, $\mathscr{C} = \Set$). Let $\mathscr{D}$ be any small category, then all functors $F: \mathscr{C}\to \mathscr{D}$ have both right and left Kan extensions along all functors $K: \mathscr{C}\to \mathscr{D}$.  In particular, any functor $F: \mathscr{C} \to \mathscr{D}$ has both right and left Kan extensions along $1_{\mathscr{C}}: \mathscr{C}\to \mathscr{C}$. By the universal property of Kan extensions, this in turn implies that both left and right Kan extensions are $F$ itself.
\end{example}

It is a general fact that the left adjoint to any pre-composition functor is a left Kan extension, and the right adjoint to any pre-composition functor is a right Kan extension. This follows since adjoints are unique up to unique isomorphism. The following illustrates this principle:

\begin{example}
Fix a group $G$ with a subgroup $H\le G$ along with the inclusion functor $i: H \to G$. The category $[G, Vect_k]$ of functors from the group $G$ to the $k$-vector spaces has $G$-representations as objects and $G$-equivariant linear maps as morphisms. As any representation of $G$ can be restricted through the inclusion map $i$ to a representation of $H$, we have a functor $\text{Res}: [G, Vect_k] \to [H, Vect_k]$ defined by precomposition with $i$. Now, any representation on $H$ can be extended to the (induced) representation of $G$ via the induction functor $\text{Ind}$ and vice-versa, extended to the coinduced representation of $G$, via the coinduction functor $\text{Coind}$. These extensions define functors, $\text{Ind, Coind}: [H,Vect_k] \to [G,Vect_k]$, which are left and right adjoints to the restriction functor $\text{Res}$. Due to the general principle stated above, we have that these functors define left and right Kan extensions along the $i$.
\end{example}

\subsubsection{Cofinality}
\begin{defn}\label{cofinal}
A subcategory $I\subseteq J$ is said to be \emph{cofinal} if for any functor $F: J\to \mcal{C}$, the induced map on colimits 
        $$ \text{colim}_I F\to \text{colim}_J F $$
is an isomorphism. Hence, to compute colimits in $J$, it suffices to compute it in the subdiagram $I\subseteq J$.
\end{defn}

\begin{example}
The subcategories $2\mathbf{N}\subseteq \mathbb{N}$ and $2\mathbb{N}+1\subseteq \mathbb{N}$ are cofinal in $\mathbb{N}$.
\end{example}

\subsubsection{A Very Weak Five Lemma}
We state the 5-lemma for a general pointed model category and elicit its effectiveness under contractibility assumptions. For more reference, we redirect the reader to \cite[Chapter 6]{hovey2007model}.
\begin{lemma}\label{weak5lemma}
Let $\mcal{C}$ be any pointed model category and consider the following commutative diagram in $\mcal{C}$ where the rows are cofiber sequences.
\begin{center}
\begin{tikzcd}
	A & B & C \\
	{A'} & {B'} & {C'} \\
	\\	{}
	\arrow["u", from=1-1, to=1-2]
	\arrow[swap, "f", from=1-1, to=2-1]
	\arrow["v", from=1-2, to=1-3]
	\arrow["g"', from=1-2, to=2-2]
	\arrow["h", from=1-3, to=2-3]
	\arrow[swap, "{u'}", from=2-1, to=2-2]
	\arrow[swap, "{v'}", from=2-2, to=2-3]
\end{tikzcd}
\end{center}
If $f$ and $g$ are weak equivalences, then so is $h$. In addition, assume that $f$ and $h$ are weak equivalences and $B$ is contractible in $\mcal{C}$, then $g$ is a weak equivalence in $\mcal{C}$ and consequently, $B'$ is contractible in $\mcal{C}$.
\end{lemma}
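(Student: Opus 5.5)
The plan is to work in the homotopy category $h\mcal{C}$, which is triangulated-like only in the stable case, so I will instead argue with the long coexact (Puppe) sequences of the rows and test weak equivalences by mapping into fibrant objects. First, by cofibrant/fibrant replacement and the factorization axiom, I will replace $u$ and $u'$ by cofibrations between cofibrant objects, so that $C$ and $C'$ are (weakly equivalent to) the honest pushouts $B\cup_A *$ and $B'\cup_{A'} *$, i.e.\ homotopy cofibers. With this in place, the diagram becomes a map of homotopy pushout squares $(*\leftarrow A\to B)\to(*\leftarrow A'\to B')$.

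For the first claim, I will invoke the gluing lemma (cube lemma) in a model category: a natural transformation between two pushout spans, built from cofibrations between cofibrant objects, that is objectwise a weak equivalence induces a weak equivalence on pushouts. Here the components are $\mathrm{id}_*$, $f$ and $g$, so $h$ is a weak equivalence. This is standard (Hovey, Chapter 5) and requires only the cofibrant replacement done above.

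For the second claim, assume $f$ and $h$ are weak equivalences and $B\simeq *$. Then $C\simeq \Sigma A$, since the cofiber of $A\to *$ is the suspension. I will use the coaction of $\Sigma A$ on $C$ and the Puppe sequence $A\to B\to C\to \Sigma A\to \Sigma B\to\cdots$. Mapping into an arbitrary fibrant $Z$ gives long exact sequences of pointed sets and groups $[\,\cdot\,,Z]$, which are natural in the vertical maps.

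Applying the five-lemma to these sequences, with $[f,Z]$, $[h,Z]$, $[\Sigma f,Z]$ and $[\Sigma h,Z]$ bijections, should yield that $[g,Z]$ is a bijection and hence that $g$ is a weak equivalence. The main obstacle is that in low degree the sequence is only an exact sequence of pointed sets, so the naive five-lemma fails. I expect to handle this by exploiting $B\simeq *$ directly: exactness then forces $[C',Z]\to[B',Z]$ to be determined by $[A',Z]$, and the action of $[\Sigma A',Z]$ on $[C',Z]$ controls its fibers. Failing that, I will instead use the fact that in the cases of interest (pointed motivic spaces) the pointed condition together with $B\simeq*$ gives $B'\simeq\mathrm{cofib}(\Sigma^{-1}$-type$)$ only after suspension. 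If the unstable argument resists, I will settle for the weaker but sufficient conclusion that $B'$ is contractible when $A'\to B'$ is null via $B$, by realizing $B'$ as the homotopy pushout of $*\leftarrow A'\to B'$ compared with $*\leftarrow A\to B$ and applying the first part in reverse.
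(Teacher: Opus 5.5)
Your first part is fine. The gluing lemma applied to the spans $*\leftarrow A\to B$ and $*\leftarrow A'\to B'$ shows that $h$ is a weak equivalence, provided $h$ is the map induced on homotopy cofibers, so that $(f,g,h)$ is a map of cofiber sequences compatible with the coactions; the paper simply defers to Hovey here.

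The gap is in the second part, which is the one the paper actually uses for $\mathcal{K}$ and $\mathcal{X}_m$. You set up the right long exact sequences and correctly note that the naive five lemma breaks down: $[C,Z]$, $[B,Z]$, $[A,Z]$ are only pointed sets, and nothing sits to the right of $[A,Z]$. But you never supply the argument that replaces it. The claim that exactness ``forces $[C',Z]\to[B',Z]$ to be determined by $[A',Z]$'' is not a valid step, and neither fallback works:
\begin{itemize}
\item The homotopy pushout of $*\leftarrow A'\to B'$ is $C'$, not $B'$.
\item The gluing lemma has no converse. Deducing that a corner map of the spans is an equivalence from the other corner and the pushout is exactly what must be proved, and it is false without extra input such as $B\simeq *$. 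In pointed spaces, take an acyclic $X$ with $\pi_1X\neq 1$ and the map of cofiber sequences from $X\xrightarrow{=}X\to CX$ to $X\to *\to\Sigma X$. Then $f=\mathrm{id}$ and $h\colon CX\to\Sigma X$ are weak equivalences, but $g\colon X\to *$ is not.
\item The same example, where $\Sigma g$ is an equivalence, shows that information visible only after suspension cannot detect $g$ unstably.
\end{itemize}

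What is missing is the following chase, which is what the paper's ``simple diagram chase'' amounts to. Fix $Z$, so that $[B,Z]=*$.
\begin{enumerate}
\item For $\beta\in[B',Z]$ one has $f^*(u')^*\beta=u^*g^*\beta=*$. Injectivity of $f^*$ gives $(u')^*\beta=*$, hence $\beta=(v')^*\gamma$ for some $\gamma\in[C',Z]$.
\item Exactness at $[C,Z]$ holds in the strong sense: the fibres of $v^*$ are the orbits of the $[\Sigma A,Z]$-action induced by the coaction. Since $[B,Z]=*$, this action is transitive, so $h^*\gamma=\alpha\cdot *$ for some $\alpha$.
\item Write $\alpha=(\Sigma f)^*\alpha'$, which is possible because $\Sigma f$ is an isomorphism in $\mathrm{Ho}(\mathcal{C})$. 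Naturality of the action gives $h^*(\alpha'\cdot *)=\alpha\cdot *=h^*\gamma$, so $\gamma=\alpha'\cdot *$ by injectivity of $h^*$.
\item Therefore $\beta=(v')^*(\alpha'\cdot *)=(v')^*(*)=*$.
\end{enumerate}
Hence $[B',Z]=*$ for every $Z$, so $B'\cong *$ in $\mathrm{Ho}(\mathcal{C})$. Then $g$, a map between zero objects there, is an isomorphism, i.e.\ a weak equivalence.

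You did point at the action on $[C',Z]$. What makes it usable is that transitivity is only known on the unprimed row, where it comes from $B\simeq *$, and must be transported to the primed row through $h^*$ and $(\Sigma f)^*$. Combined with step 1, this is what replaces the five lemma.
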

\begin{proof}
The proof is due to \cite[Proposition 6.5.3 (b)]{hovey2007model}. It suffices to show that $[B',X]\simeq *$ for every object $X\in \mathcal{C}$. Applying the representable Hom functor $[-, X]$ for any object $X\in \mathcal{C}$ to the above diagram, we get a commutative diagram of long exact sequences of pointed sets and groups 
    \[\begin{tikzcd}
	{[A,X]} && {[B,X]} && {[C,X]} && {[\Sigma A,X]} & \dots \\
	\\
	{[A',X]} && {[B',X]} && {[C',X]} && {[\Sigma A', X]} & \dots
	\arrow["{u^*}"', from=1-3, to=1-1]
	\arrow["{v^*}"', from=1-5, to=1-3]
	\arrow["\delta"', from=1-7, to=1-5]
	\arrow[from=1-8, to=1-7]
	\arrow["{f^*}", from=3-1, to=1-1]
	\arrow["{g^*}", from=3-3, to=1-3]
	\arrow["{(u')*}", from=3-3, to=3-1]
	\arrow["{h^*}", from=3-5, to=1-5]
	\arrow["{(v')^*}", from=3-5, to=3-3]
	\arrow["{(\Sigma f)^*}", from=3-7, to=1-7]
	\arrow["{\delta'}", from=3-7, to=3-5]
	\arrow[from=3-8, to=3-7]
    \end{tikzcd}\]
We conclude that the fiber $g^*$ is trivial by a simple diagram chase. As $[B,X]\simeq *$, it follows that $[B',X]\simeq *$ as well.
\end{proof}

The following lemma, due to Ken Brown, is extremely useful in model categories.
\begin{lemma}\label{ken-brown-lemma}
Suppose $\mathscr{C}$ is a model category and $\mathscr{D}$ is a category with a subcategory of weak equivalences that satisfies the two out of three axiom. Suppose $F: \mathscr{C} \to \mathscr{D}$ is a functor that takes trivial cofibrations between cofibrant objects to weak equivalences. Then $F$ takes all weak equivalences between cofibrant objects to weak equivalences. Dually, if $F$ takes trivial fibrations between fibrant objects to weak equivalences, then $F$ takes all weak equivalences between fibrant objects to weak equivalences.
\end{lemma}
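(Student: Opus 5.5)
The statement to prove is Ken Brown's lemma, in the form stated in \cref{ken-brown-lemma}. I would prove the cofibrant case and obtain the fibrant case by duality, i.e.\ by applying the same argument to $\mathscr{C}^{op}$, where fibrations and cofibrations swap roles. The strategy is to factor an arbitrary weak equivalence between cofibrant objects as a trivial cofibration followed by a retraction of another trivial cofibration. Both pieces are then sent to weak equivalences, and the two-out-of-three property in $\mathscr{D}$ finishes the argument.

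Let $f\colon A\to B$ be a weak equivalence with $A,B$ cofibrant. The steps are as follows.

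\begin{enumerate}
\item Consider the map $(f,\mathrm{id}_B)\colon A\sqcup B\to B$. Applying the factorization axiom (5) of \cref{defn:model-cats} gives
\[
A\sqcup B\xrightarrow{\;j\;} C\xrightarrow{\;p\;} B,
\]
with $j$ a cofibration and $p$ a trivial fibration.

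\item The inclusions $i_A\colon A\to A\sqcup B$ and $i_B\colon B\to A\sqcup B$ are cofibrations. This holds because each is a pushout of $\emptyset\to B$ (respectively $\emptyset\to A$), and both $A$ and $B$ are cofibrant. Consequently $A\sqcup B$ is cofibrant, and so is $C$.

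\item The composite $j\circ i_A$ is therefore a cofibration. Since $p\circ j\circ i_A=f$ and both $f$ and $p$ are weak equivalences, two-out-of-three in $\mathscr{C}$ shows $j\circ i_A$ is a weak equivalence. The same argument applies to $j\circ i_B$, using $p\circ j\circ i_B=\mathrm{id}_B$. So $j i_A$ and $j i_B$ are trivial cofibrations between cofibrant objects, and by hypothesis $F(j i_A)$ and $F(j i_B)$ are weak equivalences in $\mathscr{D}$.

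\item Apply $F$ to $p\circ (j i_B)=\mathrm{id}_B$. This gives $F(p)\circ F(j i_B)=\mathrm{id}_{F(B)}$, and identities are weak equivalences. By two-out-of-three in $\mathscr{D}$, $F(p)$ is a weak equivalence.

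\item Finally $F(f)=F(p)\circ F(j i_A)$ is a composite of weak equivalences, hence a weak equivalence.
\end{enumerate}

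The only point requiring care is Step 2: checking that the coproduct inclusions are cofibrations, which is what guarantees that the hypothesis on $F$ applies to $j i_A$ and $j i_B$. I would handle it via the pushout square with corners $\emptyset$, $B$, $A$, $A\sqcup B$, together with the fact that cofibrations are stable under pushout, since they are characterized by the left lifting property against trivial fibrations.

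One further point concerns two-out-of-three in $\mathscr{D}$. The hypothesis is only that $\mathscr{D}$ has a subcategory of weak equivalences satisfying this axiom. Since a subcategory contains all identities, $\mathrm{id}_{F(B)}$ is a weak equivalence, which is what Step 4 uses.

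For the fibrant case, I would run the same argument in $\mathscr{C}^{op}$. Factor $(\mathrm{id}_A,f)\colon A\to A\times B$ as a trivial cofibration followed by a fibration. The projections $A\times B\to A$ and $A\times B\to B$ are fibrations because $A$ and $B$ are fibrant. The same two-out-of-three chase then yields the dual conclusion.
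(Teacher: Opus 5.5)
Your argument is correct and matches the paper's approach: the paper gives no argument of its own but cites Hovey's Lemma 1.1.12, whose proof is the one you give. That proof factors $(f,\mathrm{id}_B)\colon A\sqcup B\to B$ as a cofibration followed by a trivial fibration, uses the same two-out-of-three chase in $\mathscr{C}$ and $\mathscr{D}$, and obtains the fibrant case dually via $(\mathrm{id}_A,f)\colon A\to A\times B$.
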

\begin{proof}
This is due to \cite[Lemma 1.1.12]{hovey2007model}. 
\end{proof}

% \afterpage{\blankpage}

\begin{savequote}
We did not domesticate wheat. It domesticated us.
\qauthor{"Sapiens" by Yuval Noah Harari}   
\end{savequote}
\chapter{$\AA^1$-Algebraic Topology}\label{app:A1-alg-top}
\markboth{Appendix}{}

% $\AA^1$-Algebraic topology

In this section, we define and illustrate the first properties of strictly and strongly $\AA^1$-invariant following the book "\emph{$\AA^1$-algebraic topology over a field}" (\cite{morel2012A1topology}). In the sequel, we will also define the $\AA^1$-derived category and the $\AA^1$-homology sheaves, which we rely on in \cref{chp5}. Following this, we present the theory of Milnor-Witt sheaves and the associated Chow-Witt theory within the scope of this script.
\medskip

One has from algebraic topology that the set of path-connected components $\pi_0$ and the homotopy groups $\ pi_i$ are all discrete. The study of (strictly and strongly) $\AA^1$-invariant sheaves can be thought of as studying the analogous "discreteness" property of $\AA^1$-homotopy theoretic sheaves. 

\begin{defn}\label{defn:A1-inv-str-stro}
All the sheaves mentioned below are considered in the Nisnevich topology.
\begin{enumerate}
\item A sheaf of sets $\mcal{S}$ on $\Sm_k$ is said to be \emph{$\AA^1$-invariant} if for any $X \in \Sm_k$ ,the map
    $$\mcal{S}(X)\to \mcal{S}(X\times \AA^1)$$
induced by the projection $\AA^1 \times X \to X$, is a bijection.
\item A sheaf of groups $\mcal{G}$ on $\Sm_k$ is said to be \emph{strongly $\AA^1$-invariant} if for any $X \in \Sm_k$, the map
    $$H^i_{Nis}(X; \mcal{G}) \to H^i_{Nis}(X\times \AA^1; \mcal{G})$$
induced by the projection $\AA^1 \times X \to X$, is a bijection for $i \in \{0,1\}$.
\item A sheaf of groups $\mcal{M}$ on $\Sm_k$ is said to be \emph{strictly $\AA^1$-invariant} if for any $X \in \Sm_k$, the map
    $$H^i_{Nis}(X,\mathcal{M}) \to H^i_{Nis}(X\times_k \AA^1, \mathcal{M})$$
induced by the projection $\AA^1 \times X \to X$, is a bijection for $i\in \mathbb{N}$.
\end{enumerate}
\end{defn}

\begin{remark}
One also has the following characterization of these sheaves in terms of their associated algebraic invariants:
\begin{enumerate}
\item A sheaf of set $\mcal{S}$ is $\AA^1$-invariant if and only if it is $\AA^1$-local as a space,
\item A sheaf of groups $\mcal{G}$ is strongly $\AA^1$-invariant if and only if the classifying space $B(\mcal{G})= \textbf{K}(\mcal{G},1)$ is an $\AA^1$-local space,
\item A sheaf of abelian groups $\mcal{M}$ is strictly $\AA^1$-invariant if and only if for any $n\in \mathbb{N}$, the Eilenberg-MacLane space $\textbf{K}(\mcal{M},n)$ is $\AA^1$-local.
\end{enumerate}
\end{remark}

\begin{theorem}\label{thm:A1-htpysheaves-invariant}
Let $\mcal{X}$ be a pointed space. Then the $\AA^1$-fundamental sheaf of groups $\pi_1^{\AA^1}(\mcal{X})$ is strongly $\AA^1$-invariant. For any $n \ge 2$, the higher $\AA^1$-homotopy sheaves of abelian groups $\pi_n^{\AA^1}$ are strictly $\AA^1$-invariant.
\end{theorem}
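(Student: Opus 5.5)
The statement to prove is \cref{thm:A1-htpysheaves-invariant}: $\pi_1^{\AA^1}$ is strongly $\AA^1$-invariant and the higher $\pi_n^{\AA^1}$ are strictly $\AA^1$-invariant. The plan is to reduce everything to $\AA^1$-locality of classifying spaces, using the characterization in the remark immediately preceding the theorem. Namely, $\mathcal{G}$ is strongly $\AA^1$-invariant iff $B\mathcal{G}$ is $\AA^1$-local, and $\mathcal{M}$ is strictly $\AA^1$-invariant iff every $\mathbf{K}(\mathcal{M},n)$ is $\AA^1$-local.

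First, replace $\mathcal{X}$ by an $\AA^1$-fibrant model $\L_{mot}\mathcal{X}$. This changes neither $\pi_n^{\AA^1}$, which are by definition the simplicial homotopy sheaves of $\L_{mot}\mathcal{X}$, nor their invariance properties. We may also assume $\mathcal{X}$ is $\AA^1$-connected by restricting to the component of the base point.

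Next, consider the simplicial Postnikov tower $\mathcal{X}\to \cdots\to P^n\mathcal{X}\to P^{n-1}\mathcal{X}\to\cdots$ of the simplicial sheaf $\L_{mot}\mathcal{X}$, with layers fibrations $\mathbf{K}(\pi_n,n)\to P^n\mathcal{X}\to P^{n-1}\mathcal{X}$, where $\pi_n=\pi_n^{\AA^1}(\mathcal{X})$. The key claim, which I would prove by induction on $n$, is that every $P^n\mathcal{X}$ is $\AA^1$-local.

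Granting the claim, one extracts invariance as follows. The bottom stage is $P^1\mathcal{X}=B\pi_1^{\AA^1}$, so its locality is exactly strong invariance of $\pi_1$. For $n\ge 2$, the sheaf $\pi_n$ is abelian. The homotopy fiber of a map of $\AA^1$-local spaces is again $\AA^1$-local, since $\AA^1$-local objects are closed under homotopy limits. Hence $\mathbf{K}(\pi_n,n)$ is $\AA^1$-local, and taking loop spaces shows that $\mathbf{K}(\pi_n,j)$ is $\AA^1$-local for $j\le n$. For all $j$, the delooping $\mathbf{K}(\pi_n,j+1)$ is obtained as the target of a $k$-invariant of the truncation; its locality follows from Morel's theorem that a strongly $\AA^1$-invariant sheaf of abelian groups is strictly $\AA^1$-invariant. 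I will invoke this result from \cite{morel2012A1topology}, since $\mathbf{K}(\pi_n,1)$ and $\mathbf{K}(\pi_n,2)$ are local.

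The main obstacle is the inductive locality of the Postnikov stages: truncating an $\AA^1$-local space need not, a priori, stay $\AA^1$-local. This is precisely the content of Morel's unstable connectivity and Gersten-type theory. My first attempt would be to show that the truncation functors commute with $\L_{mot}$ up to equivalence, via Morel's result that strongly invariant sheaves admit Gersten resolutions with $\AA^1$-invariant cohomology. This is checked on fields, i.e. on the stalks of $\Sm_k$, and then one applies the preceding remark. Over a general base this argument is unavailable, so the statement is to be read over a perfect field $k$ as in \cite{morel2012A1topology}.
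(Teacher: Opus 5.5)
Your reduction makes no progress on the actual content of the theorem. The base case of your key claim is that $P^1\mathcal{X}=B\pi_1^{\AA^1}(\mathcal{X})$ is $\AA^1$-local. By the characterization you quote, that \emph{is} the first assertion of the theorem. The tools you propose for proving locality of the Postnikov stages are also downstream of the statement:
\begin{itemize}
\item In \cite{morel2012A1topology}, the $\AA^1$-locality of Postnikov sections of $\AA^1$-local spaces, and with it the unstable $\AA^1$-connectivity theorem, is deduced from Theorem 6.1 and Corollary 6.2, since one needs $\mathbf{K}(\pi_n^{\AA^1},n)$ to be $\AA^1$-local.
\item Gersten/Rost--Schmid resolutions are constructed only for sheaves already known to be strongly $\AA^1$-invariant.
\end{itemize}
So the argument is circular.

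Two auxiliary claims are also wrong as stated. First, truncation does not commute with $\L_{mot}$. For instance $\tau_{\le 0}\L_{mot}\PP^1\simeq\pi_0^{\AA^1}(\PP^1)=*$, whereas $\L_{mot}\tau_{\le 0}\PP^1=\L_{mot}\PP^1$ has non-trivial $\pi_1^{\AA^1}$. What you need is only that $\tau_{\le n}$ preserves $\AA^1$-local objects, and that is essentially a restatement of the theorem. Second, Nisnevich stalks are henselian local essentially smooth schemes, not fields. Strong $\AA^1$-invariance, i.e.\ bijectivity of $H^1_{Nis}(X;G)\to H^1_{Nis}(X\times\AA^1;G)$ for all smooth $X$, is not a stalkwise condition at all. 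Reducing it to a check over fields is exactly the hard part of Morel's work.

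The Postnikov induction is also unnecessary, and its inductive step is circular even if you grant the base case. To get $P^n\mathcal{X}$ local from $P^{n-1}\mathcal{X}$ via the $k$-invariant, you need locality of $\mathbf{K}(\pi_n,n+1)$ (twisted over $B\pi_1$ in general), i.e.\ strict invariance of $\pi_n$. You only extract that from locality of $P^n\mathcal{X}$. The correct reduction is Morel's Corollary 6.2. One has $\pi_n^{\AA^1}(\mathcal{X})\cong\pi_1(\Omega_s^{n-1}\L_{mot}\mathcal{X})$, and $\Omega_s^{n-1}\L_{mot}\mathcal{X}$ is again $\AA^1$-local. So the $\pi_1$ statement gives strong invariance of $\pi_n^{\AA^1}(\mathcal{X})$. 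Commutativity, together with the theorem that strongly $\AA^1$-invariant sheaves of abelian groups are strictly $\AA^1$-invariant, then gives strict invariance.

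Everything therefore rests on the $\pi_1$ case, and the missing idea is a direct argument for it. Roughly, Morel first shows that $G=\pi_1(\mathcal{X})$ is unramified, with the appropriate structure over fields and discrete valuation rings. This uses only that $\mathcal{X}$ is $\AA^1$-local, together with Nisnevich excision and homotopy purity. He then proves that such sheaves of groups are strongly $\AA^1$-invariant, which is where Gabber's presentation lemma over a perfect field enters. Your proposal has no substitute for this step. As written, it amounts to the citation the paper gives, since its proof is simply a reference to \cite[Theorem 6.1, Corollary 6.2]{morel2012A1topology}. The parts of your scaffolding that go beyond that citation are circular.
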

\begin{proof}
This proof is due to \cite[Theorem 6.1, Corollary 6.2]{morel2012A1topology}.
\end{proof}

\begin{example}\label{eg:strictly-strongly}
For $i\ge 1$, the $\AA^1$-homotopy sheaves $\pi_i^{\AA^1}$ are strongly $\AA^1$-invariant, Rost cycle modules, $\AA^1$-invariant sheaves with transfers of Voevodsky, sheaf $\textbf{\underline{W}}$ of unramified Witt groups.
\end{example}

\subsection{$\AA^1$-derived category}
Let us denote by $\abek$ the abelian category of sheaves of abelian groups on $\Sm_S$ in the Nisnevich topology. Let $C_*(\abek)$ be the category of chain complexes in $\abek$. As in the case of classical homological algebra, one obtains the derived category of $\abek$, denoted by $D(\abek)$, from  $C_*(\abek)$ by inverting the class of quasi-isomorphisms between chain complexes. To establish the model categorical structure, one can use the following (Nisnevich) local model structure on the category of chain complexes $C_*(\abek)$ in $\abek$:

\begin{enumerate}
\item A morphism $f: A_* \to B_*$ in $C_*(\abek)$ is a \emph{weak equivalence} if it induces a quasi isomorphism over the sections of the smooth Henselian local schemes (stalks in the Nisnevich topology),
\item A morphism $f: A_* \to B_*$ is a fibration if it is an epimorphism,
\item A morphism $f: A_* \to B_*$ is a cofibration if it has the left lifting property with respect to the trivial fibrations.
\end{enumerate}

The derived category $D(\abek)$ is the associated homotopy category obtained by formally inverting the quasi-isomorphisms in the above model structure. We will now define the $\AA^1$-derived category. It is obtained by the left Bousfield localization, where we invert the projection maps with respect to $\ZZ(\AA^1_k)$ in the above local model structure, where $\ZZ(\AA^1_k)$ is the free abelian sheaf on $\AA^1_k$.

\begin{defn}
We will now define the $\AA^1$-derived category.
\begin{itemize}
\item A chain complex $D_* \in C_*(\abek)$ is called \emph{$\AA^1$-local} if and only if for any $C_* \in C_*(\abek)$, the projection $C_*\otimes \ZZ(\AA^1)\to C_*$ induces a bijection:
    $$Hom_{D(\abek)} (C_*,D_*)\to   Hom_{D(\abek)} (C_*\otimes \ZZ(\AA^1), D_*). $$
\item A morphism $f : C_* \to D_*$ in $C_*(\abek)$ is called an \emph{$\AA^1$-quasi isomorphism} if and only if for any $\AA^1$-local chain complex $E_*$, the morphism:
        $$ Hom_{D(\abek)}(D_*,E_*) \to Hom_{D(\abek)}(C_*, E_*) $$
 is bijective.
\item Finally, we define the $\AA^1$-derived category $D_{\AA^1}(\abek)$ as the category obtained by inverting the all the $\AA^1$-quasi isomorphisms.
\end{itemize}
\end{defn}
As in the case of the theory of simplicial presheaves, we have an (abelian) $\AA^1$-localization functor 
        $$L_{\AA^1}^{Ab}: C_*(\abek)\to C_*(\abek)$$ 
together with a natural transformation $\theta: Id \to L_{\AA^1}^{Ab}$ such that for any chain complex $C_*$,                           
        $$\theta_{C_*}: C_*\to L_{\AA^1}^{Ab}(C_*)$$ 
is an $\AA^1$-quasi isomorphism whose target is an $\AA^1$-local fibrant chain complex. There is natural functoriality that one derives as a formal consequence of such a framework (See \cite[\S 6.2]{morel2012A1topology}). 

\subsubsection{$\AA^1$-homology sheaves}
Recall that in algebraic topology (see e.g., \cite[Chapter 5]{may1992simplicial}), we have the notion of a normalized chain complex $N(C_*)$ associated to any chain complex $C_*$. Importing this machinery into algebraic geometry, one defined the notion of $\AA^1$-homology sheaves (\cite[\S 6.3]{morel2012A1topology}). We will briefly touch upon this now.
Let $\mcal{X}$ be any simplicial sheaf of sets with its normalized chain complex $\mcal{N}(\mcal{X})$ in $C_*(\abek)$, which is associated to the free simplicial sheaf of abelian groups $\ZZ(\mcal{X})$ on $\mcal{X}$. This defines a functor
         $$\mcal{N}_*: \Delta^{op}\Shv_{Nis} (\Sm_k)\to C_*(\abek).$$
This functor has a right adjoint 
        $$K: C_*(\abek) \to \Delta^{op}\Shv_{Nis}(\Sm_k) $$
called the \emph{Eilenberg-MacLane space functor}.

\begin{remark}
For an abelian sheaf $\mcal{A} \in \abek$ and an integer $n$, the pointed simplicial sheaf $K(\mcal{A},n)$ is defined by applying $K$ to the shifted complex $\mcal{A}[n]$, of the complex $\mcal{A}$ placed in degree 0. It is characterized by the following property:
\[\pi_i^{\AA^1}(K(\mcal{A},n)) \cong
        \begin{cases*}
             0           & if  $n < 0$  \\
             \mcal{A}    & if $i=n$ \\
             0           & if $i \ne n$
    \end{cases*} \]
\end{remark}

Moreover, it is a fact that the functor $\mcal{N}_*(-)$ sends simplicial weak equivalences to quasi-isomorphisms and $K(-)$  maps quasi-isomorphisms to simplicial weak equivalences. As a consequence, these induce a pair of adjoint functors
        $$\mcal{N}_*: \mcal{H}_s(k) \to D(\abek) $$
and 
        $$K: D(\abek) \to \mcal{H}_s(k).$$
Thus $\mcal{N}_*: \mcal{H}_s(k) \to D(\abek)$ maps $\AA^1$-weak equivalences to $\AA^1$-quasi isomorphisms and induces a functor
         $$\mcal{N}_*^{\AA^1}: \mcal{H}(k) \to   D_{\AA^1}(\abek) $$
defined by 
        $$\mcal{X} \mapsto L_{\AA^1}^{Ab}(\mcal{N}_*(\mcal{X})).$$

\begin{defn}
Let $\mcal{X}$ be a space and $n \in \ZZ$ be an integer. The \emph{$n$-th $\AA^1$-homology sheaf} associated to $\mcal{X}$, denoted as $H_n^{\AA^1}(\mcal{X})$, is defined as the $n$-th homology sheaf of the complex $\mcal{N}_*(\mcal{X})$. 
\end{defn}
If $\mcal{X}$ is pointed, one defines the \emph{reduced $n$-th $\AA^1$-homology sheaf} as the 
    $$\widetilde{H}_n^{\AA^1}(\mcal{X}) := Ker (H_n^{\AA^1}(\mcal{X})\to H_n^{\AA^1}(\Spec k))$$
and attributed to the vanishing of homology sheaves of $\Spec k$ in all non-zero dimensions, we have that 
    $$H_*^{\AA^1}(\mcal{X}) = \ZZ \oplus \widetilde{H_*}^{\AA^1}(\mcal{X})$$

The following is formal of the $\AA^1$-connectivity theorem (\cite[Corollary 6.31]{morel2012A1topology}):

\begin{corollary}
The $\AA^1$-homology sheaves $H_n^{\AA^1}(\mcal{X})$ of a space $\mcal{X}$ vanish for $n<0$ and are strictly $\AA^1$-invariant sheaves for $n\ge 0$.
\end{corollary}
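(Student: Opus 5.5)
The statement is the corollary that the $\AA^1$-homology sheaves $H_n^{\AA^1}(\mcal{X})$ vanish for $n<0$ and are strictly $\AA^1$-invariant for $n\ge 0$. The plan is to reduce both claims to properties of the $\AA^1$-local complex $L_{\AA^1}^{Ab}(\mcal{N}_*(\mcal{X}))$, since $H_n^{\AA^1}(\mcal{X})$ should be read as the $n$-th homology sheaf of this localized complex, i.e.\ of $\mcal{N}_*^{\AA^1}(\mcal{X})$.

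\textbf{Strict $\AA^1$-invariance.} I would first record a general fact about $\AA^1$-local complexes. Let $D_*$ be $\AA^1$-local and $X\in\Sm_k$. Taking $C_*=\ZZ(X)[-i]$ in the definition of $\AA^1$-locality gives
$$\mathbb{H}^i_{Nis}(X,D_*)\cong \mathbb{H}^i_{Nis}(X\times\AA^1,D_*),$$
so the hypercohomology of $D_*$ is $\AA^1$-invariant.

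To pass from this to the individual homology sheaves, I would use Morel's stable $\AA^1$-connectivity theorem. It is the one genuinely deep input (it relies on Gabber's presentation lemma over $k$), and I would cite it rather than reprove it. It says that the truncation functors of the homotopy $t$-structure on $D(\abek)$ preserve $\AA^1$-local complexes. As a consequence, each homology sheaf $H_n(D_*)[n]$ is again $\AA^1$-local. An abelian sheaf $\mcal{M}$ with $\mcal{M}[n]$ $\AA^1$-local is precisely a sheaf with $H^i_{Nis}(-,\mcal{M})$ $\AA^1$-invariant for every $i$, which is the definition of strict $\AA^1$-invariance (equivalently, by the earlier remark, $\textbf{K}(\mcal{M},n)$ is $\AA^1$-local). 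Applying this to $D_*=L_{\AA^1}^{Ab}(\mcal{N}_*(\mcal{X}))$ gives the second claim.

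\textbf{Vanishing in negative degrees.} The complex $\mcal{N}_*(\mcal{X})$ is concentrated in nonnegative degrees, so it is $(-1)$-connected. The same connectivity theorem asserts that $L_{\AA^1}^{Ab}$ preserves connectivity: if $C_*$ is $(n-1)$-connected, then so is its $\AA^1$-localization. Taking $n=0$ gives $H_n^{\AA^1}(\mcal{X})=0$ for $n<0$.

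The main obstacle is exactly this connectivity and truncation statement for $L_{\AA^1}^{Ab}$; everything else is formal. If I had to sketch its proof, I would follow Morel. First, one shows that strictly $\AA^1$-invariant sheaves form an abelian subcategory closed under kernels and cokernels; this is where the Gersten-type resolutions and Gabber's lemma enter. Second, one builds $L_{\AA^1}^{Ab}$ as a transfinite iteration of $\mathrm{Sing}^{\AA^1}$ and Nisnevich fibrant replacement, and checks at each stage that the homology sheaves stay in nonnegative degrees.
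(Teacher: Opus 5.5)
Your proposal is correct and takes essentially the paper's route. The paper gives no argument beyond calling the corollary a formal consequence of Morel's stable $\AA^1$-connectivity theorem, and you rightly read $H_n^{\AA^1}(\mcal{X})$ as the homology of $L_{\AA^1}^{Ab}(\mcal{N}_*(\mcal{X}))$ rather than of $\mcal{N}_*(\mcal{X})$, as the paper's definition literally says. One refinement: your truncation statement is not an extra input, because it follows from $L_{\AA^1}^{Ab}$ preserving $(n-1)$-connected complexes — for $\AA^1$-local $C$, the universal properties of $L_{\AA^1}^{Ab}$ and of $\tau_{\ge n}$ force $\tau_{\ge n}C \to L_{\AA^1}^{Ab}(\tau_{\ge n}C)$ to be an isomorphism.
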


\begin{corollary}
The $\AA^1$-localization functor commutes with  the suspension in $D(\abek)$ and so there exists a canonical suspension isomorphism for any pointed space $\mcal{X}$ and any integer $n\in \ZZ$:
        $$\widetilde{H}_n^{\AA^1}(\mcal{X}) \cong \widetilde{H}_{n+1}^{\AA^1}(\Sigma_s \mcal{X}) $$
where $\Sigma_s \mcal{X}:= S^1\wedge \mcal{X}$ is the simplicial suspension of $\mcal{X}$.
\end{corollary}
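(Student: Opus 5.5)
The statement is the suspension isomorphism $\widetilde{H}_n^{\AA^1}(\mcal{X}) \cong \widetilde{H}_{n+1}^{\AA^1}(\Sigma_s \mcal{X})$. The plan is to reduce it to the corresponding fact for the normalized chain complex functor $\mcal{N}_*$. Then I will use that $L^{Ab}_{\AA^1}$ is an exact (triangulated) functor on $D(\abek)$, so that it commutes with the shift $[1]$.

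\textbf{Step 1 (chain level).} For a pointed simplicial sheaf $\mcal{X}$, the cofiber sequence $\mcal{X}\to C\mcal{X}\to \Sigma_s\mcal{X}$ has $C\mcal{X}$ simplicially contractible. Applying the reduced normalized chain functor $\widetilde{\mcal{N}}_* := \mathrm{cone}(\mcal{N}_*(\Spec k)\to\mcal{N}_*(\mcal{X}))$ turns this cofiber sequence into a distinguished triangle in $D(\abek)$, since $\mcal{N}_*$ sends levelwise monomorphisms to short exact sequences of complexes. Contractibility of the cone then gives a quasi-isomorphism
\[
\widetilde{\mcal{N}}_*(\Sigma_s\mcal{X}) \simeq \widetilde{\mcal{N}}_*(\mcal{X})[1].
\]
Concretely, $\ZZ(S^1\wedge\mcal{X})$ is the reduced Dold--Kan tensor $\widetilde{\ZZ}(S^1)\otimes\widetilde{\ZZ}(\mcal{X})$, and Eilenberg--Zilber identifies $\widetilde{\mcal{N}}_*(S^1)\simeq\ZZ[1]$.

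\textbf{Step 2 ($\AA^1$-localization).} $L^{Ab}_{\AA^1}$ is a left Bousfield localization of a stable model structure on $C_*(\abek)$, so its derived functor is triangulated. It therefore commutes with $[1]$ and sends triangles to triangles. This gives
\[
L^{Ab}_{\AA^1}\widetilde{\mcal{N}}_*(\Sigma_s\mcal{X})\simeq L^{Ab}_{\AA^1}\widetilde{\mcal{N}}_*(\mcal{X})[1],
\]
which is the first sentence of the statement. To justify exactness, I would note that the class of $\AA^1$-local complexes is closed under $[\pm1]$, because $\Hom(C_*\otimes\ZZ(\AA^1),D_*[1])=\Hom(C_*[-1]\otimes\ZZ(\AA^1),D_*)$. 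Hence $\AA^1$-quasi-isomorphisms are stable under shifts, and the localization commutes with the shift functor.

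\textbf{Step 3 (homology).} Taking $n$-th homology sheaves gives $H_n(C_*[1])=H_{n-1}(C_*)$, i.e. $H_{n+1}(C_*[1])=H_n(C_*)$. The splitting $H_*^{\AA^1}(\mcal{X})=\ZZ\oplus\widetilde{H}_*^{\AA^1}(\mcal{X})$ identifies the reduced homology with the homology of $L^{Ab}_{\AA^1}\widetilde{\mcal{N}}_*$, and this yields the stated isomorphism. Naturality in $\mcal{X}$ follows from the naturality of each identification above.

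The main obstacle is Step 2: making precise that $L^{Ab}_{\AA^1}$ is exact, i.e. commutes with shifts and cones, rather than merely preserving weak equivalences. I would handle this by the closure of $\AA^1$-local objects under shifts described in Step 2, and then deduce exactness from the general fact that left Bousfield localizations of stable model categories at shift-closed classes are stable.
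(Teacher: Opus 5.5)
Your proposal is correct, and it spells out the reasoning the statement itself encodes. The paper records this corollary without proof, as a formal consequence of Morel's $\AA^1$-derived formalism: identify $\widetilde{\mcal N}_*(\Sigma_s\mcal X)\simeq\widetilde{\mcal N}_*(\mcal X)[1]$ via the contractible cone, use that $L^{Ab}_{\AA^1}$ commutes with $[1]$ because $\AA^1$-local complexes are closed under $[\pm 1]$, and take homology sheaves. Your argument only uses commutation with shifts (together with preservation of quasi-isomorphisms and finite direct sums, needed to split off $\ZZ$), so the shift-closure argument in your Step 2 suffices without appealing to stability of the localized model structure.
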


We end this section with one powerful application of these sheaves for $\AA^1$-simply connected spaces that we use in Chapter 5.
\begin{theorem}
Let $k$ be any perfect field. Let $f: X\to Y$ be a morphism between $\AA^1$-simply connected pointed smooth $k$-schemes and let $d= \text{max}\{\dim \ X+1, Y\}$. If $f$ induces an isomorphism $H_i^{\AA^1}(X) \xrightarrow{\cong} H_i^{\AA^1}(Y)$ for all $2\le i <d$ and an epimorphism $H_d^{\AA^1}(X)\to H_d^{\AA^1}(Y)$, $f$ is an $\AA^1$-weak equivalence in $Spc_k$.
\end{theorem}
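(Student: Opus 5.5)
This is a Whitehead-type criterion via homology; the statement is Morel's $\AA^1$-Whitehead theorem in homological form. I would deduce it from the $\AA^1$-Hurewicz theorem applied to the homotopy cofiber (or fiber) of $f$, combined with the $\AA^1$-connectivity and dimension bounds.

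\textbf{Reduction to the cofiber.} Let $C_f$ denote the homotopy cofiber of $f$ in $\mathcal{H}_{\bullet}(k)$. Since $X$ and $Y$ are $\AA^1$-simply connected, I first check that $C_f$ is $\AA^1$-simply connected. The cofiber of a map of $\AA^1$-$1$-connected spaces has trivial $\pi_0^{\AA^1}$ by the unstable $0$-connectivity theorem (\cref{unstable-0-connect:thm}). It has trivial $\pi_1^{\AA^1}$ by the motivic Seifert--van Kampen statement for pushouts along simply connected pieces, which I would cite from \cite{morel2012A1topology}.

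The long exact sequence of $\AA^1$-homology sheaves attached to the cofiber sequence $X\to Y\to C_f$ then shows that $\widetilde H_i^{\AA^1}(C_f)=0$ for $i\le d$. For $i<d$ this uses the isomorphisms in the hypotheses. For $i=d$ it uses the epimorphism together with the isomorphism in degree $d-1$. The low degrees $0,1$ are automatic from simple connectivity of $X$, $Y$ and $\widetilde H_1^{\AA^1}=\pi_1^{\AA^1,ab}$.

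\textbf{Hurewicz step.} The $\AA^1$-Hurewicz theorem \cite[Theorem 6.35, 6.37]{morel2012A1topology} applied inductively shows that $C_f$ is $\AA^1$-$d$-connected.

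The substantive step is to pass from the cofiber to the homotopy fiber $F$ of $f$. For this I would use the relative Hurewicz/Blakers--Massey comparison available in $\AA^1$-homotopy theory (e.g.\ \cite{shimizu2022relative}, or the $\AA^1$-excision results of \cite{asok2009A1-excision}). These say that, for $f$ between $\AA^1$-simply connected spaces, the map of pairs is $\AA^1$-$n$-connected if and only if the cofiber is $\AA^1$-$n$-connected. Hence $f$ induces isomorphisms on $\pi_i^{\AA^1}$ for $i<d$ and an epimorphism on $\pi_d^{\AA^1}$.

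\textbf{Dimension argument.} Finally I would use that $X$ and $Y$ are smooth schemes of dimension less than $d$. The Nisnevich cohomological dimension bound (\cref{App:Nisnevich-useful}, item 1) and Morel's Postnikov tower arguments imply that the $\AA^1$-homotopy type of a smooth scheme $U$ of dimension $\le n$ is detected by its $\AA^1$-Postnikov truncation at level $n$. More precisely, $[U,\mathcal{Z}]_{\AA^1}$ depends only on $\tau_{\le n}\mathcal{Z}$, since obstructions live in $H^{i+1}_{Nis}(U,\pi_i^{\AA^1})$, which vanish for $i\ge n$.

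Apply this with $U$ ranging over $X$ and $Y$ and with $\mathcal{Z}=X,Y$. Since $f$ induces an equivalence of $\tau_{\le d-1}$, there is an inverse on Postnikov truncations, and the maps $f_*:[Y,X]\to[Y,Y]$ and $f^*:[Y,X]\to[X,X]$ become bijective. This produces a homotopy inverse $g$, and $f$ is an $\AA^1$-weak equivalence.

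\textbf{Main obstacle.} The hardest step is the dimension argument. Postnikov truncation only controls maps out of objects of finite cohomological dimension, so one must carefully verify that $d=\max\{\dim X+1,\dim Y\}$ is exactly enough. The worst case is at $d=\dim Y$, where only an epimorphism on $\pi_d$ is available; here I would need the stated surjectivity on $H_d^{\AA^1}$ to kill the top obstruction in $H^d_{Nis}(Y,\pi_d^{\AA^1}(F))$. A fallback is to cite Asok's homological Whitehead theorem \cite{asok2009A1-excision} directly.
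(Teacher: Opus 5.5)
Your reduction is the right one. The cofiber long exact sequence gives $\widetilde H_i^{\AA^1}(C_f)=0$ for $i\le d$. A relative $\AA^1$-Hurewicz theorem for maps of $\AA^1$-simply connected spaces then makes $f$ $\AA^1$-$d$-connected, i.e.\ the homotopy fiber $F$ is $\AA^1$-$(d-1)$-connected. The paper's proof is only a citation of \cite{shimizu2022relative}, and that relative Hurewicz comparison is the main technical input of that reference. Your alternative citation does not help here: the excision theorem of \cite{asok2009A1-excision} concerns open immersions whose complement has high codimension, so it cannot replace relative Hurewicz for an arbitrary $f$.

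The gap is in your dimension step. Truncation at level $d-1$ controls $[U,-]_{\AA^1}$ only when $\dim U\le d-1$. So it gives bijectivity of $f_*\colon[X,X]_{\AA^1}\to[X,Y]_{\AA^1}$, but nothing about maps out of $Y$ when $\dim Y=d$; your claim that both $X$ and $Y$ have dimension $<d$ is false for $Y$. The precomposition map $f^*\colon[Y,X]_{\AA^1}\to[X,X]_{\AA^1}$ is not controlled by any dimension argument at all. Your ``main obstacle'' also points at the wrong cohomology group.

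The correct argument uses the Moore--Postnikov tower of $f$, which is principal since $Y$ is $\AA^1$-simply connected. Obstructions to lifting a map $U\to Y$ through $f$ lie in $H^{i+1}_{Nis}(U,\pi_i^{\AA^1}(F))$, and differences of lifts lie in $H^{i}_{Nis}(U,\pi_i^{\AA^1}(F))$, for $i\ge d$. Finite Nisnevich cohomological dimension also handles convergence of the tower on $[U,-]_{\AA^1}$.
\begin{itemize}
\item For $U=Y$, the existence groups vanish because $i+1>d\ge\dim Y$. Hence $f_*\colon[Y,X]_{\AA^1}\to[Y,Y]_{\AA^1}$ is surjective and you get $g$ with $fg\simeq\mathrm{id}_Y$. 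The group $H^d_{Nis}(Y,\pi_d^{\AA^1}(F))$ is a uniqueness group and plays no role.
\item For $U=X$, both groups vanish since $\dim X\le d-1$. Hence $f_*\colon[X,X]_{\AA^1}\to[X,Y]_{\AA^1}$ is injective, and $f(gf)\simeq f$ forces $gf\simeq\mathrm{id}_X$.
\end{itemize}
The epimorphism on $H_d^{\AA^1}$ is used exactly once, to get $\pi_{d-1}^{\AA^1}(F)=0$. With this replacement your outline goes through.
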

\begin{proof}
See \cite[Theorem 1.1, \S 3]{shimizu2022relative}.
\end{proof}

\subsection{The Milnor-Witt Sheaves}\label{app:MWK}
The Milnor-Witt sheaves are examples of strictly $\AA^1$-invariant sheaves. In this section, we will complement the background for the (\cref{sect:Milnor-Witt}) closely following \cite{fasel2020Chow-Witt, deglise2023notes}. We will begin with the background on Grothendieck-Witt ring, Witt ring, and fundamental ideals. We will also then define the Milnor $K$-theory and state Milnor's conjecture. We will end with some additional knowledge in Milnor-Witt $K$-theory and Chow-Witt groups. In general, as far as the exposition here is concerned, the references will be \cite{lam2005quadratic, morel2012A1topology, deglise2023notes, bachmann2025MWmotives,fasel2020Chow-Witt}.

\subsubsection{Grothendieck-Witt Ring}\label{app:Gw-ring}
\begin{defn}
Let $F$ be a field. An \emph{inner product space} $(E,\phi)$ over $F$ is a finite $F$-vector space $E$ equipped with a bilinear form 
        $$\phi: E \otimes_F E \to F$$
which is symmetric and non-degenerate\footnote{this means $E \to E^{\vee},\ x\mapsto \phi(x,-)$ is an isomorphism}. A morphism $f:(E,\phi) \to (F,\psi)$ of inner spaces is a $F$-linear morphism $f:E\to F$ such that $\phi(f(v),f(w) = \psi(v,w)$. We define the \emph{rank} of $(E,\phi)$ to be its dimension of the vector space $E/F$.
\end{defn}
The category of inner spaces admits direct sums $\oplus$ and tensor products $\otimes$ on its elements. Thus, the set of isomorphism classes of inner spaces, denoted by $I_F$, is a commutative monoid for $\oplus$ and a commutative semi-ring $\oplus,\otimes$. The following definition using inner spaces is due to \cite{milnor1973symmetric}.

\begin{defn}
The \emph{Grothendieck-Witt ring} of field $F$ is defined as the group completion of the monoid $(I_F,\oplus)$, with products induced by the tensor product $\otimes$. The rank map $rk$ defined above induces a ring homomorphism $rk: \GW(F)\to \ZZ$.
\end{defn}

Alternatively, one also defines the $\GW(F)$ to be the isomorphism classes of quadratic forms of $F$, exploiting the one-to-one correspondence between the symmetric bilinear forms and the theory of quadratic forms. But this will no longer hold whenever char $F=2$ (cf. \cite[Remark 2.1.3]{deglise2023notes}). Nevertheless, the definitions based on the inner spaces are valid irrespective of the characteristic of $F$, and so we will adopt this approach for all the formulations in the $\AA^1$-homotopy theory in our thesis. The $\GW(F)$ can also be presented as explicit generators and relations (See \cite[Theorem 2.1.11]{deglise2023notes}). The $\GW(F)$ can be presented as ring via an explicit element $\epsilon = - \Lin -1\Rin$ which provides $\GW(F) = \ZZ[\epsilon]/(\epsilon^2-1)$.

\begin{remark}
For an unit $u\in F^{\times}$, the inner space $F\otimes F\to F$ defined by $(x,y)\mapsto u \cdot xy$ is of rank 1. We denote its class in $\GW(F)$ by the symbol $\Lin u \Rin$. This gives a relation $\Lin uv^2 \Rin = \Lin u \Rin$. If we have $n$-units, $u_1,\dots,u_n$, we write $\Lin u_1,\dots,u_n \Rin = \Lin u_1 \Rin +\dots \Lin u_n \Rin$.
\end{remark}

\begin{defn}
A field $F$ is said to be \emph{quadratically closed} if every element of $F$ is a square in $F$. Equivalently, every quadratic polynomial has a root in $F$. We denote the quadratic classes of $f$ by  $Q(F):= F^\times/(F^\times)^2$. For any field $F$, there is a canonical map $\phi: Q(F) \to \GW(F)$ defined by the relation in the above remark.
\end{defn}

\begin{prop}
A field $F$ is quadratically closed if and only if the map $rk: \GW(F)\to \ZZ$ is a ring isomorphism.
\end{prop}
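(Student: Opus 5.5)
The statement to prove is the last proposition: a field $F$ is quadratically closed if and only if the rank map $rk\colon \GW(F)\to \ZZ$ is a ring isomorphism. I will prove the two directions separately, using the presentation of $\GW(F)$ as the group completion of isomorphism classes of inner spaces, together with the rank-one classes $\Lin u\Rin$ and the relation $\Lin uv^2\Rin=\Lin u\Rin$ recorded in the preceding remark.

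\emph{Forward direction.} Assume $F$ is quadratically closed.

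1. The map $rk$ is always a ring homomorphism, since rank is additive under $\oplus$ and multiplicative under $\otimes$. It is surjective because $rk(n\Lin 1\Rin)=n$.

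2. For injectivity, I will show that every inner space over $F$ is isometric to $\Lin 1,\dots,1\Rin$. The first sub-step is to diagonalize: every inner space decomposes as an orthogonal sum of rank-one spaces $\Lin u_1,\dots,u_n\Rin$.
   \begin{itemize}
   \item In characteristic $\neq 2$ this is the standard diagonalization of symmetric bilinear forms: choose an anisotropic vector and split off its orthogonal complement.
   \item In characteristic $2$ this step can fail, because alternating (hyperbolic) planes $\mathbb{H}$ occur as orthogonal summands and do not diagonalize. This is the main obstacle. I will handle it with the standard identity $\mathbb{H}\oplus\Lin u\Rin\cong\Lin u,u,-u\Rin$. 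Since $-1=1$ in characteristic $2$, this reads $\mathbb{H}\oplus\Lin 1\Rin\cong\Lin 1,1,1\Rin$.
   \item Adding $\Lin 1\Rin$ therefore shows that every form is \emph{stably} diagonal. In $\GW(F)$ this is enough, because we may cancel $\Lin 1\Rin$ after passing to the group completion.
   \end{itemize}

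3. Once the class is diagonal, quadratic closedness lets us write each $u_i=v_i^2$, so $\Lin u_i\Rin=\Lin 1\Rin$ by the remark. Hence every class in $\GW(F)$ equals $n\Lin 1\Rin-m\Lin 1\Rin$. Such a class lies in the kernel of $rk$ only if $n=m$, i.e.\ only if it is zero. So $rk$ is injective, and therefore an isomorphism.

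\emph{Converse.} Assume $rk$ is an isomorphism. For any $u\in F^\times$, the classes $\Lin u\Rin$ and $\Lin 1\Rin$ both have rank $1$, so injectivity of $rk$ gives $\Lin u\Rin=\Lin 1\Rin$ in $\GW(F)$. I then need to descend this equality to an actual isometry.

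1. Equality in the group completion means $\Lin u\Rin\oplus E\cong\Lin 1\Rin\oplus E$ for some inner space $E$.

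2. I will invoke Witt cancellation to conclude $\Lin u\Rin\cong\Lin 1\Rin$, which means $u=v^2$ for some $v\in F^\times$. Witt cancellation holds for symmetric bilinear forms in characteristic $\neq 2$.

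3. In characteristic $2$ I will instead argue directly and avoid cancellation entirely. The Frobenius map $F\to F$, $x\mapsto x^2$, is additive there, so the set $\{\phi(x,x)\}$ of values represented on the diagonal of a form is an additive subgroup. For a diagonal form $\Lin a_1,\dots,a_n\Rin$ this subgroup is $\sum_i a_iF^2$.
   \begin{itemize}
   \item Adjoining $E$ (made stably diagonal as in step 2 of the forward direction) compares $uF^2+\sum_i a_iF^2$ with $F^2+\sum_i a_iF^2$.
   \item To extract $u\in F^2$ from this comparison, I would try using an invariant that is additive on the group completion, for example the $F^2$-span of diagonal values modulo the span coming from $E$.
   \item Alternatively, I would quote the cited presentation of $\GW(F)$ by generators and relations, in which the kernel of $Q(F)\to\GW(F)$ is trivial.
   \end{itemize}

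4. With $u=v^2$ established for every $u\in F^\times$, every element of $F$ is a square, so $F$ is quadratically closed.

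The two places where the argument is genuinely delicate are the characteristic-$2$ diagonalization (step 2 of the forward direction) and the injectivity of $Q(F)\to\GW(F)$ in characteristic $2$ (step 3 of the converse).
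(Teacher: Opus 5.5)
Your forward direction is fine: you diagonalize, or in characteristic $2$ stably diagonalize using $\mathbb{H}\oplus\Lin 1\Rin\cong\Lin 1,1,1\Rin$, and then use $\Lin v^2\Rin=\Lin 1\Rin$. In characteristic $\neq 2$ your converse via Witt cancellation is also correct.

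The gap is the converse in characteristic $2$. The statement does cover that case, since $\GW(F)$ is defined here through inner spaces in every characteristic, and neither of your two suggested routes closes it.
\begin{itemize}
\item The $F^2$-span of diagonal values is not a cancellative invariant. Writing $W$ for the span coming from $E$, the equality $uF^2+W=F^2+W$ carries no information as soon as $W\supseteq F^2+uF^2$ (take $E=\Lin 1,u\Rin$). So this invariant cannot extract $u\in F^2$.
\item Appealing to the presentation of $\GW(F)$ ``in which the kernel of $Q(F)\to\GW(F)$ is trivial'' assumes what is to be proved. A presentation by generators and relations never shows by itself that two generators are distinct; for that you need an invariant.
\end{itemize}

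The missing invariant is the determinant. Let $A$ be a Gram matrix of an inner space.
\begin{itemize}
\item The class of $\det A$ in $F^\times/F^{\times 2}$ is an isometry invariant, because a change of basis replaces $A$ by $P^{T}AP$.
\item This class is multiplicative under $\oplus$. Since the target is a group, it extends to a homomorphism $\det\colon \GW(F)\to F^\times/F^{\times 2}$ with $\det\Lin u\Rin=[u]$.
\item Now suppose $rk$ is injective. Then $\Lin u\Rin-\Lin 1\Rin$ has rank $0$, so $\Lin u\Rin=\Lin 1\Rin$ and hence $u\in F^{\times 2}$. This holds for every $u\in F^\times$ and in every characteristic.
\end{itemize}
This is exactly the injectivity of $Q(F)\to\GW(F)$ on which the paper's one-line cited argument rests. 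It also makes Witt cancellation unnecessary in characteristic $\neq 2$. You are right to read ``quadratically closed'' as $F=F^2$: the paper's gloss ``every quadratic polynomial has a root'' is not equivalent in characteristic $2$. For example, over $\FF_2$ every element is a square and $\GW(\FF_2)\cong\ZZ$, yet $x^2+x+1$ has no root.
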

\begin{proof}
This is an easy consequence of the isomorphism of the map $\phi$ above, see \cite[Proposition 3.1]{lam2005quadratic}.
\end{proof}

\begin{example}
The following are some of the first computations of Grothendieck-Witt rings.
\begin{itemize}
\item If $F$ is an algebraically closed field, then the rank map $rk: \GW(k)\to \ZZ$ is an isomorphism. More generally, $rk$ is an isomorphism whenever $(-1)$ is a square in $k$ (since they are quadratically closed).
\item Let $F=\RR$, then $\GW(\RR)\cong \ZZ \oplus \ZZ$. Indeed, we know that the signature determines a quadratic form over a real vector space. Hence, any class $\sigma\in \GW(\RR)$ can be uniquely written as $\sigma = p\cdot \Lin 1\Rin + q\cdot \Lin -1 \Rin$ with $rk(\sigma)= p+q$ and the signature of $\sigma$ is the pair $(p,q)$. Hence, the map $\GW(\RR)\to \ZZ\oplus \ZZ$ defined by $\sigma\mapsto (p,q)$ is an isomorphism.
\item Let $F=\FF_q$ be the finite field with $q=p^n$ and $p$ prime. Then we have that
    \[\GW(\FF_q)\cong 
        \begin{cases*}
            \ZZ              & q \text{even} \\
            \ZZ/2 \oplus \ZZ & q \text{odd}
        \end{cases*}\]
\end{itemize}
\end{example}
The Grothendieck Witt ring is intimately related to the Milnor-Witt ring by the following fact: for $a\in F^\times$, let us denote by $\Lin a \Rin$ the class of symmetric bilinear forms on $F$ defined by $(X, Y) \mapsto a XY$. Then we have a fundamental statement due to \cite[Lemma 3.10]{morel2012A1topology}.

\begin{lemma}\label{lemma:GW--MW}
The map 
\begin{align*}
    \GW(F) \to K^{\MW}_0 (F)\\
    \Lin a \Rin \mapsto 1+\eta [a]
\end{align*}
passes to a well-defined isomorphism.
\end{lemma}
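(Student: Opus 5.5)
The statement is the final lemma: the assignment $\Lin a\Rin \mapsto 1+\eta[a]$ passes to a well-defined isomorphism $\GW(F)\to K^{\MW}_0(F)$.

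The plan is to go through the presentation of $\GW(F)$ by generators and relations. I will use the standard presentation (Witt's presentation, as in \cite[Theorem 2.1.11]{deglise2023notes}). It says $\GW(F)$ is the abelian group generated by symbols $\Lin a\Rin$, $a\in F^\times$, subject to:
\begin{itemize}
\item[(i)] $\Lin ab^2\Rin=\Lin a\Rin$;
\item[(ii)] $\Lin a\Rin\Lin b\Rin=\Lin ab\Rin$, for the ring structure;
\item[(iii)] $\Lin a\Rin+\Lin b\Rin=\Lin a+b\Rin+\Lin ab(a+b)\Rin$ whenever $a+b\neq 0$.
\end{itemize}
In particular $\Lin a\Rin+\Lin -a\Rin=\Lin 1\Rin+\Lin -1\Rin$.

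On the other side, $K^{\MW}_0(F)$ is additively generated by the products $\eta^n[a_1,\dots,a_n]$. Relation (2) of the definition gives $\eta[a][b]=[ab]-[a]-[b]$, so $\eta^2[a][b]=\eta[ab]-\eta[a]-\eta[b]$. Hence $K^{\MW}_0(F)$ is generated as a group by the elements $\Lin a\Rin:=1+\eta[a]$, and the proposed map is surjective once it is shown to be well defined. Relation (2) also gives $\Lin a\Rin\Lin b\Rin=\Lin ab\Rin$ directly, which settles multiplicativity.

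\textbf{Well-definedness.} I will check the three relations in $K^{\MW}_0(F)$.
\begin{itemize}
\item $\Lin b^2\Rin=1$. I will use $[b^2]=2_\epsilon[b]=(1+\Lin -1\Rin)[b]$, which follows from relation (2) together with $\eta[b][b]=\eta[b][-1]$. Then $\eta[b^2]=\eta(2+\eta[-1])[b]=0$ by relation (4). Combined with multiplicativity, this gives (i).
\item The additive relation (iii). Rescaling by $\Lin a\Rin$, it reduces to $\Lin 1\Rin+\Lin c\Rin=\Lin 1+c\Rin+\Lin c(1+c)\Rin$. This is the Steinberg-type identity $\eta[c][1+c]$-computation. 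I plan to rewrite it via $[c]=[-c]+[-1]+\eta[-1][-c]$ (up to signs from $\epsilon$-commutativity) and then apply the Steinberg relation to $[-c][1+c]$.
\end{itemize}

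\textbf{Injectivity.} I will construct an inverse. Following Morel, I will build a graded ring $\GW(F)\times_{I/I^2}\dots$; concretely, I will use the fibre product description $K^{\MW}_n(F)\cong I^n\times_{I^n/I^{n+1}}K^{\M}_n(F)$ in degree $0$. A simpler alternative that suffices here is to define a ring map $\Phi:K^{\MW}_*(F)\to \GW(F)[\eta^{\pm1}]$-type target by $[a]\mapsto \eta^{-1}(\Lin a\Rin-1)$ and $\eta\mapsto\eta$. I would then verify relations (1)--(4) of the definition, using Pfister forms:
\begin{itemize}
\item $(\Lin a\Rin-1)(\Lin 1-a\Rin-1)=0$ is the classical vanishing of the Pfister form $\langle\langle a,1-a\rangle\rangle$;
\item relation (4) becomes $\Lin -1\Rin+1-2+2=\dots$, which is $h=\Lin1,-1\Rin$ annihilating $\Lin-1\Rin-1$ appropriately.
\end{itemize}
Restricting $\Phi$ to degree $0$ then gives the inverse.

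\textbf{Main obstacle.} I expect the crux to be verifying relation (iii) on the Milnor--Witt side, and checking that the inverse respects relation (4) without dividing by $2$ in characteristic $2$. The definition of $\GW$ via inner spaces is characteristic-free, but Witt's presentation needs care when $\mathrm{char}\,F=2$. There I would fall back on Morel's argument in \cite[Lemma 3.10]{morel2012A1topology}, which is the source of the lemma.
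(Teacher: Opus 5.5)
The paper gives no argument for this lemma beyond citing \cite[Lemma 3.10]{morel2012A1topology}. Your well-definedness and surjectivity steps follow Morel's route and are sound. Your sketch of (iii) does go through: with $[c]=[-1]+[-c]+\eta[-1][-c]$ and the Steinberg relation $[-c][1+c]=0$, the required identity $\eta(2+\eta[c])[1+c]=0$ becomes $\eta(2+\eta[-1])[1+c]=0$. You tacitly use $[1]=0$, which follows from relations (2) and (4). In characteristic $2$ the hyperbolic relation should be checked separately, but it is immediate: $\eta[a]+\eta[-a]=\eta[-1]+\eta(2+\eta[-1])[a]=\eta[-1]$.

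The gap is injectivity.

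\textbf{The proposed inverse is not well defined.} Your ``simpler alternative'' $\Phi\colon K^{\MW}_*(F)\to\GW(F)[\eta^{\pm1}]$ sends relation (4) to $\eta\bigl(\eta\cdot\eta^{-1}(\langle-1\rangle-1)+2\bigr)=\eta h$. This is nonzero, because $\eta$ is a unit and $h$ has rank $2$. The identity $h(\langle-1\rangle-1)=0$ that you invoke is true, but it is not what (4) requires.

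No target in which $\eta$ is invertible can work. Relation (4) would force $h=0$ in degree $0$, so the degree-$0$ composite would factor through $\W(F)=\GW(F)/(h)$. It could then never detect the kernel $\mathbb{Z}h$ of $\GW(F)\to\W(F)$.

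\textbf{The fibre-product route, as stated, is circular.} Using the fibre-product description of $K^{\MW}_n(F)$ ``in degree $0$'' means using $K^{\MW}_0(F)\cong\W(F)\times_{\mathbb{Z}/2}\mathbb{Z}\cong\GW(F)$, which is the lemma itself. In positive degrees that isomorphism is a much deeper statement resting on the Milnor conjecture.

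\textbf{What is missing.} $\eta$ must land in the Witt ring rather than be inverted. You only need the \emph{existence} of a graded ring map into Morel's ring
\[
J^*(F):=I^*(F)\times_{I^*(F)/I^{*+1}(F)}K^{\M}_*(F),\qquad I^n(F):=\W(F)\ \text{for } n\le 0,
\]
so that $J^n=\W(F)$ for $n<0$ and $J^0=\W(F)\times_{\mathbb{Z}/2}\mathbb{Z}\cong\GW(F)$. The map is $\eta\mapsto 1\in\W(F)=J^{-1}$ and $[a]\mapsto(\langle a\rangle-1,\{a\})$.

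Multiplication by $\eta$ from $J^0$ to $J^{-1}$ is then the projection $\GW(F)\to\W(F)$, and the relations check as follows:
\begin{enumerate}
\item Relation (1) becomes $(\langle a\rangle-1)(\langle 1-a\rangle-1)=0$ together with $\{a,1-a\}=0$.
\item Relation (2) is your computation.
\item Relation (4) becomes $\bar h=0$ in $\W(F)$, which holds.
\end{enumerate}
The degree-$0$ composite $\GW(F)\to K^{\MW}_0(F)\to J^0(F)=\GW(F)$ sends $\langle a\rangle\mapsto 1+(\langle a\rangle-1)=\langle a\rangle$. It is therefore the identity, which gives injectivity.
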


\begin{remark}
Due to this result, we will denote by $\Lin a \Rin$ the symbol $1+\eta[a]$ of degree 0. For example, the relation 2 in \cref{MW:relations} then can written as
        $$[ab] = [a] +  \Lin a \Rin[b] = [b] + \Lin b \Rin [a]  $$
for any $a,b\in F^\times$.  In particular, we have that $ \Lin b^2 \Rin =1 $ yields $[ab^2] = [a]+[b^2]$. Moreover, the fourth relation becomes 
\begin{align}\label{hyberbolicform}
\eta[-1]+2 =  \Lin -1 \Rin + 1 =  \Lin -1,1 \Rin=: h\quad\ \text{implies} && \eta h =0    
\end{align}
Here, $h$ is called the \emph{hyperbolic form} on $F^2$.
\end{remark}

The following element in $\GW(F)$ is of paramount importance to describe the commutativity of $K^{\MW}_*(F)$.
\begin{lemma}\label{MW:epsilon-defn}
Let $\epsilon:= -\Lin -1 \Rin \in \GW(F)$. For any $n\in \ZZ$, let 
\[ 
n_{\epsilon} = 
\begin{cases*}
\sum_{i=1}^{n} \Lin (-1)^{i-1} \Rin  & if $n> 0$.  \\
0 & if $n = 0$.  \\
\epsilon \sum_{i=1}^{-n} \Lin (-1)^{i-1} \Rin  & if $n<0.$
\end{cases*} 
\]
Then the following holds.
\begin{enumerate}
\item For any $a\in F^\times$, we have $[a,a]=[-1,a]=[a,-1]$ and $[-a,a]=0$
\item For any $\alpha\in K^{\MW}_n(F)$ and $\beta\in K^{\MW}_m(F)$, we have $\alpha\beta = \epsilon ^{mn}\beta\alpha$
\item For any $n\in \ZZ$ and $a\in F^\times$, we have $[a^n] = n_{\epsilon}[a]$.
\end{enumerate}
\end{lemma}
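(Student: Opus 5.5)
The plan is to derive all three statements directly from the defining relations of $K^{\MW}_*(F)$, namely the Steinberg relation, the twisted additivity $[ab]=[a]+[b]+\eta[a][b]=[a]+\Lin a\Rin[b]$, the centrality of $\eta$, and $\eta h=0$ with $h=\Lin 1\Rin+\Lin -1\Rin$. These are used together with the identification $\GW(F)\cong K^{\MW}_0(F)$ of \cref{lemma:GW--MW}. I will prove (1) first, then reduce (2) to the case of generators, and finally get (3) by induction on $n$.

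For (1), I start from the Steinberg relation in the form $[a][-a]=0$. For $a\neq 1$, this follows by writing $-a=(1-a)/(1-a^{-1})$. The two factors satisfy $[1-a][a]=0$ and $[1-a^{-1}][a^{-1}]=0$, so expanding $[-a]$ with the twisted additivity gives the claim. Here I also use $[a^{-1}]=-\Lin a^{-1}\Rin[a]$, which comes from $0=[1]=[a\cdot a^{-1}]$. Next I expand $[a][-a]=[a]([-1]+\Lin -1\Rin[a])$. This yields $[a][a]=-\Lin -1\Rin^{-1}[a][-1]=\epsilon\,\Lin -1\Rin^{-1}\cdots$; I expect it to simplify to $[a][-1]$ once I use $\Lin -1\Rin^2=1$ and the relation $\eta[a][-1]+\dots$. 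The step I expect to be the main obstacle is bookkeeping in this manipulation. I will first derive $\Lin a\Rin\Lin b\Rin=\Lin ab\Rin$ and $[a]\eta[-1]=-2[a]+\dots$ from $\eta h=0$. Then I will check, as in Morel's Lemma 3.7, that $[a,a]=[a,-1]$. The equality $[-1,a]=[a,-1]$ follows symmetrically, and $[-a,a]=0$ is just the Steinberg consequence above with the roles swapped.

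For (2), since $K^{\MW}_*(F)$ is generated by $\eta$ and the symbols $[a]$, and $\eta$ is central, it suffices to show $[a][b]=\epsilon[b][a]$ for degree-one generators. The general formula then follows by moving letters past each other one at a time, which accumulates $\epsilon^{mn}$. Here $\epsilon$ is central because it lies in degree $0$ and commutes with $\eta$ and with the symbols. To get the generator case, I apply (1) to $ab$. Expanding $0=[ab][-ab]$ is messy, so I will instead expand $[ab,ab]=[ab,-1]$ bilinearly using the twisted additivity. Substituting $[a,a]=[a,-1]$ and $[b,b]=[b,-1]$ cancels most terms and leaves $[a][b]+\Lin -1\Rin[b][a]=0$, which is the required identity $[a][b]=\epsilon[b][a]$.

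For (3), I induct on $n\ge 1$ using $[a^{n+1}]=[a^n]+\Lin a^n\Rin[a]$. Since $\Lin a^2\Rin=1$, the factor $\Lin a^n\Rin$ is $1$ or $\Lin a\Rin$ according to parity. I also need $\Lin a\Rin[a]=\Lin -1\Rin[a]$, which follows from $\eta[a][a]=\eta[a][-1]$ by (1). The recursion $(n+1)_\epsilon=n_\epsilon+\Lin(-1)^n\Rin$ then gives the formula. The case $n=0$ is $[1]=0$. For negative $n$, I use $[a^{-n}]+\Lin a^{-n}\Rin[a^n]=0$, which together with the positive case yields the factor $\epsilon$.
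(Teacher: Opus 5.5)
The paper states this lemma without proof (it is background quoted from Morel's book), so deriving everything from the defining relations, as you do, is the standard route. The trouble is that the one non-formal step in (1) is exactly the one you leave open. Your derivation of $[a][-a]=0$ is fine. Expanding $[-a]=[-1]+\langle -1\rangle[a]$ correctly gives $[a][a]=\epsilon[a][-1]$.

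What remains is $\epsilon[a][-1]=[a][-1]$, i.e.\ $h[a][-1]=0$. This \emph{is} the content of (1), and the tools you name cannot produce it:
\begin{itemize}
\item $\langle -1\rangle^2=1$ is equivalent to $\eta h[-1]=0$.
\item $\eta h=0$ only gives $\eta^2[-1]=-2\eta$.
\end{itemize}
Both control $h$ only after multiplication by $\eta$; they yield $\eta[a][a]=\eta[a][-1]$ and nothing more. An identity like $\eta[-1][a]=-2[a]$ is also out of reach, because it would say $h[a]=0$. That is false in general: $h[a]\mapsto 2\{a\}$, which is nonzero in $K^{\M}_1(\QQ)$ for $a=2$.

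The missing idea is $h[-1]=0$. It comes from relation 2 with $a=b=-1$:
\[
0=[1]=[(-1)(-1)]=2[-1]+\eta[-1][-1]=h[-1].
\]
Since $h$ is central and $\epsilon=1-h$, this gives $[a][a]=\epsilon[a][-1]=[a][-1]$. Expanding $[-a][a]=0$ the same way gives $[a][a]=\epsilon[-1][a]=[-1][a]$. This computation, not a symmetry argument, is what proves $[-1,a]=[a,-1]$.

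You also use $[1]=0$, which is not a defining relation. From $[-1]=[(-1)\cdot 1]$ you get $\langle -1\rangle[1]=0$. Multiplying by $\eta$ and using $\eta^2[-1]=-2\eta$ gives $\eta[1]=0$, and then $[1]=-\eta[1][1]=0$.

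In (2), the reduction to generators miscounts signs because $\eta$ has degree $-1$. For monomials $\alpha=\eta^p[a_1]\cdots[a_{n+p}]$ and $\beta=\eta^q[b_1]\cdots[b_{m+q}]$, transposing symbols produces $\epsilon^{(n+p)(m+q)}$, not $\epsilon^{nm}$. The difference $(n+p)(m+q)-nm$ can be odd. For instance, take $\alpha=\eta[a][b]$ and $\beta=[c]$; or $\alpha=\eta$ and $\beta=[b]$, where the formula demands $\eta[b]=\epsilon^{-1}[b]\eta$.

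The fix is $\epsilon\eta=\eta$, which follows from $\eta h=0$:
\[
\epsilon\eta=-\eta-\eta^2[-1]=\eta.
\]
Hence $\epsilon$ fixes every monomial containing $\eta$, and only the $\eta$-free case needs the count.

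Your generator computation via $[ab][ab]=[ab][-1]$ does work, but substitution alone only leaves $\langle a\rangle([a][b]+[b][a])=\eta[a][b][-1]$. To reach $[a][b]+\langle -1\rangle[b][a]=0$ you still need three facts:
\begin{itemize}
\item $\langle a\rangle^2=1$;
\item $\eta\langle a\rangle[a]=-\eta[a]$;
\item symbols commute once an $\eta$ is present, which follows from $\eta[a][b]=\eta[b][a]$, i.e.\ from $[ab]=[ba]$.
\end{itemize}
Each of these follows from (1) together with $\eta h=0$. Part (3) is then fine once (1) is in place; for even negative exponents you also use $\epsilon h=-h$.
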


\subsubsection{Witt Ring}\label{app:Witt-ring}
The \emph{Witt ring} is defined as the quotient ring 
                    $$\W(F):= \GW(F)/ (h).$$
The hyperbolic form $h$ has a rank 2, the rank map $rk$ induces a morphism of rings $\W(F) \to \ZZ/2$. The negative part of the Milnor-Witt $K$-theory is governed by the Witt ring as described by the following fact (\cite[Lemma 3.10]{morel2012A1topology}):
\begin{lemma}
For all $n\le -1$ and $a\in F^\times$, the morphism 
$$\gamma_n: K^{\MW}_n(F)\to \W(F)\quad \text{defined by}\quad  \Lin a \Rin \eta^{-n}\mapsto \Lin a \Rin$$
is an isomorphism of abelian groups.
\end{lemma}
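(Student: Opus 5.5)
The statement is the isomorphism $\gamma_n\colon K^{\MW}_n(F)\to \W(F)$ for $n\le -1$. I would set it up as a pair of mutually inverse homomorphisms, following \cite[Lemma 3.10]{morel2012A1topology}. The first step describes $K^{\MW}_n(F)$ for $n\le -1$ explicitly. Any product of generators of total degree $n<0$ can be rewritten, using relation (3) ($\eta$ is central), as $\eta^{-n+k}[a_1]\cdots[a_k]$. Relation (2) gives $\eta[a]=\Lin a\Rin-1$, so each $\eta[a_i]$ absorbs into a degree-$0$ element. Hence multiplication by $\eta^{-n}$ gives a surjection $K^{\MW}_0(F)\to K^{\MW}_n(F)$. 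Every element is then of the form $\alpha\eta^{-n}$ with $\alpha$ in the subring generated by the $\Lin a\Rin$, which by \cref{lemma:GW--MW} is $\GW(F)$.

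The second step identifies the kernel of $\GW(F)\to K^{\MW}_n(F)$, $\alpha\mapsto \alpha\eta^{-n}$. It contains $h$, because relation (4) says exactly that $\eta h=0$, as in \eqref{hyperbolicform}. This yields a surjection $\W(F)=\GW(F)/(h)\to K^{\MW}_n(F)$ sending $\Lin a\Rin\mapsto \Lin a\Rin\eta^{-n}$. Its inverse is the map $\gamma_n$ of the statement, provided $\gamma_n$ is well defined.

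The third step, which I expect to be the main obstacle, is to show that $\gamma_n$ is well defined, i.e.\ that the kernel is no larger than $(h)$. For this I would construct the whole graded ring $K^{\MW}_*(F)$ mapping into a target where the relations visibly hold. The target is the graded ring $J_*$ with $J_n=I^n$ (the powers of the fundamental ideal, $I^n=\W(F)$ for $n\le0$) fibered over $K^{\M}_n(F)/2$ in positive degrees. In that ring one sends $[a]\mapsto(\Lin a\Rin-1,\{a\})$ and $\eta\mapsto$ the inclusion $I^{n+1}\subset I^n$. One then checks the four relations. The Steinberg relation reduces to $\Lin a\Rin+\Lin 1-a\Rin=1+\Lin a(1-a)\Rin$ in $\GW$. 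Relation (4) holds because $\W$ kills $h$. The other two are formal.

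Restricted to degree $n<0$, this ring map is exactly $\gamma_n$, so $\gamma_n$ is a well-defined homomorphism. Composing it with the surjection from the second step gives the identity on generators $\Lin a\Rin$. So the surjection $\W(F)\to K^{\MW}_n(F)$ is also injective, and $\gamma_n$ is an isomorphism.
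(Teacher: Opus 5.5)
Your proposal is correct and is essentially the argument of \cite[Lemma 3.10]{morel2012A1topology}, which the paper cites instead of giving a proof: you get surjectivity of $\W(F)\to K^{\MW}_n(F)$, $\Lin a\Rin\mapsto\Lin a\Rin\eta^{-n}$, from the centrality of $\eta$ and $\eta h=0$, and injectivity by splitting this map with the graded ring homomorphism $K^{\MW}_*(F)\to J_*$. (Two cosmetic points: $\eta[a]=\Lin a\Rin-1$ is the definition of $\Lin a\Rin$ rather than relation (2), and the fiber product with $K^{\M}_*(F)/2$ is harmless but unnecessary, since the ring $\bigoplus_{n\in\ZZ}I^n$ alone already receives the required map.)
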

As a consequence, we have that for all $m,n\le -1$, the following diagram
\[\begin{tikzcd}
	K^{\MW}_m(F)\times K^{\MW}_n(F) & K^{\MW}_{m+n}(F) \\
	\W(F)\times \W(F) & \W(F)
	\arrow["{\times}", from=1-1, to=1-2]
	\arrow["{\gamma_m\times \gamma_n}",swap, from=1-1, to=2-1]
	\arrow["{\gamma_{m+n}}", from=1-2, to=2-2]
	\arrow["{\times}",swap, from=2-1, to=2-2]
\end{tikzcd}\]
is commutative.

\subsubsection{Fundamental ideal}
The kernel of the rank map $\Ker(rk):= \mathcal{I}$ is called the \emph{fundamental ideal}, which is additively generated by the classes of the form $\Lin -1, a \Rin-h$ in $\GW(F)$. In a similar vein, there is another rank map in the Witt theory, $rk: \W(F)\to \ZZ/2$, and we set $Ker(rk):= \mathcal{I}(F)$, called (by abuse of naming) \emph{Fundamental ideal}. The $\mathcal{I}(F)$ is additively generated by the classes of $\Lin -1,a \Rin$ in $\W(F)$. For $n\le0$, one often sets $\mathcal{I}^n:= \W(F)$ and considers the quotient groups 
        $$\overline{\mathcal{I}}^n:= \mathcal{I}^n(F)/\mathcal{I}^{n+1}(F)$$
for all $n\in \ZZ$ (by convention, $\overline{\mathcal{I}}^n=0 $, if $n\le -1$). The fundamental ideals are described conveniently using the Pfister forms. For any $u\in F^\times$, the Pfister form is defined as $\Lin\Lin u\Rin\Rin:= 1- \Lin u\Rin$. 
\begin{prop}
A field $F$ is quadratically closed if and only if the map $rk: \W(F)\to \ZZ/2$ is a ring isomorphism.
\end{prop}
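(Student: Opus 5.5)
The plan is to reduce both directions to the Grothendieck--Witt statement proved just above (that $F$ is quadratically closed if and only if $rk\colon \GW(F)\to \ZZ$ is a ring isomorphism), using the presentation $\W(F)=\GW(F)/(h)$ and the fact that the hyperbolic form has rank $2$. For the converse I will add one extra invariant, the discriminant, because cancellation of symmetric bilinear forms can fail in characteristic $2$.

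For the forward direction, assume $F$ is quadratically closed. By the preceding proposition, $rk\colon \GW(F)\to\ZZ$ is a ring isomorphism. It sends the ideal $(h)$ onto $(rk\, h)=2\ZZ$. Passing to quotients, the induced map $\W(F)=\GW(F)/(h)\to \ZZ/2$ is therefore a ring isomorphism, and it is exactly the rank map of $\W(F)$.

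For the converse, assume $rk\colon \W(F)\to\ZZ/2$ is an isomorphism and fix $u\in F^\times$. The class $\Lin u\Rin-\Lin 1\Rin$ has rank $0$, so it vanishes in $\W(F)$. Hence in $\GW(F)$ we have $\Lin u\Rin-\Lin 1\Rin = x\cdot h$ for some $x\in \GW(F)$. Taking ranks gives $0=2\,rk(x)$, so $rk(x)=0$.

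This does not yet give $\Lin u\Rin=\Lin 1\Rin$: we only know that $x$ has rank $0$, not that $x=0$. To conclude, I introduce the signed determinant (discriminant) $d\colon \GW(F)\to Q(F)=F^\times/(F^\times)^2$.
\begin{itemize}
\item On an inner space $(E,\phi)$ of rank $n$, set $d(E,\phi)=(-1)^{n(n-1)/2}\det(\phi)$. This is well defined up to squares, since a change of basis multiplies the Gram determinant by a square.
\item The function $d$ is not additive under $\oplus$. The standard remedy is to pair it with the rank, which gives a group homomorphism $\GW(F)\to \ZZ\times Q(F)$ with the twisted group law $(n,a)(m,b)=(n+m,(-1)^{nm}ab)$. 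This extends from the monoid of inner spaces to its group completion.
\item On rank-zero classes this law reduces to ordinary multiplication in $Q(F)$.
\item The hyperbolic plane has Gram matrix $\begin{pmatrix}0&1\\1&0\end{pmatrix}$, so its signed determinant is $(-1)\cdot(-1)=1$. A short computation with the twisted law then shows that $d$ is trivial on every element of the form $x\cdot h$ with $rk(x)=0$.
\end{itemize}

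Applying $d$ to $\Lin u\Rin-\Lin 1\Rin = x\cdot h$ therefore gives $u\equiv 1$ modulo squares. So every unit is a square, and $F$ is quadratically closed. In characteristic $2$ all the signs $-1=1$ disappear, so the same argument goes through verbatim.

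The main obstacle is the converse step, turning $\Lin u\Rin=\Lin 1\Rin$ in $\W(F)$ into squareness of $u$. I will handle it entirely through the discriminant homomorphism rather than through Witt cancellation, which is unavailable in characteristic $2$. The one verification to do carefully is that $d$ kills the rank-zero multiples of $h$ under the twisted law.
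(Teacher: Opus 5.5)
Your proof is correct. The paper gives no argument for this statement; it sits right after the $\GW$-version (cited from Lam) as its evident companion. Your forward direction is exactly the reduction one would write there.

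For the converse you take a longer road than needed. The obstacle you flag, that $x$ has rank $0$ without being $0$, is harmless.
\begin{itemize}
\item $\Lin a\Rin h = h$ for every $a\in F^\times$, since a scaled hyperbolic plane is again hyperbolic. Equivalently, this is the relation $\eta h=0$ read through $\GW(F)\cong K^{\MW}_0(F)$.
\item A rank-zero $x$ lies in the fundamental ideal, which is additively generated by the elements $\Lin a\Rin-1$.
\end{itemize}
Hence $xh=0$ outright, and $\Lin u\Rin=\Lin 1\Rin$ already holds in $\GW(F)$.

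The same observation shows that injectivity of $rk$ on $\W(F)$ forces injectivity of $rk\colon\GW(F)\to\ZZ$. So the converse can also be obtained by citing the $\GW$-proposition, which makes both directions pure reductions to it.

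If you prefer to stay self-contained, the ordinary determinant $\GW(F)\to Q(F)$ is already a group homomorphism, because Gram determinants multiply under orthogonal sums. Applied to $\Lin u\Rin=\Lin 1\Rin$, it gives $u\equiv 1$ modulo squares. The signed discriminant with the twisted law on $\ZZ\times Q(F)$ is the invariant that is blind to hyperbolic summands, and that is precisely the job $xh=0$ already does.

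So your route works, and like the plain-determinant version it is characteristic-free and avoids Witt cancellation. It just carries more bookkeeping than the statement requires.
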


\begin{example}
The following are some simple examples of Witt rings and their fundamental ideals.
\begin{itemize}
\item If $F$ is algebraically closed field, then $\W(F)\cong \ZZ/2$ and so its fundamental ideal $\mcal{I}(F)$ is trivial,
\item If $F=\RR$, then any quadratic form is equivalent to a diagonal form $\Lin a_1,\dots,a_n\Rin =(a_1x_1^2, \dots,a_nx_n^2)$ and hence, we have that the induced rank map $\W(\RR)\to \ZZ$ is an isomorphism. Consequently, the kernel consists of even forms, and so we have $\mcal{I}(\RR)= 2\ZZ$.
\item Let $F=\FF_q$ be the finite field with $q = p^n$, for some prime $p$. Then using the computation of $\GW(\FF_q)$, we have that, 
\[ I(\FF_q)\cong \begin{cases*}
      0      & q \text{even} \\
    \ZZ/2    & q \text{odd}
\end{cases*}\]
Consequently, we have that 
\[ \W(\FF_q)\cong \begin{cases*}
    \ZZ/2               & q \text{even} \\
    \ZZ/2[t]/(t-1)^2    & q=1 \text{mod} 4\\
    \ZZ/4               & q=3 \text{mod} 4
\end{cases*}\]
The higher fundamental ideals all vanish, $I^n(\FF_q)=0$, for $n \ge 2$.
\end{itemize}
\end{example}

\subsubsection{Milnor $K$-theory}\label{App;MilnorK-theory}
To give more context, we will also briefly describe the Milnor $K$-theory (\cite{milnor1970algebraic}). The Milnor $K$-theory $K^{\M}(F)$ of field $F$ is defined as the $\ZZ$-graded algebra generated by the symbols $\{a\}$in degree +1, for $a\in F^\times$ modulo the relations:
\begin{enumerate}
\item[(M1)] $\{a,1-a\}= 0$, for all $a\in F\bs \{0,1\}$ (Steinberg relation)
\item[(M2)] $\{ab\}:= \{a\}\cdot \{b\} = \{a\}+\{b\}$ 
\end{enumerate}
The primordial properties that one has are those that $K^{\M}_0(F) = \ZZ$ and $K_1^{\M}(F)= F^\times$. There is a canonical map $K^{\M}_n(F)\to K_n(F)$ from the Milnor $K$-theory to the algebraic $K$-theory due to Quillen is an isomorphism if $n\le 2$ (for $n=2$, this is due \cite{matsumoto1969sous}). 
The fundamental ideal $\mcal{I}$ defined above forms an important part of the study associated with Milnor's conjecture (\cite[Question 4.3]{milnor1970algebraic}), now a theorem due to Kato \cite{kato1982symmetric} in characteristic 2 and Orlov-Vishik-Voevodsky \cite{orlov2007exact} and \cite{morel2005milnor} for all other cases. We will finish this section by stating the following beautiful result:
\begin{theorem}
Let F be an arbitrary field. Then for any $n\ge 0$, the map $F^\times\to I(F)$, $u\mapsto \Lin\Lin u \Rin\Rin$ induces a ring morphism :
     $$\mu: K^{\M}_*(F)/2K_*^M(F)\to \bigoplus_{n\ge 0} \mcal{I}^n(F)/ \mcal{I}^{n+1}(F)$$
which is an isomorphism.
\end{theorem}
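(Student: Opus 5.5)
The statement is Milnor's conjecture, and it cannot be established from the tools developed earlier in this script. The honest plan is therefore to reduce it to the deep inputs in the literature and indicate where each is used.

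First, I will check that $\mu$ is a well-defined surjective ring homomorphism. I send $\{u\}\mapsto \Lin\Lin u\Rin\Rin \bmod \mcal{I}^2(F)$. Additivity (M2) follows from the identity
\[
\Lin\Lin ab\Rin\Rin \equiv \Lin\Lin a\Rin\Rin+\Lin\Lin b\Rin\Rin \pmod{\mcal{I}^2},
\]
obtained by expanding $(1-\Lin a\Rin)(1-\Lin b\Rin)\in\mcal{I}^2$. The Steinberg relation (M1) follows from the vanishing of the Pfister form $\Lin\Lin a,1-a\Rin\Rin$, since $\Lin a,1-a\Rin$ represents $1$ and the form is therefore hyperbolic. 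Relation by $2$ holds because $2\Lin\Lin u\Rin\Rin=\Lin\Lin u,-1\Rin\Rin\cdot(\text{unit})\in\mcal{I}^2$ up to the sign conventions. Multiplicativity sends $\{a_1,\dots,a_n\}$ to the class of the $n$-fold Pfister form. Since $\mcal{I}^n$ is additively generated by $n$-fold Pfister forms, $\mu$ is surjective in every degree.

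The main obstacle is injectivity. In degrees $n\le 2$ I would argue classically: the discriminant gives degree $1$, and Merkurjev's theorem identifying $K^{\M}_2/2$ with $\mcal{I}^2/\mcal{I}^3$ via the Clifford invariant gives degree $2$. For all $n$ in characteristic $\neq 2$, I would factor through Galois cohomology. Voevodsky's norm residue isomorphism $K^{\M}_n(F)/2\cong H^n_{\acute{e}t}(F,\mu_2)$, proved in motivic homotopy theory as surveyed in the Preface, shows that the symbol map is injective. Orlov--Vishik--Voevodsky then show that the kernel of $e^n:\mcal{I}^n\to H^n$ is $\mcal{I}^{n+1}$, using the exact sequence for the motive of Pfister quadrics. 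Composing $e^n\circ\mu$ with the norm residue map gives injectivity of $\mu$.

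In characteristic $2$, I would instead invoke Kato's theorem, which compares $K^{\M}_*/2$ with differential forms via the Cartier operator and identifies both sides with $\nu(n)$. Finally, the degree-$0$ and degree-$1$ compatibilities and multiplicativity are formal, so assembling the three inputs (Merkurjev, Voevodsky, and OVV/Kato) yields the isomorphism.
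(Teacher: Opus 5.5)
Your proposal is correct and follows essentially the same route as the paper. The paper gives no independent argument: it simply records the isomorphism as a theorem of Kato in characteristic $2$ and of Orlov--Vishik--Voevodsky (with Morel's alternative proof) in characteristic $\neq 2$. You add the elementary check that $\mu$ is a well-defined surjective ring map, and then appeal to the same literature for injectivity.

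One thing to tidy is the logic of that last step. What you need from Orlov--Vishik--Voevodsky is that $e^n$ is well defined on $\mathcal{I}^n(F)/\mathcal{I}^{n+1}(F)$ with $e^n\circ\mu$ equal to the norm residue map. Injectivity of $\mu$ then follows at once from Voevodsky's theorem, so the separate appeal to Merkurjev (a characteristic $\neq 2$ result) is redundant.
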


\subsection{Residue morphisms in $K^{\MW}_*$}\label{app:residue-homom}
One of the quintessential tools nested with the Milnor-Witt $K$-theory is the \emph{residue homomorphism,} which allows us to define the differential between the Rost-Schmid complex that will be defined in the sequel, which leads us to the definition of the Chow-Witt groups. Let $F$ be a field and $v:F\to \ZZ\cup\{-\infty\}$ be a discrete valuation with residue field $\kappa(v)$ and valuation ring $\mathcal{O}_v$ and choose a uniformizing parameter $\pi = \pi_v$ of $v$. Then one defines the following group homomorphism, called the \emph{non-canonical residue morphism} due to \cite[Theorem 3.15]{morel2012A1topology}:

\begin{theorem}
There is a unique homomorphism of graded Abelian groups 
        $$\partial^{\pi}_v : K^{\MW}_*(F)\to K^{\MW}_{*-1}(\kappa(v))$$
of degree -1 such that $\partial_v^\pi$ commutes wit the multiplication by $\eta$ and 
\begin{enumerate}
\item $\partial_v^\pi([\pi, u_1,\dots,u_n]) = [\Bar{u_1},\dots,\Bar{u_n}]$
\item $\partial_v^\pi([u_1,\dots,u_n])=0$
\end{enumerate}
We call such a $\partial_v^\pi$ the \emph{residue homomorphism}.
\end{theorem}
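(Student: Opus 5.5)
The plan is to follow Morel's version of the classical Milnor--Serre argument for Milnor $K$-theory. Uniqueness should be easy, so I will dispose of it first. Existence will come from a ring homomorphism out of $K^{\MW}_*(F)$, built by checking the four defining relations of $K^{\MW}_*(F)$ listed in \cref{sect:Milnor-Witt}.

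\textbf{Uniqueness.} Every $a\in F^\times$ can be written as $a=u\pi^n$ with $u\in\mathcal{O}_v^\times$. I will use relation (2) in the form $[ab]=[a]+\langle a\rangle[b]$, together with $\langle a\rangle = 1+\eta[a]$ and the identity $[\pi^n]=n_\epsilon[\pi]$ from \cref{MW:epsilon-defn}, to rewrite any symbol $[a_1,\dots,a_n]$. The goal is a $\mathbb{Z}[\eta]$-linear combination of symbols of two shapes: $[u_1,\dots,u_m]$ with all $u_i$ units, and $[\pi,u_1,\dots,u_m]$. Two points need care here. First, repeated occurrences of $[\pi]$ collapse via $[\pi,\pi]=[-1,\pi]$. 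Second, moving $[\pi]$ to the front introduces powers of $\epsilon$ by $\epsilon$-graded commutativity. Since $\eta$ and $\epsilon$ act on both sides, and $\partial_v^\pi$ is additive and commutes with $\eta$, the values prescribed in (1) and (2) then determine $\partial_v^\pi$ completely.

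\textbf{Existence.} I will form the graded ring $K^{\MW}_*(\kappa(v))[\xi]$ generated over $K^{\MW}_*(\kappa(v))$ by a symbol $\xi$ of degree $1$, subject to
\[
\xi\eta=\eta\xi,\qquad \xi[a]=\epsilon[a]\xi,\qquad \xi^2=\xi[-1].
\]
As a left module this ring is free on $1$ and $\xi$. I then define $\Theta$ on generators by $\Theta(\eta)=\eta$ and
\[
\Theta([u\pi^n]) = [\bar u] + \langle\bar u\rangle\, n_\epsilon\,\xi .
\]
The main task is to verify that $\Theta$ respects the relations (1)--(4) of $K^{\MW}_*(F)$. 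Relations (3) and (4) are immediate, because $\eta$ commutes with everything and $\eta h=0$ already holds in $K^{\MW}_*(\kappa(v))$. Relation (2) reduces, after expanding $\Theta([ab])$, to the multiplicativity $(n+m)_\epsilon = n_\epsilon+\langle(-1)^n\rangle m_\epsilon$ together with $\xi^2=\xi[-1]$. Given the relations, I write $\Theta(\alpha)=s_v^\pi(\alpha)+\partial_v^\pi(\alpha)\xi$ and read off $\partial_v^\pi$. Properties (1) and (2) then follow from $\Theta([\pi])=\xi$ and $\Theta([u])=[\bar u]$, after commuting $\xi$ to the right, which is where the $\epsilon$-twists cancel. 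Commutation with $\eta$ is clear from $\Theta(\eta)=\eta$.

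\textbf{Main obstacle.} I expect the Steinberg relation $[a][1-a]\mapsto 0$ to be the hard step. I would split it into three cases according to the valuation of $a$.
\begin{itemize}
\item If $v(a)>0$, then $1-a$ is a unit with residue $1$, so $\Theta([1-a])=[1]=0$.
\item If $v(a)<0$, I would write $[a][1-a]$ in terms of $[a^{-1}][1-a^{-1}]$ using $1-a=-a(1-a^{-1})$ and reduce to the previous case. Concretely, I would use $[a][-a]=0$, which I will first establish in $K^{\MW}_*(F)$ itself.
\item If $v(a)=0$ and $v(1-a)=0$, both images are units and the relation holds in $K^{\MW}_*(\kappa(v))$, unless $\bar a=1$. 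In that exceptional case $v(1-a)=n>0$, and I use $\Theta([a])=[\bar a]=[1]=0$.
\end{itemize}
The delicate bookkeeping lies in the $v(a)<0$ case, where the identities $[a][-1]=[a][a]$ and the $\epsilon$-commutations must be tracked precisely.
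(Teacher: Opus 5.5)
Your strategy is the one behind the result the paper cites (Morel's Theorem 3.15, i.e.\ Serre's method using the ring $K^{\MW}_*(\kappa(v))[\xi]$ and $\Theta([u\pi^n])=[\bar u]+\langle\bar u\rangle n_\epsilon\xi$); the paper itself gives no proof beyond that citation. Your uniqueness argument and your checks of relations (2)--(4) are fine. Two steps, however, fail as written.

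\textbf{Reading off $\partial_v^\pi$.} Suppose you define $\partial_v^\pi$ by $\Theta(\alpha)=s_v^\pi(\alpha)+\partial_v^\pi(\alpha)\xi$. Then $\Theta([\pi,u_1,\dots,u_n])=\xi[\bar u_1]\cdots[\bar u_n]=\epsilon^n[\bar u_1,\dots,\bar u_n]\,\xi$, so $\partial_v^\pi([\pi,u_1,\dots,u_n])=\epsilon^n[\bar u_1,\dots,\bar u_n]$. Nothing cancels this $\epsilon^n$, and for odd $n$ this is not property (1). For example, $\epsilon[\bar u]$ maps to $\bar u^{-1}$ in $K^{\M}_1(\kappa(v))=\kappa(v)^\times$, so it differs from $[\bar u]$ as soon as $\bar u^2\neq 1$. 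The fix is to put $\xi$ on the left. Since $\xi c=\epsilon^{\deg c}c\,\xi$, the ring is also free as a right module on $1,\xi$. Defining $\partial_v^\pi$ by $\Theta(\alpha)=s_v^\pi(\alpha)+\xi\,\partial_v^\pi(\alpha)$ then gives (1) and (2) with no commutation at all, and $\xi\eta=\eta\xi$ still gives compatibility with $\eta$.

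\textbf{The case $v(a)<0$ of the Steinberg relation is circular.} You cannot invoke $[a][-a]=0$ ``in $K^{\MW}_*(F)$ itself''. Its derivation uses the Steinberg relation for $a$. More fundamentally, no identity in $K^{\MW}_*(F)$ can be transported along $\Theta$ before you know $\Theta$ kills the defining relations, and that is exactly what you are proving. What you need is $\Theta([a])\Theta([-a])=0$ in $K^{\MW}_*(\kappa(v))[\xi]$, checked by hand. Write $a=u\pi^m$ with $m<0$. Since $1-a=-a(1-a^{-1})$ with $1-a^{-1}\in 1+\mathfrak{m}_v$, you get $\Theta([1-a])=\Theta([-a])$ directly, with no detour through $[a^{-1}][1-a^{-1}]$. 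Then
\[
\Theta([a])\Theta([-a])=[\bar u][-\bar u]+m_\epsilon\bigl([\bar u]+\epsilon[-\bar u]+\langle-1\rangle m_\epsilon[-1]\bigr)\xi=m_\epsilon\bigl(1+\langle-1\rangle m_\epsilon\bigr)[-1]\,\xi .
\]
Here you use $\xi^2=[-1]\xi$ together with the following relations in $K^{\MW}_*(\kappa(v))$:
\begin{itemize}
\item $[\bar u][-\bar u]=0$;
\item $\langle\pm\bar u\rangle[\mp\bar u]=[\mp\bar u]$;
\item $\langle\bar u\rangle\langle-\bar u\rangle=\langle-1\rangle$;
\item $[\bar u]+\epsilon[-\bar u]=[-1]$.
\end{itemize}
The result vanishes because $h[-1]=0$ for $h=1+\langle-1\rangle$. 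Moreover $m_\epsilon$ is congruent modulo $h$ to $0$ for $m$ even and to $1$ for $m$ odd, so $m_\epsilon(1+\langle-1\rangle m_\epsilon)\in h\,K^{\MW}_0(\kappa(v))$. This direct computation, not an identity in the source, is what closes the hard case.
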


It turns out that this homomorphism $\partial_v^{\pi}$ depends on the chosen valuation $\pi$ (hence, the naming) as described below. This is the reason for \cite{morel2012A1topology} to introduce the theory of \emph{twisted Milnor Witt $K$-theory}, which is the most suitable morphism to consider in this setup:
\begin{prop}\label{non-can-residue}
Let $\alpha\in K^{\MW}_*(F)$ and let $u\in \mathcal{O}_v^{\times}$. Then $\partial_v^\pi(\Lin u \Rin \alpha) =  \Lin \Bar{u}\Rin \partial_v^\pi(\alpha)$.
\end{prop}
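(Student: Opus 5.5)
The statement to prove is \cref{non-can-residue}: for $u\in \mathcal{O}_v^\times$ and $\alpha\in K^{\MW}_*(F)$, the residue satisfies $\partial_v^\pi(\Lin u\Rin\alpha)=\Lin \bar u\Rin\partial_v^\pi(\alpha)$.

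The plan is to exploit the uniqueness and defining properties of $\partial_v^\pi$, and to reduce to generators. $K^{\MW}_*(F)$ is additively generated by elements of the form $\eta^j[u_1,\dots,u_n]$ and $\eta^j[\pi,u_1,\dots,u_n]$ with $u_i\in\mathcal{O}_v^\times$. To see this, write each unit $a\in F^\times$ as $a=\pi^k w$ with $w$ a unit. Then expand $[a]=[\pi^k]+\Lin \pi^k\Rin[w]$ using relation (2) and $[\pi^k]=k_\epsilon[\pi]$ from \cref{MW:epsilon-defn}. Finally use $[\pi,\pi]=[\pi,-1]$ to reduce to at most one occurrence of $[\pi]$, moving $[\pi]$ to the front via $\epsilon$-commutativity.

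Both sides of the claimed identity are additive in $\alpha$ and commute with multiplication by $\eta$. Indeed, $\Lin u\Rin=1+\eta[u]$ is central, since $\eta$ is central and $\Lin u\Rin$ has degree $0$, so \cref{MW:epsilon-defn}(2) applies. It therefore suffices to check the identity on these two types of generators.

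For generators without $\pi$, observe that $\Lin u\Rin[u_1,\dots,u_n]=[u_1,\dots,u_n]+\eta[u,u_1,\dots,u_n]$ is again a sum of symbols in units. Hence $\partial_v^\pi$ kills it, by property (2) of the residue theorem, and the right-hand side is also $0$.

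For generators $[\pi,u_1,\dots,u_n]$, compute
$$\Lin u\Rin[\pi,u_1,\dots,u_n]=[\pi,u_1,\dots,u_n]+\eta[u][\pi][u_1,\dots,u_n].$$
By $\epsilon$-commutativity, $[u][\pi]=\epsilon[\pi][u]$. So the second term equals $\epsilon\eta[\pi,u,u_1,\dots,u_n]$, and since $\eta\epsilon=\eta$ (from $\eta h=0$), it simplifies to $\eta[\pi,u,u_1,\dots,u_n]$. Applying property (1) and $\eta$-linearity gives
$$[\bar u_1,\dots,\bar u_n]+\eta[\bar u,\bar u_1,\dots,\bar u_n]=\Lin\bar u\Rin[\bar u_1,\dots,\bar u_n],$$
as desired.

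The main obstacle is the bookkeeping of signs from $\epsilon$-commutativity. The key simplification is the identity $\eta\epsilon=\eta$, which follows from $\eta(\eta[-1]+2)=0$ since $\epsilon=-\Lin -1\Rin=-1-\eta[-1]$. If that sign check misbehaves, the fallback is to work with the alternative formula $[u\pi]=[u]+\Lin u\Rin[\pi]$ directly and compare.
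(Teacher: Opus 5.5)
Your proof is correct, and the sign bookkeeping checks out: $\eta\epsilon=-\eta-\eta^2[-1]=\eta$. You could even skip the $\epsilon$-swap. Since $\langle u\rangle$ is central, $\langle u\rangle[\pi,u_1,\dots,u_n]=[\pi]\,\langle u\rangle[u_1,\dots,u_n]=[\pi,u_1,\dots,u_n]+\eta[\pi,u,u_1,\dots,u_n]$ directly.

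Your route differs from the paper's, which gives no argument and only cites Morel's Proposition 3.17. In Morel's setup, $\partial_v^\pi$ is \emph{constructed} by Serre's method as a component of a ring homomorphism $K^{\MW}_*(F)\to K^{\MW}_*(\kappa(v))[\xi]$. Multiplicativity of that map gives the sharper identity $\partial_v^\pi([u]\alpha)=\epsilon[\bar u]\,\partial_v^\pi(\alpha)$ at once. The stated identity then follows from $\partial_v^\pi(\langle u\rangle\alpha)=\partial_v^\pi(\alpha)+\eta\,\partial_v^\pi([u]\alpha)$ together with $\eta\epsilon=\eta$.

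You instead use only the characterizing properties of $\partial_v^\pi$ recorded in the paper, plus the generation of $K^{\MW}_*(F)$ by $\eta^j[u_1,\dots,u_n]$ and $\eta^j[\pi,u_1,\dots,u_n]$. That generation fact is the same input that gives uniqueness of $\partial_v^\pi$. So your argument is self-contained relative to the paper's statement of the residue theorem, at the cost of proving the generation lemma and checking generators. The constructive route avoids that case check and yields the $[u]$-version for free, but depends on knowing how $\partial_v^\pi$ is built rather than on its axiomatic characterization.
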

\begin{proof}
This is \cite[Proposition 3.17]{morel2012A1topology}.
\end{proof}

\begin{remark}(\cite[Remark 1.9]{fasel2020Chow-Witt})\label{res-depends-on-pi}
The homomorphism $\partial_{v}^{\pi}$ depends not only on the valuation $v$, but also on the choice of the uniformizing parameter $\pi$.
\end{remark}
\begin{proof}
Suppose $\pi'=u\pi$, for some $u\in \mcal{O}_v^{\times}$. Then we have that $\partial_{v}^{\pi}([\pi,-1])= [-1]$, essentially by construction. However, we see that 
\begin{equation}
\begin{split}
\partial_{v}^{\pi'} ([\pi,-1])
    & =  \partial_{v}^{\pi'} ([u^{-1}\pi', -1])  \\    
    & =  \partial_{v}^{\pi'} ([u^{-1},-1]+ [\pi',-1]+\eta[u^{-1},\pi',-1] )    \\
    & =  \partial_{v}^{\pi'} ([u^{-1},-1]+ [\pi',-1]+\eta[\pi',u^{-1},-1] )     \\
    & =  [-1]+ \eta[u^{-1},-1]       \\
    & =  \Lin u^{-1} \Rin [-1]. 
\end{split}
\end{equation}
The class $[-1]\neq \Lin u^{-1} \Rin[-1]$ in general. For example, if $\kappa(v)=\RR$, then the homomorphism $K^{\MW}_1(\RR)\to \mcal{I}(\RR)$ shows that $[-1] \neq \Lin -1 \Rin [-1]$.
\end{proof}

\subsubsection{Twists in Milnor-Witt $K$-theory}\label{app:twists}
We will now explain this theory of twists in the Milnor-Witt ring. The twists are generally considered with respect to an invertible sheaf, a.k.a line bundle (in full generality, one uses the graded line bundles: (see \cite[\S 1.3]{fasel2020Chow-Witt}). We will now describe the twisted version of $K^{\MW}_*$ and, as a result, the twisted residue morphism $\partial_v^{\pi}$ and convince ourselves that this modification indeed eliminates the choice of the uniformizer.
\medskip

Let $V$ be an $F$-vector space of rank 1 with $V^\times$ comprising the non-zero elements of $V$. The group $V^\times$ acts transitively on $F^\times$, and this gives the free Abelian group $\ZZ[V^\times]$ a structure of a $\ZZ[F^\times]$-algebra. On the other hand, we know that $K^{\MW}_*(F)$ is a $K^{\MW}_0(F)$-algebra and there is a canonical map 
    $$F^\times \to K^{\MW}_0(F)\quad  \text{defined by}\quad  u \mapsto
    \Lin u \Rin .$$
Thus, all the groups $K^{\MW}_n(F)$ also inherits the structure of $\ZZ[F^\times]$-algebra. Now, we can conveniently set
the \emph{Milnor-Witt $K$-theory twisted by $V$} as        
        $$K^{\MW}_n(F,V) := K^{\MW}_n(F)\otimes_{\ZZ[{F^\times}]} \ZZ[V^\times]$$
With this definition, we can now consider the \emph{twisted residue homomorphism} (also called the \emph{canonical residue map}) with respect to a line bundle $L$. The homomorphism
        $$\partial_v: K^{\MW}_*(F,L)\to K^{\MW}_{*-1}(\kappa(v), (\mathfrak{m}_v/\mathfrak{m}^2_v)^*\otimes L)$$
is defined by 
$$\partial_v(\alpha \otimes l) = \partial_v^\pi(\alpha)\otimes \overline{\pi}^* \otimes l$$ 
for $l\in L$. Here, $(\mathfrak{m}_v/\mathfrak{m}^2_v)^*$ represents the relative tangent space, which, by definition, is the dual of the relative cotangent space $\mathfrak{m}_v/\mathfrak{m}^2_v$ (seen as $\kappa(v)$-vector spaces).

\begin{lemma}\label{app:lemma-residue-free}\cite[Lemma 1.19]{fasel2020Chow-Witt}
The homomorphism $\partial_v$ is well-defined and is independent of the uniformizer $\pi$.
\end{lemma}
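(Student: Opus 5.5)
The statement is \cref{app:lemma-residue-free}: the twisted residue $\partial_v(\alpha\otimes l)=\partial_v^\pi(\alpha)\otimes\overline{\pi}^*\otimes l$ is well defined on $K^{\MW}_*(F,L)=K^{\MW}_*(F)\otimes_{\ZZ[F^\times]}\ZZ[L^\times]$ and does not depend on the uniformizer $\pi$. Two things must be checked. First, the formula respects the tensor relations over $\ZZ[F^\times]$. Second, replacing $\pi$ by another uniformizer $\pi'=u\pi$ with $u\in\mathcal{O}_v^\times$ gives the same map.

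\textbf{Step 1: independence of $\pi$.} This is the main point. I would first compare $\partial_v^{\pi'}$ with $\partial_v^{\pi}$ on the generators of $K^{\MW}_*(F)$. By the standard decomposition (Morel), $K^{\MW}_*(F)$ is generated as a group by elements $\eta^j[u_1,\dots,u_n]$ and $\eta^j[\pi,u_1,\dots,u_n]$ with $u_i\in\mathcal{O}_v^\times$.
\begin{itemize}
\item Both residues kill the first type of generator.
\item For the second type, write $[\pi]=[u^{-1}\pi']=[u^{-1}]+\langle u^{-1}\rangle[\pi']$, using the relation $[ab]=[a]+\langle a\rangle[b]$.
\item By $\eta$-linearity and \cref{non-can-residue}, this gives $\partial_v^{\pi'}(\alpha)=\langle\bar u^{-1}\rangle\,\partial_v^{\pi}(\alpha)$, that is, $\partial_v^{\pi'}=\langle\bar u\rangle\partial_v^{\pi}$, since $\langle a\rangle^2=1$. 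This is the computation already illustrated in \cref{res-depends-on-pi}, now done in general.
\end{itemize}
The dual generators satisfy $\overline{\pi'}^*=\bar u^{-1}\overline{\pi}^*$ in $(\mathfrak m_v/\mathfrak m_v^2)^*$. In the twisted group, moving the scalar $\bar u^{-1}$ across the tensor over $\ZZ[\kappa(v)^\times]$ turns it into multiplication by $\langle\bar u^{-1}\rangle$. So
\[
\partial_v^{\pi'}(\alpha)\otimes\overline{\pi'}^*=\langle\bar u\rangle\langle\bar u^{-1}\rangle\,\partial_v^\pi(\alpha)\otimes\overline{\pi}^*=\partial_v^\pi(\alpha)\otimes\overline{\pi}^*.
\]

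\textbf{Step 2: well-definedness on the tensor product.} I must show that $\partial_v(\langle a\rangle\alpha\otimes l)=\partial_v(\alpha\otimes al)$ for $a\in F^\times$. Write $a=w\pi^k$ with $w\in\mathcal{O}_v^\times$. There are two cases.
\begin{itemize}
\item If $k$ is even, then $\langle a\rangle=\langle w\rangle$, because $\langle b^2\rangle=1$. By \cref{non-can-residue}, $\partial_v^\pi(\langle w\rangle\alpha)=\langle\bar w\rangle\partial_v^\pi(\alpha)$. This matches moving $a$ to the line bundle side, where the residual factor $\bar w$ is absorbed by the tensor relation over $\ZZ[\kappa(v)^\times]$.
\item If $k$ is odd, the twist $\langle\pi\rangle$ appears. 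Here I would compute $\partial_v^\pi(\langle\pi\rangle\alpha)$ on generators directly, using $\langle\pi\rangle=1+\eta[\pi]$ and $\eta$-linearity. I expect the answer to be $\epsilon\,\partial_v^{\pi}(\alpha)$, with the needed compatibility coming from the target twist.
\end{itemize}

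I expect the odd-valuation case of Step 2 to be the main obstacle. The obvious way to make sense of "moving $a$ to $L$" is to pass to a lattice $\mathcal{O}_v$-line in $L$. The cleanest route is probably to define $\partial_v$ using $l$ chosen in such a lattice, reduce the general case to it, and handle the odd case via the identity $[\pi][\pi]=[-1][\pi]$ from \cref{MW:epsilon-defn}.
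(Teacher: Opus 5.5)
Your Step 1 is correct and is exactly the paper's argument. The paper takes $L=\mathcal{O}_v$, uses $\partial_v^{\pi}=\langle\bar u\rangle\,\partial_v^{\pi'}$ as in \cref{res-depends-on-pi}, and cancels this against the rescaling of $\overline{\pi}^*$. Your direction $\overline{\pi'}^*=\bar{u}^{-1}\,\overline{\pi}^*$ is the correct one; the paper writes the inverse scalar, which is harmless because $\langle\bar u\rangle=\langle\bar u^{-1}\rangle$. The paper only asserts well-definedness, so Step 2 is your own addition, and that is where the proposal goes wrong.

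For $v(a)$ odd, the identity $\partial_v(\langle a\rangle\alpha\otimes l)=\partial_v(\alpha\otimes al)$ is not an obstacle to overcome: it is false, and the factor $\epsilon$ you predict is wrong.
Already for $\alpha=1$ you get $\partial_v^\pi(\langle\pi\rangle)=\partial_v^\pi(1+\eta[\pi])=\eta\neq 0$ in $K^{\MW}_{-1}(\kappa(v))\cong\W(\kappa(v))$, while $\partial_v^\pi(1)=0$.
On $[\pi,u_1,\dots,u_n]$, the relation $\langle\pi\rangle[\pi]=[\pi^2]-[\pi]=\langle-1\rangle[\pi]$ from \cref{MW:epsilon-defn} gives the factor $\langle-1\rangle=-\epsilon$, not $\epsilon$.
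So $\partial_v^\pi(\langle\pi\rangle\alpha)$ is not determined by $\partial_v^\pi(\alpha)$, and no twist on the target can absorb the difference.

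Your even case with $v(a)\neq 0$ does not work as written either. The twist $(\mathfrak{m}_v/\mathfrak{m}_v^2)^*\otimes L$ is a $\kappa(v)$-line only when $L$ is an invertible $\mathcal{O}_v$-module, with the source twisted by $L_F:=L\otimes_{\mathcal{O}_v}F$. The formula then only makes sense for $l$ an $\mathcal{O}_v$-generator, since otherwise $\bar{l}=0$ or $l\notin L$.

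The missing observation is that no relation with $v(a)\neq 0$ ever needs checking. $\ZZ[L_F^\times]$ is free of rank one over $\ZZ[F^\times]$ on any nonzero vector. So for a fixed $\mathcal{O}_v$-generator $l_0$, the map $\alpha\mapsto\alpha\otimes l_0$ is an isomorphism $K^{\MW}_*(F)\cong K^{\MW}_*(F,L_F)$. Hence $\partial_v(\alpha\otimes l_0):=\partial_v^\pi(\alpha)\otimes\overline{\pi}^*\otimes\bar{l}_0$ is automatically well defined, and the only thing left is independence of $l_0$. Replacing $l_0$ by $wl_0$ with $w\in\mathcal{O}_v^\times$ is exactly your $v(a)=0$ computation via \cref{non-can-residue}. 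That, together with Step 1, is the whole proof, with no parity case distinction.
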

\begin{proof}
For simplicity, we set the line bundle $L = \mcal{O}_v$. The morphism $\partial_{v}^{\pi'}$ is well-defined as $\partial_{v}^{\pi}$ is. Let $\pi'$ be another uniformizing parameter. Then there exists $u\in \mcal{O}_v^{\times}$ such that $u\pi = \pi'$. As in \cref{res-depends-on-pi}, we see that $\partial_{v}^{\pi} = \Lin u\Rin \partial_{v}^{\pi'}$. Since then we have that $\overline{\pi}^* = u^{-1}(\overline{\pi}')^*$, we have that 
$$p (\alpha) \otimes \overline{\pi}^* = \Lin u \Rin \partial_{v}^{\pi'} (\alpha)\otimes \overline{\pi}^* = \Lin u \Rin \partial_{v}^{\pi'} \otimes u^{-1} (\overline{\pi'})^*  = \partial_{v}^{\pi'} (\alpha) \otimes (\overline{\pi'})^* $$
\end{proof}

\subsubsection{Geometric and Canonical Transfers}
Similar to many other motivic theories, the Milnor-Witt $K$-theory also has \emph{transfers}, colloquially called the \emph{wrong way maps} in both untwisted (called the "geometric" transfer) and twisted (called the 'canonical' transfer) settings. The fact that we have it in the latter again emphasizes the fact that the transfer in the non-canonical setting again depends on something that we wouldn't want it to (choice of a section!). 
\medskip

In the untwisted setting, we have the following (\cite[Theorem  1.11]{fasel2020Chow-Witt}) split short exact sequence: for any $n\in \ZZ$
$$0\to K^{\MW}_n(F)\to K^{\MW}_n(F(t))\xrightarrow{\sum \partial_p^p} \bigoplus_p K^{\MW}_{n-1}(F(p))\to 0 $$
where $p$ runs over set of monic irreducible polynomials of $F[t]$ and $F(p) = F[t]/(p)$. The left-hand homomorphism is induced by the field extension $F\subset F(t)$ and the right-hand homomorphism is the sum of the residue homomorphisms associated to the $p$-adic valuation and the uniformizing parameter $p$. For any $\alpha \in K^{\MW}_n(F(t))$, one considers $[t]\alpha \in K^{\MW}_{n+1}(F(t))$ and use the residue map $\partial_t^t$. But one can also consider the other important valuation, $p=\infty$, that is, $v_{\infty}(f/g) = deg(g)-deg(f)$ with uniformizing parameter $-1/t$. Indeed, this gives rise to the definition of \emph{geometric transfers}, which are certain transfer maps with respect to valuation $\partial_{\infty}$ in the untwisted Milnor-Witt $K$-theory. For for any $n\in \ZZ$, the homomorphism
    $$\partial_{\infty}^{1/t}: K^{\MW}_n(F(t))\to K^{\MW}_{n-1}(F) $$
helps us defines the following (\cite[\S 4.2]{morel2012A1topology}): Let $p$ be a monic irreducible polynomial in $F[t]$, then the composite
$$\tau_F^{F(p)}: K^{\MW}_{n-1}(F(p))\subset \bigoplus_p K^{\MW}_{n-1}(F(p))\xrightarrow{s}K^{\MW}_n(F(t)) \xrightarrow{-\partial_{\infty}^{-1/t}} K^{\MW}_{n-1}(F) $$
is called the \emph{geometric transfer}, where $s$ is any section of $\sum \partial_p^p$ associated to the extension $F\subset F(p)$. The following lemma summarizes this story.

\begin{lemma}
The geometric transfers $\tau_F^{F(p)}$ are the unique homomorphisms which makes the diagram 
\[\begin{tikzcd}
	0 & {\mathrm{K^{MW}_n}(F)} & {\mathrm{K^{MW}_n}(F(t))} && {\bigoplus_p \mathrm{K^{MW}_{n-1}}(F(p))} & 0 \\
	\\
	&& {\mathrm{K^{MW}_{n-1}}(F)}
	\arrow[from=1-1, to=1-2]
	\arrow[from=1-2, to=1-3]
	\arrow["{\sum \partial_p^p}", from=1-3, to=1-5]
	\arrow["{-\partial_{\infty}^{-1/t}}"', from=1-3, to=3-3]
	\arrow[from=1-5, to=1-6]
	\arrow["{\sum \tau_F^{F(p)}}", from=1-5, to=3-3]
\end{tikzcd}\]
commutative. In other words, there are unique homomorphisms $f_p:K^{\MW}_{n-1}(F(p))\to K^{\MW}_{}n-1 (F)$ with $\sum (f_p\circ \partial_p^p)+ \partial_{\infty}^{-1/t} =0$.
\end{lemma}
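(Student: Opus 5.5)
The plan is to derive both existence and uniqueness formally from the split short exact sequence
$$0\to K^{\MW}_n(F)\to K^{\MW}_n(F(t))\xrightarrow{\sum \partial_p^p} \bigoplus_p K^{\MW}_{n-1}(F(p))\to 0,$$
quoted above from \cite[Theorem 1.11]{fasel2020Chow-Witt}. I take its exactness for granted, as the paper does. Beyond that, the one substantive input is the fact that $\partial_{\infty}^{-1/t}$ vanishes on the image of $K^{\MW}_n(F)$.

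\textbf{Step 1: constants have no residue at infinity.} Every $\alpha\in K^{\MW}_n(F)$ is a sum of products $\eta^j[u_1,\dots,u_m]$ with $u_i\in F^\times$. Each such $u_i$, viewed in $F(t)$, is a unit for the valuation $v_\infty$. By property (2) of the residue homomorphism, and since $\partial_\infty^{-1/t}$ commutes with multiplication by $\eta$, every such product is sent to zero. Hence $\partial_{\infty}^{-1/t}$ annihilates the image of $K^{\MW}_n(F)\to K^{\MW}_n(F(t))$. This is the step I expect to need the most care, but it reduces directly to the defining properties of $\partial_v^\pi$.

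\textbf{Step 2: independence of the section.} Let $s,s'$ be two sections of $\sum\partial_p^p$. Then $s-s'$ takes values in $\ker(\sum\partial_p^p)$, which by exactness is the image of $K^{\MW}_n(F)$. By Step 1, $\partial_\infty^{-1/t}\circ s=\partial_\infty^{-1/t}\circ s'$. So $\tau_F^{F(p)}$ does not depend on the choice of $s$.

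\textbf{Step 3: commutativity of the diagram.} For $\alpha\in K^{\MW}_n(F(t))$, only finitely many residues $\partial_p^p(\alpha)$ are nonzero, so $\beta:=\sum_p\partial_p^p(\alpha)$ lies in the direct sum. The element $\alpha-s(\beta)$ lies in $\ker(\sum\partial_p^p)$, hence comes from $K^{\MW}_n(F)$. By Step 1,
$$-\partial_\infty^{-1/t}(\alpha)=-\partial_\infty^{-1/t}(s(\beta))=\sum_p\tau_F^{F(p)}(\partial_p^p(\alpha)),$$
which is exactly the required relation $\sum (\tau_F^{F(p)}\circ \partial_p^p)+ \partial_{\infty}^{-1/t} =0$.

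\textbf{Step 4: uniqueness.} Suppose a family $(f_p)$ also satisfies $\sum_p f_p\circ\partial_p^p=-\partial_\infty^{-1/t}$. Then $\sum_p(f_p-\tau_F^{F(p)})$ vanishes on the image of $\sum\partial_p^p$. This map is surjective, so the difference vanishes on all of $\bigoplus_p K^{\MW}_{n-1}(F(p))$. Restricting to each summand gives $f_p=\tau_F^{F(p)}$ for every $p$.
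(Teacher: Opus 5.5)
Your proof is correct and is the standard argument behind the paper's construction. The paper states the lemma without proof, right after defining $\tau_F^{F(p)}$ as $-\partial_\infty^{-1/t}\circ s$ restricted to the $p$-summand; your argument supplies exactly what is needed: $\partial_\infty^{-1/t}$ kills the image of $K^{\MW}_n(F)$, so it factors through $K^{\MW}_n(F(t))/K^{\MW}_n(F)\cong\bigoplus_p K^{\MW}_{n-1}(F(p))$, which gives existence, and surjectivity of $\sum_p\partial_p^p$ gives uniqueness. Your Step 2 is also right, and it shows that the paper's later remark that the transfers ``depend on the chosen section $s$'' is misstated: what $\tau_F^{F(p)}$ genuinely depends on is the choice of generator $t$ (the presentation $F(p)\cong F[t]/(p)$), which is the dependence the twisted canonical transfers remove.
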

But it turns out that these transfers depend on the chosen section $s$. This is the main reason for developing the theory of (canonical) transfers using the twisted residue morphism. We will now briefly touch upon this. The twists are with respect to the canonical sheaf of differential $\omega_{F/k}$ for a finitely generated field extension $F/k$. Note that the $F[t]$-module $\Omega_{F[t]/k}$ is free of rank equal to $\text{tr.deg}\ (F/k)+1$ (if $k$ is perfect). Now, given a monic polynomial $p \in F[t]$, consider the $p$-adic valuation to obtain twisted residue morphism
$$\partial_p: K^{\MW}_*(F(t), \det(\Omega_{F(t)/k})) \to K^{\MW}_{*-1} (F(p),          ((\mathfrak{m}_p)/(\mathfrak{m}_p^2)^*)\otimes_{F[t]} \det(\Omega_{F[t]/k}).$$
Then it follows that we have a split exact sequence as in the untwisted case (\cite[Proposition 1.20]{fasel2008groupeC-W}):
$$0\to K^{\MW}_n(F, \det(\Omega_{F/k}))\to K^{\MW}_n(F(t), \det(\Omega_{F(t)/k}))\xrightarrow{d} \bigoplus_p K^{\MW}_{*-1}(F(p), \det(\Omega_{F(p)/k}))\to 0 $$
where $d$ is the total residue homomorphism taken over all $p$. Now for any monic irreducible polynomial $p$ in $F[t]$, we have a homomorphism of the form
$$\rm{Tr}_F^{F(q)}: K^{\MW}_{n-1}(F(p), \det(\Omega_{F(p)/k}))\to K^{\MW}_{n-1}(F,\det (\Omega_{F/k})) $$
called the \emph{canonical transfer}. For field extensions $k\subset F\subset L$ with $L/F$ finite, this homomorphism is given by 
$$\rm{Tr}_F^L: K^{\MW}_{*}(L, \det (\Omega_{L/k})\to K^{\MW}_{*}(F,\det(\Omega_{F/k}))$$
        
\subsection{The Rost-Schmid Complex and the Chow-Witt theory}
As a result of twisted residue homomorphism and the canonical transfer, one can now define the \emph{Rost-Schmid complexes} and thereby, the \emph{Chow-Witt groups}.

\begin{defn}
Let $X$ be a finite type, separated scheme over any field $k$, and let $L$ be any line bundle. For any $j\in \ZZ$, the Rost-Schmid complex $\Tilde{C}(X,j, L)$ in weight $j$ is the complex (in homological dimension) whose term in degree $i$ is given by
    $$\Tilde{C}_i(X,j,L):= \bigoplus_{x\in X_{(i)}} K^{\MW}_{j+i}(\kappa(x), \omega_{\kappa(y)/k}\otimes L)  $$
One consequently verifies that for $x\in X_{(i)}$ and $y\in X_{(i-1)}$, there exists a well-defined notion of differential 
    $$d^x_y: K^{\MW}_{j+1}(\kappa(x), \omega_{\kappa(x)/k}\otimes L) \to K^{\MW}_{j+i-1}(\kappa(y),\omega_{\kappa(y)/k}\otimes L)   $$
defined using the residue homomorphism and the canonical transfer described above, and that we have 
    $$d_i: \Tilde{C}_i(X,j,L)\to \Tilde{C}_{i-1}(X,j,L) $$
\end{defn}
In all, the complex $(\Tilde{C}(X,j, L)$ is a (graded) Abelian group with a homomorphism $d$ of degree -1 such that $d_{i-1}\cdot d_i=0$. This is called the \emph{homological Rost-Schmid complex twisted by $L$} and we write $H_i(X,j, L)$ for the homology groups of this complex.
\medskip

Now, let us consider the special case when $X$ is smooth. In this case, we obtain the (graded) Abelian group whose component of degree $i$ is of the form 
        $$C(X,j,L)^i:= \bigoplus_{x\in X^{(i)}} K^{\MW}_{j-i}(\kappa(x), (\omega_{\kappa(x)/k})^{-1}\otimes L)$$
and we have $\Tilde{C}_i(X,j,(\omega_{\kappa(x)/k}^{-1})) = C(X,j+d_X)^{d_X-i}$, where $d_X$ is the dimension of $X$. It follows that we obtain a group homomorphism of (cochain) complex $C(X,j)$ with differentials $d$, i.e.. a pair $(C(X,j), d)$ of degree +1. We call this the \emph{cohomological Rost-Schmid complex}, and we write $H^i(X,j,L)$ for its cohomology groups.

\begin{remark}\label{app:Chow-Witt-reduced-coh}
If $Y\subset X$ is a closed subset of $X$, then one also considers subcomplexes supported on $Y$, and we denote them by $\Tilde{C}_Y(X,j,L)$ and $C_Y(X,j,L)$ respectively. Furthermore, if $i:Y\hookrightarrow X$ is a closed subscheme of $X$, then we have the following identification
    \begin{equation}\label{support:RostSchmid}
        \Tilde{C}(Y,j,i^*L) = \Tilde{C}_Y(X,j,L).
    \end{equation}
Furthermore, these groups are also invariant under taking reduction of schemes, that is, $\Tilde{C}(X,j, L) = \Tilde{C}(X_{red},j, L)$. And one can also relax that $X$ is essentially of finite type.
\end{remark}

We are now all equipped to comprehend the definition of the Chow-Witt groups.
\begin{defn}
Let $X$ be a separated scheme that is essentially of finite type over a field $k$. For any $i\in \mathbb{N}$ and any line bundle $L$ over $X$, the group           $$\widetilde{\CH}_i(X,L):= H_i(X,-i, L)$$
is called the \emph{homological Chow-Witt groups of $i$-dimensional cycles twisted by $L$}. Other names include the homological $i$-th Chow-Witt groups, or simply Chow-Witt groups. In a similar vein, one also has 
        $$\widetilde{\CH}^i(X,L) := H^i(X,i,L) $$
called the \emph{cohomological Chow-Witt groups of $i$-dimensional cycles twisted by $L$} or similarly, called as the homological $i$-th Chow-Witt groups, or simply Chow-Witt groups. If $Y\subset X$ is a closed subset, then we also have the supported version 
            $$\widetilde{\CH}^i_Y(X,L):= H^i(C_Y(X,i,L)) $$
called the \emph{cohomological Chow-Witt groups of $i$-dimensional cycles supported on $Y$ and twisted by $L$} or the Chow-Witt groups supported on $Y$.
\end{defn}

\begin{prop}
We will list some of the basic properties in this theory following \cite[\S 2.2, 2.3, 2,4]{fasel2020Chow-Witt}:
\begin{enumerate}
\item If $i>\dim(X)$ or $i<0$, then $\widetilde{\CH}_i(X,L)=0$.
\item If $X$ is smooth of dimension $d_X$, then we have $\widetilde{\CH}_i(X, \det(\Omega_{X/k})^{-1}\otimes L) \simeq \widetilde{\CH}^{d_X-i}(X,L)$.
\item For a proper morphism $f: X\to Y$ of finite type schemes, we have the pushforward (cf. \cite[Theorem 2.9]{fasel2020Chow-Witt})
    $$f_*: \widetilde{C}(X,j,f^*L)\to \widetilde{C}(Y,j,L)$$ 
and if $f$ is finite morphism of smooth $k$-schemes $f:X\to Y$ with $d=\dim(Y)- \dim(X)$, we have          
    $$f_*: \rm{C(X,j, \det(\Omega_{X/k})\otimes f^*L) \to C(Y,j+d, \det(\Omega_{Y/k})\otimes L)}$$
\item If $f:Y\to X$ is a flat morphism of essentially smooth $k$-schemes, then we have pullbacks (cf. \cite[Theorem 2.12]{fasel2020Chow-Witt}): 
    $$f^*: C(X,j,L)\to C(Y,j,f^*L).$$
For smooth morphism $f:Y\to X$ of essentially smooth $k$-schemes of finite type with $d=\dim(Y) - \dim(X)$, we get (\cite[Theorem 2.14]{fasel2020Chow-Witt})
    $$ f^*: \widetilde{C}(C,j,L)\to \widetilde{C}(Y,j-d,    \det(\Omega_{Y/X})^{-1})\otimes f^*L) $$
\item If $\iota: Y\subset X$ is a closed subscheme (with reduced structure on $Y$), then $\widetilde{C}(Y,j,\iota^* L) = \widetilde{C}_Y(X, j, L)$, for all $L$.
\item Letting $u:U\to X$ be the complement open subscheme of $Y$, we have an exact sequence of complexes
$$0 \to \widetilde{C}(Y,j,\iota^*L)\to \widetilde{C}(X,j,L)\to \widetilde{C}(U,j,u^*(L)) \to 0 $$
which induces a long exact sequence of Chow-Witt groups
$$..\cdots\to \widetilde{\CH}_i(Y,\iota^*(L))\to \widetilde{\CH}_i(X,L)\to \widetilde{\CH}_i(U,u^*(L))\to \cdots.. $$
In contrast with the classical Chow groups, the right-hand arrow $\widetilde{\CH}(X, L)\to \widetilde{\CH}(U,u^*(L))$ is not surjective, in general.
\item If $X$ is a smooth scheme, then the pullback map turns the group $\widetilde{\CH}_i(X,j, L)$ into a ring structure with the ring product acquired via the exterior product, which then extends to a ring homomorphism of smooth schemes (\cite[Proposition 3.13.]{fasel2020Chow-Witt}).
\end{enumerate}
\end{prop}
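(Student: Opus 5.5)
Since the proposition collects standard structural properties of the Rost--Schmid complexes, the plan is to derive each item directly from the definition of $\widetilde{C}(X,j,L)$ and $C(X,j,L)$ as graded sums of twisted Milnor--Witt groups of residue fields. Where an item needs a functoriality statement about the differentials, we reduce it to the compatibility of the twisted residue maps $\partial_v$ (\cref{app:lemma-residue-free}) and the canonical transfers $\mathrm{Tr}_F^L$ with the relevant morphism. The first two items are purely combinatorial.
\begin{itemize}
\item For (1), if $i<0$ or $i>\dim X$ there are no points of dimension $i$, so $\widetilde{C}_i(X,-i,L)=0$ and its homology vanishes.
\item For (2), for $X$ smooth of pure dimension $d_X$ a point of dimension $i$ has codimension $d_X-i$. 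The identification $\widetilde{C}_i(X,j,\omega_{X/k}^{-1}\otimes L)=C(X,j+d_X,L)^{d_X-i}$ recorded before the definition of the Chow--Witt groups then matches terms. It also matches the twists, because $\omega_{\kappa(x)/k}\otimes\omega_{X/k}^{-1}|_x$ is identified with $\det$ of the normal space, which is the twist appearing in the cohomological complex. Specializing to $j=-i$ gives $\widetilde{\CH}_i(X,\omega_{X/k}^{-1}\otimes L)\simeq\widetilde{\CH}^{d_X-i}(X,L)$.
\end{itemize}

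Items (5) and (6) come next, since they are the easiest structural facts.
\begin{itemize}
\item For (5), a point of $Y$ of dimension $i$ is the same as a point of $X$ of dimension $i$ lying in $Y$, with the same residue field. Hence the summands of $\widetilde{C}(Y,j,\iota^*L)$ and $\widetilde{C}_Y(X,j,L)$ coincide. The differentials agree because residues and transfers are computed on the local rings of closures of points, which only see $Y$ (this is \cref{support:RostSchmid}).
\item For (6), the points of $X$ split as points of $Y$ and points of $U$. This gives a termwise split sequence $0\to\widetilde{C}_Y(X)\to\widetilde{C}(X)\to\widetilde{C}(U)\to0$ of graded groups. It is a sequence of complexes because $Y$ is closed, so differentials cannot go from $Y$-points to $U$-points. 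The long exact sequence of homology is then formal.
\item The failure of surjectivity follows by exhibiting the nonzero boundary $\widetilde{\CH}_i(U)\to\widetilde{\CH}_{i-1}(Y)$. In contrast with $\CH$, the next term $H_{i-1}(Y,-i)$ is not a Chow--Witt group, so it need not vanish.
\end{itemize}

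The substantive items are (3), (4) and (7), namely that pushforward and pullback are chain maps.
\begin{itemize}
\item For (3), define $f_*$ on a summand at $x$ as $\mathrm{Tr}_{\kappa(f(x))}^{\kappa(x)}$ when $\kappa(x)/\kappa(f(x))$ is finite, and as $0$ otherwise. Compatibility with $d$ reduces, after localizing at a point of $Y$ and normalizing, to the case of a proper curve over a field. There it is the reciprocity law: the sum of twisted residues followed by transfers vanishes, which is exactly the commutative triangle in the lemma on geometric transfers, globalized via $\mathbb{P}^1$ and the canonical (twisted) transfers.
\item For (4), flat pullback sends $x$ to the sum over generic points $y$ of $f^{-1}(\overline{x})$. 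The coefficients are lengths, upgraded to $\GW$-classes by the twists. Compatibility with residues is the base-change formula for $\partial_v$ along a DVR extension with ramification $e$, namely multiplication by $e_\epsilon$ (from $[a^n]=n_\epsilon[a]$, \cref{MW:epsilon-defn}). The smooth case then adds the twist by $\det\Omega_{Y/X}$.
\item For (7), the exterior product is defined via products of residue fields. Restriction along the diagonal uses Gysin pullback, obtained by deformation to the normal cone from (4) and homotopy invariance.
\end{itemize}

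I expect the main obstacle to be the chain-map property in (3): the reciprocity for twisted transfers, and keeping track of the twists so that the choice of uniformizer and section genuinely cancels. I would first prove it for $\mathbb{A}^1_F\subset\mathbb{P}^1_F$ using the split exact sequence for $F(t)$, and then reduce the general proper case to this by factoring through a closed immersion into $\mathbb{P}^n_Y$ followed by the projection.
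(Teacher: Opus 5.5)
The paper gives no argument for this proposition beyond the citation of Fasel's lectures, so your sketch has to stand on its own. Items (1), (2), (5) and the exactness part of (6) are fine as you describe them.

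\textbf{The non-surjectivity claim in (6) is not proved.} Observing that the classical surjectivity argument breaks down does not show that surjectivity fails; a ``not in general'' claim needs an instance. The connecting map also goes to $H_{i-1}(Y,-i)$, not to $\widetilde{\CH}_{i-1}(Y)$ as you first write. Its chains are $\bigoplus_{y\in Y_{(i-1)}}K^{\MW}_{-1}(\kappa(y),\dots)$, i.e.\ twisted Witt groups. The standard instance is $X=\AA^2_k$, $Y=\{0\}$, $i=1$. By homotopy invariance, $\widetilde{\CH}^1(\AA^2_k)=H^1(\AA^2_k,K^{\MW}_1)=0$ and $H^2(\AA^2_k,K^{\MW}_1)=0$. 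The localization sequence then gives $\widetilde{\CH}^1(\AA^2_k\bs\{0\})\cong K^{\MW}_{-1}(k)\cong\W(k)\neq 0$, so the restriction $0\to\W(k)$ is not surjective.

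\textbf{The functoriality items (3), (4), (7) have real holes.}

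In (4), ``lengths, upgraded to $\GW$-classes by the twists'' is not a construction.
\begin{itemize}
\item As soon as the fibre $\mathcal{O}_{Y,y}/\mathfrak{m}_x\mathcal{O}_{Y,y}$ is not a field, $\mathfrak{m}_x/\mathfrak{m}_x^2\otimes\kappa(y)\to\mathfrak{m}_y/\mathfrak{m}_y^2$ is not an isomorphism. So there is no natural identification of twists to multiply by, and an integer length is the wrong kind of coefficient.
\item Your own ramification formula already forces a quadratic class such as $e_\epsilon$. For $s=t^2$ on $\AA^1_k$, $\partial^t_t([t^2]\beta)=2_\epsilon\bar\beta=h\bar\beta$ while $\partial^s_s([s]\beta)=\bar\beta$. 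The twists only match there because $\langle u\rangle h=h$.
\item For a general Artinian fibre you must construct a genuinely quadratic multiplicity, for example the class of the canonical duality form on this Gorenstein fibre, and prove that it yields a chain map.
\item Alternatively, restrict to smooth $f$. There the fibres are reduced, the coefficient is $\langle 1\rangle$ and the twist is $\det\Omega_{Y/X}$, so that half of your plan is sound. The chain-map check then also needs the rule that residues at valuations of $\kappa(y)$ trivial on $\kappa(x)$ kill restricted classes.
\end{itemize}

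In (3), the pointwise reduction yields two identities, not one.
\begin{itemize}
\item When $\dim\overline{f(x)}=\dim\overline{x}-1$ you need reciprocity on the proper fibre curve over $\kappa(f(x))$.
\item When $\dim\overline{f(x)}=\dim\overline{x}$ you need $\partial_z\circ\mathrm{Tr}^{\kappa(x)}_{\kappa(f(x))}=\sum_{x'\mapsto z}\mathrm{Tr}^{\kappa(x')}_{\kappa(z)}\circ\partial_{x'}$. This is compatibility of canonical transfers with twisted residues along finite extensions.
\item The $\PP^1$ triangle gives reciprocity only for $\PP^1_F$. Passing to an arbitrary proper curve through a finite map to $\PP^1_F$ uses exactly that compatibility, plus independence of the transfers from the chosen generators and their transitivity. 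That is the substantive input, and it is missing.
\item Your fallback of factoring through $\PP^n_Y$ requires $f$ projective. It also does not reduce to $\PP^1$, since $\PP^n_Y\to Y$ has relative dimension $n$.
\end{itemize}

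Item (7) only names homotopy invariance and deformation to the normal cone, each a theorem in its own right. The claim that pullbacks along arbitrary morphisms of smooth schemes are ring maps additionally needs the graph factorization $f=\mathrm{pr}\circ\Gamma_f$.
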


% \newpage

\begin{savequote}
Wealth is what you don’t see.
\qauthor{"The Psychology of Money" by Morgan Housel}
\end{savequote}
\chapter{Miscellaneous}\label{app:Scheme-theory}
\markboth{Appendix}{}

\section{Scheme Theory and Motivic Topology at Infinity}\label{app:schemes-alg-groups}

This section comprises miscellaneous concepts from scheme theory put together. We will present a brief background on the motivic version of the topology at infinity machinery. 

\subsubsection{Weil restriction functor}\label{app:weil-restriction}
Let $h: S' \to S$ be a morphism of schemes. Then, for any $S'$-scheme $X'$, the contravariant functor
\begin{align*}
    \mathfrak{R}_{S/S'}(X'): (\Sch_S)^{op}\to \Set \\
     T \mapsto \Hom_{S'}(T\times_S S', X')
\end{align*}
is defined on the category $\Sch_S$ of S-schemes. If it is representable, the corresponding $S$-scheme, again denoted by $\mathfrak{R}_{S'/S}(X')$ is called the \emph{Weil restriction} of $X'$ with respect to $h$. Thus, the latter is characterized by a functorial isomorphism
$$ \Hom_S(T, \mathfrak{R}_{S'/S}(X')) \xrightarrow{\sim} \Hom_{S'}( T\times_S S',X')$$ of functors in $T$ where $T$ varies over all $S$-schemes.

\subsubsection{Sheaf of relative differentials}\label{app:differentials}
The sheaf of relative differentials of a scheme is the algebraic analogy of K\"ahler differentials for rings and modules. For example, for a smooth variety $X$ over $\CC$,  the sheaf of differentials is essentially the same as the dual of the tangent bundle defined in differential geometry. For the short exposition here, we will refer to \cite[Chapter 2, \S 8]{hartshorne2013algebraic}. Let us recall the classical definition. Let $A$ be a ring (commutative, unital), let $B$ be an $A$-algebra, and let $M$ be a $B$-module. 

\begin{defn}
The \emph{module of relative differentials} of $B$ over $A$ is a $B$-module $\Omega_{B/A}$, together with an $A$-derivation $d:B \to \Omega_{B/A}$, which satisfies the following universal property: for any $B$-module $M$, and for any $A$-derivation $d':B\to M$, there exists a unique $B$-module homomorphism $f:\Omega_{B/A}\to M$ such that $d' = f\circ d$.
\end{defn}

\begin{example}
If $B = A[x_1,\dots, x_n]$ , then $\Omega_{B/A}$ is the free $B$-module of rank $n$ generated by $dx_1,\dots, dx_n$.
\end{example}

Let $f: X \to Y$ be a morphism of schemes. We consider the diagonal morphism  $\Delta: X\to X\times_Y X$ with the closed subscheme $\Delta(X)\subset X\times_Y X$.

\begin{defn}
Let $\mathcal{I}$ be the sheaf of ideals of $\Delta(X)$. Then we define the 
\emph{sheaf of relative differentials} of X over Y to be the sheaf                  $$\Omega_{X/Y}:= \Delta^*(\mathcal{I}/\mathcal{I}^2) $$
in $X$. The sheaf $\Omega_{X/Y}$ has a natural structure of $\mathcal{O}_X$-modules, which is a priori, quasi-coherent. If $Y$ is Noetherian and $f$ is of finite type, then $\Omega_{X/Y}$ is a coherent sheaf on $X$.
\end{defn}

\begin{example}
If $X = \AA^n$, then $\Omega_{X/Y}$ is a free $\mathcal{O}_X$-module of rank $n$, generated by the global sections $\{dx_1,\dots, dx_n\}$, where $x_1,\dots,x_n$ are affine coordinates for $\AA^n$.
\end{example}

\begin{defn}
The \emph{relative canonical sheaf} $\omega_{X/Y}$ is defined as the sheaf associated to $n$th-exterior product $\bigwedge^{n} \Omega_{X/Y}$, where $n =\dim (X)$. The canonical sheaf $\omega_{X/Y}$ is an invertible sheaf on $X$.
\end{defn}
\begin{example}
For a smooth projective variety $X\to \Spec k$, the canonical sheaf produces a birational invariant called its \emph{genus}, $\rho_g(X) :=\dim_k \Gamma(X,\omega_X)$. From the adjunction formula, one finds that $\omega_{\PP^n_k} \cong \mathcal{O}_X(-n-1)$. Since $\mathcal{O}(l)$ has 
no global sections for $l<0$, we find that $\rho_g(\PP^n) =0$ for any $n\ge 1$. Thus, if $Y$ is any rational variety, then $\rho_g(Y) = 0$.
\end{example}

\subsubsection{$\GG_a$-actions and Locally Nilpotent Derivations}\label{G_a-LND}
Let $k$ be a field of characteristic zero. Then for a $k$-algebra $B$, there is a bijection between the set of $\GG_a$-actions on $B$ and the locally nilpotent derivations on $B$ (\cite[\S 1.5]{freudenburg2006algebraic}). Let $X = \Spec B$ be the corresponding affine variety. Given $\partial \in \LND(B)$, then we have a group homomorphism
\begin{align*}
    \eta: (\ker \partial, +)\to \Aut_k(B)\\
        f \mapsto \exp (t \partial)(f)
\end{align*} 
where $\exp$ is the exponential map
    $$exp(t\partial)(f):= 1+ ({\partial f}) t + \frac{(\partial^2 f) t^2}{2!}+ \dots + \frac{(\partial^n f) t^n}{n!} + \dots .$$ 
In addition, if $\partial = 0$, then $\eta$ is injective. Restricting $\eta$ to the subgroup $\GG_a = (k,+)$, we obtain the algebraic representation $\eta : \GG_a \hookrightarrow \Aut_k(B)$.  Geometrically, this means that $\partial$ induces the faithful algebraic $\GG_a$-action $exp(t \partial)$ on $X$ ($t\in k$). Conversely, let $\rho : \GG_a\times X \to X$ be an algebraic $\GG_a$-action over $k$. Then $\rho$ induces a derivation $\rho'(0)$, where differentiation takes places relative to $t\in \GG_a$.
\medskip

The following lemma is useful for testing the smoothness of a scheme via fibers.
\begin{lemma}\label{smooth:over-arbitrary-base}
Let $S$ be any Noetherian scheme of finite type. Then for any affine $S$-scheme $X$, we have that the canonical morphism $f:X \to S$ is an affine morphism which is locally of finite presentation. Moreover, we have that $f$ is smooth if all the fibres of $f: X\to S$ are smooth.
\end{lemma}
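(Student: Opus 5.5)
The plan is to split the statement into its three assertions and treat each with a standard result from scheme theory, reading the hypotheses the way they are used in \cref{KR3Fmain:Noetherian}. There $X$ is cut out of an affine space $\AA^N_S$ by finitely many equations, so "affine $S$-scheme" means $X=\Spec_S \mathcal{A}$ with $\mathcal{A}$ a quasi-coherent $\mathcal{O}_S$-algebra of finite type.

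\textbf{Affineness.} Every base scheme in the script is separated, so $S$ has affine diagonal. For an affine open $V=\Spec B\subset S$, I will write $f^{-1}(V)$ as the fibre product $X\times_S V$. Since $X$ and $V$ are affine and the diagonal of $S$ is affine, this fibre product is affine. Hence $f$ is an affine morphism.

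\textbf{Local finite presentation.} Over such a $V$, the ring $\mathcal{A}(V)$ is a finitely generated $B$-algebra. Because $B$ is Noetherian, every ideal of a polynomial ring over $B$ is finitely generated. So $\mathcal{A}(V)$ is finitely presented over $B$, and $f$ is locally of finite presentation.

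\textbf{Smoothness.} I would invoke the fibrewise criterion (EGA IV 17.5.1, Stacks 01V8): a morphism locally of finite presentation is smooth if and only if it is flat and all its fibres are smooth. Fibre smoothness is the hypothesis and local finite presentation was just shown. So everything reduces to flatness of $f$, which I expect to be the main obstacle.

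Smooth fibres alone cannot force flatness. A closed point embedded in a curve has smooth (possibly empty) fibres but is not flat over the curve. So the lemma must be read with the standing convention that $X$ is flat over $S$, or with a presentation that yields flatness. For the situation where the lemma is applied, I would supply flatness through the \emph{critère de platitude par fibres} (EGA IV 11.3.8). Write $X=V(F)\subset\AA^N_S$ for a single polynomial $F$ with coefficients in $\mathcal{O}_S$. Then $X$ is flat over $S$ at a point $x$ as soon as the image of $F$ in $\mathcal{O}_{\AA^N_{s},x}$ is a nonzerodivisor on the fibre over $s=f(x)$.

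For the Koras-Russell-type equations the relevant $F$ are $x^mz-x-y^r-t^s$ and its generalizations. These contain a monomial, such as the lone $x$ or $x_0$, with coefficient $\pm1$. Hence $F$ is nonzero in every fibre, and a nonzero element of a polynomial ring over a field is a nonzerodivisor. So these $X$ are flat over $S$, and the fibrewise criterion concludes.

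In the writeup I would state the flatness requirement explicitly as part of the hypotheses. I would also verify the nonzerodivisor condition for the specific equations used in \cref{KR3Fmain:Noetherian} and \cref{KR3F:prototypes-A1-cont-base:Noetherian}, since that is the only point needing input beyond general results.
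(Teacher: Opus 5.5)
Your proof has the same skeleton as the paper's. The paper's proof consists only of citing Stacks Tag 01SG for affineness and Stacks Tag 01V8 for the fibrewise smoothness criterion.

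Where you differ is that you check the hypotheses of that criterion, and you are right to. The criterion needs $f$ to be flat and locally of finite presentation. The lemma as stated assumes neither flatness nor finite type, so it is literally false. Your closed point $\Spec k\hookrightarrow\AA^1_k$ refutes the smoothness claim, and $\Spec k[x_1,x_2,\dots]\to\Spec k$ refutes the finite-presentation claim. The paper's two citations silently skip this.

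Your repair is correct and is exactly what \cref{KR3Fmain:Noetherian} and \cref{KR3F:prototypes-A1-cont-base:Noetherian} actually use. You add finite type and flatness as hypotheses. You then get flatness in those applications from the local flatness criterion: the defining polynomial stays nonzero, hence is a nonzerodivisor, in every fibre local ring $\mathcal{O}_{\AA^N_s,x}$.

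Two minor points. First, under your own relative reading $X=\Spec_S\mathcal{A}$, affineness of $f$ is automatic. The affine-diagonal argument, which treats $X$ as an absolute affine scheme, is therefore unnecessary, and it need not apply when $S$ is not affine. Second, for $\XX_m(\ul{n},\psi)$ the lone monomial $x_0$ has coefficient $\psi(0)$, not $\pm1$. It is cleaner to use $y^r$, whose coefficient is $1$ in all these equations, to see that the equation is nonzero in every fibre.
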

\begin{proof}
This follows from \cite[\href{https://stacks.math.columbia.edu/tag/01SG}{Lemma 01SG}]{stacks-project} and \cite[\href{https://stacks.math.columbia.edu/tag/01V8}{Lemma 01V8}]{stacks-project}. 
\end{proof}

%------------------------------------------------------------------------
\section{Motivic Topology at Infinity}\label{app:Mot-top-infty}
The $\AA^1$-contractibility is a delicate notion as compared to its topological counterpart. To emphasize this further, we know in topology it is always true that if a space $X$ is contractible, then every vector bundle over $F\to E\to X$ is isomorphic to a trivial bundle $ E\cong X\times F$, where $F$ is the fiber. But this is in no sense straightforward in algebraic geometry. In particular, there are examples of smooth schemes that carry trivial vector bundles but are not $\AA^1$-contractible. On the flip side, there are examples of $\AA^1$-contractible schemes that carry non-trivial bundles (see the beautiful work by \cite{asok2008vector}); however, all such examples known so far are strictly quasi-affine schemes, and this wilderness can be contained by restricting the question to the affine realm. Altogether, these instances only reiterate the fact that there is so far no universal invariant that can uniquely identify the affine spaces among all the smooth affine $\AA^1$-contractible schemes.
\medskip

In connection with the motivic open Poincaré Conjecture (cf. \cref{intro:A1-cont-survey}), one approach is to develop a \emph{motivic topology at infinity}. Following the footsteps of topology, it seems instructive to first develop a notion of "motivic homology at infinity". This was proposed by \cite{wildeshaus2006boundary} under the name of \emph{boundary motive}, which takes place in Voevodsky's derived category of motives $\DM_{Nis}^{eff, -}(\Spec k,\ZZ)$.

\subsubsection{Motivic homology at infinity}
Let us quickly recall Voevodsky's derived category of motives over $k$ following \cite[Chapter 5, \S 2.1]{voevodsky2000cycles}. Fix a perfect field $k$ and consider the category of correspondences $\SmCor(k)$ whose objects are smooth schemes $X, Y \in \Sm_k$. A morphism $Y\to X$ in $\SmCor(k)$ is a \emph{finite correspondence}: the free Abelian group generated by integral closed subschemes of the following kind:
$$\Cor(Y,X):= \ZZ\{Z\subset Y\times_k X : Z\to Y \ \text{is finite, surjective}\} \subset Y\times X. $$
The composition law in this category is given by the natural pullback of schemes. The category $\Shv_{Nis}(\SmCor(k))$ of \emph{Nisnevich sheaves with transfer} is the category of those contravariant additive functors from $\SmCor(k)$ to Abelian groups, whose restriction to $\Sm_k$ is a sheaf for the Nisnevich topology. $\Shv_{Nis}(\SmCor(k))$ is an Abelian category and so contains the derived category $D^{-}(\Shv_{Nis}(\SmCor(k)))$ of complexes bounded above. This in turn contains the full subcategory of \emph{effective motivic complexes} $\DM_{-}^{eff}(\Shv_{Nis}(\SmCor(k)))$ over $k$. The latter is the one whose cohomology sheaves are homotopy invariant (\cite[Definition 3.1.10]{voevodsky2000cycles}). 
\medskip

There is another triangulated category $\DM_{gm}^{eff}(k)$ called the \emph{effective geometrical motives} (see \cite[Chapter 5, \S 2.1]{voevodsky2000cycles}). It has a canonical full triangulated embedding of $\DM_{gm}^{eff}(k)$ into $\DM_{-}^{eff}(k)$, mapping the geometrical motive of $X\in \Sm_k$ to $M_{gm}(X)$. Via this embedding, one identifies $M_{gm}(X)=: M(X)$ as an object of $\DM_{gm}^{eff}(k)$. The category of \emph{geometric motives} $\DM_{gm}(k)$ is the one obtained by inverting the Tate motive $\ZZ(1)$ in $\DM_{gm}^{eff}(k)$ (\cite[P. 192]{voevodsky2000cycles}). This latter category is also called the \emph{Voevodsky’s derived category of mixed motives}, denoted $\DM(\Spec k, \ZZ)$.
\medskip

The following version of the boundary motive suffices for our context. For a precise definition, one can refer to \cite[Definition 2.1]{wildeshaus2006boundary}.
\begin{defn}
Let $X\in \Sm_k$ be a smooth scheme over a perfect field $k$. Then the \emph{boundary motive} $\partial M_{gm}(X)$ can be defined as the one that fits into the following exact triangle 
$$\partial M_{gm}(X)\to M_{gm}(X)\to M_{gm}^c(X)\to \partial M_{gm}(X)[1] $$
where $M_{gm}(X)$ denotes the motive of $X$ and $M_{gm}^c(X)$ its motive with compact support in $\DM_{-}^{eff}(\Spec k)$.
\end{defn}

In particular, there are chain complexes $C_* \ZZ_{tr}(X)$ and $C_* \ZZ_{tr}^c(X)$ giving rise to the corresponding categories of motive $M(X)$ and compactly supported motive $M^c(X)$ of $X$ as objects in $DM_{Nis}^{eff, -}(X, \ZZ)$. This is obtained in analogy with "singular chain complex at infinity" as in topology (\cref{defn:sing-hom-infty}). Then one has a canonical map $\iota_X: C_* \ZZ_{tr}(X) \to C_* \ZZ_{tr}^c(X) $ induced from the underlying morphism of presheaves with transfers $\ZZ_{tr}(X)\hookrightarrow \ZZ_{tr}^c(X)$. Then one defines the \emph{motive at infinity}, denoted $M^{\infty}(X):= M_{gm}^{\infty}(X)$, as the motive associated with the complex $C_*(\text{coker}(\iota_X)[-1])$. Given this setup, we have the following result that strikes an analogy with that of topology \cref{eg:homol-infty-sphere}:

\begin{lemma}
Suppose $\XX$ is an $m$-dimensional $\AA^1$-contractible finite type smooth scheme. Then we have 
        $$M^{\infty}(\XX) \simeq \ZZ \oplus \ZZ(m)[2m-1]$$
(non-canonically) as objects in $\DM(\Spec k, \ZZ)$.
\end{lemma}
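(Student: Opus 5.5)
The plan is to imitate the topological computation of \cref{eg:homol-infty-sphere}: compute $M(\XX)$ and $M^c(\XX)$ separately, then read off $M^{\infty}(\XX)$ from the exact triangle
$$M^{\infty}(\XX)\to M(\XX)\xrightarrow{\iota_{\XX}} M^c(\XX)\to M^{\infty}(\XX)[1]$$
in $\DM(\Spec k,\ZZ)$. This triangle is obtained by rotating the boundary-motive triangle, since $M^{\infty}$ is defined as the shifted cone of $\iota_{\XX}$.

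First, I will show $M(\XX)\simeq \ZZ$. The motive functor $\Sm_k\to \DM^{eff}(k)$ factors through $\mathcal{H}(k)$: it is the composite of $\Sigma^{\infty}$ with the Eilenberg--MacLane (linearization) functor, and it sends $\AA^1$-weak equivalences to isomorphisms. Because $\XX\to\Spec k$ is an $\AA^1$-weak equivalence, its image $M(\XX)\to M(\Spec k)=\ZZ$ is an isomorphism.

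Second, I will compute $M^c(\XX)$ by duality. For $\XX$ smooth of pure dimension $m$ over a perfect field (working with $\ZZ[1/p]$-coefficients in positive characteristic if necessary), Voevodsky's duality gives $M^c(\XX)\simeq M(\XX)^{\vee}(m)[2m]$. With $M(\XX)\simeq\ZZ$ this yields $M^c(\XX)\simeq \ZZ(m)[2m]$.

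The final step is to identify the map $\iota_{\XX}:\ZZ\to\ZZ(m)[2m]$. Its class lies in
$$\Hom_{\DM}(\ZZ,\ZZ(m)[2m])=H^{2m,m}(\Spec k,\ZZ)=\CH^m(\Spec k)=0$$
for $m\ge1$. So $\iota_{\XX}=0$, and the cone of a zero map splits. This gives
$$M^{\infty}(\XX)=\mathrm{cone}(\iota_{\XX})[-1]\simeq \ZZ\oplus \ZZ(m)[2m-1],$$
and the splitting is not canonical. The case $m=0$ is degenerate: there $\XX\cong\Spec k$ by \cref{0-dim prop}, so $M^c=M$ and the formula has to be read accordingly.

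I expect the main obstacle to be the duality step. One has to know that $M^c$ of a smooth scheme is the Tate-twisted dual of $M$, and this needs $k$ perfect together with either resolution of singularities or de Jong--Kelly alterations, which may force inverting the exponential characteristic. If instead we had to avoid duality, I would first try to compute $M^c(\XX)$ via the localization triangle for a compactification. That route is harder, because no nice compactification of $\XX$ is available in general.
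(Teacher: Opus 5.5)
Your argument is correct and is essentially the one behind \cite[Lemma 6.5]{asok2007unipotent}, to which the paper simply refers: $\AA^1$-contractibility gives $M(\XX)\simeq\ZZ$, duality gives $M^c(\XX)\simeq\ZZ(m)[2m]$, and $\iota_{\XX}$ vanishes since $H^{2m,m}(k,\ZZ)=\CH^m(\Spec k)=0$, so the triangle splits. The caveats you flag are genuine restrictions implicit in the statement: it needs $m\ge 1$ (for $m=0$ one has $M^{\infty}(\XX)=0$), and the duality step needs resolution of singularities, i.e.\ characteristic zero, or else $\ZZ[1/p]$-coefficients in characteristic $p>0$.
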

\begin{proof}
See \cite[Lemma 6.5]{asok2007unipotent}.    
\end{proof}

\subsubsection{Motivic Homotopy at Infinity}
Put differently, the motivic homology at infinity of $\XX$ is that of the motivic homology of the sphere of appropriate dimension. But to fully distinguish between $\AA^1$-contractible schemes, it is indispensable to have the more powerful theory of homotopy at infinity. The existence of homotopy purity allows one to do so, at least in the stable $\AA^1$-homotopy category $\SH(k)$. This was first described by \cite{levine2007motivic}. In addition to this, by developing the theory of punctured tubular neighborhoods in stable motivic homotopy theory, the authors in \cite{DDO2022punctured} have recently established a candidate for stable motivic homotopy at infinity that is in alignment with that of topology. 

\begin{defn}([\textit{ibid}, Eq. 1.1.0.a])
Let $S$ be a quasi-compact, quasi-separated base scheme. Let $f: X\to S$ be any separated scheme of finite type. Then the \emph{stable motivic homotopy type at infinity} is defined as the homotopy exact sequence
    $$\Pi_S^{\infty}(X) \to f_{!}f^{!}(\textbf{1}_S) \xrightarrow{\alpha_X} f_*f^{!} (\textbf{1}_S). $$
\end{defn}
Here $\textbf{1}_S$ is the motivic sphere spectrum over $S$, $f_!f^! \textbf{1}_S = \Pi_S(X)$ is the stable homotopy type of $X$ and $f_*f^! (\textbf{1}_S) = \Pi_S^c(X)$ is the properly supported stable homotopy type of $X$. The canonical morphism $\alpha_X$ is obtained from the six-functor formalism for the stable motivic homotopy category $\SH(S)$. The following fundamental properties show that this definition is a canonical choice.
\begin{itemize}
\item If $f$ is smooth, then $f_!f^!(\textbf{1}_S) = \Sigma^{\infty} X_+$ retrieves back the motivic suspension spectrum of $X$,
\item  If $f$ is proper, then $\alpha_X$ is an isomorphism (as $f_* \cong f_!$)
\item The morphism $\alpha_X$ is covariant with respect to proper morphisms and contravariant with respect to \'etale morphisms.
\end{itemize}

For a morphism $f: X\to S$ as above, recall that a \emph{compactification} is given by a system of open immersions $j: X\hookrightarrow \widetilde{X}$ into a proper $S$-scheme $\widetilde{f}:\widetilde{X}\to S$. The closed complement, known as the \emph{boundary}, is given by $\partial X:=  (\widetilde{X}\bs X)_{red}$ of $\widetilde{X}$ and comes equipped with closed immersion $i: \partial X \hookrightarrow X$. Moreover by setting, $\partial f = \widetilde{f} \circ i: \partial X\to S$, we get the following commutative diagram:
\[\begin{tikzcd}
	X && {\widetilde{X}} && {\partial X} \\
	&& S
	\arrow["j", hook, from=1-1, to=1-3]
	\arrow["f"', from=1-1, to=2-3]
	\arrow["{\widetilde{f}}"', from=1-3, to=2-3]
	\arrow["i"', hook', from=1-5, to=1-3]
	\arrow["{\partial f}", from=1-5, to=2-3]
\end{tikzcd}\]
This system gives rise to the following definition:

\begin{defn}([\textit{ibid}, Definition 4.1.1])
Let $(X, Z)$ be a closed $S$-pair and let $v$ be a (virtual) vector bundle on $X$. Then the \emph{punctured tubular neighbourhood} of $Z$ in $X$ relative to $S$ twisted by $v$, $\TN_S^{\times}(X,Z,v)$ is the homotopy fiber in $\SH(k)$ of the composite
    $$\beta_{X,Z}:\Pi_S(Z,i^{-1}v) \to \Pi_S(X,v)\to \Pi_S(X/X-Z,v) $$
\end{defn}

Among several other properties of $\TN_S^{\times}$, they prove that it coincides with that of stable homotopy at infinity (\textit{ibid} [Proposition 4.4.2]):
\begin{prop}
Let $(\widetilde{X},\partial X)$  be the closed $S$-pair associated with a compactification of a separated $S$-scheme of finite type. Then there exists a canonical isomorphism
      $$\Pi_S^{\infty}(X) \simeq \TN_S^{\times}(\widetilde{X},\partial X) $$
\end{prop}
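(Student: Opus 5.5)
The statement to prove is the final Proposition: for a closed $S$-pair $(\widetilde{X},\partial X)$ coming from a compactification $X \overset{j}{\hookrightarrow} \widetilde{X} \overset{i}{\hookleftarrow} \partial X$, there is a canonical isomorphism $\Pi_S^{\infty}(X)\simeq \TN_S^{\times}(\widetilde{X},\partial X)$.

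The plan is to compare the two defining homotopy fiber sequences by the six-functor formalism in $\SH(\widetilde{X})$, and then push forward along the proper map $\widetilde{f}$. Set $K:=\widetilde{f}^{!}\mathbf{1}_S$, so that $f^{!}\mathbf{1}_S = j^{*}K$ because $j$ is an open immersion. The localization sequence on $\widetilde{X}$ gives a cofiber sequence
\[
j_{!}j^{*}K \to K \to i_{*}i^{*}K,
\]
and dually a fiber sequence
\[
i_{*}i^{!}K \to K \to j_{*}j^{*}K.
\]

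First I would rewrite the three terms of $\Pi^\infty_S(X)$. Apply $\widetilde{f}_{*}=\widetilde{f}_{!}$ (properness). Using $f_!=\widetilde{f}_*j_!$ and $f_*=\widetilde{f}_*j_*$, we get $\Pi_S(X)=\widetilde{f}_*j_!j^*K$ and $\Pi_S^c(X)=\widetilde{f}_*j_*j^*K$. Under these identifications, $\alpha_X$ is $\widetilde{f}_*$ applied to the composite $j_!j^*K\to K\to j_*j^*K$, which is the canonical comparison $j_!\to j_*$.

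Next I would compute the fiber of this composite. By the octahedral axiom applied to $j_!j^*K\to K\to j_*j^*K$, the fiber of the composite sits in a triangle with the fiber of $K\to j_*j^*K$ (namely $i_*i^!K$) and with $\mathrm{fib}(j_!j^*K\to K)=i_*i^*K[-1]$. Concretely,
\[
\mathrm{fib}(\alpha)\simeq \mathrm{fib}\bigl(i_*i^!K\to i_*i^*K\bigr),
\]
where the map is the canonical $i^!\to i^*$. Pushing forward, $\Pi^\infty_S(X)\simeq \mathrm{fib}\bigl(\partial f_*\, i^!K \to \partial f_*\, i^*K\bigr)$.

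Then I would unwind $\TN^\times_S$ with $v=0$. We have $\Pi_S(\partial X)=\partial f_!\partial f^!\mathbf{1}=\widetilde{f}_*i_*i^!K$ and $\Pi_S(\widetilde{X})=\widetilde{f}_*K$. The term $\Pi_S(\widetilde{X}/\widetilde{X}-\partial X)$ is the cofiber of $\widetilde{f}_*j_!j^*K\to \widetilde{f}_*K$, which is $\widetilde{f}_*i_*i^*K$. The map $\beta$ is then $\widetilde{f}_*$ of $i_*i^!K\to K\to i_*i^*K$, which is exactly the canonical $i_*i^!\to i_*i^*$. So both fibers agree, giving the isomorphism.

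The main obstacle is coherence rather than computation. One must check that the identification of $\Pi_S(\widetilde{X}/\widetilde{X}-\partial X)$ used in the definition of $\TN^\times_S$ agrees with the localization cofiber. One must also check that the two maps are literally the same natural transformation $i^!\to i^*$, including the support and exchange isomorphisms, rather than differing by an automorphism. I would handle this by working in the $\infty$-categorical six-functor formalism, where fibers are functorial, and by invoking the compatibility of localization triangles with $\widetilde{f}_*$. Canonicity, including independence of the compactification, would follow because all maps used are natural transformations of the formalism.
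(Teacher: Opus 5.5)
The paper gives no argument for this proposition: it is quoted from \cite[Proposition 4.4.2]{DDO2022punctured} in the appendix on motivic topology at infinity, so there is no in-text proof to compare against. Your proof is correct, and it is the natural six-functor argument. With $K=\widetilde f^{!}\mathbf{1}_S$, both sides become $\widetilde f_*$ of the fiber of $i_*$ applied to the canonical map $i^{!}K\to i^{*}K$. On the left you get there via the octahedron on $j_!j^*K\to K\to j_*j^*K$. On the right you get there by reading $\beta$ as the counit of $i_*\dashv i^{!}$ followed by the unit of $i^*\dashv i_*$.

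If you want the coherence check to be almost automatic, you can avoid the octahedron:
\begin{itemize}
\item Applying the localization triangle to $j_*j^*K$ and using $j^*j_*\simeq\mathrm{id}$ gives $\mathrm{cofib}(j_!j^*K\to j_*j^*K)\simeq i_*i^*j_*j^*K$.
\item Applying $i^*$ to $i_*i^{!}K\to K\to j_*j^*K$ gives $\mathrm{fib}(i^{!}K\to i^{*}K)\simeq i^*j_*j^*K[-1]$.
\end{itemize}
Hence both $\Pi^{\infty}_S(X)$ and $\TN^{\times}_S(\widetilde X,\partial X)$ are identified with the same object, $\partial f_*\, i^*j_*f^{!}\mathbf{1}_S[-1]$.

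One correction to your last sentence: compactification-independence is not an output of this argument. The object $\Pi^{\infty}_S(X)$ is intrinsic to $f$. The independence of $\alpha_f\colon f_!\to f_*$ from the chosen compactification is an \emph{input} you use when you write $\alpha_X=\widetilde f_*(j_!\to j_*)$. That $\TN^{\times}_S(\widetilde X,\partial X)$ does not depend on $\widetilde X$ is then a \emph{consequence} of the proposition, which is exactly the corollary stated right after it.
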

Moreover, this isomorphism is in fact independent of the choice of compactification ([\textit{ibid}, Corollary 4.4.4]):

\begin{corollary}
Let $(X, Z)$ be a closed $S$-pair such that $X/S$ is proper. Then the punctured tubular neighborhood $\TN_S^{\times} (X,Z)$ is isomorphic to $\Pi_S^{\infty}(X\bs Z)$ and hence only depends on the open subscheme $X\bs Z$.
\end{corollary}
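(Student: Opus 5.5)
The plan is to deduce the corollary directly from the preceding Proposition (the identification $\Pi_S^{\infty}(X)\simeq \TN_S^{\times}(\widetilde{X},\partial X)$ for a compactification), by observing that a closed $S$-pair $(X,Z)$ with $X/S$ proper is itself a compactification of the open complement $U:=X\bs Z$. Concretely, I set $\widetilde{X}:=X$, $j\colon U\hookrightarrow X$ the open immersion, and $f_U=f\circ j\colon U\to S$. Since $X$ is proper over $S$ (in particular separated and of finite type) and $j$ is an open immersion of a Noetherian scheme, $U$ is a separated $S$-scheme of finite type. Hence $j\colon U\hookrightarrow X$ is a compactification in the sense recalled before the Proposition. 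Its boundary is $\partial U=(X\bs U)_{red}=Z_{red}$.

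Applying the Proposition to $U$ then gives a canonical isomorphism
$$\Pi_S^{\infty}(X\bs Z)\simeq \TN_S^{\times}(X,Z_{red}).$$
It remains to replace $Z_{red}$ by $Z$. Here I would use that $\TN_S^{\times}(X,Z)$ is the homotopy fiber of $\beta_{X,Z}\colon \Pi_S(Z)\to \Pi_S(X)\to \Pi_S(X/X-Z)$. Two observations handle this. First, $X-Z=X-Z_{red}$ as open subschemes, so the target $\Pi_S(X/X-Z)$ is unchanged. Second, the nil-immersion $Z_{red}\hookrightarrow Z$ induces an isomorphism $\Pi_S(Z_{red})\xrightarrow{\sim}\Pi_S(Z)$. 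This follows from the localization property in $\SH$: a closed immersion with empty open complement induces an equivalence, which is the stable analogue of \cref{remark:nilimmersion}. These isomorphisms are compatible with the maps into $\Pi_S(X)$, because the composite $Z_{red}\to Z\to X$ is the closed immersion of $Z_{red}$. Taking homotopy fibers therefore yields $\TN_S^{\times}(X,Z_{red})\simeq \TN_S^{\times}(X,Z)$.

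The final clause, that the result depends only on $X\bs Z$, is then formal. The object $\Pi_S^{\infty}(X\bs Z)$ is defined intrinsically as the fiber of $\alpha_{X\bs Z}\colon f_{U!}f_U^{!}\mathbf{1}_S\to f_{U*}f_U^{!}\mathbf{1}_S$, which involves only $U\to S$ and not the chosen pair $(X,Z)$.

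The step I expect to need the most care is the independence from the reduced structure. Specifically, one must check that the canonical map in the Proposition, which is built from the boundary with its reduced structure, matches the fiber of $\beta_{X,Z}$ for possibly non-reduced $Z$ compatibly, not merely abstractly. If the invariance $\Pi_S(Z_{red})\simeq\Pi_S(Z)$ is awkward to cite directly, I would first try deriving it from the localization triangle $i_*i^!\to \mathrm{id}\to j_*j^*$ for the closed immersion $Z_{red}\hookrightarrow Z$, whose open complement is empty.
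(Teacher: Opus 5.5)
Your proposal is correct and follows the route the paper takes: the paper does not prove this corollary itself but cites \cite{DDO2022punctured} (Corollary 4.4.4), where it is deduced from the preceding Proposition exactly as you do, by viewing the proper $X$ as a compactification of $X\bs Z$ with boundary $Z_{red}$. Your additional check that $\TN_S^{\times}(X,Z)$ depends only on $Z_{red}$, using localization for the nil-immersion $Z_{red}\hookrightarrow Z$, is a correct detail that the paper leaves implicit.
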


The authors have furthermore computed the stable motivic homotopy at infinity of several algebraic varieties, including the Du Val singularities on normal surfaces (\textit{ibid} [\S 5.4, Example 1]) and the Danielewski surfaces (\textit{ibid} [\S 5.4, Example 2]), among others. In principle, $\Pi_S^{\infty}$ can be seen to measure the formal difference between $\Pi_S(X)$ and its compact counterpart $\Pi_S^c(X)$. In unison with the context of our script, let us end with the following example (\textit{ibid} [Example 4.3.7]).

\begin{example}\label{eg:SH-infty-is-A^d}
Let $\XX$ be a stably $\AA^1$-contractible variety of dimension $d$ over a field $k$. Then the stable motivic homotopy at infinity of $\XX$ is seen to be that of the affine $d$-space, i.e.,
    $$\Pi_k^{\infty} (\XX) = \textbf{1}_k \oplus \textbf{1}_k(d)[2d-1] = \Pi_k^{\infty}(\AA^d_k).$$
\end{example}   
Thus, the stable motivic homotopy at infinity cannot distinguish between a stably $\AA^1$-contractible variety and $\AA^d_k$. Moreover, recall from topology that taking the product with Euclidean spaces (or any contractible space) kills off the homotopy at infinity. This is also true for $\Pi_S^{\infty}$ as the following illustrates (cf. \textit{ibid} [Proposition 4.3.9]):

\begin{example}
Let $f: Y\to S$ be a smooth stably $\AA^1$-contractible $S$-scheme of relative dimension $n$ with a trivial relative tangent bundle (e.g., take $Y = \AA^n_k$), then we have the following splitting of extension
        $$\Pi_S^{\infty}(X\times Y) \simeq \Pi_S(X) \oplus \Pi_S^c(X)(n)[2n-1]. $$
\end{example}

The authors in \cite{DDO2022punctured} furthermore believe that an unstable version of motivic homotopy at infinity should resolve this issue at stake. However, there are currently several difficulties in defining a candidate for the \emph{unstable motivic homotopy at infinity}; for instance, a coherent way to choose a base point at infinity or to formulate the "right definition" of motivic homotopy at infinity itself: since unstably, the homotopy fibre and the homotopy cofibre are generally different, it is unclear what should be the right notion to begin with. 
\medskip

\textsc{
{\fontfamily{bch}\selectfont
So far, it is only a dream to have such a distinguished motivic invariant in play; nevertheless, this dream too shall become a reality!
}}

% \afterpage{\blankpage}

\end{appendix}      
\backmatter

\printbibliography[heading=bibintoc, title={Bibliography}]
\let\cleardoublepage\clearpage

\chapter{Cover Page Courtesy}\label{app:Scheme-theory}
\markboth{Appendix}{}

I gratefully acknowledge the following sources for their contribution to the cover page of this thesis.

\begin{itemize}
\item Real Exotic 3-manifold, 
\href{https://www.londontsai.com/drawing/view/582083/1/679362}{Whitehead Manifold}
\item Henry Segerman, Oklahoma State University - \href{http://www.segerman.org/tshirts.html}{T-shirt designs}

\item The work of Maurits Cornelis Escher, \href{https://escherinhetpaleis.nl/en/about-escher/masterpieces/hand-with-reflecting-sphere}{Escher in Het Paleis}
\end{itemize}

\afterpage{\blankpage
\thispagestyle{empty}}

\end{document}